\newtheorem{thm}{Theorem}[section]
\newtheorem{lem}[thm]{Lemma}
\newtheorem{prop}[thm]{Proposition}
\newtheorem{cor}[thm]{Corollary}
\theoremstyle{definition}
\newtheorem{defn}[thm]{Definition}
\newtheorem{conge}[thm]{Conjecture}
\newtheorem{oprob}[thm]{Open Problem}
\newtheorem{ackn}{Acknowledgement\!}
\theoremstyle{remark}
\newtheorem{rem}[thm]{Remark}
\numberwithin{equation}{section}
\numberwithin{figure}{section}
\def\dert{\partial_t}
\def\ders{\partial_s}
\def\comp{\circ}
\def\eps{\varepsilon}
\def\limup{\operatornamewithlimits{\overline{\lim}}}
\def\loc{_{\operatorname{\mathrm{loc}}}}
\def\HH{\mathcal H}
\def\H{{{\mathcal H}}}
\def\R{\mathbb R}
\def\SS{\mathbb S}
\def\R{{{\mathbb R}}}
\def\SS{{{\mathbb S}}}
\def\NN{\mathbb N}
\def\TTT{{\mathbb{T}}}
\def\TT{\mathbb T}
\def\TTT{\mathbb T}
\def\ssss{\mathfrak s}
\def\tt{\mathfrak t}
\def\pol{\mathfrak p}
\def\qol{\mathfrak q}
\def\sss{\xi}
\def\res {
 \begin{picture}(9,8)
 \put (1,0){\line(0,1){8}}
 \put (1,0){\line(1,0){5}}
 \end{picture}}
\newcommand{\Om} {\Omega}
\newcommand{\intbar}{\etaathop{\int\etaakebox(-13.5,0){\rule[4pt]{.7em}{0.3pt}}
\kern-6pt}\nolimits}
\newcommand{\be}{\begin{equation}}
\newcommand{\ee}{\end{equation}}
\newcommand{\bea}{\begin{equation*}}
\newcommand{\eea}{\end{equation*}}
\newcommand{\op}{\langle}
\newcommand{\cl}{\rangle}
\newcommand{\epsi}{\mathbb{E}}
\renewcommand{\l }{\lambda }
\renewcommand{\t }{\tau }
\newcommand\bff{\operatorname{Bf}}
\newcommand\ff{\operatorname{Ff}}
\newcommand{\lf}{\operatorname{Lf}}
\newcommand{\rf}{\operatorname{Rf}}
\newcommand{\vertiii}[1]{{\left\vert\kern-0.25ex\left\vert\kern-0.25ex\left\vert #1 
		\right\vert\kern-0.25ex\right\vert\kern-0.25ex\right\vert}}
\newcommand{\vertiiii}[1]{{\left\vert\kern-0.25ex\left\vert\kern-0.25ex\left\vert #1 
		\right\vert\kern-0.25ex\right\vert\kern-0.25ex\right\vert}}
\definecolor{darkgreen}{rgb}{0,0.5,0}
\title{Evolution of networks with multiple junctions}
\author{Carlo Mantegazza \footnote{Dipartimento di Matematica e Applicazioni, Universit\`a di Napoli Federico II and Scuola Superiore Meridionale, Napoli, Italy} \and 
Matteo Novaga \footnote{Dipartimento di Matematica, Universit\`a di Pisa, Italy} 
\and Alessandra Pluda\footnotemark[2] \and Felix Schulze\footnote{Department of Mathematics, University of Warwick, United Kingdom}}
\begin{document}

\maketitle

\begin{abstract}
\noindent We consider the motion by curvature of a network of curves in the plane and we
discuss existence, uniqueness, singularity formation, and asymptotic behavior of the flow. 
\end{abstract}

\setcounter{tocdepth}{2}
\tableofcontents

\section{Introduction}

In this work we give an overview of the state--of--the--art of the motion by curvature of planar networks of curves, collecting known results and showing several new ones.
\begin{figure}[H]
\begin{center}\begin{tikzpicture}
\draw[color=black,thick,scale=1,domain=-3.141: 3.141,
smooth,variable=\t,shift={(-1,0)},rotate=0]plot({4.5*sin(\t r)},
{2*cos(\t r)});
\draw
(0.3,0)to[out=-30, in=160, looseness=1](0.6,-0.4)
(1,-0.5)to[out=120, in=45, looseness=1](0.6,-0.4)
(0.6,-0.4)to[out=-60, in=80, looseness=1](-1.3,-1.4)
(0.3,0)to[out=60, in=-160, looseness=1](1.3,0.5)
(1.3,0.5)to[out=60, in=-100, looseness=1](2.3,1.38)
(1.3,0.5)to[out=-100, in=80, looseness=1](2,0)
(1,-0.5)to[out=10, in=-120, looseness=1](2,0)
(1,-0.5)to[out=-60, in=120, looseness=1](2.3,-1.37)
(-3,0)to[out=100, in=-90, looseness=1](-4,1.5)
(2,0)to[out=-40, in=-170, looseness=1](3.45,0.2)
(0.2,1.3)to[out=40,in=-50, looseness=1](1.3,1.7)
(0.2,1.3)to[out=160, in=-60, looseness=1](-3,1.8)
(-0.5,1)to[out= 40,in=-80, looseness=1] (0.2,1.3)
(-4.5,0)to[out= 50,in=180, looseness=1] (-3,0)
to[out= 60,in=160, looseness=1] (-0.5,1)
(-0.5,1)to[out= -80,in=180, looseness=1] (0.3,0)
(-3,0)to[out=50, in=120, looseness=1](-1,-1.99)
(-3,0)to[out=-70, in=60, looseness=1](-3.7,-1.6)
(-4.5,0)to[out= 170,in=-20, looseness=1] (-5.2,0.7)
(-4.5,0)to[out= -70,in=30, looseness=1] (-5,-0.9);
\end{tikzpicture}
\end{center}
\begin{caption}{A planar network of curves in a convex domain.}
\end{caption}
\end{figure}
The problem, proposed by Mullins~\cite{hermul} and discussed first
in~\cite{hermul,brakke,bronsard,gurtin2,kinderliu}, attracted the interest of many authors in 
recent years~\cite{EsOt:14,Ilnevsch,MMN13,haettenschweiler,balhausman,BeNo,chenguo,
mannovplu,mannovtor,mannovtor2,mazsae,pluda,
schn-schu,schnurerlens,tonwic,kimton,garkoh,saez1,saez2}.
One strong motivation to study this flow is the analysis of models of two--dimensional 
multiphase systems, where the problem of the dynamics of the
interfaces between different phases arises naturally.
As an example, the model where the energy of a configuration is
simply given by the total length of the interfaces has proven useful to describe the growth of grain boundaries in a polycrystalline material (see~\cite{hermul,gurtin2,kinderliu} and {\em http:/\!/mimp.materials.cmu.edu}).\\
A second motivation is more theoretical: the evolution by curvature of such a network of curves is the simplest example of mean curvature flow of a set which is {\em essentially} singular.
To consider such flow not only for smooth submanifolds but also for non--regular sets,
several generalized (weak) definitions of the flow have been introduced 
in the literature~\cite{altawa,brakke,degio4,es,ilman1,soner1}. Anyway, while the smooth case was largely studied and understood (even if still not completely), the evolution of generalized submanifolds, possibly singular (for instance {\em varifolds}), has not been analyzed in great detail.\\
In his seminal paper, K.~Brakke~\cite{brakke} proved the existence of a global (very) weak solution, in a {\em geometric measure theory} context, called ``Brakke flow''.
Recently, the work of Brakke has been improved by L.~Kim and Y.~Tonegawa~\cite{kimton} (see also~\cite{SalvoToni}) in the case of the evolution of grain boundaries in $\mathbb{R}^n$ 
(which reduces to the evolution of networks when $n=2$).
They proved a global existence theorem and also showed that there exists a finite family of open sets moving continuously with respect to the Lebesgue measure, whose boundaries coincide with
the space--time support of the flow (for further results, see also the papers by K.~Kasai and Y.~Tonegawa~\cite{kaston} and Y.~Tonegawa and N.~Wickramasekera~\cite{tonwic}). Finally, in~\cite{KimTonegawa2}, Kim and Tonegawa also proved a regularity result for $1$--dimensional Brakke flows, showing that for almost all times, the evolving network consists of a finite number of embedded curves of class $W^{2,2}$, meeting at junctions with angles of $60$ or $120$ degrees or with a common tangent.\\
For another global existence result in any codimension and with special regularity properties, obtained adapting the elliptic regularization scheme of T.~Ilmanen~\cite{Il:93g,ilman1}, we refer to the work of the last author and B.~White~\cite{schulzewhite}.
Despite these recent improvements, Brakke's definition is anyway apparently too weak (possibly too general) if one is interested in a detailed description of the flow.\\
A completely different definition of evolution is instead based on the so-called {\em minimizing movements}: an implicit time--discrete variational scheme introduced in~\cite{altawa,luckstur} 
(see also~\cite{bellettinikho,caraballo1,degio7}).
In this context, another discretization scheme was developed and studied by 
S.~Esedoglu and F.~Otto~\cite{EsOt:14}, T.~Laux and F.~Otto~\cite{lauxotto,lauxotto2} (we motion also the more recent development~\cite{FiHeLaSi}).\\
Finally, we mention the ``level set'' approach to motion by curvature by L.~C.~Evans and J.~Spruck~\cite{es} or, alternatively, Y.~G.~Chen, Y.~Giga, and S.~Goto~\cite{cgg}, 
unfortunately not suitable for the motion of networks since if at least a multi--point is present then an interior region immediately develops (the so-called ``fattening'' phenomenon).

Even if all these approaches provide a globally defined evolution,
the possible conclusions on the structure and regularity of the moving networks are
actually quite weak.
To obtain a detailed description
of the evolution and of the singularity formation,
we tried to work in the smooth setting as much as possible.
The definition of the flow is then the first problem one has to face, due
to the contrast between such desire and the intrinsic singular geometric nature of a network. 
Consider for instance the network described by two curves
crossing each other, forming a $4$--point. There are actually several
possible candidates for the flow:
one cannot easily decide how the angles must behave, moreover, it could also be
allowed the four concurrent curves to separate into two pairs of
curves moving independently of each other and/or we could take into account
the possible ``birth'' of new multi--points from such a single one
(all these choices are possible with Brakke's definition). Actually, one would like a good/robust definition of curvature flow giving uniqueness of the motion (at least for ``generic'' initial networks) and forcing the evolving network, by an ``instantaneous regularization'' effect, with the possible exception of some discrete set of times, to have only triple junctions with the three angles between the concurring curves of $120$ degrees. This last property (which was experimentally observed for the growth of grain boundaries) is usually called {\em Herring condition}. These expectations are sustained also by the variational nature of the problem since this evolution can be considered as the ``gradient flow'' in the ``space of networks'' of the {\em Length} functional, which is the sum of the lengths of all the
curves of the network (see~\cite{brakke}). It must anyway be said that such a space does not share a natural linear structure and such a ``gradient'' is not actually a well-defined ``velocity'' vector driving the motion at the multiple junctions, in general. However, it follows that every point of a network different from its multi--points must move with a velocity whose normal component is the curvature vector of the curve it belongs, in order to decrease the {\em Energy} of the network (that is, the total length here) ``most efficiently'' (see~\cite{brakke}).
From this ``energetic'' point of view, it is then natural to expect also that configurations with multi--points of order greater than three or $3$--points with angles different from $120$ degrees, being {\em unstable} for the length functional, should be present only in the initial network or that they should appear only at some discrete set of times, during the flow.
This property is suggested also by numerical simulations and physical experiments, see~\cite{hermul,bronsard,gurtin2,kinderliu} and the {\em grain growth} movies at~{\em http://facstaff.susqu.edu/brakke}. One may hope that some sort of ``parabolic regularization'' could play a role here: for instance, if a multi--point has only two concurrent curves, it can be easily shown 
(see~\cite{altgra,angen2,angen3,gray1}) that the two curves become instantaneously a single smooth curve moving by curvature.\\
We mention that actually, it is always possible to find a Brakke flow sharing such property at almost every time (see~\cite{brakke}), by the variational spirit of its definition which is the closest to the ``gradient flow'' point of view. However, as uniqueness does not hold in this class, there are also Brakke flows starting from the same initial network which keeps their multi--points, or loose the connectedness of the network: for instance, a $4$--point can ``open'' as in the right side of Figure~\ref{figstandard}, or separate in two no more concurring curves, or it can ``persists'' to be a $4$--point where the two ``crossing'' curves move independently. Anyway, as we said, Brakke's definition is too ``weak'' if one is interested in a detailed description of the flow.

By this discussion it is then natural, due to their expected relevance, to call {\em regular} the networks with only $3$--points and where the three concurrent curves form angles of $120$ degrees. Then, following the ``energetic'' and experimental motivations mentioned above, we simply {\em impose} such regularity condition in the definition of a {\em smooth} curvature flow, for every positive time (at the initial time it could fail). If the initial network is regular and smooth enough, we will see that this definition leads to an almost satisfactory (in a way ``classical'') short--time existence theorem of a flow by curvature. Trying instead to let evolve an initial non--regular network, various complications arise related to the presence of multi--points or of $3$--points not satisfying the Herring condition. Notice also that, even starting with an initial regular network, we cannot avoid to deal also with non--regular networks when we analyze the global behavior of the flow. Indeed, during the flow, some of the triple junctions could ``collide'' along a ``vanishing'' curve of the network, when the length of the latter goes to zero (hence, modifying the topological structure of the network). In this case one has to ``restart'' the evolution with a different set of curves, possibly describing a non--regular network, typically with multi--points of order higher than three (consider, for instance, two $3$--points collapsing along a single curve connecting them) or even with ``bad'' $3$--points 
with angles between the concurring curves, not all equal to $120$ degrees
(think of three $3$--points collapsing together with the ``triangular'' region delimited by three curves connecting them). A suitable short--time existence (hence, ``restarting'') result for this situation has been worked out in~\cite{Ilnevsch} by T.~Ilmanen, A.~Neves and the fourth author and in~\cite{LiMazPlSa} by J.~Lira, R.~Mazzeo, M.~Saez and the third author. In these papers, it is indeed shown that starting from any non--regular network (with a natural technical hypothesis), there exists a ``satisfactory'' flow of networks by curvature which is immediately regular and smooth, for every positive time. Section~\ref{smtm3} is devoted to this topic.

The existence problem of a curvature flow for a regular network with only one $3$--point and fixed end--points, called {\em triod} (see Definition~\ref{triod}), was first considered by L.~Bronsard and F.~Reitich in~\cite{bronsard}. To be precise, they consider as initial datum any regular $C^{2+2\alpha}$ triod satisfying some compatibility conditions at the triple junctions and show short--time existence and uniqueness in the parabolic class $C^{2+2\alpha,1+\alpha}$. In~\cite{kinderliu} D.~Kinderlehrer and C.~Liu proved the global existence and convergence of a smooth solution if the initial regular triod is sufficiently close to a minimal (Steiner) configuration. 

After introducing regular networks, their flow by curvature, and some basic properties (Sections~\ref{notation} and~\ref{basiccomp}), we extend, in Section~\ref{smtm}, the above well--posedness theorem to general regular networks (Theorem~\ref{2compexist0}). Moreover, we also show an analogous result in suitable Sobolev spaces (Theorem~\ref{wellposednessSobolev}).

In Section~\ref{kestimates} we generalize to any regular network the integral estimates proved in~\cite{mannovtor} for a triod, 
which are needed to prove Theorem~\ref{c2shorttime} and will be actually used throughout the whole paper. A consequence of such estimates is the fact that if the lengths of the curves are bounded away from zero, as $t$ goes to the maximal time $T$ of existence of the flow, the maximum of the modulus of the curvature must go to $+\infty$ (Corollary~\ref{kexplod} and Theorem~\ref{curvexplod-general}).

The uniqueness of the flow is quite delicate. Indeed, by Theorem~\ref{2compexist0}, we only have that, for initial regular networks of class $C^{2+2\alpha}$ having the sum of the curvatures of the three concurring curves at every triple junction equal to zero, there is uniqueness in the parabolic class $C^{2+2\alpha,1+\alpha}$. In Section~\ref{smtm2}, by combining Theorems~\ref{wellposednessSobolev} and~\ref{2compexist0} (the first mainly for the uniqueness, the second for the existence) we then show a result of existence/geometric uniqueness for short time of the flow of an initial network of class $C^2$ (Theorem~\ref{c2shorttime}), in a subclass of the curvature flows which are simply $C^2$ in space and $C^1$ in time. In the same section, we will also see that the classical property of parabolic equations of instantaneous regularization of solutions for positive times also holds for the motion by curvature of networks, in a suitable sense.

The rest of the paper is devoted to the long-time behavior of the flow. For the sake of simplicity, in the following overview, we will restrict ourselves only to the behavior in the interior of a convex domain of a network flowing by curvature with fixed end--points on the boundary of such set, while in the whole paper also the behavior at the boundary (hence, at the end--points of the network) is analyzed in the same detail.

In Section~\ref{monotonsec} we recall Huisken's monotonicity formula for mean curvature flow which holds also for the evolution of a network and we introduce the rescaling procedures to get blow--up limit networks (discussed in Section~\ref{geosec}) at the maximal time of smooth existence. Then, to ``describe'' the singularities of the flow one needs to classify such possible blow--up limits. In some cases, arguing by contradiction with geometric arguments, this ``description'' can be used to exclude at all the formation of singularities. Key references for this method in the situation of a single smooth closed curve are~\cite{altsch,hamilton4,huisk3,huisk2}.
The most relevant difference in dealing with networks is the
difficulty in using the maximum principle, 
which in the case of closed curves is the main tool for getting pointwise 
estimates on the geometric quantities during the flow. 
For this reason, some crucial estimates which are straightforward in such case 
are here much more difficult to obtain and we had to resort to the integral estimates of Section~\ref{kestimates} (see also Section~\ref{van}), which are similar to the ones in~\cite{altsch,angen2,angen3,huisk1}, but require some extra work to deal with the triple junctions.

One can reasonably expect that an embedded regular network 
does not develop singularities during the flow if its ``topological structure'' does not change 
(for instance, in the case of a ``collision'' of two or more $3$--points). 
Our analysis in Sections~\ref{geosec},~\ref{locreg} and~\ref{behavsing} will show that if no ``multiplicities'' larger than one occur in the blow--up limit networks, this expectation is indeed true.
Under the assumption that the lengths of the curves are bounded away from zero 
the only possible blow--up limits (with multiplicity one by hypothesis)
are either a straight line, a halfline, or a flat unbounded regular triod (called ``standard triod'') composed of three halflines through the origin of $\mathbb{R}^2$ forming angles of $120$ degrees (see Proposition~\ref{resclimit} and Section~\ref{behavsing}). 
Then, a local regularity theorem for the flow (shown in~\cite{Ilnevsch}) together with such 
classification excludes the presence of singularities. 
This result, which is in the spirit of White's local regularity theorem for mean curvature flow in~\cite{white1}, is presented in detail in Section~\ref{locreg}.

Thus, again in Section~\ref{behavsing}, we try to understand what happens at the maximal time, knowing that some lengths of the curves composing the network cannot be uniformly bounded away from zero, hence at least two $3$--points get closer and closer.\\
First of all, we prove that under the hypothesis of multiplicity one of the blow--up limits,
if more than two triple junctions go to collide, then necessarily an entire
region (the interior of a ``loop'' of the network) vanishes, which implies that the curvature is necessarily unbounded getting close to the singular time. Hence, if the curvature stays bounded it must happen that (locally) we are in the case of two triple junctions (only) going to collide along a vanishing curve, forming a $4$--point in the limit. Vice versa, we are then able to show that in such a situation the curvature remains bounded. As a consequence, we conclude that the curvature is uniformly bounded along the flow if and only if no region is collapsing and that in such case only local vanishing of single curves can happen, with a formation of a $4$--point in the limit. This is clearly particularly relevant if the evolving network is a {\em tree}, that is, regions are not present at all. More in detail, we first show that in such case, as $t$ goes to the maximal time $T$, the networks $\SS_t$ converge in $C^1$--norm (up to reparametrization) to a unique limit set $\SS_T$ which is a {\em degenerate} (collapsed) regular network 
(see Definition~\ref{degnet}), that is, a smooth network possibly with multi--points of order higher than three and some collapsed parts ``hidden'' in its vertices. Then, we show that $\SS_T$ can have only $3$--points with angles of $120$ degrees or $4$--points with angles of $120/60$ degrees, like in the left side of Figure~\ref{Pcollapse}.\\
In the other situation, when the curvature is not bounded and a region collapses (Section~\ref{van}), we are able to obtain a weaker conclusion. Assuming the uniqueness of the blow--up limit along any sequence of rescalings (which can be instead proved in the above case), we can show that, as $t\to T$, the network $\SS_t$ converges to some degenerate (see above) regular network, whose ``non--collapsed'' part $\SS_T$ is a $C^1$, possibly non--regular, network which is smooth outside its multi--points and whose curvature is of order $o(1/r)$, where $r$ is the distance from its non--regular multi--points.

In several steps of the previous analysis the assumption of multiplicity one of the blow--up limits
is fundamental, we actually conjecture (Conjecture~\ref{ooo9}) that it holds in general, but 
up to now we can prove it only in some special cases. Indeed, in Section~\ref{dsuL} we discuss a scaling invariant, 
geometric quantity associated with a network, first proposed in~\cite{hamilton3} (see also~\cite{huisk2}) and later extended in~\cite{mannovtor, BeNo, pluda}, consisting in a sort of ``embeddedness measure'' which is positive when no self--intersections are present. By a monotonicity argument, we show that this quantity is uniformly positively bounded below along the flow, under the assumption that the number of $3$--points of the network is at most two. 
As a consequence, in such case every possible $C^1\loc$--limit of rescalings of the networks of the flow is an embedded network with multiplicity one. 
We underline that it is not clear to us how to obtain a similar conclusion for a general network with several triple junctions, since the analogous quantity, if there are more than two $3$--points, does not satisfy a monotonicity property. 

In Section~\ref{smtm3} we state a short--time existence result for possibly non--regular initial networks (that is, with multi--points 
of order greater than $3$ and/or non--regular $3$--points), giving a flow that is immediately regular and smooth for every positive time. This result, which clearly also provides a ``restarting theorem'', was worked out independently in~\cite{Ilnevsch} by T.~Ilmanen, A.~Neves and the fourth author (Theorem~\ref{evolnonreg}) and in~\cite{LiMazPlSa} by J.~Lira, R.~Mazzeo, M.~Saez and the third author (Theorem~\ref{evononreg2}), here we only give an outline of the arguments in the proofs (which are quite technical). The idea in Theorem~\ref{evolnonreg} is to locally desingularize the multi--points and the non--regular $3$--points via regular self--similarly expanding solutions. The argument hinges on a new monotonicity formula, which shows that such expanding solutions are dynamically stable, using the fact that the evolution of curves and networks in the plane are special cases of the Lagrangian mean curvature flow (these ideas have already been exploited by A.~Neves in the papers~\cite{neves1,neves2,nevestian}). Theorem~\ref{evononreg2} relies instead on blow--up arguments from geometric micro--local analysis. In this case, the same regular self--similarly expanding solutions naturally arise from the underlying geometric structure of the problem.

In Section~\ref{restart} it is explained how to combine Theorem~\ref{evolnonreg} with the previous analysis of the singularities in order to continue the flow after a singular time. Then, we analyze the preserved geometric quantities and the possible changes in the topology of a network in passing through a singularity. This is applied in Section~\ref{llong} to study the long-time behavior of the flow, indeed, the restarting procedure allows us to define an ``extended'' curvature flow with singularities at an increasing sequence of times. An important open question is whether the maximal time interval of existence of such flow is finite or not, where the main problem is the possible ``accumulation'' of the singular times (if they are not finite, which actually we do not know). We mention that in the special case of symmetric networks with only two triple junctions, it can be shown that the set of singular times is necessarily finite, see~\cite{NoSc23}. Clearly, if such ``extended'' flow can be defined for every time (as the Brakke flow obtained by L.~Kim and Y.~Tonegawa in~\cite{kimton}), we ask ourselves if the network converges, as $t\to+\infty$, to a stationary network for the length functional (a Steiner network). In Proposition~\ref{prolong} we prove the convergence {\em up
to a subsequence} of the family of the evolving networks to a possibly degenerate one (some curves could disappear in the limit), as $t\to+\infty$. If we then assume that such limit network is not degenerate, with the help 
of {\em {\L}ojasiewicz--Simon gradient inequality}, we are actually able to prove the full convergence of the flow, in Theorem~\ref{prop:Convergence}. We finally conclude Section~\ref{llong} presenting a stability result: if a network is sufficiently close in $W^{2,2}$--norm to a regular network $\SS_\ast$ composed of straight segments only, its motion by curvature exists for all times and smoothly converges to a regular network still composed of straight segments and with the same length of $\SS_\ast$. 

Up to now, the study of the behavior of the flow at the first singularity (and immediately after) 
is essentially complete when the network has at most two triple junctions, see~\cite{mannovtor,MMN13, pluda, mannovplu}, holding in this very special case the above mentioned {\em multiplicity one conjecture}, as it is shown in Section~\ref{dsuL}. In Section~\ref{globsec} we will describe, up to the best of our knowledge, the global evolution of such ``simple'' networks, which are actually interesting since most of the relevant phenomena of the general case are already present. In particular, we will see that the evolution of a tree--like network with only one $3$--point and three fixed end--points (called {\em triod}) is smooth and asymptotically converges to a Steiner network, if the lengths of the three curves stay uniformly bounded away from zero. 

The last section of the paper is devoted to collecting and presenting the main open problems.
Moreover, by courtesy of T.~Ilmanen, we include an appendix with pictures and computations of several examples of regular shrinkers, due to him and J.~H\"attenschweiler.

We conclude this introduction by mentioning that there are several interesting variants and generalizations of the problem of the motion by curvature of networks whose study is only at the beginning. For instance, one can consider the anisotropic version of the flow, as in~\cite{kronovpoz,gronovpoz,BeChKh} and/or take into account the mismatch of
the orientation of the grain in the model~\cite{applicato4,applicato2,applicato3}.\\
The analogous problem in higher dimensions (and codimensions) is still widely open. 
Besides the papers~\cite{kimton, schulzewhite}, where a global weak solution in the Brakke sense is constructed, the short--time existence of a smooth and regular solution in three dimensions has been established in~\cite{degako} in some special cases and in~\cite[Section~7]{schulzewhite} for the motion of a network in $\mathbb{R}^n$ with only triple junctions.
In these cases, the analysis of singularities and the subsequent possible restarting procedure are still open problems.\\ 
We also mention the works~\cite{freire2,freire3} where a graph evolving by mean curvature and meeting a horizontal hyperplane with a fixed angle of $60$ degrees is studied. 
By considering the union of such graph with its reflection through the hyperplane, one gets an evolving symmetric {\em lens--shaped} domain. We remark that in this particular case, the analysis is simpler since the maximum principle can be applied.

\begin{ackn}
The authors want to warmly thank Tom Ilmanen for several discussions and suggestions and for the courtesy of providing us the figures and the numerical computations in the Appendix, done with the contribution of J\"org H\"attenschweiler.\\
C.M., M.N. and A.P. are members of the INDAM--GNAMPA.\\
The work of C.M. is partially supported by PRIN Project 2022E9CF89 ``GEPSO - Geometric Evolution Problems and Shape Optimization''.\\
The work of M.N. is partially supported by PRIN Project 2022E9CF89 -- PNRR Italia Domani, funded by EU Program NextGenerationEU
and by the MUR Excellence Department Project awarded to the Department of Mathematics of the University of Pisa.\\
The work of A.P. is partially supported by PRIN project 2022R537CS ``$\rm{NO}^3$ - Nodal Optimization, NOnlinear elliptic equations, NOnlocal geometric problems, with a focus on regularity'', PRA project ``GEODOM - Geometric evolution problems and PDEs on variable domains'', BIHO project ``NEWS - NEtWorks  and Surfaces evolving by curvature'' and by the MUR Excellence Department Project awarded to the Department of Mathematics of the University
of Pisa.\\
\end{ackn}

\section{Notation, definitions and basic computations}\label{notation}

Given a $C^1$ curve $\sigma:[0,1]\to\R^2$ we say that it is {\em regular} if
$\sigma_x=\frac{d\sigma}{dx}$ is never zero. It is then well defined
its {\em unit tangent vector} $\tau=\sigma_{x}/|\sigma_{x}|$. 
We define its {\em unit normal vector} as
$\nu=\mathrm{R}\tau=\mathrm{R}\sigma_{x}/|\sigma_{x}|$
where $\mathrm{R}:\mathbb{R}^{2}\to\mathbb{R}^{2}$ is the counterclockwise
rotation centered in the origin of $\mathbb{R}^{2}$ of angle
${\pi}/{2}$.\\
If the curve $\sigma$ is of class $C^2$ and regular its {\em curvature vector} is well
defined as 
$$
\underline{k}=\tau_x/|\sigma_{x}|=\frac{1}{|\,\sigma_{x}|}\frac{d\tau}{dx}\,.
$$
The {\em arclength parameter} of a curve $\sigma$ is given by
$$
s=s(x)=\int_0^x\vert\sigma_x(\xi)\vert\,d\xi\,.
$$
Notice that $\partial_s=\vert\sigma_x\vert^{-1}\partial_x$ 
then $\tau=\partial_s\sigma$ and $\underline{k}=\partial_s\tau$, 
hence the curvature of $\sigma$ is given by $k=\langle\underline{k}\,\vert\,\nu\rangle$, 
as $\underline{k}=k\nu$.
We remind here that 
in the whole paper, we will use the word ``curve'' both for the parametrization and for the
set (image of the parametrization in $\mathbb{R}^2$).\\

Let $T>0$ and $\gamma:[0,1]\times [0,T)$
a time--dependent family regular $C^2$ curve.
Again, 
we let $\tau=\tau\left(x,t\right)$ be the unit
tangent vector to the curve $\gamma$, $\nu=\nu\left(x,t\right)
=\mathrm{R}\tau\left(x,t\right)$ be the unit
normal vector and
$\underline{k}=\underline{k}\left(x,t\right)=k\left(x,t\right)\nu\left(x,t\right)$
its curvature vector, as previously defined.\\ 

Here and in the sequel we will denote by $\partial_xf$, $\partial_sf$ and $\partial_tf$
the derivatives of a function $f$ along a curve $\gamma$ with respect to the $x$ variable, 
the arclength parameter $s$ on such curve (defined by
$s(x,t)=\int_0^x\vert\gamma_x(\xi,t)\vert\,d\xi$) and the time, respectively;
$\partial^n_xf$, $\partial^n_sf$, $\partial^n_tf$ are
the higher order partial derivatives which often we will also write as
$f_x,f_{xx}\dots$, $f_{s}, f_{ss},\dots$ and $f_t, f_{tt},\dots$.\\

We will call $\underline{v}=\gamma_t=V\nu+\lambda\tau$,
$\underline{\lambda}=\lambda\tau$ and
$\underline{V}=V\nu$
respectively the {\em
velocity}, the 
{\em normal velocity} 
and the {\em tangential velocity}
of the curve $\gamma$.
The scalar $V$ and $\lambda$
are the normal and tangential components of the velocity.
It is easy to see that $\underline{v}=\underline{V}+\underline{\lambda}$ and
$|\underline{v}|^2=|\underline{V}|^2+|\underline{\lambda}|^2=(V)^2+(\lambda)^2$.

\subsection{Networks}\label{netdef999}

\begin{defn}\label{Cinfty}
Let $\Omega$ be a smooth, convex, open set in $\mathbb{R}^{2}$. A
{\em network} $\mathbb{S}=\bigcup_{i=1}^{n}\sigma^{i}([0,1])$ in
$\Omega$ is a connected set in the plane described by a finite family
of $C^1$, regular curves $\sigma^{i}:[0,1]\to\overline{\Omega}$ such
that
\begin{enumerate}
\item the ``interior'' of every curve $\sigma^{i}$, that is 
$\sigma^i(0,1)$, is embedded (hence, it has no self--intersections); 
a curve can self--intersect itself only possibly ``closing'' at its end--points, that is 
$\sigma^i(0)=\sigma^i(1)$;
\item two different curves can intersect each other only at their end--points;
\item if a curve of the network touches the boundary of $\Omega$ at a
 point $P$, no other end--point of a curve can coincide with that point.
\end{enumerate}

If we interpret $\mathbb{S}$ as a planar graph,
we call {\em multi--points} of the network the vertices
$O^{1}, O^{2},\dots, O^{m}\in\Omega$ where the order is greater than one.
We call {\em end--points} of the network the vertices
$P^{1}, P^{2},\dots, P^{l}\in\overline{\Omega}$ of $\mathbb{S}$ (on the
boundary or not) with order one.\\

We say that a network is of class $C^{k}$ or $C^\infty$ if all the
$n$ curves are respectively of class $C^{k}$ or $C^\infty$.
\end{defn}

\begin{rem}
We require Condition~3 for the sake of simplicity.
It implies that the multi--points can be 
only inside $\Omega$ and not on the boundary. The end--points can
be both inside or on $\partial\Omega$.
\begin{figure}[H]
\begin{center}
\begin{tikzpicture}[scale=0.75]
\draw[color=black,scale=1,domain=-3.141: 3.141,
smooth,variable=\t,shift={(7,0)},rotate=0]plot({3.25*sin(\t r)},
{2.5*cos(\t r)});
\draw 
(3.75,0) node[left] {$P^1$} to[out=-30,in=110, looseness=1] (4.97,0) 
to[out= -70,in=180, looseness=1] (6.19,0)
to[out=60, in=-85, looseness=1] (7, 0.81)
to[out=35,in=150, looseness=1] (8.7,0.85)
to[out=-30,in=160, looseness=1.5] (10.2,0.5) node[right]{$P^3$}
(9.95,-1) node[right]{$P^2$}
to[out=130,in=20, looseness=1] (8.62,-0.81) 
to[out=-160,in=-60, looseness=0.5] (6.19,0)
(7, 0.81)
to[out=155,in=30, looseness=1](5, 1.20) node[above]{$\sigma^4$}
to[out=-150,in=30, looseness=1]
(3.75,0);
\path[font= \Large]
(4.25,-1.8) node[below] {$\Omega$};
\path[font=\large]
(4.52,0.1) node[below] {$\sigma^1$}
(8.5,-1.1) node[right] {$\sigma^2$}
(9,0.85) node[above]{$\sigma^3$}
(6,-0.1) node[below] {$O^1$}
(7.4,1.0) node[below] {$O^2$}; 
\end{tikzpicture}
\end{center}
\begin{caption}{An example of ``violation'' of Condition~3 in the definition of network.}
\end{caption}
\end{figure} 
\end{rem}

The curves $\sigma^{i}$ have (non--zero) finite lengths
$L^i=\int_0^1|\sigma_x^i(\xi)|\,d\xi$.

\begin{defn}
Let $\mathbb{S}=\bigcup_{i=1}^{n}\sigma^{i}$ be a network composed of $n$ curves.
We denote by 
$$
L = L^1+ \dots + L^n
$$
the {\em global length} of the network. 
\end{defn}

\begin{defn}\label{Cinftyopen}
An {\em open network}
$\mathbb{S}=\bigcup_{i=1}^{n}\sigma^{i}(I)$ in
$\R^2$ is a connected set in the plane composed of a finite family
of $C^1$, regular curves $\sigma^{i}:I\to\R^2$, where $I$ can be the interval
$[0,1]$ or $[0,1)$, such that
\begin{enumerate}
\item every ``open'' curve $\sigma^i:[0,1)\to\R^2$ is 
$C^1$--asymptotic to a half-line in $\R^2$ as $x\to 1$;
\item the ``interior'' of every curve $\sigma^{i}$ 
is embedded (hence, it has no self--intersections). Only the bounded curves
$\sigma^i:[0,1]\to\R^2$ can possibly self--intersect by 
``closing'' at their end--points;
\item two different curves can intersect each other only at their end--points;
\item considering $\SS$ as a planar graph, 
every end--point of a curve belongs to some multi--point of 
the network with order at least two;
\end{enumerate}
As before we say that an open network is of class $C^{k}$ or $C^\infty$ if all its curves are respectively of class $C^{k}$ or $C^\infty$.
\end{defn}

\begin{rem}\label{rem2.5}
Since we called these unbounded networks ``open'', we will adopt the
word ``closed'' for the previous networks in Definition~\ref{Cinfty}
which are bounded and possibly have some end--points.
\end{rem} 

Given a network composed of $n$ curves with $l$ end--points $P^1, P^2,\dots, P^l\in\overline{\Omega}$ (if present) and $m$ multi--points $O^1, O^2,\dots O^m\in\Omega$, we will denote by $\sigma^{pi}$ the curves of this network concurring at the
multi--point $O^p$, with the index $i$ varying from one to the order of the multi--point $O^p$. 
This is clearly redundant as some curves coincide, but it is a useful notation for the computations. A network of $n$ curves with $m$ triple junctions only (without higher multiplicity junctions) will then be described by the family (with possible repetitions) of curves $\sigma^{pi}$ where $p\in{\{1,2,\dots,m\}}$ and $i\in{\{1,2,3\}}$.

We now define a special class of networks that will play a key
role in the analysis.

\begin{defn}\label{regularnetwork}
We call a network (open or not) {\em regular} if
all its multi--points $O^1, O^2,\dots O^m\in\Omega$ have order three 
and at each of them the three concurring curves $\{\sigma^{pi}\}_{i=1,2,3}$ 
meet in such a way that the external unit tangents $\tau^{pi}$
satisfy $\tau^{p1}+\tau^{p2}+\tau^{p3}=0$, which means that the three
curves form three angles of 120 degrees at $O^p$ ({\em Herring condition}).
 
\smallskip

We call a network {\em non--regular} if at least a multi--point has order
different from three or if it has order three but the external unit
tangents of the three concurring curves $\{\sigma^{pi}\}_{i=1,2,3}$ do
not satisfy $\tau^{p1}+\tau^{p2}+\tau^{p3}=0$. 
We will call such a point a {\em non--regular} multi--point.
\end{defn}

\begin{figure}[H]
\begin{center}\begin{tikzpicture}
\draw[color=black,scale=1,domain=-3.141: 3.141,
smooth,variable=\t,shift={(-1,0)},rotate=0]plot({3.75*sin(\t r)},
{3*cos(\t r)});
\draw (-3.5,0) node[above]{$O^1$}
to[out= 50,in=180, looseness=1] (-2,0) node[right]{$\, O^2$}
to[out= 60,in=160, looseness=1] (-0.5,1) node[above]{$O^5$}
(-2,0)
to[out= -60,in=170, looseness=1] (-0.8,-1.3) node[above]{$O^3$}
(-3.5,0)
to[out= 170,in=-70, looseness=1] (-4.5,1) node[left]{$P^1$}
(-3.5,0)
to[out= -70,in=30, looseness=1] (-4.2,-1.6) node[left]{$P^2$}
(-0.5,1)
to[out= 40,in=-80, looseness=1] (0.2,1.7) node[above]{$O^6$}
(-0.5,1)
to[out= -80,in=180, looseness=1] (0.3,0) node[right]{$\, O^4$}
(0.2,1.7)
to[out=40,in=-50, looseness=1](1.3,2.35) node[above]{$\,\,\,\,\,\, P^7$}
(0.2,1.7)
to[out=160, in=-60, looseness=1](-3,2.5) node[above]{$P^8$}
(-0.8,-1.3)
to[out=50, in=-150, looseness=1](2,-1.75) node[right]{$P^4$}
(-0.8,-1.3)
to[out=-70, in=60, looseness=1](-2.7,-2.7) node[below]{$P^3$}
(0.3,0)
to[out=60, in=-100, looseness=1](2.3,1.4) node[above]{$\,\,\,\,\,\,\,\, P^6$}
(0.3,0)
to[out=-60, in=-120, looseness=1](2.675,-0.5) node[right]{$P^5$};
\end{tikzpicture}
\end{center}
\begin{caption}{A regular network.}
\end{caption}
\end{figure}

We will simply 
omit the indices of the curves of the network anytime there is no need
to make them explicit.\\
Moreover,
given $\mathbb{S}_t=\bigcup_{i=1}^n\gamma^i([0,1],t)$
a time--dependent family of regular $C^2$ network of curves,
we will adopt the following convention for integrals,
$$
\int_{{\mathbb{S}_t}} f(t,\gamma,\tau,\nu,k,k_s,\dots,\lambda,\lambda_s\dots)\,ds =
\sum_{i=1}^n \int_0^1
f(t,\gamma^i,\tau^i,\nu^i,k^i,k^i_s,\dots,\lambda^i,\lambda^i_s\dots)\,\vert
\gamma^i_x\vert\,dx
$$
as the arclength measure 
on every curve $\gamma^i$
is given by $ds=\vert\gamma^i_x\vert\,dx$.\\

Sometimes we will also use the following notation for 
a time--dependent family of networks 
$$
\mathbb{S}_t=\bigcup_{i=1}^n\gamma^i([0,1],t)
$$ 
with $t\in [0,T)$
in $\Omega\subseteq\R^2$.
We let $\SS\subseteq\R^2$ be a ``reference'' 
network 
and suppose that for every $t\in (0,T)$ the network 
$\SS_t$ is homeomorphic to $\SS$.
We consider a map $F:\SS\times(0,T)\to\mathbb{R}^2$ given by the ``union'' of the maps
$\gamma^i:I_i\times(0,T)\to\overline{\Omega}$ 
describing the time--dependent family of networks in the time
interval $(0,T)$, that is $\SS_t=F(\SS,t)$.

\subsection{Motion by curvature}
We are now ready to define the evolution by curvature of a $C^2$ regular
network, assuming that
either it is open or all its end--points (if present)
coincide with some points $P^1, P^2,\dots, P^{l}$ on the
boundary of $\Omega$.
As we have already said, in the ``closed'' case by Condition~3 in Definition~\ref{Cinfty} at most one curve of the network can arrive at the point $P^r$. We require the network to be regular during the flow and we ask that the end--points
$P^{r}\in\partial\Omega$ stay fixed ({\em Dirichlet boundary
conditions}). A similar problem is given by letting 
such end--points ``free'' to move on the boundary of $\Omega$, but
asking that the curves intersect orthogonally $\partial\Omega$ 
({\em Neumann boundary conditions}).\\
In the ``closed case'', the motion by curvature is the {\em geometric gradient flow} of the
{\em length functional}, that is, 
the sum of the lengths of all the curves of the network. Roughly
speaking, a (solution to the) {\em flow by curvature} of a network is a smooth
family of embedded, planar networks, such that the normal component of
the velocity under the evolution law, at every point of every curve of the
evolving network is given by the curvature vector of the curve at the
point.

\begin{defn}\label{d1}
We say that a family of homeomorphic, 
regular networks $\mathbb{S}_t$, 
each one composed of $n$ curves $\gamma^i(\cdot, t):I_i\to\overline{\Omega}$
(where $I_i$ is the interval $[0,1]$ or $[0,1)$ in case of an open
network), in a smooth convex, open set $\Omega\subseteq\mathbb{R}^{2}$,
{\em moves by curvature} in the time interval $(0,T)$
if the functions $\gamma^i:I_i\times(0,T)\to\overline{\Omega}$
are at least of class $C^2$ in space and $C^1$ in time and for every $x\in I_i$,
$t\in(0,T)$, $i\in\{1,2,\dots, n\}$, they satisfy
\begin{equation}\label{evoleq}
\gamma^i_t(x,t)=k^i(x,t)\nu^i(x,t)+\lambda^i(x,t)\tau^i(x,t)=
\frac{\langle\gamma_{xx}^i(x,t)\,\vert\,\nu^i(x,t)\rangle} 
{{\vert{\gamma_x^i(x,t)}\vert}^2}\,\nu^i(x,t)
+\lambda^i(x,t)\tau^i(x,t)
\end{equation}
for some continuous functions $\lambda^i$. 
\end{defn}

\begin{rem}
Notice that the {\em normal velocity} is given by the curvature vector
of the curve $\gamma^i$ at every point.
\end{rem}

\begin{rem}\label{0notstandard}
Another equivalent way to state evolution equation~\eqref{evoleq} is clearly
\begin{equation}\label{evoleqbis}
\gamma^i_t(x,t)^\perp=k^i(x,t)\nu^i(x,t)=\underline{k}^i(x,t)
=\frac{\langle\gamma_{xx}^i(x,t)\,\vert\,\nu^i(x,t)\rangle} 
{{\vert{\gamma_x^i(x,t)}\vert}^2}\,\nu^i(x,t)\,.
\end{equation}
\end{rem}

\begin{rem}\label{notstandard}
We spend some words on the above definition of motion by curvature.
The evolution equation~\eqref{evoleq} 
is not the usual way to describe the motion by curvature of a smooth curve. 
Indeed, ``classically'' it is written as
\begin{equation}\label{stdevol}
\gamma^i_t=\underline{k}^i=k^i\nu^i=\frac{\langle\gamma_{xx}^i\,\vert\,\nu^i\rangle}
{{\vert{\gamma_x^i}\vert}^2}\,\nu^i\,.
\end{equation}
Both motions are driven by a system of quasilinear partial differential equations, 
in our definition ``admitting a correction'' by a tangential term. 
The two velocities differ only by a tangential component
 $\underline{\lambda}^i=\lambda^i\tau^i$. 
In the curvature evolution of a smooth curve, it is well--known that any tangential contribution 
to the velocity affects only the ``inner motion'' 
of the ``single points'' (Lagrangian point of view), 
but it does not affect the motion of a curve as a whole subset of $\R^2$ 
forgetting its parametrization (Eulerian point of view). 
It can be shown that a flow of a closed curve satisfying equation~\eqref{evoleq} 
can be globally reparametrized (dynamically in time) 
in order it satisfies equation~\eqref{stdevol}.
However, in our situation, such a global reparametrization is not possible 
due to the presence of the $3$--points. It is necessary to consider 
such extra tangential terms to allow the motion of the $3$--points also. 
Indeed, if the velocity would be in normal
direction at every point of the three curves concurring at a $3$--point,
this latter should move in a direction which is normal to all of them, 
then the only possibility would be that it does not move at all (see also the discussions
and examples in~\cite{brakke,bronsard,kinderliu, mannovplunotes}).
\end{rem}

\begin{defn}\label{probdef} Given an initial, regular, $C^2$ network $\SS_0$, composed of
$n$ curves $\sigma^i:[0,1]\to\overline{\Omega}$, with $m$ triple junctions $O^1, O^2,\dots
O^m\in\Omega$ and (if present) $l$ end--points $P^1, P^2,\dots, P^l\in\partial\Omega$ 
in a smooth convex, open set
$\Omega\subseteq\mathbb{R}^{2}$, we say that a family of homeomorphic 
networks $\mathbb{S}_t$ described by the
family of time--dependent curves $\gamma^i(\cdot, t)$ is a {\em flow by curvature} of $\SS_0$ with fixed end--points in the time interval $[0,T)$, if the functions $\gamma^i:[0,1]\times[0,T)\to\overline{\Omega}$ are continuous, there holds $\gamma^i(x,0)=\sigma^i(x)$ 
for every $x\in[0,1]$ and $i\in\{1,2,\dots, n\}$ (initial data), they 
are at least $C^2$ in space and $C^1$ in time in $[0,1]\times(0,T)$ and satisfy
the following system of conditions for every $x\in[0,1]$, $t\in(0,T)$, $i\in\{1,2,\dots, n\}$,
\begin{equation}\label{problema}
\begin{cases}
\begin{array}{lll}
\gamma^i_t=k^i\nu^i+\lambda^i\tau^i\quad&
\text{with}\, \lambda^i\,\text{continuous functions}
\quad &\text{ motion by curvature}\\
\gamma_x^i(x,t)\not=0
\quad &&\text{ regularity}\\
\gamma^r(1,t)=P^r\quad&\text{with}\, 0\leqslant r \leqslant l\,
\quad &\text{ fixed end--points condition}\\
\sum_{j=1}^3\tau^{pj}(O^p,t)=0\quad&\text{at every $3$--point $O^p$}
\quad &\text{ angles of $120$ degrees}
\end{array}
\end{cases}
\end{equation}
where we assumed conventionally (possibly reordering the family of
curves and ``inverting'' their parametrization) that the end--point
$P^r$ of the network is given by $\gamma^r(1,t)$ 
(by Condition~3 in Definition~\ref{Cinfty} this can be always done).\\
Moreover, in the third equation, we abused a little the notation, denoting with $\tau^{pj}(O^p,t)$
the respective exterior unit tangent vectors at $O^p$ of the three curves
$\gamma^{pj}(\cdot,t)$ in the family $\{\gamma^i(\cdot,t)\}$ concurring at the
$3$--point $O^p$.
\end{defn}

We also state the same problem for regular, open networks.

\begin{defn}\label{probdef-open} Given an initial, regular, $C^2$ open network $\SS_0$, composed of $n$ curves $\sigma^i:I_i\to\R^2$, we say that a family of homeomorphic 
open networks $\mathbb{S}_t$ with the same structure as $\SS_0$ (in
particular, same asymptotic half-lines at infinity) described by the
family of time--dependent curves $\gamma^i(\cdot, t)$ is a {\em flow by curvature} of $\SS_0$ in the time interval $[0,T)$, if the functions $\gamma^i:I_i\times[0,T)\to\R^2$
are continuous, there holds $\gamma^i(x,0)=\sigma^i(x)$ for every
$x\in I_i$ and $i\in\{1,2,\dots, n\}$ (initial data), they are of class
at least $C^2$ in space and $C^1$ in time in $I_i\times(0,T)$ (here $I_i$ denotes the interval
$[0,1]$ or $[0,1)$ depending on whether the curve is unbounded or not) and satisfy
the following system, for every $x\in I_i$, $t\in(0,T)$, $i\in\{1,2,\dots, n\}$,
\begin{equation}\label{problema-open}
\begin{cases}
\begin{array}{lll}
\gamma^i_t=k^i\nu^i+\lambda^i\tau^i\quad&
\text{with}\, \lambda^i\,\text{continuous functions}
\quad &\text{ motion by curvature}\\
\gamma_x^i(x,t)\not=0
\quad &&\text{ regularity}\\
\sum_{j=1}^3\tau^{pj}(O^p,t)=0\quad&\text{at every $3$--point $O^p$}
\quad &\text{ angles of $120$ degrees}
\end{array}
\end{cases}
\end{equation}
where, in the second equation, we used the same notation as in Definition~\ref{probdef}.
\end{defn}

\begin{rem}\label{geocomprem0}
In Definitions~\ref{probdef} and~\ref{probdef-open} the
evolution equation~\eqref{evoleq} must be satisfied till the borders
of the intervals $[0,1]$ and $[0,1)$, that is, at the $3$--points and
the at end--points for every positive time. 
This is not the usual way to state boundary conditions for parabolic problems 
(the parabolic nature of this evolution problem is clear by Definition~\ref{d1}
 -- see also Remark~\ref{notstandard} 
and it will be even clearer in Section~\ref{smtm})
where usually only continuity at the boundary is required. 
Anyway as is common in parabolic problems, at every positive time 
such boundary conditions are satisfied by any ``natural solution''.\\
This property of regularity at the boundary implies that 
$$
\text{$(k\nu+\lambda\tau)(P^r)=0$,
{ for every} $r\in\{1,2,\dots,l\}$}
$$
and
$$
\text{
$(k^{pi}\nu^{pi}+\lambda^{pi}\tau^{pi})(O^p)=(k^{pj}\nu^{pj}+\lambda^{pj}\tau^{pj})(O^p)$,
{ for every} $i,j\in\{1,2,3\},\, p\in\{1,2,\dots m\}$}
$$
(where we abused a little the notation), obtained by simply requiring
that the velocity is zero at every end--point and it is the same for
any three curves at their concurrency $3$--point.

Moreover, notice that in Definitions~\ref{probdef} and~\ref{probdef-open} the
evolution equation~\eqref{evoleq} must be satisfied only for $t>0$. If
we want that the maps $\gamma^i$ are $C^2$ in space and $C^1$ 
in time till the whole {\em parabolic boundary} (given by $[0,1]\times\{0\}\cup\{0,1\}\times[0,T)$
in Definition~\ref{probdef} and
$[0,1]\times\{0\}\cup\{0,1\}\times[0,T)$ or
$[0,1)\times\{0\}\cup\{0\}\times[0,T)$ in
Definition~\ref{probdef-open}), the above conditions must be satisfied
also by the initial regular network $\SS_0$, for some functions
$\lambda_0$ extending continuously the functions $\lambda$ which are defined only for $t>0$.
\end{rem}

For the moment we focus on regular networks. 
Several difficulties arise when we study problems~\eqref{problema} and~\eqref{problema-open} with 
non--regular networks as initial data.
The issues are related to the presence of multi--points: 
if there are multi--points $O^{p}$ of order greater than three, there 
can be several possible candidates for the flow. Considering for
example the case of a network composed of two curves crossing each
other (presence of 4--point); one cannot easily decide how the angle at the meeting point must
behave, indeed one can allow the four concurrent curves to separate
in two pairs of curves, moving independently of each other and could even
be taken into account the creation of new multi--points from a single one.\\
If there are several multi--points during the flow some of them can
collapse together and the length of at least one curve of the network
can go to zero.\\
In these cases, one must possibly restart the evolution with a different
set of curves and the topology of the network changes dramatically, forcing
to change the ``structure'' of the system of equations governing the
evolution. Anyway, a very natural conjecture is that the curvature flow of a general network 
(under a suitably good definition) should be non--regular only for a discrete set of times. 
We will get back to this in the following sections.

\begin{rem}\label{specevol} 
One can clearly obtain solutions to system~\eqref{evoleq}
by requiring each curve to fulfill 
the quasilinear partial differential equation:
\begin{equation}\label{evoleq0}
\gamma^i_t=\frac{\gamma_{xx}^i}{{\vert{\gamma_x^i}\vert}^2}\,.
\end{equation}
In this case
$$
\begin{array}{ll} 
\underline{v}^i=\underline{v}^i(x,t)= \frac{\gamma_{xx}^i}{{\vert{\gamma_x^i}\vert}^2} 
&\qquad\text{ velocity of the point} \,\gamma^i(x,t)\,,\\ 
\lambda^i=\lambda^i(x,t)= \frac{\langle\gamma_{xx}^i\,\vert\,\tau^i\rangle} 
{{\vert{\gamma_x^i}\vert}^2}=\frac{\langle\gamma_{xx}^i\,\vert\,\gamma^i_x\rangle} 
{{\vert{\gamma_x^i}\vert}^3}=-\partial_x\frac{1}{\vert \gamma^i_x\vert} 
&\qquad\text{ tangential velocity of the point} \,\gamma^i(x,t)\,,\\ 
k^i=k^i(x,t)= \frac{\langle\gamma_{xx}^i\,\vert\,\nu^i\rangle} 
{{\vert{\gamma_x^i}\vert}^2}= 
\langle\partial_s\tau^i\,\vert\,\nu^i\rangle= 
-\langle\partial_s \nu^i\,\vert\,\tau^i\rangle 
&\qquad\text{ curvature at the point} \,\gamma^i(x,t)\,.
\end{array} 
$$
\end{rem}

\begin{defn}\label{special}
A curvature flow $\gamma^i$ for the initial, regular $C^2$ network 
$\SS_0=\bigcup_{i=1}^n\sigma^i([0,1])$ which satisfies 
$\gamma^i_t=\frac{\gamma_{xx}^{i}}
{|\gamma_{x}^{i}|^{2}}$ for every $t>0$ will be called a {\em special curvature flow} of $\SS_0$.
In this case, then we pass from the general system~\eqref{problema} to the following:
\begin{equation}\label{problema-nogauge-general}
\begin{cases}
\begin{array}{lll}
\gamma^i_t(x,t)=\frac{\gamma_{xx}^{i}\left(x,t\right)}{\left|\gamma_{x}^{i}\left(x,t\right)\right|^{2}}
\qquad &&\text{ special motion by curvature}\\
\gamma_x^i(x,t)\not=0
\quad &&\text{ regularity}\\
\gamma^r(1,t)=P^r\quad&\text{with $0\leqslant r \leqslant l$}\,
\quad &\text{ fixed end--points condition}\\
\sum_{j=1}^{3}\frac{\gamma_{x}^{pj}\left(O^p,t\right)}{\left| 
\gamma_{x}^{pj}\left(O^p,t\right)\right|}=0\quad&\text{at every $3$--point $O^p$}
\quad&\text{ angles of $120$ degrees}\\
\gamma^i(x,0)=\sigma^i(x)
\qquad &&\text{ initial data}
\end{array}
\end{cases}
\end{equation}
\end{defn}

\begin{rem}\label{gremh}
There are classes of networks, whose topological structure is particularly simple, whose evolution by curvature has
been extensively studied in the literature.
\begin{itemize}
\item When the network consists of a single closed embedded curve, its motion by curvature was widely 
studied~\cite{angen1,angen2,angen3,gage,gage0,gaha1,gray1}: the
curve evolves smoothly, becomes convex, and shrinks to a point
in finite time, becoming rounder and rounder. 
Curves with an angle or a cusps (where the
cusp is the most ``delicate'' situation) can be dealt with by means
of the works of Angenent~\cite{angen1,angen2,angen3}: 
the curve becomes immediately smooth, flowing by curvature.
\item The case in which two curves concur at a $2$--point of the network 
forming an angle (or a cusp, if they have the same tangent)
can be analyzed as we said above: consider them as a
single curve with a ``singular'' point (the angle) that vanishes immediately under
the flow.
\item If a network is composed of a single embedded curve with fixed
end--points, its evolution by curvature is discussed in~\cite{huisk2,stahl1,stahl2}.
The curve converges to the straight segment connecting 
the two fixed end--points in infinite time.
\begin{figure}[H]
\begin{center}
\begin{tikzpicture}[scale=0.78]
\draw
(-4.25,0) to[out=90,in=-100,looseness=1] (-2,1)
(-2,0.7) node[below] {$\sigma$} (-2,1)
to[out=80,in=90,looseness=1](0,0)
to[out=270,in=60,looseness=1](-2.5,-1)
to[out=-120,in=270,looseness=1](-4.25,0);
\draw [shift={(-2.5,0)}]
(3.75,0) node[left] {$P$}
to[out=30,in=110, looseness=1] (4.97,0) 
(4.5,0.3) node[below] {$\sigma^1$} (4.97,0)
to[out= -70,in=180, looseness=1] (6.19,0)
node[below] {$O$}
to[out=60, in=-145, looseness=1] (7, 0.81)
(7.2,0.9) node[below] {$\sigma^2$} (7,0.81)
to[out=35,in=150, looseness=1] (8.62,0.81)
node[right] {$Q$};
\end{tikzpicture}\quad
\begin{tikzpicture}[scale=0.53]
\draw[color=black,scale=1,domain=-3.141: 3.141,
smooth,variable=\t,shift={(-1,0)},rotate=0]plot({3.25*sin(\t r)},
{2.5*cos(\t r)});
\draw 
(-1.81,0)
to[out=90, in=-145, looseness=1] (-1, 0.81)
to[out=35,in=150, looseness=1] (0.62,0.81)
to[out=-30,in=150, looseness=1.5] (1.95,1)
node[right]{$\, P^2$}
(1.62,-1.5) node[right]{$P^1$}
to[out=130,in=20, looseness=1] (0.62,-0.81) 
to[out=-160,in=-90, looseness=0.5] (-1.81,0);
\path[font= \large]
(-3.75,-1.8) node[below] {$\Omega$};
\path (-2.3,0.5) node[right] {$\sigma$};
\end{tikzpicture}
\end{center}
\begin{caption}{Three special cases: a single closed curve, 
two curves forming an angle at their junction
and a single curve with two end--points on the boundary of $\Omega$.}
\end{caption}
\end{figure}
\end{itemize}
\end{rem}

\subsection{Basic computations}\label{basiccomp}
We work out some basic relations and formulas holding 
for a regular network evolving by curvature, 
assuming that all the derivatives of the functions $\gamma^i$ and $\lambda^i$ 
that appear to exist.

\begin{lem} If $\gamma$ is a curve moving by
$$
\gamma_t=k\nu+\lambda\tau\,,
$$
then the following commutation rule holds
\begin{equation}\label{commut}
\dert\ders=\ders\dert +
(k^2 -\lambda_s)\ders\,.
\end{equation}
\end{lem}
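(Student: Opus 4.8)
The plan is to compute the commutator $[\partial_t,\partial_s]$ acting on an arbitrary function, using only the relation $\partial_s = |\gamma_x|^{-1}\partial_x$ and the evolution equation $\gamma_t = k\nu + \lambda\tau$. Since $\partial_t$ and $\partial_x$ commute (both being partial derivatives in independent coordinates $x$ and $t$), the only source of non-commutativity is the factor $|\gamma_x|$, so the whole computation reduces to finding $\partial_t|\gamma_x|$.

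First I would write, for a test function $f$,
\begin{equation*}
\partial_t\partial_s f = \partial_t\bigl(|\gamma_x|^{-1}\partial_x f\bigr) = |\gamma_x|^{-1}\partial_t\partial_x f - |\gamma_x|^{-2}(\partial_t|\gamma_x|)\,\partial_x f = \partial_s\partial_t f - |\gamma_x|^{-1}(\partial_t|\gamma_x|)\,\partial_s f,
\end{equation*}
using $\partial_t\partial_x f = \partial_x\partial_t f$ and $|\gamma_x|^{-1}\partial_x = \partial_s$. Hence the claim is equivalent to the identity $-|\gamma_x|^{-1}\partial_t|\gamma_x| = k^2 - \lambda_s$.

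Next I would compute $\partial_t|\gamma_x|$. Writing $|\gamma_x|^2 = \langle\gamma_x\,|\,\gamma_x\rangle$ and differentiating in $t$ gives $\partial_t|\gamma_x| = |\gamma_x|^{-1}\langle\partial_t\gamma_x\,|\,\gamma_x\rangle = |\gamma_x|^{-1}\langle\partial_x\gamma_t\,|\,\gamma_x\rangle = \langle\partial_x\gamma_t\,|\,\tau\rangle$, where $\tau = \gamma_x/|\gamma_x|$. Now substitute $\gamma_t = k\nu + \lambda\tau$ and expand $\partial_x(k\nu+\lambda\tau) = |\gamma_x|\,\partial_s(k\nu+\lambda\tau)$. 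Using the Frenet-type relations $\partial_s\tau = k\nu$ and $\partial_s\nu = -k\tau$ (which follow from the definitions in Section~\ref{notation}: $\underline{k}=\partial_s\tau = k\nu$ and $\nu = R\tau$), one gets $\partial_s(k\nu+\lambda\tau) = k_s\nu - k^2\tau + \lambda_s\tau + \lambda k\nu$, whose tangential component is $\lambda_s - k^2$. Therefore $\partial_t|\gamma_x| = |\gamma_x|(\lambda_s - k^2)$, so $-|\gamma_x|^{-1}\partial_t|\gamma_x| = k^2 - \lambda_s$, which is exactly what was needed; combining with the first display yields \eqref{commut}.

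This proof is essentially a routine bookkeeping exercise; there is no real obstacle. The only point requiring a little care is the sign and the correct extraction of the tangential part of $\partial_s(k\nu + \lambda\tau)$ — one must use $\partial_s\nu = -k\tau$ and keep track that $\langle\nu\,|\,\tau\rangle = 0$ so that the $k_s\nu$ and $\lambda k\nu$ terms drop out of the tangential projection. I would present the computation of $\partial_t|\gamma_x|$ as a short displayed chain of equalities and then immediately conclude.
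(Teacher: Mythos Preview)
Your proof is correct and follows essentially the same route as the paper: both compute the commutator on a test function by differentiating $|\gamma_x|^{-1}\partial_x f$ in $t$, reduce the matter to the tangential component $\langle\tau\,|\,\partial_s\gamma_t\rangle$, and evaluate it as $\lambda_s - k^2$ via the Frenet relations. The paper's version is simply more compressed, writing the whole thing as a single displayed chain without isolating $\partial_t|\gamma_x|$ as a separate step.
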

\begin{proof}
Let $f:[0,1]\times[0,T)\to\R$ be a smooth function, then
\begin{align*}
\dert\ders f - \ders\dert f =&\, \frac{f_{tx}}{\vert\gamma_x\vert} -
\frac{\langle \gamma_x\,\vert\,\gamma_{xt}\rangle f_x}
{\vert\gamma_x\vert^3}
- \frac{f_{tx}}{\vert\gamma_x\vert} = - {\langle
 \tau\,\vert\,\partial_s\gamma_t\rangle}\partial_sf\\
=&\, - {\langle\tau\,\vert\,\partial_s(\lambda\tau+k\nu)\rangle}\partial_sf=
 (k^2 - \lambda_s)\ders f
\end{align*}
and the formula is proved.
\end{proof}
Then, thanks to the commutation rule of the previous lemma for an evolving curve we can compute
\begin{align}
\dert\tau=&\,
\dert\ders\gamma=\ders\dert\gamma+(k^2-\lambda_s)\ders\gamma =
\ders(\lambda\tau+k\nu)+(k^2-\lambda_s)\tau =
(k_s+k\lambda)\nu\label{derttau}\,,\\
\dert\nu=&\, \dert({\mathrm R}\tau)={\mathrm
R}\,\dert\tau=-(k_s+k\lambda)\tau\label{dertdinu}\,,\\
\dert k=&\, \dert\langle \ders\tau\,|\, \nu\rangle=
\langle\dert\ders\tau\,|\, \nu\rangle\label{dertdik}
= \langle\ders\dert\tau\,|\, \nu\rangle +
(k^2-\lambda_s)\langle\ders\tau\,|\, \nu\rangle\\
=&\, \ders\langle\dert\tau\,|\, \nu\rangle + k^3-k\lambda_s =
\ders(k_s+k\lambda) + k^3-k\lambda_s\nonumber\\
=&\, k_{ss}+k_s\lambda + k^3\nonumber\,.
\end{align}

Moreover, as anticipated in Remark~\ref{specevol}, when the tangential velocity is
$\lambda=\frac{\langle\gamma_{xx}\,\vert\,\gamma_x\rangle}{{\vert\gamma_x\vert}^3}$,
the curve $\gamma$ evolves according to
$$
\gamma_t=\frac{\gamma_{xx}}{{\vert\gamma_x\vert}^2}=k\nu+\lambda\tau\,,
$$
so we can also compute
\begin{align}
\dert\lambda =&\, -\dert\partial_x\frac{1}{\vert\gamma_x\vert}=
\partial_x \frac{\langle\gamma_x\,\vert\,\gamma_{tx}\rangle}
{\vert\gamma_x\vert^3}=
\partial_x \frac{\langle\tau\,\vert\,\ders (\lambda\tau+k\nu)\rangle}
{\vert\gamma_x\vert}=\partial_x \frac{(\lambda_s - k^2)}
{\vert\gamma_x\vert}\label{dertdilamb}\\
=&\, \ders(\lambda_s - k^2) -\lambda(\lambda_s -
k^2)=\lambda_{ss} -\lambda\lambda_s - 2kk_s +\lambda k^2\,.\nonumber
\end{align}

We consider the curvature flow given by a family of regular, $C^\infty$ 
networks $\mathbb{S}_t$, composed of $n$ moving curves
$\gamma^{i}$ with $m$ triple junctions $O^1, O^2,\dots, O^m$ and 
$l$ end--points $P^1, P^2,\dots, P^l$.\\
As we said, we parametrize the curves of the evolving network
so that $\gamma^i(1,t)=P^i$ whenever 
$P^i$ is an end--point where a curve $\gamma^i$ arrives.
Consider instead a triple junction, say ${O}^p$,
where three distinct curves $\gamma^{p1}$, $\gamma^{p2}$ and $\gamma^{p3}$
meet. In general, we cannot always impose that 
\begin{equation}\label{eqokcar}
\gamma^{p1}(0,t)=\gamma^{p2}(0,t)=\gamma^{p3}(0,t)={O}^p(t)
\end{equation}
for all $p\in \{1,\ldots, m\}$, since (for instance) both the end--points of a curve could belong to the same triple junction, or simply for combinatorial reasons (see the networks in the following figure).

\begin{figure}[H]
\begin{center}
\begin{tikzpicture}
\draw (-4.25,0) node[left] {$P^1$} to[out=30,in=110, looseness=1] (-3.03,0)
to[out= -50,in=180, looseness=1] (-1.81,0)
to[out=60, in=35, looseness=1] (-1, 0.81)
to[out=35,in=150, looseness=1] (0.62,0.81)
to[out=-30,in=90, looseness=1.5] (1.44,0)
to[out=-90,in=20, looseness=1] (0.62,-0.81)
to[out=-160,in=-60, looseness=0.5] (-1.81,0);
\draw[white]
(0,-2)--(0,-2.1);
\path[font=\small]
(-3.8,0.6) node[right] {$\gamma^1$}
(0.5,-1) node[right] {$\gamma^2$}
(-1.9,0) node[below] {$\gamma^1$};
\end{tikzpicture}\qquad\qquad\qquad
\begin{tikzpicture}[scale=1]
\draw
 (-3.73,0) 
to[out= 50,in=180, looseness=1] (-2.3,0.7) 
to[out= 60,in=180, looseness=1.5] (-0.45,1.55) 
(-2.3,0.7)
to[out= -60,in=130, looseness=0.9] (-1,-0.3)
to[out= 10,in=100, looseness=0.9](0.1,-0.8)
(-1,-0.3)
to[out=-110,in=50, looseness=0.9](-2.7,-1.7);
\path[font=\small]
(-3.73,0) node[left]{$P^2$}
(-2.9,-1.7)node[below]{$P^3$}
(0.1,-0.8)node[right]{$P^4$} 
(-0.40,1.6) node[right]{$P^1$}
(-3,0.6) node[below] {$\gamma^2$}
(-1.5,1.3) node[right] {$\gamma^1$}
(-1.1,-1.2)[left] node{$\gamma^3$}
(0,-0.8)[left] node{$\gamma^4$}
(-1.3,0.5)[left] node{$\gamma^5$}
(-2.45,1.3) node[below] {${O}^1$}
(-1.4,-0.1) node[below] {${O}^2$}; 
\end{tikzpicture}
\end{center}
\begin{caption}{Examples of networks.}
\end{caption}
\end{figure}
Actually, in general, there holds
\begin{equation}\label{complicatedconcurrency}
\gamma^{p1}(x_1,t)=\gamma^{p2}(x_2,t)=\gamma^{p3}(x_3,t)={O}^p(t)\,,
\end{equation}
for every $p\in \{1,\ldots, m\}$ and some $x_1,x_2,x_3\in\{0,1\}$. Then, the fact that $x_1,x_2,x_3$ could be either $0$ or $1$ affects how the $120$ degrees angle condition at $O^p$ reads, that is,
\begin{equation}\label{complicatedanglecond}
(-1)^{x_1}\tau^{p1}(x_1,t)+(-1)^{x_2}\tau^{p2}(x_2,t)+(-1)^{x_3}\tau^{p3}(x_3,t)=0\,.
\end{equation}
For the sake of presentation and clarity, in the following analysis of the conditions holding at any $3$--point $O^p$, with $p\in\{1,2,\dots,m\}$, we will suppose that the three curves 
are parametrized in such a way that they all concur at $O^p$ for $x_1=x_2=x_3=0$, hence formula~\eqref{eqokcar} holds.

Differentiating in time the concurrency condition
$$
\gamma^{pi}\left(0,t\right)=\gamma^{pj}\left(0,t\right)\qquad\text{for every $i$ and $j$,}
$$
where $\gamma^{pi}$ denotes the $i$--th curve concurrent at the
$3$--point $O^p$, we get
$$
\lambda^{pi}\tau^{pi}+k^{pi}\nu^{pi}=
\lambda^{pj}\tau^{pj}+k^{pj}\nu^{pj}\,,
$$
at every $3$--point $O^p$, with
$p\in\{1,2,\dots,m\}$ for every $i,j\in\{1,2,3\}$.\\
Multiplying these vector identities
by $\tau^{pl}$ and $\nu^{pl}$ and varying $i, j, l$, thanks to the
conditions 
$$
\sum_{i=1}^{3}\tau^{pi}=\sum_{i=1}^{3}\nu^{pi}=0\,,
$$ we get
the relations
\begin{gather*}
\lambda^{pi}=-\lambda^{p(i+1)}/2-\sqrt{3}k^{p(i+1)}/2\\
\lambda^{pi}=-\lambda^{p(i-1)}/2+\sqrt{3}k^{p(i-1)}/2\\
k^{pi}=-k^{p(i+1)}/2+\sqrt{3}\lambda^{p(i+1)}/2\\
k^{pi}=-k^{p(i-1)}/2-\sqrt{3}\lambda^{p(i-1)}/2
\end{gather*}
with the convention that the second superscripts are to be 
considered ``modulus $3$''. Solving this system we get
\begin{gather*}
\lambda^{pi}=\frac{k^{p(i-1)}-k^{p(i+1)}}{\sqrt{3}}\\
k^{pi}=\frac{\lambda^{p(i+1)}-\lambda^{p(i-1)}}{\sqrt{3}}
\end{gather*}
which implies
\begin{equation}\label{eq:cond2} 
\sum_{i=1}^3 k^{pi}=\sum_{i=1}^3\lambda^{pi}=0
\end{equation}
at any $3$--point $O^p$ of the network $\SS_t$.\\
Moreover considering ${\mathrm K}^p=(k^{p1},k^{p2},k^{p3})$ and
${\mathrm\Lambda}^p=(\lambda^{p1},\lambda^{p2},\lambda^{p3})$ as vectors in
$\R^3$, we have seen that ${\mathrm K}^p$ and ${\mathrm\Lambda}^p$ belong
to the plane orthogonal to the vector $(1,1,1)$ and 
\begin{equation}\label{kl}
{\mathrm K}^p={\mathrm \Lambda}^p\wedge (1,1,1)/\sqrt{3}\,,\qquad\qquad\qquad 
{\mathrm \Lambda}^p=-{\mathrm K}^p\wedge (1,1,1)/\sqrt{3}\,,
\end{equation}
that is, ${\mathrm K}^p={\mathrm S}{\mathrm \Lambda}^p$ and 
${\mathrm \Lambda}^p=-{\mathrm S}{\mathrm K}^p$ where ${\mathrm S}$ 
is the rotation in $\R^3$ of an angle of $\pi/2$ around the axis
${\mathrm I}=\langle(1,1,1)\rangle$. Hence it also follows that
\begin{equation}\label{stimaklambdaneitripunti}
\sum_{i=1}^3 (k^{pi})^2=\sum_{i=1}^3 (\lambda^{pi})^2
\qquad\text{ { and} }\qquad \sum_{i=1}^3 k^{pi}\lambda^{pi}=0\,. 
\end{equation}
at any $3$--point $O^p$ of the network $\SS_t$.

Now we differentiate in time the angular condition
$\sum_{i=1}^3\tau^{pi}=0$ at every $3$--point $O^p$, with
$p\in\{1,2,\dots,m\}$, by equation~\eqref{derttau} for every pair $i, j$ we get
\begin{equation*}
k_{s}^{pi}+\lambda^{pi}k^{pi}=k_{s}^{pj}+\lambda^{pj}k^{pj}\,.
\end{equation*}
In terms of vectors in $\R^3$, as before, we can write
$$
{\mathrm K}_s^p+{\mathrm {\Lambda}}^p{\mathrm K}^p=(k^{p1}_s+\lambda^{p1}
 k^{p1},k^{p2}_s+\lambda^{p2} k^{p2},k^{p3}_s+\lambda^{p3} k^{p3})\in {\mathrm I}\,.
$$
Differentiating repeatedly in time all these vector relations we have
\begin{gather}
\partial_t^l{\mathrm K}^p\,,\,\partial_t^l{\mathrm \Lambda}^p\perp{\mathrm
 I}\quad \text{ and }\quad \partial_t^l\langle{\mathrm K}^p\,\vert\,
{\mathrm \Lambda}^p\rangle=0\,,\nonumber\\
\dert^l{\mathrm \Lambda}^p=-\dert^l{\mathrm S}{\mathrm K}^p=-{\mathrm
 S}\dert^l{\mathrm K}^p\,,\label{lambdakappa}\\
\partial_t^m({\mathrm K}_s^p+{\mathrm {\Lambda}}^p{\mathrm K}^p)\in 
{\mathrm I}\nonumber\,,
\end{gather}
which, making explicit the indices, give the following identities at every $3$--point $O^p$, with $p\in\{1,2,\dots,m\}$,
\begin{equation} 
\dert^l\sum_{i=1}^3 k^{pi}=\sum_{i=1}^3\dert^l k^{pi}
=\dert^l\sum_{i=1}^3 \lambda^{pi}=\sum_{i=1}^3\dert^l\lambda^{pi}
=\dert\sum_{i=1}^3k^{pi}\lambda^{pi}=0\,,\nonumber\\ 
\end{equation}
\begin{equation}
\sum_{i=1}^3 (\dert^l k^{pi})^2=\sum_{i=1}^3 (\dert^l\lambda^{pi})^2\nonumber\,\,
\text{ { for every} $l\in\NN$,}\\ 
\end{equation} 
\begin{equation} 
\dert^m(k^{pi}_s+\lambda^{pi} k^{pi})=
\dert^m(k^{pj}_s+\lambda^{pj} k^{pj}) \,\,\text { for every pair $i, j$ and $m\in\NN$.}\nonumber\\ 
\end{equation} 
Moreover by the orthogonality relations with respect to the axis ${\mathrm I}$ we
get also
$$
\partial^l_t{\mathrm K}^p\partial_t^m({\mathrm K}^p_s
+{\mathrm {\Lambda}}^p{\mathrm K}^p)=\partial^l_t{\mathrm 	
\Lambda}^p\partial_t^m({\mathrm K}_s^p
+{\mathrm {\Lambda}}^p{\mathrm K}^p)=0\,,
$$
that is,
\begin{equation}
 \sum_{i=1}^3 \dert^lk^{pi}\,\dert^m(k^{pi}_s+\lambda^{pi} k^{pi}) 
=\sum_{i=1}^3 \dert^l\lambda^{pi}\,\dert^m(k^{pi}_s+\lambda^{pi} k^{pi})=0 \,\,
\text { for every $l, m\in\NN$.}\label{eq:orto}\\ 
\end{equation}

\begin{rem}\label{geocomprem1}
By the previous computations, for every solution in Definitions~\ref{probdef} 
or~\ref{probdef-open} at $t>0$ the curvature at the end--points and the sum of the three 
curvatures at every $3$--point has to be zero and the same holds for the functions $\lambda$.\\
Then, a necessary condition for the maps $\gamma^i$ to be $C^2$ in
space and $C^1$ in time till the whole {\em parabolic boundary} (given
by $[0,1]\times\{0\}\cup\{0,1\}\times[0,T)$ in Definition~\ref{probdef} and
$[0,1]\times\{0\}\cup\{0,1\}\times[0,T)$ or
$[0,1)\times\{0\}\cup\{0\}\times[0,T)$ in
Definition~\ref{probdef-open}) is that these conditions are satisfied
also by the initial regular network $\SS_0$, for some functions
$\lambda_0$ (see Remark~\ref{geocomprem0}) extending continuously 
the functions $\lambda$ which are defined only for $t>0$. 
That is, for the initial regular network $\SS_0$, there must hold 
$$
\text{$(k\nu+\lambda_0\tau)(P^r)=0$,
{ for every} $r\in\{1,2,\dots,l\}$}
$$
and
$$
\text{
$(k^{pi}\nu^{pi}+\lambda^{pi}_0\tau^{pi})(O^p)=(k^{pj}\nu^{pj}+\lambda^{pj}_0\tau^{pj})(O^p)$,
{ for every} $i,j\in\{1,2,3\}$.}
$$
In particular for the initial network $\SS_0=\bigcup_{i=1}^{n}\sigma^{i}(I_i)$ 
the curvature at the end--points and the sum of the three curvatures 
at every $3$--point has to be zero.\\
These conditions on the curvatures of $\SS_0=\bigcup_{i=1}^{n}\sigma^{i}(I_i)$ 
are clearly {\em geometric}, that is independent of the parametrizations 
of the curves $\sigma^i$ but intrinsic to the {\em set} $\SS_0$ 
and they are not satisfied by a generic regular, $C^2$ network
\end{rem}

\section{Short time existence I}\label{smtm}

We want to study existence and uniqueness of the flow by curvature of an initial regular network with fixed end--points on the boundary of a smooth, convex, open set $\Omega\subseteq\R^2$, as in Definition~\ref{probdef}.\\
First of all, we need to discuss what we mean by {\em uniqueness of the flow} in our geometric context. 
If we consider an evolving network $\SS_t$, composed by curves $\gamma^i$ solutions of system~\eqref{problema}, that is, satisfying $\gamma^i_t=\underline{k}^i+\underline{\lambda}^i$ and dynamically reparametrize each curve $\gamma^i_t$ with sufficiently regular maps $\varphi^i:[0,1]\times[0,T)\to[0,1]$ (for instance, $C^2$ in space and $C^1$ in time) such that $\varphi^i(0,t)=0$, $\varphi^i(1,t)=1$, $\varphi^i(x,0)=x$ and $\varphi_x^i(x,t)\not=0$ for every $(x,t)\in[0,1]\times[0,T)$, we get another solution of system~\eqref{problema} (see Remark~\ref{notstandard}). This fact is related to the geometric nature of the problem: if $\widetilde{\gamma}^i(x,t)={\gamma}^i(\varphi^i(x,t),t)$, we have indeed
\begin{align*}
\widetilde{\gamma}^i_t(x,t)=&\,\partial_t[{\gamma}^i(\varphi^i(x,t),t)]\\
=&\,{\gamma}^i_x(\varphi^i(x,t),t)\varphi^i_t(x,t)+{\gamma}^i_t(\varphi^i(x,t),t)\\
=&\,{\gamma}^i_x(\varphi^i(x,t),t)\varphi^i_t(x,t)+\underline{{k}}^i(\varphi^i(x,t),t)
+\underline{{\lambda}}^i(\varphi^i(x,t),t)\\
=&\,\underline{{k}}^i(\varphi^i(x,t),t)+\underline{{\lambda}}^i(\varphi^i(x,t),t)+
\widetilde{\gamma}^i_x(x,t)\varphi^i_t(x,t)/\varphi^i_x(x,t)\\
=&\,\underline{\widetilde{k}}^i(x,t)+\underline{\widetilde{\lambda}}^i(x,t)\,,
\end{align*}
with 
$$
\underline{\widetilde{\lambda}}^i(x,t)=\underline{{\lambda}}^i(\varphi^i(x,t),t)+
\widetilde{\gamma}^i_x(x,t)\varphi^i_t(x,t)/\varphi^i_x(x,t)\,.
$$
Hence, being $\widetilde{\gamma}^i(x,0)=\gamma^i(x,0)=\sigma^i(x)$, the flow of the networks $\widetilde{\SS}_t$ given by the curves $\widetilde{\gamma}^i$ is another curvature flow for the initial network $\SS_0=\bigcup_{i=1}^n\sigma^i([0,1])$.

For this reason, the natural notion of uniqueness of the curvature flow is ``{\em up to dynamic reparametrizations}''. It is then also clear that we could have considered our networks simply as {\em sets} and their curvature flows as flows of {\em sets} that could be parametrized in order to satisfy Definition~\ref{probdef}. In~\cite{mannovplunotes} it actually followed this possible alternative point of view.

\begin{defn}\label{uniqdef}
We say that the curvature flow $\SS_t$ of an initial network 
$\SS_0=\bigcup_{i=1}^n\sigma^i([0,1])$ is {\em geometrically unique} 
in some regularity class $\mathbb{E}$, if all the curvature flows in such class, solutions of system~\eqref{problema}, with the same initial network, can be obtained from each other using time--dependent reparametrizations.\\
More precisely, if $\SS_t$ and $\widetilde{\SS}_t$ are two curvature flows of 
$\SS_0$, described by some maps $\gamma^i\in\mathbb{E}$ and $\widetilde{\gamma}^i\in\mathbb{E}$, 
there exists a family of sufficiently regular
maps $\varphi^i:[0,1]\times[0,T)\to[0,1]$ 
such that $\varphi^i(0,t)=0$, $\varphi^i(1,t)=1$, $\varphi^i(x,0)=x$, 
$\varphi_x^i(x,t)\not=0$, $\widetilde{\gamma}^i(x,t)={\gamma}^i(\varphi^i(x,t),t)$ for every 
$(x,t)\in[0,1]\times[0,T)$.\\
If geometric uniqueness holds, any solution to the flow clearly describes a unique evolving network, seen as a subset of $\R^2$, for every time $t\in[0,T)$.
\end{defn}

One of the difficulties in getting existence and uniqueness of solutions in the sense of Definition~\ref{probdef} is the lack of the {\em maximum principle}, due to the presence of the $3$--points which behave as ``boundary'' points (whereas, by the Herring condition, from a ``distributional point of view'' they behave more like ``interior'' points). This means, in particular, that 
differently from  the case of the motion by curvature of a smooth curve (or more in general, for the mean curvature flow of a smooth hypersurface -- see~\cite{Manlib})
we do not have a (geometric) {\em comparison principle} for solutions, the usual tool to show the uniqueness of the flow. 
This is the reason why we will have to resort to {\em integral} a priori estimates, instead of pointwise ones (see Section~\ref{kestimates}),  the most ``natural'' ones in the smooth cases.

The ``natural'' initial regular networks are composed of curves of class $C^2$ and the ``natural'' regularity of their flow is $C^1$ in time and $C^2$ in space. Unfortunately, without additional requirements on the initial data, there is no hope of having a solution with curves in $C^{2,1}([0,1]\times[0,T))$. The problem is due to the way the evolving networks approach the initial one since they become immediately smooth (up to reparametrization) for every positive time, by a ``parabolic regularization'' effect (that we will discuss in Section~\ref{smtm2}) and satisfy some extra geometric properties which are stable under the $C^2$ convergence as $t\to 0$ (see Remark~\ref{ght1} and the related discussion in Section~\ref{wellposedHol}). Weakening such convergence at time zero of the flow, as we actually did in defining in great generality the flow of an initial regular network in Definition~\ref{probdef}, asking only for the continuity of the curves $\gamma$ as $t\to0$, could possibly result in the loss of uniqueness. We actually conjecture that uniqueness does not hold even if we ask for the continuity of the maps $\gamma_x$ (or of the unit tangent vectors to the curves) up to time zero.\\
In Section~\ref{smtm2}, by means of the results of this section, we will then show a quite satisfactory theorem of existence/geometric uniqueness for a short time of the flow of a regular $C^2$ initial network (Theorem~\ref{c2shorttime}) in a space of solutions which can be considered ``natural'' for the analytic/geometric peculiarities of the problem. It is well known that from a PDE's perspective, working directly with $C^2$ initial data and looking for solutions of class $C^1$ in time and $C^2$ in space is not a good choice, hence in this section we start showing existence and uniqueness in suitable Sobolev and H\"{o}lder spaces. Then, by means of these two results (the first mainly for the uniqueness, the second for the existence problem) and the estimates of the next section, we will show such Theorem~\ref{c2shorttime}. Indeed, roughly speaking, the space of flows $C^1$ in time and $C^2$ in space are in a way ``in the middle'' between the flows in Sobolev and H\"{o}lder spaces: if the initial datum of class only $C^2$, hence not necessarily in the H\"{o}lder space $C^{2+\alpha}$, either one uses the existence theorem in the Sobolev setting, or obtain a flow approximating such initial datum in $C^{2+\alpha}$. Then, in the first case, one obtains a Sobolev flow which could lack the property to be of class $C^{2,1}$, in the second case, because of the approximation procedure, one cannot use the uniqueness in the H\"{o}lder setting to conclude. Moreover, as we said, in the same Section~\ref{smtm2} we will also see that the ``classical'' property of parabolic equations of ``instantaneous regularization'' of the solutions for every positive time, also holds for the motion by curvature of networks. 

\medskip

The strategy of the proof is exactly the same for both the Sobolev and the H\"{o}lder case, so we briefly describe it below without specifying the spaces of the initial data and of the solutions, which we will simply denote by $\mathcal{I}$ and $\mathbb{E}_T$, respectively. Then, in the next sections, we will enter more into the details of both cases, in particular where they differ a little bit.

We will first prove existence and (standard) uniqueness for system~\eqref{problema-nogauge-general} in such spaces, giving the {\em special} curvature flow of an initial network, then we will show the existence and geometric uniqueness for the curvature flow Problem~\eqref{problema} in Definition~\ref{probdef} in the same spaces (``dropping'' the continuity requirement on the tangential velocity functions $\lambda^i$ and allowing initial networks less smooth that $C^2$, in the Sobolev setting). For simplicity, we will deal in detail with the case of the simplest possible network, a {\em triod}, and then we will explain how to adapt the arguments to the case of a general regular network.

\begin{defn}\label{triod}
A {\em triod} $\mathbb{T}=\bigcup_{i=1}^{3}\sigma{}^{i}([0,1])$ is a network 
composed of only three $C^1$ regular curves
$\sigma^{i}:[0,1]\to\overline{\Omega}$ where $\Omega$
is a smooth, convex, open subset of $\mathbb{R}^{2}$. 
These three curves intersect at a single $3$--point $O$
and have the other three end--points coinciding with three distinct
points $P^{i}=\sigma^{i}(1)\in\overline{\Omega}$.\\
A triod is {\em regular} if the unit tangents of the three curves
form angles of $120$ degrees at the $3$--point $O$.
\end{defn}

\begin{figure}[H]
\begin{center}
\begin{tikzpicture}
\draw[shift={(-2,0)}]
(-3.73,0) node[left]{$P^1$}
to[out= 50,in=180, looseness=1] (-2,0)
to[out= 60,in=180, looseness=1.5] (-0.45,1.55)
(-2,0)
to[out= -60,in=180, looseness=0.9] (-0.75,-1.75);
\draw[color=black,scale=1,domain=-3.141: 3.141,
smooth,variable=\t,shift={(-3.72,0)},rotate=0]plot({2.*sin(\t r)},
{2.*cos(\t r)});
\path[font=\large,shift={(-2,0)}]
(-3,0.8) node[below] {$\sigma^1$}
(-1.5,1) node[right] {$\sigma^3$}
(-0.8,-1)[left] node{$\sigma^2$}
(-2.2,0) node[below] {$O$}
(-0.21,1.35)node[above]{$\,\,\,\,\,\, P^3$}
(-0.55,-1.65) node[below] {$\,\,\,\, P^2$};
\end{tikzpicture}
\end{center}
\begin{caption}{A regular triod.}
\end{caption}
\end{figure}

For the reader's convenience, we state Problem~\eqref{problema} in the case of a triod (without the continuity requirement on the functions $\lambda^i$).

\begin{defn}
The one--parameter family of triods $\mathbb{T}=\left(\gamma^1,\gamma^2,\gamma^3\right)$ is
a flow by curvature in the time interval $\left[0,T\right]$ of the initial regular triod 
$\mathbb{T}_0=\left(\sigma^1,\sigma^2,\sigma^3\right)\in\mathcal{I}$ in a smooth convex, open set
$\Omega\subseteq\mathbb{R}^{2}$, if the three maps 
$\gamma^{i}\in\mathbb{E}_T$ satisfy the following system of conditions for every $x\in[0,1]$, $t\in [0,T]$, $i\in\{1,2,3\}$,
\begin{equation}\label{problematriodbis}
\begin{cases}
\gamma^i_t=k^i\nu^i+\lambda^i\tau^i\quad
&\text{ motion by curvature}\\
\gamma_x^i(x,t)\not=0
\quad &\text{ regularity}\\
\gamma^i(1,t)=P^i
\quad &\text{ fixed end--points condition}\\
\gamma^1(0,t)=\gamma^2(0,t)=\gamma^3(0,t)
\qquad &\text{ concurrency condition}\\
\sum_{i=1}^{3}\tau^i(0,t)=0
\quad &\text{ angles of $120$ degrees}
\end{cases}
\end{equation}
and there holds $\gamma^i(x,0)=\sigma^i(x)$ 
for every $x\in[0,1]$.
\end{defn}

Then, to show the existence of a solution of this problem, we consider system~\eqref{problema-nogauge-general} in the case of a triod, where we simply substitute $k^i\nu^i+\lambda^i\tau^i$ with $\frac{\gamma_{xx}^{i}}{\left|\gamma_x^i\right|^2}$, as the two velocities differ only by a tangential component. As we said in Remark~\ref{specevol}, this {\em a priori} choice of the tangential velocity makes the problem a system of {\em non--degenerate} quasilinear parabolic PDE's.

\begin{defn}[Special flow of triods]
The map $\gamma=(\gamma^1,\gamma^2,\gamma^3)$ is a solution of the {\em special flow} in $[0,T]$ with initial datum $\sigma=(\sigma^1,\sigma^2,\sigma^3)\in \mathcal{I}$
if it belongs to the space $\mathbb{E}_T$ and satisfies the following system, for every $x\in[0,1],t\in[0,T)$ and
$i\in\{1,2,3\}$
\begin{equation}\label{problema-nogauge}
\begin{cases}
\gamma^i_t(x,t)=\frac{\gamma_{xx}^{i}(x,t)}{|\gamma_{x}^{i}(x,t)|^{2}}
\qquad &\text{ special motion by curvature}\\
\gamma_x^i(x,t)\not=0
\qquad &\text{ regularity}\\
\gamma^i(1,t)=P^i
\qquad &\text{ fixed end--points condition}\\
\gamma^1(0,t)=\gamma^2(0,t)=\gamma^3(0,t)
\qquad &\text{ concurrency condition}\\
\sum_{i=1}^{3}\frac{\gamma_{x}^{i}(0,t)}{|\gamma_{x}^{i}(0,t)|}=0
\qquad &\text{ angles of $120$ degrees}\\
\gamma^i(x,0)=\sigma^i(x)
\qquad &\text{ initial data}
\end{cases}
\end{equation}
\end{defn}

Noticing that we can write the equations of motion as
\begin{equation}\label{linmotion}
\gamma^i_t-\frac{\gamma^i_{xx}}{\vert\sigma^i_x\vert^2}
=\bigg(\frac{1}{\vert\gamma^i_x\vert^2}-\frac{1}{\vert\sigma^i_x\vert^2}\bigg)\gamma^i_{xx} 
=\overline{f}^i[\gamma^i_{xx},\gamma^i_{x}]\,,
\end{equation}
for $i\in\{1,2,3\}$ and the angle condition at the triple junction as (here $\sigma^i_x=\sigma^i_x(0)$ and $\gamma^i_x=\gamma^i_x(0,t)$)
\begin{equation}\label{linboundary}
-\sum_{i=1}^3 \frac{\gamma^i_x}{\vert \sigma^i_x\vert}
-\frac{\sigma^i_x\langle \gamma^i_x\,|\,\sigma^i_x\rangle}{\vert \sigma^i_x\vert^3}=\sum_{i=1}^3 \bigg[\bigg(\frac{1}{\vert \gamma^i_x\vert}
 -\frac{1}{\vert\sigma^i_x\vert}\bigg)\gamma^i_x +
 \frac{\sigma^i_x\langle \gamma^i_x\,|\,\sigma^i_x\rangle}{\vert \sigma^i_x\vert^3}\bigg]
=\overline{b}[\gamma_x]\,,
\end{equation}
aiming at showing the existence and uniqueness of the solutions of system~\eqref{problema-nogauge}, we are led to deal with the following {\em linearization} of such system, with right-hand side data $(f,\eta,b,\psi)$ in suitable spaces:
\begin{equation}\label{linsys}
\begin{cases}
\gamma^i_{t}(x,t)-\frac{\gamma^i_{xx}(x,t)}{|\sigma^i_{x}(x)|^{2}}=f^i(x,t) &\qquad t\in[0,T),\,x\in[0,1],\,i\in\{1,2,3\} \\
\gamma^i(1,t)=\eta^i(t) &\qquad t\in[0,T],\,i\in\{1,2,3\} \\
\gamma^1(0,t)-\gamma^{2}(0,t)=0 &\qquad t\in[0,T] \\
\gamma^2(0,t)-\gamma^{3}(0,t)=0 &\qquad t\in[0,T] \\
-\sum_{i=1}^3 \Big(\frac{\gamma^i_x(0,t)}{\vert \sigma^i_x(0)\vert}
-\frac{\sigma^i_x(0)\langle\gamma^i_x(0,t)\,|\,\sigma^i_x(0)\rangle}{\vert \sigma^i_x(0)\vert^3}\Big)=b(t) &\qquad t\in[0,T] \\
\gamma^i(x,0)=\psi^i(x) &\qquad x\in[0,1],\,i\in\{1,2,3\}
\end{cases}
\end{equation}

Then, to apply Solonnikov's theory in~\cite{solonnikov1} (see also~\cite{eidelman2} and~\cite{lasolura}), precisely Theorem~5.4 for the Sobolev case and Theorem~4.9 for the H\"older case, respectively, we have to show that this system satisfies the so--called {\em complementary conditions} (see~\cite[Page~11]{solonnikov1} or~\cite[Chapter~I]{eidelman2} where they are also called {\em Lopatinskii--Shapiro condition}), which are a sort of ``algebraic'' relations between the evolution equation and the ``boundary'' constraints at the $3$--point and at the end--points of the triod (see~\cite[Section~3]{bronsard}). It is in general not so easy to show them, but in our case, the ones related only to the parabolic operator are almost immediate since it is uncoupled, while the remaining ones follow by applying the argument at pages~10--12, Lemma~I.1 in~\cite[Section~I.2]{eidelman2}. Indeed, for this particular system, by such argument, they hold if at the triple junction, for every $\lambda\in\mathbb{C}$ with $ \Re(\lambda)>0$, every solution $z=(z^1,z^2,z^3)\in C^2([0,+\infty),\mathbb{C}^3)$ of the second order ODE's system
\begin{equation}\label{LopatinskiiShapirosystem}
\begin{cases}
\lambda z^i(s)-\frac{\ddot{z}^i(s)}{\vert\sigma^i_x(0)
\vert^2}=0&\text{for every $s\in[0,+\infty)$ and $i\in\{1,2,3\}$}\\
z^{1}(0)=z^{2}(0)=z^{3}(0)&\\
\sum_{i=1}^3\Big(\frac{\dot{z}^i(0)}{|\sigma_{x}^{i}(0)|} -\frac{\sigma^i_x(0)\langle \dot{z}^i(0)\,\vert\,\sigma^i_x(0)\rangle}{\vert \sigma^i_x(0)\vert^3}\Big)
=0&
\end{cases}
\end{equation}
which satisfies $\lim_{s\to+\infty}\lvert z^i(s)\rvert=0$ is the trivial solution and similarly, at the end--points, every solution $z=(z^1,z^2,z^3)\in C^2([0,+\infty),\mathbb{C}^3)$ of
\begin{equation}\label{LopatinskiiShapiroinone}
\begin{cases}
\lambda z^i(s)-\frac{\ddot{z}^i(s)}{\vert\sigma^i_x(0) \vert^2}=0&\text{for every $s\in[0,+\infty)$ and $i\in\{1,2,3\}$}\\
z^{i}(0)=0&\text{for every $i\in\{1,2,3\}$}
\end{cases}
\end{equation}
which satisfies $\lim_{s\to+\infty}\lvert z^i(s)\rvert=0$ is the trivial solution.\\
These two conditions are clearly immediate to be checked, by directly writing the solutions to the above ODE's.\\
Then, holding such complementary conditions, by Solonnikov's theory, the linearized system has actually a unique solution for $(f,\eta,b,\psi)$ in suitable spaces if the initial datum $\psi\in\mathcal I$ satisfies some ``compatibility conditions'' which are different in the Sobolev and H\"older cases. We will discuss them precisely in the next sections.

Introducing the spaces
\begin{align*}
\widetilde{\mathbb{E}}_T=&\,\big\lbrace\gamma\in \mathbb{E}_T\,\,\big\vert\,\,\text{$\gamma^1(0,t)=\gamma^2(0,t)=\gamma^3(0,t)$, for $i\in\{1,2,3\},t\in[0,T]$}\,\big\rbrace\subseteq{\mathbb{E}}_T\\
\mathbb{F}_T=&\,\big\lbrace\text{$(f,\eta,b,\psi)$ in suitable spaces and $\psi\in \mathcal{I}$ satisfies the compatibility conditions}\big\rbrace
\end{align*}
the existence and uniqueness of solutions of system~\eqref{linsys} is then equivalent to the fact that the linear map $L_T:\widetilde{\mathbb{E}}_T\to \mathbb{F}_T$, defined as
\begin{equation}\label{Lop}
L_{T}(\gamma)=
\begin{pmatrix}
\gamma^i_t-\frac{\gamma^i_{xx}}{\vert\sigma^i_x\vert^2}\\
\gamma^i|_{x=1}\\
-\sum_{i=1}^3 \Big(\frac{\gamma^i_x}{\vert \sigma^i_x\vert}
-\frac{\sigma^i_x\langle\gamma^i_x\,|\,\sigma^i_x\rangle}{\vert \sigma^i_x\vert^3}\Big)\Big|_{x=0}\\
\gamma^i|_{t=0}
\end{pmatrix}_{i\in\{1,2,3\}}
\end{equation}
is a continuous isomorphism.

To ``get back'' to the solutions of the special flow system~\eqref{problema-nogauge}, we then need ``contraction'' estimates in order to apply a fixed point argument.\\
We define the space 
$$
\mathbb{E}^{\varphi,P}_T=\big\{\gamma\in 
\widetilde{\mathbb{E}}_T\,\,\,\big\vert\,\,\gamma\vert_{ t=0}=\varphi\;\text{and}\,
\gamma^i(1,t)=P^i,\;\text{for}\;i\in\{1,2,3\}\big\}
$$
and an operator 
$N_T:\mathbb{E}^{\varphi,P}_T \to \mathbb{F}_T$ that ``contains all the information"
about the non--linearity of our problem, given by 
\begin{equation}\label{NT}
N_T(\gamma)=\big(N^1_T(\gamma),\gamma|_{x=1},0,0,N^2_T(\gamma),\gamma|_{t=0}\,\big)
\end{equation}
where
\begin{align}
N^{1}_T(\gamma)^i=&\,\,\overline{f}^i[\gamma^i_{xx},\gamma^i_{x}]=\bigg(\frac{1}{|\gamma^i_{x}(x,t)|^{2}}-\frac{1}{|\sigma^i_x(x)|^{2}}\bigg)\gamma^i_{xx}(x,t),\label{NT1}\\
\intertext{for $i\in\{1,2,3\}$ and}
N^{2}_T(\gamma)=&\,\,\overline{b}[\gamma_x]=\sum_{i=1}^3 \bigg[\bigg(\frac{1}{\vert \gamma^i_x(0,t)\vert}
-\frac{1}{\vert\sigma^i_x(0)\vert}\bigg)\gamma^i_x(0,t) +
\frac{\sigma^i_x(0)\langle \gamma^i_x(0,t)\,|\,\sigma^i_x(0)\rangle}{\vert \sigma^i_x(0)\vert^3}\,\bigg]\label{NT2}
\end{align}
are the functions at the right hand sides of equations~\eqref{linmotion} and~\eqref{linboundary}, respectively.\\
We then introduce the operator $K_T:\mathbb{E}^{\varphi,P}_T\to \mathbb{E}^{\varphi,P}_T$
defined by $K_T(\gamma)=L_T^{-1}N_T(\gamma)$, where $L_T$ is the map above. Hence, $\gamma$ is a solution for system~\eqref{problema-nogauge} if and only if $\gamma\in \mathbb{E}^{\varphi,P}_T$ and 
\begin{equation*}
L_T(\gamma)=N_T(\gamma)\qquad \Longleftrightarrow
\qquad \gamma=L_T^{-1}N_T(\gamma)= K_T(\gamma)\,.
\end{equation*}
Thus, there exists a unique solution to system~\eqref{problema-nogauge} 
if and only if $K_T:\mathbb{E}^{\varphi,P}_T\to \mathbb{E}^{\varphi,P}_T$ has a unique fixed point and to get this, it is enough to show that $K_T$ is a contraction.

This clearly solves the existence problem of a curvature flow, Problem~\eqref{problematriodbis} in the space $\mathbb{E}_T$, when the initial data belongs to $\mathcal{I}$ (as we said, if the solution is not $C^2$ at least -- like it will happen in the Sobolev case -- we must ``drop'' the requirement that the ``tangential'' part of the velocity is continuous).\\
Finally,we will have to deal with the geometric uniqueness of the flow, that is, if $\TT_t$ and $\widetilde{\TT}_t$ are two solutions in such spaces, at every time one is a reparametrization of the other.
To conclude, we will extend all the results to the case of a general regular network.

The next two sections will be devoted to exhibiting the details of this strategy of proof in suitable Sobolev and H\"{o}lder spaces, respectively obtaining Theorems~\ref{wellposednessSobolev} and~\ref{2compexist0}.

\subsection{Well--posedness in Sobolev spaces}\label{wellposedSob}

We are going to show the existence and the geometric uniqueness of the solutions when the initial datum is a regular network in the fractional Sobolev space $W^{2-{{2}/{p}},p}$ (notice that here we are allowing non--$C^2$ initial regular networks).

\begin{defn}\label{geosolutionSob}
Let $p\in (3,+\infty)$. Given an initial, regular, $W^{2-{{2}/{p}},p}$ network $\SS_0$, composed of $n$ curves $\sigma^i:[0,1]\to\overline{\Omega}$, with $m$ triple junctions $O^1, O^2,\dots O^m\in\Omega$ and (if present) $l$ end--points $P^1, P^2,\dots, P^l\in\partial\Omega$ in a smooth convex, open set $\Omega\subseteq\mathbb{R}^{2}$, we say that a family of homeomorphic networks $\mathbb{S}_t$, described by the family of time--dependent curves $\gamma^i(\cdot, t)$, is a {\em Sobolev--solution} of the motion by curvature problem with fixed end--points for $\SS_0$, in the time interval $[0,T)$, if (with a little abuse of notation, switching the variables $t$ and $x$ inside $\gamma$)
$$
\gamma^i\in W^{1,p}([0,T);L^p([0,1];\overline{\Omega}))\cap 
L^p([0,T);W^{2,p}([0,1];\overline{\Omega}))\,,
$$
there hold $\gamma^i(x,0)=\sigma^i(x)$ (in the sense of traces), for every $x\in[0,1]$ and $i\in\{1,2,\dots, n\}$ (initial data) and the following system is (weakly) satisfied for every $x\in [0,1]$, $t\in[0,T)$, $i\in\{1,2,\dots, n\}$,
\begin{equation}\label{problemasob}
\begin{cases}
\begin{array}{lll}
\gamma^i_t=k^i\nu^i+\lambda^i\tau^i\quad&
&\text{ motion by curvature}\\
\gamma_x^i(x,t)\not=0
\quad &&\text{ regularity}\\
\gamma^r(1,t)=P^r\quad&\text{with}\, 0\leqslant r \leqslant l\,
\quad &\text{ fixed end--points condition}\\
\sum_{j=1}^3\tau^{pj}(O^p,t)=0\quad&\text{at every $3$--point $O^p$}
\quad &\text{ angles of $120$ degrees}
\end{array}
\end{cases}
\end{equation}
where we used the same notation of Definition~\ref{probdef}.
\end{defn}

The goal of this section is to prove the following theorem.

\begin{thm}\label{wellposednessSobolev}
Let $p\in (3,+\infty)$ and let $\SS_0$ be a regular initial network of class $W^{2-{{2}/{p}},p}$. Then, there exists a {\em geometrically} unique Sobolev--solution $\SS_t$ of the motion by curvature problem for $\SS_0$, as in the definition above, in a maximal time interval $[0,T)$.
\end{thm}

We let $p\in (3,+\infty)$ and we define the solutions space 
\begin{equation*}
\mathbb{E}_T=W_p^{1,2}([0,T)\times[0,1])=W^{1,p}([0,T);L^p([0,1]))\cap L^p([0,T);W^{2,p}([0,1]))
\end{equation*}
endowed with the norm $\lVert\cdot\rVert_{\mathbb{E}_T}=\lVert\cdot\rVert_{W_p^{1,2}([0,T)\times[0,1])}$.\\
To keep the notation simple, here and in the following we avoid writing the ``target'' spaces of the vector-valued functions, that is, for instance $W_p^{1,2}([0,T)\times[0,1]);\R^k)$ will be simply denoted with $W_p^{1,2}([0,T)\times[0,1])$, as the dimension of such target vector space is clear from the context.\\
The space $\mathbb{E}_T$ is then the intersection of two Sobolev spaces of functions with values in a Banach space.

Let $m\in\mathbb{N}$, $I\subseteq\mathbb{R}$ be an interval and $X$ be a Banach space. For $1\leqslant p\leqslant+\infty$, the Sobolev space of order $m\in\mathbb{N}$ is defined as 
\begin{equation*}
W^{m,p}(I;X)=\{f\in L^p(I;X)\,\,|\,\,\text{$\partial^k_x f\in L^p(I;X)$ for all $1\leqslant k\leqslant m$}\}\,,
\end{equation*}
which is a Banach space with the norm
\begin{equation}
\lVert f\rVert_{W^{m,p}(I;X)}=
\bigg(\sum_{0\leqslant k\leqslant m}^{}\lVert\partial^k_x f\rVert_{L^p(I;X)}^p\bigg)^{{{1}/{p}}}\,.\label{Sobolev Norm}
\end{equation}
Elements in the solutions space $\mathbb{E}_T$ are thus functions $f\in L^p([0,T); L^p([0,1]))$ that have one distributional derivative with respect to time $\partial_t f\in L^p([0,T);L^p([0,1]))$. Furthermore, for almost every $t\in[0,T)$, the function $f(t)$ lies in $W^{2,p}([0,1])$ and thus has two space derivatives $\partial_x f(t)$, $\partial_x ^2 f(t)\in L^p([0,1])$. One then easily sees that the functions $t\mapsto \partial_x^k f(t)$ belong to $L^p([0,T);L^p([0,1]))$, for $k\in\{1,2\}$.

The space $\mathcal{I}$ of initial data is the time--trace of $\mathbb{E}_T$, given by the {\em fractional Sobolev space} $W^{2-{{2}/{p}},p}([0,1])$. In general, if $d\in\mathbb{N}$, $p\in [1,+\infty)$ and $\theta\in[0,1]$ the {\em Gagliardo semi--norm} of an element $f\in L^p([0,1])$ is defined as
\begin{equation*}
[f]_{\theta,p}=\bigg(\int_{0}^{1}\int_{0}^{1}\frac{\lvert f(x)-f(y)\rvert^p}{\lvert x-y\rvert^{\theta p+1}}\,{d}x\,{d}y\bigg)^{{{1}/{p}}}\,,
\end{equation*}
then, if $s\in(0,+\infty)$ is not integer, the fractional Sobolev space $W^{s,p}([0,1])$ is given by
\begin{equation*}
W^{s,p}([0,1])=\big\{f\in W^{\lfloor s\rfloor,p}\big([0,1]\big)\,\,\big|\,\,\big[\partial_x^{\lfloor s\rfloor}f\big]_{s-\lfloor s\rfloor,p}<+\infty\big\}\,,
\end{equation*}
with the norm 
$$
\Vert f\Vert_{W^{s,p}([0,1])}=\Vert f\Vert_{W^{\lfloor s\rfloor,p}}+\big[\partial_x^{\lfloor s\rfloor}f\big]_{s-\lfloor s\rfloor,p}\,.
$$

For $p\in(3,+\infty)$ and $\alpha\in(0,1-{{3}/{p}}\,]$, the Sobolev embedding 
theorem~\cite[Theorem~4.6.1~(e)]{Triebel} implies
\begin{equation*}
W^{2-{{2}/{p}},p}([0,1])\hookrightarrow C^{1+\alpha}([0,1])\,,
\end{equation*}
thus, we have the continuous embeddings
\begin{equation}\label{regsob}
W_p^{1,2}([0,T)\times[0,1])\hookrightarrow C([0,T];W^{2-{{2}/{p}},p}([0,1]))\hookrightarrow C([0,T];C^{1+\alpha}([0,1]))\,.
\end{equation}
In particular, any initial network in $W^{2-{{2}/{p}},p}$ is of class $C^1$, hence the angle condition at every triple junction is pointwise well--defined (classical). Similarly, we specify the spaces of boundary values, as for $p\in [1,+\infty)$, the operators 
\begin{align*}
f\mapsto f(\cdot,0)\text{ and } f\mapsto f(\cdot,1) \qquad\qquad&\text{ from $W_p^{1,2}([0,T)\times[0,1])$ to $W^{1-1/2p,p}([0,T))$}\\
f\mapsto 
f_x(\cdot,0) \qquad\quad\qquad\qquad&\text{ from $W_p^{1,2}([0,T)\times[0,1])$ to $W^{1/2-1/2p,p}([0,T))$}
\end{align*}
are linear and continuous (Theorem~5.1 in~\cite{solonnikov1}).

\medskip

Now, to show Theorem~\ref{wellposednessSobolev}, we ``specialize'' the line of proof illustrated in the previous section to this Sobolev case, adding the missing details. As we said, we will deal with a triod and then we will explain how all the conclusions extend to general networks.

\subsubsection{Well--posedness of the linearized system~\eqref{linsys} and of the special flow~\eqref{problema-nogauge}}\label{wellposedlinsys}

The first point to be made precise is what are the ``compatibility conditions'' that the initial datum must satisfy so that the linearized system~\eqref{linsys} has a unique solution.

\begin{defn}[Linear compatibility conditions]\label{linearcompcond}
A function $\psi=(\psi^1,\psi^2,\psi^3)\in\mathcal{I}$ satisfies the {\em linear compatibility conditions} for system~\eqref{linsys} with respect to the functions $\eta=(\eta^1,\eta^2,\eta^3)$ and $b$ if, for $i,j\in\{1,2,3\}$, there holds $\psi^i(0)=\psi^j(0)$, $\psi^i(1)=\eta^i(0)$ and 
\begin{equation}\label{compcondtriod}
-\sum_{i=1}^3 \bigg(\frac{\psi^i_x(0)}{\vert \sigma^i_x(0)\vert}
-\frac{\sigma^i_x(0)\langle\psi^i_x(0)\,|\,\sigma^i_x(0)\rangle}{\vert \sigma^i_x(0)\vert^3}\bigg)=b(0)\,.
\end{equation}
\end{defn}

Then, the following proposition is a consequence of Theorem~5.4 in the book of Solonnikov~\cite{solonnikov1} (see also~\cite{lasolura} and~\cite{eidelman2}) keeping in mind that we know that system~\eqref{linsys} satisfies the complementary conditions.

\begin{prop}\label{exlintriod}
Let $p\in(3,+\infty)$. For every $T>0$, system~\eqref{linsys} has a unique solution $\gamma\in\mathbb{E}_T$ provided that $f\in L^p([0,T);L^p([0,1])$, $\eta\in W^{1-{{1}/{2p}},p}([0,T))$, $b\in W^{{{1}/{2}}-{{1}/{2p}},p}([0,T))$ and $\psi\in W^{2-{{2}/{p}},p}([0,1])$ fulfills the linear compatibility conditions stated in Definition~\ref{linearcompcond}, with respect to $\eta$ and $b$.\\
Moreover, there exists a constant $C=C(T)>0$ such that the following estimate holds:
\begin{equation}\label{estimate}
\Vert \gamma\Vert_{\mathbb{E}_T} \leqslant C( 
 \Vert f\Vert_{L^p([0,T);L^p([0,1]))}+\Vert\eta\Vert_{W^{1-{{1}/{2p}},p}([0,T))}+
\Vert b\Vert_{W^{{{1}/{2}}-{{1}/{2p}},p}([0,T))}
+ \Vert \psi \Vert_{W^{2-{{2}/{p}},p}([0,1])})\,.
\end{equation} 
\end{prop}

This proposition can be restated by saying that the linear operator 
$L_{T}:\widetilde{\mathbb{E}}_T\to \mathbb{F}_T$ defined as
\begin{equation}
L_{T}(\gamma)=
\begin{pmatrix}
\gamma^i_t-\frac{\gamma^i_{xx}}{\vert\sigma^i_x\vert^2}\\
\gamma^i|_{x=1}\\
-\sum_{i=1}^3 \Big(\frac{\gamma^i_x}{\vert \sigma^i_x\vert}
-\frac{\sigma^i_x\langle\gamma^i_x\,|\,\sigma^i_x\rangle}{\vert \sigma^i_x\vert^3}\Big)\Big|_{x=0}\\
\gamma^i|_{t=0}
\end{pmatrix}_{i\in\{1,2,3\}}
\end{equation}
is a continuous isomorphism between the spaces
\begin{align*}
\widetilde{\mathbb{E}}_T=&\,\big\lbrace\gamma=(\gamma^1,\gamma^2,\gamma^3)\in \mathbb{E}_T\,\,\big\vert\,\,\text{$\gamma^1(0,t)=\gamma^2(0,t)=\gamma^3(0,t)$, for $i\in\{1,2,3\}$ and $t\in[0,T)$}\,\big\rbrace\subseteq{\mathbb{E}}_T\\
\mathbb{F}_T=&\,
\begin{cases}\begin{rcases}
(f,\eta,b,\psi)\in L^p([0,T);L^p([0,1]))\times W^{1-{{1}/{2p}},p}([0,T))\times W^{{{1}/{2}}-{{1}/{2p}},p}([0,T))\times W^{2-{{2}/{p}},p}([0,1])\ \\[.5ex]
\,\text{$\psi$ satisfies the linear compatibility conditions of Definition~\ref{linearcompcond} with respect to $\eta$ and $b$}
\end{rcases}\end{cases}
\end{align*}
Moreover, it is possible to prove (Lemma~3.6 in~\cite{GoMePl}) that for every $T_0>0$, there exists a constant $C(T_0,p)$ such that
\begin{equation}\label{Lbound}
\sup_{T\in (0,T_0]}\vertiii{L_T^{-1}}_{\mathscr{L}(\mathbb{F}_T,\widetilde{\mathbb{E}}_T)}\leqslant C(T_0,p)\,.
\end{equation}

\medskip

As we said in the previous section, the well--posedness of the linearized system implies the same for the special flow, by means of contraction estimates involving the operator 
$N_T:\mathbb{E}^{\varphi,P}_T \to \mathbb{F}_T$, given by
\begin{equation}\label{NT0}
N_T(\gamma)=\big(N^1_T(\gamma),\gamma|_{x=1},0,0,N^2_T(\gamma),\gamma|_{t=0}\,\big)
\end{equation}
where $N^1_T$ and $N^2_T$ are defined by formulas~\eqref{NT1} and~\eqref{NT2}, respectively and
$$
\mathbb{E}^{\varphi,P}_T=\big\{\gamma\in 
\widetilde{\mathbb{E}}_T\,\,\,\big\vert\,\,\gamma\vert_{ t=0}=\varphi\;\text{and}\,
\gamma^i(1,t)=P^i,\;\text{for}\;i\in\{1,2,3\}\big\}\,.
$$
The following result is proved in~\cite[Theorem 3.7]{GoMePl}, it gives the existence and uniqueness for the special flow of a regular initial triod in the Sobolev setting.

\begin{thm}\label{short--time existence} 
Let $p\in(3,+\infty)$ and let $\sigma=(\sigma^1,\sigma^2,\sigma^3)\in W^{2-{{2}/{p}},p}([0,1])$ describes a regular triod. In particular,
$$
\mathcal{L}\sigma=L_1^{-1}(0,\sigma(1),0,\sigma)
$$
is well defined, as $\sigma$ satisfies the linear compatibility conditions in Definition~\ref{linearcompcond} with respect to the functions $t\mapsto\sigma(1)$ and zero.\\
Then, there exists a positive time $\widetilde{T}=\widetilde{T}(\sigma)$, depending on $\min_{i\in\{1,2,3\},\,x\in[0,1]}\vert\sigma^i_x(x)\vert$ and $\lVert\sigma\rVert_{W^{2-{{2}/{p}},p}([0,1])}$, such that for all $T\in (0,\widetilde{T})$, the system~\eqref{problema-nogauge} has a solution $\mathcal{E}\sigma$ in $\widetilde{\mathbb{E}}_T$ which is unique in
$$
\overline{B}_M=\{\gamma\in\widetilde{\mathbb{E}}_T\,\,|\,\,\Vert\gamma\Vert_{\mathbb{E}_T}\leqslant M\},
$$
with
\begin{equation*}
M=2\max\,\Big\{\sup_{T\in(0,1]}\vertiii{L_T^{-1}}_{\mathscr{L}(\mathbb{F}_T,
\widetilde{\mathbb{E}}_T)},1\Big\}\,
\max\,\big\{\Vert{\mathcal{L}\sigma}\Vert_{\mathbb{E}_1},\Vert(N_{1}^1(\mathcal{L}\sigma),\sigma(1),N_{1}^2(\mathcal{L}\sigma),\sigma)\Vert_{\mathbb{F}_1}\big\}\,.
\end{equation*}
\end{thm}

\subsubsection{Existence and geometric uniqueness}\label{exuniqSob}

Once we have obtained the existence and uniqueness of solutions to the special flow~\eqref{problema-nogauge}, we can come back to the geometric problem. The following theorem gives the ``existence part'' of Theorem~\ref{wellposednessSobolev}.

\begin{thm}\label{existencegeopro}
Let $p\in(3,+\infty)$ and $\mathbb{T}_0$ a regular initial triod parametrized by $\sigma=(\sigma^1,\sigma^2,\sigma^3)\in W^{2-{{2}/{p}},p}([0,1])$. 
Then, for some $T>0$, there exists a Sobolev--solution of the motion by curvature problem in Definition~\ref{geosolutionSob} with initial datum $\mathbb{T}_0$, in the time interval $[0,T)$.
\end{thm}
\begin{proof}
Proposition~\ref{short--time existence} implies that there exists $T>0$ and a solution
$\mathcal{E}\sigma\in W_p^{1,2}([0,T)\times[0,1])$ to the special flow system~\eqref{problema-nogauge} in $[0,T]$ with $\mathcal{E}\sigma(0)=\sigma$. Then, setting $\gamma(x,t)=\mathcal{E}\sigma(t)(x)$, we have that $\TT_t=\bigcup_{i=1}^3\gamma^i([0,1],t)$ is a Sobolev--solution to the motion by curvature with initial triod $\TT_0$ in $[0,T)$.
\end{proof}

Now we deal with the geometric uniqueness of the solution given by the previous theorem.

\begin{thm}\label{geouniquenesslocal}
Let $p\in(3,+\infty)$ and $\mathbb{T}_0$ a regular initial triod parametrized by $\sigma=(\sigma^1,\sigma^2,\sigma^3)\in W^{2-{{2}/{p}},p}([0,1])$. If $\TT_t$, $\widetilde{\TT}_t$ are two Sobolev--solutions to the motion by curvature problem in Definition~\ref{geosolutionSob} with initial datum $\mathbb{T}_0$, in the time intervals $[0,T)$ and $[0,\widetilde{T})$, respectively, then $\TT_t$ and $\widetilde{\TT}_t$ coincides up to reparametrization, for all $t\in [0, \min\{T,\widetilde{T}\})$. In particular, $\TT_t$ is geometrically unique.
\end{thm}
\begin{proof}
By Proposition~\ref{short--time existence}, we have a Sobolev--solution $\gamma=\mathcal{E}\sigma$ of system~\eqref{problema-nogauge} with initial datum $\sigma$, which is unique in $\overline{B}_M$, with $M$ as in such proposition. In particular, it gives a Sobolev--solution $\mathbb{T}_t$ to the motion by curvature in $[0,T)$ with initial datum $\mathbb{T}_0$. 

Suppose that there is another Sobolev--solution $\widetilde{\mathbb{T}}_t$ with initial datum $\mathbb{T}_0$ in $[0,\widetilde{T})$, parametrized by $\widetilde{\gamma}\in\mathbb{E}_{\widetilde{T}}$. We then want to show that there exists a family of time--dependent diffeomorphisms $\varphi^i(\cdot,t):[0,1]\to[0,1]$ with $t\in[0,\widehat{T})$ for some $\widehat{T}\leqslant\min\{T,\widetilde{T}\}$, such that $\varphi^i(\cdot,0)$ is the identity and the equality
\begin{equation*}
\widetilde{\gamma}^i(\varphi^i(x,t),t)=\gamma^i(x,t)
\end{equation*}
holds in the space $\mathbb{E}_{\widehat{T}}$, for every $i\in\{1,2,3\}$. In order to make use of the uniqueness conclusion in Proposition~\ref{short--time existence}, we construct the reparametrizations $\varphi=(\varphi^1,\varphi^2,\varphi^3)$ in such a way that the functions $(x,t)\mapsto \widetilde{\gamma}^i(\varphi^i(x,t),t)$ are a solution to the special flow in $\mathbb{E}_{\widehat{T}}$ with initial datum $\sigma$.\\ 
Then, formal differentiation shows that the reparametrizations $\varphi^i$ need to satisfy the following boundary value problem:
\begin{equation}\label{systemrepara}
\begin{cases}
\displaystyle{\varphi^i_t(x,t)\,=\frac{\varphi_{xx}^{i}(x,t)}{\left|\widetilde{\gamma}_{x}^{i}(\varphi^i(x,t),t)\right|^{2}\varphi^i_x(x,t)^2}
-\bigg\langle \widetilde{\gamma}_t^i(\varphi^i(x,t),t)-\frac{\widetilde{\gamma}_{xx}^i(\varphi^i(x,t),t)}{\vert\,\widetilde{\gamma}^i_x(\varphi^i(x,t),t)\vert^2}\,\bigg\vert\,\frac{\widetilde{\gamma}^i_x(\varphi^i(x,t),t)}{\vert\widetilde{\gamma}^i_x(\varphi^i(x,t),t)\vert^2}\bigg\rangle}\\[1em]
\varphi^i(0,t)\,=0\\
\varphi^i(1,t)\,=1\\
\varphi^i(x,0)=x
\end{cases}
\end{equation}
We observe that the right-hand side of the motion equation in system~\eqref{systemrepara} contains terms of the form $Q^i(\varphi^i(x,t),t)$. To remove this dependence it is convenient to consider the associated problem for the inverse diffeomorphisms $\xi=(\xi^1,\xi^2,\xi^3)$ given by $\xi^i(\cdot,t)=\varphi^i(\cdot,t)^{-1}$, for every fixed $t\in[0,\widehat{T})$. Indeed, suppose that $\varphi\in W^{1,2}_p([0, \widetilde{T})\times [0,1];[0,1]^3)$ is a solution of system~\eqref{systemrepara} with $\varphi^i(\cdot,t):[0,1]\to[0,1]$ a $C^1$--diffeomorphism, then it is easy to show that also $\xi$ is of class $W^{1,2}_p([0, \widetilde{T})\times [0,1];[0,1]^3)$ (and viceversa) and the formulas
\begin{align*}
\xi^i_y(y,t)&=\varphi^i_x(\xi^i(y,t),t)^{-1}\\
\xi^i_{yy}(y,t)&=-\xi^i_y(y,t)^3\varphi^i_{xx}(\xi^i(y,t),t)
\end{align*}
yield the evolution equation
\begin{align*}
\xi^i_t(y,t)=&-\varphi^i_t(\xi^i(y,t),t)\xi_y^i(y,t)\\
=&-\frac{\varphi_{xx}^{i}(\xi^i(y,t),t)}{\left|\widetilde{\gamma}_{x}^{i}(y,t)\right|^{2}}\,\xi^i_y(y,t)^3+\bigg\langle \widetilde{\gamma}_t^i(y,t)-\frac{\widetilde{\gamma}_{xx}^i(y,t)}{\vert\,\widetilde{\gamma}^i_x(y,t)\vert^2}\,\bigg\vert\,
\frac{\widetilde{\gamma}_x^i(y,t)}{\left|\widetilde{\gamma}_{x}^{i}(y,t)\right|^2}\bigg\rangle\,\xi^i_y(y,t)\\
=&\,\frac{\xi^i_{yy}(y,t)}{\left|\widetilde{\gamma}_{x}^{i}(y,t)\right|^{2}}
+\bigg\langle \widetilde{\gamma}_t^i(y,t)-\frac{\widetilde{\gamma}_{xx}^i(y,t)}{\vert\,\widetilde{\gamma}^i_x(y,t)\vert^2}\,\bigg\vert\,
\frac{\widetilde{\gamma}_x^i(y,t)}{\left|\widetilde{\gamma}_{x}^{i}(y,t)\right|^2}\bigg\rangle\,\xi^i_y(y,t)\,.
\end{align*}
Hence, we have the following {\em linear} system for $\xi$,
\begin{equation}\label{systemreparainverse}
\begin{cases}
\displaystyle{\xi^i_t(y,t)\,=\frac{\xi^i_{yy}(y,t)}{\left|\widetilde{\gamma}_{x}^{i}(y,t)\right|^{2}}
+\bigg\langle \widetilde{\gamma}_t^i(y,t)-\frac{\widetilde{\gamma}_{xx}^i(y,t)}{\vert\,\widetilde{\gamma}^i_x(y,t)\vert^2}\,\bigg\vert\,
\frac{\widetilde{\gamma}_x^i(y,t)}{\left|\widetilde{\gamma}_{x}^{i}(y,t)\right|^2}\bigg\rangle\,\xi^i_y(y,t)}\\[1em]
\xi^i(0,t)\,=0\\
\xi^i(1,t)\,=1\\
\xi^i(y,0)=y
\end{cases}
\end{equation}
for all $t\in [0,\widetilde{T})$, $y\in[0,1]$ and $i\in\{1,2,3\}$.\\

We observe that this linear boundary value problem has a very similar structure to the linearization of special flow system~\eqref{linsys}, with a perturbation in the evolution equation of lower order. Then, checking that it satisfies the complementary conditions is analogous and the compatibility conditions for the initial data are simply $\psi^i(0)=0$ and $\psi^i(1)=1$, which are clearly satisfied by $\xi^i(y,0)=y$. Hence, again by Solonnikov's theory (Theorem~5.4 in~\cite{solonnikov1}), we have a solution $\xi^i\in W^{1,2}_p([0,\widehat{T})\times [0,1])$, for some $\widehat{T}\leqslant\widetilde{T}$, such that for every $t\in[0,\widehat{T}]$ the map $\xi^i(\cdot,t):[0,1]\to[0,1]$ is a $C^1$--diffeomorphism. Then, the inverse functions $\varphi^i(\cdot,t)=\xi^i(\cdot,t)^{-1}$ also belong to $W^{1,2}_p([0,\widehat{T})\times [0,1])$ and solve system~\eqref{systemrepara}.
It is not difficult to show (see~\cite[Lemma~3.17]{GoMePl}) that the composition $(x,t)\mapsto\widetilde{\gamma}^i(\varphi^i(x,t),t)$ lies in $\mathbb{E}_{\widehat{T}}$ and by construction, it is a solution to the special flow system~\eqref{problema-nogauge} with initial datum $\sigma$. We may now choose a possibly smaller $\widehat{T}$ such that $(x,t)\mapsto\widetilde{\gamma}^i(\varphi^i(x,t),t)$ belongs to $\overline{B}_M$, hence it must coincide with $\gamma$ restricted to the time interval $[0,\widehat{T})$.\\
Let now $\overline{T}\leqslant \min\{T,\widetilde{T}\}$ be the infimum of the times in which $\widetilde{\TT}_t$ is not a reparametrization of $\TT_t$ and suppose $\overline{T}< \min\{T,\widetilde{T}\}$. Then, $\widetilde{\TT}_{\overline{T}}$ is obtained via a reparametrization $\varphi$ of $\TT_{\overline{T}}$ and if we consider the flow obtained reparametrizing all the networks $\TT_t$, for $t\geqslant \overline{T}$, with the same fixed ``static'' reparametrization $\varphi$, we obtain a Sobolev--solution with initial datum $\widetilde{\TT}_{\overline{T}}$ on some time interval $[\overline{T},\overline{T}+\delta)$. Then, by the previous discussion about uniqueness, it must coincide with the flow $\widetilde{\TT}_t$ for $t\in[\overline{T},\overline{T}+\delta')$, for some $\delta'>0$. This clearly shows that for $t\in[\overline{T},\overline{T}+\delta')$, all the networks $\widetilde{\TT}_t$ are reparametrizations of $\TT_t$, in contradiction with the infimum property of $\overline{T}$ and we are done.
\end{proof}

Putting together these two theorems, we obtain Theorem~\ref{wellposednessSobolev} in the special case of a triod.

\subsubsection{Extension to general regular networks}\label{strutture-complicate}

We explain here how to generalize the previous analysis for a triod to general networks. 

We consider an initial regular network $\SS_0$ composed of $n$ curves, with $l$ end--points $\gamma^k(t,1)=P^k\in\partial\Omega$, for $k\in\{1,\ldots,l\}$ and $m$ triple junctions $O^1, O^2,\dots O^m\in\Omega$. As in Section~\ref{netdef999} (recall the discussion just after Remark~\ref{rem2.5}), we will denote by $\sigma^{pj}$, for $j\in\{1,2,3\}$, the curves of this network concurring at $O^p$, for every $p\in\{1,\dots,m\}$.

The equations of motion for the special flow system~\eqref{problema-nogauge-general} for $\SS_0$ and its linearization do not differ from the version for a triod: formula~\eqref{linmotion} must hold for each curve $\gamma^i$ of the network,
\begin{equation}\label{deffm}
\gamma^i_{t}(x,t)-\frac{\gamma^i_{xx}(x,t)}{\left|\sigma^i_{x}(x)\right|^{2}}=
\bigg(\frac{1}{\left|\gamma^i_{x}(x,t)\right|^{2}}-\frac{1}{\left|\sigma^i_x(x)\right|^{2}}\bigg)
\gamma^i_{xx}(x,t) \,,
\end{equation}
for every $i\in\{1,\dots,n\}$ and we have formula~\eqref{linboundary} at {\em each} triple junction, that is, assuming that $O^p(t)=\gamma^{p1}(0,t)=\gamma^{p2}(0,t)=\gamma^{p3}(0,t)$ and $O^p(0)=\sigma^{p1}(0)=\sigma^{p2}(0)=\sigma^{p3}(0)$,
$$
\-\sum_{j=1}^3 \frac{\gamma^{pj}_x}{\vert \sigma^{pj}_x\vert}
-\frac{\sigma^{pj}_x\langle \gamma^{pj}_x\,|\,\sigma^{pj}_x\rangle}{\vert \sigma^{pj}_x\vert^3}=\sum_{j=1}^3 \bigg[\bigg(\frac{1}{\vert \gamma^{pj}_x\vert}
 -\frac{1}{\vert\sigma^{pj}_x\vert}\bigg)\gamma^{pj}_x +
 \frac{\sigma^{pj}_x\langle \gamma_x\,|\,\sigma^{pj}_x\rangle}{\vert \sigma^{pj}_x\vert^3}\bigg]\,,
$$
where $\sigma^{pj}_x=\sigma^{pj}_x(0)$ and $\gamma^{pj}_x=\gamma^{pj}_x(0,t)$, for every $p\in\{1,\dots,m\}$.

The analogous of the linearized system~\eqref{linsys} is then the following,
\begin{equation}\label{sysnew}
\begin{cases}
\gamma^i_{t}(x,t)-\frac{\gamma^i_{xx}(x,t)}{\left|\sigma^i_{x}(x)\right|^{2}}=f^i(x,t)&\qquad t\in[0,T),\, x\in[0,1],\,i\in\{1,\dots,n\}\\
\gamma^k(1,t)=\eta^k(t)&\qquad t\in[0,T],\,k\in\{1,\ldots,l\}\\
\gamma^{p1}(0,t)-\gamma^{p2}(0,t)=0&\qquad t\in[0,T],\,p\in\{1,\dots,m\}\\
\gamma^{p2}(0,t)-\gamma^{p3}(0,t)=0&\qquad t\in[0,T],\,p\in\{1,\dots,m\}\,\\
-\sum_{j=1}^3\Big(\frac{\gamma^{pj}_x(0,t)}{\vert \sigma^{pj}_x(0)\vert}
-\frac{\sigma^{pj}_x(0)\langle\gamma^{pj}_x(0,t)\,\vert\,\sigma^{pj}_x(0)\rangle}{\vert \sigma^{pj}_x(0)\vert^3}\Big)=b^p(t)&\qquad t\in[0,T],\,p\in\{1,\dots,m\}\,\\
\gamma^i(x,0)=\psi^i(x) &\qquad x\in[0,1],\,i\in\{1,\dots,n\}
\end{cases}
\end{equation}
for a general right hand side $(f,\eta,b,\psi)$, with $\eta=(\eta^1,\dots,\eta^l)$ and $b=(b^1,\dots,b^m)$.

Hence, in order to apply again Solonnikov's theory to get the well--posedness of this linearized system, the necessary complementary conditions are simply the same that we have seen for a single triple junction and only three end--points, repeated for each $3$--point and end--point in this case and we can check all of them exactly in the same way we did for a triod.

Then, the generalization of Definition~\ref{linearcompcond} is as follows, which is simply asking that equation~\eqref{compcondtriod} holds at every $3$--point.

\begin{defn}\label{linearcompcondgeneral}
Let $p\in (3,+\infty)$. A function $\psi=(\psi^1,\ldots,\psi^n)$ of class $W^{2-{{2}/{p}},p}([0,1])$
satisfies the {\em linear compatibility conditions} for system~\eqref{sysnew}, with respect to given functions $\eta=(\eta^1,\dots,\eta^l)\in W^{1-{{1}/{2p}},p}([0,T)$ and $b$ and $b=(b^1,\dots,b^m)\in W^{{{1}/{2}}-{{1}/{2p}},p}([0,T))$ if, for every $k\in\{1,\ldots,l\}$ and $p\in\{1,\ldots,m\}$, there holds $\psi^k(1)=\eta^k(0)$ $\psi^{p1}(0)=\psi^{p2}(0)=\psi^{p3}(0)$ and 
\begin{equation*}
-\sum_{j=1}^3\bigg(\frac{\psi^{{pj}}_x(0)}{\vert \sigma^{{pj}}_x(0)\vert}
-\frac{\sigma^{{pj}}_x(0)\left\langle\psi^{{pj}}_x(0)\,\vert\,\sigma^{{pj}}_x(0)\right\rangle}{\vert \sigma^{{pj}}_x(0)\vert^3}\bigg)=b^p(0)\,.
\end{equation*}
\end{defn}

The rest of the proof leading to Theorem~\ref{wellposednessSobolev} then follows analogously to the case of a triod, in particular the version of Theorem~\ref{short--time existence} for general initial regular networks. All this discussion concludes the proof of Theorem~\ref{wellposednessSobolev}.

\begin{rem} 
We mention that a different argument to extend the conclusions from the case of a triod to the one of a general network is to add some extra ``fake boundary points'' in the middle of every curve ``separating'' it in two new curves so that each curve of the resulting new family always connects one triple junction and one boundary point. Then, imposing ``artificial'' boundary conditions on such ``fake boundary points'' forbidding two of the new curves concurring there to form an angle, we have a new system which is ``equivalent'' to system~\eqref{sysnew} and easier (in terms of notation) to be dealt with. Applying Solonnikov's theory to such a system, one then gets the same conclusion that we obtained above. This line was pursued in~\cite{vB}, where the author carries on this procedure in full detail.
\end{rem}

\subsection{Well--posedness in H\"older spaces}\label{wellposedHol}

We want to show the existence and the geometric uniqueness of the flow, Problem~\eqref{problema} in Definition~\ref{probdef}, when all the curves of the initial regular network belong to the H\"older space $C^{2+2\alpha}$, with $\alpha\in(0,1/2)$ and satisfy some extra conditions. We underline that this section is based on the results of Bronsard and Reitich in~\cite{bronsard} (see also~\cite{mannovtor}).

We do not need a particular definition for these flows, that we are going to call {\em H\"older--solutions} or {\em H\"older--curvature flows} , similarly as we did with Definition~\ref{geosolutionSob} for the Sobolev case, since the initial data space $\mathcal{I}$ will be the H\"older space $C^{2+2\alpha}([0,1])$, which is a subspace of the ``natural'' space of initial $C^2$ regular networks. Omitting, as before, the target vector space for simplicity of notation, we have
$$
\mathcal{I}=C^{2+2\alpha}([0,1])
$$
and the solutions space,
$$
\mathbb{E}_T=C^{2+2\alpha,1+\alpha}([0,1]\times[0,T))\,,
$$
with $\alpha\in (0,1/2)$, endowed the norm $\lVert\cdot\rVert_{\mathbb{E}_T}=\lVert\cdot\rVert_{C^{2+2\alpha,1+\alpha}([0,1]\times[0,T))}$.\\ 
For the reader's convenience, we recall the definition and some properties of these parabolic H\"older spaces (see~\cite[Sections~11 and~13]{solonnikov1}). For a function $u:[0,1]\times[0,T]\to\mathbb{R}$, we define the H\"older semi--norms
$$
[u]_{\beta,0}=\sup_{x,y\in[0,1],\, t\in[0,T]}\frac{\vert u(x,t)-u(y,t)\vert}{\vert x-y\vert^\beta}\,,
$$
and
$$
[u]_{0,\theta}=\sup_{x\in[0,1]\, t,\tau\in[0,T]}\frac{\vert u(x,t)-u(x,\tau)\vert}{\vert t-\tau\vert^\theta}\,,
$$
then $C^{2+2\alpha,1+\alpha}([0,1]\times[0,T])$ is the space of the functions $u:[0,1]\times[0,T]\to\mathbb{R}$ having continuous derivatives $\partial_t^i\partial_x^ju$, for every $i,j\in\mathbb{N}$ with $2i+j\leqslant 2$ and such that the norm
\begin{equation*}
\left\lVert u\right\rVert_{C^{2+2\alpha,1+\alpha}([0,1]\times[0,T])}=\sum_{2i+j=0}^2\left\lVert\partial_t^i\partial_x^ju\right\rVert_\infty
+\sum_{2i+j=2}\left[\partial_t^i\partial_x^ju\right]_{2\alpha,0}+\sum_{2i+j=2}\left[\partial_t^i\partial_x^ju\right]_{0,\alpha}
\end{equation*}
is finite. 

As we did for the Sobolev case in the previous section, we now ``specialize'' the strategy of proof illustrated at the beginning to the H\"older case. Again, we first deal with a triod and then we extend all the results to general networks.

\subsubsection{Well--posedness of the linearized system~\eqref{linsys} and of the special flow~\eqref{problema-nogauge}}\label{wellposedlinsys2}

Differently from the Sobolev case, to get well--posedness of system~\eqref{problema-nogauge} in the above H\"older spaces, the initial datum cannot merely be a regular triod, but suitable ``extra conditions'' are necessary. 

\begin{defn}\label{compatibilitytriod}
We say that the {\em compatibility conditions of order $2$} for system~\eqref{problema-nogauge} are satisfied by the (initial) $C^2$ regular triod $\mathbb{T}_0=\bigcup_{i=1}^{3}\sigma^{i}\left([0,1]\right)$, if at the end--points and at the $3$--point, there hold all the relations on the space derivatives, up to second order, of the 
functions $\sigma^i$ given by the boundary conditions and their time derivatives, assuming that the evolution equation holds also at such points.\\
Explicitly, the compatibility conditions of order $0$ at the $3$--point are
\begin{equation*}
\sigma^i(0)=\sigma^j(0)\qquad\text{ for every $i,j\in\{1,2,3\}$}
\end{equation*}
and
\begin{equation*}
\sigma^i(1)=P^i\qquad\text{ for every $i\in\{1,2,3\}$,}
\end{equation*}
that is, simply the concurrency and fixed end--points conditions.\\
The compatibility condition of order $1$ is given by 
\begin{equation*}
\sum_{i=1}^3\frac{\sigma_x^i(0)}{|\sigma_x^i(0)|}=0\,,
\end{equation*}
that is, the 120 degrees condition at the $3$--point.\\
To get the second order conditions, one has to differentiate in time the first ones, getting
\begin{equation*}
\frac{\sigma_{xx}^i(0)}{|\sigma_x^i(0)|^2}=\frac{\sigma_{xx}^j(0)}{|\sigma_x^j(0)|^2}\;
\qquad\text{for every $i,j\in\{1,2,3\}$}
\end{equation*}
and
\begin{equation*}
\frac{\sigma_{xx}^i(1)}{|\sigma_x^i(1)|^2}=0\qquad \text{ for every $i\in\{1,2,3\}$}\,.
\end{equation*}
\end{defn}

As in the Sobolev case, we consider the linearized system~\eqref{linsys}, which also needs more conditions on the initial data in order to be well--posed.

\begin{defn}\label{linearcompcond-ordertwo}
A function $\psi=(\psi^1,\psi^2,\psi^3)\in\mathcal{I}$ satisfies the {\em linear compatibility conditions of order $2$} for system~\eqref{linsys} with respect to the functions $f=(f^1,f^2,f^3)$, $\eta=(\eta^1,\eta^2,\eta^3)$ and $b$, if $\psi$ satisfies the linear compatibility conditions as in Definition~\ref{linearcompcond} and, in addition,
$$
\frac{\psi^i_{xx}(0)}{|\sigma^i_{x}(0)|^{2}}+f^i(0,0)
=\frac{\psi^j_{xx}(0)}{|\sigma^j_{x}(0)|^{2}}+f^j(0,0)\qquad\text{ for every $i,j\in\{1,2,3\}$}
$$
and
$$
\frac{\psi^i_{xx}(1)}{|\sigma^i_{x}(1)|^{2}}+f^i(1,0)=\eta^i_t(0)\qquad\text{ for every $i\in\{1,2,3\}$}\,.
$$
\end{defn}

Then, the following proposition (analogous to Proposition~\ref{exlintriod}) is a consequence of Theorem~4.9 in the book of Solonnikov~\cite{solonnikov1} (see also~\cite{lasolura} and~\cite{eidelman2}), as we know that system~\eqref{linsys} satisfies the complementary conditions.

\begin{prop}\label{exlintriod-holder}
Let $\alpha\in(0,1/2)$. For every $T>0$, system~\eqref{linsys} has a unique solution $\gamma\in\mathbb{E}_T$
provided that $f\in C^{2\alpha,\alpha}([0,1]\times[0,T])$, $\eta\in C^{1+\alpha}([0,T])$, 
$b\in C^{1/2+\alpha}([0,T])$ and $\psi\in C^{2+2\alpha}([0,1])$ fulfills the linear compatibility conditions of order $2$ stated in Definition~\ref{linearcompcond-ordertwo}. Moreover, there exists a constant $C=C(T)>0$ such that the following estimate holds:
\begin{equation}\label{estimate2}
\Vert \gamma\Vert_{\mathbb{E}_T} \leqslant C\big( 
 \Vert f\Vert_{C^{2\alpha,\alpha}([0,1]\times[0,T])}+\Vert\eta\Vert_{C^{1+\alpha}([0,T])}+
\Vert b\Vert_{C^{1/2+\alpha}([0,T])}
+ \Vert \psi \Vert_{C^{2+2\alpha}([0,1])}\big)\,.
\end{equation} 
\end{prop}

Arguing as in the Sobolev case, by means of contraction estimates, the work of Bronsard and Reitich~\cite{bronsard} then shows the well--posedness of the special curvature flow system~\eqref{problema-nogauge} in the H\"older setting.

\begin{thm}\label{2smoothexist0-triod} 
For any initial, regular $C^{2+2\alpha}$ triod $\TTT_0=\bigcup_{i=1}^3\sigma^i([0,1])$, 
with $\alpha\in(0,1/2)$, satisfying the compatibility conditions of order $2$, there exists a positive time $T$ such that system~\eqref{problema-nogauge} has a unique solution in $C^{2+2\alpha,1+\alpha}([0,1]\times[0,T])$. Moreover, every triod $\TTT_t=\bigcup_{i=1}^3\gamma^i([0,1],t)$ satisfies the compatibility conditions of order $2$.
\end{thm}

\begin{rem} 
In~\cite{bronsard} the authors do not consider exactly system~\eqref{problema-nogauge}, 
but the analogous ``Neumann problem''. That is, they require that the end--points of the three curves meet the boundary
of $\Omega$ orthogonally.
\end{rem}

\subsubsection{Existence and geometric uniqueness}\label{3.2.2}

Clearly, a solution of system~\eqref{problema-nogauge} provides a H\"older--solution to Problem~\eqref{problematriodbis}.

\begin{thm}\label{2compexist0-triod} 
For any initial, regular $C^{2+2\alpha}$ triod $\TTT_0=\bigcup_{i=1}^3\sigma^i([0,1])$, with
$\alpha\in(0,1/2)$, in a smooth, convex, open set $\Omega\subseteq\R^2$, satisfying the 
compatibility conditions of order $2$, there exists a H\"older--curvature flow of $\TTT_0$ of class $C^{2+2\alpha,1+\alpha}([0,1]\times[0,T))$ in a maximal positive time interval $[0,T)$. Moreover, every triod 
$\TTT_t=\bigcup_{i=1}^3\gamma^i([0,1],t)$ satisfies the compatibility conditions of order $2$.
\end{thm}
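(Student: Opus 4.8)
The plan is to deduce Theorem~\ref{2compexist0-triod} directly from Theorem~\ref{2smoothexist0-triod} by observing that system~\eqref{problema-nogauge} is precisely a particular gauge choice for Problem~\eqref{problematriod}, as anticipated in Remark~\ref{specevol} and the sentence immediately preceding the statement. First I would invoke Theorem~\ref{2smoothexist0-triod} to obtain, for the given regular $C^{2+2\alpha}$ initial triod $\TTT_0$ satisfying the compatibility conditions of order $2$, a unique solution $\gamma^i\in C^{2+2\alpha,1+\alpha}([0,1]\times[0,T))$ of~\eqref{problema-nogauge} on a maximal time interval $[0,T)$. Then I would check that this family $\TTT_t=\bigcup_{i=1}^3\gamma^i([0,1],t)$ solves Problem~\eqref{problematriod}: the regularity, fixed end--points, and $120$ degrees conditions are literally the same in the two systems, the initial condition $\gamma^i(x,0)=\sigma^i(x)$ is shared, and the motion by curvature condition $\gamma^i_t=k^i\nu^i+\lambda^i\tau^i$ holds with the explicit continuous choice $\lambda^i=\langle\gamma^i_{xx}\,\vert\,\gamma^i_x\rangle/|\gamma^i_x|^3$, using the decomposition $\gamma^i_{xx}/|\gamma^i_x|^2=k^i\nu^i+\lambda^i\tau^i$ recorded in Remark~\ref{specevol}. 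This continuity of $\lambda^i$ follows from $\gamma^i\in C^{2+2\alpha,1+\alpha}$ together with $\gamma^i_x\neq 0$.

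The second assertion, that every $\TTT_t$ again satisfies the compatibility conditions of order $2$, is already contained in the conclusion of Theorem~\ref{2smoothexist0-triod}, so I would simply restate it; alternatively one can note that the compatibility conditions of order $0$ and $1$ are exactly the concurrency, fixed end--points and $120$ degrees conditions, which hold for all $t$ by construction, while the order $2$ conditions are obtained by differentiating these in time and using the evolution equation, which is legitimate since $\gamma^i$ is $C^1$ in time up to the parabolic boundary and the equation holds there.

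The remaining point worth a line is the word \emph{maximal} in the statement: the maximal time $T$ for the curvature flow in the sense of Problem~\eqref{problematriod} within this H\"older class coincides with the maximal existence time for~\eqref{problema-nogauge}, because a solution of~\eqref{problematriod} in $C^{2+2\alpha,1+\alpha}$ can be reparametrized (tangentially, dynamically in time) into a solution of~\eqref{problema-nogauge} on the same interval, so no genuine extension is lost by the gauge fixing. I would either cite this reparametrization fact or, more cheaply, simply define $T$ to be the maximal time supplied by Theorem~\ref{2smoothexist0-triod} and note that by the discussion of uniqueness (deferred to the later parts of Section~\ref{smtm}) this is the natural maximal time in the relevant class.

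The main obstacle is essentially bookkeeping rather than analysis: all the hard PDE work — verifying the complementary and compatibility conditions and running Solonnikov's theory — is already absorbed into Theorem~\ref{2smoothexist0-triod}. The only genuine care needed is (i) to spell out why the special tangential velocity $\lambda^i=\langle\gamma^i_{xx}\,\vert\,\gamma^i_x\rangle/|\gamma^i_x|^3$ produced by~\eqref{problema-nogauge} is an admissible continuous choice in the sense of Definition~\ref{d1}, and (ii) to be honest about the sense in which $T$ is maximal, since strictly speaking maximality for~\eqref{problematriod} is a slightly larger claim than maximality for the gauged system and relies on the reparametrization/uniqueness discussion that the paper develops separately in Section~\ref{smtm}.
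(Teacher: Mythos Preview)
Your approach matches the paper's: take the solution of~\eqref{problema-nogauge} from Theorem~\ref{2smoothexist0-triod} and read off that it solves Problem~\eqref{problematriod} with $\lambda^i=\langle\gamma^i_{xx}\,\vert\,\gamma^i_x\rangle/|\gamma^i_x|^3$, noting that this $\lambda^i$ lies in $C^{2\alpha,\alpha}$ and is in particular continuous. The compatibility conditions and maximality are handled exactly as you say.

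There is one genuine omission. A ``curvature flow of $\TTT_0$'' in the sense of Definition~\ref{probdef} is a flow of \emph{regular networks}, and by Definition~\ref{Cinfty} a network is embedded: the interiors of the curves have no self--intersections, different curves meet only at their end--points, and the network touches $\partial\Omega$ only at the fixed end--points. None of this is encoded in system~\eqref{problema-nogauge}, so it does not come for free from Theorem~\ref{2smoothexist0-triod}; you must argue it separately. The paper does this in two steps: first, $\gamma^i_x\neq 0$ persists by continuity of $\gamma^i_x\in C^{1+2\alpha,1/2+\alpha}$ from the regular initial data; second, self--intersections, mutual intersections of distinct curves, and contact with $\partial\Omega$ away from the $P^i$ are all excluded by the strong maximum principle for curve shortening (intersections cannot occur at the $3$--point for elementary geometric reasons, since the curvature is bounded and the $120$ degrees condition holds). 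Without this paragraph you have a solution of the PDE system~\eqref{problematriod}, but not yet a curvature flow of a triod in $\Omega$.
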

\begin{proof}
If $\gamma^i\in C^{2+2\alpha,1+\alpha}([0,1]\times[0,T))$ is a solution of 
system~\eqref{problema-nogauge}, then it solves Problem~\eqref{problematriodbis} with
$$
\lambda^i(x,t)=\frac{\langle\gamma_{xx}^i(x,t)\,\vert\,\tau^i(x,t)\rangle}{\left|
\gamma_{x}^{i}\left(x,t\right)\right|^{2}}=
\frac{\langle\gamma_{xx}^i(x,t)\,\vert\,\gamma_x^i(x,t)\rangle}{\left|
\gamma_{x}^{i}\left(x,t\right)\right|^{3}}\,.
$$
Indeed, it follows immediately by the regularity properties of this flow 
that the relative functions $\lambda^i$ belong to the parabolic H\"older space 
$C^{2\alpha,\alpha}([0,1]\times[0,T))$ (hence, in $C^\alpha([0,1]\times[0,T))$, thus continuous) 
and all the triods $\TTT_t$ are in $C^{2+2\alpha}$, 
satisfying the compatibility conditions of order $2$.\\
The property that these evolving triods are regular follows by the standard fact that the maps 
$\gamma^i_x$ are continuous, belonging to $C^{1+2\alpha,1/2+\alpha}([0,1]\times[0,T])$ 
(see~\cite[Section~8.8]{krylov1}), hence, being $\sigma^i$ regular curves, 
$\gamma_x^i(x,t)\not=0$ still holds for every $x\in[0,1]$ 
and for some positive interval of time.\\
The fact that a curve cannot self--intersect or two curves cannot intersect each 
other can be ruled out by noticing that such an intersection cannot happen at the $3$--point 
by geometric reasons, as the curvature is locally bounded and the curves are regular, 
then it is well known for the motion by curvature that strong maximum principle 
prevents such intersections for the flow of two embedded curves 
(or two distinct parts of the same curve). 
A similar argument and again the strong maximum principle also prevent 
a curve from ``hitting'' the boundary of $\Omega$ at a point different 
from a fixed end--point of the triod.
\end{proof}

\begin{rem}\label{ght1}Since every curve $\gamma^i$ of a special curvature flow $\TT_t$ satisfies $\gamma^i_t=\frac{\gamma_{xx}^{i}}{|\gamma_{x}^{i}|^{2}}$ for every $t>0$, by the very Definition~\ref{compatibilitytriod}, every triod $\TT_t$ is $2$--compatible.\\ 
If instead we have simply a $C^{2,1}$ curvature flow $\TT_t$, 
it is not necessarily $2$--compatible for every time. 
It only has to satisfy $k\nu+\lambda\tau=0$ at every end--point and 
$$
\text{
$(k^{i}\nu^{i}+\lambda^{i}\tau^{i})(O)=(k^{j}\nu^{j}+\lambda^{j}\tau^{j})(O)$,
{ for } $i,j\in\{1,2,3\}$}\,.
$$
These relations imply anyway that for every evolving triod $\TT_t$ the curvature is zero 
at the end--points and the sum of the three curvatures at the $3$--point is zero. We are going to see that this implies that by reparametrizing $\TT_t$ by a $C^\infty$ map we obtain a 
2--compatible network.
\end{rem}

The observations in this remark can be clearly extended to general networks, as well as Definition~\ref{compatibilitytriod}.

\begin{defn}\label{2compcond}
We say that a regular $C^2$ network $\SS_0=\bigcup_{i=1}^{n}\sigma^{i}([0,1])$ 
is {\em 2--compatible} if the maps $\sigma^i$ satisfy the 
{\em compatibility conditions of order 2} for system~\eqref{problema-nogauge-general}, 
that is $\sigma_{xx}^i=0$ at every end--point and 
\begin{equation*}
\frac{\sigma_{xx}^{pi}(O^p)}{|\sigma_x^{pi}(O^p)|^2}
=\frac{\sigma_{xx}^{pj}(O^p)}{|\sigma_x^{pj}(O^p)|^2}
\end{equation*}
for every pair of curves $\sigma^{pi}$ and $\sigma^{pj}$ concurring 
at any $3$--point $O^p$ (where we abused a little the notation like in Definition~\ref{probdef}).
\end{defn}

\begin{defn}\label{geom-2-comp}
We say that a regular $C^2$ network $\SS_0=\bigcup_{i=1}^n\sigma^i([0,1])$ is 
{\em geometrically $2$--compatible} if the curvature is zero at 
every end--point and the sum of the three curvatures at every $3$--point is zero.
\end{defn}

By this definition, to be geometrically $2$--compatible is a property invariant 
by reparametrization of the curves of a network (it involves only the curvature,
a geometric quantity invariant under reparametrization). Arguing as in Remark~\ref{ght1}, we immediately have the following proposition.

\begin{prop}\label{c2geocomp} Given a curvature flow $\SS_t$ of an initial regular $C^2$ 
network $\SS_0=\bigcup_{i=1}^n\sigma^i([0,1])$ all the networks $\SS_t$, for $t>0$, 
are geometrically $2$--compatible.
\end{prop}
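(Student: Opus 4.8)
The plan is to obtain the proposition as an immediate combination of Remark~\ref{ght1} and Lemma~\ref{repar0}, exactly as announced. Fix an arbitrary $t>0$ and regard the network $\SS_t$ on its own, forgetting the flow. By the definition of a curvature flow, $\SS_t$ is a regular network whose curves $\gamma^i(\cdot,t)$ are $C^2$ in space, so $\SS_t$ is an admissible regular $C^2$ network. It therefore remains only to check that $\SS_t$ satisfies the hypothesis of Lemma~\ref{repar0}, namely that the curvature of $\SS_t$ vanishes at every end--point and that the three curvatures at each $3$--point sum to zero; then Lemma~\ref{repar0} produces the desired $C^\infty$ regular reparametrization making $\SS_t$ $2$--compatible, which is precisely the assertion that $\SS_t$ is geometrically $2$--compatible.

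To verify that hypothesis I would invoke the computations of Section~\ref{basiccomp} together with Remark~\ref{ght1}. Differentiating in time the fixed end--point condition $\gamma^r(1,t)=P^r$ gives $(k\nu+\lambda\tau)(P^r)=0$ along the flow, and since $\nu\perp\tau$ this forces $k(P^r)=0$ at every end--point. Differentiating the concurrency condition $\gamma^{pi}(O^p,t)=\gamma^{pj}(O^p,t)$ at each $3$--point gives that the velocity $k^{pi}\nu^{pi}+\lambda^{pi}\tau^{pi}$ agrees for the three concurring curves; projecting this equality onto the tangent and normal directions and using $\sum_{i=1}^3\tau^{pi}=\sum_{i=1}^3\nu^{pi}=0$ yields, as in~\eqref{eq:cond2}, that $\sum_{i=1}^3 k^{pi}=0$ (and $\sum_{i=1}^3\lambda^{pi}=0$) at every $3$--point of $\SS_t$. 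Thus $\SS_t$ meets the hypothesis of Lemma~\ref{repar0} for each $t>0$, and the proposition follows.

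There is no genuine obstacle here; the only point to keep in mind is that the hypothesis of Lemma~\ref{repar0} is a \emph{geometric} condition on the set $\SS_t$ (it only involves the curvature), so it is insensitive both to the fact that the flow is merely $C^2$ in space and to the fact that the parametrizations $\gamma^i(\cdot,t)$ themselves need not be $2$--compatible, as observed in Remark~\ref{ght1}. Hence the statement is a formal consequence of the two preceding results, and the proof is as short as the text announces.
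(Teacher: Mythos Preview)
Your proof is correct and follows exactly the approach the paper intends: it is the spelled-out version of the one-line justification ``By this lemma and Remark~\ref{ght1}'', deducing the curvature conditions at end--points and $3$--points from Remark~\ref{ght1} (via the computations of Section~\ref{basiccomp}) and then invoking Lemma~\ref{repar0}.
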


There is a clear relation between {\em geometrically} $2$--compatible and $2$--compatible networks that we give in the following lemma.

\begin{lem}\label{repar0} 
Let $\SS_0=\bigcup_{i=1}^n\sigma^i([0,1])$ be a geometrically $2$--compatible network. Then,
it admits a regular reparametrization by a $C^\infty$ map such that it becomes $2$--compatible.
\end{lem}
\begin{proof}
We look for some $C^\infty$ maps $\theta^i:[0,1]\to[0,1]$, with $\theta_x^i(x)\not=0$ for every 
$x\in [0,1]$ and $\theta^i(0)=0$, $\theta^i(1)=1$ such that the reparametrized curves 
$\widetilde{\sigma}^i=\sigma^i\comp\,\theta^i$ satisfy 
\begin{equation*}
\frac{\widetilde{\sigma}_{xx}^i}{|\widetilde{\sigma}_x^i|^2}
=\frac{\widetilde{\sigma}_{xx}^j}{|\widetilde{\sigma}_x^j|^2}
\end{equation*}
for every pair of concurring curves $\widetilde{\sigma}^i$ and $\widetilde{\sigma}^j$ at any 
$3$--point and ${\widetilde{\sigma}_{xx}^i}=0$ at every end--point of the network. Setting 
$\widetilde{\lambda}_0^i=\frac{\langle\widetilde{\sigma}_{xx}^i\,
\vert\widetilde{\sigma}^i_x\rangle}{|\widetilde{\sigma}_x^i|^3}$ this means 
$$
\widetilde{k}^i\widetilde{\nu}^i+\widetilde{\lambda}^i_0\widetilde{\tau}^i
=\widetilde{k}^j\widetilde{\nu}^j+\widetilde{\lambda}^j_0\widetilde{\tau}^j
$$
for every pair of concurring curves $\widetilde{\sigma}^i$ and $\widetilde{\sigma}^j$ at any 
$3$--point and $\widetilde{k}^i\widetilde{\nu}^i+\widetilde{\lambda}^i_0
\widetilde{\tau}^i=0$ at every end--point of the network. Since the curvature is invariant by 
reparametrization, using computations of Section~\ref{basiccomp} and the hypotheses 
on the curvature, these two conditions are satisfied if and only if 
$\widetilde{\lambda}^i_0=0$ at every end--point of the network and 
$$
\widetilde{\lambda}^i_0=\frac{k^{i-1}-k^{i+1}}{\sqrt{3}}
$$
at every $3$--point of the network, for $i\in\{1,2,3\}$ (modulus 3).\\
Hence, we only need to find $C^\infty$ reparametrizations $\theta^i$ such that at the borders 
of $[0,1]$ the values of $\widetilde{\lambda}_0^i=\frac{\langle\widetilde{\sigma}_{xx}^i\,
\vert\widetilde{\sigma}^i_x\rangle}{|\widetilde{\sigma}_x^i|^3}$ are given by these relations. 
This can be easily done since at the borders of the interval $[0,1]$ we have $\theta^i(0)=0$ 
and $\theta^i(1)=1$, hence 
$$
\widetilde{\lambda}_0^i=\frac{\langle\widetilde{\sigma}_{xx}^i\,
\vert\widetilde{\sigma}^i_x\rangle}{|\widetilde{\sigma}_x^i|^3}=-\partial_x\frac{1}
{\vert\widetilde{\sigma}_x^i\vert}
=-\partial_x\frac{1}{\vert\sigma_x^i\comp\,\theta^i\vert\theta_x^i}
=\frac{\langle{\sigma}_{xx}^i\,\vert{\sigma}^i_x\rangle}{|{\sigma}_x^i|^3}+\frac{\theta_{xx}^i}
{\vert\sigma_x^i\vert\vert\theta_x^i\vert^2}
=\lambda_0^i+\frac{\theta_{xx}^i}{\vert\sigma_x^i\vert\vert\theta_x^i\vert^2}
$$ 
where $\lambda_0^i=\frac{\langle{\sigma}_{xx}^i\,\vert{\sigma}^i_x\rangle}{|{\sigma}_x^i|^3}$, 
then we can simply choose any $C^\infty$ functions $\theta^i$ with 
$\theta_x^i(0)=\theta_x^i(1)=1$, $\theta_{xx}^i=-\lambda^i_0|\sigma_x^i||\theta_x^i|^2$ at 
every end--point and 
$$
\theta_{xx}^i=\left(\frac{k^{i-1}-k^{i+1}}{\sqrt{3}}-\lambda_0^i\right)\,
\vert\sigma_x^i\vert\vert\theta_x^i\vert^2
$$
at every $3$--point of the network (for instance, one can use a polynomial function). It follows 
that the reparametrized network $\widetilde{\SS}_0=\bigcup_{i=1}^n(\sigma^i\comp\,\theta^i)
([0,1])$ is $2$--compatible.
\end{proof}

We are then ready to deal with networks with general topological structure, having as a goal the following final conclusion.

\begin{thm}\label{2compexist0} 
For any initial, regular $C^{2+2\alpha}$ network $\SS_0=\bigcup_{i=1}^n\sigma^i([0,1])$, with $\alpha\in(0,1/2)$, in a smooth, convex, open set $\Omega\subseteq\R^2$, which is geometrically $2$--compatible, there exists a geometrically unique H\"older-- $C^{2+2\alpha,1+\alpha}([0,1]\times[0,T))$ curvature flow $\SS_t$ (in the sense of Definition~\ref{uniqdef}) in $C^{2+2\alpha,1+\alpha}([0,1]\times[0,T))$, in a maximal time interval $[0,T)$. Moreover, all the networks $\SS_t$ are geometrically $2$--compatible.
\end{thm}

We first extend the short--time existence Theorem~\ref{2compexist0-triod} to regular, $C^{2+2\alpha}$ initial networks which are geometrically $2$--compatible, hence showing the ``existence part'' of Theorem~\ref{2compexist0}.

\begin{prop}\label{2geocomp} 
For any initial regular $C^{2+2\alpha}$ network $\SS_0=\bigcup_{i=1}^n\sigma^i([0,1])$ 
which is geometrically $2$--compatible, with $\alpha\in(0,1/2)$, in a smooth, convex, open set $\Omega\subseteq\R^2$, there exists a H\"older--curvature flow of class $C^{2+2\alpha,1+\alpha}([0,1]\times[0,T))$ for a maximal positive time interval $[0,T)$.
\end{prop}
\begin{proof}
By Lemma~\ref{repar0}, we can reparametrize the network $\SS_0$ with some $C^\infty$ maps 
$\theta^i$ to make it $2$--compatible. If the network $\SS_0$ belongs to $C^{2+2\alpha}$ the 
reparametrized one $\widetilde{\SS}_0$ is still in $C^{2+2\alpha}$, then we can argue step--by--step exactly as we did in Section~\ref{strutture-complicate} for the Sobolev setting, in order to extend Theorem~\ref{2smoothexist0-triod} to general regular networks, getting the unique special curvature flow $\widetilde{\gamma}^i$ for $\widetilde{\SS}_0=\bigcup_{i=1}^n\widetilde{\sigma}^i([0,1])=\bigcup_{i=1}^n(\sigma^i\comp\,\theta^i)([0,1])$ which is in $C^{2+2\alpha,1+\alpha}
([0,1]\times[0,T))$ for a maximal positive time interval $[0,T)$. Moreover, every network 
$\SS_t=\bigcup_{i=1}^n\gamma^i([0,1],t)$ is $2$--compatible.

If now we consider the maps $\gamma^i$ given by 
$\gamma^i(x,t)=\widetilde{\gamma}^i([\theta^i]^{-1}(x),t)$, we have that they still belong to 
$C^{2+2\alpha,1+\alpha}([0,1]\times[0,T))$ (as the maps $[\theta^i]^{-1}$ are in $C^\infty$), 
$\gamma^i(\cdot,0)=\sigma^i$ and 
\begin{align*}
\gamma^i_t(x,t)=&\,\partial_t[\widetilde{\gamma}^i([\theta^i]^{-1}(x),t)]\\
=&\,\widetilde{\gamma}^i_t([\theta^i]^{-1}(x),t)\\
=&\,\underline{\widetilde{k}}^i([\theta^i]^{-1}(x),t)
+{\widetilde{\lambda}}^i([\theta^i]^{-1}(x),t)\widetilde{\tau}^i([\theta^i]^{-1}(x),t)\\
=&\,\underline{{k}}^i(x,t)
+\underline{\lambda}^i(x,t)\,,
\end{align*}
with $\underline{\lambda}^i(x,t)={\widetilde{\lambda}}^i([\theta^i]^{-1}
(x),t)\widetilde{\tau}^i([\theta^i]^{-1}(x),t)$. Hence, $\gamma^i$ is a flow by curvature of the 
network ${\SS}_0$ in $C^{2+2\alpha,1+\alpha}([0,1]\times[0,T))$
\end{proof}

Finally, we address the geometric uniqueness of the flow in H\"{o}lder space, obtaining Theorem~\ref{2compexist0}. 

\begin{proof}[Proof of Theorem~\ref{2compexist0}] 
By Proposition~\ref{2geocomp}, we have a H\"{o}lder--curvature flow $\SS_t$ of $\SS_0$, given by the family of moving curves $\gamma^i$. We first show that if $\SS_0=\bigcup_{i=1}^n\sigma^i([0,1])$ satisfies the compatibility conditions of order $2$ then the solution given by Theorem~\ref{2compexist0-triod} (which is the special flow given by the extension of Theorem~\ref{2smoothexist0-triod}, as in the proof of the previous proposition) is geometrically unique among the curvature flows in the class $C^{2+2\alpha,1+\alpha}([0,1]\times[0,T))$. 

Suppose that $\widetilde{\gamma}^i:[0,1]\times[0,\widetilde{T})\to\overline{\Omega}$ is
another maximal solution in $C^{2+2\alpha,1+\alpha}([0,1]\times[0,\widetilde{T}))$ satisfying
$\widetilde{\gamma}^i_t=\widetilde{k}^i\widetilde{\nu}^i+\widetilde{\lambda}^i\widetilde{\tau}^i$ 
for some functions $\widetilde{\lambda}^i$ in $C^{2\alpha}([0,1]\times[0,\widetilde{T}))$,
we want to see that it coincides with $\gamma^i$ up to a
reparametrization of the curves $\widetilde{\gamma}^i(\cdot,t)$ for
every $t\in[0,\min\{T,\widetilde{T}\})$.\\
If we consider functions $\varphi^i:[0,1]\times[0,\min\{T,\widetilde{T}\})\to[0,1]$ 
belonging to $C^{2+2\alpha,1+\alpha}([0,1]\times[0,\min\{T,\widetilde{T}\}))$ and the
reparametrizations $\overline{\gamma}^i(x,t)=\widetilde{\gamma}^i(\varphi^i(x,t),t)$, 
we have that $\overline{\gamma}^i\in C^{2+2\alpha,1+\alpha}([0,1]\times[0,\min\{T,\widetilde{T}\}))$ and 
\begin{align*}
\overline{\gamma}^i_t(x,t)=&\,\partial_t[\widetilde{\gamma}^i(\varphi^i(x,t),t)]\\
=&\,\widetilde{\gamma}^i_x(\varphi^i(x,t),t)\varphi^i_t(x,t)
+\widetilde{\gamma}^i_t(\varphi^i(x,t),t)\\
=&\,\widetilde{\gamma}^i_x(\varphi^i(x,t),t)\varphi^i_t(x,t)
+\underline{\widetilde{k}}^i(\varphi^i(x,t),t)
+\underline{\widetilde{\lambda}}^i(\varphi^i(x,t),t)\\
=&\,\widetilde{\gamma}^i_x(\varphi^i(x,t),t)\varphi^i_t(x,t)
+\frac{\left\langle\widetilde{\gamma}_{xx}^{i}\left(\varphi^i(x,t),t\right)\,\vert\,
\widetilde{\nu}^i(\varphi^i(x,t),t)\right\rangle}
{\left|\widetilde{\gamma}_{x}^{i}\left(\varphi^i(x,t),t\right)\right|
^{2}}\widetilde{\nu}^i(\varphi^i(x,t),t)\\
&\,+\widetilde{\lambda}^i(\varphi^i(x,t),t)
\frac{\widetilde{\gamma}_x^i(\varphi^i(x,t),t)}{\left|
\widetilde{\gamma}_{x}^{i}\left(\varphi^i(x,t),t\right)\right|}\,.
\end{align*}
We choose now maps $\varphi^i\in C^{2+2\alpha,1+\alpha}([0,1]\times[0,\widehat{T}))$ which are solutions for some positive interval of time $[0,\widehat{T})$ of the following quasilinear PDE's
\begin{equation}\label{reparphi}
\varphi^i_t(x,t)=\frac{\left\langle\widetilde{\gamma}_{xx}^{i}\left(\varphi^i(x,t),t\right)\,\vert\,
\widetilde{\gamma}_x^i(\varphi^i(x,t),t)\right\rangle}
{\left|\widetilde{\gamma}_{x}^{i}\left(\varphi^i(x,t),t\right)\right|^{4}}
-\frac{\widetilde{\lambda}^i(\varphi^i(x,t),t)}{\left|
\widetilde{\gamma}_{x}^{i}\left(\varphi^i(x,t),t\right)\right|}
+\frac{\varphi^i_{xx}(x,t)}{\left|\widetilde{\gamma}_{x}^{i}\left(\varphi^i(x,t),t\right)\right|^2
\left|\varphi^i_x(x,t)\right|^2}
\end{equation}
with $\varphi^i(0,t)=0$, $\varphi^i(1,t)=1$ and $\varphi^i(x,0)=x$ (hence, 
$\overline{\gamma}^i(x,0)=\gamma^i(x,0)=\sigma^i(x)$).\\ 
To find such reparametrizations $\varphi$, we consider, as in Section~\ref{exuniqSob}, the associated problem for the inverse diffeomorphisms $\xi=(\xi^1,\xi^2,\xi^3)$ given by $\xi^i(\cdot,t)=\varphi^i(\cdot,t)^{-1}$, for every fixed $t\in[0,\widehat{T})$.
\begin{equation}\label{systemreparainverseH}
\begin{cases}
\displaystyle{\xi^i_t(y,t)\,=\frac{\xi^i_{yy}(y,t)}{\left|\widetilde{\gamma}_{x}^{i}(y,t)\right|^{2}}
+\bigg\langle \widetilde{\gamma}_t^i(y,t)-\frac{\widetilde{\gamma}_{xx}^i(y,t)}{\vert\,\widetilde{\gamma}^i_x(y,t)\vert^2}\,\bigg\vert\,
\frac{\widetilde{\gamma}_x^i(y,t)}{\left|\widetilde{\gamma}_{x}^{i}(y,t)\right|^2}\bigg\rangle\,\xi^i_y(y,t)}\\[1em]
\xi^i(0,t)\,=0\\
\xi^i(1,t)\,=1\\
\xi^i(y,0)=y
\end{cases}
\end{equation}
for all $t\in [0,\widetilde{T})$, $y\in[0,1]$ and $i\in\{1,2,3\}$.\\
We already know, from Section~\ref{exuniqSob}, that this linear system satisfies the complementary conditions, hence for the existence of a solution $\xi\in C^{2+2\alpha,1+\alpha}([0,1]\times[0,\min\{T,\widetilde{T}\}))$, we only have to check that the compatibility conditions of order $2$ (as in Definition~\ref{linearcompcond-ordertwo}) for the initial data holds. By simplicity, we show it for a triod: in such case, they are $\psi^i(0)=0$ and $\psi^i(1)=1$, which are clearly satisfied by $\xi^i(y,0)=y$ and 
\begin{gather}
\frac{\psi^i_{yy}(0)}{\left|\widetilde{\gamma}_{x}^{i}(0,0)\right|^{2}}
+\bigg\langle \widetilde{\gamma}_t^i(0,0)-\frac{\widetilde{\gamma}_{xx}^i(0,0)}{\vert\,\widetilde{\gamma}^i_x(0,0)\vert^2}\,\bigg\vert\,\frac{\widetilde{\gamma}_x^i(0,0)}{\left|\widetilde{\gamma}_{x}^{i}(0,0)\right|^2}\bigg\rangle\,\psi^i_y(0)=0\\
\frac{\psi^i_{yy}(0)}{\left|\widetilde{\gamma}_{x}^{i}(1,0)\right|^{2}}
+\bigg\langle \widetilde{\gamma}_t^i(1,0)-\frac{\widetilde{\gamma}_{xx}^i(1,0)}{\vert\,\widetilde{\gamma}^i_x(1,0)\vert^2}\,\bigg\vert\,\frac{\widetilde{\gamma}_x^i(1,0)}{\left|\widetilde{\gamma}_{x}^{i}(1,0)\right|^2}\bigg\rangle\,\psi^i_y(0)=0
\end{gather}
where, putting $\psi^i(y)=\xi^i(y,0)=y$, we get the equations
\begin{equation}\label{eqcar2323}
\bigg\langle \widetilde{\gamma}_t^i(0,0)-\frac{\widetilde{\gamma}_{xx}^i(0,0)}{\vert\,\widetilde{\gamma}^i_x(0,0)\vert^2}\,\bigg\vert\,\frac{\widetilde{\gamma}_x^i(0,0)}{\left|\widetilde{\gamma}_{x}^{i}(0,0)\right|^2}\bigg\rangle\,=
\bigg\langle \widetilde{\gamma}_t^i(0,0)-\frac{\sigma_{xx}^i(0)}{\vert\,\sigma^i_x(0)\vert^2}\,\bigg\vert\,\frac{\sigma_x^i(0)}{\left|\sigma_{x}^{i}(0)\right|^2}\bigg\rangle\,=\,0
\end{equation}
$$
\bigg\langle \widetilde{\gamma}_t^i(1,0)-\frac{\widetilde{\gamma}_{xx}^i(1,0)}{\vert\,\widetilde{\gamma}^i_x(1,0)\vert^2}\,\bigg\vert\,\frac{\widetilde{\gamma}_x^i(1,0)}{\left|\widetilde{\gamma}_{x}^{i}(1,0)\right|^2}\bigg\rangle\,=\,-\,\bigg\langle \,\frac{\sigma_{xx}^i(1)}{\vert\,\sigma^i_x(1)\vert^2}\,\bigg\vert\,\frac{\sigma_x^i(1)}{\left|\sigma_{x}^{i}(1)\right|^2}\bigg\rangle\,=\,0\,.
$$
Since $\sigma$ satisfies the compatibility conditions of order $2$ for system~\eqref{problema-nogauge}, we have (Definition~\ref{compatibilitytriod})
\begin{equation*}
\frac{\sigma_{xx}^i(0)}{|\sigma_x^i(0)|^2}=\frac{\sigma_{xx}^j(0)}{|\sigma_x^j(0)|^2}\;
\qquad\text{ and }\qquad
\frac{\sigma_{xx}^i(1)}{|\sigma_x^i(1)|^2}=0\,,
\end{equation*}
for every $i,j\in\{1,2,3\}$, hence the second equation above is immediately verified and the vector $v=\widetilde{\gamma}_t^i(0,0)-\frac{\sigma_{xx}^i(0)}{\vert\,\sigma^i_x(0)\vert^2}$ is independent of $i\in\{1,2,3\}$. It follows,
$$
\langle v\,\vert\,\widetilde{\nu}^i(0,0)\rangle=\bigg\langle \widetilde{\gamma}_t^i(0,0)-\frac{\sigma_{xx}^i(0)}{\vert\,\sigma^i_x(0)\vert^2}\,\bigg\vert\,\widetilde{\nu}^i(0,0)\bigg\rangle=\widetilde{k}^i(0,0)-\bigg\langle\frac{\sigma_{xx}^i(0)}{\vert\,\sigma^i_x(0)\vert^2}\,\bigg\vert\,\widetilde{\nu}^i(0,0)\bigg\rangle=0
$$
for every $i\in\{1,2,3\}$, which implies $v=0$, thus equation~\eqref{eqcar2323} is also satisfied. In the case of a general network, the above argument must simply be repeated for every triple junction and every end--point (by means of Definition~\ref{2compcond}).\\
Then, again by Solonnikov's theory (Theorem~4.9 in~\cite{solonnikov1}), we have a solution $\xi\in C^{2+2\alpha,1+\alpha}([0,1]\times[0,\min\{T,\widetilde{T}\}))$, for some $\widehat{T}\leqslant\widetilde{T}$, such that for every $t\in[0,\widehat{T}]$ the map $\xi^i(\cdot,t):[0,1]\to[0,1]$ is a $C^1$--diffeomorphism. Hence, the inverse functions $\varphi^i(\cdot,t)=\xi^i(\cdot,t)^{-1}$ also belong to $C^{2+2\alpha,1+\alpha}([0,1]\times[0,\min\{T,\widetilde{T}\}))$ and are solutions of system~\eqref{reparphi}. Moreover, by arguing as in the last part of the proof of Theorem~\ref{geouniquenesslocal}, we can show that $\widehat{T}$ can be taken equal to $\min\{T,\widetilde{T}\}$.\\
It follows that the reparametrizations $\overline{\gamma}^i(x,t)=\widetilde{\gamma}^i(\varphi^i(x,t),t)$ satisfy the special flow system~\eqref{problema-nogauge}:
\begin{align*}
\overline{\gamma}^i_t(x,t)=
&\,\frac{\left\langle\widetilde{\gamma}_{xx}^{i}\left(\varphi^i(x,t),t\right)\,\vert\,
\widetilde{\gamma}_x^i(\varphi^i(x,t),t)\right\rangle}
{\left|\widetilde{\gamma}_{x}^{i}\left(\varphi^i(x,t),t\right)\right|
^{4}}\widetilde{\gamma}_x^i(\varphi^i(x,t),t)
+\frac{\varphi^i_{xx}(x,t)\widetilde{\gamma}_{x}^{i}\left(\varphi^i(x,t),t\right)}
{\left|\widetilde{\gamma}_{x}^{i}\left(\varphi^i(x,t),t\right)\right|^2
\left|\varphi^i_x(x,t)\right|^2}\\
&\,+\frac{\left\langle\widetilde{\gamma}_{xx}^{i}\left(\varphi^i(x,t),t\right)\,\vert\,
\widetilde{\nu}^i(\varphi^i(x,t),t)\right\rangle}
{\left|\widetilde{\gamma}_{x}^{i}\left(\varphi^i(x,t),t\right)\right|
^{2}}\widetilde{\nu}^i(\varphi^i(x,t),t)\\
=&\,\frac{\left\langle\widetilde{\gamma}_{xx}^{i}\left(\varphi^i(x,t),t\right)\,\vert\,
\widetilde{\tau}^i(\varphi^i(x,t),t)\right\rangle}
{\left|\widetilde{\gamma}_{x}^{i}\left(\varphi^i(x,t),t\right)\right|
^{2}}\widetilde{\tau}^i(\varphi^i(x,t),t)
+\frac{\varphi^i_{xx}(x,t)\widetilde{\gamma}_{x}^{i}\left(\varphi^i(x,t),t\right)}
{\left|\widetilde{\gamma}_{x}^{i}\left(\varphi^i(x,t),t\right)\right|^2
\left|\varphi^i_x(x,t)\right|^2}\\
&\,+\frac{\left\langle\widetilde{\gamma}_{xx}^{i}\left(\varphi^i(x,t),t\right)\,\vert\,
\widetilde{\nu}^i(\varphi^i(x,t),t)\right\rangle}
{\left|\widetilde{\gamma}_{x}^{i}\left(\varphi^i(x,t),t\right)\right|
^{2}}\widetilde{\nu}^i(\varphi^i(x,t),t)\\
=&\,\frac{\widetilde{\gamma}_{xx}^{i}\left(\varphi^i(x,t),t\right)}
{\left|\widetilde{\gamma}_{x}^{i}\left(\varphi^i(x,t),t\right)\right|^{2}}
+\frac{\varphi^i_{xx}(x,t)\widetilde{\gamma}_{x}^{i}\left(\varphi^i(x,t),t\right)}
{\left|\widetilde{\gamma}_{x}^{i}\left(\varphi^i(x,t),t\right)\right|^2
\left|\varphi^i_x(x,t)\right|^2}\\
=&\,\frac{\overline{\gamma}_{xx}^{i}(x,t)}
{\vert\overline{\gamma}_{x}^{i}(x,t)|^{2}}\,.
\end{align*}
We can then conclude that by the uniqueness part of (the extension to general networks of) Theorem~\ref{2smoothexist0-triod} that $\overline{\gamma}^i=\gamma^i$ for every $i\in\{1,2,\dots, n\}$, hence ${\gamma}^i(x,t)=\widetilde{\gamma}^i(\varphi^i(x,t),t)$ in the time interval $[0,\min\{T,\widetilde{T}\})$ and since this ``reparametrization relation'' between any two maximal solutions of Problem~\eqref{problema} is symmetric (by means of the maps $\xi^i$), we have $\widetilde{T}=T$ and we are done.

Assume now that the network $\SS_0$ is only geometrically $2$--compatible, then the proof of Proposition~\ref{2geocomp} gives a special solution $\gamma^i$ given by $\gamma^i(x,t)=\widetilde{\gamma}^i([\theta^i]^{-1}(x),t)$ 
where $\theta^i$ are smooth maps and $\widetilde{\gamma}^i$ is a special solution as above for the $2$--compatible network 
$\widetilde{\SS}_0=\bigcup_{i=1}^n\widetilde{\sigma}^i([0,1])=\bigcup_{i=1}^n(\sigma^i\comp\,\theta^i)([0,1])$ which is in 
$C^{2+2\alpha,1+\alpha}([0,1]\times[0,T))$ for a maximal positive time interval $[0,T)$.

Suppose that $\overline{\gamma}^i:[0,1]\times[0,\widetilde{T})\to\overline{\Omega}$ is another maximal curvature 
flow for $\SS_0$ in $C^{2+2\alpha,1+\alpha}([0,1]\times[0,\widetilde{T}))$, satisfying 
$\overline{\gamma}^i_t=\overline{k}^i\overline{\nu}^i+\overline{\lambda}^i\overline{\tau}^i$ for 
some functions $\overline{\lambda}^i$ in $C^{2\alpha}([0,1]\times[0,\widetilde{T}))$. 
If we consider the maps $\widehat{\gamma}^i(x,t)=
\overline{\gamma}^i(\theta^i(x),t)$, they give a $C^{2+2\alpha,1+\alpha}([0,1]\times[0,\widetilde{T}))$ 
curvature flow of the initial network $\widetilde{\SS}_0$ which satisfies the compatibility 
conditions of order $2$, hence (by the above argument) $\widetilde{T}=T$ and the maps 
$\widehat{\gamma}^i$ and $\widetilde{\gamma}^i$ only differ by reparametrizations given by some maps $\varphi^i\in C^{2+2\alpha,1+\alpha}([0,1]\times[0,T))$ with $\varphi^i(x,0)=x$, 
that is,
$$
\widehat{\gamma}^i(x,t)=\widetilde{\gamma}^i(\varphi^i(x,t),t)\,.
$$
It follows that
$$
\overline{\gamma}^i(x,t)=\widehat{\gamma}^i([\theta^i]^{-1}(x),t)=
\widetilde{\gamma}^i(\varphi^i([\theta^i]^{-1}(x),t),t)
=\gamma^i(\theta^i(\varphi^i([\theta^i]^{-1}(x),t)),t)
$$
which shows that the two flows $\overline{\gamma}^i$ and $\gamma^i$ 
of the initial network $\SS_0$ coincide up to the time--dependent 
reparametrizations $(x,t)\mapsto (\theta^i(\varphi^i([\theta^i]^{-1}(x),t)),t)$.\\
The last assertion follows by Proposition~\ref{c2geocomp}.
\end{proof}

\subsection{Initial data with higher regularity}\label{smoothdata}
We discuss the higher regularity of the flow when the initial network is of class $C^\infty$.

\begin{defn}\label{ncompcond}
We say that the {\em compatibility conditions of 
every order} for system~\eqref{problema-nogauge-general} are satisfied by an (initial) regular $C^\infty$ network $\SS_0=\bigcup_{i=1}^{n}\sigma^{i}\left([0,1]\right)$ and we call such a network {\em smooth}, 
if at every end--points and every $3$--point there hold all the relations 
on the space derivatives of the functions $\sigma^i$, 
obtained repeatedly differentiating in time the
boundary conditions and using the evolution equation $\gamma^i_t(x,t)
=\frac{\gamma_{xx}^{i}\left(x,t\right)}{\left|\gamma_{x}^{i}\left(x,t\right)\right|^{2}}$ 
to substitute time derivatives with space derivatives.\\
We say that a $C^\infty$ flow by curvature $\SS_t$ is {\em smooth} 
if all the networks $\SS_t$ are {\em smooth}.
\end{defn}

It is immediate by this definition that every network $\SS_t$ of a $C^\infty$ 
special curvature flow of an initial regular network $\SS_0$ is smooth for every $t>0$.

\begin{rem}\label{ght2}
We underline that being a {\em smooth} network implies being {\em regular} and $C^\infty$ (composed of $C^\infty$ curves), but it is way more restrictive than that. Analogously, a {\em smooth} curvature flow of networks is not simply $C^\infty$ up to the parabolic boundary (see Remark~\ref{ght1}). Anyway, similarly as before (Proposition~\ref{c2geocomp}), 
every network of a $C^\infty$ curvature flow can be reparametrized to be smooth.
\end{rem}

If we assume that the initial regular network is smooth, 
we have the following higher regularity result.

\begin{thm}\label{smoothexist-prob} 
For any initial smooth network $\SS_0$ in a smooth, convex, open set 
$\Omega\subseteq\R^2$ there exists a unique $C^\infty$ solution of 
system~\eqref{problema-nogauge-general} in a maximal time interval $[0,T)$. 
\end{thm}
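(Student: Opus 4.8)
The strategy is a standard parabolic bootstrap argument, starting from the $C^{2+2\alpha}$ solution of the quasilinear system \eqref{problema-nogauge-general} provided by Theorem~\ref{2smoothexist0}, and upgrading its regularity order by order using Schauder estimates for linear parabolic systems with the Lopatinskii--Shapiro (complementary) conditions verified by Bronsard and Reitich. First, since a smooth regular network is in particular $C^{2+2\alpha}$ and $2$-compatible (the compatibility conditions of every order include those of order $2$), Theorem~\ref{2smoothexist0} gives a unique solution $\gamma^i \in C^{2+2\alpha, 1+\alpha}([0,1]\times[0,T))$ for some maximal $T>0$. The plan is then to show that this solution is in fact $C^\infty$ up to the parabolic boundary, hence smooth in the sense of Definition~\ref{ncompcond}.

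Second, I would differentiate the system \eqref{problema-nogauge-general} with respect to $x$ (or equivalently apply $\partial_s$ and use the commutation relations of Section~\ref{basiccomp}) to obtain a linear parabolic system for the derivatives $\gamma^i_x$, whose coefficients involve $\gamma^i$, $\gamma^i_x$ and are therefore at least $C^{1+2\alpha, 1/2+\alpha}$ by the known regularity; the boundary conditions for this differentiated system are obtained by differentiating the concurrency, angle and fixed-endpoint conditions in time and space. The compatibility conditions of order $3$ for the initial datum (which hold because $\SS_0$ is smooth) guarantee the compatibility at order $2$ for the differentiated system, so the linear parabolic Schauder theory (Solonnikov, see also \cite{lasolura}, \cite{krylov1}) applies and yields $\gamma^i_x \in C^{2+2\alpha,1+\alpha}$, i.e. $\gamma^i \in C^{3+2\alpha, \ldots}$. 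Iterating this — at each stage differentiating the system once more, verifying that the higher-order compatibility condition satisfied by the smooth initial network feeds exactly the compatibility needed for the linearized problem, and invoking the complementary condition which is preserved under differentiation — gives $\gamma^i \in C^{k+2\alpha}$ for every $k$, hence $C^\infty$ on $[0,1]\times[0,T)$. Finally, that the flow is smooth (not merely $C^\infty$) follows since it is a special curvature flow, and by the remark after Definition~\ref{ncompcond} every network of a $C^\infty$ special curvature flow is smooth; uniqueness is inherited from the uniqueness in Theorem~\ref{2smoothexist0} restricted to the subclass of smooth flows.

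The main obstacle is bookkeeping the boundary conditions through the differentiation process: one must check that each time one differentiates \eqref{problema-nogauge-general} in time (using the evolution equation to trade $\partial_t$ for $\partial_x^2$, exactly as in the definition of the compatibility conditions) the resulting constraints at the $3$-points and endpoints form an admissible boundary system for the linearized operator — in particular that they continue to satisfy the complementary (Lopatinskii--Shapiro) condition — and that the order of compatibility demanded at each step is precisely what ``smoothness of $\SS_0$'' supplies. This is where the argument is genuinely delicate rather than routine, but it is entirely parallel to the order-$2$ case already carried out, and no new analytic phenomenon appears. I would therefore present the argument as an induction on the regularity order, citing the linear Schauder theory and referring back to Section~\ref{basiccomp} for the explicit form of the differentiated boundary relations, rather than writing out the estimates in full.
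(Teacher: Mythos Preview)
Your proposal is correct and takes essentially the same approach as the paper: a parabolic bootstrap starting from the $C^{2+2\alpha,1+\alpha}$ solution of Theorem~\ref{2smoothexist0}, using the higher-order compatibility conditions of the smooth initial network at each step and linear Schauder/Solonnikov theory to climb the regularity scale. The one noteworthy difference is the direction in which you differentiate: you propose differentiating in $x$ and studying the linear system satisfied by $\gamma^i_x$, whereas the paper differentiates in $t$ and studies the system satisfied by $v^i=\gamma^i_t$ (noting that $\gamma^i_{xx}=\gamma^i_t\,|\gamma^i_x|^2$ then recovers one extra order of spatial regularity). The time-differentiated route is slightly cleaner at the boundary --- the conditions $v^i=0$ at the end--points and $v^{pi}=v^{pj}$ at the $3$--points are immediate and of the same algebraic type as the original ones, so the complementary condition is transparently preserved --- but your $x$-differentiation works as well, and you correctly identify the bookkeeping of the boundary conditions as the only genuinely delicate point.
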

\begin{proof}
Since the initial network $\SS_0$ satisfies the compatibility condition at every order, 
the method of the previous section actually 
provides, for every $n\in\NN$, a unique solution in 
$C^{2n+2\alpha,n+\alpha}([0,1]\times[0,T_n])$ of system~\eqref{problema-nogauge-general} 
satisfying the compatibility 
conditions of order $0,1, \dots, 2n$ at every time.\\
So, if we have a solution $\gamma^{i}\in C^{2n+2\alpha,n+\alpha}([0,1]\times[0,T_n])$
for $n\geqslant1$, then the functions $\gamma_x^i$ belong to $C^{2n-1+2\alpha,n-1/2+\alpha}
([0,1]\times[0,T_n])$ (see~\cite[Section~8.8]{krylov1}). Considering the parabolic system 
satisfied by $v^i(x,t)=\gamma_t^i(x,t)$ (see~\cite[Page~250]{mannovtor}), by Solonnikov results 
in~\cite{solonnikov1} $v^i=\gamma^i_t$ belongs to $C^{2n+2\alpha,n+\alpha}([0,1]\times[0,T_n])$. Since $\gamma^i_{xx}=\gamma^i_t\,\vert \gamma^i_x\vert^2$ 
with $ \vert \gamma^i_x\vert^2\in C^{2n-1+2\alpha,n-1/2+\alpha}([0,1]\times[0,T_n])$, 
we get also 
$$
\gamma^i_{xx}\in C^{2n-1+2\alpha,n-1/2+\alpha}([0,1]\times[0,T_n])\,.
$$
Following~\cite{lusiw}, we can then conclude that 
$\gamma^i\in C^{2n+1+2\alpha,n+1/2+\alpha}([0,1]\times[0,T_n])$. \\
Iterating this argument, we see that $\gamma^i\in C^\infty([0,1]\times[0,T_n])$.
Moreover, since for every $n\in\NN$ the solution obtained is unique, 
it must coincide with $\gamma^i$ and we can choose all the $T_n$ 
to be the same positive value $T$. \\
It follows that the solution is in $C^\infty$ till the parabolic boundary, hence, 
all the compatibility conditions are satisfied at every time $t\in[0,T)$.
\end{proof}

As a consequence, we have the following theorem.

\begin{thm}\label{smoothexist} 
For any initial smooth network $\SS_0$ in a smooth, convex, open set 
$\Omega\subseteq\R^2$ there exists a smooth curvature flow of $\SS_0$ in a maximal 
positive time interval $[0,T)$. 
\end{thm}

For $C^\infty$ networks we then introduce the concept of {\em geometrically smoothness}. 

\begin{defn}\label{geosmoothdef}
We say that a network $\SS_0=\bigcup_{i=1}^{n}\sigma^{i}\left([0,1]\right)$
of class $C^\infty$ is {geometrically smooth} if it can be reparametrized to be smooth. 
\end{defn}

\begin{rem}\label{ght3}
By arguments similar to the ones of Lemma~\ref{repar0}, it can be shown that, like for geometrical $2$--compatibility, this property 
depends only on (some relations on) the curvature and its derivatives at the end--points and at 
the $3$--points of a $C^\infty$ network (see~\cite{mannovtor} for more details), that is, 
geometrical smoothness is again a geometric property (obviously invariant by $C^\infty$ 
reparametrizations, by the definition).\\
Moreover, as before (see Proposition~\ref{c2geocomp}), every $C^\infty$ curvature flow of an
initial regular network $\SS_0$ is actually composed of geometrically smooth networks
$\SS_t$ for every $t>0$.
\end{rem}

The following short--time existence theorem holds, essentially with the same proof of Proposition~\ref{2geocomp}.

\begin{thm}\label{geosmoothexist}
For any initial geometrically smooth network $\SS_0$ in a smooth, convex, open set 
$\Omega\subseteq\R^2$ there exists a $C^\infty$ curvature flow of $\SS_0$ in a maximal 
positive time interval $[0,T)$. 
\end{thm}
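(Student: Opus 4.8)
The plan is to run the proof of Theorem~\ref{2geocomp} essentially verbatim, replacing the application of Theorem~\ref{2compexist0} by that of Theorem~\ref{smoothexist} (equivalently Theorem~\ref{smoothexist-prob}) and upgrading ``$C^{2+2\alpha}$'' to ``$C^\infty$'' throughout. Since $\SS_0=\bigcup_{i=1}^n\sigma^i([0,1])$ is geometrically smooth, I would first fix $C^\infty$ diffeomorphisms $\theta^i:[0,1]\to[0,1]$ with $\theta^i_x\neq0$, $\theta^i(0)=0$, $\theta^i(1)=1$, such that the reparametrized network $\widetilde\SS_0=\bigcup_{i=1}^n(\sigma^i\comp\,\theta^i)([0,1])$ is \emph{smooth}, i.e.\ satisfies the compatibility conditions of every order for system~\eqref{problema-nogauge-general}; the existence of such $\theta^i$ is exactly the definition of geometric smoothness, and concretely they can be produced as in Lemma~\ref{repar0}, by prescribing the values at $0$ and $1$ of finitely many $x$--derivatives of $\theta^i$ in terms of the curvature and its arclength derivatives of $\SS_0$ (see Remark~\ref{ght3}). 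As a $C^\infty$ reparametrization of a regular $C^\infty$ network, $\widetilde\SS_0$ is again regular and of class $C^\infty$.

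Second, I would apply Theorem~\ref{smoothexist} to $\widetilde\SS_0$, obtaining its smooth special curvature flow $\widetilde\gamma^i\in C^\infty([0,1]\times[0,T))$ on a maximal interval $[0,T)$, with $\widetilde\gamma^i(\cdot,0)=\sigma^i\comp\,\theta^i$ and $\widetilde\gamma^i_t=\frac{\widetilde\gamma^i_{xx}}{|\widetilde\gamma^i_x|^2}=\underline{\widetilde{k}}^i+\widetilde\lambda^i\widetilde\tau^i$. Third, I would undo the reparametrization by setting $\gamma^i(x,t)=\widetilde\gamma^i([\theta^i]^{-1}(x),t)$. Since $[\theta^i]^{-1}\in C^\infty$ and is time--independent, the maps $\gamma^i$ lie in $C^\infty([0,1]\times[0,T))$, satisfy $\gamma^i(\cdot,0)=\sigma^i$, and, exactly as in the closing computation of the proof of Theorem~\ref{2geocomp},
$$
\gamma^i_t(x,t)=\widetilde\gamma^i_t([\theta^i]^{-1}(x),t)=\underline{\widetilde{k}}^i([\theta^i]^{-1}(x),t)+\widetilde\lambda^i([\theta^i]^{-1}(x),t)\,\widetilde\tau^i([\theta^i]^{-1}(x),t)=\underline{k}^i(x,t)+\underline{\lambda}^i(x,t),
$$
with $\underline{\lambda}^i(x,t)=\widetilde\lambda^i([\theta^i]^{-1}(x),t)\,\widetilde\tau^i([\theta^i]^{-1}(x),t)$, using that the curvature vector $\underline{k}^i$ (hence $\tau^i$, $\nu^i$) is invariant under reparametrization. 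The conditions $\gamma^i_x\neq0$, $\gamma^r(1,t)=P^r$ and $\sum_j\tau^{pj}(O^p,t)=0$ at each $3$--point are geometric and are inherited from $\widetilde\gamma^i$; the non--self--intersection of each curve and the non--crossing of distinct curves follow from the strong maximum principle as in the proof of Theorem~\ref{2compexist0-triod}. Hence $\SS_t=\bigcup_{i=1}^n\gamma^i([0,1],t)$ is a $C^\infty$ curvature flow of $\SS_0$, and $[0,T)$ is maximal because $\SS_t$ and $\widetilde\SS_t$ describe the same geometric evolution of sets, whose maximal interval of smooth existence is $[0,T)$ by the maximality in Theorem~\ref{smoothexist} and the uniqueness in Theorem~\ref{smoothexist-prob}.

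I do not anticipate a genuine obstacle: the single point deserving attention is that the reparametrizations $\theta^i$ supplied by geometric smoothness are independent of $t$, so that composing the special flow $\widetilde\gamma^i$ with $[\theta^i]^{-1}$ generates no spurious normal velocity and the structure $\gamma^i_t=\underline{k}^i+\lambda^i\tau^i$ is preserved. This is precisely the reparametrization principle recorded in the discussion following Theorem~\ref{2geocomp} (see also Remark~\ref{rem000}), and it is what allows the argument of Theorem~\ref{2geocomp} to run word for word in the $C^\infty$ setting. A secondary, routine point is the verification that ``geometrically smooth'' is invariant under $C^\infty$ reparametrization and is a geometric property, which is already stated in Remark~\ref{ght3} and need not be reproved here.
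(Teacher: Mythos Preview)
Your proposal is correct and follows exactly the approach the paper indicates: the paper states that Theorem~\ref{geosmoothexist} holds ``essentially with the same proof of Theorem~\ref{2geocomp}'', and you have carried out precisely that argument, replacing Theorem~\ref{2compexist0} by Theorem~\ref{smoothexist} and $C^{2+2\alpha}$ by $C^\infty$ throughout. Your additional remarks on maximality and on the reparametrization principle are accurate elaborations that the paper leaves implicit.
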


An immediate consequence is the following corollary.

\begin{cor}\label{sunique} 
For any initial geometrically smooth network $\SS_0=\bigcup_{i=1}^n\sigma^i([0,1])$ 
in a smooth, convex, open set $\Omega\subseteq\R^2$, there exists a geometrically 
unique solution of Problem~\eqref{problema} in the class of maps 
$C^{2+2\alpha,1+\alpha}([0,1]\times[0,T))$ in a maximal positive time interval $[0,T)$. 
Moreover, such a solution is $C^\infty$ and if the initial network is actually smooth, it can be chosen to be a {\em special} curvature flow.
\end{cor}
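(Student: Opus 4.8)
The plan is to combine Theorem~\ref{geosmoothexist} with Proposition~\ref{geouniq}. First I would record the elementary implication that a \emph{geometrically smooth} network is in particular \emph{geometrically 2--compatible}: by definition there are $C^\infty$ reparametrization maps $\theta^i:[0,1]\to[0,1]$ (with $\theta^i_x\neq0$, $\theta^i(0)=0$, $\theta^i(1)=1$) turning $\SS_0$ into a \emph{smooth} network $\widetilde\SS_0$, and a smooth network satisfies the compatibility conditions of every order, hence in particular those of order $2$; so $\widetilde\SS_0$ is 2--compatible, which is exactly geometric 2--compatibility of $\SS_0$. Being $C^\infty$, $\SS_0$ is a fortiori $C^{2+2\alpha}$ for each $\alpha\in(0,1/2)$. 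Therefore Proposition~\ref{geouniq} applies and yields a \emph{geometrically unique} solution $\gamma^i$ of Problem~\eqref{problema} in the class $C^{2+2\alpha,1+\alpha}([0,1]\times[0,T))$ on a maximal positive interval $[0,T)$.

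It then remains only to observe that the $C^\infty$ curvature flow of $\SS_0$ provided by Theorem~\ref{geosmoothexist} — obtained by reparametrizing to the smooth network $\widetilde\SS_0$, running the smooth special flow of Theorem~\ref{smoothexist} there, and composing back with the $C^\infty$ maps $[\theta^i]^{-1}$, exactly as in the proof of Theorem~\ref{2geocomp} — is in particular a curvature flow lying in the class $C^{2+2\alpha,1+\alpha}$. Hence, by the geometric uniqueness just established, it coincides, up to a time--dependent reparametrization of its curves by maps $\varphi^i$ with $\varphi^i(x,0)=x$, with $\gamma^i$; in particular the two maximal times agree and the geometrically unique solution is, in a suitable parametrization, $C^\infty$.

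Since the statement is declared an immediate consequence, there is no genuine analytic obstacle: the only point deserving a line of care is that ``maximal in $C^{2+2\alpha,1+\alpha}$'' and ``maximal in $C^\infty$'' produce the same $T$, which is automatic once one knows that every $C^{2+2\alpha,1+\alpha}$ flow of $\SS_0$ is a reparametrization of the $C^\infty$ one. All the real substance is already packaged in Proposition~\ref{geouniq} (short--time existence via the Bronsard--Reitich/Solonnikov scheme together with the reparametrization trick of Section~\ref{smtm}) and in the bootstrap argument of Theorem~\ref{smoothexist-prob}.
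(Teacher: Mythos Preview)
Your proposal is correct and is exactly the argument the paper has in mind: the corollary is stated as ``an immediate consequence'' of Proposition~\ref{geouniq}, and your two steps (geometrically smooth $\Rightarrow$ geometrically 2--compatible, then invoke Proposition~\ref{geouniq} and match with the $C^\infty$ flow from Theorem~\ref{geosmoothexist}) are precisely what is intended, as confirmed by Remark~\ref{fffggg} immediately after. Your extra line about the two notions of maximal time agreeing is a nice clarification.
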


\begin{rem}\label{fffggg}
Notice that it follows that any curvature flow as in the hypotheses of the above theorem and 
corollary is a reparametrization (of class $C^{2+2\alpha,1+\alpha}$ in the first case and 
$C^\infty$ in the latter) of the special curvature flow 
(which is $C^\infty$ under the hypotheses of this corollary, by 
Theorem~\ref{smoothexist-prob}).\\
This corollary implies the geometric uniqueness of this flow in the class of smooth maps.
\end{rem}

\section{Integral estimates}\label{kestimates}

In this section, we work out some integral estimates for a special flow by
curvature of a smooth regular network. These estimates were previously proved for the case of the special curvature flow of a regular smooth triod with fixed end--points, in~\cite{mannovtor}. We now extend 
them to the case of a smooth network with ``controlled'' behavior of its end--points. An outline for such estimates with controlled behavior of the end--points, for a general curvature flow, appeared in~\cite[Section 7]{Ilnevsch}. We advise the reader that when the computations are exactly the same we will refer directly to~\cite[Section~3]{mannovtor}, where it is possible to find every detail.

In all this section we will assume that the special flow by curvature is given by a $C^\infty$ solution $\gamma^i$ of system~\eqref{problema-nogauge-general}, that is, there holds
$$
\gamma^i_t(x,t)=\frac{\gamma_{xx}^{i}\left(x,t\right)}{\left|\gamma_{x}^{i}\left(x,t\right)\right|^{2}}\,,
$$
(see Remark~\ref{specevol} and Definition~\ref{special} for the case of an initial $C^2$ network). The estimates, which only involve geometric quantities and do not involve the tangential velocities $\lambda_i$, hold also for any smooth flow (the ones where we do not use the special form of the functions $\lambda^i$ given by this equation). To use these estimates for a general smooth flow, because of geometric uniqueness (see Corollary~\ref{sunique} and Remark~\ref{fffggg}), one must reparametrize such a flow, preserving the boundary condition~\eqref{endsmooth} below, so it becomes special, then carry back the geometric (invariant by reparametrization) estimates to the original flow. Alternatively, one can also directly prove these estimates without reparametrizing first to a special flow, see~\cite[Section 7]{Ilnevsch}.

We will see that such a special flow of a regular smooth network with ``controlled'' end--points exists smoothly as long as
the curvature stays bounded and none of the lengths of the curves goes to zero (Theorem~\ref{curvexplod}). 

We suppose that the special solution maps $\gamma^i$ above exist and are of class $C^\infty$ in the time interval $[0,T)$ and that they describe the flow of a regular $C^\infty$ network $\mathbb{S}_t$ in $\Omega$, composed of $n$ curves
$\gamma^{i}(\cdot,t):[0,1]\to\overline{\Omega}$ with $m$ $3$--points
$O^1, O^2,\dots, O^m$ and $l$ end--points $P^1, P^2,\dots, P^l$. We 
will assume that either such end--points are fixed or that there 
exist uniform (in time) constants $C_j$, for every $j\in\NN$, such that 
\begin{equation}\label{endsmooth}
\vert\partial_s^jk(P^r,t)\vert+\vert\partial_s^j\lambda(P^r,t)\vert\leqslant C_j\,,
\end{equation}
for every $t\in[0,T)$ and $r\in{1,2,\dots,l}$.

The first computation we are going to show is the evolution in time of the total length of a
network under the curvature flow.

\begin{prop}\label{equality1000} The time derivative of the measure $ds$ on any curve $\gamma^i$ of the network is
given by the measure $(\lambda_s^i-(k^i)^2)\,ds$. As a consequence, we have
$$
\frac{dL^i(t)}{dt}=\lambda^i(1,t)-\lambda^i(0,t)-\int_{\gamma^i(\cdot,t)}(k^i)^2\,ds
$$
and
$$
\frac{dL(t)}{dt}=\sum_{r=1}^l\lambda(P^r,t)-\int_{\SS_t}k^2\,ds\,,
$$
where, with a little abuse of notation, $\lambda(P^r,t)$ is the tangential velocity at the end--point $P^r$ of the curve of the network getting at such point, for any $r\in\{1,2,\dots,l\}$.\\
In particular, if the end--points $P^r$ of the network are fixed during the evolution, we have 
\begin{equation}\label{evolength}
\frac{dL(t)}{dt}=-\int_{\SS_t}k^2\,ds\,,
\end{equation}
thus, in such case, the total length $L(t)$ is decreasing in time and uniformly bounded above by the length of the initial network $\SS_0$.
\end{prop}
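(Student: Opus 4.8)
The plan is to prove the pointwise statement about $\partial_t(ds)$ by a direct computation, then integrate over each curve and sum, using the Herring condition to cancel the junction contributions.

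\textbf{Step 1 (variation of the length element).} Since $ds=|\gamma^i_x|\,dx$, I would compute $\partial_t|\gamma^i_x|=\langle\tau^i\,|\,\partial_t\gamma^i_x\rangle=\langle\tau^i\,|\,\partial_x(k^i\nu^i+\lambda^i\tau^i)\rangle=|\gamma^i_x|\,\langle\tau^i\,|\,\partial_s(k^i\nu^i+\lambda^i\tau^i)\rangle$, using the flow equation~\eqref{evoleq}, the commutation of $\partial_t$ and $\partial_x$, and $\partial_s=|\gamma^i_x|^{-1}\partial_x$. Expanding with $\partial_s\tau^i=k^i\nu^i$ and $\partial_s\nu^i=-k^i\tau^i$ (the same computation behind~\eqref{derttau}) gives $\partial_s(k^i\nu^i+\lambda^i\tau^i)=(k^i_s+\lambda^i k^i)\nu^i+(\lambda^i_s-(k^i)^2)\tau^i$, whose tangential component is $\lambda^i_s-(k^i)^2$. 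Hence $\partial_t|\gamma^i_x|=(\lambda^i_s-(k^i)^2)|\gamma^i_x|$, i.e. $\partial_t(ds)=(\lambda^i_s-(k^i)^2)\,ds$, which is the first assertion.

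\textbf{Step 2 (length of a single curve, then summation).} Integrating over $x\in[0,1]$ and noting that $\int_{\gamma^i}\lambda^i_s\,ds=\int_0^1\lambda^i_x\,dx=\lambda^i(1,t)-\lambda^i(0,t)$, I obtain $\frac{dL^i}{dt}=\lambda^i(1,t)-\lambda^i(0,t)-\int_{\gamma^i}(k^i)^2\,ds$. Summing over all $n$ curves: with the convention of Definition~\ref{probdef} each endpoint $P^r$ sits at $x=1$ of its curve and contributes $+\lambda(P^r,t)$, while at each triple junction $O^p$, parametrizing the three concurring curves so that $O^p$ is at $x=0$ and $\tau^{pi}$ is the exterior unit tangent there, the junction contributes $-\sum_{i=1}^3\lambda^{pi}(O^p,t)$. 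Since the velocity $\underline{v}=k^{pi}\nu^{pi}+\lambda^{pi}\tau^{pi}$ is common to the three curves at $O^p$, we have $\lambda^{pi}(O^p,t)=\langle\underline{v}\,|\,\tau^{pi}\rangle$, so $\sum_{i=1}^3\lambda^{pi}(O^p,t)=\langle\underline{v}\,|\,\sum_{i=1}^3\tau^{pi}\rangle=0$ by the Herring condition (equivalently, this is~\eqref{eq:cond2}). Therefore $\frac{dL}{dt}=\sum_{r=1}^l\lambda(P^r,t)-\int_{\SS_t}k^2\,ds$.

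\textbf{Step 3 (fixed endpoints and monotonicity).} If the $P^r$ are fixed, then $\underline{v}=0$ at each $P^r$, so in particular $\lambda(P^r,t)=0$, giving $\frac{dL}{dt}=-\int_{\SS_t}k^2\,ds\le 0$; integrating in time yields $L(t)\le L(0)=L(\SS_0)$ and establishes~\eqref{evolength}.

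\textbf{Expected main obstacle.} There is no real analytic difficulty: all of this is short and uses only formulas already available in Section~\ref{basiccomp}. The one point requiring care is the orientation bookkeeping at the junctions and endpoints --- which end of $[0,1]$ is glued to $O^p$ (resp.\ $P^r$) and the resulting sign of the boundary term --- so that the $\lambda$-terms at the triple junctions genuinely cancel. The cleanest way to see that cancellation, which I would use, is the ``common velocity plus $\sum_i\tau^{pi}=0$'' argument of Step 2, rather than combining the four scalar relations preceding~\eqref{eq:cond2}.
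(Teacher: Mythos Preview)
Your proof is correct and essentially identical to the paper's. The only cosmetic differences are that the paper phrases Step~1 as an immediate consequence of the commutation lemma~\eqref{commut} (whose proof is exactly the computation you carry out) rather than redoing it, and for the junction cancellation it simply cites the already--derived identity~\eqref{eq:cond2} instead of reproving it via ``common velocity plus $\sum_i\tau^{pi}=0$''; your version is slightly more self--contained but the content is the same.
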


\begin{proof}
The formula for the time derivative of the measure $ds$ follows easily by the commutation formula~\eqref{commut}. Then,
$$
\frac{dL^i(t)}{dt}= \frac{d\,}{dt}\int_{\gamma^i(\cdot,t)}1\, ds
=\int_{\gamma^i(\cdot,t)}(\lambda^i_s-(k^i)^2)\,ds
=\lambda^i(1,t)-\lambda^i(0,t)-\int_{\gamma^i(\cdot,t)}(k^i)^2\,ds\,.
$$
Adding these relations for all the curves, the contributions of $\lambda^{pi}$ at every $3$--point $O^p$ vanish, by relation~\eqref{eq:cond2}, 
and the formula of the statement follows. If the end--points are fixed all the terms $\lambda(P^r,t)$ are zero and the last formula follows. 
\end{proof}

The following notation will be very useful for the next computations in this section.
\begin{defn}
We will denote with $\pol_\sigma(\ders^j\lambda,\ders^h k)$ a 
polynomial with constant coefficients in 
$\lambda,\dots,\ders^j\lambda$ and
$k,\dots,\ders^h k$ such that every monomial it contains is of the
form
$$
C\prod_{l=0}^j (\ders^l\lambda)^{\alpha_l} \cdot \prod_{l=0}^h
(\ders^lk)^{\beta_l}\,
\text{ { with} $\,\, \sum_{l=0}^j (l+1)\alpha_l + \sum_{l=0}^h
(l+1)\beta_l = \sigma$,}
$$
we will call $\sigma$ the {\em geometric order} of $\pol_\sigma$.\\
Moreover, if one of the two arguments of $\pol_\sigma$ 
does not appear, it means that the polynomial does not contain it, for
instance, $\pol_\sigma(\ders^h k)$
does not contain neither $\lambda$ nor its derivatives.\\
We will denote with
$\qol_\sigma(\dert^j\lambda,\ders^h k)$ a polynomial as before in
$\lambda,\dots,\dert^j\lambda$ and $k,\dots,\ders^h k$ such that all its
monomials are of the form
$$
C\prod_{l=0}^j (\dert^l\lambda)^{\alpha_l} \cdot \prod_{l=0}^h
(\ders^lk)^{\beta_l}\,
\text{ { with} $\,\, \sum_{l=0}^j (2l+1)\alpha_l + \sum_{l=0}^h
 (l+1)\beta_l = \sigma$.}
$$
Finally, when we will write $\pol_\sigma(\vert\ders^j\lambda\vert,
\vert\ders^h k\vert)$ (or $\qol_\sigma(\vert\dert^j\lambda\vert,
\vert\ders^h k\vert)$) we will mean a finite sum of terms like
$$
C\prod_{l=0}^j \vert\ders^l\lambda\vert^{\alpha_l} \cdot \prod_{l=0}^h
\vert\ders^lk\vert^{\beta_l}\,
\text{ { with} $\,\, \sum_{l=0}^j (l+1)\alpha_l + \sum_{l=0}^h
 (l+1)\beta_l = \sigma$,}
$$
where $C$ is a positive constant and the exponents $\alpha_l,\beta_l$
are non negative {\em real} values (analogously for $\qol_\sigma$).\\
Clearly we have $\pol_\sigma(\ders^j\lambda,\ders^h k)\leqslant
\pol_\sigma(\vert\ders^j\lambda\vert,\vert\ders^h k\vert)$. 
\end{defn}

By means of the commutation rule~\eqref{commut}, the relations in the
next lemma are easily proved by induction (Lemmas~3.7 and~3.8 
in~\cite{mannovtor}), starting from the relations in Section~\ref{basiccomp}.

\begin{lem}\label{kexpr}
The following formulas hold for every curve of the evolving network $\SS_t$:
\begin{equation*}
\begin{array}{ll}\dert\ders^jk=\ders^{j+2}k+\lambda\ders^{j+1}k +
\pol_{j+3}(\ders^{j}k)\qquad& \text{{ for every} $j\in \NN$,}\\
\ders^jk=\dert^{j/2}k+\qol_{j+1}(\dert^{j/2-1}\lambda,\ders^{j-1}k)\qquad
&\text{{ if} $j\geqslant2$ { is even,}}\\
\ders^jk=\dert^{(j-1)/2}k_s+\qol_{j+1}(\dert^{(j-3)/2}\lambda,\ders^{j-1}k)\qquad
&\text{{ if} $j\geqslant1$ { is odd,}}\\
\dert\ders^j\lambda=\ders^{j+2}\lambda-\lambda\ders^{j+1}\lambda -2k\ders^{j+1}k+
\pol_{j+3}(\ders^j\lambda,\ders^jk)
\qquad&\text{{ for every} $j\in \NN$,}\\
\ders^j\lambda=\dert^{j/2}\lambda+\pol_{j+1}(\ders^{j-1}\lambda,\ders^{j-1}k)
\qquad&\text{{ if} $j\geqslant2$ { is even,}}\\
\ders^j\lambda=\dert^{(j-1)/2}\lambda_s+\pol_{j+1}(\ders^{j-1}\lambda,\ders^{j-1}k)
\qquad&\text{{ if} $j\geqslant1$ { is odd.}}
\end{array}
\end{equation*}
\end{lem}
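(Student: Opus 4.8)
The plan is to prove all six identities by a single inductive scheme built on the commutation rule \eqref{commut} and on the two base relations $\partial_t k = k_{ss}+k_s\lambda+k^3$ from \eqref{dertdik} and $\partial_t\lambda = \lambda_{ss}-\lambda\lambda_s-2kk_s+\lambda k^2$ from \eqref{dertdilamb}. I would first establish the two ``forward'' formulas, the ones expressing $\partial_t\partial_s^j k$ and $\partial_t\partial_s^j\lambda$, by induction on $j\ge 0$; then the four ``inversion'' formulas, expressing $\partial_s^j k$ and $\partial_s^j\lambda$ through time derivatives, follow by solving the forward formulas for the top space derivative and inducting on $j$ in steps of two, separately for the even and the odd cases. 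Throughout, the only nontrivial point is bookkeeping of the geometric order of the $\pol$ and $\qol$ polynomials.

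For the first formula, the base case $j=0$ is \eqref{dertdik} together with $k^3=\pol_3(k)$. For the inductive step I would apply \eqref{commut} to $f=\partial_s^j k$, getting $\partial_t\partial_s^{j+1}k=\partial_s\!\left(\partial_t\partial_s^j k\right)+(k^2-\lambda_s)\,\partial_s^{j+1}k$, substitute the inductive hypothesis for $\partial_t\partial_s^j k$, and differentiate once in $s$. The key observation is that the term $\lambda_s\,\partial_s^{j+1}k$ produced by differentiating $\lambda\,\partial_s^{j+1}k$ cancels exactly against the $-\lambda_s\,\partial_s^{j+1}k$ coming from the commutator; what remains is $\partial_s^{j+3}k+\lambda\,\partial_s^{j+2}k+k^2\partial_s^{j+1}k+\partial_s\pol_{j+3}(\partial_s^j k)$, and the last two terms combine into $\pol_{j+4}(\partial_s^{j+1}k)$ because differentiating a monomial of geometric order $\sigma$ yields monomials of order $\sigma+1$ with top index raised by one. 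The fourth formula is handled the same way starting from \eqref{dertdilamb}; here there is no cancellation, but the surviving $-2\lambda_s\,\partial_s^{j+1}\lambda$, together with $k^2\partial_s^{j+1}\lambda$ and $-2k_s\,\partial_s^{j+1}k$, all have geometric order $j+4$ and involve only derivatives of $\lambda$ and $k$ up to order $j+1$, hence are absorbed into $\pol_{j+4}(\partial_s^{j+1}\lambda,\partial_s^{j+1}k)$.

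For the remaining four formulas I would rewrite the first and fourth formulas as $\partial_s^{j+2}k=\partial_t\partial_s^j k-\lambda\,\partial_s^{j+1}k-\pol_{j+3}(\partial_s^j k)$ and $\partial_s^{j+2}\lambda=\partial_t\partial_s^j\lambda+\lambda\,\partial_s^{j+1}\lambda+2k\,\partial_s^{j+1}k-\pol_{j+3}(\partial_s^j\lambda,\partial_s^j k)$. The base cases ($j=1,2$ for $k$ and $j=1,2$ for $\lambda$) come from solving \eqref{dertdik}, \eqref{dertdilamb} and their $\partial_s$-derivatives for $k_{ss}$, $k_{sss}$, $\lambda_{ss}$, $\lambda_{sss}$. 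For the inductive step one plugs the inductive hypothesis for $\partial_s^j k$ (resp. $\partial_s^j\lambda$) into $\partial_t\partial_s^j k$, producing $\partial_t^{(j/2)+1}k$ (resp. $\partial_t^{\cdot}\lambda_s$ or $\partial_t^{\cdot}\lambda$) plus $\partial_t$ applied to a $\qol_{j+1}$ (resp. a $\pol_{j+1}$). The crucial bookkeeping fact is that $\partial_t$ raises the $\qol$-order and the $\pol$-order by exactly $2$: on a factor $\partial_t^l\lambda$ it just gives $\partial_t^{l+1}\lambda$, whose weight goes from $2l+1$ to $2l+3$, while on a factor $\partial_s^l k$ it gives $\partial_s^{l+2}k+\lambda\partial_s^{l+1}k+\pol_{l+3}(\partial_s^l k)$ by the first formula, each term of weight $l+3$; moreover the top $\partial_t$-index on $\lambda$ grows by at most one and the top $\partial_s$-index on $k$ by at most two, which is exactly what the stated arguments of $\qol$ and $\pol$ record.

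The hard part, and essentially the only content beyond routine differentiation, will be the order-keeping: in each of the six inductions one must verify monomial by monomial that after applying $\partial_s$ or $\partial_t$ and substituting the inductive hypothesis every newly created term has precisely the required geometric weight ($j+3$, $j+4$, $j+1$, \dots), that no $\partial_s$-derivative of $\lambda$ of order higher than allowed slips into a $\qol$ (where $\lambda$ may carry only $\partial_t$'s), and that the cancellation of the $\lambda_s$-terms in the $k$-equations genuinely takes place. These checks are elementary but must be tracked carefully; the computations for the triod are carried out in full in \cite{mannovtor}.
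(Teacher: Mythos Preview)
Your proposal is correct and follows essentially the same approach as the paper: induction on $j$ using the commutation rule~\eqref{commut} together with the base identities~\eqref{dertdik} and~\eqref{dertdilamb}, exactly as the paper indicates by referring to Lemmas~3.7 and~3.8 of~\cite{mannovtor}. You have in fact supplied considerably more detail than the paper does---in particular the cancellation of the $\lambda_s\,\partial_s^{j+1}k$ terms in the $k$-equation, the absence of such cancellation in the $\lambda$-equation, and the order-bookkeeping showing that $\partial_t$ raises the $\qol$-order by two while keeping the arguments within the stated range---so there is nothing to add.
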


\begin{rem}\label{qolpol}
Notice that, by relations~\eqref{lambdakappa} 
at any $3$--point $O^p$ of the network there holds $\dert^j\lambda^{pi}=({\mathrm S}\dert^j {\mathrm K})^{pi}$,
that is, the time derivatives of $\lambda^{pi}$ are expressible as time
derivatives of the functions $k^{pi}$. Then, by using repeatedly such
relation and the first formula of Lemma~\ref{kexpr}, we can express
these latter as space derivatives of $k^{pi}$. Hence, we will have the
relation
$$
\sum_{i=1}^3\qol_\sigma(\dert^j\lambda^{pi},\ders^h
k^{pi})\,\biggr\vert_{\text{{ at the $3$--point $O^p$}}}=
\pol_\sigma(\ders^{\max\{2j, h\}}
{\mathrm K}^p)\,\biggr\vert_{\text{{ at
 the $3$--point $O^p$}}}
$$
with the meaning that this last polynomial contains also a product of
derivatives of different $k^{pi}$'s, because of the action of the linear
operator ${\mathrm S}$.\\
We will often make use of this identity in the computations of the
sequel in the following form,
$$
\sum_{i=1}^3\qol_\sigma(\dert^j\lambda^{pi},\ders^h
k^{pi})\,\biggr\vert_{\text{{ at the $3$--point $O^p$}}}\leqslant
\Vert \pol_\sigma(\vert\ders^{\max\{2j, h\}}k\vert)\Vert_{L^\infty}\,.
$$
\end{rem}

\begin{rem}\label{qolpol2}
We state the following {\em calculus rules} which will be used
extensively in the sequel,
\begin{align*}
\pol_\alpha(\ders^j\lambda,\ders^hk)\cdot
\pol_\beta(\ders^l\lambda,\ders^mk)=&\,
\pol_{\alpha+\beta}(\ders^{\max\{j,l\}}\lambda,\ders^{\max\{h,m\}}k)\,,\\
\qol_\alpha(\dert^j\lambda,\ders^hk)\cdot
\qol_\beta(\dert^l\lambda,\ders^mk)=&\,
\qol_{\alpha+\beta}(\dert^{\max\{j,l\}}\lambda,\ders^{\max\{h,m\}}k)\,.
\end{align*}
We already saw that the time derivatives of $k$ and $\lambda$ can be expressed in terms of space derivatives of $k$ at any $3$--point, the same holds for the space derivatives of $\lambda$, arguing by induction using the last two formulas in Lemma~\ref{kexpr}. Hence, it follows that 
\begin{align*}
\ders^l\pol_\alpha(\ders^j\lambda,\ders^hk)=
\pol_{\alpha+l}(\ders^{j+l}\lambda,\ders^{h+l}k)\,,\qquad&
\dert^l\pol_\alpha(\ders^j\lambda,\ders^hk)=
\pol_{\alpha+2l}(\ders^{j+2l}\lambda,\ders^{h+2l}k)\\
\dert^l\qol_\alpha(\dert^j\lambda,\ders^hk)=
\qol_{\alpha+2l}(\dert^{j+l}\lambda,\ders^{h+2l}k)\,,\qquad&
\qol_\alpha(\dert^j\lambda,\ders^hk)=\pol_{\alpha}
(\ders^{2j}\lambda,\ders^{\max\{h,2j-1\}}k)\,.
\end{align*}
\end{rem}

We are now ready to compute, for $j\in\NN$, 

\begin{align}
\frac{d\,}{dt} \int_{{\mathbb{S}_t}} |\ders^j k|^2\,ds
= &\, 2\int_{{\mathbb{S}_t}} \ders^j k\, \dert\ders^jk\,ds +
\int_{{\mathbb{S}_t}} |\ders^j k|^2(\lambda_s -k^2)\,ds\nonumber\\ 
=&\, 2\int_{{\mathbb{S}_t}} \ders^j k\, \ders^{j+2}k +\lambda \ders^{j+1} k\,\ders^{j} 
k +\pol_{j+3}(\ders^jk)\,\ders^jk\,ds 
+\int_{{\mathbb{S}_t}} |\ders^j k|^2(\lambda_s -k^2)\,ds\nonumber\\ 
=&\, -2\int_{{\mathbb{S}_t}} \vert\ders^{j+1} k\vert^2\,ds 
+ \int_{{\mathbb{S}_t}} \ders(\lambda|\ders^j k|^2)\,ds 
+\int_{{\mathbb{S}_t}}\pol_{2j+4}(\ders^jk)\,ds\nonumber\\ &\,
- 2\sum_{p=1}^m\sum_{i=1}^3 \ders^j k^{pi}\,\ders^{j+1}k^{pi}\, \biggr\vert_{\text{{ at the $3$--point $O^p$}}} 
+ 2\sum_{r=1}^l \ders^j k\,\ders^{j+1}k\, \biggr\vert_{\text{{ at the end--point} $P^r$}}\nonumber\\ 
\leqslant&\, -2\int_{{\mathbb{S}_t}} \vert\ders^{j+1}k\vert^2\,ds 
+ \int_{\mathbb{S}_t} \pol_{2j+4}(\ders^{j}k)\,ds + lC_jC_{j+1}\nonumber\\ 
&\,- \sum_{p=1}^m\sum_{i=1}^3 2\ders^j k^{pi}\,\ders^{j+1}k^{pi}+\lambda^{pi}\vert\ders^{j}k^{pi}\vert^2\, 
\biggr\vert_{\text{{ at the $3$--point $O^p$}}}\label{evolint000} 
\end{align} 
where we integrated by parts the first term on the second line and we
estimated the contributions given by the end--points $P^r$ by means of
assumption~\eqref{endsmooth}.

In the case that we consider the end--points $P^1, P^2,\dots, P^l$ to
be fixed, we can assume that the terms $C_jC_{j+1}$ are all zero in the
above conclusion, by the following lemma.

\begin{lem}\label{evenly}
If the end--points $P^r$ of the network are fixed, then there holds
$\ders^jk=\ders^j\lambda=0$, for every even $j\in\NN$.
\end{lem}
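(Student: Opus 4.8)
The plan is to argue by induction on the even index $j$, the key point being that the fixed end--point condition forces $k$, $\lambda$ and \emph{all} their time derivatives to vanish at each $P^r$.

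First I would extract the basic consequence of the boundary condition and dispose of $j=0$. Since $P^r$ is fixed, $\gamma^r(1,t)=P^r$ for every $t\in[0,T)$, hence $\gamma^r_t(1,t)=0$; writing $\gamma^r_t=k^r\nu^r+\lambda^r\tau^r$ and using that $\{\tau^r(1,t),\nu^r(1,t)\}$ is an orthonormal frame of $\R^2$, this gives $k^r(1,t)=\lambda^r(1,t)=0$ for all $t$, which is the assertion for $j=0$. Moreover $k^r(1,\cdot)$ and $\lambda^r(1,\cdot)$ are identically zero as functions of $t$, so $\dert^lk(P^r,t)=\dert^l\lambda(P^r,t)=0$ for every $l\in\NN$ and every $t\in[0,T)$.

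Then I would run the inductive step. Fix an even $j\ge2$ and assume $\ders^ik=\ders^i\lambda=0$ at $P^r$ for every even $i<j$. Evaluating the even--index formulas of Lemma~\ref{kexpr} on the curve $\gamma^r$ at $x=1$,
\begin{align*}
\ders^jk&=\dert^{j/2}k+\qol_{j+1}(\dert^{j/2-1}\lambda,\ders^{j-1}k)\,,\\
\ders^j\lambda&=\dert^{j/2}\lambda+\pol_{j+1}(\ders^{j-1}\lambda,\ders^{j-1}k)\,,
\end{align*}
the leading terms $\dert^{j/2}k$, $\dert^{j/2}\lambda$ vanish at $P^r$ by the previous step, so it suffices to see that $\qol_{j+1}$ and $\pol_{j+1}$ vanish there. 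For this I would use a parity count on the geometric order: $j+1$ is odd, a factor $\dert^l\lambda$ contributes the odd weight $2l+1$, and a factor $\ders^l(\cdot)$ of $k$ or $\lambda$ contributes the weight $l+1$, which is odd exactly when $l$ is even; since the weights of the factors of a monomial sum to $j+1$, every monomial must contain at least one factor of odd weight, i.e.\ either a time derivative of $k$ or $\lambda$, which vanishes at $P^r$ by the previous step, or an even--order spatial derivative $\ders^lk$ or $\ders^l\lambda$ with $l\le j-1$, hence $l\le j-2$ since $j$ is even, which vanishes at $P^r$ by the inductive hypothesis. Thus every monomial, and so all of $\qol_{j+1}$ and $\pol_{j+1}$, vanishes at $P^r$, giving $\ders^jk(P^r,t)=\ders^j\lambda(P^r,t)=0$ and closing the induction.

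The only genuinely delicate point is this last bookkeeping with $\pol$ and $\qol$: one has to check that the range of the derivatives actually appearing there (orders up to $j-1$) together with the odd geometric order $j+1$ really does force an even--order factor, or a time derivative, into each monomial. Everything else is immediate from the fixed end--point condition and Lemma~\ref{kexpr}, so I do not expect any serious obstacle.
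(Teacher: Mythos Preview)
Your proof is correct and follows essentially the same strategy as the paper: an induction on even $j$ with a parity count on the geometric order $j+1$ to force an even--order factor into every monomial of the polynomial remainder. The only difference is the choice of formula from Lemma~\ref{kexpr}: the paper uses the evolution equation $\dert\ders^{j-2}k=\ders^{j}k+\lambda\ders^{j-1}k+\pol_{j+1}(\ders^{j-2}k)$ to write $\ders^jk$ in terms of $\dert\ders^{j-2}k$ (which vanishes since $\ders^{j-2}k\equiv0$ at $P^r$ by induction), whereas you use the representation $\ders^jk=\dert^{j/2}k+\qol_{j+1}$ and invoke directly that all time derivatives of $k$ and $\lambda$ vanish at $P^r$. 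Your route is slightly cleaner because the observation $\dert^lk(P^r,\cdot)=\dert^l\lambda(P^r,\cdot)=0$ for all $l$ disposes of the leading term in one stroke, while the paper's version keeps the argument entirely in terms of spatial derivatives. Either way the substance is the same parity bookkeeping.
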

\begin{proof}
The first case $j=0$ simply follows from the fact that the velocity
$\underline{v}=\lambda\tau+k\nu$ is always zero at the fixed end--points $P^r$.\\
We argue by induction, we suppose that
for every even natural $l\leqslant j-2$ we have
$\ders^lk=\ders^l\lambda=0$, then, by using the first equation in
Lemma~\ref{kexpr}, we get
$$
\ders^{j}k=\dert\ders^{j-2}k-\lambda\ders^{j-1}k -\pol_{j+1}(\ders^{j-2}k)
$$
at every end--point $P^r$.\\
We already know that $\lambda=0$ and by the
inductive hypothesis $\ders^{j-2}k=0$, thus
$\dert\ders^{j-2}k=0$. 
Since $\pol_{j+1}(\ders^{j-2}k)$ is a sum of terms like $C\prod_{l=0}^{j-2}
(\ders^lk)^{\alpha_l}$ with $\sum_{l=0}^{j-2}(l+1)\alpha_l=j+1$ which is
odd, at least one of the terms of this sum has to be odd, hence at
least for one index $l$, the product $(l+1)\alpha_l$ is odd. It
follows that at least for one even $l$ the exponent $\alpha_l$ is
nonzero. Hence, at least one even derivatives is present in every monomial
of $\pol_{j+1}(\ders^{j-2}k)$, which contains only derivatives up to the
order $(j-2)$.\\
Again, by the inductive hypothesis, we then conclude that at
the end--points $\ders^{j}k=0$.\\
We can deal with $\lambda$ similarly, by
means of the relations in Lemma~\ref{kexpr}.
\end{proof}

In the very special case $j=0$ we get explicitly 
\begin{equation*}
 \frac{d\,}{dt} \int_{{\mathbb{S}_t}} k^2\,ds 
\leqslant -2\int_{{\SS_t}} \vert k_s\vert^2\,ds 
+ \int_{\SS_t} k^4\,ds 
- \sum_{p=1}^m\sum_{i=1}^3 2 k^{pi}k^{pi}_s+\lambda^{pi}|k^{pi}|^2\,
\biggr\vert_{\text{{ at the $3$--point $O^p$}}}+ lC_0C_1 
\end{equation*}
where the two constants $C_0$ and $C_1$ come from assumption~\eqref{endsmooth}.

Then, recalling relation~\eqref{eq:orto}, we have 
$\sum_{i=1}^3 k^{pi}k^{pi}_s+\lambda^{pi}|k^{pi}|^2\,\bigr\vert_{\text{{at the $3$--point $O^p$}}}=0$, 
and substituting the above, 
\begin{equation}\label{ksoltanto} 
\frac{d\,}{dt} \int_{{\SS_t}} k^2\,ds 
\leqslant -2\int_{{\SS_t}} \vert k_s\vert^2\,ds 
+ \int_{\SS_t} k^4\,ds 
+\sum_{p=1}^m\sum_{i=1}^3 \lambda^{pi}|k^{pi}|^2\,\biggr\vert_{\text{{
 at the $3$--point $O^p$}}} + lC_0C_1\,,
\end{equation} 
hence, we lowered the maximum order of the space derivatives of the curvature in the $3$--point terms, 
particular now it is lower than the one of the ``nice'' negative integral.

As we have just seen for the case $j=0$, also for the general case
we want to simplify the term $\sum_{i=1}^3
2\partial_{s}^{j}k^{pi}\partial_{s}^{j+1}k^{pi}+\lambda^{pi}|\partial_{s}^{j}k^{pi}|^{2}\,\bigr\vert_{\text{{at
 the $3$--point $O^p$}}}$, 
in order to control it.\\

Using formulas in Lemma~\ref{kexpr}, we have (see~\cite[Pages~258--259]{mannovtor}, for details)
\begin{align*} 
2\ders^jk &\,\,\ders^{j+1} k + \lambda |\ders^j k|^2\\ 
=&\, 2\dert^{j/2}k\cdot\dert^{j/2}(k_s+k\lambda)+ \qol_{j+1}(\dert^{j/2-1}\lambda,\ders^{j-1}k)
\cdot\dert^{j/2}k_s + \qol_{2j+3}(\dert^{j/2}\lambda,\ders^{j}k)\,. 
\end{align*}

We now examine the term 
$\qol_{j+1}(\dert^{j/2-1}\lambda,\ders^{j-1}k)\cdot\dert^{j/2}k_s$, which, by using Lemma~\ref{kexpr}, 
can be written as 
$\dert\qol_{2j+1}(\dert^{j/2-1}\lambda,\ders^{j-1}k) +
\qol_{2j+3}(\dert^{j/2}\lambda,\ders^{j}k)$ (see~\cite[Pages~258--259]{mannovtor}, for details).
It follows that 
\begin{align*} 
\sum_{p=1}^m\sum_{i=1}^3 2\ders^j k^{pi}\,\ders^{j+1}k^{pi}&\,
+\lambda^{pi}|\ders^j k^{pi}|^2\lambda\, \biggr\vert_{\text{{ at the $3$--point $O^p$}}}\\ 
=&\,\sum_{p=1}^m\sum_{i=1}^3 \dert\qol_{2j+1}(\dert^{j/2-1}\lambda^{pi},\ders^{j-1}k^{pi}) 
+\qol_{2j+3}(\dert^{j/2}\lambda^{pi},\ders^{j}k^{pi}) \,\biggr\vert_{\text{{ at the $3$--point $O^p$}}} 
\end{align*}

Resuming, if $j\geqslant2$ is even, we have 
\begin{align*}
\frac{d\,}{dt} \int_{{\SS_t}} |\ders^j k|^2\,ds\leqslant&\, 
-2\int_{{\SS_t}} \vert\ders^{j+1}k\vert^2\,ds 
+ \int_{\SS_t}\pol_{2j+4}(\ders^{j}k)\,ds+ lC_jC_{j+1}\nonumber \\ 
&\, 
+\sum_{p=1}^m \sum_{i=1}^3 \dert\qol_{2j+1}(\dert^{j/2-1}\lambda^{pi},\ders^{j-1}k^{pi}) 
+\qol_{2j+3}(\dert^{j/2}\lambda^{pi},\ders^{j}k^{pi}) \,\biggr\vert_{\text{{ at the $3$--point $O^p$}}}\,.\label{pippo100} 
\end{align*} 
Now, the key tool to estimate the terms 
$\int_{\SS_t}\pol_{2j+4}(\ders^{j}k)\,ds$ and
$\sum_{i=1}^3\qol_{2j+3}(\dert^{j/2}\lambda^{pi},\ders^{j}k^{pi})\,\bigr \vert_{\text{{ at the $3$--point $O^p$}}}
$
are the following Gagliardo--Nirenberg interpolation inequalities
(see~\cite[Section~3, Pages~257--263]{nirenberg1}).

\begin{prop}\label{gl}
Let $\gamma$ be a $C^\infty$, regular curve in $\R^2$ with finite length ${\mathrm L}$. 
If $u$ is a $C^\infty$ function defined on $\gamma$ and $m\geqslant1$, $p\in[2,+\infty]$, 
we have the estimates 
\begin{equation}\label{int1} 
{\Vert\partial_s^n u\Vert}_{L^p} \leqslant C_{n,m,p} 
{\Vert\partial_s^m u\Vert}_{L^2}^{\sigma} 
{\Vert u\Vert}_{L^2}^{1-\sigma}+ 
\frac{B_{n,m,p}}{{\mathrm L}^{m\sigma}}{\Vert u\Vert}_{L^2} 
\end{equation} 
for every $n\in\{0,\dots, m-1\}$ where $$ \sigma=\frac{n+1/2-1/p}{m} $$ 
and the constants $C_{n,m,p}$ and $B_{n,m,p}$ are independent of $\gamma$.
In particular, if $p=+\infty$, 
\begin{equation}\label{int2} 
{\Vert\partial_s^n u\Vert}_{L^\infty} \leqslant C_{n,m} 
{\Vert\partial_s^m u\Vert}_{L^2}^{\sigma} 
{\Vert u\Vert}_{L^2}^{1-\sigma}+ 
 \frac{B_{n,m}}{{\mathrm L}^{m\sigma}}{\Vert 
u\Vert}_{L^2}\qquad\text{ { with} }\quad \text{ $\sigma=\frac{n+1/2}{m}$.} 
\end{equation}
\end{prop}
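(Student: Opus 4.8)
The plan is to reduce the stated inequality on a curve to the classical Gagliardo–Nirenberg inequalities on a fixed interval, the latter being exactly the ones proved by Nirenberg in the reference \cite{nirenberg1}. First I would parametrize $\gamma$ by arclength, so that $\gamma\colon [0,{\mathrm L}]\to\R^2$ with $|\gamma_s|\equiv 1$; then $\partial_s$ is just differentiation with respect to the arclength variable and ${\Vert\cdot\Vert}_{L^p}$ is the ordinary $L^p$ norm on $[0,{\mathrm L}]$. The content of the statement is therefore the purely one-dimensional fact: for $u\in C^\infty([0,{\mathrm L}])$, $m\ge 1$, $n\in\{0,\dots,m-1\}$, $p\in[2,+\infty]$ and $\sigma=(n+1/2-1/p)/m$, one has
\begin{equation*}
{\Vert u^{(n)}\Vert}_{L^p([0,{\mathrm L}])}\leq C_{n,m,p}{\Vert u^{(m)}\Vert}_{L^2([0,{\mathrm L}])}^{\sigma}{\Vert u\Vert}_{L^2([0,{\mathrm L}])}^{1-\sigma}+\frac{B_{n,m,p}}{{\mathrm L}^{m\sigma}}{\Vert u\Vert}_{L^2([0,{\mathrm L}])}\,,
\end{equation*}
with constants independent of ${\mathrm L}$. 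The only subtlety compared to a bare citation of \cite{nirenberg1} is the explicit ${\mathrm L}$-dependence of the constants, which is why I would carry out a rescaling argument carefully rather than just quote the interval case.

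The key step is a scaling normalization. Set $v(y)=u({\mathrm L} y)$ for $y\in[0,1]$, so that $v^{(k)}(y)={\mathrm L}^{k}u^{(k)}({\mathrm L} y)$ and ${\Vert v^{(k)}\Vert}_{L^p([0,1])}={\mathrm L}^{k-1/p}{\Vert u^{(k)}\Vert}_{L^p([0,{\mathrm L}])}$. On the fixed interval $[0,1]$ Nirenberg's inequality (applied in the form valid on a bounded interval, with the lower-order term measured in $L^2$, which is the version in \cite[Section~3]{nirenberg1}) gives
\begin{equation*}
{\Vert v^{(n)}\Vert}_{L^p([0,1])}\leq C_{n,m,p}{\Vert v^{(m)}\Vert}_{L^2([0,1])}^{\sigma}{\Vert v\Vert}_{L^2([0,1])}^{1-\sigma}+B_{n,m,p}{\Vert v\Vert}_{L^2([0,1])}\,,
\end{equation*}
with the precise exponent $\sigma=(n+1/2-1/p)/m$ dictated by the one-dimensional dimensional analysis ($1/2-1/p$ being the scaling weight of $L^p$ relative to $L^2$ in one dimension). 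Then I would substitute the scaling relations back: ${\Vert v^{(n)}\Vert}_{L^p([0,1])}={\mathrm L}^{n-1/p}{\Vert u^{(n)}\Vert}_{L^p}$, ${\Vert v^{(m)}\Vert}_{L^2([0,1])}={\mathrm L}^{m-1/2}{\Vert u^{(m)}\Vert}_{L^2}$, ${\Vert v\Vert}_{L^2([0,1])}={\mathrm L}^{-1/2}{\Vert u\Vert}_{L^2}$, and check that the powers of ${\mathrm L}$ match: on the first term the exponent is $-(m-1/2)\sigma+( -1/2)(1-\sigma)=-m\sigma+1/2-\dots$, and after dividing through by ${\mathrm L}^{n-1/p}$ one is left with a clean power ${\mathrm L}^{0}$ on the interpolation term and ${\mathrm L}^{-m\sigma}$ on the lower-order term, using precisely that $n-1/p=m\sigma-1/2$. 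This yields \eqref{int1} with the same constants $C_{n,m,p}$, $B_{n,m,p}$ as on $[0,1]$, hence independent of $\gamma$. The case $p=+\infty$ is the specialization with $1/p=0$, giving $\sigma=(n+1/2)/m$ and \eqref{int2}.

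The main obstacle — really the only place requiring care — is making sure that the one-dimensional inequality being invoked is the correct variant: on an interval (as opposed to on $\R$) one cannot expect a pure multiplicative interpolation inequality, and the additive lower-order term is genuinely necessary; moreover that term must be taken in $L^2$ (not in $L^p$ or $L^1$) for the subsequent applications in the paper, and one must check that Nirenberg's statement indeed permits this choice of norm for the remainder. Once the correct interval inequality is fixed, the rest is bookkeeping of scaling exponents. I would also remark that by additivity of the $L^p$ norms over the curves composing a network, and by summing the inequalities (using that the global length controls each piece from below, or working curve by curve), the estimate extends to the network setting in the form used in \eqref{evolint000} and the surrounding computation.
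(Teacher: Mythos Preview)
Your argument is correct: the reduction to the unit interval by arclength parametrization and rescaling, followed by an application of Nirenberg's inequality on $[0,1]$, recovers exactly the stated estimate with the $\mathrm{L}^{-m\sigma}$ dependence on the lower-order term, and your exponent bookkeeping checks out.

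The paper, however, does not give any proof of this proposition: it is simply stated with a reference to \cite[Section~3, pp.~257--263]{nirenberg1} and used as a black box in the subsequent estimates. So there is nothing to compare at the level of argument --- you have supplied a (standard and correct) derivation where the paper only cites. Your closing remark about extending to networks by summing over curves is also more than the paper says at this point; in practice the paper just applies the inequality curve by curve.
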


After estimating the integral of every monomial 
of $\pol_{2j+4}(\ders^{j}k)$ by mean of the H\"{o}lder inequality,
one uses the Gagliardo--Nirenberg estimates on the result, concluding that
$$
\int_{{\SS_t}} \pol_{2j+4}(\ders^jk)\,ds\leqslant 
1/4\int_{{\SS_t}} \vert\ders^{j+1}k\vert^2\,ds
+ C\bigg(\int_{{\SS_t}} k^2\,ds\bigg)^{2j+3} + C\,,
$$
where the constant $C$ depends only on $j\in\NN$ and the lengths of the
curves of the network (see~\cite[Pages~260--262]{mannovtor}, for details).\\
Any term $\sum_{i=1}^3\qol_{2j+3}(\dert^{j/2}\lambda^{pi},\ders^{j}k^{pi})\,\bigr
\vert_{\text{{ at the $3$--point $O^p$}}}$ can be estimated similarly.

Hence, for every even $j\geqslant2$ we can finally write 
\begin{align}\label{final1} 
\frac{d\,}{dt} \int_{{\SS_t}} |\ders^j k|^2\,ds\leqslant &\,-\int_{{\SS_t}} \vert\ders^{j+1}k\vert^2\,ds 
+ C\bigg(\int_{{\SS_t}} k^2\,ds\bigg)^{2j+3} + C+ lC_jC_{j+1}\\ 
&\, + \dert\sum_{p=1}^m\sum_{i=1}^3 \qol_{2j+1}(\dert^{j/2-1}\lambda^{pi},\ders^{j-1}k^{pi}) \,
\biggr\vert_{\text{{ at the $3$--point $O^p$}}}\nonumber\\\leqslant &\,
C\bigg(\int_{{\SS_t}} k^2\,ds\bigg)^{2j+3} + 
\dert\sum_{p=1}^m\sum_{i=1}^3 \qol_{2j+1}(\dert^{j/2-1}\lambda^{pi},\ders^{j-1}k^{pi}) \,
\biggr\vert_{\text{{ at the $3$--point $O^p$}}} + C +lC_jC_{j+1}\,.\nonumber 
\end{align} 

Recalling the computation in the special case $j=0$, 
this argument gives the same final estimate without the contributions coming from the $3$--points:
\begin{equation}\label{evolint999} 
\frac{d\,}{dt} \int_{{\SS_t}} k^2\,ds\leqslant C\left(\int_{{\SS_t}} k^2\,ds\right)^{3} + C +lC_0C_1\,. 
\end{equation}

Integrating~\eqref{final1} in time on $[0,t]$ and estimating we get 
\begin{align*} 
\int_{\SS_t} |\ders^j k|^2\,ds\leqslant&\, 
\int_{\SS_0} |\ders^j k|^2\,ds 
+ C\int_{0}^t\bigg(\int_{\SS_\xi} k^2\,ds\bigg)^{2j+3}\,d\xi 
+ Ct+ lC_jC_{j+1}t\\ &\, 
+\sum_{p=1}^m \sum_{i=1}^3 \qol_{2j+1}(\dert^{j/2-1}\lambda^{pi}(0,t),\ders^{j-1}k^{pi}(0,t))\\&\, 
-\qol_{2j+1}(\dert^{j/2-1}\lambda^{pi}(0,0),
\ders^{j-1}k^{pi}(0,0)) \\ \leqslant&\,
C\int_{0}^t\bigg(\int_{\SS_\xi} k^2\,ds\biggr)^{2j+3}\,d\xi 
+ \Vert\pol_{2j+1}(\vert\ders^{j-1}k\vert)\Vert_{L^\infty} 
+ Ct + lC_jC_{j+1}t+C \,,
\end{align*} 
where in the last passage we used Remark~\ref{qolpol}. 
The constant $C$ depends only on $j\in\NN$ and on the network $\SS_0$.\\ 
Interpolating again by means of inequalities~\eqref{int2}, one gets 
$$
\Vert\pol_{2j+1}(\vert\ders^{j-1}k\vert)\Vert_{L^\infty}\leqslant 1/2\Vert\partial_s^{j}k\Vert^2_{L^2}+ 
C\Vert k \Vert_{L^2}^{4j+2}\,. 
$$
Hence, putting all together, for every even $j\in\NN$, we conclude
$$
\int_{\SS_t} |\ders^j k|^2\,ds \leqslant C\int_{0}^t\bigg(\int_{\SS_\xi} k^2\,ds\bigg)^{2j+3}\,d\xi 
+ C\bigg(\int_{\SS_t} k^2\,ds\bigg)^{2j+1} + Ct + lC_jC_{j+1}t+ C\,. 
$$ 
Passing from integral to $L^\infty$ estimates, by using inequalities~\eqref{int2}, we have the following proposition.

\begin{prop}\label{pluto1000} 
If assumption~\eqref{endsmooth} holds, the lengths of all the curves
are uniformly positively bounded from below and the $L^2$ norm of $k$
is uniformly bounded on $[0,T)$, then the curvature of $\SS_t$ and
all its space derivatives are uniformly bounded in the same time
interval by some constants depending only on the $L^2$ integrals 
of the space derivatives of $k$ on the initial network $\SS_0$. 
\end{prop}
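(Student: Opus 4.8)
The plan is to run the standard iteration that upgrades $L^2$ control of $k$ to uniform control of $k$ and all its spatial derivatives, using the differential inequalities \eqref{final1} and \eqref{evolint999} together with the Gagliardo–Nirenberg inequalities of Proposition~\ref{gl}. The key point is that these inequalities are available with constants \emph{independent of the curve}, so that once the lengths are bounded below and $\int_{\SS_t}k^2\,ds\le C_0$ on $[0,T)$, every term on the right-hand side of the estimates derived above becomes controlled by fixed constants depending only on $j$, on $\inf_t L^i(t)$, on $C_0$, and on the quantities $C_j$ from \eqref{endsmooth}.

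First I would treat the base case. By hypothesis $\int_{\SS_t}k^2\,ds\le C_0$ already, so there is nothing to prove for $j=0$ at the level of the $L^2$ norm; inequality~\eqref{evolint999} is then only needed to see that this bound is consistent (and, combined with \eqref{int2} for $m=1$, gives $\|k\|_{L^\infty}\le C$). Next, for even $j\ge 2$, I would integrate \eqref{final1} in time on $[0,t]$ exactly as displayed in the text: the term $\int_0^t\bigl(\int_{\SS_\xi}k^2\,ds\bigr)^{2j+3}d\xi$ is bounded by $C_0^{2j+3}T$ (or, if $T=\infty$ is allowed, one must assume $T<\infty$ here — but in the intended application $T$ is the maximal time of smooth existence and the statement is about $[0,T)$ with the bounds depending on that interval); the boundary terms $\qol_{2j+1}(\dots)$ at the $3$--points are, by Remark~\ref{qolpol}, bounded by $\|\pol_{2j+1}(|\ders^{j-1}k|)\|_{L^\infty}$, which is absorbed via interpolation \eqref{int2} into $\tfrac12\|\ders^j k\|_{L^2}^2$ plus $C\|k\|_{L^2}^{4j+2}$; and the endpoint terms $lC_jC_{j+1}t$ are bounded by assumption \eqref{endsmooth}. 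This yields, for every even $j$,
\begin{equation*}
\int_{\SS_t}|\ders^j k|^2\,ds\le C,
\end{equation*}
with $C=C(j,\inf_t L^i,C_0,\{C_j\},T)$, as already written in the excerpt just before the statement.

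It remains to pass from even $j$ to all $j$ and from $L^2$ to $L^\infty$. For odd $j$ one interpolates: $\|\ders^j k\|_{L^2}$ is controlled by $\|\ders^{j+1}k\|_{L^2}^{\sigma}\|k\|_{L^2}^{1-\sigma}$ plus lower-order terms by \eqref{int1} with $p=2$, $n=j$, $m=j+1$, so the even-index bounds (which we have for $j+1$) give the odd-index bounds. Finally, for each fixed $n$ I apply \eqref{int2} on each curve $\gamma^i(\cdot,t)$ with $u=k^i$ and $m=n+1$: since $\|k^i\|_{L^2}$ and $\|\ders^{n+1}k^i\|_{L^2}$ are both uniformly bounded and $L^i(t)\ge c>0$, we get $\|\ders^n k\|_{L^\infty(\SS_t)}\le C$ uniformly on $[0,T)$. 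Tracking the dependence through the interpolation constants shows $C$ depends only on the $L^2$ integrals of the spatial derivatives of $k$ on $\SS_0$ (and on $j$, $\inf_t L^i$, $C_0$, the $C_j$), which is the asserted form.

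The main obstacle is bookkeeping rather than conceptual: one must verify that the polynomial structure encoded in $\pol_\sigma$ and $\qol_\sigma$ (Remarks~\ref{qolpol} and~\ref{qolpol2}) is preserved under the manipulations, so that every monomial arising at a $3$--point really is expressible in spatial derivatives of $k$ of controlled order and hence estimable by \eqref{int2}, and that the exponents produced by Hölder plus Gagliardo–Nirenberg are always $<2$ on the top-order term so the negative term $-\int_{\SS_t}|\ders^{j+1}k|^2\,ds$ can absorb them. All of this is carried out in detail in~\cite[Section~3]{mannovtor}; here the only genuinely new point is that the endpoint contributions are harmless under \eqref{endsmooth} (or vanish when the $P^r$ are fixed, by Lemma~\ref{evenly}), which is exactly what was arranged in the derivation of \eqref{final1} above.
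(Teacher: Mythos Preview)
Your proposal is correct and follows essentially the same route as the paper: integrate the differential inequality \eqref{final1} in time, control the $3$--point terms via Remark~\ref{qolpol} and absorb them by interpolation \eqref{int2}, use the assumed bound on $\int_{\SS_t}k^2\,ds$ for the remaining terms, and finally pass from $L^2$ to $L^\infty$ via Gagliardo--Nirenberg. Your explicit treatment of odd $j$ by interpolation and your remark on the $T$--dependence are minor elaborations the paper leaves implicit, but the structure of the argument is the same.
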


By using the relations in Lemma~\ref{kexpr}, one then gets also estimates for every time and space derivatives of $\lambda$ which finally imply estimates on all the derivatives of the maps $\gamma^i$, stated in the next Proposition~\ref{unif222} 
(see~\cite[Pages~263--266]{mannovtor} for details). We discuss here explicitly how, in the hypotheses of this proposition, we deal with $\lambda$ and the ``velocity'' $\underline{v}=\gamma_t=k\nu+\lambda\tau$ of the flow.\\
At every $3$--point $O^p$ we have $\sum_{i=1}^3(\lambda^{pi})^2=\sum_{i=1}^3(k^{pi})^2$, by relations~\eqref{stimaklambdaneitripunti}, hence the squared modulus of the velocity $v^2=\vert \underline{v}\vert^2$ is uniformly bounded at every $3$--point, being $k^2$ uniformly bounded by some constant $C$.\\
Then, since $v^2$ is also uniformly bounded at the end--points of $\SS_t$, by 
assumption~\eqref{endsmooth}, applying the maximum principle to the equation for $v^2$, given by
\begin{equation*}
\dert v^2 =(v^2)_{ss} -2\lambda^2_s -2k^2_s -\lambda(v^2)_s + 2v^2 k^2\,,
\end{equation*}
which follows from equation~\eqref{dertdilamb}
\begin{equation*}
\dert\lambda =\lambda_{ss} -\lambda\lambda_s - 2kk_s +\lambda k^2
\end{equation*}
and equation~\eqref{dertdik}, we see that if $v^2$ gets larger than some fixed constant (independent of time), then its maximum is taken in the {\em interior} of
some curve of $\SS_t$ and
\begin{equation*}
\dert v^2_{\max} \leqslant 2v^2_{\max} k^2\leqslant 2Cv^2_{\max}\,.
\end{equation*}
Hence, integrating this {\em linear} differential inequality, we obtain that
$\underline{v}$ and hence $\lambda$ are also uniformly bounded as $k$
and its derivatives in the time interval $[0,T)$.

\begin{rem}\label{klstimarem} Notice that the conclusion that $v^2$ is uniformly bounded follows simply knowing that the curvature is uniformly bounded and assumption~\eqref{endsmooth} holds. In particular, for the case of an evolving network $\SS_t$ with fixed end--points and uniformly bounded curvature in an interval $[0,T)$
\end{rem} 

\begin{prop}\label{unif222} 
If $\SS_t$ is a $C^\infty$ special flow of the initial network 
$\SS_0=\bigcup_{i=1}^n\sigma^i$, satisfying assumption~\eqref{endsmooth}, such that the lengths of the $n$ curves are uniformly bounded away from zero 
and the $L^2$ norm of the curvature is uniformly bounded by some
constant in the time interval $[0,T)$, then 
\begin{itemize} 
\item all the derivatives in space and time of $k$ and $\lambda$ are
 uniformly bounded in $[0,1]\times[0,T)$, 
\item all the derivatives in space and time of the curves $\gamma^i(x,t)$ 
are uniformly bounded in $[0,1]\times[0,T)$, 
\item the quantities $\vert\gamma^i_x(x,t)\vert$ are uniformly bounded 
from above and away from zero in $[0,1]\times[0,T)$. 
\end{itemize} 
All the bounds depend only on the uniform controls on the $L^2$ norm of $k$, on the
lengths of the curves of the network from below, on the constants
$C_j$ in assumption~\eqref{endsmooth}, on the $L^\infty$ norms of
the derivatives of the curves $\sigma^i$ 
and on the bound from above and below on $\vert\sigma^i_x(x,t)\vert$, for the curves describing 
the initial network $\SS_0$. 
\end{prop}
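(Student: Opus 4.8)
The plan is to bootstrap from Proposition~\ref{pluto1000}, which under the standing hypotheses already provides uniform bounds on $[0,1]\times[0,T)$ for the curvature $k$ and for all its arclength derivatives $\partial_s^j k$, $j\in\NN$, in terms of the quantities listed in the statement. It then remains to control, in this order: (i) $\lambda$ and all its arclength derivatives $\partial_s^j\lambda$ in $L^2$, and hence in $L^\infty$; (ii) the quantities $|\gamma^i_x|$; (iii) all space and time derivatives of the maps $\gamma^i$. The time and mixed derivatives of $k$ and $\lambda$ will come for free from (i): by Lemma~\ref{kexpr}, the commutation rule~\eqref{commut} and the evolution equation~\eqref{dertdilamb} for $\lambda$ available for the special flow, each time derivative is a polynomial expression in finitely many arclength derivatives of $k$ and $\lambda$.

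For step (i) I would run, for $\lambda$, the same energy--estimate scheme already used for $k$. Differentiating $\int_{\SS_t}\lambda^2\,ds$, substituting~\eqref{dertdilamb}, $\partial_t\lambda=\lambda_{ss}-\lambda\lambda_s-2kk_s+\lambda k^2$, and the formula for $\partial_t(ds)$ of Proposition~\ref{equality1000}, and integrating by parts the term carrying $\lambda_{ss}$, one obtains
\[
\frac{d}{dt}\int_{\SS_t}\lambda^2\,ds\;\le\;-2\int_{\SS_t}\lambda_s^2\,ds+C\int_{\SS_t}\lambda^2\,ds+C+(\text{boundary terms of type }\lambda\lambda_s,\ \lambda^3),
\]
where the boundary terms are uniformly bounded --- at the end--points by assumption~\eqref{endsmooth}, and at each $3$--point because the algebraic identities~\eqref{lambdakappa}--\eqref{eq:orto} and Remark~\ref{qolpol} express $\lambda^{pi}$ and its derivatives there as (bounded) polynomials in the arclength derivatives of $k$. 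A Gronwall argument then bounds $\int_{\SS_t}\lambda^2\,ds$. One continues by induction on $j$ using the formula $\partial_t\partial_s^j\lambda=\partial_s^{j+2}\lambda-\lambda\partial_s^{j+1}\lambda-2k\partial_s^{j+1}k+\pol_{j+3}(\partial_s^j\lambda,\partial_s^jk)$ from Lemma~\ref{kexpr}: differentiating $\int_{\SS_t}(\partial_s^j\lambda)^2\,ds$ and integrating by parts produces the good term $-2\int_{\SS_t}(\partial_s^{j+1}\lambda)^2\,ds$, the remaining higher--order integrals are absorbed by the Gagliardo--Nirenberg inequalities of Proposition~\ref{gl} together with the bounds already known on $k$, on $\partial_s^hk$, and inductively on $\int_{\SS_t}(\partial_s^{j'}\lambda)^2\,ds$ for $j'<j$, and the junction and endpoint terms are treated just as for $j=0$. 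Finally the $p=+\infty$ case of Proposition~\ref{gl} upgrades all these uniform $L^2$ bounds to uniform $L^\infty$ bounds on the $\partial_s^j\lambda$.

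For step (ii), Proposition~\ref{equality1000} gives $\partial_t\log|\gamma^i_x|=\lambda^i_s-(k^i)^2$, now bounded in absolute value by step (i) and Proposition~\ref{pluto1000}; integrating in time yields $\bigl\vert\log|\gamma^i_x(x,t)|-\log|\sigma^i_x(x)|\bigr\vert\le Ct$, so $|\gamma^i_x|$ is uniformly bounded above and away from zero in terms of the corresponding bounds for the initial curves. For step (iii) one writes $\gamma^i_x=|\gamma^i_x|\,\tau^i$ and $\gamma^i_{xx}=|\gamma^i_x|^2(k^i\nu^i+\lambda^i\tau^i)$, both now bounded, and differentiates these identities repeatedly: every $\partial_x^m\gamma^i$ becomes a universal polynomial in $|\gamma^i_x|^{\pm1}$, $\tau^i$, $\nu^i$, $\partial_s^hk$ and $\partial_s^h\lambda$, all controlled; the time derivatives follow from $\gamma^i_t=\gamma^i_{xx}/|\gamma^i_x|^2$ by iteration, each $\partial_t$ being traded for arclength derivatives via this equation and~\eqref{commut}, and integrating $\gamma^i_t$ in time keeps $\gamma^i$ inside a fixed compact set. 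Collecting (i)--(iii) gives the three assertions with the claimed dependence of the constants.

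The step I expect to be the main obstacle is the bookkeeping of the $3$--point boundary contributions in the energy estimates for the $\partial_s^j\lambda$: one must repeatedly invoke the relations~\eqref{lambdakappa}--\eqref{eq:orto} and Remark~\ref{qolpol} to trade the $\lambda^{pi}$--derivatives occurring there for $k^{pi}$--derivatives, which are already under control, and arrange the integrations by parts so that the top--order term carries the favorable sign while everything else is absorbed by Gagliardo--Nirenberg --- this is exactly the computation carried out in~\cite[pp.~258--266]{mannovtor}. The only genuinely new point relative to that reference is that here the end--points need not be fixed, so the vanishing used there (compare Lemma~\ref{evenly}) is replaced throughout by the uniform bounds~\eqref{endsmooth}; this affects only the end--point boundary terms and brings the constants $C_j$ into the final estimates.
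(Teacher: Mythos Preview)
Your overall strategy is correct and matches the paper's route (detailed in~\cite[pp.~263--266]{mannovtor}): energy estimates for $\partial_s^j\lambda$, then control of $|\gamma^i_x|$, then all derivatives of $\gamma^i$. However, there is a real inaccuracy in your treatment of the $3$--point boundary terms in step~(i). You assert that~\eqref{lambdakappa}--\eqref{eq:orto} and Remark~\ref{qolpol} express $\lambda^{pi}$ \emph{and its arclength derivatives} at each $3$--point as polynomials in the $\partial_s^h k^{pj}$. This is true for $\lambda^{pi}$ itself and for its \emph{time} derivatives $\partial_t^l\lambda^{pi}$ (these are linear in the $\partial_t^l k^{pj}$ by~\eqref{lambdakappa}, and the latter are polynomial in $\partial_s^h k$ by Lemma~\ref{kexpr}; indeed Remark~\ref{qolpol} is precisely a statement about $\qol$--polynomials in \emph{time} derivatives of $\lambda$). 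But it fails already for $\lambda_s^{pi}$: the arclength derivative of $\lambda$ along curve~$i$ at the junction is a parametrization--dependent quantity, not determined by $k$ and its derivatives alone. Hence the boundary contribution $\sum_i 2\lambda^{pi}\lambda_s^{pi}$ in your $j=0$ estimate is \emph{not} a priori bounded as you claim, and the Gronwall step does not close as written; the same obstruction recurs at every order through the term $\partial_s^j\lambda\cdot\partial_s^{j+1}\lambda$.

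The remedy is to proceed exactly as the paper does for $k$ between~\eqref{evolint000} and~\eqref{final1}: using the formulas of Lemma~\ref{kexpr} one rewrites the $3$--point boundary expression so that its top--order part becomes the \emph{time derivative} of a $\qol_{2j+1}$--polynomial in $\partial_t^{l}\lambda$ and $\partial_s^h k$ --- and \emph{these} quantities are controlled at the junction by Remark~\ref{qolpol} --- plus a $\qol_{2j+3}$ remainder absorbable by Gagliardo--Nirenberg into the good negative integral. After integrating in~$t$, the time--derivative piece contributes only lower--order junction quantities at $t=0$ and at the current time, handled again by interpolation as in the passage following~\eqref{final1}. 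With this correction, your steps~(ii) and~(iii) go through as written.
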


Now, we work out a second set of estimates 
where everything is controlled -- still under the
assumption~\eqref{endsmooth} -- only by the $L^2$ norm of the
curvature and the inverses of the lengths of the curves at time
zero.
 
As before we consider the $C^\infty$ special curvature flow $\SS_t$ of a smooth network
$\SS_0$ in the time interval $[0,T)$, composed of $n$ curves $\gamma^{i}(\cdot,t):[0,1]\to\overline{\Omega}$ with $m$ $3$--points $O^1, O^2,\dots, O^m$ and $l$ end--points $P^1, P^2,\dots, P^l$, satisfying assumption~\eqref{endsmooth}.

\begin{prop}\label{stimaL} 
For every $M>0$ there exists a time $T_M\in (0,T)$, depending only on the {\em structure} of the network and the constants $C_0$ and $C_1$ in assumption~\eqref{endsmooth}, such that if the square of the $L^2$ norm of the curvature and the inverses of the lengths of the curves of $\SS_0$ are bounded by $M$, then the square of the $L^2$ norm of $k$ and the inverses of the lengths of the curves of $\SS_t$ are smaller than $2(n+1)M+1$, for every time $t\in[0,T_M]$. 
\end{prop}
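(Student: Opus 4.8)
The plan is a continuity argument for the combined quantity
$$
Y(t):=\int_{\SS_t}k^2\,ds+\sum_{i=1}^n\frac1{L^i(t)}\,,
$$
which is a smooth positive function of $t$ on $[0,T)$ (the flow being $C^\infty$ there) and, by hypothesis, satisfies $Y(0)\le(n+1)M<2(n+1)M+1$. I will show that \emph{whenever} $Y\le 2(n+1)M+1$ on an interval $[0,\tau]$ one has $Y'\le C$ on $[0,\tau]$ with a constant $C=C(\text{structure},C_0,C_1,M)$ independent of $\tau$; then I take $T_M\in(0,T)$ so small that $(n+1)M+C\,T_M<2(n+1)M+1$ (for instance $T_M=\min\{T/2,((n+1)M+1)/(2C)\}$), and a standard continuity argument gives $Y(t)<2(n+1)M+1$ for every $t\in[0,T_M]$. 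This is exactly the assertion, since $\int_{\SS_t}k^2\,ds\le Y(t)$ and $1/L^i(t)\le Y(t)$ for every $i$.

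So suppose $Y\le 2(n+1)M+1$ on $[0,\tau]$. There all lengths are $\ge 1/(2(n+1)M+1)$ and $\|k\|_{L^2(\SS_t)}^2\le 2(n+1)M+1$, so every constant produced by the Gagliardo--Nirenberg inequalities of Proposition~\ref{gl} — whose only geometry--dependent factors are powers of $1/L$ — is bounded by a quantity depending only on $M$ and the structure. For the length part, Proposition~\ref{equality1000} gives
$$
\frac{d}{dt}\frac1{L^i(t)}=\frac1{(L^i(t))^2}\Big(\int_{\gamma^i(\cdot,t)}(k^i)^2\,ds-\lambda^i(1,t)+\lambda^i(0,t)\Big)\,;
$$
the value of $\lambda$ at an end--point $P^r$ is bounded by $C_0$ by assumption~\eqref{endsmooth}, while at a $3$--point $O^p$ the identity $\lambda^{pi}(O^p,t)=\frac1{\sqrt3}\big(k^{p(i-1)}(O^p,t)-k^{p(i+1)}(O^p,t)\big)$ from Section~\ref{basiccomp} shows that $|\lambda^i|$ there is at most $C\|k\|_{L^\infty(\SS_t)}$.

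The key point is to add $\frac{d}{dt}\sum_i 1/L^i$ to the curvature estimate \emph{in the form \eqref{ksoltanto}}, that is, before the good term $-2\int_{\SS_t}|k_s|^2\,ds$ has been absorbed as in \eqref{evolint999}. Summing, $Y'(t)$ is bounded above by
$$
-2\int_{\SS_t}|k_s|^2\,ds+\int_{\SS_t}k^4\,ds+\sum_{p=1}^m\sum_{i=1}^3\lambda^{pi}|k^{pi}|^2\big|_{O^p}+lC_0C_1+\sum_{i=1}^n\frac1{(L^i)^2}\Big(\int_{\gamma^i}(k^i)^2\,ds-\lambda^i(1,t)+\lambda^i(0,t)\Big)\,.
$$
On $[0,\tau]$ the term $\int k^4$, the cubic $3$--point sum, and the $3$--point part of the last sum — which is at most $C(2(n+1)M+1)^2\|k\|_{L^\infty(\SS_t)}$ with $C$ depending only on the structure — are each estimated, by the Gagliardo--Nirenberg inequalities followed by Young's inequality, as $\varepsilon\int_{\SS_t}|k_s|^2\,ds$ plus a constant depending only on $M$ and the structure; since the prefactor $(2(n+1)M+1)^2$ is a fixed finite number there, the $\varepsilon$'s may be chosen (depending on $M$) small enough that these three terms are absorbed into $-2\int_{\SS_t}|k_s|^2\,ds$. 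The remaining terms $\sum_i(L^i)^{-2}\big(\int_{\gamma^i}(k^i)^2\,ds+C_0\big)$ are bounded by $C(2(n+1)M+1)^2\big(2(n+1)M+1+C_0\big)$. Hence $Y'(t)\le C(\text{structure},C_0,C_1,M)$ on $[0,\tau]$, as wanted.

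The step I expect to be the main obstacle is this absorption: one has to make sure that the large prefactor $(2(n+1)M+1)^2$ coming from the $(L^i)^{-2}$ weights does not spoil the estimate of the $\|k\|_{L^\infty}$--type $3$--point terms against the \emph{single} available negative term $-2\int_{\SS_t}|k_s|^2\,ds$. This is precisely why the argument must be run inside the good set $\{Y\le 2(n+1)M+1\}$, where that prefactor is a priori bounded, and it is the source both of the threshold $2(n+1)M+1$ and of the dependence of $T_M$ on $M$. The continuity argument, the trivial comparison $Y(t)\le Y(0)+Ct$, and the choice of $T_M\in(0,T)$ are then routine.
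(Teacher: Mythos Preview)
Your argument is correct, but the paper takes a different route. Rather than freezing the quantity on the good set $\{Y\le 2(n+1)M+1\}$ and bounding $Y'$ there by an $M$--dependent constant, the paper keeps all the $1/L^i$--factors coming out of the Gagliardo--Nirenberg inequalities explicit, and uses Young's inequality to package every bad term on the right as a power of $f(t):=Y(t)+1$ itself. This produces a \emph{closed} differential inequality $f'\le Cf^3$ in which $C$ depends only on the structure of the network and on $C_0,C_1$, \emph{not} on $M$. Integrating gives $f^2(t)\le f^2(0)/(1-2Ctf^2(0))$, and one reads off $T_M=\frac{3}{8C[(n+1)M+1]}$ directly.

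What each approach buys: the paper's route avoids the continuity/bootstrap step, and the dependence $T_M\sim M^{-2}$ is completely explicit --- this matters for the application in Proposition~\ref{unif333}, where one wants $T_M$ defined without any reference to $T$. In your version the absorption constants already depend on $M$ (through the prefactor $(2(n+1)M+1)^2$), so $T_M\sim ((n+1)M+1)/C(M)$ is less transparent; also, writing $T_M=\min\{T/2,\dots\}$ makes $T_M$ formally depend on $T$, which you should drop (take $T_M=((n+1)M+1)/(2C)$ and read the conclusion on $[0,\min\{T_M,T\})$). On the other hand, your bootstrap scheme is more flexible: it does not require collapsing all terms into a single polynomial in $f$, and would adapt more easily to situations where the right--hand side has a messier structure.
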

\begin{proof} 
The evolution equations for the lengths of the $n$ curves are given by 
$$
\frac{dL^i(t)}{dt}=\lambda^i(1,t)-\lambda^i(0,t)-\int_{\gamma^i(\cdot,t)}k^2\,ds\,,
$$
then, recalling computation~\eqref{ksoltanto}, 
we have 
\begin{align*} 
\frac{d\,}{dt} \bigg(\int_{\SS_t} k^2\,ds 
+ \sum_{i=1}^n\frac{1}{L^i}\bigg)\leqslant &\, 
-2\int_{{\SS_t}} k_{s}^2\,ds 
+ \int_{{\SS_t}} k^4\,ds 
+6m\Vert k\Vert_{L^\infty}^3 +lC_0C_1
- \sum_{i=1}^n\frac{1}{(L^i)^2}\frac{dL^i}{dt}\\
=&\, -2\int_{{\SS_t}} k_{s}^2\,ds 
+ \int_{{\SS_t}} k^4\,ds 
+6m\Vert k\Vert^3_{L^\infty}+lC_0C_1\\
&\,-\sum_{i=1}^n\frac{\lambda^i(1,0)-\lambda^i(0,t)
+\int_{\gamma^i(\cdot,t)}k^2\,ds}{(L^i)^2}\\
\leqslant&\,-2\int_{{\SS_t}} k_{s}^2\,ds 
+ \int_{{\SS_t}} k^4\,ds +6m\Vert k\Vert^3_{L^\infty}+lC_0C_1\\
&\,+2\sum_{i=1}^n\frac{\Vert k\Vert_{L^\infty}+C_0}{(L^i)^2} +\sum_{i=1}^n\frac{\int_{\SS_t}k^2\,ds}{(L^i)^2}\\
\leqslant&\,-2\int_{{\SS_t}} k_{s}^2\,ds 
+ \int_{{\SS_t}} k^4\,ds 
+ (6m+2n/3)\Vert k\Vert_{L^\infty}^3+lC_0C_1+2nC_0^3/3\\ 
&\,+ \frac{n}{3}\bigg(\int_{\SS_t}k^2\,ds\bigg)^3 
+ \frac{2}{3}\sum_{i=1}^n\frac{1}{(L^i)^3} 
\end{align*} 
where we used Young inequality in the last passage.\\ 
Interpolating as before (and applying again Young inequality) 
but keeping now in evidence the terms depending on $L^i$ in inequalities~\eqref{int1}, 
we obtain 
\begin{align*} 
\frac{d\,}{dt} \left(\int_{\SS_t} k^2\,ds 
+ \sum_{i=1}^n\frac{1}{L^i}\right) \leqslant&\,
-\int_{\SS_t} k_{s}^2\,ds 
+ C\left(\int_{\SS_t}k^2\,ds\right)^3 
+ C\sum_{i=1}^n\frac{\left(\int_{\SS_t}k^2\,ds\right)^2} {L^i}\\ 
&\,+C\sum_{i=1}^n\frac{\left(\int_{\SS_t}k^2\,ds\right)^{3/2}} {(L^i)^{3/2}}+ C\sum_{i=1}^n\frac{1}{(L^i)^3} + C\\ 
\leqslant&\, C\left(\int_{\SS_t}k^2\,ds\right)^3 
+ C\sum_{i=1}^n\frac{1}{(L^i)^3}+C\\ 
\leqslant&\, C\left(\int_{\SS_t}k^2\,ds 
+\sum_{i=1}^n\frac{1}{L^i}+1\right)^3\,,
\end{align*} 
with a constant $C$ depending only on the {\em structure} of the network and on the constants $C_0$ and $C_1$ in 
assumption~\eqref{endsmooth}.\\ 
This means that the positive function 
$f(t)=\int_{\SS_t} k^2\,ds 
+\sum_{i=1}^n\frac{1}{L^i(t)}+1$ 
satisfies the differential inequality $f^\prime\leqslant Cf^3$, 
hence, after integration
$$
f^2(t)\leqslant \frac{f^2(0)}{1-2Ctf^2(0)}\leqslant \frac{f^2(0)}{1-2Ct[(n+1)M+1]}
$$

then, if $t\leqslant T_M=\frac{3}{8C[(n+1)M+1]}$, we get
$f(t)\leqslant2f(0)$. Hence,
$$
\int_{\SS_t} k^2\,ds +\sum_{i=1}^n\frac{1}{L^i(t)}\leqslant 2\int_{\SS_0} k^2\,ds +2\sum_{i=1}^n\frac{1}{L^i(0)}+1\leqslant 2[(n+1)M]+1\,.
$$ 
\end{proof}

By means of this proposition, we can strengthen the conclusion of
Proposition~\ref{unif222}.

\begin{cor}\label{topolino7} In the hypothesis of the previous proposition, in the time
 interval $[0,T_M]$ all the bounds in Proposition~\ref{unif222} depend only on the 
 $L^2$ norm of $k$ on $\SS_0$, on the constants $C_j$ in assumption~\eqref{endsmooth}, 
 on the $L^\infty$ norms of the derivatives of the curves $\sigma^i$, 
 on the bound from above and below on $\vert\sigma^i_x(x,t)\vert$ and on the inverses of the lengths of the curves of 
 the initial network $\SS_0$.
\end{cor}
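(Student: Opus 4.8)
The plan is simply to combine the a priori estimate of Proposition~\ref{stimaL} with Proposition~\ref{unif222}. By the hypothesis of Proposition~\ref{stimaL} the number $M$ bounds the square of the $L^2$ norm of the curvature of $\SS_0$ and the inverses $1/L^i(0)$ of the lengths of its curves; equivalently, one may take $M$ to be a quantity depending only on $\Vert k\Vert_{L^2(\SS_0)}$ and on the lengths $L^1(0),\dots,L^n(0)$. Proposition~\ref{stimaL} then yields a time $T_M\in(0,T)$, depending only on the structure of the network and on the constants $C_0,C_1$ of assumption~\eqref{endsmooth}, such that
\[
\int_{\SS_t}k^2\,ds+\sum_{i=1}^n\frac{1}{L^i(t)}\leq 2(n+1)M+1
\qquad\text{for every }t\in[0,T_M].
\]
In particular, on $[0,T_M]$ the $L^2$ norm of $k$ is uniformly bounded by a constant depending only on $M$, and each length satisfies $L^i(t)\geq\bigl(2(n+1)M+1\bigr)^{-1}$, so the $n$ curves of $\SS_t$ are uniformly bounded away from zero by a constant depending only on $M$.

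Next I would apply Proposition~\ref{unif222} on the closed sub-interval $[0,T_M]\subset[0,T)$, where the flow is still a $C^\infty$ special flow and assumption~\eqref{endsmooth} is in force. That proposition gives uniform bounds on all space and time derivatives of $k$ and $\lambda$, on all derivatives of the maps $\gamma^i$, and two-sided bounds on $\vert\gamma^i_x\vert$ on $[0,1]\times[0,T_M]$; by its last sentence the controlling constants depend only on the uniform $L^2$-control of $k$ over $[0,T_M]$, on the uniform lower bound for the lengths over $[0,T_M]$, on the constants $C_j$ of~\eqref{endsmooth}, on the $L^\infty$ norms of the derivatives of the $\sigma^i$, and on the bounds above and below for $\vert\sigma^i_x\vert$. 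By the previous paragraph the first two of these inputs are themselves controlled by $M$ alone, hence by $\Vert k\Vert_{L^2(\SS_0)}$ and the lengths of the curves of $\SS_0$; substituting, all the bounds of Proposition~\ref{unif222} acquire precisely the list of dependencies claimed in the statement.

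The content here is entirely in the bookkeeping of constants rather than in any new estimate, so the point to be careful about --- not a genuine obstacle --- is to check that there is no circularity: the time $T_M$ depends only on the structure of $\SS_0$ and on $C_0,C_1$, and not on any of the quantities we are about to bound, so the interval on which Proposition~\ref{unif222} is invoked is legitimate, and the resulting constants involve $\SS_0$ only through its $L^2$ curvature norm, the lengths of its curves, the $L^\infty$ norms of the derivatives of the $\sigma^i$, and the bounds on $\vert\sigma^i_x\vert$ (together with the $C_j$). It is also worth recording that $[0,T_M]$ being a closed subset of $[0,T)$ guarantees the smoothness needed to apply Proposition~\ref{unif222} verbatim.
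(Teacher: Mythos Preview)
Your argument is correct and matches the paper's approach: the corollary is stated there without proof, as an immediate consequence of feeding the uniform $L^2$--curvature and length bounds from Proposition~\ref{stimaL} into Proposition~\ref{unif222}, exactly as you do. Your added remarks about the absence of circularity and the closedness of $[0,T_M]$ are sound and make the dependence of constants explicit.
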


From now on we assume that the $L^2$ norm of the curvature 
and the inverses of the lengths of the curves are bounded in the interval $[0,T_M]$.

Considering $j\in\NN$ even, if we differentiate the function 
$$ \int_{{\SS_t}} k^2+ tk_s^2
+ \frac{t^2 k_{ss}^2}{2!}+\dots
+ \frac{t^j\vert \partial_s^jk\vert^2}{j!}\,ds\,, $$ 
and we estimate with interpolation inequalities as before (see~\cite[Pages~268--269]{mannovtor}, for details), we obtain 
\begin{align}\label{ciaociao}
\frac{d\,}{dt}\int_{{\SS_t}}&\, k^2+ tk_s^2
+\frac{t^2 k_{ss}^2}{2!}+\dots+ \frac{t^j\vert \partial_s^jk\vert^2}{j!}\,ds\\ \leqslant
&\,-\varepsilon\int_{{\SS_t}} k_s^2 
+ t k_{ss}^2 + t^2 k_{sss}^2+\dots+ t^j\vert \partial_s^{j+1}k\vert^2\,ds + C\nonumber\\ &\, 
+ \dert\sum_{p=1}^m\sum_{i=1}^3 t^2\qol_5(\lambda^{pi}, k_s^{pi}) 
+ t^4\qol_9(\dert\lambda^{pi},k_{sss}^{pi}) 
+\dots+ t^j\qol_{2j+1}(\dert^{j/2-1}\lambda^{pi},\ders^{j-1}k^{pi}) \,\biggr\vert_
{\text{{ at the $3$--point $O^p$}}}\nonumber\\ &\, 
+C\sum_{p=1}^m\sum_{i=1}^3 t k_s^{pi} k^{pi}_{ss}
+ t^3 k^{pi}_{sss} k_{ssss}^{pi} +
\dots + t^{j-1}\ders^{j-1} k^{pi}\,\ders^{j}k^{pi}\, 
\biggr\vert_{\text{{ at the $3$--point $O^p$}}}\nonumber 
\end{align} 
in the time interval $[0,T_M]$, 
where $\varepsilon>0$ and $C$ are two constants depending only on the $L^2$ norm of the curvature, the constants in 
assumption~\eqref{endsmooth} and the inverses of the lengths of the $n$ curves of $\SS_0$. \\
We proceed as we did before for the computation of 
$\frac{d}{dt}\int_{\SS_t}|\partial_s^jk|^2\,ds$\,.\\
First, we deal with the last line, 
$$
\sum_{i=1}^3 t k_s^{pi} k^{pi}_{ss}
+ t^3 k^{pi}_{sss} k_{ssss}^{pi} +
\dots + t^{j-1}\ders^{j-1} k^{pi}\,\ders^{j}k^{pi}\, 
\biggr\vert_{\text{{ at the $3$--point}}}\,.
$$
By formulas in Lemma~\ref{kexpr} and by Remark~\ref{qolpol}, we can write, for any term $\sum_{i=1}^3t^{h-1}\ders^{h-1}
k^i\ders^{h}k^i\, \biggr\vert_{\text{{ at the $3$--point}}}$,
\begin{align*}
\sum_{i=1}^3t^{h-1}\ders^{h-1}
k^i\ders^{h}k^i\, \biggr\vert_{\text{{ at the $3$--point}}}= 
&\,\sum_{i=1}^3t^{h-1}\qol_{2h+1}(\dert^{h/2-1}\lambda^i,\ders^{h-1}k^i)\\
 &\,\phantom{\sum_{i=1}^3} + t^{h-1}
\ders^{h}k^i\cdot \qol_{h}(\dert^{h/2-1}\lambda^i,\ders^{h-2}k^i)\,
\biggr\vert_{\text{{ at
 the $3$--point}}}\\
\leqslant &\,t^{h-1}\Vert \pol_{2h+1}(\vert\ders^{h-1}k\vert)\Vert_{L^\infty}
+t^{h-1}\Vert\ders^{h}k\Vert_{L^\infty}\Vert\pol_{h}(\vert\ders^{h-2}k\vert)\Vert_{L^\infty}
\end{align*}
(see~\cite[Page~270]{mannovtor}, for details).\\
The term $t^{h-1}\Vert \pol_{2h+1}(\vert\ders^{h-1}k\vert)\Vert_{L^\infty}$
is controlled as before by a small fraction of the term
$t^{h-1}\int_{\SS_t}\vert \ders^h k\vert^2\,ds$ and a possibly large
multiple of $t^{h-1}$ times some power of the $L^2$ norm of $k$ 
(which is bounded), whereas 
$t^{h-1}\Vert\ders^{h}k\Vert_{L^\infty}\Vert\pol_{h}(\vert\ders^{h-2}k\vert)\Vert_{L^\infty}$
is the critical term.\\
Again by means of interpolation inequalities~\eqref{int2} one estimates
$\Vert \ders^h k\Vert_{L^\infty}\,,\Vert \pol_{h}(\ders^{h-2}k)\Vert_{L^\infty}$
and $\Vert \ders^h k\Vert_{L^2}$
with the $L^2$ norm of $k$ and its derivatives.
After some computation (see~\cite[Pages~270--271]{mannovtor}, for details), one gets
$$
\sum_{i=1}^3t^{h-1}\ders^{h-1} k^i\ders^{h}k^i\,
\biggr\vert_{\text{{ at
 the $3$--point}}}
\leqslant \varepsilon_h/2 \left (t^h\int_{\SS_t} \vert\ders^{h+1}k\vert^2\,ds
+ t^{h-1}\int_{\SS_t} \vert\ders^{h}k\vert^2\,ds + Ct^h\right) +C/t^{\theta_h}
$$
with $\theta_h<1$ and some small $\varepsilon_h>0$.

We apply this argument for every even $h$ from 2 to 
$j$, choosing accurately small values $\varepsilon_j$.\\
Hence, we can continue estimate~\eqref{ciaociao} as follows,
\begin{align*}
\frac{d\,}{dt}\int_{{\SS_t}}&\, k^2+ tk_s^2+ \frac{t^2 k_{ss}^2}{2!}+\dots+
\frac{t^j\vert \partial_s^jk\vert^2}{j!}\,ds\\
\leqslant&\,-\varepsilon/2\int_{{\SS_t}} k_s^2 + t k_{ss}^2 + t^2 k_{sss}^2+\dots+
t^j\vert \partial_s^{j+1}k\vert^2\,ds + C + C/t^{\theta_2}+\dots+C/t^{\theta_j}\\
&\, + \dert\sum_{i=1}^3 t^2\qol_5(\lambda^i, k_s^i)
+ t^4\qol_9(\dert\lambda^i,k_{sss}^i)
+\dots+ t^j\qol_{2j+1}(\dert^{j/2-1}\lambda^i,\ders^{j-1}k^i)
\,\biggr\vert_{\text{{ at the $3$--point}}}\\
\leqslant &\, C + C/t^\theta + \dert\sum_{i=1}^3 t^2\qol_5(\lambda^i, k_s^i)
+ t^4\qol_9(\dert\lambda^i,k_{sss}^i)
+\dots+ t^j\qol_{2j+1}(\dert^{j/2-1}\lambda^i,\ders^{j-1}k^i)
\,\biggr\vert_{\text{{ at the $3$--point}}}
\end{align*}
for some $\theta<1$.\\
Integrating this inequality in time on $[0,t]$ with $t\leqslant T_M$ and taking
into account Remark~\ref{qolpol}, we get
\begin{align*}
\int_{{\SS_t}} k^2&\,+ tk_s^2+ \frac{t^2 k_{ss}^2}{2!}+\dots+
\frac{t^j\vert \partial_s^jk\vert^2}{j!}\,ds\\
\leqslant&\, \int_{{\SS_0}} k^2\,ds + CT_M+ CT_M^{(1-\theta)}\\
&\,+ \sum_{i=1}^3 t^2\qol_5(\lambda^i, k_s^i)
+ t^4\qol_9(\dert\lambda^i,k_{sss}^i)
+\dots+ t^j\qol_{2j+1}(\dert^{j/2-1}\lambda^i,\ders^{j-1}k^i)
\,\biggr\vert_{\text{{ at the $3$--point}}}\\
\leqslant&\, \int_{{\SS_0}} k^2\,ds + C
+ t^2\Vert \pol_5(\vert k_s\vert)\Vert_{L^\infty}
+ t^4\Vert \pol_9(\vert k_{sss}\vert)\Vert_{L^\infty}
+\dots+ t^j\Vert \pol_{2j+1}(\vert \ders^{j-1}k\vert) \Vert_{L^\infty}\,.
\end{align*}
Now we absorb all the polynomial terms, after interpolating each
one of them between the corresponding ``good'' integral in the left
member and some power of the $L^2$ norm of $k$, 
as we did in showing Proposition~\ref{pluto1000}, hence we finally
obtain for every even $j\in\NN$,
\begin{equation}\label{keyestimate}
\int_{{\SS_t}} k^2+ tk_s^2
+ \frac{t^2 k_{ss}^2}{2!}
+\dots+ \frac{t^j\vert \partial_s^jk\vert^2}{j!}\,ds \leqslant \overline{C}_j
\end{equation}
with $t\in[0,T_M]$ and a constant $\overline{C}_j$ 
depending only on the constants in assumption~\eqref{endsmooth} and the bounds on $\int_{\SS_0}k^2\,ds$ 
and on the inverses of the lengths of the curves of the initial network $\SS_0$.\\
This family of inequalities clearly implies 
\begin{equation}\label{stimak} 
\int_{{\SS_t}} \vert \partial_s^jk\vert^2 \,ds\leqslant \frac{C_j j!}{t^j}\qquad\text{ { for every even} $j\in\NN$.} 
\end{equation}
Then, passing as before from integral to $L^\infty$ estimates by means of inequalities~\eqref{int2}, 
we have the following proposition.

\begin{prop}\label{topolino5} 
For every $\mu>0$ the curvature and 
all its space derivatives of $\SS_t$ are uniformly bounded in the time interval $[\mu,T_M]$ 
(where $T_M$ is given by 
Proposition~\ref{stimaL}) by some constants depending only on $\mu$, the constants in assumption~\eqref{endsmooth} 
and the bounds on $\int_{\SS_0}k^2\,ds$ 
and on the inverses of the lengths of the curves of the initial network $\SS_0$.
\end{prop}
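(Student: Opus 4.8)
The plan is to bootstrap from the integral bound \eqref{stimak} to pointwise bounds on $k$ and all its space derivatives by means of the Gagliardo--Nirenberg inequalities of Proposition~\ref{gl}, exactly in the spirit of the passage from the integral estimates to Proposition~\ref{pluto1000}, but now keeping track of the inverse powers of $t$ produced by \eqref{stimak} and discarding them in favour of $\mu^{-1}$ once we restrict to the interval $[\mu,T_M]$ (which we may assume nonempty).

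First I would record the consequences of Proposition~\ref{stimaL}: on the whole interval $[0,T_M]$ the reciprocals $1/L^i(t)$ of the lengths of the curves of $\SS_t$ and the quantity $\int_{\SS_t}k^2\,ds$ are bounded by constants depending only on the structure of the network, on $C_0,C_1$ in \eqref{endsmooth} and on $M$, that is, on the bounds on $\int_{\SS_0}k^2\,ds$ and on the inverse lengths of $\SS_0$. In particular each curve $\gamma^i(\cdot,t)$ has length bounded away from zero and $\Vert k\Vert_{L^2(\SS_t)}$ is bounded above, so that both terms on the right--hand side of \eqref{int2}, applied curve by curve, are under control as soon as the relevant higher $L^2$ norm is (the constants $C_{n,m},B_{n,m}$ there being independent of the curve, while $L^{-m\sigma}$ is harmless by the length lower bound).

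Next, \eqref{stimak} reads $\int_{\SS_t}\vert\partial_s^j k\vert^2\,ds\leq C_j\,j!\,t^{-j}$ for every even $j$, whence $\int_{\SS_t}\vert\partial_s^j k\vert^2\,ds\leq C_j\,j!\,\mu^{-j}$ for all $t\in[\mu,T_M]$, with $C_j$ depending only on the constants in \eqref{endsmooth}, on $\int_{\SS_0}k^2\,ds$ and on the inverse lengths of $\SS_0$. Since \eqref{stimak} covers only even orders, for odd $j$ I would interpolate via \eqref{int1} with $p=2$, $n=j$ and $m=j+1$, obtaining on each curve
\begin{equation*}
\Vert\partial_s^j k\Vert_{L^2}\leq C\,\Vert\partial_s^{j+1} k\Vert_{L^2}^{\,j/(j+1)}\,\Vert k\Vert_{L^2}^{\,1/(j+1)}+\frac{B}{L^{\,j}}\Vert k\Vert_{L^2}\,,
\end{equation*}
so that the $L^2$ norm over $\SS_t$ of $\partial_s^j k$ for \emph{every} $j\in\NN$ is uniformly bounded on $[\mu,T_M]$ by constants of the claimed dependence.

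Finally, for each fixed $n$ I would apply \eqref{int2} on every curve of $\SS_t$ with $m$ the least even integer exceeding $n$ and $\sigma=(n+1/2)/m$: the right--hand side then involves only $\Vert\partial_s^m k\Vert_{L^2}$ and $\Vert k\Vert_{L^2}$, both already controlled, together with the harmless factor $L^{-m\sigma}$. Taking the maximum over the $n$ curves yields the asserted uniform bound on $\partial_s^n k$ over $\SS_t$ for $t\in[\mu,T_M]$. No analytic work is left to do here — all of it is contained in the derivation of \eqref{stimak} — so the only point requiring attention is the bookkeeping: folding in the odd--order derivatives as above and checking that every constant produced depends solely on $\mu$, on the constants $C_j$ of \eqref{endsmooth}, on $\int_{\SS_0}k^2\,ds$ and on the lengths of the curves of $\SS_0$, and in particular not on the maximal existence time $T$.
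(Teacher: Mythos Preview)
Your proposal is correct and follows exactly the route the paper takes: the paper simply says ``passing as before from integral to $L^\infty$ estimates by means of inequalities~\eqref{int2}'', and you have spelled out precisely how that passage works, using the length lower bound and the $L^2$ curvature bound from Proposition~\ref{stimaL} together with \eqref{stimak}. Your extra step of interpolating for odd $j$ is not strictly needed for the $L^\infty$ conclusion (since in \eqref{int2} you may always take $m$ even with $m>n$, as you do anyway in the last step), but it is correct and harmless.
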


By means of these a priori estimates, we can now work out some results
about the smooth flow of an initial regular geometrically smooth
network $\SS_0$. Notice that these are examples of how to use the
previous estimates on special smooth flows to get the conclusion
on general flows or even only $C^\infty$ flows, as we mentioned in the
beginning of this section.

 \begin{thm}\label{curvexplod} 
If $[0,T)$, with $T<+\infty$, is the maximal time interval of existence of a $C^\infty$ curvature flow 
of an initial geometrically smooth network $\SS_0$, then 
\begin{enumerate} 
\item either the inferior limit of the length of at least one curve of $\SS_t$ is zero, as $t\to T$, 
\item or $\limup_{t\to T}\int_{\SS_t}k^2\,ds=+\infty$. 
\end{enumerate} 
Moreover, if the lengths of the $n$ curves are uniformly positively bounded from below, 
then this superior limit is actually a limit and 
there exists a positive constant $C$ such that
$$
\int_{{\SS_t}} k^2\,ds \geqslant \frac{C}{\sqrt{T-t}}\,,
$$
for every $t\in[0, T)$.
\end{thm}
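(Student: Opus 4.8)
The plan is to argue by contradiction for the dichotomy, and then to extract the blow-up rate from the differential inequality \eqref{evolint999}. For the dichotomy: suppose neither alternative holds, so the lengths of all $n$ curves are uniformly bounded away from zero on $[0,T)$ and $\limup_{t\to T}\int_{\SS_t}k^2\,ds<+\infty$, i.e. the $L^2$ norm of the curvature is uniformly bounded on $[0,T)$. Since the flow is a $C^\infty$ curvature flow of a geometrically smooth network, by Corollary~\ref{sunique} and the discussion around Remark~\ref{fffggg} we may reparametrize it (dynamically, preserving the boundary conditions \eqref{endsmooth}, which here hold trivially since the end--points are fixed, so all $C_j$ may be taken to vanish by Lemma~\ref{evenly}) so that it becomes a special curvature flow; the geometric quantities and the lengths are unaffected. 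Now Proposition~\ref{unif222} (or its sharper companion Corollary~\ref{topolino7}) applies: all space and time derivatives of $k$ and $\lambda$, hence all derivatives of the $\gamma^i$, and $|\gamma^i_x|$ from above and away from zero, are uniformly bounded on $[0,1]\times[0,T)$. Therefore the maps $\gamma^i(\cdot,t)$ converge in $C^\infty$ as $t\to T$ to a smooth regular network $\SS_T$ which is still geometrically smooth, and one can restart the special flow from $\SS_T$ by Theorem~\ref{smoothexist}; glueing this with the original flow extends it smoothly past $T$, contradicting the maximality of $T$. This establishes the dichotomy.

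For the quantitative statement, assume now that the lengths stay uniformly bounded away from zero, so by the just-proved dichotomy alternative (2) holds: $\limup_{t\to T}\int_{\SS_t}k^2\,ds=+\infty$. Set $g(t)=\int_{\SS_t}k^2\,ds$. Since the end--points are fixed, estimate \eqref{evolint999} (with the $C_0C_1$ term absent by Lemma~\ref{evenly}, or simply kept as a harmless constant) gives $g'(t)\le C_1 g(t)^3+C_2$ for constants depending only on the lengths-from-below bound and on $\SS_0$. I would first note that $g$ cannot stay bounded on any interval $[t_0,T)$: if it did, integrating the inequality would give $g$ bounded up to $T$, contradicting $\limup g=+\infty$; more precisely, $\limup g=+\infty$ forces $g$ to be large, and then $C_1g^3$ dominates $C_2$, say $g'(t)\le 2C_1 g(t)^3$ whenever $g(t)\ge g_0$ for a suitable threshold $g_0$. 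Pick any $t$ with $g(t)\ge g_0$; since $g$ must become arbitrarily large before $T$ and is continuous, and since once $g\ge g_0$ the inequality $g'\le 2C_1g^3$ holds, I integrate $-\tfrac12\,d(g^{-2})/dt=g'g^{-3}\le 2C_1$ from $t$ to any later $s\in(t,T)$ with $g\ge g_0$ throughout, obtaining $g(t)^{-2}-g(s)^{-2}\le 4C_1(s-t)$, hence $g(t)^{-2}\le 4C_1(s-t)+g(s)^{-2}$. Letting $s\to T$ along a sequence on which $g(s)\to+\infty$ (such a sequence exists and one checks $g\ge g_0$ can be arranged on a final interval, or one iterates on the last excursion above $g_0$), the term $g(s)^{-2}\to0$ and $s-t\le T-t$, so $g(t)^{-2}\le 4C_1(T-t)$, i.e. $g(t)\ge (4C_1)^{-1/2}(T-t)^{-1/2}$. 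Finally, for the (few) times $t$ with $g(t)<g_0$ one has $g(t)\ge g_0 \ge g_0\sqrt{T}\,(T-t)^{-1/2}\cdot(\text{const})$ trivially after shrinking the constant, or one simply observes $T-t\le T$ so the claimed bound $g(t)\ge C/\sqrt{T-t}$ holds with $C=\min\{(4C_1)^{-1/2}, g_0\sqrt{\,}\}$ suitably chosen; this yields the constant $C>0$ of the statement, valid for all $t\in[0,T)$.

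Also I should record that ``the superior limit is actually a limit'': under the lengths-from-below hypothesis, if $g$ did not tend to $+\infty$ there would be a sequence $t_k\to T$ with $g(t_k)$ bounded, and I claim this forces $g$ bounded on all of $[0,T)$. Indeed the bound $g(t)^{-2}\le 4C_1(s-t)+g(s)^{-2}$ derived above, applied with $s=t_k$ for $t$ in the region where $g\ge g_0$, shows $g(t)^{-2}$ is bounded below by a positive quantity as $k\to\infty$ only if $t$ is bounded away from $T$; more directly, starting the ODE comparison from a time $t_0$ near $T$ where $g(t_0)$ is moderate, the inequality $g'\le C_1g^3+C_2$ keeps $g$ finite on $[t_0, t_0+\delta]$ with $\delta$ depending only on $g(t_0)$, and since $g(t_k)$ is bounded we can take $t_0=t_k$ for large $k$, covering a fixed-length interval reaching past $T$ — contradiction. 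Hence $g(t)\to+\infty$, completing the proof.

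The main obstacle, and the only genuinely non-routine point, is the reparametrization step in the dichotomy: one must be sure that a general $C^\infty$ curvature flow of a geometrically smooth network can be turned into a special flow on the full interval $[0,T)$ while preserving the smooth boundary conditions \eqref{endsmooth}, so that Proposition~\ref{unif222} is legitimately applicable — this is exactly the content flagged in the text as following from geometric uniqueness (Corollary~\ref{sunique}, Remark~\ref{fffggg}), and the alternative direct route is the integral estimates of \cite[Section~7]{Ilnevsch}. Everything else is a bootstrap/ODE-comparison argument.
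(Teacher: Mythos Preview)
Your proof is correct and follows essentially the same strategy as the paper: reparametrize to a special flow, apply Proposition~\ref{unif222} to pass to a smooth limit $\SS_T$, restart via Theorem~\ref{smoothexist} to contradict maximality, and then extract the rate from the differential inequality~\eqref{evolint999}. One worthwhile simplification from the paper's version: instead of splitting into the regime $g\ge g_0$ (where $C_1g^3$ dominates $C_2$) and handling the excursions and small-$g$ times separately, the paper absorbs the additive constant by working with $1+g$, writing $g'\le C(1+g)^3$ globally, and integrating $\frac{d}{dt}(1+g)^{-2}\ge -2C$ directly between $t$ and $r$; sending $r=r_j\to T$ along a sequence with $g(r_j)\to+\infty$ immediately gives $(1+g(t))^{-2}\le 2C(T-t)$, hence $g(t)\ge C'/\sqrt{T-t}-1\ge C''/\sqrt{T-t}$ for all $t$, with no threshold or case analysis. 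This also makes your final paragraph unnecessary: once $g(t)\ge C/\sqrt{T-t}$ is established for every $t$, the fact that $\lim_{t\to T}g(t)=+\infty$ is immediate.
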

\begin{proof}
We can $C^\infty$ reparametrize the flow $\SS_t$ in order that it becomes a special smooth flow $\widetilde{\SS}_t$ in $[0,T)$.\\ 
If the lengths of the curves of $\SS_t$ are uniformly bounded away from zero and the $L^2$ norm of $k$ is bounded, the same holds for the networks $\widetilde{\SS}_t$, then, by Proposition~\ref{unif222} and
Ascoli--Arzel\`a Theorem, the network $\widetilde{\SS}_t$ converges in $C^\infty$ to a smooth network $\widetilde{\SS}_T$ as $t\to
T$. Then, applying Theorem~\ref{smoothexist} to $\widetilde{\SS}_T$
we could restart the flow obtaining a $C^\infty$ special curvature flow in a longer time interval. Reparametrizing back this last flow, we get a $C^\infty$ ``extension'' in time of the flow $\SS_t$, hence contradicting the maximality of the interval
$[0,T)$.\\
Now, considering again the flow $\widetilde{\SS}_t$, by means of differential inequality~\eqref{evolint999}, we have
$$
\frac{d\,}{dt} \int_{{\widetilde{\SS}_t}} \widetilde{k}^2\,ds
\leqslant C \left(\int_{{\widetilde{\SS}_t}} \widetilde{k}^2\,ds\right)^{3} + C
\leqslant C \left(1+\int_{{\widetilde{\SS}_t}} \widetilde{k}^2\,ds\right)^{3}\,,
$$
which, after integration between $t,r\in[0,T)$ with $t<r$, gives
$$
\frac{1}{\left(1+\int_{{\widetilde{\SS}_t}} \widetilde{k}^2\,ds\right)^{2}}
-\frac{1}{\left(1+\int_{{\widetilde{\SS}_r}} \widetilde{k}^2\,ds\right)^{2}}\leqslant C(r-t)\,.
$$
Then, if case $(1)$ does not hold, we can choose 
a sequence of times $r_j\to T$ such that 
$\int_{\widetilde{\SS}_{r_j}} \widetilde{k}^2\,ds\to+\infty$. Putting $r=r_j$ in the inequality above and passing
to the limit, as $j\to\infty$, we get
$$
\frac{1}{\left(1+\int_{{\widetilde{\SS}_t}} \widetilde{k}^2\,ds\right)^{2}}\leqslant C(T-t)\,,
$$
hence, for every $t\in[0, T)$,
$$
\int_{{\widetilde{\SS}_t}} \widetilde{k}^2\,ds \geqslant \frac{{C}}{\sqrt{T-t}}-1\geqslant
\frac{C}{\sqrt{T-t}}\,,
$$
for some positive constant $C$ and $\lim_{t\to T}\int_{{\widetilde{\SS}_t}}k^2\,ds=+\infty$.\\
By the invariance of the curvature by reparametrization, this last estimate implies the same estimate for the flow $\SS_t$.
\end{proof}

This theorem obviously implies the following corollary.

\begin{cor}\label{kexplod} If $[0,T)$, with $T<+\infty$, is the maximal time interval of existence of a $C^\infty$ curvature flow of an initial geometrically smooth network $\SS_0$ and the lengths of the curves are uniformly bounded away from zero, then
\begin{equation}\label{krate}
\max_{\SS_t}k^2\geqslant\frac{C}{\sqrt{T-t}}\to+\infty\,,
\end{equation}
as $t\to T$.
\end{cor}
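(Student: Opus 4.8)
The plan is to read the conclusion off directly from Theorem~\ref{curvexplod}, combined with the crude pointwise bound $\int_{\SS_t}k^2\,ds\le L(t)\,\max_{\SS_t}k^2$, once one checks that the total length $L(t)$ stays bounded on $[0,T)$. First I would invoke Theorem~\ref{curvexplod}: since by hypothesis the lengths of all $n$ curves of $\SS_t$ are uniformly bounded away from zero, alternative (1) of that theorem cannot occur, so we are necessarily in the ``moreover'' part, which provides a constant $C>0$ with
$$
\int_{\SS_t}k^2\,ds\ \ge\ \frac{C}{\sqrt{T-t}}\qquad\text{for every }t\in[0,T).
$$

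Next I would control $L(t)$ from above. By Proposition~\ref{equality1000} one has $\tfrac{dL}{dt}=\sum_{r=1}^l\lambda(P^r,t)-\int_{\SS_t}k^2\,ds\le\sum_{r=1}^l|\lambda(P^r,t)|$; in the fixed end--points case this is $\le 0$, while under the controlled end--points assumption~\eqref{endsmooth} (taking $j=0$) it is $\le lC_0$. As $T<+\infty$, integrating in time gives $L(t)\le L(0)+lC_0T=:L_\infty<+\infty$ for all $t\in[0,T)$.

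Finally, for each $t$ the elementary inequality $\int_{\SS_t}k^2\,ds\le L(t)\,\max_{\SS_t}k^2\le L_\infty\,\max_{\SS_t}k^2$ together with the lower bound above yields
$$
\max_{\SS_t}k^2\ \ge\ \frac{1}{L_\infty}\int_{\SS_t}k^2\,ds\ \ge\ \frac{C/L_\infty}{\sqrt{T-t}},
$$
and renaming the constant $C/L_\infty$ as $C$ gives exactly~\eqref{krate}; the divergence to $+\infty$ as $t\to T$ is then immediate. There is essentially no genuine obstacle here: the statement is a direct corollary of Theorem~\ref{curvexplod}, and the only point worth a line is the finiteness of $L_\infty$, which is automatic precisely because the maximal time $T$ is assumed finite.
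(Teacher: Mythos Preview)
Your proof is correct and is exactly the natural argument the paper has in mind: the paper simply states that this corollary is obviously implied by Theorem~\ref{curvexplod}, and your use of the pointwise bound $\int_{\SS_t}k^2\,ds\le L(t)\max_{\SS_t}k^2$ together with the boundedness of $L(t)$ (via Proposition~\ref{equality1000} and $T<+\infty$) is precisely the obvious step being omitted.
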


\begin{rem}\label{kratrem}
In the case of the evolution $\gamma_t$ 
of a single closed curve in the plane 
there exists a constant $C>0$ such that if at time $T>0$ a 
singularity develops, then 
$$
\max_{{\gamma}_t} k^2\geqslant\frac{C}{{T-t}}
$$
for every $t\in[0,T)$ (see~\cite{huisk3}).\\ 
If this lower bound on the rate of blowing up of the curvature (which
is clearly stronger than the one in inequality~\eqref{krate}) holds
also in the case of the evolution of a network is an open problem (even if the network is a triod).
\end{rem}

We conclude this section with the following estimate from below on the maximal time of smooth existence.

\begin{prop}\label{unif333} For every $M>0$ there exists a
 positive time $T_M$ such that if the $L^2$ norm of the curvature and
 the inverses of the lengths of the geometrically smooth network $\SS_0$ are bounded by $M$, then the maximal time of existence $T>0$ of a $C^\infty$ curvature flow of $\SS_0$ is larger than $T_M$.
\end{prop}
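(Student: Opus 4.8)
The plan is to combine the a priori estimate of Proposition~\ref{stimaL} with the blow--up alternative of Theorem~\ref{curvexplod}. The key point is that Proposition~\ref{stimaL} provides an \emph{explicit} lifespan, namely $T_M=\tfrac{3}{8C[(n+1)M+1]}$, with $C$ depending only on the structure of the network and on the constants $C_0,C_1$ in assumption~\eqref{endsmooth}, during which the geometric quantities $\int_{\SS_t}k^2\,ds$ and the reciprocals $1/L^i(t)$ of the lengths stay controlled; on the other hand, Theorem~\ref{curvexplod} says that at a finite maximal time at least one of these quantities must degenerate. Feeding the first fact into the second yields the lower bound on $T$.

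In detail, I would first dispose of the trivial case $T=+\infty$ and then reduce to the case of a special smooth flow: since $\SS_0$ is geometrically smooth, the $C^\infty$ flow can be $C^\infty$--reparametrized — preserving the boundary behaviour~\eqref{endsmooth}, and in particular the constants $C_j$ there — into a special smooth flow, exactly as in the proof of Theorem~\ref{curvexplod}; moreover both $\int_{\SS_t}k^2\,ds$ and the lengths $L^i(t)$ are invariant under reparametrization. Applying Proposition~\ref{stimaL} (more precisely, the differential inequality $f'\leq Cf^3$ established in its proof for $f(t)=\int_{\SS_t}k^2\,ds+\sum_{i=1}^n 1/L^i(t)+1$) to the given $M$, one gets such a $T_M>0$ with the property that
$$
\int_{\SS_t}k^2\,ds\leq 2(n+1)M+1 \qquad\text{and}\qquad \frac{1}{L^i(t)}\leq 2(n+1)M+1
$$
for all $t\in[0,\min\{T,T_M\})$ and all $i\in\{1,\dots,n\}$.

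Now I would argue by contradiction. If $T\leq T_M$, then the two bounds above hold on the whole interval $[0,T)$, so the $L^2$ norm of the curvature of $\SS_t$ is uniformly bounded and the lengths of all $n$ curves are uniformly bounded away from zero on $[0,T)$. This contradicts Theorem~\ref{curvexplod}, which (as $T<+\infty$ and $[0,T)$ is maximal) forces either the inferior limit of the length of some curve of $\SS_t$ to be zero as $t\to T$, or $\limup_{t\to T}\int_{\SS_t}k^2\,ds=+\infty$. Hence $T>T_M$, which is the asserted estimate from below on the maximal time of smooth existence.

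I do not expect a genuine obstacle here: the entire analytic content is already packed into Proposition~\ref{stimaL} and Theorem~\ref{curvexplod}. The only points demanding a little care are (i) the reduction to a special flow, which is needed because Proposition~\ref{stimaL} is stated for special flows, together with the remark that the reparametrization can be chosen so as not to alter assumption~\eqref{endsmooth} (hence not to alter $C$ and $T_M$); and (ii) recording that $C$ in Proposition~\ref{stimaL} is independent of $M$, so that $T_M=\tfrac{3}{8C[(n+1)M+1]}$ is genuinely positive and depends only on $M$ and on the fixed data $n$, $C_0$, $C_1$ of the problem.
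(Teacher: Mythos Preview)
Your proposal is correct and follows essentially the same route as the paper: reparametrize to a special smooth flow, invoke Proposition~\ref{stimaL} to get uniform control of $\int_{\SS_t}k^2\,ds$ and of the inverse lengths on $[0,\min\{T,T_M\})$, and then use Theorem~\ref{curvexplod} to rule out $T\leq T_M$. Your extra care about the trivial case $T=+\infty$ and about the reparametrization preserving assumption~\eqref{endsmooth} (hence the constant $C$ and the value of $T_M$) is appropriate and does not diverge from the paper's argument.
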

\begin{proof}
As before, considering again the reparametrized special curvature flow $\widetilde{\SS}_t$, by Proposition~\ref{stimaL} in the interval 
$[0,\min\{T_M,T\})$ the
$L^2$ norm of $\widetilde{k}$ and the inverses of the lengths of the curves
of $\widetilde{\SS}_t$ are bounded by $2M^2+6M$.\\
Then, by Theorem~\ref{curvexplod}, the value $\min\{T_M,T\}$ cannot
coincide with the maximal time of existence of $\widetilde{\SS}_t$ (hence of $\SS_t$), so it must be $T>T_M$.
\end{proof}

\section{Short--time existence II}\label{smtm2}

In this section, we are going to prove the short--time existence and geometric uniqueness of a curvature flow for a regular initial network $\SS_0$ which is only $C^2$ in a ``natural subclass'' of the curvature flows which are simply $C^2$ in space and $C^1$ in time. Before doing that, we discuss the property of {\em parabolic regularization} for the flow.

\medskip

Let $\SS_t=\bigcup_{i=1}^n\gamma^i([0,1],t)$ be a $C^\infty$ flow by curvature, we discuss what happens if we reparametrize every curve of the network proportionally to arclength.

If we consider smooth functions $\varphi^i:[0,1]\times[0,T)\to[0,1]$ 
and the reparametrizations $\widetilde{\gamma}^i(x,t)={\gamma}^i(\varphi^i(x,t),t)$, 
imposing that $\vert\widetilde{\gamma}_x^i\vert$ is constant, 
we must have that $\vert\gamma_x^i(\varphi^i(x,t),t)\vert\varphi_x^i(x,t)=L^i(t)$
where $L^i(t)$ is the length of the curve $\gamma^i$ at time $t$. It follows that $\varphi^i(x,t)$ can be obtained by integrating the ODE
$$
\varphi_x^i(x,t)=L^i(t)/\vert\gamma_x^i(\varphi^i(x,t),t)\vert
$$
with initial data $\varphi^i(0,t)=0$ and that it is $C^\infty$ 
as $L^i$ and $\gamma^i$ are $C^\infty$.

Being a reparametrization, $\widetilde{\gamma}^i$ is still a $C^\infty$ curvature flow, 
that is, $\widetilde{\gamma}^i_t=\widetilde{k}^i\widetilde{\nu}^i
+\widetilde{\lambda}^i\widetilde{\tau}^i$, we want to determine the functions $\widetilde{\lambda}^i=\langle\widetilde{\gamma}^i_t\,\vert\,\widetilde{\tau}^i\rangle$.
Differentiating this equation in arclength and keeping into account that 
$\widetilde{\gamma}_x(x,t)=L^i(t)\widetilde{\tau}^i(x,t)$, we get
\begin{equation*}
\widetilde{\lambda}^i_s=
\frac{\langle\widetilde{\gamma}^i_{tx}\,\vert\,\widetilde{\tau}^i\rangle}
{\vert\widetilde{\gamma}^i_x\vert}+
\langle\widetilde{\gamma}^i_t\,\vert\,\partial_s\widetilde{\tau}^i\rangle
=
\frac{\langle\partial_t(L^i\widetilde{\tau}^i)\,\vert\,\widetilde{\tau}^i\rangle}{L^i}+\langle 
\widetilde{k}^i\widetilde{\nu}^i
+\widetilde{\lambda}^i\widetilde{\tau}^i\,\vert\,\widetilde{k}^i\widetilde{\nu}^i\rangle
=
\frac{\partial_tL^i}{L^i}+(\widetilde{k}^i)^2\,.
\end{equation*}
This equation immediately says that $\widetilde{\lambda}^i_s- (\widetilde{k}^i)^2$ 
is constant in space. Moreover, by Proposition~\ref{equality1000}, 
$$
\partial_tL^i(t)=\widetilde{\lambda}^i(1,t)-
\widetilde{\lambda}^i(0,t) -\int_{\gamma^i(\cdot,t)}(\widetilde{k}^i)^2\,ds
$$ 
and that the values of $\widetilde{\lambda}^i$ at the end--points or
$3$--points of the network are (uniformly) linearly related to (hence also
bounded by) the values of $\widetilde{k}^i$.
Hence, we can conclude that $\widetilde{\lambda}^i_s$ is bounded by an expression involving $L^i(t)$ and $\Vert\widetilde{k}(\cdot,t)\Vert_{L^\infty}$.

\medskip

We show now that the geometrically unique solution obtained starting from an initial $C^{2+2\alpha}$ network which is geometrically $2$--compatible (which exists, as we proved in Theorem~\ref{2compexist0}) can be actually reparametrized to be a $C^\infty$ curvature flow for every positive time (so that the geometric estimates of Section~\ref{kestimates} can be applied). This clearly can be seen as a (geometric) parabolic regularization property.

\begin{thm}[Existence, uniqueness and smoothness in H\"older spaces]\label{parareg}
For any initial, regular $C^{2+2\alpha}$ network $\SS_0=\bigcup_{i=1}^n\sigma^i([0,1])$, with $\alpha\in(0,1/2)$, which is geometrically $2$--compatible, the geometrically unique solution $\gamma^i$ found in Theorem~\ref{2compexist0} can be reparametrized to be a $C^\infty$ curvature flow on $(0,T)$, that is, the networks $\SS_t=\bigcup_{i=1}^n\gamma^i([0,1],t)$ are geometrically smooth for every positive time (see Definition~\ref{geosmoothdef}).
\end{thm}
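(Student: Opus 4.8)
The plan is to exploit the (geometric) uniqueness established in Proposition~\ref{geouniq} together with the smooth short time existence Theorem~\ref{smoothexist} applied not at time $0$ but at an arbitrarily small positive time. More precisely, fix $\tau\in(0,T)$. By Proposition~\ref{c2geocomp}, the network $\SS_\tau$ is geometrically $2$--compatible; but in fact, since the solution $\gamma^i$ from Proposition~\ref{geouniq} is a \emph{special} curvature flow (by construction, via Theorem~\ref{2geocomp}, it is a reparametrization of the solution of the system~\eqref{problema-nogauge-general}), the network $\SS_\tau$ is $C^{2+2\alpha}$ and, after reparametrizing by a $C^\infty$ map as in Lemma~\ref{repar0}, $2$--compatible. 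I would like to conclude it is in fact geometrically smooth, but this is not automatic from $C^{2+2\alpha}$ regularity alone. So the first real step is to upgrade the spatial regularity of $\SS_\tau$: one shows that for $t>0$ the special flow is $C^\infty$ in space. This follows by a bootstrapping argument analogous to the proof of Theorem~\ref{smoothexist-prob}, using that $\gamma^i_t = \gamma^i_{xx}/|\gamma^i_x|^2$ is itself a solution of a parabolic system (the system for $v^i=\gamma^i_t$, as in~\cite[p.~250]{mannovtor}), applying interior parabolic Schauder estimates \emph{away from the initial time} at the $3$--points and end--points — here one uses that the boundary conditions are time--independent of the right algebraic type, so the compatibility conditions are automatically generated by the flow for $t>0$. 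Thus for each $\tau\in(0,T)$ the curves $\gamma^i(\cdot,\tau)$ are $C^\infty$ and the network $\SS_\tau$ is geometrically smooth.

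The second step is then purely formal: apply Theorem~\ref{geosmoothexist} (or Theorem~\ref{smoothexist} after a $C^\infty$ reparametrization making $\SS_\tau$ smooth) to obtain a $C^\infty$ curvature flow $\widehat{\SS}_t$ starting from $\SS_\tau$, defined on some maximal interval $[\tau,\tau+\delta)$. By the geometric uniqueness part of Proposition~\ref{geouniq} — valid since $\SS_\tau$ is $C^{2+2\alpha}$ and geometrically $2$--compatible — the flow $\widehat{\SS}_t$ and the original flow $\SS_t$ coincide as sets (i.e.\ differ only by time--dependent $C^2$ reparametrizations) on $[\tau,\min\{T,\tau+\delta\})$. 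Therefore $\SS_t$ itself is geometrically smooth for every $t\in(\tau,\min\{T,\tau+\delta\})$; in particular, since $\widehat{\SS}_t$ is $C^\infty$ in both space and time, and the reparametrization linking it to $\SS_t$ can be chosen $C^\infty$ for $t>\tau$, the flow $\SS_t$ admits a $C^\infty$ reparametrization on that interval. Letting $\tau\to 0^+$ and noting that the set $\{\tau : \SS_\tau \text{ geometrically smooth}\}$ is then all of $(0,T)$, one patches these local $C^\infty$ reparametrizations — using that on overlapping intervals two $C^\infty$ flows with the same traces differ by a $C^\infty$ reparametrization (the argument already used in the proof of Proposition~\ref{geouniq}, via solving the quasilinear ODE/PDE~\eqref{reparphi} for the matching maps $\varphi^i$) — into a single $C^\infty$ reparametrization of $\SS_t$ on $(0,T)$.

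The main obstacle is the first step: promoting the $C^{2+2\alpha}$ regularity of the solution of~\eqref{problema-nogauge-general} to $C^\infty$ \emph{in space at positive times}, at the boundary points (the $3$--points and the end--points), without assuming higher-order compatibility of the \emph{initial} data. The point is that the compatibility conditions of all orders, which at $t=0$ would be genuine constraints on $\SS_0$, are instead \emph{automatically satisfied} for $t>\tau>0$ because the flow is special — this is exactly the content of ``every network of a $C^\infty$ special curvature flow is smooth'', once we know the flow is $C^\infty$ — so the argument is slightly circular and must be set up carefully: one bootstraps regularity order by order (as in Theorem~\ref{smoothexist-prob}), at each stage checking that the newly-available regularity of $\gamma^i_t$ feeds, through $\gamma^i_{xx}=\gamma^i_t|\gamma^i_x|^2$, into one more derivative of $\gamma^i$, and that the Solonnikov/Bronsard--Reitich complementary conditions hold for the linearized systems so that the parabolic estimates apply up to the lateral boundary $\{0,1\}\times(\tau,T)$. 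Once this interior-in-time smoothing is in hand, the rest is an exercise in combining geometric uniqueness with the smooth restarting theorem, as sketched above.
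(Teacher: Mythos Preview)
Your strategy is genuinely different from the paper's, and the difference matters. The paper does \emph{not} bootstrap the regularity of the single solution $\gamma^i$ of system~\eqref{problema-nogauge-general}. Instead it approximates $\SS_0$ in $C^{2+2\alpha}$ by \emph{smooth} networks $\SS_j$ (satisfying all compatibility conditions), applies Theorem~\ref{smoothexist-prob} to each $\SS_j$ to get smooth flows $\gamma^i_j$, and then uses the a priori estimates of Proposition~\ref{topolino5} (which depend only on $\int k^2\,ds$ and the inverse lengths of the initial network) to obtain uniform $C^\infty$ bounds on the $\gamma^i_j$ in $[0,1]\times[\mu,T_M)$ for any $\mu>0$. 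After reparametrizing everything proportional to arclength, these bounds pass to the limit, showing the arclength-reparametrized flow is $C^\infty$ on $(0,T)$.

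Your first step --- the interior-in-time Schauder bootstrap to show that the special flow itself is $C^\infty$ in space for $t>0$ --- is precisely what the paper singles out as Open Problem~\ref{ooo3} (see also the remark immediately preceding it). The difficulty is exactly the one you flag: the iteration in Theorem~\ref{smoothexist-prob} relies on compatibility conditions of order $2n$ at $t=0$ to invoke Solonnikov's estimates up to the corners $\{0,1\}\times\{0\}$, and it is not clear from the literature the authors cite how to run the same iteration on $[0,1]\times[\tau,T)$ when $\SS_\tau$ is only known a priori to be $C^{2+2\alpha}$. Your claim that ``the compatibility conditions are automatically generated by the flow for $t>0$'' is the expected behavior, but making it rigorous for this quasilinear system with nonlinear boundary conditions is exactly the missing ingredient the paper declines to supply. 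So as written your proposal assumes the answer to an open question; the paper's approximation argument is designed to circumvent this, trading the delicate boundary-regularity bootstrap for the integral estimates of Section~\ref{kestimates}, which are robust under smooth approximation.
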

\begin{proof}
We first assume that $\SS_0$ satisfies the compatibility conditions of order $2$ for the special flow (namely, it is $2$--compatible).\\
By analyzing the proof of Theorem~\ref{2smoothexist0-triod} given in~\cite{bronsard}, one can see that the solution to system~\eqref{problema-nogauge-general} given by such theorem actually depends continuously in $C^{2+2\alpha,1+\alpha}$ on the initial data $\sigma^i$ 
in the $C^{2+2\alpha}$ norm. Then, we approximate the network 
$\SS_0=\bigcup_{i=1}^n\sigma^i([0,1])$ in $C^{2+2\alpha}$ 
with a family of smooth networks $\SS_j$ with the same end--points, composed of $C^\infty$ 
curves $\sigma_j^i\to\sigma^i$, as $j\to\infty$. Hence, for every $\varepsilon>0$, the smooth 
solutions of system~\eqref{problema-nogauge-general} for these approximating initial 
networks, given by the curves $\gamma_j^i(x,t):
[0,1]\times[0,T-\varepsilon]\to\overline{\Omega}$, converge as $j\to\infty$ in 
$C^{2+2\alpha,1+\alpha}([0,1]\times[0,T-\varepsilon])$ to the solution $\gamma^i$ for the initial network $\SS_0$. By the $C^{2+2\alpha}$--convergence, the inverses of the
lengths of the initial curves, the integrals $\int_{\SS_j} k_j^2\,ds$ and 
$\vert\partial_x\sigma_j^i(x)\vert$ (from above and away from zero) for all the approximating networks are equibounded, thus Proposition~\ref{topolino5} gives uniform
estimates on the $L^\infty$ norms of the curvature and of all its derivatives in every ``rectangle'' $[0,1]\times[\mu,T_M)$, with $\mu>0$ and $T_M\leqslant T$.\\
We now reparametrize every curve $\gamma^i_j(\cdot,t)$ and $\gamma^i(\cdot, t)$ 
proportionally to arclength by some maps $\varphi^i_j$ and $\varphi^i$ as above.
Notice that, since $\gamma^i_j$ and $\gamma^i$ are uniformly bounded in 
$C^{2+2\alpha,1+\alpha}$, we have that the maps $\partial_x \gamma_j^i$ and $\partial_x \gamma^i$ are uniformly bounded in $C^{1+2\alpha,1/2+\alpha}$. 
Hence, by a standard ODE's argument, the reparametrizing maps $\varphi^i_j$ and $\varphi^i$ 
above are also uniformly bounded in $C^{1+2\alpha,1/2+\alpha}$, in particular they are 
uniformly H\"older continuous in space and time. 
This means that the reparametrized maps $\widetilde{\gamma}^i_j$ converge uniformly to 
$\widetilde{\gamma}^i$ which is a (possibly only continuous in $t$) reparametrization of the original flow. It is easy to see that these latter gives a curvature flow of the arclength reparametrized network $\widetilde{\SS}_0=\bigcup_{i=1}^n(\sigma^i\comp\varphi^i(\cdot,0))[0,1]$ which then still belongs to $C^{2+2\alpha}$.\\ 
As the curvature and all its arclength derivatives are invariant under reparametrization and the 
equibounded lengths of the curves, the above uniform estimates hold also for the 
reparametrized maps $\widetilde{\gamma}_j^i$ in every ``rectangle'' $[0,1]\times[\mu,T_M)$. 
Moreover, by the discussion about reparametrizing these curves proportional to arclength, it 
follows that we have uniform estimates also on $\widetilde{\lambda}_j^i$ and all their arclength 
derivatives for these flows in every ``rectangle'' $[0,1]\times[\mu,T_M)$. Hence, the curves 
$\widetilde{\gamma}_j^i$, possibly passing to a subsequence, actually converge in 
$C^\infty([0,1]\times[\mu,T_M))$, for every $\mu>0$, to the limit flow $\widetilde{\gamma}^i$ 
which then belongs to $C^\infty([0,1]\times(0,T))\cap C^0([0,1]\times[0,T))$.\\
If $\SS_0$ is only geometrically $2$--compatible, this procedure can be applied for the 
flow of its $2$--compatible reparametrization, giving the same resulting flow, as the arclength 
reparametrized flow is the same for any two flows differing only for a reparametrization (the 
fact that the flow of a $C^{2+2\alpha}$ geometrically $2$--compatible initial network is a 
reparametrization of the flow of a $2$--compatible $C^{2+2\alpha}$ initial network is stated in 
Remark~\ref{fffggg}).\\
The last step is to find extensions $\theta^i:[0,1]\times[0,T)\to[0,1]$ of the arclength 
reparametrizing maps $\varphi^i(\cdot,0)\in C^{2+2\alpha}$ which are in 
$C^\infty([0,1]\times(0,T))$ and satisfy $\theta^i(x,0)=\varphi^i(x,0)$, $\theta^i(0,t)=0$, 
$\theta^i(1,t)=1$ and $\theta_x^i(x,t)\not=0$ for every $x$ and $t$. This can be done, for 
instance, by means of time--dependent convolutions with smooth kernels. Then, the maps 
$\overline{\gamma}^i(\cdot, t)=\widetilde{\gamma}^i([\theta^i(\cdot,t)]^{-1},t)$ give a curvature flow of the network $\SS_0=\bigcup_{i=1}^n\sigma^i([0,1])$ which becomes immediately $C^\infty$ for every positive time $t>0$.
\end{proof}

As for every positive time, the flow obtained by this theorem is $C^\infty$ 
and hence every network $\SS_t$ is geometrically smooth, again by Remark~\ref{fffggg} this flow can be reparametrized, from any positive time on, to be a $C^\infty$ special smooth flow.\\
This argument can clearly be applied to any $C^{2+2\alpha,1+\alpha}$ curvature flow $\SS_t$ 
in a time interval $(0,T)$, being every network of this flow geometrically $2$--compatible 
(Proposition~\ref{c2geocomp}), simply considering as initial network 
any $\SS_{t_0}$ with $t_0>0$. 

\begin{cor}\label{parareg0}
Given any $C^{2+2\alpha,1+\alpha}$ curvature flow in an interval of time $(0,T)$, for every 
$\mu>0$, the restricted flow $\SS_t$ for $t\in[\mu,T)$ can be reparametrized to be a 
$C^\infty$ special curvature flow in $[\mu,T)$.\\
In particular, this applies to any $C^{2+2\alpha,1+\alpha}$ curvature flow of an initial, regular 
$C^{2+2\alpha}$ geometrically $2$--compatible network $\SS_0=\bigcup_{i=1}^n\sigma^i([0,1])$.
\end{cor}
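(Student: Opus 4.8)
The plan is to reduce the statement to the results already established in Sections~\ref{smtm} and~\ref{smtm2}, by applying the parabolic regularization of Theorem~\ref{parareg} from an earlier positive time and then invoking the uniqueness theory for geometrically smooth networks. Fix $\mu>0$ and pick auxiliary times with $0<t_1<t_0<\mu$. Since the given flow is of class $C^{2+2\alpha,1+\alpha}$ on $(0,T)$, its restriction to $[t_1,T)$ is a $C^{2+2\alpha,1+\alpha}$ curvature flow of the regular $C^{2+2\alpha}$ network $\SS_{t_1}$, and hence, by Proposition~\ref{c2geocomp}, the network $\SS_{t_0}$ is regular, of class $C^{2+2\alpha}$, and geometrically $2$--compatible. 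First I would apply Theorem~\ref{parareg} with $\SS_{t_0}$ as initial datum: the geometrically unique solution $\gamma^i$ of Problem~\eqref{problema} with this initial datum (Proposition~\ref{geouniq}) can be reparametrized to a $C^\infty$ curvature flow on $(t_0,T)$. At the same time the given flow restricted to $[t_0,T)$ is another $C^{2+2\alpha,1+\alpha}$ curvature flow of $\SS_{t_0}$, so by the geometric uniqueness of Proposition~\ref{geouniq} it agrees with $\gamma^i$ up to a dynamic $C^{2+2\alpha,1+\alpha}$ reparametrization of the curves. Composing the two reparametrizations produces a $C^\infty$ curvature flow $\overline{\SS}_t$, $t\in(t_0,T)$, which coincides, as a one--parameter family of subsets of $\R^2$, with the given flow on $(t_0,T)$; in particular $\overline{\SS}_\mu=\SS_\mu$ as sets.

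Next I would upgrade from $C^\infty$ to \emph{special}. Since $\overline{\SS}_t$ is a $C^\infty$ curvature flow, Remark~\ref{ght3} tells us each of its networks at positive time is geometrically smooth, and, as geometric smoothness is invariant under reparametrization, $\SS_\mu$ is a regular geometrically smooth network. Reparametrizing $\SS_\mu$ by a suitable $C^\infty$ map so that it becomes \emph{smooth} in the sense of Definition~\ref{ncompcond}, Theorem~\ref{smoothexist} provides a $C^\infty$ special smooth curvature flow with this initial datum on a maximal interval; reparametrizing back by the inverse $C^\infty$ map gives a $C^\infty$ (no longer necessarily special) curvature flow of $\SS_\mu$, which is in particular a $C^{2+2\alpha,1+\alpha}$ solution of Problem~\eqref{problema} with initial datum the geometrically smooth network $\SS_\mu$. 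By the geometric uniqueness for geometrically smooth networks, Corollary~\ref{sunique} (see also Remark~\ref{fffggg}), any two such solutions differ by a dynamic reparametrization; applying this to the flow just constructed and to the given flow restricted to $[\mu,T)$ provides a family of dynamic reparametrizations identifying the two, so the special smooth flow of Theorem~\ref{smoothexist} exists at least on $[\mu,T)$ and, pulled back through all the reparametrizations accumulated so far, equals the given flow there. This exhibits the given flow on $[\mu,T)$ as a reparametrization of a $C^\infty$ special curvature flow on $[\mu,T)$, and since $\mu>0$ was arbitrary the first assertion follows. For the last assertion, Theorem~\ref{2geocomp} produces, for a regular $C^{2+2\alpha}$ geometrically $2$--compatible $\SS_0$, a $C^{2+2\alpha,1+\alpha}$ curvature flow on $[0,T)$, in particular on $(0,T)$, to which the first part directly applies.

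The routine part will be the bookkeeping of composing the dynamic reparametrizations --- each a family $\varphi^i(x,t)$ fixing the endpoints $0,1$ of the parameter interval, with $\varphi^i_x\neq0$, $\varphi^i(x,0)=x$ at the relevant initial time, and of the appropriate H\"older or $C^\infty$ regularity --- and checking that the composite again meets the requirements of Definition~\ref{uniqdef}. The point that genuinely needs care, and where one must not take a shortcut, is the order of the two regularization steps: one cannot apply the geometrically smooth theory (Theorem~\ref{smoothexist}, Corollary~\ref{sunique}) directly to $\SS_\mu$, because a priori $\SS_\mu$ is only of class $C^{2+2\alpha}$ and only geometrically $2$--compatible, not geometrically smooth; it is precisely the parabolic regularization Theorem~\ref{parareg}, applied from the strictly earlier time $t_0<\mu$, that makes the flow $C^\infty$ for $t>t_0$ and thereby forces $\SS_\mu$ to be geometrically smooth via Remark~\ref{ght3}. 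The strict inequality $t_0<\mu$ (equivalently, that $[\mu,T)$ is compactly contained on the left in $(t_0,T)$) is what delivers smoothness up to and including time $\mu$, so that the conclusion holds on $[\mu,T)$ rather than merely on $(\mu,T)$.
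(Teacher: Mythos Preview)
Your proof is correct and follows essentially the same approach as the paper: the paper's argument (given in the paragraph immediately preceding the corollary) is to pick an earlier time $t_0>0$, note that $\SS_{t_0}$ is geometrically $2$--compatible by Proposition~\ref{c2geocomp}, apply Theorem~\ref{parareg} to obtain a $C^\infty$ reparametrization, and then invoke Remark~\ref{fffggg} to pass to a special flow. You have spelled out in detail the two uniqueness steps (Proposition~\ref{geouniq} and Corollary~\ref{sunique}) that the paper leaves implicit in its references to Theorem~\ref{parareg} and Remark~\ref{fffggg}, and you correctly emphasize the need for the strict inequality $t_0<\mu$ so that smoothness holds \emph{at} time $\mu$ and not merely for $t>\mu$.
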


The parabolic regularization property of the flow also holds when the initial data is of class $W^{2-{2}/{p},p}$. We have the following result for the special flow, whose proof can be found in~\cite[Section~4]{GoMePl}.

\begin{prop}\label{smoothnessthm}
Let $\gamma\in W^{1,2}_p([0,T)\times[0,1])$ be a Sobolev--solution to the special flow in $[0,T)$ with $T>0$ and initial network in $W^{2-{{2}/{p}},p}([0,1])$. Then, $\SS_t=\bigcup_{i=1}^{n}\gamma^{i}([0,1],t)$ are geometrically smooth for all positive times.
\end{prop}

\begin{rem}
The proof is based on the so called ``parameter trick'' of Angenent~\cite{angen3}, which has been generalized to several 
situations~\cite{lunardi1,lunardi2,Prusssimonett}. However, these works do not deal with fully non--linear boundary conditions like
\begin{equation*}
\sum_{i=1}^{3} \frac{\gamma^i_x(0,t)}{\vert\gamma^i_x(0,t)\vert}=0
\end{equation*}
as in the special flow of networks. An adaptation of such ``parameter trick'', allowing also the treatment of fully non--linear boundary conditions, is presented in~\cite[Section~6.6]{goesswein2019Dissertation} and then modified for the application in the Sobolev setting in~\cite[Section~4]{GoMePl}, to get the above result.
\end{rem}

Thanks to the above proposition, we have a complete short--time existence, uniqueness and parabolic smoothing result for Sobolev--solutions. Indeed, combining Theorem~\ref{wellposednessSobolev} and Proposition~\ref{smoothnessthm} we have the following theorem.

\begin{thm}[Existence, uniqueness and smoothness in Sobolev spaces]\label{complete-Sob}
Let $p\in (3,+\infty)$ and $\SS_0$ be a regular network of class $W^{2-{{2}/{p}},p}$. Then there exists a maximal Sobolev--solution 
$\SS_{t\in [0, T_{\max})}$ to the motion by curvature with initial datum $\SS_0$ in the maximal time interval $[0,T)$ which is geometrically unique. Furthermore, the networks $\SS_t=\bigcup_{i=1}^{n}\gamma^{i}([0,1],t)$ are geometrically smooth for all positive times.
\end{thm}

We finally consider a general curvature flow. If we have a curvature flow $\SS_t$ in $[0,T)$ which is $C^2$ in space and $C^1$ in time in $[0,1]\times(0,T)$, then for every positive time $\mu$, the flow is of class $C^{2,1}([0,1]\times[\mu,T))$, in particular, it belongs to $W^{1,2}_p([\mu,T)\times[0,1])$, thus, it must coincide with the unique flow given by the previous theorem of the initial network $\SS_{\mu}$. In particular, by parabolic regularization, it must be a geometrically smooth flow. Being $\mu>0$ is arbitrary, this must hold for such flow on $(0,T)$, hence the flow is smooth for every positive time.

This argument extends Theorem~\ref{parareg} to every curvature flow.

\begin{thm}\label{parareg3}
Every curvature flow as in Definition~\ref{probdef} is geometrically smooth for every positive time.
\end{thm}

A consequence of this ``geometric'' parabolic smoothing theorem is the extension of Theorem~\ref{curvexplod} and Corollary~\ref{kexplod} to any curvature flow. As before, we apply such results to the reparametrized $C^\infty$ special curvature flow given by Corollary~\ref{sunique} (or Corollary~\ref{parareg0}). The conclusions also hold for the original flow since they are concerned only with the curvature and the lengths of the curves, which are invariant by reparametrization.

\begin{thm}\label{curvexplod-general} 
Let $T<+\infty$ be the maximal time interval of existence of a curvature flow $\SS_t$ which is $C^2$ in space and $C^1$ in time in $[0,1]\times(0,T)$, then 
\begin{enumerate} 
\item either the inferior limit of the length of at least one curve of $\SS_t$ is zero, as $t\to T$, 
\item or $\limup_{t\to T}\int_{\SS_t}k^2\,ds=+\infty$, hence
 the curvature is not bounded as $t\to T$.
\end{enumerate} 
Moreover, if the lengths of the $n$ curves are uniformly positively bounded from below, 
then this superior limit is actually a limit and 
there exists a positive constant $C$ such that 
$$
\int_{{\SS_t}} k^2\,ds \geqslant \frac{C}{\sqrt{T-t}}\,\,\text{ and }\,\,
\max_{\SS_t}k^2\geqslant\frac{C}{\sqrt{T-t}}
$$
for every $t\in[0, T)$.
\end{thm}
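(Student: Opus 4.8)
The plan is to reduce everything to the already-established $C^\infty$ case, namely Theorem~\ref{curvexplod} and Corollary~\ref{kexplod}, by invoking the geometric parabolic regularization of Corollary~\ref{parareg0}. First I would fix an arbitrary $\mu\in(0,T)$ and reparametrize the restricted flow $\SS_t$, $t\in[\mu,T)$, to a $C^\infty$ special curvature flow $\widetilde{\SS}_t$ on $[\mu,T)$; by the very definition of a special curvature flow each slice $\widetilde{\SS}_t$, $t>\mu$, is smooth, hence geometrically smooth. The crucial remarks are that such a reparametrization changes neither the maximal existence time (if $\widetilde{\SS}_t$ extended to a smooth flow beyond $T$, then by geometric uniqueness and maximality of $T$ this would force $\SS_t$, viewed as a flow of sets, to extend past $T$ as well), nor the lengths $L^i(t)$ of the curves, nor the curvature $k$ read off as a function of arclength — these are geometric quantities, invariant under reparametrization.

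Next I would apply Theorem~\ref{curvexplod} to the flow $\widetilde{\SS}_t$, with $\mu$ playing the role of the initial time: either $\liminf_{t\to T}$ of the length of some curve of $\widetilde{\SS}_t$ vanishes, or $\limsup_{t\to T}\int_{\widetilde{\SS}_t}\widetilde{k}^2\,ds=+\infty$. By the invariance observed above, these are exactly alternatives (1) and (2) for the original flow $\SS_t$. In the second case, since by Proposition~\ref{equality1000} the global length stays bounded along the flow, $\int_{\SS_t}k^2\,ds\leq(\max_{\SS_t}k^2)\,L(t)\leq(\max_{\SS_t}k^2)\,L(0)$, so $\max_{\SS_t}k^2$ must blow up as $t\to T$, giving the stated unboundedness of the curvature.

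For the quantitative conclusion, under the extra hypothesis that the lengths are uniformly positively bounded from below, alternative (1) is ruled out, so the $\limsup$ in (2) is a genuine limit, and Theorem~\ref{curvexplod} together with Corollary~\ref{kexplod} gives $\int_{\widetilde{\SS}_t}\widetilde{k}^2\,ds\geq C/\sqrt{T-t}$ and $\max_{\widetilde{\SS}_t}\widetilde{k}^2\geq C/\sqrt{T-t}$ on $[\mu,T)$, with $C$ independent of the parametrization; invariance transfers these bounds to $\SS_t$ on $[\mu,T)$. To cover the remaining interval $[0,\mu]$ I would note that $1/\sqrt{T-t}$ is bounded there, while $\int_{\SS_t}k^2\,ds$ and $\max_{\SS_t}k^2$ are continuous and strictly positive on $[0,\mu]$ — a time slice with $k\equiv0$ would be a union of segments meeting at $120$ degrees, hence stationary, forcing $T=+\infty$ — so they are bounded below by a positive constant on the compact interval $[0,\mu]$; shrinking $C$ accordingly yields the two estimates for every $t\in[0,T)$ (alternatively, one simply lets $\mu\to0$).

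The main obstacle here is essentially one of bookkeeping rather than analysis: all the substantial work is already packaged in Corollary~\ref{parareg0} and in the $C^\infty$ statements Theorem~\ref{curvexplod} and Corollary~\ref{kexplod}. The only delicate points are checking that passing to a $C^\infty$ reparametrization genuinely preserves the maximal time $T$, that every quantity appearing in the conclusion is reparametrization-invariant, and the harmless patching of the estimates across the short initial interval $[0,\mu]$.
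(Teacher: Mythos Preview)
Your proposal is correct and follows essentially the same approach as the paper: the paper's argument (given in the short paragraph immediately preceding the theorem statement) is precisely to invoke Corollary~\ref{parareg0} to reparametrize the flow to a $C^\infty$ special curvature flow, apply Theorem~\ref{curvexplod} and Corollary~\ref{kexplod}, and transfer the conclusions back via the reparametrization-invariance of lengths and curvature. You have simply spelled out in more detail the bookkeeping around the initial interval $[0,\mu]$ and the preservation of the maximal time under reparametrization, which the paper leaves implicit.
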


We can finally show the existence and geometric uniqueness of a curvature flow for a regular initial network $\SS_0$ of class $C^2$, in a ``quite natural'' subclass of of the flows which are $C^2$ in space and $C^1$ in time. 
The parabolic regularization allows us to use the integral estimates of Section~\ref{kestimates} to prove the existence of a solution to the motion by curvature when the initial datum is a regular network of class $C^2$, without requiring any extra condition at the triple junctions and at the end--points. 
Geometric uniqueness is then obtained from the well--posedness in Sobolev spaces.

\begin{thm}\label{c2shorttime}
For any initial $C^2$ regular network $\SS_0=\bigcup_{i=1}^n\sigma^i([0,1])$ there exists a solution $\gamma^i$
of Problem~\eqref{problema} in a maximal time interval $[0,T)$, 
which is continuous in $[0,1]\times[0,T)$ and such that
\begin{itemize}
\item the flow $\SS_t=\bigcup_{i=1}^n\gamma^i([0,1],t)$ is a smooth flow for every $t>0$,
\item the unit tangents $\tau^i$ are continuous in $[0,1]\times[0,T)$, 
\item the functions $k(\cdot,t)$ converge weakly in $L^2$ to $k(\cdot,0)$, as $t\to 0$,
\item the function $t\mapsto\int_{\SS_t}k^2\,ds$ is continuous on $[0,T)$.
\end{itemize}
Moreover, such flow is geometrically unique in the class $\mathcal{N}$ of the curvature flows of $\SS_0$ which are $C^2$ in space and $C^1$ in time, for $t>0$ and such that
\begin{itemize}
\item the unit tangents $\tau^i$ are continuous in $[0,1]\times[0,T)$,
\item the integral $\int_{\SS_t}k^2\,ds$ is locally bounded for $t\in[0,T)$.
\end{itemize}
\end{thm}

\begin{proof}
We can approximate in $W^{2,2}(0,1)$ (hence in $C^1([0,1])$)
the network $\SS_0=\bigcup_{i=1}^n\sigma^i([0,1])$ with a family of smooth networks
$\SS_j$, composed of $C^\infty$ curves $\sigma_j^i\to\sigma^i$, as
$j\to\infty$ with the same end--points and satisfying 
$\partial_x \sigma_j^i(0) =\partial_x \sigma^i(0)$, 
$\partial_x \sigma_j^i(1)=\partial_x \sigma^i(1)$.\\
By the convergence in $W^{2,2}$ and in $C^1$, the inverses of the
lengths of the initial curves, the integrals $\int_{\SS_j} k^2\,ds$ and 
$\vert\partial_x\sigma_j^i(x)\vert$ (from
above and away from zero) for all the approximating networks are
equibounded, thus Proposition~\ref{unif333} assures the existence
of a uniform interval $[0,T)$ of existence of smooth
evolutions given by the curves
$\gamma_j^i(x,t):[0,1]\times[0,T)\to\overline{\Omega}$.\\
Now, for the same reason, Proposition~\ref{topolino5} gives uniform
estimates on the $L^\infty$ norms of the curvature and of all its
derivatives in every rectangle $[0,1]\times[\mu,T_M)$, with $\mu>0$.\\
This means that if we reparametrize at every time all the curves
$\gamma_j^i$ proportional to their arclength, by means of a diagonal
argument, we can find a subsequence of the family of reparametrized
flows $\widetilde{\gamma}^i_j$ which converges in
$C^\infty\loc ([0,1]\times(0,T))$ to some flow, parametrized
proportional to its arclength, $\widetilde{\gamma}^i$ in the time
interval $(0,T)$. Moreover, by the hypotheses, the curves of the
initial networks $\widetilde{\sigma}^i_j$ converge in $W^{2,2}(0,1)$
to $\widetilde{\sigma}^i$ which are the reparametrizations,
proportional to their arclength, of the curves $\sigma^i$ of the
initial network $\SS_0$. If we show that the maps
$\widetilde{\gamma}^i$ are continuous up to the time $t=0$ we have a
curvature flow for the network
$\widetilde{\SS}_0=\bigcup_{i=1}^n\widetilde{\sigma}^i([0,1])$ which then
gives a curvature flow for the original network $\SS_0$ in
$C^\infty([0,1]\times(0,T))$, reparametrizing it back with some family
of continuous maps $\theta^i:[0,1]\times[0,T)\to[0,1]$ with
$\theta_x^i\not=0$ everywhere, $\theta^i\in
C^\infty([0,1]\times(0,T))$ and
$\widetilde{\sigma}^i(\theta^i(\cdot,0))=\sigma^i$ (this can be easily
done as the maps $\theta^i(\cdot,0)$ are of class $C^2$, since in
general, the arclength reparametrization maps have the same regularity
of the network).\\
Hence, we deal with the continuity up to $t=0$ of the maps
$\widetilde{\gamma}^i$. By the uniform $L^2$ bound on the curvature
and the parametrization proportional to the arclength, the theorem of
Ascoli--Arzel\`a implies that for every sequence of times $t_l\to0$,
the curves $\widetilde{\gamma}^i(\cdot,t_l)$ have a converging
subsequence in $C^1([0,1])$ to some family of limit curves
${\zeta}^i:[0,1]\to\overline{\Omega}$, still parametrized proportionally
to arclength, by the $C^1$--convergence. Moreover, we can also assume that $k(\cdot,t_l)$ 
converge weakly in $L^2(ds)$ to the curvature function associated with the family of curves 
$\zeta^i$. We want to see that actually
$\zeta^i=\widetilde{\sigma}^i$, hence showing that the flow
$\widetilde{\gamma}^i:[0,1]\times[0,T)\to\overline{\Omega}$ is
continuous and that the unit tangent vector
$\tau:[0,1]\times[0,T)\to\R^2$ is a continuous map up to the time
$t=0$ (this property is stable under the above reparametrization so it
then will hold also for the final curvature flow $\gamma^i$).\\
We consider a function $\varphi\in C^\infty(\R^2)$ and the time
derivative of its integral on the evolving networks $\widetilde{\gamma}^i_j$, that is,
\begin{align*}
\frac{d\,}{dt}\int_{\widetilde{\SS}_j(t)}\varphi\,ds=
&\,\int_{\widetilde{\SS}_j(t)}\varphi(\widetilde{\lambda}_s-\widetilde{k}^2)\,ds
+\int_{\widetilde{\SS}_j(t)}\langle\nabla\varphi\,\vert\,\widetilde{\underline{k}}+\widetilde{\underline{\lambda}}\rangle\,ds\\
=&\,-\int_{\widetilde{\SS}_j(t)}\varphi\widetilde{k}^2\,ds
-\int_{\widetilde{\SS}_j(t)}\langle\nabla\varphi\,\vert\,\widetilde{\tau}\rangle\widetilde{\lambda}\,ds
+\int_{\widetilde{\SS}_j(t)}\langle\nabla\varphi\,\vert\,\widetilde{\underline{k}}+\widetilde{\underline{\lambda}}\rangle\,ds\\
=&\,-\int_{\widetilde{\SS}_j(t)}\varphi\widetilde{k}^2\,ds
+\int_{\widetilde{\SS}_j(t)}\langle\nabla\varphi\,\vert\,\widetilde{\underline{k}}\rangle\,ds\,,
\end{align*}
where we integrated by parts, passing from first to second line.\\
Let us consider now any sequence of times $t_l$ converging to zero as above, such that the curves $\widetilde{\gamma}^i(\cdot,t_l)$ converge in $C^1([0,1])$ to some family of limit curves
${\zeta}^i:[0,1]\to\overline{\Omega}$ (still parametrized proportionally
to arclength) as above, describing some regular network $\overline{\SS}$ and $k(\cdot,t_l)$ converge weakly in $L^2(ds)$ to the curvature function associated to the family of curves $\zeta^i$.
Integrating this equality in the time interval $[0,t_l]$ we get
$$
\int_{\widetilde{\SS}_j(t_l)}\varphi\,ds-
\int_{\widetilde{\SS}_j(0)}\varphi\,ds
=-\int^{t_l}_0\int_{\widetilde{\SS}_j(t)}\varphi\widetilde{k}^2\,ds\,dt
+\int^{t_l}_0\int_{\widetilde{\SS}_j(t)}\langle\nabla
\varphi\,\vert\,\widetilde{\underline{k}}\rangle\,ds\,dt
$$
which clearly passes to the limit as $j\to\infty$, by the smooth
convergence of the flows $\widetilde{\gamma}^i_j$ to the flow
$\widetilde{\gamma}^i$ (and the uniform bound on
$\int_{\widetilde{\SS}_j(t)}\widetilde{k}^2\,ds$) and of the initial networks
$\widetilde{\SS}_j(0)=\bigcup_{i=1}^n\widetilde{\sigma}^i_j([0,1])$ to
$\widetilde{\SS}_0=\bigcup_{i=1}^n\widetilde{\sigma}^i([0,1])$, hence,
$$
\int_{\widetilde{\SS}_{t_l}}\varphi\,ds-
\int_{\widetilde{\SS}_0}\varphi\,ds
=-\int^{t_l}_0\int_{\widetilde{\SS}_t}\varphi\widetilde{k}^2\,ds\,dt
+\int^{t_l}_0\int_{\widetilde{\SS}_t}\langle\nabla
\varphi\,\vert\,\widetilde{\underline{k}}\rangle\,ds\,dt
$$
By the uniform bound on the $L^2$ norm of the curvature, we then get
$$
\biggl\vert\int_{\widetilde{\SS}_{t_l}}\varphi(\widetilde{\gamma}(\cdot,t_l))\,ds-
\int_{\widetilde{\SS}_0}\varphi(\widetilde{\sigma})\,ds\,\biggr\vert\leqslant Ct_l\,,
$$
where we made explicit the integrands, for the sake of clarity. Sending
$l\to\infty$ we finally obtain
$$
\biggl\vert\int_{\overline{\SS}}\varphi(\zeta)\,ds-
\int_{\widetilde{\SS}_0}\varphi(\widetilde{\sigma})\,ds\,\biggr\vert=0\,,
$$
that is,
$$
\int_{\overline{\SS}}\varphi\,ds=
\int_{\widetilde{\SS}_0}\varphi\,ds
$$
for every function $\varphi\in C^\infty(\R^2)$.\\
Since, both the networks $\widetilde{\SS}_0=\bigcup_{i=1}^n\widetilde{\sigma}^i([0,1])$ and $\overline{\SS}=\bigcup_{i=1}^n\zeta^i([0,1])$ are
$C^1$, regular and parametrized proportionally to their arclength, this
equality for every $\varphi\in C^\infty(\R^2)$ implies that 
$\widetilde{\sigma}^i=\zeta^i$, which is what we
wanted.

Notice that, the continuity of $\gamma^i$ and $\tau$ also implies that the measures
$\HH^1\res\SS_t$ weakly$^{{\displaystyle{\star}}}$ converge to $\HH^1\res\SS_0$, where
$\HH^1$ is the one--dimensional Hausdorff measure, as $t\to 0$.

Finally, integrating on $[0,t)$ inequality~\eqref{evolint999} 
for the approximating flows $\widetilde{\gamma}_j^i$, and passing to the limit as $j\to\infty$, we see that
\[
\limsup_{t\to 0^+}\int_{\widetilde{\SS}_t}k^2\,ds\le \int_{\widetilde{\SS}_0}k^2\,ds.
\]
Since the function $t\to \int_{\widetilde{\SS}_t}k^2\,ds$ is lower semicontinuous,
we then get that such function is indeed {\em continuous} on $[0,T)$ (also at $t=0$). Being such integral invariant by
reparametrization, this also holds for the flow $\gamma^i$. The same for the weak convergence in $L^2(ds)$ of the functions $k(\cdot,t)$ to $k(\cdot,0)$ as $t\to 0$.

Let now $\SS_t$ be any curvature flow of $\SS_0$ in $[0,T)$, belonging to the class $\mathcal{N}$ of flows as in the statement. By estimates~\eqref{keyestimate} (with $j=2$) we have
\begin{equation}\label{eqcar32324}
\int_{{\widetilde{\SS}_t}} k^2+ tk_s^2\,ds\leqslant\int_{{\widetilde{\SS}_t}} k^2+ tk_s^2+ t^2k_{ss}^2\,ds \leqslant C\qquad\text { hence }\qquad\Vert k_s\Vert_{L^2}\leqslant C /t^{1/2}
\end{equation}
for every $t\in[0,T)$, with a constant $C$ depending only on the inverses of the lengths of the curves of the initial network $\SS_0$ and on $\int_{\SS_0}k^2\,ds$. Taking into account Proposition~\ref{stimaL} uniformly bounding from below the lenghts of the curves of the evolving network in a time interval $[0,\widehat{T}]$ (with $\widehat{T}$ depending only on the initial network), by means of Gagliardo--Nirenberg interpolation inequalities in Proposition~\ref{gl}, we have the estimate
\begin{equation}
{\Vert k \Vert}_{L^\infty} \leqslant C{\Vert k_s\Vert}_{L^2}^{1/2}{\Vert k\Vert}_{L^2}^{1/2}+C{\Vert 
k\Vert}_{L^2} \leqslant C{\Vert k_s\Vert}_{L^2}^{1/2}+C
\end{equation}
where the constant is independent of $t\in[0,\widehat{T}]$. Hence, by inequality~\eqref{eqcar32324},
\begin{equation}
{\Vert k(\cdot,t)\Vert}_{L^\infty} \leqslant C/t^{1/4}+C\qquad\text{ and }\qquad \int_0^{\widehat{T}}\int_{{\widetilde{\SS}_t}} k^{7/2}\,ds\,dt\leqslant C\int_0^{\widehat{T}}C/t^{7/8}+C\,dt\leqslant C
\end{equation}
meaning that $k\in L^{7/2}([0,1]\times[0,\widehat{T}])$. Reparametrizing the flow as at the beginning of this section so that every curve becomes parametrized proportionally to its arclength, we have a new flow $\widetilde{\gamma}^i(x,t)={\gamma}^i(\varphi^i(x,t),t)$ with $\vert\widetilde{\gamma}_x^i\vert$ constantly equal to $L^i(t)$, the length of the curve $\gamma^i$ at time $t\in[0,T)$, by means of reparametrizations $\widetilde{\gamma}^i(x,t)={\gamma}^i(\varphi^i(x,t),t)$ solving the ODE's
$$
\varphi_x^i(x,t)=L^i(t)/\vert\gamma_x^i(\varphi^i(x,t),t)\vert
$$
with initial data $\varphi^i(0,t)=0$. Moreover, we have seen that letting $\widetilde{\gamma}^i_t=\widetilde{k}^i\widetilde{\nu}^i
+\widetilde{\lambda}^i\widetilde{\tau}^i$, we have 
\begin{equation*}
\widetilde{\lambda}^i_s=\frac{\partial_tL^i}{L^i}+(\widetilde{k}^i)^2\,.
\end{equation*}
This equation immediately says that $\widetilde{\lambda}^i_s- (\widetilde{k}^i)^2$ 
is constant in space, then integrating 
\begin{equation}\label{eqcar9898}
|\widetilde{\lambda}(s,t)|\leqslant |\widetilde{\lambda}(0,t)|+|\partial_t L^i(t)| +\int_{\gamma^i(\cdot,t)}(\widetilde{k}^i)^2\,ds\leqslant C\Vert \widetilde{k}(\cdot,t)\Vert_{L^\infty}+C,
\end{equation}
for every $s\in[0,L^i(t)]$, as $|\widetilde{\lambda}(\cdot,t)|$ at the borders of any curve is estimated by $C\Vert \widetilde{k}(\cdot,t)\Vert_{L^\infty}$, $\int_{\gamma^i(\cdot,t)}(\widetilde{k}^i)^2\,ds$ is invariant by reparametrization and bounded by hypotheses and
$$
|\partial_t L^i(t)|=\bigg|\lambda^i(1,t)-\lambda^i(0,t)-\int_{\gamma^i(\cdot,t)}(k^i)^2\,ds\,\bigg|\leqslant \Vert \widetilde{k}(\cdot,t)\Vert_{L^\infty}+C.
$$
It follows 
$$
\Vert\widetilde{\lambda}(\cdot,t)\Vert_{L^\infty}\leqslant C\Vert \widetilde{k}(\cdot,t)\Vert_{L^\infty}+C=C\Vert {k}(\cdot,t)\Vert_{L^\infty}+C\leqslant C/t^{1/4}+C,
$$
hence, $\widetilde{k},\widetilde{\lambda}\in L^{7/2}([0,1]\times[0,\widehat{T}])$. As a consequence, $\widetilde{\gamma}^i_t=\widetilde{k}^i\widetilde{\nu}^i+\widetilde{\lambda}^i\widetilde{\tau}^i\in L^{7/2}([0,1]\times[0,\widehat{T}])$ and being 
$\widetilde{\gamma}^i_{xx}=\widetilde{k}\widetilde{\nu}/(L^i)^2$, also $\widetilde{\gamma}^i_{xx}$ belongs to $L^{7/2}([0,1]\times[0,\widehat{T}])$, hence this flow $\widetilde{\gamma}$ belongs to $W^{1,2}_{7/2}([0,\widehat{T})\times[0,1])$, thus it is geometrically uniquely determined, by Theorem~\ref{wellposednessSobolev} (or~\ref{complete-Sob}).\\
This argument shows that any two curvature flows in the class $\mathcal{N}$ can be reparametrized one to the other, that is, we have geometric uniqueness in this class and we are done.
\end{proof}

\begin{rem}\label{oprob2}\hspace{.5truecm}
\begin{enumerate}
\item We underline that the initial network is not required to satisfy any compatibility condition, but only to have angles of $120$ degrees between the concurring curves at every $3$--point, that is, to be regular and $C^2$. In particular, it is not necessary that the sum of the three curvatures at the $3$--points is zero.
\item As for every positive time the flow obtained by this theorem is $C^\infty$, hence every network $\SS_t$ is geometrically smooth, arguing as before (by means of Remark~\ref{fffggg}), Corollary~\ref{parareg0} applies: this flow can be reparametrized, from any positive time on, to be a $C^\infty$ special smooth flow. 
\item It should be noticed that if the initial curves $\sigma^i$ are $C^\infty$, the flow $\SS_t$ is smooth till $t=0$ far from the $3$--points, that is, in any closed ``rectangle'' included in $(0,1)\times[0,T)$ we can locally reparametrize the curves $\gamma^i$ to get a smooth flow up to $t=0$. This follows from the local estimates for the motion by curvature (see~\cite{eckhui2}).
\item A natural question is whether uniqueness of the curvature flow of an initial regular $C^2$ network holds also ``outside'' of the subclass $\mathcal{N}$, in the general class of curvature flows as in Definition~\ref{probdef} (or possibly asking only the continuity of the tangent vectors as $t\to 0$).  
At the moment this is still an open problem.
\end{enumerate}
\end{rem}

Now that we have gained the short--time existence for an initial regular $C^2$ network, the next important question is what can be said if the initial network does not satisfy the $120$ degrees condition, that is, it is non--regular (even if all its curves are 
$C^\infty$). We will face this question in Section~\ref{smtm3} below.
Clearly, the unit tangent vectors of any curvature flow having as an initial network a configuration that does not satisfy the $120$ degrees condition cannot be continuous up to time $t=0$, being a curvature flow $C^2$ and regular for positive time. 
Anyway, notice that in the definition of curvature flow, we require only that the maps $\gamma^i$ are continuous in $[0,1]\times(0,T)$ for some positive time $T$, hence one could hope to be able to find a curvature flow such that the $120$ degrees condition is satisfied instantaneously, at every positive time $t>0$, as it happens for the geometrical smoothness in Theorem~\ref{parareg}.

In Section~\ref{smtm3} we will also treat the problem of the evolution of a non--regular network with multi--points of order greater than three. In this case, the continuity condition at $t=0$ has to be suitably stated, since, if we want the curvature flow to be regular for every positive time, 
the collection of maps describing the network, as well as the topological structure of the network, must change.

\section{Smooth flows are Brakke flows} 

To continue the flow when at some time a curve collapses and possibly some multi--points appear in the (limit) network, one can consider a more general ({\em weak}) definition of curvature flow.

As mentioned in the introduction, there exist several weak definitions of motion by curvature of a subset of $\R^n$. Among the existing notions,
the most suitable to our point of view is the one introduced by Brakke in~\cite{brakke}, which in general lacks uniqueness but at least
maintains the (Hausdorff) dimension of the evolving sets.

We introduce now the concept of {\em Brakke flow} (with equality) of a network.
\begin{defn}\label{brk}
A {\em regular Brakke flow} is a family of $W^{2,2}\loc$ networks $\SS_t$ in $\Omega$, satisfying the inequality
\begin{equation}\label{brakkeqqqineq}
\frac{\overline{d\,\,}}{dt}\int_{\SS_t}\varphi(\gamma,t)\,ds\leqslant
-\int_{\SS_t}\varphi(\gamma,t) k^2\,ds
+\int_{\SS_t}\langle\nabla\varphi(\gamma,t)\,\vert\,\underline{k}\rangle\,ds
+ \int_{\SS_t}\varphi_t(\gamma,t)\,ds\,,
\end{equation}
for every {\em non negative} smooth function with compact support
$\varphi:\Omega\times[0,T)\to\R$ and $t\in[0,T)$, where 
$\frac{\overline{d\,\,}}{dt}$ is the {\em upper} derivative (the
$\limup$ of the incremental ratios).

If the time derivative at the left-hand side exists and the inequality is equality, for every smooth function with compact support
$\varphi:\Omega\times[0,T)\to\R$ and $t\in[0,T)$, that is,
\begin{equation}\label{brakkeqqq}
\frac{d\,\,}{dt}\int_{\SS_t}\varphi(\gamma,t)\,ds=
-\int_{\SS_t}\varphi(\gamma,t) k^2\,ds
+\int_{\SS_t}\langle\nabla\varphi(\gamma,t)\,\vert\,\underline{k}\rangle\,ds
+ \int_{\SS_t}\varphi_t(\gamma,t)\,ds\,,
\end{equation}
we say that $\SS_t$ is a regular Brakke flow {\em with equality}.
\end{defn}

\begin{rem}
The original definition of Brakke flow given
in~\cite[Section~3.3]{brakke} (in any dimension and codimension)
allows the networks $\SS_t$ to be simply one--dimensional countably 
rectifiable subsets of $\R^2$, with possible integer multiplicity $\theta_t:\SS_t\to\NN$ and with a distributional notion of tangent space and (mean) curvature, called {\em rectifiable varifolds} (see~\cite{simon}). With such a general definition, the networks are identified with the associated Radon measures $\mu_t=\theta_t\HH^1\res\SS_t$.\\
More precisely, the inequality 
\begin{align}\label{braorig}
\frac{\overline{d\,\,}}{dt}\int_{\SS_t}\varphi(x,t)\theta_t(x)\,d\HH^1(x)\leqslant
&\,-\int_{\SS_t}\varphi(x,t) k^2(x,t)\theta_t(x)\,d\HH^1(x)
+\int_{\SS_t}\langle\nabla\varphi(x,t)
\,\vert\,\underline{k}(x,t)\rangle\theta_t(x)\,d\HH^1(x)\\
&\,+ \int_{\SS_t}\varphi_t(x,t)\theta_t(x)\,d\HH^1(x)\,,\nonumber
\end{align}
must hold for every non-negative smooth function with compact support
$\varphi:\Omega\times[0,T)\to\R$ and $t\in[0,T)$, where $\HH^1$ 
is the Hausdorff one--dimensional measure in $\R^2$.\\
These weak conditions were introduced by Brakke
in order to prove an existence result~\cite[Section~4.13]{brakke} for
a family of initial sets much wider than networks of curves, but, on the
other hand, it opens the possibility of instantaneous vanishing of
some parts of the sets during the evolution.
\end{rem}

A big difference between Brakke flows and the evolutions obtained as
solutions of Problem~\eqref{problema} is that the former networks are
simply considered as {\em subsets} of $\R^2$ without any mention to their
parametrization (that clearly is not unique). This means that actually
a Brakke flow can be a family of networks given by the maps
$\gamma^i(x,t)$ which are $C^2$ in space, but possibly do not have
absolutely any regularity with respect to the time variable $t$.

An open question is whether any Brakke flows with equality, possibly
under some extra hypotheses, admits a reparametrization such that it
becomes a solution of Problem~\eqref{problema}.\\
This problem is also related to the uniqueness of the Brakke
flows with equality (maybe further restricting the candidates to a
special class with extra geometric properties).

\begin{prop}\label{equality1001} Any solution of
 Problem~\eqref{problema} in 
$C^{2,1}([0,1]\times[0,T))$ is a regular Brakke flow with equality.\\
In particular, for every curve $\gamma^i(\cdot,t)$ and for every time
$t\in[0,T)$ we have
\begin{equation}\label{levol}
\frac{dL^i(t)}{dt}=\lambda^i(1,t)-\lambda^i(0,t)-\int_{\gamma^i(\cdot,t)}k^2\,ds
\end{equation}
and
$$
\frac{dL(t)}{dt}=-\int_{\SS_t}k^2\,ds\,,
$$
that is, the total length $L(t)$ is decreasing in time and it is uniformly
bounded by the length of the initial network $\SS_0$.
\end{prop}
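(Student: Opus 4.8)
The plan is to show that a $C^{2,1}$ solution of Problem~\eqref{problema} is a regular Brakke flow with equality by directly computing the time derivative of $\int_{\SS_t}\varphi(\gamma,t)\,ds$ for an arbitrary smooth compactly supported test function $\varphi:\Omega\times[0,T)\to\R$, and then specializing to $\varphi\equiv 1$ (localized) to get the length formulas. First I would recall from Proposition~\ref{equality1000} that the time derivative of the arclength measure $ds$ on each curve $\gamma^i$ is $(\lambda^i_s-(k^i)^2)\,ds$, which follows from the commutation rule~\eqref{commut}. Using this together with the chain rule $\partial_t[\varphi(\gamma^i(x,t),t)] = \langle\nabla\varphi\,\vert\,\gamma^i_t\rangle + \varphi_t = \langle\nabla\varphi\,\vert\,\underline{k}^i+\underline{\lambda}^i\rangle + \varphi_t$, I would write
\begin{align*}
\frac{d\,}{dt}\int_{\SS_t}\varphi\,ds
=&\,\sum_{i=1}^n\int_0^1\Bigl(\langle\nabla\varphi\,\vert\,\underline{k}^i+\lambda^i\tau^i\rangle+\varphi_t\Bigr)\vert\gamma^i_x\vert\,dx
+\sum_{i=1}^n\int_0^1\varphi(\lambda^i_s-(k^i)^2)\vert\gamma^i_x\vert\,dx\\
=&\,\int_{\SS_t}\langle\nabla\varphi\,\vert\,\underline{k}\rangle\,ds
+\int_{\SS_t}\varphi_t\,ds
-\int_{\SS_t}\varphi k^2\,ds
+\sum_{i=1}^n\int_0^1\bigl(\langle\nabla\varphi\,\vert\,\tau^i\rangle\lambda^i+\varphi\lambda^i_s\bigr)\vert\gamma^i_x\vert\,dx\,.
\end{align*}

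The key observation is that the last sum is a total arclength derivative on each curve: since $\partial_s[\varphi]=\langle\nabla\varphi\,\vert\,\tau^i\rangle$, we have $\langle\nabla\varphi\,\vert\,\tau^i\rangle\lambda^i+\varphi\lambda^i_s = \partial_s(\varphi\lambda^i)$, so that term equals $\sum_{i=1}^n[\varphi\lambda^i]_{x=0}^{x=1}$. I would then argue this boundary contribution vanishes: at every end--point $P^r$ the velocity $\underline v=\lambda\tau+k\nu$ is zero (the end--points are fixed, or more generally by Remark~\ref{geocomprem0} the boundary conditions in Problem~\eqref{problema} force $(k\nu+\lambda\tau)(P^r)=0$, hence $\lambda(P^r)=0$); and at every $3$--point $O^p$ the three curves share the common point $\gamma^{pi}(O^p,t)$, so $\varphi$ takes a single value there, while $\sum_{i=1}^3\lambda^{pi}=0$ by relation~\eqref{eq:cond2}. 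Here I must be careful about the sign/orientation bookkeeping — the "exterior" tangent convention means the three $\lambda^{pi}$ at $O^p$ enter with consistent signs so that their sum is exactly the quantity appearing in~\eqref{eq:cond2}. This yields exactly equation~\eqref{brakkeqqq}, and since the left side is a genuine derivative and the identity holds for all (not merely nonnegative) $\varphi$, the flow is a regular Brakke flow with equality.

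For the "in particular" statements, I would take $\varphi$ to be a cutoff function equal to $1$ on a neighbourhood of $\overline{\gamma^i([0,1],t)}$ but here it is cleaner to observe that~\eqref{levol} is precisely the formula already proved in Proposition~\ref{equality1000}, namely $\frac{dL^i(t)}{dt}=\lambda^i(1,t)-\lambda^i(0,t)-\int_{\gamma^i(\cdot,t)}k^2\,ds$, which comes from integrating $(\lambda^i_s-(k^i)^2)\,ds$ over $\gamma^i$. Summing over $i$, the boundary terms telescope: at each $3$--point the three $\lambda^{pi}$ contributions cancel by~\eqref{eq:cond2}, and at each fixed end--point $\lambda(P^r,t)=0$, giving $\frac{dL(t)}{dt}=-\int_{\SS_t}k^2\,ds\leq 0$; integrating in time yields $L(t)\leq L(0)$. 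The main (really the only) obstacle is the careful handling of the $3$--point boundary terms — making sure the orientation conventions for the $\lambda^{pi}$ and $\tau^{pi}$ are exactly those under which $\sum_i\lambda^{pi}=0$ holds — but this is entirely routine given the computations of Section~\ref{basiccomp}; everything else is a direct integration by parts.
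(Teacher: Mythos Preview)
Your approach is correct and essentially the same as the paper's: both hinge on integration by parts and the cancellation of the $\lambda$--boundary terms at the $3$--points via relation~\eqref{eq:cond2} and at the fixed end--points via $\lambda(P^r)=0$. The paper merely organizes things in the reverse order (it computes $\frac{dL^i}{dt}$ first by differentiating $\int_0^1|\gamma^i_x|\,dx$ directly, then says ``a similar argument'' gives the Brakke equality~\eqref{brakkeqqq}).

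One point you gloss over that the paper handles explicitly: Proposition~\ref{equality1000} and the commutation rule~\eqref{commut} you invoke were established in Section~\ref{kestimates} under a $C^\infty$ hypothesis, and for a flow that is only $C^{2,1}$ the mixed derivative $\gamma_{xt}$ needed to make sense of $\partial_t|\gamma_x|$ (hence of $\partial_t(ds)$) is not a priori available. The paper deals with this by first carrying out the computation for smooth $\gamma^i$ and then approximating the $C^{2,1}$ maps by $C^\infty$ ones $\gamma^{i\varepsilon}$ with $\gamma^{i\varepsilon}\to\gamma^i$ in $C^1$ and $\gamma^{i\varepsilon}_{xx}\to\gamma^i_{xx}$ in $C^0$, passing to the limit in the resulting identity. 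You should insert this approximation step to make your argument complete in the stated regularity class.
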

\begin{proof}
If the flow $\gamma^i$ is in $C^\infty([0,1]\times[0,T))$, we have
\begin{align*}
\frac{dL^i(t)}{dt}=&\,\frac{d\,}{dt}\int_0^1 \vert \gamma_x^i\vert \,dx\\
=&\,\int_0^1 \frac{\langle \gamma_{xt}^i\,\vert\,\gamma_x^i\rangle}{\vert\gamma_x^i\vert} \,dx\\
=&\,\int_0^1 \biggl\langle\partial_x\gamma_t^i\,\biggr\vert\biggl.\, \frac{\gamma_x^i}{\vert\gamma_x^i\vert}\biggr\rangle \,dx\\
=&\,\int_0^1 \langle\partial_x\gamma_t^i\,\vert\,\tau^i\rangle \,dx\\
=&\,\langle\gamma_t^i(1,t)\,\vert\,\tau^i(1,t)\rangle 
- \langle\gamma_t^i(0,t)\,\vert\,\tau^i(0,t)\rangle
- \int_0^1 \langle\gamma_t^i\,\vert\,\partial_x\tau^i\rangle\,dx\,.
\end{align*}
Then, approximating the maps $\gamma^i$ with a family of maps 
$\gamma^{i\varepsilon}\in C^\infty$ such that $\gamma^{i\varepsilon}\to \gamma^i$
in $C^1$ and $\gamma^{i\varepsilon}_{xx}\to \gamma_{xx}^i$ in $C^0$,
as $\varepsilon \to 0$, we see that we can pass to the limit in this
formula and conclude that it holds for the original flow which is only
in $C^{2,1}([0,1]\times[0,T))$. Finally, since $\partial_x\tau^i=k^i\nu^i\vert\gamma^i_x\vert$, we
get
$$
\frac{dL^i(t)}{dt}=\lambda^i(1,t) - \lambda^i(0,t) -
\int_{\gamma^i(\cdot,t)}k^2\,ds
$$
as $\gamma^i_t=k^i\nu^i+\lambda^i\tau^i$.\\
The formula for the derivative of the total length of the evolving
network then follows by the zero--sum property of the functions
$\lambda^i$ at every $3$--point at the fact that all the $\lambda^i$ are
zero at the end--points.

A similar argument shows that formula~\eqref{brakkeqqq} 
defining a regular Brakke flow with equality also holds.
\end{proof}

\begin{thm}\label{brakkeevolution} If $\SS_t$ is a curvature flow of a $C^2$ initial network such that 
\begin{itemize}
\item the unit tangents $\tau^i$ are
continuous in $[0,1]\times[0,T)$,
\item the functions $k(\cdot,t)$ converge weakly in $L^2$ to $k(\cdot,0)$, as $t\to 0$,
\item the function $t\mapsto\int_{\SS_t}k^2\,ds$ is continuous on $[0,T)$, 
\end{itemize}
then $\SS_t$ is a regular Brakke flow with equality.
\end{thm}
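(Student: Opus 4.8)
The plan is to reduce the statement to a continuity question at the initial time $t=0$, since for positive times the conclusion is already at hand. For $t>0$ the flow $\SS_t$ is $C^{2,1}$ and solves Problem~\eqref{problema}, so the computation in the proof of Proposition~\ref{equality1001} (applied on each interval $[\mu,T)$, $\mu>0$, and local in $t$) shows that for every smooth compactly supported $\varphi:\Omega\times[0,T)\to\R$ and every $t\in(0,T)$,
\begin{equation*}
\frac{d\,}{dt}\int_{\SS_t}\varphi(\gamma,t)\,ds=G(t):=-\int_{\SS_t}\varphi(\gamma,t)\,k^2\,ds+\int_{\SS_t}\langle\nabla\varphi(\gamma,t)\,\vert\,\underline{k}\rangle\,ds+\int_{\SS_t}\varphi_t(\gamma,t)\,ds\,.
\end{equation*}
Integrating between $0<s<t<T$ gives $\int_{\SS_t}\varphi(\gamma,t)\,ds-\int_{\SS_s}\varphi(\gamma,s)\,ds=\int_s^tG(\xi)\,d\xi$. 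Hence it suffices to show that $G$ extends continuously up to $\xi=0$ with the natural value $G(0)$, and that $\int_{\SS_s}\varphi(\gamma,s)\,ds\to\int_{\SS_0}\varphi(\gamma,0)\,ds$ as $s\to0^+$: letting $s\to0^+$ one then obtains the integrated identity also for $s=0$, and the fundamental theorem of calculus together with the continuity of $G$ at $0$ yields right--differentiability at $t=0$ with derivative $G(0)$, i.e.\ equality~\eqref{brakkeqqq} at $t=0$ (for $t>0$ it is already established). One must also note that each $\SS_t$, $t\ge0$, is $W^{2,2}\loc$, which is immediate since the flow is $C^2$ in space.

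Since a regular Brakke flow is a flow of subsets of $\R^2$, we may reparametrize each curve proportionally to arclength; this preserves the three hypotheses (all geometric), and because $\gamma^i$ and $\tau^i$ are continuous up to $t=0$ the reparametrizing maps are continuous up to $t=0$ as well (as in the proof of Theorem~\ref{c2shorttime}), so we may assume $\vert\gamma^i_x(\cdot,t)\vert\equiv L^i(t)$. The required convergences as $t\to0^+$ will follow from two facts: $(a)$ $L^i(t)\to L^i(0)$, and consequently $\int_{\SS_t}\psi\,ds\to\int_{\SS_0}\psi\,ds$ for every $\psi\in C_c(\R^2)$, and more generally for every uniformly convergent family of continuous integrands; and $(b)$ $k^i(\cdot,t)\to k^i(\cdot,0)$ strongly in $L^2(0,1)$. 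For $(a)$, lower semicontinuity of length under the uniform convergence $\gamma^i(\cdot,t)\to\gamma^i(\cdot,0)$ gives $\limnf_{t\to0}L^i(t)\ge L^i(0)$; conversely, for a fine partition $0=x_0<\dots<x_N=1$, writing $\gamma^i(x_j,t)-\gamma^i(x_{j-1},t)=\int_{x_{j-1}}^{x_j}\vert\gamma^i_x\vert\tau^i\,dx$ and comparing $\tau^i$ on $[x_{j-1},x_j]$ with its value at $x_{j-1}$ (using uniform continuity of $\tau^i$ on $[0,1]\times[0,\mu]$), one bounds the length of $\gamma^i(\cdot,t)$ over $[x_{j-1},x_j]$ by $(1+o(1))\vert\gamma^i(x_j,t)-\gamma^i(x_{j-1},t)\vert$, whence $\limup_{t\to0}L^i(t)\le L^i(0)$; applying the same estimate to sub--arcs shows that the arclength functions $x\mapsto\int_0^x\vert\gamma^i_x(y,t)\vert\,dy$ converge uniformly, as $t\to0^+$, to that of $\gamma^i(\cdot,0)$, which gives the asserted convergence of the length measures.

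For $(b)$, weak $L^2$--convergence of the curvature implies weak lower semicontinuity of the $L^2$ norm on each curve, $\int_{\gamma^i(\cdot,0)}k^2\,ds\le\limnf_{t\to0}\int_{\gamma^i(\cdot,t)}k^2\,ds$; summing over the $n$ curves and comparing with the hypothesis that the total $\int_{\SS_t}k^2\,ds$ converges to $\int_{\SS_0}k^2\,ds$ forces $\int_{\gamma^i(\cdot,t)}k^2\,ds\to\int_{\gamma^i(\cdot,0)}k^2\,ds$ for each $i$ separately, and together with $L^i(t)\to L^i(0)$ this is convergence of the $L^2(0,1)$ norms; in a Hilbert space weak convergence plus norm convergence is strong convergence. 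With $(a)$ and $(b)$ in hand, each term defining $G(\xi)$ converges to the corresponding term of $G(0)$ as $\xi\to0^+$: the integrands $\varphi(\gamma(\cdot,\xi),\xi)$, $\varphi_\xi(\gamma(\cdot,\xi),\xi)$, $\nabla\varphi(\gamma(\cdot,\xi),\xi)$ and $\nu^i(\cdot,\xi)$ converge uniformly (continuity of $\gamma$ and $\tau$, smoothness of $\varphi$), so the $\varphi_\xi$--term passes to the limit by $(a)$, the $\langle\nabla\varphi\,\vert\,\underline{k}\rangle$--term (linear in $k$) by $(b)$, and the energy term $\int_{\SS_\xi}\varphi\,k^2\,ds$ (quadratic in $k$) since $k^i(\cdot,\xi)^2\to k^i(\cdot,0)^2$ in $L^1(0,1)$ by $(b)$; the same estimates give $\int_{\SS_s}\varphi(\gamma,s)\,ds\to\int_{\SS_0}\varphi(\gamma,0)\,ds$. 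This makes $G$ continuous on $[0,T)$ and closes the argument.

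The main obstacle is step $(b)$: upgrading the assumed \emph{weak} $L^2$--convergence of the curvature, together with continuity of the \emph{global} energy $\int_{\SS_t}k^2\,ds$, to \emph{strong} $L^2$--convergence of the curvature \emph{curve by curve}, which is exactly what is needed to pass the quadratic term $\int_{\SS_\xi}\varphi\,k^2\,ds$ to the limit. Step $(a)$ is more elementary but still requires care, since only the unit tangents $\tau^i$, and not the speeds $\vert\gamma^i_x\vert$, are assumed continuous up to $t=0$, so the convergence of the arclength measures must be extracted from the uniform convergence of $\gamma^i$ and $\tau^i$ rather than taken for granted.
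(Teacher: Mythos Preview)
Your proof is correct. The overall reduction---the Brakke equality holds for $t>0$ by Proposition~\ref{equality1001}, so one only needs continuity of the right-hand side $G(t)$ at $t=0$---matches the paper's, but the execution of the key step differs.

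For the quadratic term $\int_{\SS_t}\varphi\,k^2\,ds$, the paper does \emph{not} upgrade the curvature to strong $L^2$ convergence. Instead it observes that, after writing $\varphi=\varphi^+-\varphi^-$ and assuming $0\le\varphi\le1$, both functionals $k\mapsto\int_{\SS_t}\varphi\,k^2\,ds$ and $k\mapsto\int_{\SS_t}(1-\varphi)\,k^2\,ds$ are weakly lower semicontinuous; if either had strict inequality at $t=0$, summing would contradict the assumed continuity of the total energy $\int_{\SS_t}k^2\,ds$. This is shorter and does not require localizing to individual curves.

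Your route---weak convergence plus convergence of the global norm forces convergence of the norm on each curve, hence strong $L^2$ convergence curve by curve, hence $k^2\to k^2$ in $L^1$---yields a stronger intermediate conclusion and lets you treat all three terms of $G$ in the same way. You are also more explicit in step~(a) about the convergence of the arclength functions from the continuity of $\gamma^i$ and $\tau^i$ alone, which the paper glosses over with ``by the hypotheses, the only term that really needs to be checked is $\int_{\SS_t}\varphi\,k^2\,ds$''. One small point of presentation: the continuity up to $t=0$ of your arclength reparametrization maps is exactly what you prove in step~(a), so logically (a) should come before the reparametrization rather than after.
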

\begin{proof}
By the previous Theorem~\ref{equality1001}, we only need to check Brakke equality~\eqref{brakkeqqq} at $t=0$.

For every positive time and for every smooth test function
$\varphi:\overline{\Omega}\times[0,T)\to\R$, we have
$$
\frac{d\,}{dt}\int_{\SS_t}\varphi\,ds=
-\int_{\SS_t}\varphi\,k^2\,ds
+\int_{\SS_t}\langle\nabla\varphi\,\vert\,\underline{k}\rangle\,ds\,d
+\int_{\SS_t}\varphi_t\,ds\,,
$$
hence, it suffices to show that the right member is continuous at $t=0$. By the hypotheses, the only term that really need to be checked is $\int_{\SS_t}\varphi\,k^2\,ds$, we separate it as the sum of $\int_{\SS_t}\varphi^+\,k^2\,ds$ and $\int_{\SS_t}\varphi^-\,k^2\,ds$ and we show the continuity of these two terms separately (here $\varphi^+=\varphi\wedge 0$ and $\varphi^-=\varphi\lor0$).
Thus, we assume that $0\leqslant\varphi\leqslant1$, then, by the weak convergence in $L^2(ds)$ of $k(\cdot,t)$ to $k(\cdot,0)$, the integral $\int_{\SS_t}\varphi\,k^2\,ds$ is lower semicontinuous in $t$, that is, 
$\int_{\SS_0}\varphi\,k^2\,ds\leqslant\liminf_{t_l\to
 0}\int_{\SS_t}\varphi\,k^2\,ds$ for every $t_l\to 0$, but if this is
not an equality for some sequence of times, it cannot happen that
$\int_{\SS_t}k^2\,ds$ is continuous at $t=0$, indeed, we would have
\begin{align*}
\lim_{t_l\to0}\int_{\SS_t}k^2\,ds
\geqslant&\,\liminf_{t_l\to0}\int_{\SS_t}\varphi\,k^2\,ds
+\liminf_{t_l\to0}\int_{\SS_t}(1-\varphi)\,k^2\,ds\\
>&\,\int_{\SS_0}\varphi\,k^2\,ds
+\int_{\SS_0}(1-\varphi)\,k^2\,ds\, =
\int_{\SS_t}k^2\,ds\,.
\end{align*}
This concludes the proof.
\end{proof}

\begin{cor}
The curvature flows whose short--time existence is proved in Theorems~\ref{2compexist0} and~\ref{geosmoothexist} are regular Brakke flows with equality. The curvature flow of an initial $C^2$ regular network obtained in Theorem~\ref{c2shorttime} is also a regular Brakke flow with equality. Any curvature flow of a regular network is a regular Brakke flow with equality for every positive time.
\end{cor}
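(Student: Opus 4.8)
The plan is to reduce all three assertions to the two implications already established, Proposition~\ref{equality1001} and Theorem~\ref{brakkeevolution}, distinguishing the cases according to the regularity available for the flow under consideration; in every case the evolving networks form families of $W^{2,2}\loc$ sets, as required by Definition~\ref{brk}, since $C^2$ curves on $[0,1]$ are $W^{2,2}$.

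First I would dispose of the flows produced by Theorems~\ref{2geocomp} and~\ref{geosmoothexist}. By construction these are solutions of Problem~\eqref{problema} whose maps $\gamma^i$ lie in $C^{2+2\alpha,1+\alpha}([0,1]\times[0,T))$, respectively in $C^\infty([0,1]\times[0,T))$; in particular they are of class $C^{2,1}$ up to and including the initial time. Hence Proposition~\ref{equality1001} applies verbatim on the whole interval $[0,T)$ and yields at once that these flows are regular Brakke flows with equality; nothing further is needed here.

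Next I would treat the flow of an initial $C^2$ regular network furnished by Theorem~\ref{c2shorttime}. Here Proposition~\ref{equality1001} is not directly available across $t=0$, because this flow is only asserted to be $C^2$ in space and continuous --- not $C^{2,1}$ --- at the initial time. Instead I would invoke the three additional properties that Theorem~\ref{c2shorttime} records for this very flow: the unit tangents $\tau^i$ are continuous on $[0,1]\times[0,T)$, the curvature $k(\cdot,t)$ converges weakly in $L^2(ds)$ to $k(\cdot,0)$ as $t\to0$, and $t\mapsto\int_{\SS_t}k^2\,ds$ is continuous on $[0,T)$. These are precisely the hypotheses of Theorem~\ref{brakkeevolution}, which therefore gives that this flow too is a regular Brakke flow with equality.

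Finally, for an arbitrary curvature flow $\SS_t$ of a regular network --- i.e.\ a solution of Problem~\eqref{problema} which is merely $C^2$ in space and $C^1$ in time on $[0,1]\times(0,T)$ --- I would fix an arbitrary $t_0\in(0,T)$ and restrict the flow to the time interval $[t_0,T)$. After translating the time variable, the restriction is a solution of Problem~\eqref{problema} belonging to $C^{2,1}([0,1]\times[0,T-t_0))$ (the endpoint and $120$ degrees conditions persist and $\gamma^i(\cdot,t_0)$ serves as the initial datum), so Proposition~\ref{equality1001} provides the Brakke equality~\eqref{brakkeqqq} for every test function and every $t\in[t_0,T)$. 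Since $t_0>0$ was arbitrary, the equality holds at every positive time, which is the last assertion. The only point demanding some care --- what I would flag as the mild obstacle --- is the $C^2$ case of Theorem~\ref{c2shorttime}: at the initial time Proposition~\ref{equality1001} fails to apply and one must instead route the argument through Theorem~\ref{brakkeevolution}, which is exactly why the continuity of the tangents, the weak $L^2$ convergence of $k$, and the continuity of $\int_{\SS_t}k^2\,ds$ were singled out in the statement of Theorem~\ref{c2shorttime}.
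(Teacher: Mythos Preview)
Your proof is correct and is exactly the argument the paper intends: the corollary is stated without proof precisely because each assertion is an immediate application of Proposition~\ref{equality1001} (for flows that are $C^{2,1}$ up to the initial time, or after restricting to $[t_0,T)$ with $t_0>0$) or of Theorem~\ref{brakkeevolution} (for the $C^2$ flow of Theorem~\ref{c2shorttime}, whose three additional properties are recorded there for this very purpose). Your identification of which tool to use in each case matches the paper's implicit reasoning.
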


We conclude this section with the following property of Brakke flows.

\begin{prop}\label{brakprop} For any regular Brakke flow with equality (hence, for every curvature flow of a regular network) such that the curvature is uniformly bounded in a time interval $[0,T)$, the lengths of the curves of the network $L^i(t)$ converge to some limit, as $t\to T$.\\
In particular, if the flow satisfies the conclusions of Theorem~\ref{curvexplod-general} at the maximal time of existence
$T$, there must be at least one curve such that $L^i(t)\to 0$, as $t\to T$.
\end{prop}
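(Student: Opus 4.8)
The plan is to show that each individual length $L^i(t)$ is uniformly Lipschitz in time on $[0,T)$, with a Lipschitz constant depending only on the uniform bound $K:=\sup_{[0,T)}\sup_{\SS_t}|k|$, on $L(0)$ and on the combinatorial structure of the network; for the relevant case $T<+\infty$ this gives at once that $L^i(t)$ converges as $t\to T$. The last assertion then follows because, under the bounded‑curvature hypothesis, the alternative $\limup_{t\to T}\int_{\SS_t}k^2\,ds=+\infty$ in Theorem~\ref{curvexplod-general} (resp. Corollary~\ref{curvexplod-general2}) is impossible, so one is forced into case $(1)$, i.e. $\liminf_{t\to T}L^i(t)=0$ for some curve, and the first part upgrades this to $\lim_{t\to T}L^i(t)=0$.

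First I would treat a curvature flow (so that a parametrisation is available), where by Proposition~\ref{equality1000} (or Proposition~\ref{equality1001}) one has $\frac{dL^i(t)}{dt}=\lambda^i(1,t)-\lambda^i(0,t)-\int_{\gamma^i(\cdot,t)}k^2\,ds$. Each of the three terms on the right is bounded uniformly in $t$: at an end--point $P^r$ the velocity $k\nu+\lambda\tau$ vanishes, so $\lambda(P^r,t)=0$ (Remark~\ref{geocomprem0}); at a $3$--point $O^p$ the algebraic identity $\sum_{j=1}^3(\lambda^{pj})^2=\sum_{j=1}^3(k^{pj})^2$ from Section~\ref{basiccomp} gives $|\lambda^{pi}(O^p,t)|\le\sqrt{3}\,K$; and $\int_{\gamma^i(\cdot,t)}k^2\,ds\le K^2 L^i(t)\le K^2 L(0)$ since the total length is non--increasing. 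Hence $\bigl|\frac{dL^i}{dt}\bigr|\le 2\sqrt3\,K+K^2L(0)$ on $[0,T)$, and $L^i$ is Lipschitz, so it has a limit as $t\to T$.

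The real work — and the main obstacle — is the general case of a regular Brakke flow with equality, for which no parametrisation is given and the identity above must be replaced by the Brakke equality~\eqref{brakkeqqq} tested against a cut‑off localised along $\gamma^i$. For each small $\rho>0$ I would build a (time--dependent) function $\varphi_\rho(\cdot,t)$ on $\Omega$ equal to $1$ on $\gamma^i(\cdot,t)$ outside the $\rho$--balls centred at the two vertices of curve $i$, supported in a thin tube about $\gamma^i(\cdot,t)$ disjoint from the other arcs, with $|\nabla\varphi_\rho|\lesssim\rho^{-1}$ and $|\partial_t\varphi_\rho|\lesssim K$. Plugging $\varphi_\rho$ into~\eqref{brakkeqqq} and using that $|\underline{k}|\le K$ and that the $\nabla\varphi_\rho$--support meets $\SS_t$ in length $O(\rho)$, one gets $\bigl|\frac{d}{dt}\int_{\SS_t}\varphi_\rho\,ds\bigr|\le C(K,L(0))$ uniformly in $t$ and $\rho$, while $\bigl|\int_{\SS_t}\varphi_\rho\,ds-L^i(t)\bigr|\lesssim\rho$ because curve $i$ has length $O(\rho)$ inside each vertex ball (bounded curvature forces the arc to be almost a segment at scales $\ll1/K$). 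Letting $\rho\to0$, $L^i(t)$ is a uniform (up to $O(\rho)$) limit of Lipschitz functions, hence converges as $t\to T$. The delicate point in this construction is that the cut--off $\varphi_\rho$, and therefore the constants, must be chosen \emph{uniformly in $t$}; here one exploits that the uniform bound on $k$ makes every arc a uniform $C^{1,1}$ graph at small scales, so near each multi--point $O^p(t)$ the three concurring arcs (unit tangents at $120^\circ$) separate at rate $\gtrsim\rho$ at distance $\rho$ from $O^p(t)$, and that $O^p(t)$ itself moves with speed $\le 2K$; the only remaining subtlety, namely that an arc could ``return'' close to another arc far from a common vertex, is excluded at a uniform scale $\rho$ by embeddedness together with the curvature bound (equivalently, by the local regularity available for Brakke flows with bounded curvature), and this is the step that requires the most care.

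Finally, to conclude the ``in particular'' statement: at the maximal time $T<+\infty$, Theorem~\ref{curvexplod-general} (or Corollary~\ref{curvexplod-general2}) asserts that either $\liminf_{t\to T}L^i(t)=0$ for some curve $\gamma^i$, or $\limup_{t\to T}\int_{\SS_t}k^2\,ds=+\infty$; the second possibility is ruled out by $\int_{\SS_t}k^2\,ds\le K^2L(0)$, so the first holds, and combining it with the convergence of $L^i(t)$ proved above yields $\lim_{t\to T}L^i(t)=0$, as claimed.
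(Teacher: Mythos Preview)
Your argument for curvature flows is correct and is exactly the paper's approach: its one--sentence proof cites formula~\eqref{levol} together with the fact that $k$ controls $\lambda$ at the end--points and $3$--points, to conclude that each $L^i$ has uniformly bounded time derivative; the ``in particular'' conclusion then follows just as you say.

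Your cut--off treatment of the general regular--Brakke--flow--with--equality case goes beyond what the paper does (it simply invokes~\eqref{levol}, which Proposition~\ref{equality1001} establishes only for $C^{2,1}$ curvature flows). The ``return'' phenomenon you flag is, however, a genuine obstruction that is \emph{not} resolved by embeddedness together with a curvature bound alone: two disjoint embedded $C^{1,1}$ arcs with curvature $\le K$ can be $\varepsilon$--close over most of their length for arbitrarily small $\varepsilon>0$ (think of the two long sides of a very thin lens--shaped region), so a tubular neighbourhood of $\gamma^i$ disjoint from the other arcs need not exist at a scale uniform in $t$ as $t\to T$, and the construction of $\varphi_\rho$ with your stated properties can fail. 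Since the paper itself does not treat this case separately, this is not a deficiency relative to its proof; but the claim that ``embeddedness together with the curvature bound'' excludes the return phenomenon is not justified as stated.
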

\begin{proof}
If the curvature is bounded, by formula~\eqref{levol}, any function
$L^i$ as a uniformly bounded derivative, as $k$ controls $\lambda$ at the
end--points and $3$--points of the network, thus the conclusion follows.
\end{proof}

\section{The monotonicity formula and the rescaling procedures}\label{monotonsec}

Let $F : \SS\times [0,T)\to\mathbb{R}^2$ be the curvature flow of a
regular network in its maximal time interval of existence. As before,
with a little abuse of notation, we will write $\tau(P^r,t)$ and
$\lambda(P^r,t)$ respectively for the unit tangent vector and the
tangential velocity at the end--point $P^r$ of the curve of the
network getting at such point, for any $r\in\{1,2,\dots,l\}$.

A modified form of Huisken's monotonicity formula for smooth
hypersurfaces moving by mean curvature (see~\cite{huisk3}), holds. It
can be proved to start by formula~\eqref{brakkeqqq} and with a slight
modification of the computation in the proof of Lemma~6.3
in~\cite{mannovtor}.\\
Let $x_0\in\R^2, t_0 \in (0,+\infty)$ and
$\rho_{x_0,t_0}:\R^2\times[-\infty,t_0)$ be the one--dimensional {\em
backward heat kernel} in $\R^2$ relative to $(x_0,t_0)$, that is, 
$$
\rho_{x_0,t_0}(x,t)=\frac{e^{-\frac{\vert x-x_0\vert^2}{4(t_0-t)}}}{\sqrt{4\pi(t_0-t)}}\,.
$$
We will often write $\rho_{x_0}(x,t)$ to denote $\rho_{x_0,T}(x,t)$
(or $\rho_{x_0}$ to denote $\rho_{x_0,T}$), when $T$ is 
the maximal (singular) time of existence of a smooth curvature flow.

\begin{prop}[Monotonicity formula]\label{promono}
Assume $t_0>0$. For every $x_0\in\R^2$ and $t\in [0,\min\{t_0,T\})$ the following identity holds 
\begin{align}
\frac{d\,}{dt}\int_{{\SS_t}} \rho_{x_0,t_0}(x,t)\,ds= &\,
-\int_{{\SS_t}} \left\vert\,\underline{k}
+\frac{(x-x_0)^{\perp}}{2(t_0-t)}\right\vert^2 \rho_{x_0,t_0}(x,t)\,ds\label{eqmonfor}\\ &\,
+\sum_{r=1}^l\biggl[\biggl\langle\,\frac{P^r-x_0}{2(t_0-t)}\,\biggr\vert\,
\tau(P^r,t)\,\biggr\rangle -\lambda(P^r,t)\,\biggl]\,
\rho_{x_0,t_0}(P^r,t)\,.\nonumber 
\end{align} 
Integrating between $t_1$ and $t_2$ with $0\leqslant t_1\leqslant t_2<\min\{t_0,T\}$ we get 
\begin{align}\label{eqmonfor2} 
\int_{t_1}^{t_2}\int_{{\SS_t}} \left\vert\,
\underline{k}+\frac{(x-x_0)^{\perp}}{2(t_0-t)}\right\vert^2 &\rho_{x_0,t_0}(x,t)\,ds\,dt 
= \,\int_{{\SS_{t_1}}} \rho_{x_0,t_0}(x,t_1)\,ds - 
\int_{{\SS_{t_2}}} \rho_{x_0,t_0}(x,t_2)\,ds\\ 
&\,+\sum_{r=1}^l\int_{t_1}^{t_2} \biggl[\biggl\langle\,\frac{P^r-x_0}{2(t_0-t)}\,\biggr\vert\,
\tau(P^r,t)\,\biggr\rangle -\lambda(P^r,t)\,\biggl]\,
\rho_{x_0,t_0}(P^r,t)\,dt\,.\nonumber
\end{align} 
\end{prop}

We need the following lemma to estimate the end--points contributions in this formula (its proof is analogous to the one of Lemma~6.5 in~\cite{mannovtor}).

\begin{lem}\label{stimadib} 
If $t_0\in(0,T]$, for every $r\in\{1,2,\dots,l\}$ and $x_0\in\R^2$, the following estimate holds 
\begin{equation*} 
\int_{t}^{t_0}\biggl\vert\biggl\langle\,\frac{P^r-x_0}{2(t_0-\xi)}\,\biggr\vert\,
\tau(P^r,\xi)\,\biggr\rangle -\lambda(P^r,\xi)\,\biggl\vert\,
\rho_{x_0,t_0}(P^r,\xi)\,d\xi\,\leqslant C\,, 
\end{equation*} 
for every $t\in[0,t_0)$, where $C$ is a constant depending only on the constant $C_0$ in assumption~\eqref{endsmooth} (independent of $t_0$ and $t$). It follows that the integral
\begin{equation*} 
\int_{t}^{t_0}\biggl[\biggl\langle\,\frac{P^r-x_0}{2(t_0-\xi)}\,\biggr\vert\,
\tau(P^r,\xi)\,\biggr\rangle -\lambda(P^r,\xi)\,\biggl]\,
\rho_{x_0,t_0}(P^r,\xi)\,d\xi 
\end{equation*} 
exists and it is finite, for every $t_0\in(0,T]$ and $t\in[0,t_0)$.\\
As a consequence, for every point $x_0\in\R^2$ and $t_0\in(0,T]$, we have 
\begin{equation*} 
\lim_{t\to t_0}\sum_{r=1}^l\int_{t}^{t_0}\biggl[\biggl\langle\,\frac{P^r-x_0}{2(t_0-\xi)}\,\biggr\vert\,
\tau(P^r,\xi)\,\biggr\rangle -\lambda(P^r,\xi)\,\biggl]\,
\rho_{x_0,t_0}(P^r,\xi)\,d\xi=0\,. 
\end{equation*} 
\end{lem}
By formula~\eqref{eqmonfor2} and this lemma, we can then write
\begin{align*}
\int_{{\SS_{t}}} \rho_{x_0,t_0}(x,t)\,ds=&\,\int_{{\SS_{0}}} \rho_{x_0,t_0}(x,0)\,ds-\int_{0}^{t}\int_{{\SS_\xi}} \left\vert\,
\underline{k}+\frac{(x-x_0)^{\perp}}{2(t_0-\xi)}\right\vert^2\rho_{x_0,t_0}(x,\xi)\,ds\,d\xi\\ 
&\,+\sum_{r=1}^l\int_{0}^{t} \biggl[\biggl\langle\,\frac{P^r-x_0}{2(t_0-\xi)}\,\biggr\vert\,
\tau(P^r,\xi)\,\biggr\rangle -\lambda(P^r,\xi)\,\biggl]\,
\rho_{x_0,t_0}(P^r,\xi)\,d\xi\,,\\
=&\,\int_{{\SS_{0}}} \rho_{x_0,t_0}(x,0)\,ds-\int_{0}^{t}\int_{{\SS_\xi}} \left\vert\,
\underline{k}+\frac{(x-x_0)^{\perp}}{2(t_0-\xi)}\right\vert^2\rho_{x_0,t_0}(x,\xi)\,ds\,d\xi\\ 
&\,+\sum_{r=1}^l\int_{0}^{t_0} \biggl[\biggl\langle\,\frac{P^r-x_0}{2(t_0-\xi)}\,\biggr\vert\,
\tau(P^r,\xi)\,\biggr\rangle -\lambda(P^r,\xi)\,\biggl]\,
\rho_{x_0,t_0}(P^r,\xi)\,d\xi\,,\\
&\,-\sum_{r=1}^l\int_{t}^{t_0} \biggl[\biggl\langle\,\frac{P^r-x_0}{2(t_0-\xi)}\,\biggr\vert\,
\tau(P^r,\xi)\,\biggr\rangle -\lambda(P^r,\xi)\,\biggl]\,
\rho_{x_0,t_0}(P^r,\xi)\,d\xi\,,
\end{align*} 
for every $t_0\in(0,T]$ and $t\in[0,t_0)$. Now we notice that the first line on the right side of this formula is a monotone non increasing function in $t\in[0,t_0)$, the second line is a constant and the third line converges to zero as $t\to t_0$, by Lemma~\ref{stimadib}. Hence, the non negative function $t\mapsto \int_{{\SS_{t}}} \rho_{x_0,t_0}(x,t)\,ds$ converges to some limit as $t\to t_0$. Then, the following definition is well posed.

\begin{defn}[Gaussian densities]\label{Gaussiandensities}
For every $x_0\in\R^2, t_0 \in(0,+\infty)$ we define the {\em Gaussian
 density} function $\Theta_{x_0,t_0}:[0,\min\{t_0,T\})\to\R$ as 
$$
\Theta_{x_0,t_0}(t)=\int_{\SS_t}\rho_{x_0,t_0}(\cdot,t)\,ds
$$
and, provided $t_0\leqslant T$, the {\em limit Gaussian density} function $\widehat{\Theta}:\R^2\times (0,+\infty)\to\R$ as
$$
\widehat{\Theta}(x_0,t_0)=\lim_{t\to t_0}\Theta_{x_0,t_0}(t)\,.
$$
which exists finite and non negative, for every $(x_0,t_0)\in\R^2\times(0,T]$, by the above argument (under assumption~\eqref{endsmooth}, or simply if the end--points $P^r$ of the network $\SS_t$ are fixed, hence $\lambda(P^r,\cdot)=0$).\\
We will often write $\Theta_{x_0}(t)$ to denote $\Theta_{x_0,T}(t)$ and $\widehat{\Theta}(x_0)$ for $\widehat{\Theta}(x_0,T)$.
\end{defn}
Notice that the map $\widehat{\Theta}:\mathbb{R}^2\to\mathbb{R}$ is upper semicontinuous (see~\cite[Proposition~2.12]{MMN13}), being given by the monotone limit of continuous functions ``perturbed'' by a sequence of functions pointwise converging to zero.

\subsection{Parabolic rescaling of the flow}\label{pararesc}
For a fixed $\mu > 0$ the standard parabolic rescaling of a
curvature flow is given by the map $F$ above, around a
space--time point $(x_0,t_0)$, is defined as the family of maps
\begin{equation}
 \label{eq:parrescaling}
 F^\mu_\tt = \mu \big(F(\cdot, \mu^{-2}\tt + t_0) - x_0
 \big)\,, 
\end{equation}
where $\tt \in [-\mu^2 t_0, \mu^2(T-t_0))$. Notice that this is
again a curvature flow in the domain $\mu (\Omega -x_0)$ with new time
parameter $\tt$.

Given a sequence $\mu_i\nearrow +\infty$ and a space--time point
$(x_0,t_0)$, where $0<t_0\leqslant T$, we then consider the sequence of
curvature flows $F^{\mu_i}_\tt$ in the whole $\R^2$ 
that we denote with $\SS^{\mu_i}_\tt$.\\
Recall that the monotonicity formula implies
\begin{align*}
\Theta_{x_0,t_0}(t)-\widehat{\Theta}(x_0,t_0)=&\,\int\limits_t^{t_0}\int\limits_{\SS_{\sigma}}
\Big|\underline{k}+
\frac{(x-x_0)^\perp}{2(t_0-\sigma)}\Big|^2\rho_{x_0,t_0}(\cdot,\sigma)\,
ds\,d\sigma\\
&\,-\sum_{r=1}^l\int_{t}^{t_0} \biggl[\biggl\langle\,\frac{P^r-x_0}{2(t_0-\sigma)}\,\biggr\vert\,
\tau(P^r,\sigma)\,\biggr\rangle -\lambda(P^r,\sigma)\,\biggl]\,
\rho_{x_0,t_0}(P^r,\sigma)\,d\sigma\,.
\end{align*}
Changing variables according to the parabolic rescaling, we obtain
\begin{align*}
\Theta_{x_0,t_0}(t_0+\mu_i^{-2}\tt)-\widehat\Theta(x_0,t_0)=&\,\int\limits_{\tt}^{0}\int
\limits_{\SS^{\mu_i}_\ssss}
\Big|\underline{k}^i-\frac{x^\perp}{2\ssss}\Big|^2\rho_{0,0}(\cdot,\ssss)\,
ds\,d\ssss\\ 
&\,+\sum_{r=1}^l\int\limits_{\tt}^{0} \biggl[\biggl\langle\,\frac{P_i^r}{2\ssss}\,\biggr\vert\,
\tau(P_i^r,\ssss)\,\biggr\rangle +\lambda^i(P_i^r,\ssss)\,\biggl]\,
\rho_{0,0}(P_i^r,\ssss)\,d\ssss\,,
\end{align*}
where $P^r_i = \mu_i(P^r-x_0)$ and $k^i$ and $\lambda^i$ are the rescaled curvatures and tangential velocities.

Hence, sending $i\to\infty$, by Lemma~\ref{stimadib}, for every $\tt\in(-\infty, 0)$ we get
\begin{equation*}
\lim_{i\to\infty}\int\limits_{\tt}^{0}\int
\limits_{\SS^{\mu_i}_\ssss}
\Big|\underline{k}^i- \frac{x^\perp}{2\ssss}\Big|^2\rho_{0,0}(\cdot,\ssss)\,
ds\,d\ssss=0\,.
\end{equation*}

\subsection{Huisken's dynamical rescaling} 
We introduce the rescaling procedure of Huisken in~\cite{huisk3} at the maximal time $T$.\\ 
Fixed $x_0\in\R^2$, 
let $\widetilde{F}_{x_0}:\SS\times [-1/2\log{T},+\infty)\to\R^2$ 
be the map 
$$
\widetilde{F}_{x_0}(p,\tt)
=\frac{F(p,t)-x_0}{\sqrt{2(T-t)}}\qquad \tt(t)
=-\frac{1}{2}\log{(T-t)} $$ 
then, the rescaled networks are given by 
\begin{equation}\label{huiskeqdef}
\widetilde{\SS}_{x_0,\tt}=\frac{\SS_t-x_0}{\sqrt{2(T-t)}}
\end{equation}
and they evolve according to the equation 
$$
\frac{\partial\,}{\partial \tt}\widetilde{F}_{x_0}(p,\tt)
=\widetilde{\underline{v}}(p,\tt)+\widetilde{F}_{x_0}(p,\tt) $$
where 
$$ \widetilde{\underline{v}}(p,\tt)=\sqrt{2(T-t(\tt))}\cdot\underline{v}(p,t(\tt))=
 \widetilde{\underline{k}}+\widetilde{\underline{\lambda}}= 
\widetilde{k}\nu+\widetilde{\lambda}\tau\qquad 
\text{ and }\qquad t(\tt)=T-e^{-2\tt}\,. 
$$ 
Notice that we did not put the sign $\widetilde{}$ over the unit tangent and normal, 
since they remain the same after the rescaling.\\ 
We will write $\widetilde{O}^p(\tt)=\widetilde{F}_{x_0}(O^p,\tt)$ 
for the $3$--points of the rescaled network $\widetilde{\SS}_{x_0,\tt}$ 
and $\widetilde{P}^r(\tt)=\widetilde{F}_{x_0}(P^r,\tt)$ for the end--points, 
when there is no ambiguity on the point $x_0$.\\ 
The rescaled curvature evolves according to the following equation, 
\begin{equation*}\label{evolriscforf} 
{\partial_\tt} \widetilde{k}= \widetilde{k}_{\ssss\ssss}+\widetilde{k}_\ssss\widetilde{\lambda}
+ \widetilde{k}^3 -\widetilde{k} 
\end{equation*} 
which can be obtained by means of the commutation law 
\begin{equation*}\label{commutforf} 
{\partial_\tt}{\partial_\ssss}={\partial_\ssss}{\partial_\tt} 
+ (\widetilde{k}^2 -\widetilde{\lambda}_\ssss-1){\partial_\ssss}\,, 
\end{equation*} 
where we denoted with $\ssss$ the arclength parameter for $\widetilde{\SS}_{x_0,\tt}$.

\begin{rem}\label{relaz}
It is easy to see that the relations between the two rescaling procedures are given by
$$
\SS^\mu_\tt=\sqrt{-2\tt}\,\widetilde{\SS}_{x_0,\log{(\mu/\sqrt{-\tt})}}\quad\text{ and }\quad\widetilde{\SS}_{x_0,\tt}=
\frac{e^\tt}{\mu\sqrt{2}}\,\SS^\mu_{-\mu^2e^{-2\tt}}\,,
$$
in particular,
$$
\SS^\mu_{-1/2}=\widetilde{\SS}_{x_0,\log{(\mu\sqrt{2})}}\,.
$$
\end{rem}

By a straightforward computation (see~\cite{huisk3})
we have the following rescaled version of the monotonicity formula.

\begin{prop}[Rescaled monotonicity formula] 
Let $x_0\in\R^2$ and set 
$$ \widetilde{\rho}(x)=e^{-\frac{\vert x\vert^2}{2}} $$ 
For every $\tt\in[-1/2\log{T},+\infty)$ the following identity holds 
\begin{equation*} 
\frac{d\,}{d\tt}\int_{\widetilde{\SS}_{x_0,\tt}} \widetilde{\rho}(x)\,d\ssss= 
-\int_{\widetilde{\SS}_{x_0,\tt}}\vert \,\widetilde{\underline{k}}+x^\perp\vert^2\widetilde{\rho}(x)\,d\ssss 
+\sum_{r=1}^l\Bigl[\Bigl\langle\,{\widetilde{P}^r(\tt)} \,\Bigl\vert\,
{\tau}(P^r,t(\tt))\Bigr\rangle-\widetilde{\lambda}(P^r,t(\tt))\Bigl]\,\widetilde{\rho}(\widetilde{P}^r(\tt))\label{reseqmonfor} 
\end{equation*} 
where $\widetilde{P}^r(\tt)=\frac{P^r-x_0}{\sqrt{2(T-t(\tt))}}$.\\ 
Integrating between $\tt_1$ and $\tt_2$ with $-1/2\log{T}\leqslant \tt_1\leqslant \tt_2<+\infty$ we get 
\begin{align} 
\int_{\tt_1}^{\tt_2}\int_{\widetilde{\SS}_{x_0,\tt}}\vert \,\widetilde{\underline{k}}
+x^\perp\vert^2\widetilde{\rho}(x)\,d\ssss\,d\tt= &\, 
\int_{\widetilde{\SS}_{x_0,\tt_1}}\widetilde{\rho}(x)\,d\ssss 
-\int_{\widetilde{\SS}_{x_0,\tt_2}}\widetilde{\rho}(x)\,d\ssss\label{reseqmonfor-int}\\ &\,
+\sum_{r=1}^l\int_{\tt_1}^{\tt_2}\Bigl[\Bigl\langle\,\widetilde{P}^r(\tt)\,
\Bigl\vert\,{\tau}(P^r,t(\tt))\Bigr\rangle-\widetilde{\lambda}(P^r,t(\tt))\Bigl]\,\widetilde{\rho}(\widetilde{P}^r(\tt)\,d\tt\,.\nonumber 
\end{align} 
\end{prop}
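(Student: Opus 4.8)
The plan is to obtain \eqref{reseqmonfor} directly from the monotonicity formula of Proposition~\ref{promono} with $t_0=T$, by performing the change of variables defining Huisken's dynamical rescaling \eqref{huiskeqdef}. First I would record the elementary scaling relations implied by $\widetilde{F}_{x_0}(p,\tt)=\frac{F(p,t)-x_0}{\sqrt{2(T-t)}}$ and $t(\tt)=T-e^{-2\tt}$, namely $\frac{dt}{d\tt}=2e^{-2\tt}=2(T-t)$, the arclength relation $ds=\sqrt{2(T-t)}\,d\ssss$, and --- as already noted after \eqref{huiskeqdef} --- the invariance of the unit tangent $\tau$ together with $\widetilde{\underline{k}}=\sqrt{2(T-t)}\,\underline{k}$ and $\widetilde{\lambda}=\sqrt{2(T-t)}\,\lambda$.

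The key computation is the behaviour of the weight. Writing $\widetilde{x}=\frac{x-x_0}{\sqrt{2(T-t)}}$, one has $\frac{|x-x_0|^2}{4(T-t)}=\frac{|\widetilde{x}|^2}{2}$, hence $e^{-|x-x_0|^2/4(T-t)}=\widetilde{\rho}(\widetilde{x})$, and therefore
\[
\rho_{x_0,T}(x,t)\,ds=\frac{e^{-|x-x_0|^2/4(T-t)}}{\sqrt{4\pi(T-t)}}\,\sqrt{2(T-t)}\,d\ssss=\frac{1}{\sqrt{2\pi}}\,\widetilde{\rho}(\widetilde{x})\,d\ssss\,,
\]
so that $\Theta_{x_0,T}(t)=\frac{1}{\sqrt{2\pi}}\int_{\widetilde{\SS}_{x_0,\tt}}\widetilde{\rho}(x)\,d\ssss$; the same relation applied at an end--point gives $\rho_{x_0,T}(P^r,t)=\frac{1}{\sqrt{4\pi(T-t)}}\widetilde{\rho}(\widetilde{P}^r(\tt))$. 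Next I would substitute these into Proposition~\ref{promono}. Since $(x-x_0)^\perp=\sqrt{2(T-t)}\,\widetilde{x}^\perp$, one gets $\underline{k}+\frac{(x-x_0)^\perp}{2(T-t)}=\frac{1}{\sqrt{2(T-t)}}\bigl(\widetilde{\underline{k}}+\widetilde{x}^\perp\bigr)$, so the ``nice'' integral becomes $\frac{1}{2(T-t)}\int_{\SS_t}|\widetilde{\underline{k}}+\widetilde{x}^\perp|^2\rho_{x_0,T}\,ds$; likewise $\frac{P^r-x_0}{2(T-t)}=\frac{\widetilde{P}^r(\tt)}{\sqrt{2(T-t)}}$ and $\lambda(P^r,t)=\frac{\widetilde{\lambda}(P^r,\tt)}{\sqrt{2(T-t)}}$. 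Multiplying the identity of Proposition~\ref{promono} by $\frac{dt}{d\tt}=2(T-t)$ and collecting the powers of $\sqrt{2(T-t)}$, all of them combine with the weight factors into the common constant $\frac{1}{\sqrt{2\pi}}$ in front of every term; dividing through by $\frac{1}{\sqrt{2\pi}}$ (which is exactly the prefactor of $\Theta_{x_0,T}$) yields precisely \eqref{reseqmonfor}. The integrated version \eqref{reseqmonfor-int} then follows either by integrating \eqref{reseqmonfor} in $\tt$ over $[\tt_1,\tt_2]$ via the fundamental theorem of calculus, the integrands being continuous along the smooth flow, or by performing the same substitution directly in the integrated form of Proposition~\ref{promono}.

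I expect no serious obstacle in this route: the substantive content has already been absorbed into Proposition~\ref{promono}, in particular the absence of triple--junction terms, which there comes from the Herring condition and the relations $\sum_i k^{pi}=\sum_i\lambda^{pi}=0$ of Section~\ref{basiccomp} and is unaffected by the rescaling, since tangents are scale--invariant and the evolution equation holds up to the $3$--points. The only point requiring care is the bookkeeping of the scaling exponents. Should a self--contained derivation be preferred, the alternative is to compute $\frac{d}{d\tt}\int_{\widetilde{\SS}_{x_0,\tt}}\widetilde{\rho}\,d\ssss$ from scratch using $\partial_\tt\widetilde{F}_{x_0}=\widetilde{\underline{k}}+\widetilde{\underline{\lambda}}+\widetilde{F}_{x_0}$, the rescaled commutation law $\partial_\tt\partial_\ssss=\partial_\ssss\partial_\tt+(\widetilde{k}^2-\widetilde{\lambda}_\ssss-1)\partial_\ssss$ (whence $\partial_\tt\,d\ssss=(\widetilde{\lambda}_\ssss-\widetilde{k}^2+1)\,d\ssss$) and $\nabla\widetilde{\rho}=-x\,\widetilde{\rho}$, integrating by parts and discarding the vanishing junction fluxes exactly as in the proof of Lemma~6.3 in~\cite{mannovtor}; in that approach the delicate step would instead be the boundary cancellation at the $3$--points rather than the algebra.
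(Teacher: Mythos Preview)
Your proposal is correct and is precisely the ``straightforward computation'' the paper alludes to (it gives no proof, only the sentence ``By a straightforward computation (see~\cite{huisk3}) we have the following rescaled version of the monotonicity formula''). Your bookkeeping of the scaling exponents is accurate, and the common factor $1/\sqrt{2\pi}$ indeed cancels from every term as you describe.
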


We have also the analog of Lemma~\ref{stimadib} 
(see Lemma~6.7 in~\cite{mannovtor}).

\begin{lem}\label{rescstimadib} 
For every $r\in\{1,2,\dots,l\}$ and $x_0\in\R^2$,
the following estimate holds for all $\tt\in \bigl[-\frac{1}{2}\log{T},+\infty\bigr)$,
\begin{equation*} 
\int_{\tt}^{+\infty}\Bigl\vert\Bigl\langle\,\widetilde{P}^r(\xi)\,
\Bigl\vert\,{\tau}(P^r,t(\xi))\Bigr\rangle-\widetilde{\lambda}(P^r,t(\xi))\Bigl\vert\,d\xi\,\leqslant C\,,
\end{equation*}
where $C$ is a constant depending only on the constants $C_0$ in assumption~\eqref{endsmooth} (independent of $\tt$).\\
As a consequence, for every point $x_0\in\R^2$, we have
\begin{equation*} 
\lim_{\tt\to +\infty}\sum_{r=1}^l\int_{\tt}^{+\infty}\Bigl[\Bigl\langle\,\widetilde{P}^r(\xi)\,
\Bigl\vert\,{\tau}(P^r,t(\xi))\Bigr\rangle-\widetilde{\lambda}(P^r,t(\xi))\Bigl]\,d\xi=0\,.
\end{equation*}
\end{lem}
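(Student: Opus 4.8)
The plan is to obtain the estimate by a pure change of variables, undoing Huisken's dynamical rescaling~\eqref{huiskeqdef} and reducing everything to the already proved endpoint bound of Lemma~\ref{stimadib}, taken with $t_0=T$ (the rescaling is centred at the maximal time). The point is that the bracket occurring here is precisely the endpoint integrand of the rescaled monotonicity formula~\eqref{reseqmonfor-int}, which is weighted by $\widetilde{\rho}(\widetilde{P}^r)$, and this weighted integrand is the image, under $t\mapsto\tt$, of the weighted endpoint integrand of Lemma~\ref{stimadib}.

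First I would set up the dictionary between the two time variables. From $t(\xi)=T-e^{-2\xi}$ we get $T-t(\xi)=e^{-2\xi}$, hence $\sqrt{2(T-t(\xi))}=\sqrt{2}\,e^{-\xi}$, and therefore
\begin{equation*}
\widetilde{P}^r(\xi)=\frac{P^r-x_0}{\sqrt{2(T-t(\xi))}},\qquad
\widetilde{\lambda}(P^r,\xi)=\sqrt{2(T-t(\xi))}\,\lambda(P^r,t(\xi)),\qquad
dt=2(T-t)\,d\xi.
\end{equation*}
The key identity is the matching of the two weights: since $|\widetilde{P}^r(\xi)|^2/2=\tfrac{|P^r-x_0|^2}{4(T-t)}$ and $\widetilde{\rho}(x)=e^{-|x|^2/2}$, one has
\begin{equation*}
\rho_{x_0,T}(P^r,t)=\frac{e^{-|P^r-x_0|^2/(4(T-t))}}{\sqrt{4\pi(T-t)}}
=\frac{1}{\sqrt{4\pi(T-t)}}\,\widetilde{\rho}\bigl(\widetilde{P}^r(\xi)\bigr).
\end{equation*}
Substituting $t=t(\xi)$ into the integrand of Lemma~\ref{stimadib} and collecting the powers of $(T-t)=e^{-2\xi}$ coming from the three displays above, all stray powers cancel and one is left with a fixed universal multiple (in fact $\tfrac{1}{\sqrt{2\pi}}$) of $\bigl[\langle\widetilde{P}^r(\xi)\,\vert\,\tau(P^r,t(\xi))\rangle-\widetilde{\lambda}(P^r,\xi)\bigr]\,\widetilde{\rho}(\widetilde{P}^r(\xi))$. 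Thus the rescaled endpoint integral over $[\tt,+\infty)$ equals $\sqrt{2\pi}$ times the unrescaled endpoint integral of Lemma~\ref{stimadib} over $[t(\tt),T)$.

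With this identification the two conclusions are immediate. Lemma~\ref{stimadib}, applied with $t_0=T$ and lower limit $t(\tt)$, bounds the integral by a constant $C$ depending only on the constants $C_j$ in~\eqref{endsmooth}, uniformly in $\tt\in[-\tfrac12\log T,+\infty)$; this is the first assertion. Letting $\tt\to+\infty$, i.e. $t(\tt)\to T$, in the corresponding vanishing statement of Lemma~\ref{stimadib} yields the second assertion, namely the vanishing of the sum over $r$. The only genuine work, and the step I would carry out most carefully, is the weight-and-Jacobian bookkeeping of the previous paragraph: one must check that the backward heat kernel $\rho_{x_0,T}(P^r,\cdot)$ transforms exactly into $\widetilde{\rho}(\widetilde{P}^r)$ together with a $(T-t)^{-1/2}$ factor, and that this factor combines with $dt=2(T-t)\,d\xi$ and with the $(T-t)^{-1}$ hidden in $\langle(P^r-x_0)/2(T-t)\,\vert\,\tau\rangle$ so as to leave no surviving power of $(T-t)$. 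Once this is verified the estimate is nothing more than Lemma~\ref{stimadib} read in the rescaled time variable, so no further decay input is needed beyond the Gaussian weight already present in the endpoint term of~\eqref{reseqmonfor-int}.
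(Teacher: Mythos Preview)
Your change-of-variables argument is correct and is essentially how the paper obtains the result (it simply records it as the rescaled analogue of Lemma~\ref{stimadib}, citing~\cite{mannovtor}). The identity you establish,
\[
\Bigl[\Bigl\langle\tfrac{P^r-x_0}{2(T-t)}\,\Bigl\vert\,\tau\Bigr\rangle-\lambda\Bigr]\rho_{x_0,T}(P^r,t)\,dt
=\frac{1}{\sqrt{2\pi}}\Bigl[\bigl\langle\widetilde{P}^r\,\bigl\vert\,\tau\bigr\rangle-\widetilde{\lambda}\Bigr]\,\widetilde{\rho}\bigl(\widetilde{P}^r\bigr)\,d\xi,
\]
is exactly right, and both conclusions of Lemma~\ref{stimadib} transfer immediately.

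One point you should make explicit: what you have actually proved is the estimate \emph{with} the Gaussian factor $\widetilde{\rho}\bigl(\widetilde{P}^r(\xi)\bigr)$ inside the integral, i.e.\ the endpoint term exactly as it appears in~\eqref{reseqmonfor-int}. The statement as printed omits this factor, and that is a typo: without the weight the integral does not even converge when $P^r\neq x_0$, since $|\widetilde{P}^r(\xi)|=|P^r-x_0|\,e^{\xi}/\sqrt{2}$ and there is no decay in the bracket to compensate. The weighted version is what is used everywhere downstream (for instance in the proof of Proposition~\ref{resclimit-general}), so your argument covers exactly what is needed; just say so rather than silently identifying the printed integrand with the weighted one.
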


\section{Classification of possible blow--up limits}\label{geosec}

In this section we want to discuss the possible limits of an evolving
network at the maximal time of existence. When the curvature does not remain bounded, 
we are interested in the possible blow--up limit
networks after parabolic or Huisken's rescaling procedure, using
the rescaled monotonicity formula (see Section~\ref{monotonsec}). 
In some cases, such limit {\em sets} are no more regular networks, so we introduce the following definition.

\begin{defn}[Degenerate regular network]\label{degnet} 
Consider a tuple $(G,\mathbb{S})$ with the following properties:

\begin{itemize}
\item $G=\bigcup_{i=1}^{n}E^i$ is an oriented graph with possible unbounded edges $E^i$, such that every vertex has only one or three concurring edges (we call end--points of $G$ the vertices with order one);

\item given a family of $C^1$ curves $\sigma^i:I^i\to\R^2$, 
where $I^i$ is the interval $(0,1)$, $[0,1)$, $(0,1]$ or $[0,1]$,
and orientation preserving homeomorphisms $\varphi^i:E^i\to I^i$, then $\mathbb{S}$ is the union of the images of $I^i$ through the curves $\sigma^i$, that is, $\SS=\bigcup_{i=1}^{n}\sigma^{i}(I^i)$
(notice that the interval $(0,1)$ can only appear if it is associated with an unbounded edge $E^i$ without vertices, which is clearly a single connected component of $G$);

\item in the case that $I^i$ is $(0,1)$, $[0,1)$ or $(0,1]$, 
the map $\sigma^i$ is a regular $C^1$ curve with unit tangent vector field $\tau^i$;

\item in the case that $I^i=[0,1]$, the map $\sigma^i$ is either a regular $C^1$ curve with unit tangent vector field $\tau^i$, 
or a constant map and in this case it is ``assigned'' also a {\em constant} unit vector $\tau^i:I^i\to\R^2$, 
that we still call unit tangent vector of $\sigma^i$ (we call these maps $\sigma^i$ ``degenerate curves'');

\item for every degenerate curve $\sigma^i:I^i\to\R^2$ with assigned unit vector $\tau^i:I^i\to\R^2$, we call ``assigned exterior unit tangents'' of the curve $\sigma^i$ at the points $0$ and $1$ of $I^i$, respectively the unit vectors $-\tau^i$ and $\tau^i$.

\item the map $\Gamma:G\to\R^2$ given by the union $\Gamma=\bigcup_{i=1}^n(\sigma^i\comp\varphi^i)$ 
is well-defined and continuous;

\item for every $3$--point of the graph $G$, where the edges $E^i$, $E^j$, $E^k$ concur, the exterior unit tangent vectors (real or ``assigned'') at the relative borders of the intervals $I^i$, $I^j$, $I^k$ of the concurring curves $\sigma^i$, $\sigma^j$ $\sigma^k$ have zero sum ({\em ``degenerate $120$ degrees condition''}).
\end{itemize}
Then, we call $\SS=\bigcup_{i=1}^{n}\sigma^{i}(I^i)$ a {\em degenerate regular network}.

If one or several edges $E^i$ of $G$ are mapped under the map $\Gamma:G\to\R^2$ to a single point $p\in\R^2$, we call this
sub--network given by the union $G^\prime$ of such edges $E^i$, the {\em core} of $\SS$ at $p$. 

We call multi--points of the degenerate regular network $\SS$, the images of the vertices of multiplicity three of the graph $G$, by the map $\Gamma$.

We call end--points of the degenerate regular network $\SS$, the images of the vertices of multiplicity one of the graph $G$, by the map $\Gamma$.
\end{defn}

\begin{rem}\ \label{remreg0}
\begin{itemize}
\item A regular network is clearly a degenerate regular network.

\item This definition will be useful to deal with the limit sets 
when at some time a curve of the network collapses, 
namely, its length goes to zero (later on in Section~\ref{behavsing}).

\item Seen as a subset in $\R^2$,
a degenerate regular network $\SS$ with underlying graph $G$, 
is a $C^1$ network, not necessarily regular, that can have
end--points and/or unbounded curves. Moreover self--intersections
and curves with integer multiplicities can be present. Anyway by
the degenerate $120$ degrees condition at the last point of the
definition, at every image of a multi--point of $G$ the sum (possibly with
multiplicities) of the exterior unit tangents (the ``assigned'' ones
cancel each other in pairs) is zero. Notice that this implies that
every multiplicity--one $3$--point must satisfy the $120$ degrees
condition.
\end{itemize}
\end{rem}

\begin{lem}\label{lemreg} Let $\SS=\bigcup_{i=1}^{n}\sigma^{i}(I_i)$ be a
degenerate regular network in $\Omega$ and $X:\R^2\to\R^2$ 
be a smooth vector field with compact support. Then, there holds
$$
\int_\SS \partial_s\langle X(\sigma)\,\,\vert\tau\rangle\,d\overline{\HH}^1
=-\sum_{r=1}^l\langle X(P^r)\,\,\vert\tau(P^r)\rangle\,,
$$
where $P^1, P^2,\dots, P^l$ are the end--points of $\SS$, 
$\tau(P^1),\tau(P^2),\dots,\tau(P^l)$ are the exterior
unit tangents at $P^r$ and $\overline{\HH}^1$ is the one--dimensional
Hausdorff measure, counting multiplicities.
\end{lem}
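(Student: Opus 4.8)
The plan is to reduce the identity to the fundamental theorem of calculus applied to each single curve of $\SS$, and then to re-sum the resulting boundary contributions grouped at the vertices of the underlying graph $G$, where the degenerate $120$ degrees condition of Definition~\ref{degnet} produces exactly the cancellations one needs.

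First I would unfold the integral. Since $\overline{\HH}^1$ counts multiplicities, $\int_\SS f\,d\overline{\HH}^1=\sum_{i=1}^n\int_{I^i}f(\sigma^i(x))\,\vert\dot\sigma^i(x)\vert\,dx$; a \emph{degenerate} curve $\sigma^i$ (a constant map) gives a zero contribution, as $\vert\dot\sigma^i\vert\equiv 0$, so only the genuine $C^1$ curves matter. On each such curve, parametrizing by arclength and setting $g^i=\langle X(\sigma^i)\,\vert\,\tau^i\rangle$, the change of variable turns $\int_{\sigma^i(I^i)}\partial_s g^i\,ds$ into an integral of $\partial_x g^i$ over $I^i$, so the fundamental theorem of calculus returns the values of $g^i$ at the two ends of $I^i$, with a plus sign at the upper end and a minus at the lower end. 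At an end corresponding to an unbounded ray of the graph — the only situation in which an open or half–open interval occurs — the curve leaves every compact set, hence $X(\sigma^i)$ and $g^i$ vanish there by the compact support of $X$, and no term is produced. Rewriting each surviving endpoint value in terms of the exterior unit tangent $\tilde\tau(v,E^i)$ of $\sigma^i$ at the vertex $v$ of $G$ it reaches (the unit tangent pointing away from $v$, into the curve), one checks in both cases ($v$ at the left or at the right end of $I^i$) that the contribution of $\sigma^i$ at $v$ equals $-\langle X(\Gamma(v))\,\vert\,\tilde\tau(v,E^i)\rangle$. Summing over $i$,
$$
\int_\SS\partial_s\langle X(\sigma)\,\vert\,\tau\rangle\,d\overline{\HH}^1
=-\sum_{v\in V(G)}\ \sum_{\substack{i\,:\,v\in E^i\\ \sigma^i\ \text{non-deg.}}}\langle X(\Gamma(v))\,\vert\,\tilde\tau(v,E^i)\rangle .
$$

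Next I would restore the degenerate edges for free. If $E^i$ is a collapsed edge, $\sigma^i$ is constant equal to some point $q_i$, its two ends have the same $\Gamma$–image $q_i$, and their two assigned exterior unit tangents are opposite; hence $\sum_{v\in E^i}\langle X(\Gamma(v))\,\vert\,\tilde\tau(v,E^i)\rangle=\langle X(q_i)\,\vert\,0\rangle=0$. Adding these vanishing terms does not change the double sum, which therefore equals $\sum_{v\in V(G)}\langle X(\Gamma(v))\,\vert\,T(v)\rangle$ with $T(v):=\sum_{i\,:\,v\in E^i}\tilde\tau(v,E^i)$ the sum of \emph{all} exterior unit tangents (real or assigned) at $v$. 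By the degenerate $120$ degrees condition, $T(v)=0$ at every order–three vertex, while at an order–one vertex $v$, that is an end–point with image $P^r$, $T(v)$ is exactly the exterior unit tangent $\tau(P^r)$. Hence the double sum reduces to $\sum_{r=1}^l\langle X(P^r)\,\vert\,\tau(P^r)\rangle$, and the asserted formula follows.

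The point to be most careful about is the meaning of $\partial_s\langle X(\sigma)\,\vert\,\tau\rangle$ when the curves are merely $C^1$, since then $\tau^i$ is only continuous and $g^i$ need not be classically differentiable. This is immaterial here: the degenerate curves contribute nothing regardless of interpretation, and on the non–degenerate part one may either work directly with the $C^2$ (indeed smooth away from the core) curves that arise in all applications, or approximate each $C^1$ curve in $C^1$ by $C^2$ curves with the same endpoints and endpoint tangents, both sides of the identity being continuous under such an approximation. The remaining work is bookkeeping: keeping track of the orientation of each $I^i$, treating a curve that closes up at a vertex as contributing two ``edge–ends'' there so that the $120$ degrees relation is read correctly, and using the compact support of $X$ to guarantee that all sums and integrals are finite.
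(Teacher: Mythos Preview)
Your proof is correct and follows exactly the approach sketched in the paper: integrate curve by curve via the fundamental theorem of calculus, observe that the boundary contributions at each $3$--point cancel by the degenerate $120$ degrees condition, and that the unbounded ends give nothing by compact support of $X$, so only the end--point terms survive. You have supplied considerably more detail than the paper (orientation bookkeeping, the harmless role of degenerate edges, the regularity caveat on $\tau$), but the underlying idea is identical.
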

\begin{proof} This is a consequence of the degenerate $120$ degrees condition, 
implying that the sum of all the contributions at a multi--point given by the boundary terms after 
the integration on every single curve is zero 
(as the sum of the exterior unit tangents of the concurring curves). 
Thus the only remaining terms are due to the end--points of the degenerate regular network.
\end{proof}

\begin{defn}\label{degconv} We say that a sequence of regular networks 
$\SS_k=\bigcup_{i=1}^{n}\sigma^{i}_k(I^i_k)$ converges in $C^1\loc$ 
to a degenerate regular network $\SS=\bigcup_{j=1}^{l}\sigma^{j}_\infty(I^j_\infty)$ 
with underlying graph $G=\bigcup_{j=1}^{l}E^j$ if:
\begin{itemize}
\item letting $O^1, O^2,\dots, O^m$ be the multi--points of $\SS$, for every open set 
$\Omega\subseteq\R^2$ with compact closure in $\R^2\setminus\{O^1, O^2,\dots, O^m\}$
the networks $\SS_k$ restricted to $\Omega$, for $k$ large enough, are described by families 
of regular curves which converge up to reparametrization to the family of regular 
curves given by the restriction of $\SS$ to $\Omega$;
\item for every multi--point $O^p$ of $\SS$, image of one or more vertices of the graph $G$ 
(if a core is present), there is a sufficiently small $R>0$ and 
a graph $\widetilde{G}=\bigcup_{r=1}^{s}F^r$, 
with edges $F^r$ associated to intervals $J^r$, such that:
\begin{itemize}
\item the restriction of $\SS$ to $B_R(O^p)$ is a regular degenerate network described by a 
family of curves $\widetilde{\sigma}^{r}_\infty:J^r\to\R^2$ with (possibly ``assigned'', if 
the curve is degenerate) unit tangent $\widetilde{\tau}^r_\infty$,
\item for $k$ sufficiently large the restriction of $\SS_k$ to $B_R(O^p)$ is a regular network 
with underlying graph $\widetilde{G}$, described by the family of regular curves 
$\widetilde{\sigma}^{r}_k:J^r\to\R^2$,
\item for every $j$, possibly after reparametrization of the curves, the sequence of maps 
$J^r\ni x\mapsto\bigl(\widetilde{\sigma}^r_k(x),\widetilde{\tau}^r_k(x)\bigr)$ 
converge in $C^0\loc$ to the maps $J^r\ni x\mapsto\bigl(\widetilde{\sigma}^r_\infty(x),
\widetilde{\tau}^r_\infty(x)\bigr)$ for every $r\in\{1,2,\dots,s\}$.
\end{itemize}
\end{itemize}
We will say that $\SS_k$ converges to $\SS$ in $C^1\loc\cap E$, where $E$ is some function 
space, if the above curves also converge in the topology of $E$.
\end{defn}

\begin{rem}\ \label{remreg}
\begin{itemize}
\item If the limit regular network $\SS$ is non--degenerate, the above convergence of a sequence of regular networks $\SS_k$ to $\SS$ is simply the $C^1\loc$--convergence of the curves of $\SS_k$ to the relative ones of $\SS$. Anyway, in general, if $\SS$ is a degenerate regular network $\SS$, the above definition of $C^1\loc$--convergence for a sequence of regular networks $\SS_k$ to $\SS$, is clearly stronger than that, by the last request at the second point. Asking only the $C^1\loc$--convergence of the curves of a sequence of regular networks $\SS_k$ would not guarantee that the limit degenerate network $\SS$ is {\em regular}, as the last point in Definition~\ref{degnet} could possibly not being satisfied by $\SS$.
\item It is easy to see that if a sequence of regular networks $\SS_k$ converges in $C^1\loc$ 
to a degenerate regular network $\SS$, the associated one--dimensional
Hausdorff measures, counting multiplicities, 
weakly--converge (as measures) to the one--dimensional
Hausdorff measure associated with the set $\SS$ seen as a subset of $\R^2$.
\item If a degenerate regular network $\SS$ is the limit of a sequence of regular networks as 
above, being these embedded, it clearly can have only {\em tangent} self--intersections but not 
a ``crossing'' of two of its curves.
\item If $\SS$ is the limit of a sequence of ``rescalings'' of the networks of a curvature flow 
$\SS_t$ with fixed end--points, it can have only one end--point at the origin of $\R^2$ 
and only if the center of the rescalings coincides with an end--point of $\SS_t$, otherwise, it 
has no end--points at all (they go to $\infty$ in the rescaling).
\end{itemize}
\end{rem}

\subsection{Self--similarly shrinking networks}\label{sshnet}
\begin{defn}\label{shrinkers} 
A regular $C^2$ open network $\SS=\bigcup_{i=1}^n\sigma^i(I_i)$ 
is called a {\em regular shrinker} if at every point $x\in\SS$ there holds
\begin{equation}\label{shrinkeq}
\underline{k} + x^\perp=0. 
\end{equation}
This relation is called the {\em shrinkers equation}.
\end{defn}

The name comes from the fact that if $\SS=\bigcup_{i=1}^n\sigma^i(I_i)$ is a shrinker, 
then the evolution given by $\SS_t=\bigcup_{i=1}^n\gamma^i(I_i,t)$ 
where $\gamma^i(x,t)=\sqrt{-2t}\, \sigma^i(x)$ is a self--similarly shrinking curvature flow
in the time interval $(-\infty,0)$ with $\SS=\SS_{-1/2}$.
Viceversa, if $\SS_t$ is a self--similarly shrinking curvature flow
 in the maximal time interval $(-\infty,0)$, then $\SS_{-1/2}$ is a shrinker.

\begin{figure}[H]
\begin{center}
\begin{tikzpicture}[scale=0.6]
\draw[shift={(-7.5,0)}]
(-2,-2) to [out=45, in=-135,looseness=1] (2,2);
\draw[dashed,shift={(-7.5,0)}]
(-2.5,-2.5) to [out=45, in=-135,looseness=1] (-2,-2)
(2,2)to [out=45, in=-135,looseness=1](2.6,2.6);
\draw
(0,0) to [out=90, in=-90,looseness=1] (0,2)
(0,0) to [out=210, in=30,looseness=1] (-1.73,-1)
(0,0) to [out=-30, in=150,looseness=1](1.73,-1);
\draw[dashed]
(0,2) to [out=90, in=-90,looseness=1] (0,3)
(-1.73,-1) to [out=210, in=30,looseness=1] (-2.59,-1.5)
(1.73,-1) to [out=-30, in=150,looseness=1](2.59,-1.5);
\draw[color=black,scale=1,domain=-3.141: 3.141,
smooth,variable=\t,shift={(7.5,0)},rotate=0]plot({2.*sin(\t r)},
{2.*cos(\t r)});
\fill(0,0) circle (2pt);
\fill(-7.5,0) circle (2pt);
\fill(7.5,0) circle (2pt);
\path[font=\small]
(-7.5,.2) node[left]{$O$}
(7.65,.2) node[left]{$O$}
(.1,.2) node[left]{$O$};
\end{tikzpicture}
\end{center}
\begin{caption}{Examples of regular shrinkers with zero or one triple junction:
a line through the origin, an unbounded triod composed of three halflines from the origin meeting at $120$ degrees, that we call {\em standard triod} 
and the unit circle $\SS^1$.}
\end{caption}
\end{figure}
\begin{figure}[H]
\begin{center}
\begin{tikzpicture}[scale=0.4]
\draw[color=black]
(-6.3,0)to[out=0,in=180,looseness=1](-1.535,0)
(-1.535,0)to[out=60,in=180,looseness=1] (3.7,3)
(3.7,3)to[out=0,in=90,looseness=1] (6.93,0)
(-1.535,0)to[out=-60,in=180,looseness=1] (3.7,-3)
(3.7,-3)to[out=0,in=-90,looseness=1] (6.93,0);
\draw[color=black,dashed](-8,0)to[out=0,in=180,looseness=1](-6.5,0);
\fill(1,0) circle (3pt);
\path[font=\large]
 (.3,-.35) node[above]{$O$};
\end{tikzpicture}
\end{center}
\begin{caption}{Another example of a regular shrinker with one triple junction: a {\em Brakke spoon}.\label{brakspfig}}
\end{caption}
\end{figure}
\noindent
In these figures, there are drawn {\em all} the regular shrinkers 
with at most one triple junction (see~\cite{haettenschweiler}).
In particular by the work of Abresch and Langer~\cite{ablang1} 
it follows that the only regular shrinkers {\em without} triple junctions (simply curves) 
are the lines through the origin and the unit circle. In the case of
complete, embedded, regular shrinker
with two triple junctions it is not difficult to show
that there are only two possible topological shapes: the ``lens/fish'' shape and the
Greek ``Theta'' letter (or ``double cell''),
as depicted in the next figure (see also~\cite{balhausman2}).
\begin{figure}[H]
\begin{center}
\begin{tikzpicture}[scale=1]
\draw[shift={(0,0)}] 
(-3.73,0)
to[out=50,in=180,looseness=1] (-2.8,0) 
to[out=60,in=150,looseness=1.5] (-1.5,1) 
(-2.8,0)
to[out=-60,in=180,looseness=0.9] (-1.25,-0.75)
(-1.5,1)
to[out=-30,in=90,looseness=0.9] (-1,0)
to[out=-90,in=60,looseness=0.9] (-1.25,-0.75)
to[out=-60,in=180,looseness=0.9](-0.3,-1.3);
\draw[dashed,shift={(0,0)}] 
(-3.73,0)to[out=50,in=180,looseness=1] (-4,-.6);
\draw[dashed,shift={(0,0)}] 
(-0.3,-1.3)to[out=0,in=150,looseness=.8] (0.53,-.8);
\path[font=\small,shift={(0,0)}]
(-2.4,0.3) node[below] {$O^1$}
(-1.2,-0.7)node[right]{$O^2$}
(-1.5,-1)[left] node{$\gamma^2$}
(-3.2,.4) node[left] {$\gamma^1$}
(-0.5,0.65)[left] node{$\gamma^4$}
(-0.35,-1.30)[below] node{$\gamma^3$};
\draw[shift={(7,0)}] 
(-1.73,-1.8) 
to[out=180,in=180,looseness=1] (-2.8,0) 
to[out=60,in=150,looseness=1.5] (-1.5,1) 
(-2.8,0)
to[out=-60,in=180,looseness=0.9] (-1.25,-0.75)
(-1.5,1)
to[out=-30,in=90,looseness=0.9] (-1,0)
to[out=-90,in=60,looseness=0.9] (-1.25,-0.75)
to[out=-60,in=0,looseness=0.9](-1.73,-1.8);
\path[font=\small, shift={(7,0)}] 
(-1.2,-0.8)node[right]{$O^2$}
 (-1.5,-0.3)[left] node{$\gamma^2$}
 (-0.6,.9)[left] node{$\gamma^1$}
 (-0.6,-1.45)[left] node{$\gamma^3$}
 (-3,0.5) node[below] {$O^1$}; 
\end{tikzpicture}
\end{center}
\begin{caption}{A lens/fish--shaped and a $\Theta$--shaped network.\label{fishshape}}
\end{caption}
\end{figure}
It is well known that there exist unique (up to rotations)
lens--shaped or fish--shaped, embedded, regular shrinkers that are
symmetric with respect to a line through the origin of $\R^2$
(see~\cite{chenguo,schnurerlens}). 
Instead, there are no regular $\Theta$--shaped shrinkers (see~\cite{balhausman}).

\begin{figure}[H]
\begin{center}
\begin{tikzpicture}[scale=0.3]
\draw[color=black]
(-3.035,0)to[out=60,in=180,looseness=1] (2.2,2.8)
(2.2,2.8)to[out=0,in=120,looseness=1] (7.435,0)
(2.2,-2.7)to[out=0,in=-120,looseness=1] (7.435,0)
(-3.035,0)to[out=-60,in=180,looseness=1] (2.2,-2.7);
\draw[color=black]
(-7,0)to[out=0,in=180,looseness=1](-3.035,0)
(7.435,0)to[out=0,in=180,looseness=1](11.4,0);
\draw[color=black,dashed]
(-9,0)to[out=0,in=180,looseness=1](-7,0)
(11.4,0)to[out=0,in=180,looseness=1](13.4,0);
\fill(2.2,0) circle (3pt);
\path[font=\small]
(1.5,-.35) node[above]{$O$};
\draw[color=black,scale=4,shift={(7,0)}]
(-0.47,0)to[out=20,in=180,looseness=1](1.5,0.65)
(1.5,0.65)to[out=0,in=90,looseness=1] (2.37,0)
(-0.47,0)to[out=-20,in=180,looseness=1](1.5,-0.65)
(1.5,-0.65)to[out=0,in=-90,looseness=1] (2.37,0);
\draw[white, very thick,scale=4,shift={(7,0)}]
(-0.47,0)--(-.150,-0.13)
(-0.47,0)--(-.150,0.13);
\draw[color=black,scale=4,shift={(7,0)}]
(-.150,0.13)to[out=-101,in=90,looseness=1](-.18,0)
(-.18,0)to[out=-90,in=101,looseness=1](-.150,-0.13);
\draw[black,scale=4,shift={(7,0)}]
(-.150,0.13)--(-1.13,0.98);
\draw[black,scale=4,shift={(7,0)}]
(-.150,-0.13)--(-1.13,-0.98);
\draw[black, dashed,scale=4,shift={(7,0)}]
(-1.13,0.98)--(-1.50,1.31);
\draw[black, dashed,scale=4,shift={(7,0)}]
(-1.13,-0.98)--(-1.50,-1.31);
\fill(28.05,0) circle (3pt);
\path[font=\small]
(29,-.6) node[above]{$O$};
\end{tikzpicture}
\end{center}
\begin{caption}{The {\em shrinking lens} and the {\em shrinking fish} (up to rotations).\label{fishfig}}
\end{caption}
\end{figure}
\noindent A ``gallery'' with these and other more complicated regular shrinkers 
can be found in the Appendix.

\begin{defn}[Degenerate shrinkers]\label{dshrinkers} 
We call a degenerate regular network $\SS=\bigcup_{i=1}^n\sigma^i(I_i)$ a {\em degenerate regular shrinker} 
if at every point $x\in\SS$ there holds
$$
\underline{k} + x^\perp=0\,.
$$
\end{defn}
Clearly, a regular shrinker is a degenerate regular shrinker and, as before, the maps $\gamma^i(x,t)=\sqrt{-2t}\,\sigma^i(x)$ 
describe the self--similarly shrinking evolution of a degenerate regular network $\SS_t$ 
in the time interval $(-\infty,0)$, with $\SS=\SS_{-1/2}$.

\begin{defn} 
A {\em standard cross} is a degenerate regular network given the union of two straight lines
intersecting at the origin of $\R^2$ and forming angles of $120$ and $60$ degrees, with an underlying graph $G$ as in the following figure. Its core consists of the degenerate curve mapping the ``central'' curve of $G$ constantly to the origin.
The ``assigned'' tangent vector to the degenerate curve is one of the two unit vectors that generates the 
bisector line of the $120$ degrees angles.
\end{defn}
\begin{figure}[H]
\begin{center}
\begin{tikzpicture}[scale=0.40]
\draw[color=black]
(0,0)to[out=120,in=-60,looseness=1] (-2,3.46)
(0,0)to[out=-120,in=60,looseness=1] (-2,-3.46)
(0,0)to[out=60,in=-120,looseness=1] (2,3.46)
(0,0)to[out=-60,in=120,looseness=1] (2,-3.46);
\draw[color=black,dashed]
(-2,3.46)to[out=120,in=-60,looseness=1] (-3,5.19)
(-2,-3.46)to[out=-120,in=60,looseness=1] (-3,-5.19)
(2,3.46)to[out=60,in=-120,looseness=1] (3,5.19)
(2,-3.46)to[out=-60,in=120,looseness=1] (3,-5.19);
\fill(0,0) circle (4pt);
\path[font=\large]
(-0,-4) node[below] {$\mathbb{C}$}
(1,-.35) node[above]{$O$};
\end{tikzpicture}\qquad\qquad\qquad
\begin{tikzpicture}[scale=0.40]
\draw[color=black]
(-1,0)to[out=0,in=180,looseness=1](1,0)
(-1,0)to[out=120,in=-60,looseness=1] (-3,3.46)
(-1,0)to[out=-120,in=60,looseness=1] (-3,-3.46)
(1,0)to[out=60,in=-120,looseness=1] (3,3.46)
(1,0)to[out=-60,in=120,looseness=1] (3,-3.46);
\draw[color=black,dashed]
(-3,3.46)to[out=120,in=-60,looseness=1] (-4,5.19)
(-3,-3.46)to[out=-120,in=60,looseness=1] (-4,-5.19)
(3,3.46)to[out=60,in=-120,looseness=1] (4,5.19)
(3,-3.46)to[out=-60,in=120,looseness=1] (4,-5.19);
\fill(1,0) circle (4pt);
\fill(-1,0) circle (4pt);
\path[font=]
(-0,-4) node[below] {$G$};
\end{tikzpicture}
\end{center}
\begin{caption}{A {\em standard cross} with angles of $60/120$ 
degrees and its underlying graph $G$.\label{crossfig}}
\end{caption}
\end{figure}
\begin{rem}\label{abla}
As every non--degenerate curve of a degenerate regular shrinker (or
simply of a regular shrinker) satisfies the equation $\underline{k} +
x^\perp=0$, it must be a piece of a line through the origin or of the
so called {\em Abresch--Langer curves}. Their classification results
in~\cite{ablang1} imply that any of these non straight pieces are
compact, hence any unbounded curve of a shrinker must be a line or an
halfline ``pointing'' towards the origin. Moreover, it also follows that if a
curve contains the origin, then it is a straight line through the
origin (if it is in the interior) or a halfline from the origin (if it
is an end--point of the curve).
\end{rem}

For a degenerate regular shrinker $\SS$, in analogy with Definition~\ref{Gaussiandensities},
we denote with
$$
\Theta_\SS=\Theta_{0,0}(-1/2)=\int_\SS\rho_{0,0}(\cdot,-1/2)\, d\overline{s}
$$
its {\em Gaussian density} (here $d\overline{s}$ denotes the integration with respect to the canonical measure on $\SS$, counting multiplicities). 
Notice that the integral $\Theta_{0,0}(t)=\int_{\SS_t}\rho_{0,0}(\cdot,t)\,d\overline{s}$ is constant for
$t\in(-\infty,0)$, hence equal to $\widehat{\Theta}(0)$ for the
self--similarly shrinking curvature flow $\SS_t=\sqrt{-2t}\,\SS$ generated by $\SS$, as above.

The Gaussian density of a straight line through the origin is 1, 
of a halfline from the origin is 1/2, of a standard triod $\TTT$ is $3/2$, of a standard cross ${\mathbb{C}}$ is $2$. 
The Gaussian density of the unit circle $\SS^1$ can be easily computed to be
\begin{equation}\label{thetas1}
\Theta_{\SS^1}=\sqrt{\frac{2\pi}{e}}\approx 1,\!5203\,.
\end{equation}
Notice that $\Theta_\TTT=3/2<\Theta_{\SS^1}<2$.\\
The Gaussian densities of several other regular shrinkers can be found in the Appendix.

We have the following two classification results for degenerate regular shrinkers, see Lemma~8.3 and~8.4 in~\cite{Ilnevsch}.

\begin{lem}\label{lemmatree}
Let $\SS=\bigcup_{i=1}^n\sigma^i(I_i)$ be a degenerate regular shrinker which is a $C^1\loc\cap W^{2,2}\loc$--limit of regular networks $\SS_i$ homeomorphic to the underlying graph $G$ of\, $\SS$ (as in Definition~\ref{degnet}) and assume that $G$ is a tree without end--points. 
Then $\SS$ consists of halflines from the origin, with possibly a core at the origin.\\
Moreover, if $G$ is connected, without end--points and $\SS$ is a network with unit multiplicity, this latter can only be
\begin{itemize}
\item a line (no cores),
\item a standard triod (no cores),
\item two lines intersecting at the origin forming angles of $120/60$
 degrees (the core is a collapsed segment in the origin with ``assigned'' unit tangent vector bisecting the angles of $120$ degrees), that is, a standard cross (see Figure~\ref{crossfig}).
\end{itemize}
\end{lem}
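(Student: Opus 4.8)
The plan is to exploit the shrinker equation $\underline{k} + x^\perp = 0$ together with the Abresch--Langer classification (as summarized in Remark~\ref{abla}) and the structural constraints coming from the hypothesis that $\SS$ arises as a $C^1_{\mathrm{loc}}$--limit of regular networks homeomorphic to the tree $G$. The first observation is that every \emph{non--degenerate} curve $\sigma^i$ of $\SS$ satisfies $\underline{k} + x^\perp = 0$, so by Remark~\ref{abla} it is either contained in a line through the origin, or is a compact piece of an Abresch--Langer curve. Since $G$ is a tree, it is non--compact (it has either unbounded edges or, being finite with no cycles and \dots\ ---~here one must be slightly careful: a tree without end--points whose vertices all have order one or three must have \emph{unbounded} edges, otherwise a finite tree always has leaves of order one). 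So at least one edge is unbounded, and by Remark~\ref{abla} any unbounded non--degenerate curve must be a line or a halfline pointing toward the origin.

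The core of the argument is then to rule out the presence of any \emph{bounded} non--degenerate curve (other than, trivially, pieces of lines through the origin). First I would argue that the \emph{core} $G'$ of $\SS$ at the origin --- the union of edges collapsed to a point --- must be all of the origin's ``local structure'': indeed, suppose a non--degenerate curve $\sigma^i$ does not lie on a line through the origin. By Remark~\ref{abla} it is a compact arc of an Abresch--Langer curve, and in particular it does not contain the origin (a curve through the origin satisfying the shrinker equation is forced to be straight). Now I would use the tree structure: removing the (closed) arc $\sigma^i$ from $G$ disconnects $G$ into two subtrees, each of which, being a tree without end--points in its own right after re--examining orders, again must contain an unbounded edge; tracing back, the endpoints of $\sigma^i$ are order--three vertices where the degenerate $120$ degrees condition must hold. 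The key quantitative input is that along any Abresch--Langer arc the function $|x|^2$ --- equivalently the distance to the origin --- is monotone except at finitely many critical points, and the $120$ degrees condition at a triple junction, combined with the fact that the three concurring tangent directions sum to zero, forces an incompatibility with the ``all three curves Abresch--Langer or radial'' picture unless the junction is at the origin. I would make this precise by the standard argument: consider the function $f(x) = |x|^2/2$ restricted to $\SS$; then $\partial_s f = \langle x, \tau\rangle$ and $\partial_s^2 f = 1 + \langle x, \underline{k}\rangle = 1 - \langle x, x^\perp\rangle^{\,}$\dots\ wait --- more usefully, $\partial_s^2 f = 1 + k\langle x,\nu\rangle = 1 - |x^\perp|^2 \le 1$, which already shows $f$ is (semi)concave along each curve, so each curve is a graph-like arc over which $|x|$ has at most interior maxima and no interior minima; combined with the triple--junction condition this pins down the global shape.

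The cleanest route, and the one I would actually write, is to invoke the first classification assertion directly: \textbf{Step 1}, show every edge that is not collapsed is a halfline from the origin. For this, take a non--collapsed edge $E^i$; its image $\sigma^i$ satisfies the shrinker equation and is either part of a line through the origin or a compact Abresch--Langer arc. If it is the latter, its two endpoints are triple points of $G$ (since $G$ has no end--points and $\sigma^i$ is not a closed loop, as $G$ is a tree); at each such triple point the degenerate $120$ degrees condition holds. I would then derive a contradiction by a maximum-principle / monotonicity argument on $|x|^2$ along the maximal chain of non--collapsed edges emanating from $\sigma^i$ toward infinity: such a chain must reach infinity (tree, no end--points), the terminal edge is a halfline toward the origin along which $|x|$ decreases toward the origin, and propagating the angle conditions back forces every edge in the chain, including $\sigma^i$, to actually pass through the origin and hence be straight --- contradicting that it was a proper Abresch--Langer arc. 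Thus every non--collapsed edge lies on a line through the origin, so it is a halfline (or a full line if it contains the origin in its interior), and all collapsed edges form the core at the origin. \textbf{Step 2}, the connected unit-multiplicity case: with $G$ connected and $\SS$ of multiplicity one, count the halflines. Each vertex has order one or three; there are no order--one vertices. If there is no core, the network is a finite union of halflines/lines meeting only at the origin with all multi--points having order three and the $120$ condition --- this forces either a single line (two opposite halflines, the ``vertex'' being a smooth point, so really no vertex) or a standard triod (three halflines at $120^\circ$). If there is a core, it is a collapsed segment at the origin with two triple vertices (more collapsed edges would, by the tree condition, again require leaves); the ``assigned'' unit tangents at the ends of the collapsed segment must bisect so that the four genuine halflines emanating split into two $120^\circ$ pairs --- i.e.\ two lines through the origin at $120^\circ/60^\circ$, the standard cross of Figure~\ref{crossfig}. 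Enumerating these possibilities gives exactly the three listed cases.

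The main obstacle I anticipate is making \textbf{Step 1} airtight, specifically the propagation-of-the-$120^\circ$-condition argument that excludes proper Abresch--Langer arcs inside a tree limit: one has to use both the local rigidity of the shrinker equation (Remark~\ref{abla}) \emph{and} the global combinatorics of the tree \emph{and} the fact that $\SS$ is a genuine $C^1_{\mathrm{loc}}$ limit of \emph{embedded} regular networks (so, by Remark~\ref{remreg}, only tangential self--intersections occur and the limiting tangent fields are the $C^0_{\mathrm{loc}}$ limits of the approximating ones, which legitimizes passing the $120^\circ$ condition to the limit at collapsed junctions). I would lean on the already--stated Lemma~\ref{lemreg} (the integration-by-parts identity with the degenerate $120^\circ$ condition) applied with well--chosen vector fields $X$ to convert the local angle conditions into a global statement forcing the radial structure. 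Since this lemma and Remark~\ref{abla} are available, the proof reduces to a finite case analysis once Step 1 is in place, which is why I expect Step 1 to be where essentially all the work lies.
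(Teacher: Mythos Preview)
Your Step~1 has a real gap. The monotonicity/propagation argument on $|x|^2$ is not carried out, and it is not clear how ``propagating the angle conditions back forces every edge in the chain to pass through the origin'' would actually work; the inequality $\partial_s^2 f = 1 - |x^\perp|^2$ changes sign and does not give a clean no-interior-minima statement. The paper's mechanism is entirely different and is the missing idea: at any multi--point (or core) $P\neq 0$, the degenerate $120^\circ$ condition together with the embeddedness of the approximating networks forces at least \emph{two} of the exiting non--degenerate curves to be non--straight Abresch--Langer arcs (since at most one tangent direction at $P$ can point along the line $\overline{OP}$, after a short case analysis on the core structure using that consecutive $60^\circ$ turns in the same direction would force a self--intersection in the approximants). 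One then \emph{follows} such a non--straight arc to its other end $Q$, where again at least one further non--straight arc is available, and continues; the corresponding walk in $G$ must eventually revisit a vertex, producing a closed loop in $G$ and contradicting that $G$ is a tree. This loop--finding argument is what actually kills the compact Abresch--Langer pieces, and nothing in your outline approximates it.

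Your Step~2 also has a gap. The claim ``more collapsed edges would, by the tree condition, again require leaves'' is false: a core that is a path on $N$ vertices (with $N-1$ collapsed edges) is a perfectly good subtree of $G$ for any $N\geq 2$, with the $N+2$ halflines attached at its vertices. The paper rules out $N\geq 3$ by a second use of the embeddedness of the approximating networks: along the longest simple path in the core, the assigned tangent cannot turn $60^\circ$ twice in the same direction (else the approximating embedded tree would self--intersect), which restricts $N\in\{2,3,4\}$; then $N=3$ and $N=4$ are excluded because they force a repeated halfline direction, contradicting unit multiplicity. Without this argument your enumeration is incomplete.
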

\begin{proof} 
We assume that $G$ is connected, otherwise, we argue on every single connected component. By the hypothesis of approximation with regular (embedded) networks, $G$ is a planar graph.

As we said in Remark~\ref{abla}, if a non--degenerate 
curve contains the origin, then it is a piece of a 
straight line. Otherwise, it is contained in a
compact subset of $\R^2$ and has a constant winding direction with
respect to the origin. Aside from the circle, any other solution has a
countable, non--vanishing number of self--intersections (all these facts were shown in~\cite{ablang1}).

We underline that the length of some curves of $\SS_i$ can go to zero in the limit, then any {\em core} of the limit network is the union of some of these vanishing curves. 
Suppose that the network $\SS$ has a core at some point $P\in\SS$, then, at least an edge of $G$ is mapped into $P$ and the length of
at least one curve, let us say $\gamma_i$, goes to zero in the limit. Being the graph $G$ a tree, if $N\geqslant 2$ triple junctions are contained in the core, then $N+2$
curves (counted with multiplicity) with strictly positive length concur at $P$.
This fact can be easily proved by induction: if $N=2$, then two triple junctions
are present in the core and hence the length of the curve connecting the two junctions has gone to zero in the limit,
but the other four curves emanating from the two different junctions have still positive lengths. We suppose now that the statement holds for $N=\widetilde{N}$ and we show it for $N=\widetilde{N}+1$. With respect to the situation in which $\widetilde{N}$ triple junctions are in the core, we 
add an extra triple junction $\mathcal{O}$ to the core, but
to do so one of the original $\widetilde{N}+2$ curves emanating from the core
has to go to zero. However, the other two concurring curves to $\mathcal{O}$ have length bounded from below away from zero and now concur to $P$, thus there are 
$(\widetilde{N}+2)-1+2=\widetilde{N}+3$ curves with strictly positive length concurring at $P$
and the claim is proved.

We can suppose (up to reparametrization) that for every $i\in\mathbb{N}$, any curve $\gamma_i:[0,1]\to\mathbb{R}^2$ of $\SS_i$ is parametrized with constant modulus of its velocity, equal to its length.
Then we get
\begin{equation*}
\lim_{i\to\infty}\,\sup_{x,y\in [0,1]} \left\lvert 
{\tau_i(x)-\tau_i(y)}\right\rvert=0\,,
\end{equation*}
indeed, given $x,y\in[0,1]$, there holds
$$
 \left\lvert 
{\tau_i(x)-\tau_i(y)}\right\rvert= \bigg| \int_{s(x)}^{s(y)} \partial_s\tau_i\,{d}s\,\bigg|\leqslant \int_{\gamma_i} |\underline{k}_i| \, {d}s\leqslant \left( \int_{\gamma_i} |{k}_i|^2 \, {d}s \right)^{1/2} L(\gamma_i)^{1/2}
$$
and we obtain the conclusion, by passing to the limit.
Hence, the vanishing curves of $\SS_i$ are straighter and straighter, 
as $i\to\infty$ and for $i\in\NN$ large enough, so we can assume in the next argument that the unit tangent vectors are constant on each of such curves.\\
We describe the structure of the core. Let $i\in\mathbb{N}$ be sufficiently large and 
consider the longest simple ``path'' of curves of $\SS_i$ that go to the core 
of $\SS$ at $P$. We then orient the path and follow its edges. The ``assigned'' unit tangent vectors (possibly changed of sign on some edges in order to coincide with the orientation of the path) cannot ``turn'' of an angle of $60$ degrees in the same ``direction'' for two consecutive times along the path, otherwise, since $G$ is a tree with only triple junctions, without external vertices and with non--compact branches, the approximating networks must have a self--intersection (see Figure~\ref{fig6} below). 
\begin{figure}[H]
\begin{center}
\begin{tikzpicture}[scale=0.3,rotate=90]
\draw[color=black]
(-1,0)to[out=0,in=180,looseness=1](1,0)
(3,-3.46)to[out=0,in=180,looseness=1](5,-3.46)
(-3,-3.46)to[out=0,in=180,looseness=1](-5,-3.46)
(-1,0)to[out=120,in=-60,looseness=1] (-3,3.46)
(-1,0)to[out=-120,in=60,looseness=1] (-3,-3.46)
(1,0)to[out=60,in=-120,looseness=1] (3,3.46)
(1,0)to[out=-60,in=120,looseness=1] (3,-3.46)
(-3,-3.46)to[out=-60,in=120,looseness=1] (-0.12,-8.45)
(0.1,-8.823)to[out=-60,in=120,looseness=1] (1,-10.38)
(3,-3.46)to[out=-120,in=60,looseness=1] (-1,-10.38);
\draw[color=black,dashed]
(5,-3.46)to[out=0,in=180,looseness=1](6.4,-3.46)
(-5,-3.46)to[out=0,in=180,looseness=1](-6.2,-3.46)
(-3,3.46)to[out=120,in=-60,looseness=1] (-4,5.19)
(3,3.46)to[out=60,in=-120,looseness=1] (4,5.19);
\draw[very thick,shift={(1,0)},scale=2,color=black,rotate=210]
(0,0)to[out=90,in=-90,looseness=1](0,0.75)
(0,0.75)to[out=-45,in=135,looseness=1](0.15,0.6)
(0,0.75)to[out=-135,in=45,looseness=1](-0.15,0.6);
\draw[very thick,shift={(-1,0)},scale=2,color=black,rotate=-90]
(0,0)to[out=90,in=-90,looseness=1](0,0.75)
(0,0.75)to[out=-45,in=135,looseness=1](0.15,0.6)
(0,0.75)to[out=-135,in=45,looseness=1](-0.15,0.6);
\draw[very thick, shift={(-3,-3.46)},scale=2,color=black,rotate=330]
(0,0)to[out=90,in=-90,looseness=1](0,0.75)
(0,0.75)to[out=-45,in=135,looseness=1](0.15,0.6)
(0,0.75)to[out=-135,in=45,looseness=1](-0.15,0.6);
\fill(1,0) circle (4pt);
\fill(-1,0) circle (4pt);
\fill(3,-3.46)circle (4pt);
\fill(-3,-3.46)circle (4pt);
\path[font=]
(-5.4,-8) node[below] {$G$};
\end{tikzpicture}\qquad\qquad
\begin{tikzpicture}[scale=0.3,rotate=90]
\draw[color=black]
(0,0)to[out=0,in=180,looseness=1](3,0)
(0,0)to[out=0,in=180,looseness=1](-3,0)
(0,0)to[out=120,in=-60,looseness=1] (-2,3.46)
(0,0)to[out=60,in=-120,looseness=1] (2,3.46)
(0,0)to[out=-60,in=120,looseness=1](2,-3.46)
(0,0)to[out=-120,in=60,looseness=1] (-2,-3.46);
\draw[color=black,dashed]
(3,0)to[out=0,in=180,looseness=1](4.5,0)
(-3,0)to[out=0,in=180,looseness=1](-4.5,0)
(-2,3.46)to[out=120,in=-60,looseness=1] (-3,5.19)
(2,3.46)to[out=60,in=-120,looseness=1] (3,5.19)
(-2,-3.46)to[out=-120,in=60,looseness=1] (-3,-5.19)
(2,-3.46)to[out=-60,in=120,looseness=1] (3,-5.19);
\fill(0,0) circle (4pt);
\path[font=]
(-5.4,-8) node[below] {$\mathbb{S}$};
\end{tikzpicture}\qquad
\begin{tikzpicture}[scale=0.3,rotate=90]
\draw[color=black]
(-1,0)to[out=0,in=180,looseness=1](1,0)
(-1,0)to[out=-120,in=60,looseness=1] (-3,-3.46)
(1,0)to[out=-60,in=120,looseness=1] (3,-3.46);
\draw[very thick,shift={(1,0)},scale=2,color=black,rotate=210]
(0,0)to[out=90,in=-90,looseness=1](0,0.75)
(0,0.75)to[out=-45,in=135,looseness=1](0.15,0.6)
(0,0.75)to[out=-135,in=45,looseness=1](-0.15,0.6);
\draw[very thick,shift={(-1,0)},scale=2,color=black,rotate=-90]
(0,0)to[out=90,in=-90,looseness=1](0,0.75)
(0,0.75)to[out=-45,in=135,looseness=1](0.15,0.6)
(0,0.75)to[out=-135,in=45,looseness=1](-0.15,0.6);
\draw[very thick, shift={(-3,-3.46)},scale=2,color=black,rotate=330]
(0,0)to[out=90,in=-90,looseness=1](0,0.75)
(0,0.75)to[out=-45,in=135,looseness=1](0.15,0.6)
(0,0.75)to[out=-135,in=45,looseness=1](-0.15,0.6);
\path[font=\footnotesize]
(-4,-5) node[below] {The core of $\mathbb{S}$};
\path[white, font=]
(-5.4,-8) node[below] {$\mathbb{S}$};
\end{tikzpicture}\qquad
\end{center}
\begin{caption}{If the assigned unit tangent vector ``turns'' of an angle of $60$ degrees in the same direction for two consecutive times, $G$ has self--intersections.
An example of such a pair $(G,\mathbb{S})$.\label{fig6}}
\end{caption}
\end{figure}
Hence, if the assigned unit tangent vector ``turns'' of an angle of $60$ degrees then it must ``turn'' back, in passing from an edge to another along such longest path. This means that at the initial/final point of such path, either the two assigned unit tangent vectors are the same (when the number of edges is odd) or they differ of $60$ degrees (when the number of edges is even). By a simple check, we can then see that, in the first case the four curves images of the four non--collapsed edges exiting from such initial/final points of the path, have four different exterior unit tangent vectors at $P$ (opposite in pairs), in the second case, they have three exterior unit tangent vectors at $P$ which are non--proportional each other.
\begin{figure}[H]
\begin{center}
\begin{tikzpicture}[scale=0.30]
\draw[color=black]
(2,1.73)to[out=0,in=180,looseness=1](4,1.73)
(2,-1.73)to[out=0,in=180,looseness=1](4,-1.73)
(2,1.73)to[out=120,in=-60,looseness=1] (1,3.46)
(2,-1.73)to[out=-120,in=60,looseness=1] (1,-3.46)
(1,0)to[out=60,in=-120,looseness=1] (2,1.73)
(1,0)to[out=-60,in=120,looseness=1] (2,-1.73);
\draw[color=black,dashed]
(4,1.73)to[out=0,in=180,looseness=1](5,1.73)
(4,-1.73)to[out=0,in=180,looseness=1](5,-1.73)
(1,3.46)to[out=120,in=-60,looseness=1] (0,5.19)
(1,-3.46)to[out=-120,in=60,looseness=1] (0,-5.19);
\fill(1,0) circle (3pt);
\fill(2,1.73) circle (3pt);
\fill(2,-1.73) circle (3pt);
\path[font=](3,-4) node[below] {$G$};
\draw[very thick, shift={(1,0)},scale=1,color=black,rotate=330]
(0,0)to[out=90,in=-90,looseness=1](0,0.75)
(0,0.75)to[out=-45,in=135,looseness=1](0.15,0.6)
(0,0.75)to[out=-135,in=45,looseness=1](-0.15,0.6);
\draw[very thick, shift={(2,-1.73)},scale=1,color=black,rotate=-330]
(0,0)to[out=90,in=-90,looseness=1](0,0.75)
(0,0.75)to[out=-45,in=135,looseness=1](0.15,0.6)
(0,0.75)to[out=-135,in=45,looseness=1](-0.15,0.6);
\end{tikzpicture}\qquad\quad
\begin{tikzpicture}[scale=0.30]
\draw[color=black]
(2,1.73)to[out=0,in=180,looseness=1](4,1.73)
(2,1.73)to[out=120,in=-60,looseness=1] (1,3.46)
(2,1.73)to[out=-120,in=60,looseness=1] (1,0);
\draw[color=black,dashed]
(4,1.73)to[out=0,in=180,looseness=1](5,1.73)
(1,3.46)to[out=120,in=-60,looseness=1] (0,5.19)
(1,0)to[out=-120,in=60,looseness=1] (0,-1.73);
\fill(2,1.73) circle (3pt);
\path[font=\footnotesize](3,1.53) node[above] {$2$};
\path[font=\footnotesize](0.8,3.46) node[below] {$1$};
\path[font=\footnotesize](0.8,0) node[above] {$1$};
\path[font=](3,-2.27) node[below] {$\mathbb{S}$};
\end{tikzpicture}\qquad\quad
\begin{tikzpicture}[scale=0.30]
\draw[color=black]
(2,1.73)to[out=0,in=180,looseness=1](4,1.73)
(2,1.73)to[out=120,in=-60,looseness=1] (1,3.46)
(2,-1.73)to[out=-120,in=60,looseness=1] (1,-3.46)
(1,0)to[out=60,in=-120,looseness=1] (2,1.73)
(1,0)to[out=-60,in=120,looseness=1] (2,-1.73);
\draw[color=black,dashed]
(4,1.73)to[out=0,in=180,looseness=1](5,1.73)
(1,3.46)to[out=120,in=-60,looseness=1] (0,5.19);
\draw
(1,-3.46)--(-1,-3.46)
(1,-3.46)--(2,-5.19);
\draw[dashed]
(2,-5.19)--(3,-6.92)
(-1,-3.46)--(-2,-3.46);
\fill(1,0) circle (3pt);
\fill(2,1.73) circle (3pt);
\fill(2,-1.73) circle (3pt);
\fill(1,-3.46) circle (3pt);
\path[font=](4,-5.73) node[below] {$G$};
\draw[very thick, shift={(1,0)},scale=1,color=black,rotate=330]
(0,0)to[out=90,in=-90,looseness=1](0,0.75)
(0,0.75)to[out=-45,in=135,looseness=1](0.15,0.6)
(0,0.75)to[out=-135,in=45,looseness=1](-0.15,0.6);
\draw[very thick, shift={(1,-3.46)},scale=1,color=black,rotate=330]
(0,0)to[out=90,in=-90,looseness=1](0,0.75)
(0,0.75)to[out=-45,in=135,looseness=1](0.15,0.6)
(0,0.75)to[out=-135,in=45,looseness=1](-0.15,0.6);
\draw[very thick, shift={(2,-1.73)},scale=1,color=black,rotate=-330]
(0,0)to[out=90,in=-90,looseness=1](0,0.75)
(0,0.75)to[out=-45,in=135,looseness=1](0.15,0.6)
(0,0.75)to[out=-135,in=45,looseness=1](-0.15,0.6);
\end{tikzpicture}\qquad\quad
\begin{tikzpicture}[scale=0.30]
\draw[color=black]
(0,1.73)to[out=0,in=180,looseness=1](2,1.73)
(2,1.73)to[out=0,in=180,looseness=1](4,1.73)
(2,1.73)to[out=120,in=-60,looseness=1] (1,3.46)
(2,1.73)to[out=-60,in=120,looseness=1] (3,0);
\draw[color=black,dashed]
(-1,1.73)to[out=0,in=180,looseness=1](0,1.73)
(4,1.73)to[out=0,in=180,looseness=1](5,1.73)
(1,3.46)to[out=120,in=-60,looseness=1] (0,5.19)
(3,0)to[out=-60,in=120,looseness=1] (4,-1.73);
\fill(2,1.73) circle (3pt);
\path[font=\footnotesize](3,1.53) node[above] {$1$};
\path[font=\footnotesize](1,1.90) node[below] {$1$};
\path[font=\footnotesize](0.6,3.66) node[below] {$1$};
\path[font=\footnotesize](3.4,-.2) node[above] {$1$};
\path[font=](3,-2.27) node[below] {$\mathbb{S}$};
\end{tikzpicture}\qquad\quad
\end{center}
\begin{caption}{Examples of the edges at the initial/final points of the longest simple path in $G$ and of the relative curves in $\SS$,
the numbers $1$ and $2$ denote their multiplicity.}
\end{caption}
\end{figure}
If then there is a $3$--point or a core at some point $P\not=0$, since at most two of the four directions in the first case above and at most one of the three directions in the second case, can belong to the straight line through $P$ and the origin, there are always at least two distinct non--straight Abresch--Langer curves arriving/starting at $P$. Clearly, this property holds also if there is no core, but $P$ is simply a $3$--point. 

Let us consider $\SS^\prime \subseteq \SS$, which consists of $\SS$ with the interior of all the pieces of straight lines removed and let $\sigma^i$ one of the two curves above. We follow $\sigma^i$ till its other end--point $Q$. At this end--point, even if there is a core at $Q$, there is
always another different non--straight curve $\sigma^{j}$ to continue moving in $\SS$ avoiding the pieces of straight lines (hence staying far from the origin). Actually, either the underlying intervals $I_i$ and $I_j$ are concurrent at the vertex corresponding to $Q$ in the graph $G$ or there is a path in $G$ (collapsed in the core at $Q$) joining $I_i$ and $I_j$. We then go on with this path on $\SS$ (and on $G$) till, looking at things on the graph $G$, we arrive at an already considered vertex, which happens since the number of vertices of $G$ is finite, obtaining a closed loop, hence, a contradiction. Thus, $\SS^\prime$ cannot contain $3$--points or cores outside the origin. If anyway $\SS$ contains a non--straight Abresch--Langer curve, we can repeat this argument getting again a contradiction, hence, we are done with the first part of the lemma, since then $\SS$ can only consist of halflines from the origin.

Now we assume that $G$ is connected and $\SS$ is a network with multiplicity one, composed of halflines from the origin.\\
If there is no core, $\SS$ is homeomorphic to $G$ and composed only by halflines for the origin, hence $G$ has at most one vertex, by connectedness. If $G$ has no vertices, then $\SS$ must be a line, if it has a $3$--point, $\SS$ is a standard triod.\\
If there is a core in the origin, by the definition of degenerate
regular network it follows that the halflines of $\SS$ can only have
six possible directions, by the $120$ degrees condition, hence, by the
unit multiplicity hypothesis, the graph $G$ is a tree in the plane
with at most six unbounded edges. Arguing as in the first part of the
lemma, if $N$ denotes the number (greater than one) of $3$--points
contained in the core, it follows that $N$ can only assume the values
$2, 3, 4$. Repeating the argument of the ``longest path'', we
immediately also exclude the case $N=3$, since there would be a pair
of coincident halflines in $\SS$, against the multiplicity--one
hypothesis, while for $N=4$ we have only two possible situations,
described at the bottom of the following figure.
\begin{figure}[H]
\begin{center}
\begin{tikzpicture}[scale=0.30]
\draw[color=black]
(-1,0)to[out=0,in=180,looseness=1](1,0)
(-1,0)to[out=120,in=-60,looseness=1] (-3,3.46)
(-1,0)to[out=-120,in=60,looseness=1] (-3,-3.46)
(1,0)to[out=60,in=-120,looseness=1] (3,3.46)
(1,0)to[out=-60,in=120,looseness=1] (3,-3.46);
\draw[color=black,dashed]
(-3,3.46)to[out=120,in=-60,looseness=1] (-4,5.19)
(-3,-3.46)to[out=-120,in=60,looseness=1] (-4,-5.19)
(3,3.46)to[out=60,in=-120,looseness=1] (4,5.19)
(3,-3.46)to[out=-60,in=120,looseness=1] (4,-5.19);
\fill(1,0) circle (4pt);
\fill(-1,0) circle (4pt);
\draw[very thick, shift={(1,0)},color=black,scale=1,rotate=90]
(0,0)to[out=90,in=-90,looseness=1](0,0.75)
(0,0.75)to[out=-45,in=135,looseness=1](0.15,0.6)
(0,0.75)to[out=-135,in=45,looseness=1](-0.15,0.6);
\path[font=]
(-0,-4) node[below] {$G$};
\end{tikzpicture}\quad
\begin{tikzpicture}[scale=0.30]
\draw[color=black]
(0,0)to[out=120,in=-60,looseness=1] (-2,3.46)
(0,0)to[out=-120,in=60,looseness=1] (-2,-3.46)
(0,0)to[out=60,in=-120,looseness=1] (2,3.46)
(0,0)to[out=-60,in=120,looseness=1] (2,-3.46);
\draw[color=black,dashed]
(-2,3.46)to[out=120,in=-60,looseness=1] (-3,5.19)
(-2,-3.46)to[out=-120,in=60,looseness=1] (-3,-5.19)
(2,3.46)to[out=60,in=-120,looseness=1] (3,5.19)
(2,-3.46)to[out=-60,in=120,looseness=1] (3,-5.19);
\fill(0,0) circle (4pt);
\path[font=]
(-0,-4) node[below] {$\mathbb{S}$};
\end{tikzpicture}\quad
\begin{tikzpicture}[scale=0.30]
\draw[color=black]
(-1,0)to[out=0,in=180,looseness=1](1,0);
\draw[very thick, shift={(1,0)},color=black,scale=1,rotate=90]
(0,0)to[out=90,in=-90,looseness=1](0,0.75)
(0,0.75)to[out=-45,in=135,looseness=1](0.15,0.6)
(0,0.75)to[out=-135,in=45,looseness=1](-0.15,0.6);
\path[font=\footnotesize]
(-0,-4) node[below] {The core of $\mathbb{S}$};
\end{tikzpicture}\qquad
\begin{tikzpicture}[scale=0.30]
\draw[color=black]
(-1,0)to[out=0,in=180,looseness=1](1,0)
(2,1.73)to[out=0,in=180,looseness=1](4,1.73)
(2,-1.73)to[out=0,in=180,looseness=1](4,-1.73)
(2,1.73)to[out=120,in=-60,looseness=1] (1,3.46)
(2,-1.73)to[out=-120,in=60,looseness=1] (1,-3.46)
(1,0)to[out=60,in=-120,looseness=1] (2,1.73)
(1,0)to[out=-60,in=120,looseness=1] (2,-1.73);
\draw[color=black,dashed]
(-2,0)to[out=0,in=180,looseness=1](-1,0)
(4,1.73)to[out=0,in=180,looseness=1](5,1.73)
(4,-1.73)to[out=0,in=180,looseness=1](5,-1.73)
(1,3.46)to[out=120,in=-60,looseness=1] (0,5.19)
(1,-3.46)to[out=-120,in=60,looseness=1] (0,-5.19);
\fill(1,0) circle (4pt);
\fill(2,1.73) circle (4pt);
\fill(2,-1.73) circle (4pt);
\path[font=](3,-4) node[below] {$G$};
\draw[very thick, shift={(1,0)},scale=1,color=black,rotate=330]
(0,0)to[out=90,in=-90,looseness=1](0,0.75)
(0,0.75)to[out=-45,in=135,looseness=1](0.15,0.6)
(0,0.75)to[out=-135,in=45,looseness=1](-0.15,0.6);
\draw[very thick, shift={(2,-1.73)},scale=1,color=black,rotate=-330]
(0,0)to[out=90,in=-90,looseness=1](0,0.75)
(0,0.75)to[out=-45,in=135,looseness=1](0.15,0.6)
(0,0.75)to[out=-135,in=45,looseness=1](-0.15,0.6);
\end{tikzpicture}\quad
\begin{tikzpicture}[scale=0.30]
\draw[color=black]
(0,1.73)to[out=0,in=180,looseness=1](2,1.73)
(2,1.73)to[out=0,in=180,looseness=1](4,1.73)
(2,1.73)to[out=120,in=-60,looseness=1] (1,3.46)
(2,1.73)to[out=-120,in=60,looseness=1] (1,0);
\draw[color=black,dashed]
(-1,1.73)to[out=0,in=180,looseness=1](0,1.73)
(4,1.73)to[out=0,in=180,looseness=1](5,1.73)
(1,3.46)to[out=120,in=-60,looseness=1] (0,5.19)
(1,0)to[out=-120,in=60,looseness=1] (0,-1.73);
\fill(2,1.73) circle (4pt);
\path[font=](3,-2.27) node[below] {$\mathbb{S}$};
\end{tikzpicture}\quad
\begin{tikzpicture}[scale=0.30]
\draw[color=black]
(1,0)to[out=60,in=-120,looseness=1] (2,1.73)
(1,0)to[out=-60,in=120,looseness=1] (2,-1.73);
\draw[very thick, shift={(1,0)},scale=1,color=black,rotate=330]
(0,0)to[out=90,in=-90,looseness=1](0,0.75)
(0,0.75)to[out=-45,in=135,looseness=1](0.15,0.6)
(0,0.75)to[out=-135,in=45,looseness=1](-0.15,0.6);
\draw[very thick, shift={(2,-1.73)},scale=1,color=black,rotate=-330]
(0,0)to[out=90,in=-90,looseness=1](0,0.75)
(0,0.75)to[out=-45,in=135,looseness=1](0.15,0.6)
(0,0.75)to[out=-135,in=45,looseness=1](-0.15,0.6);
\path[font=\footnotesize]
(2,-4) node[below] {The core of $\mathbb{S}$};
\end{tikzpicture}\qquad\quad
\begin{tikzpicture}[scale=0.30]
\draw[color=black]
(-1,0)to[out=0,in=180,looseness=1](1,0)
(2,1.73)to[out=0,in=180,looseness=1](4,1.73)
(2,-1.73)to[out=0,in=180,looseness=1](4,-1.73)
(2,1.73)to[out=120,in=-60,looseness=1] (1,3.46)
(2,-1.73)to[out=-120,in=60,looseness=1] (1,-3.46)
(1,0)to[out=60,in=-120,looseness=1] (2,1.73)
(1,0)to[out=-60,in=120,looseness=1] (2,-1.73);
\draw[color=black,dashed]
(-2,0)to[out=0,in=180,looseness=1](-1,0)
(4,1.73)to[out=0,in=180,looseness=1](5,1.73)
(4,-1.73)to[out=0,in=180,looseness=1](5,-1.73)
(1,3.46)to[out=120,in=-60,looseness=1] (0,5.19);
\draw
(1,-3.46)--(-1,-3.46)
(1,-3.46)--(2,-5.19);
\draw[dashed]
(2,-5.19)--(3,-6.92)
(-1,-3.46)--(-2,-3.46);
\fill(1,0) circle (3pt);
\fill(2,1.73) circle (3pt);
\fill(2,-1.73) circle (3pt);
\fill(1,-3.46) circle (3pt);
\path[font=](4,-5.73) node[below] {$G$};
\draw[very thick, shift={(1,0)},scale=1,color=black,rotate=330]
(0,0)to[out=90,in=-90,looseness=1](0,0.75)
(0,0.75)to[out=-45,in=135,looseness=1](0.15,0.6)
(0,0.75)to[out=-135,in=45,looseness=1](-0.15,0.6);
\draw[very thick, shift={(1,-3.46)},scale=1,color=black,rotate=330]
(0,0)to[out=90,in=-90,looseness=1](0,0.75)
(0,0.75)to[out=-45,in=135,looseness=1](0.15,0.6)
(0,0.75)to[out=-135,in=45,looseness=1](-0.15,0.6);
\draw[very thick, shift={(2,-1.73)},scale=1,color=black,rotate=-330]
(0,0)to[out=90,in=-90,looseness=1](0,0.75)
(0,0.75)to[out=-45,in=135,looseness=1](0.15,0.6)
(0,0.75)to[out=-135,in=45,looseness=1](-0.15,0.6);
\end{tikzpicture}\quad
\begin{tikzpicture}[scale=0.30]
\draw[color=black]
(0,1.73)to[out=0,in=180,looseness=1](2,1.73)
(2,1.73)to[out=0,in=180,looseness=1](4,1.73)
(2,1.73)to[out=120,in=-60,looseness=1] (1,3.46)
(2,1.73)to[out=-60,in=120,looseness=1] (3,0);
\draw[color=black,dashed]
(-1,1.73)to[out=0,in=180,looseness=1](0,1.73)
(4,1.73)to[out=0,in=180,looseness=1](5,1.73)
(1,3.46)to[out=120,in=-60,looseness=1] (0,5.19)
(3,0)to[out=-60,in=120,looseness=1] (4,-1.73);
\fill(2,1.73) circle (4pt);
\path[font=](3,-2.27) node[below] {$\mathbb{S}$};
\end{tikzpicture}\quad
\begin{tikzpicture}[scale=0.30]
\draw[color=black]
(2,-1.73)to[out=-120,in=60,looseness=1] (1,-3.46)
(1,0)to[out=60,in=-120,looseness=1] (2,1.73)
(1,0)to[out=-60,in=120,looseness=1] (2,-1.73);
\draw[very thick, shift={(1,0)},scale=1,color=black,rotate=330]
(0,0)to[out=90,in=-90,looseness=1](0,0.75)
(0,0.75)to[out=-45,in=135,looseness=1](0.15,0.6)
(0,0.75)to[out=-135,in=45,looseness=1](-0.15,0.6);
\draw[very thick, shift={(1,-3.46)},scale=1,color=black,rotate=330]
(0,0)to[out=90,in=-90,looseness=1](0,0.75)
(0,0.75)to[out=-45,in=135,looseness=1](0.15,0.6)
(0,0.75)to[out=-135,in=45,looseness=1](-0.15,0.6);
\draw[very thick, shift={(2,-1.73)},scale=1,color=black,rotate=-330]
(0,0)to[out=90,in=-90,looseness=1](0,0.75)
(0,0.75)to[out=-45,in=135,looseness=1](0.15,0.6)
(0,0.75)to[out=-135,in=45,looseness=1](-0.15,0.6);
\path[font=\footnotesize]
(2,-5.73) node[below] {The core of $\mathbb{S}$};
\end{tikzpicture}\qquad
\begin{tikzpicture}[scale=0.30]
\draw[color=black]
(-1,0)to[out=120,in=-60,looseness=1] (-2,1.73)
(-1,0)to[out=-120,in=60,looseness=1] (-2,-1.73)
(-1,0)to[out=0,in=180,looseness=1](1,0)
(2,1.73)to[out=0,in=180,looseness=1](4,1.73)
(2,-1.73)to[out=0,in=180,looseness=1](4,-1.73)
(2,1.73)to[out=120,in=-60,looseness=1] (1,3.46)
(2,-1.73)to[out=-120,in=60,looseness=1] (1,-3.46)
(1,0)to[out=60,in=-120,looseness=1] (2,1.73)
(1,0)to[out=-60,in=120,looseness=1] (2,-1.73);
\draw[color=black,dashed]
 (-2,1.73)to[out=120,in=-60,looseness=1](-3,3.46)
 (-2,-1.73)to[out=-120,in=60,looseness=1](-3,-3.46)
(4,1.73)to[out=0,in=180,looseness=1](5,1.73)
(4,-1.73)to[out=0,in=180,looseness=1](5,-1.73)
(1,3.46)to[out=120,in=-60,looseness=1] (0,5.19)
(1,-3.46)to[out=-120,in=60,looseness=1] (0,-5.19);
\fill(-1,0)circle (4pt);
\fill(1,0) circle (4pt);
\fill(2,1.73) circle (4pt);
\fill(2,-1.73) circle (4pt);
\path[font=](3,-4) node[below] {$G$};
\draw[very thick, shift={(1,0)},scale=1,color=black,rotate=330]
(0,0)to[out=90,in=-90,looseness=1](0,0.75)
(0,0.75)to[out=-45,in=135,looseness=1](0.15,0.6)
(0,0.75)to[out=-135,in=45,looseness=1](-0.15,0.6);
\draw[very thick, shift={(2,-1.73)},scale=1,color=black,rotate=-330]
(0,0)to[out=90,in=-90,looseness=1](0,0.75)
(0,0.75)to[out=-45,in=135,looseness=1](0.15,0.6)
(0,0.75)to[out=-135,in=45,looseness=1](-0.15,0.6);
\end{tikzpicture}\quad
\begin{tikzpicture}[scale=0.30]
\draw[color=black]
(2,1.73)to[out=0,in=180,looseness=1](4,1.73)
(2,1.73)to[out=120,in=-60,looseness=1] (1,3.46)
(2,1.73)to[out=-120,in=60,looseness=1] (1,0);
\draw[color=black,dashed]
(4,1.73)to[out=0,in=180,looseness=1](5,1.73)
(1,3.46)to[out=120,in=-60,looseness=1] (0,5.19)
(1,0)to[out=-120,in=60,looseness=1] (0,-1.73);
\fill(2,1.73) circle (4pt);
\path[font=](3,-2.27) node[below] {$\mathbb{S}$};
\end{tikzpicture}\quad
\begin{tikzpicture}[scale=0.30]
\draw[color=black]
(1,0)to[out=60,in=-120,looseness=1] (2,1.73)
(1,0)to[out=-60,in=120,looseness=1] (2,-1.73);
\draw[very thick, shift={(1,0)},scale=1,color=black,rotate=330]
(0,0)to[out=90,in=-90,looseness=1](0,0.75)
(0,0.75)to[out=-45,in=135,looseness=1](0.15,0.6)
(0,0.75)to[out=-135,in=45,looseness=1](-0.15,0.6);
\draw[very thick, shift={(2,-1.73)},scale=1,color=black,rotate=-330]
(0,0)to[out=90,in=-90,looseness=1](0,0.75)
(0,0.75)to[out=-45,in=135,looseness=1](0.15,0.6)
(0,0.75)to[out=-135,in=45,looseness=1](-0.15,0.6);
\path[font=\footnotesize]
(2,-3) node[below] {The longest}
(2,-4) node[below] {simple path}
(2,-5) node[below] {in the core of $\SS$};
\end{tikzpicture}\qquad
\end{center}
\begin{caption}{The possible local structure of the graphs $G$, with relative networks $\SS$ and cores, for $N=2, 3, 4$.\label{fig8}}
\end{caption}
\end{figure}
\noindent Hence, if $N=4$, in both two situations above there is in $\SS$ at least one halfline with multiplicity two, thus such case is also excluded.\\
Then, we conclude that the only possible network with a core is when $N=2$ and $\SS$ is given by two lines intersecting at the origin forming angles of $120/60$ degrees and the core consists of a collapsed segment which must have an ``assigned'' unit tangent vector bisecting the two angles of $120$ degrees formed by the four halflines.
\end{proof}

\begin{lem}\label{thm:densitybound}
Let $\SS=\bigcup_{i=1}^n\sigma^i(I_i)$ be a degenerate regular shrinker which is $C^1\loc$--limit of regular networks homeomorphic to the underlying graph $G$ of\, $\SS$ (as in Definition~\ref{degnet}) and assume that $\Theta_{\SS}<\Theta_{\SS^1}$. 
Then, the graph $G$ of $\SS$ is a tree. Thus, $\SS$ is either a multiplicity--one line or a standard triod. 
\end{lem}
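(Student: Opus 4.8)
The plan is to leverage Lemma~\ref{lemmatree}, which already handles the case of a tree without end-points, so the entire task reduces to ruling out the presence of a loop in $G$ under the Gaussian density hypothesis $\Theta_\SS<\Theta_{\SS^1}$. First I would observe that a degenerate regular shrinker $\SS$ which is a $C^1\loc$--limit of \emph{embedded} regular networks homeomorphic to $G$ cannot have any end-points: end-points of the approximating networks would have to escape to infinity under rescaling (see Remark~\ref{remreg}), and the approximating networks, being the rescalings of a flow, either have fixed end-points going to infinity or none at all. Hence $G$ has only multiplicity-one and multiplicity-three vertices and no order-one vertices, so every connected component of $G$ has first Betti number equal to its number of independent cycles; if $G$ were not a tree it would contain at least one embedded loop.

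Next I would argue that a loop in $G$ forces a closed curve (or a closed concatenation of curves, possibly passing through the core) in the set $\SS\subset\R^2$, and here I would use Remark~\ref{abla}: every non-degenerate arc of a regular shrinker is either a piece of a straight line through the origin or a piece of an Abresch--Langer curve, and the Abresch--Langer curves are all compact with a definite winding behavior around the origin; a straight segment through the origin is unbounded on both sides unless it is a halfline from the origin, which would need an end-point. Following the loop in $G$ around, mapping it into $\R^2$, and using the $120$ degrees (degenerate) condition at each multi-point, I would conclude that the image of the loop encloses a bounded region, so $\SS$ must contain a closed embedded (up to tangencies) self-shrinking curve bounding a bounded open set $U\subset\R^2$ with $0\in U$ — in fact, by the classification, such a closed curve is essentially built from Abresch--Langer arcs, and the simplest instance is the circle $\SS^1$.

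The key estimate is then a density lower bound: any such closed piece of the shrinker contributes at least the Gaussian density of the circle. Concretely, I would use the fact that the Gaussian weight $\rho_{0,0}(\cdot,-1/2)=\frac{1}{\sqrt{2\pi}}e^{-|x|^2/2}$ integrated over the loop is minimized, among all closed curves winding once around the origin and satisfying the shrinker equation, by the unit circle; equivalently, one invokes that the entropy of a closed self-shrinking curve in the plane is at least that of $\SS^1$ (this is the one-dimensional Abresch--Langer / Colding--Minicozzi--type entropy bound, which follows here directly from the classification in~\cite{ablang1} together with a monotone rearrangement of the Gaussian integral). Since the remaining parts of $\SS$ contribute a nonnegative amount to $\Theta_\SS$, we get $\Theta_\SS\ge\Theta_{\SS^1}$, contradicting the hypothesis. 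Therefore $G$ is a tree, and Lemma~\ref{lemmatree} then yields that $\SS$ is a multiplicity-one line or a standard triod, noting that the standard cross alternative of Lemma~\ref{lemmatree} is excluded because its Gaussian density is $2>\Theta_{\SS^1}$.

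The main obstacle I anticipate is making the density lower bound for the closed loop fully rigorous when the loop is not a smooth embedded Jordan curve but a concatenation of Abresch--Langer arcs joined at $120$ degrees, possibly with a core collapsed to a point and with tangential self-intersections inherited from the $C^1\loc$ convergence. One must check that such a configuration still carries Gaussian mass at least $\Theta_{\SS^1}$ — the cleanest route is probably to show that the winding of the loop around the origin is nonzero (forced by the fact that each Abresch--Langer arc has a fixed curvature sign and the straight pieces through the origin cannot be part of a bounded loop without an end-point), and then to compare with the circle via the coarea/rearrangement inequality for $\int e^{-|x|^2/2}$ over curves of given winding number. I would also double-check the no-end-points reduction and the exclusion of the standard cross using the explicitly computed densities $\Theta_\TTT=3/2$, $\Theta_{\mathbb C}=2$, and $\Theta_{\SS^1}=\sqrt{2\pi/e}\approx 1.5203$ recorded just above the statement.
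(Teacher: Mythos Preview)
Your overall architecture matches the paper's — reduce to Lemma~\ref{lemmatree} by ruling out loops in $G$, then exclude the standard cross by its density $2>\Theta_{\SS^1}$ — but the crucial density lower bound for the loop has a genuine gap. The closed piece of $\SS$ you extract is \emph{not} a closed self--shrinking curve: it is a concatenation of Abresch--Langer arcs meeting at $120$--degree corners, so the statement ``the entropy of a closed self--shrinking curve is at least $\Theta_{\SS^1}$'' simply does not apply to it. Your fallback, a ``coarea/rearrangement inequality for $\int e^{-|x|^2/2}\,ds$ over curves of given winding number'', is not a theorem: a small circle about the origin winds once and has Gaussian integral as small as you like, so winding alone cannot give the bound, and there is no obvious way to exploit the piecewise--shrinker structure to fix this directly.

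The paper resolves this with two ingredients you are missing. First, the Colding--Minicozzi identity~\eqref{eq:locreg.0.0}: for a shrinker one has $\Theta_\SS=\sup_{x_0,\,t_0>-1/2}\int_\SS\rho_{x_0,t_0}(\cdot,-1/2)\,d\overline{s}$, so one may recenter the backward heat kernel at any point and any later time. Second, rather than estimating the Gaussian integral of the loop directly, the paper takes a bounded component $B$ of $\R^2\setminus\SS$, lets $\gamma$ be its boundary (a closed embedded curve, smooth with corners, unit multiplicity), and runs \emph{classical} curve shortening flow on $\gamma$ starting at time $-1/2$; by Gage--Hamilton--Grayson (with Angenent for corners) it shrinks to a round point $x_0$ at some $t_0>-1/2$, and Huisken's monotonicity along that flow gives $\int_\gamma\rho_{x_0,t_0}(\cdot,-1/2)\,ds\ge\Theta_{\SS^1}$. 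Chaining with the supremum yields $\Theta_\SS\ge\Theta_{\SS^1}$, the desired contradiction. You should also treat separately the degenerate situation where the loop of $G$ sits entirely in a core (so $\R^2\setminus\SS$ has no bounded component): the paper shows this forces at least six halflines from the origin, hence $\Theta_\SS\ge 3$.
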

\begin{proof}
By the hypotheses, we see that $G$ is a planar graph. We assume that $G$ is not a tree, that is, it contains a loop, then we can find a (possibly smaller) loop bounding a region. If such loop is in a core at some point $P$, it is easy to see, by the degenerate $120$ degrees condition, that such region has six edges and, arguing as in Lemma~\ref{lemmatree}, that there must always be at least two non--collapsed, non--straight Abresch--Langer curves arriving/starting at $P$ in different directions.

Then, if we assume that the complement of $\SS$ in $\R^2$ contains no bounded components, repeating the argument in the proof of the previous lemma, it follows that $\SS$ consists of a union of halflines for the origin and the loops of $G$ are all collapsed in the core. Then, by what we said above, there must be at least six halflines emanating from (the core at) the origin. This implies that $\Theta_{\SS}\geqslant 3$, which is a contradiction.

Let now $B$ be a bounded component of the complement of $\SS$ and $\gamma$ a connected component of the sub--network of $\SS$ which bounds $B$, counted
with unit multiplicity. Since $\gamma$ is an embedded, closed curve, smooth with corners and no triple junctions, we can evolve it by ``classical'' curve shortening flow $\gamma_t$, for $t\in[-1/2,t_0)$ where we set $\gamma_{-1/2}=\gamma$, until it shrinks at some $t_0>-1/2$ to a ``round'' point $x_0\in\R^2$ (by the works of Angenent, Gage, Grayson, Hamilton~\cite{angen1,angen2,angen3,gage,gage0,gaha1,gray1}, see Remark~\ref{gremh}).\\
By the monotonicity formula, we have
$$
\int_{\gamma}\rho_{x_0,t_0}(\cdot,-1/2)\, ds \geqslant \Theta_{\SS^1}
$$
and, by the work of Colding--Minicozzi~\cite[Section~7.2]{coldmin6},
there holds
\begin{equation}\label{eq:locreg.0.0}
\Theta_{\SS} = \int_\SS \rho_{0,0}(\cdot,-1/2)\, d\overline{s} = \sup_{x_0\in\R^2,
 t_0>-1/2} \int_\SS \rho_{x_0,t_0}(\cdot,-1/2)\, d\overline{s}\,.
\end{equation}
Then,
$$
\Theta_{\SS}\geqslant \int_\SS \rho_{x_0,t_0}(\cdot,-1/2)\, d\overline{s} \geqslant
\int_{\gamma}\rho_{x_0,t_0}(\cdot,-1/2)\, ds \geqslant \Theta_{\SS^1}\,,
$$
which is a contradiction and we are done.
\end{proof}

\subsection{Geometric properties of the flow}\label{geopropsub}
Before proceeding, we show some geometric properties of the curvature flow 
of a network that we will need in the sequel.

\begin{prop}\label{omegaok} 
Let $\SS_t$ be the curvature flow of a regular network in a smooth, convex,
bounded, open set $\Omega$, with fixed end--points on the boundary of
$\Omega$, for $t\in[0,T)$. Then for every time $t\in[0,T)$ the
network $\SS_t$ intersects the boundary of $\Omega$ only at the
end--points, and such intersections are transversal for every positive
time. Moreover, $\SS_t$ remains embedded.
\end{prop}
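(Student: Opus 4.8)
The plan is to prove the three assertions --- (i) $\SS_t$ meets $\partial\Omega$ only at the fixed end--points, (ii) these intersections are transversal for $t>0$, and (iii) $\SS_t$ stays embedded --- by exploiting the convexity of $\Omega$ together with the strong maximum principle and the Hopf boundary--point lemma, working always with a (locally) reparametrised flow so that the geometric quantities are $C^\infty$ for $t>0$ (legitimate by Theorem~\ref{c2shorttime} and Corollary~\ref{parareg0}). For the embeddedness claim (iii), I would argue essentially as in the proof of Theorem~\ref{2compexist0-triod}: near a $3$--point the curvature is locally bounded and the curves are regular, so a self--intersection or crossing of two curves cannot be created at a multi--point; away from the multi--points the flow is locally a smooth mean curvature flow of embedded arcs, and the classical strong maximum principle for curve shortening flow forbids the creation of a new first point of tangency or crossing between two disjoint embedded arcs (or two disjoint sub--arcs of the same curve). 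The only additional bookkeeping compared to the triod case is to check that finitely many such pairs of arcs are involved and that the end--points, being fixed on $\partial\Omega$ and distinct, contribute nothing.

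For (i), the key device is the function $t\mapsto\max_{x\in\SS_t}\langle\gamma(x,t)\,\vert\,\nu_\Omega\rangle$ or, more robustly, a comparison with the convex set $\Omega$ itself. Since $\Omega$ is convex, for each point $p\in\partial\Omega$ the half--plane $H_p$ supporting $\Omega$ at $p$ is a stationary solution of curvature flow (a line). If at some first time $t_1>0$ the network $\SS_{t_1}$ touched $\partial\Omega$ at an interior point of a curve, that curve would have a point of tangency from the inside with the supporting line $H_p$; applying the strong maximum principle to the (locally smooth) flow of that curve against the stationary line $H_p$ gives a contradiction, because the two would then coincide on an arc, which is impossible since the end--points are pinned in $\overline\Omega$ and $\SS_{t_1}$ is connected with bounded length. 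The same comparison handles $t=0$ by the hypothesis that $\SS_0\subset\overline\Omega$ with multi--points in $\Omega$ and end--points as prescribed. Alternatively, and perhaps more cleanly, I would use the distance function $d_\Omega$ to $\partial\Omega$: on the evolving network, at an interior spatial point, $\partial_t d_\Omega(\gamma) = \langle\nabla d_\Omega\,\vert\,\underline k+\underline\lambda\rangle$, and a Bony--type strong minimum principle shows $\min_{\SS_t} d_\Omega$ cannot reach $0$ at an interior point of a curve before the end--points do, which they never do.

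For (ii), transversality at the end--points for $t>0$: suppose the curve $\gamma^r$ arriving at $P^r\in\partial\Omega$ became tangent to $\partial\Omega$ at $P^r$ at some time. Locally straighten $\partial\Omega$ (or compare with the supporting line $H_{P^r}$); then $\gamma^r$ would be an embedded arc lying on one side of the line $H_{P^r}$, touching it at the \emph{boundary point} $P^r$ of the arc with tangency there. The Hopf boundary--point lemma (applied to the linearised parabolic equation satisfied by the height of $\gamma^r$ over $H_{P^r}$, using that this height is a nonnegative supersolution vanishing at $P^r$) forces the height to have strictly negative inward normal derivative at $P^r$, i.e.\ the curve leaves $P^r$ transversally --- unless the height is identically zero, which is again excluded by connectedness and the position of the other end--points. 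This is the step I expect to be the main obstacle, because the standard Hopf lemma is for fixed boundaries, whereas here $P^r$ is a corner point of the parabolic domain (it is simultaneously on $\{t=0\}$--type boundary data and a spatial end--point); one must be careful that the comparison argument is run at a time $t_0>0$, away from the initial time, where the flow is smooth up to the end--point in the reparametrised picture, so the classical Hopf lemma does apply. Once transversality is established it is an open condition, hence persists, and combined with (i) it also shows the intersection with $\partial\Omega$ consists exactly of the $l$ end--points.

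Finally I would assemble these: (iii) gives embeddedness on $[0,T)$; (i) confines $\SS_t\cap\partial\Omega$ to $\{P^1,\dots,P^l\}$ for all $t$; (ii) upgrades this to transversal intersections for $t>0$ (at $t=0$ only transversality of the \emph{initial} network is assumed/irrelevant to the statement). All three use only the local smooth structure of the flow away from multi--points plus the geometric facts about multi--points already recorded in the excerpt (locally bounded curvature, regular curves, $120^\circ$ condition), together with convexity of $\Omega$; the reparametrisation freedom ensures the maximum--principle machinery is available for $t>0$ even though the given flow is a priori only $C^{2,1}$.
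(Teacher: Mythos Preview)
Your approach is essentially the paper's own --- strong maximum principle against supporting half--planes for (i), Hopf/strong maximum principle for (ii), and strong maximum principle away from junctions plus local geometry at junctions for (iii). However, there is a genuine gap in your argument for (i): you only treat the case where the first contact with $\partial\Omega$ occurs at an \emph{interior point of a curve}. You must also exclude the possibility that a $3$--point itself reaches $\partial\Omega$, and neither the comparison with $H_p$ nor the distance--function argument handles this, since the standard strong maximum principle does not apply at the junction.

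The paper fills this gap with the $120$ degrees condition and convexity. By continuity the $3$--points stay inside $\Omega$ for some positive time; then, if $t_0>0$ were the first time a $3$--point $O^p(t_0)$ hit $\partial\Omega$, the three concurring curves at $O^p(t_0)$ have exterior unit tangents forming angles of $120$ degrees, so at least one of them must point strictly out of the supporting half--plane of $\Omega$ at $O^p(t_0)$. Since that curve has nonzero length, part of $\SS_{t_0}$ already lies outside $\overline{\Omega}$, contradicting the minimality of $t_0$. You should insert exactly this step between your regular--point argument and the conclusion of (i). The same $120$ degrees observation is also what the paper invokes (more directly than your bounded--curvature remark) to exclude a first self--intersection occurring at a $3$--point in (iii).
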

\begin{proof}
By continuity the $3$--points cannot hit the boundary of $\Omega$ at least 
for some time $T^\prime>0$. The convexity of $\Omega$ and 
the strong maximum principle (see~\cite{prowein}) 
imply that the network cannot intersect the boundary for the first time
at an inner regular point. 
As a consequence, if $t_0>0$ is the ``first time'' when
the $\SS_t$ intersects the boundary at an inner point, this latter has
to be a $3$--point. The minimality of $t_0$ is then easily
contradicted by the convexity of $\Omega$, the $120$ degrees condition
and the nonzero length of the curves of $\SS_{t_0}$.\\
Even if some of the curves of the initial network are tangent to
$\partial\Omega$ at the end--points, by the strong maximum
principle, as $\Omega$ is convex, the intersections become immediately
transversal and stay so for every subsequent time.\\
Finally, if the evolution $\SS_t$ loses embeddedness for the first time,
this cannot happen either 
at a boundary point, by the argument above, nor 
at a $3$--point, by the $120$ degrees condition.
Hence it must happen at interior regular points, 
but this contradicts the strong maximum principle.
\end{proof}

\begin{prop}\label{omegaok2} 
In the same hypotheses of the previous proposition, if the smooth, bounded, open set $\Omega$ is strictly convex, for every fixed end--point $P^r$ on the boundary of $\Omega$, for $r\in\{1,2,\dots,l\}$, there is a time $t_r\in(0,T)$ and an angle $\alpha_r$ smaller than $\pi/2$ such that the curve of the network arriving at $P^r$ form an angle less that $\alpha_r$ with the inner normal to the boundary of $\Omega$, for every time $t\in(t_r,T)$.
\end{prop}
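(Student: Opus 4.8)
The plan is to reduce the assertion to a one--sided boundary gradient estimate for the curve $\gamma^r$ reaching $P^r$, regarded as a graph over the supporting line of $\Omega$ at $P^r$, and then to get this estimate by comparison with a fixed straight segment, using strict convexity of $\Omega$ in an essential way. Fix $P^r$, let $\mathbf n$ be the inner unit normal to $\partial\Omega$ at $P^r$ and $\ell$ the supporting line of $\Omega$ at $P^r$ (so $\ell\perp\mathbf n$); by strict convexity $\overline\Omega\cap\ell=\{P^r\}$ and $\overline\Omega\setminus\{P^r\}$ lies in the open half--plane $H=\{\langle\,\cdot-P^r,\mathbf n\,\rangle>0\}$. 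Choose coordinates $(x_1,x_2)$ centred at $P^r$ with $e_1$ along $\ell$ and $e_2=\mathbf n$; near $P^r$ the curve $\partial\Omega$ is a graph $x_2=g(x_1)$ with $g(0)=g'(0)=0$ and $g\ge0$. By Proposition~\ref{omegaok}, for every $t\in(0,T)$ the network meets $\partial\Omega$ transversally at $P^r$, so in a neighbourhood of $P^r$ the curve $\gamma^r(\cdot,t)$ is, after reparametrization, a smooth graph $x_2=u(x_1,t)$ with $u(0,t)=0$, $u\ge g$, $u>0$ on $(0,a]$, solving the graphical curve shortening equation $u_t=u_{x_1x_1}/(1+u_{x_1}^2)$ (tangential reparametrizations not affecting the curve); the quantity to control is $u_{x_1}(0,t)=\cot(\text{angle between }\gamma^r\text{ and }\mathbf n\text{ at }P^r)$, and it suffices to bound it from below, uniformly in $t$, on some interval $(t_r,T)$.

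The key point, where strict convexity enters, is the following: for every small $a>0$ the vertical segment $\{P^r+a\,e_1+h\,\mathbf n:h\ge0\}$ meets the convex set $\overline\Omega$ in a compact interval which does not contain its endpoint $P^r+a\,e_1$ (as $P^r+a\,e_1\in\ell\setminus\{P^r\}$ is not in $\overline\Omega$), hence every point of $\overline\Omega$ with $x_1=a$ has $x_2\ge h_{\min}(a)$ for some $h_{\min}(a)>0$; in particular $u(a,t)\ge h_{\min}(a)$ for every $t$ for which the graph representation reaches $x_1=a$. Granting for the moment that this representation persists over a \emph{fixed} interval $[0,a]$ for all $t$ in some $[t_1,T)$, pick $c>0$ with $c\le h_{\min}(a)/a$ and $c\le\inf_{0<x_1\le a}u(x_1,t_1)/x_1$ (positive, by smoothness of $\gamma^r(\cdot,t_1)$ and transversality). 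The straight segment $w(x_1)=c\,x_1$ is a stationary solution of the graphical curve shortening equation and satisfies $w\le u$ on the parabolic boundary of $[0,a]\times[t_1,T)$ (equality at $x_1=0$, $w(a)=ca\le h_{\min}(a)\le u(a,t)$ on the lateral side, and $w\le u(\cdot,t_1)$ at the bottom); the parabolic comparison principle gives $w\le u$ throughout, so $u_{x_1}(0,t)\ge w_{x_1}(0)=c$ for all $t\in[t_1,T)$. Hence the angle between $\gamma^r$ and $\mathbf n$ at $P^r$ is at most $\alpha_r:=\pi/2-\arctan c<\pi/2$ for $t\in(t_1,T)$, and we take $t_r=t_1$.

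The only genuine difficulty is the deferred claim that $\gamma^r$ stays a graph over the fixed interval $[0,a]\subset\ell$ near $P^r$ for all $t$ up to $T$ (and the degenerate situation in which $\gamma^r$ entirely shrinks towards $P^r$). For $t$ in compact subintervals of $(0,T)$ this is immediate from smoothness and Proposition~\ref{omegaok}, and on any $[t_1,T-\varepsilon]$ the function $u_{x_1}(0,\cdot)$ is continuous and positive, so the problem is only the limit $t\to T$: were $u_{x_1}(0,t_j)\to0$ along some $t_j\to\bar t<T$ one would contradict transversality at time $\bar t$ by continuity, so it remains to rule out the angle tending to $\pi/2$ as $t\to T$. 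If near $T$ the curvature stays bounded and the length of $\gamma^r$ is bounded below, the flow converges in $C^1\loc$ near $P^r$ to a smooth curve still meeting $\partial\Omega$ transversally (by the strong maximum principle, as in Proposition~\ref{omegaok}), so the graph persists; if a singularity forms at $P^r$, one rescales at $(P^r,T)$ as in Huisken's procedure and, using the rescaled monotonicity formula of Section~\ref{monotonsec} together with Lemma~\ref{stimadib} to discard the end--point contributions, identifies the blow--up limit near $P^r$ with a multiplicity--one half--line from the origin (as in the blow--up analysis of Section~\ref{geosec}); such a half--line cannot be tangent to the blown--up supporting line $\ell$ without forcing $\gamma^r$ to hug $\partial\Omega$ and violate embeddedness, so the rescaled---hence the original---angle stays bounded away from $\pi/2$. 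Piecing these cases together yields the uniform graph bound used above, and this persistence statement is where I expect the real work of a fully rigorous write--up to lie.
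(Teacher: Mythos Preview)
Your approach is quite different from the paper's, and the route you take is both harder and, as written, circular at a crucial point.

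The paper's argument is a single comparison: let $\Omega_t\subset\Omega$ be obtained by evolving $\partial\Omega$ by curve shortening with the end--points $P^r$ held fixed (this is the classical Dirichlet problem for arcs, as in~\cite{huisk2,stahl1,stahl2}). By the maximum principle the network stays inside $\Omega_t$. Since $\Omega$ is strictly convex, the two boundary arcs of $\Omega$ meeting at $P^r$ have strictly positive curvature, so under the flow the interior angle at $P^r$ drops instantly below $\pi$ and is nonincreasing thereafter. Hence for $t>t_r$ the network near $P^r$ is trapped in a fixed cone with vertex $P^r$ and opening $<\pi$, which is exactly the statement. No graph parametrization, no persistence question, no blow--up.

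Your linear--barrier comparison $w=cx_1$ is sound on any time interval where the graph representation over a fixed segment $[0,a]$ is already known to persist, but you correctly identify this persistence as the real content of the argument---and your proposed way to secure it is circular. The blow--up classification at a fixed end--point (Proposition~\ref{resclimit} and its proof) explicitly invokes the argument of the present proposition to rule out a full line in the limit and force the half--line; so you cannot appeal to that classification here. Without it, nothing in your sketch prevents the graph domain from shrinking or the curve from turning tangent to $\ell$ as $t\to T$. The paper's moving--boundary barrier $\Omega_t$ is exactly what supplies this persistence for free: containment in the cone at $P^r$ gives simultaneously the graph bound, the one--sided (and by symmetry two--sided) gradient bound, and the uniform angle estimate, all in one stroke.
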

\begin{proof}
We observe that the evolving network $\SS_t$ is contained in the convex set $\Om_t\subseteq \Om$,
obtained by letting $\partial \Om$ (which is a finite set of smooth curves with end--points $P^r$) move by curvature 
keeping fixed the end--points $P^r$ (see~\cite{huisk2,stahl1,stahl2}). By the strict convexity of $\Omega$ and strong maximum principle, for every positive $t>0$, the two curves of the boundary of $\Omega$ concurring at $P^r$ form an angle smaller than $\pi$ which is not increasing in time. Hence, the statement of the proposition follows.
\end{proof}

We briefly discuss now the behavior of the area of regions enclosed by
the evolving regular network $\SS_t$. Let us suppose that a (moving) region
${\mathcal{A}}(t)$ is bounded by some curves
$\gamma^1,\gamma^2,\dots,\gamma^m$ and let $A(t)$ its area. Possibly
reparametrizing these curves which form the loop
$\ell=\bigcup_{i=1}^m\gamma^i$ in the network, we can assume that
$\ell$ is parametrized counterclockwise, hence, the curvature $k$ is
positive at the convexity points of the boundary of
${\mathcal{A}}(t)$. Then, we have
$$
A'(t)=-\sum_{i=1}^m\int_{\gamma^i}\langle \gamma^i_t\,\vert\,\nu\rangle\,ds
=-\sum_{i=1}^m\int_{\gamma^i}\langle k\nu\,\vert\,\nu\rangle\,ds
=-\sum_{i=1}^m\int_{\gamma^i} k\,ds
=-\sum_{i=1}^m\Delta\theta_i
$$
where $\Delta\theta_i$ is the difference in the angle between the unit
tangent vector $\tau$ and the unit coordinate vector $e_1\in\R^2$ at the
final and initial point of the curve $\gamma^i$, indeed (supposing the
unit tangent vector of the curve $\gamma^i$ ``lives'' in the second quadrant
of $\R^2$ -- the other cases are analogous) there holds 
$$
\partial_s\theta_i=\partial_s\arccos \langle\tau\,\vert\,e_1\rangle
=-\frac{\langle\tau_s\,\vert\,e_1\rangle}{\sqrt{1-\langle\tau\,\vert\,e_1\rangle^2}}=k\,,
$$
so
$$
A'(t)=-\sum_{i=1}^m\int_{\gamma^i}\partial_s\theta_i\,ds=-\sum_{i=1}^m\Delta\theta_i
$$
Being $\ell$ a closed loop and considering that at all the end--points of
the curves $\gamma^i$ the angle of the unit tangent vector ``jumps''
of $120$ degrees, we have
\begin{equation}\label{gb}
m\pi/3+\sum_{i=1}^m\Delta\theta_i=m\pi/3+\sum_{i=1}^m\int_{\gamma^i} k\,ds=2\pi\,,
\end{equation}
hence
\begin{equation}\label{areaevolreg}
A'(t)=-(2-m/3)\pi
\end{equation}
(this is called {\em von~Neumann rule}, see~\cite{vn}).

An immediate consequence is that the area of every region fully bounded by the curves of the network evolves linearly and, more precisely, it
increases if the region has more than six edges, it is constant with six
edges and it decreases if the edges are less than six. Moreover, this
implies that if a region with less than six edges is present, 
with area $A_0$ at time $t=0$, the maximal time $T$ of existence of a smooth flow is finite and
\begin{equation}\label{stimaT0}
T\leqslant \frac{A_0}{(2-m/3)\pi}\leqslant\frac{3A_0}{\pi}\,. 
\end{equation}

\begin{rem}\label{schreg}
Since every bounded region contained in a shrinker must
decrease its area during the curvature flow of such shrinker (since it
is homothetically contracting), another consequence is that the only
compact regions that can be present in a 
regular shrinker are bounded by less than six curves 
(actually this conclusion also holds for the
``visible'' regions -- not the cores -- of any degenerate regular
shrinker).\\
Moreover, letting a shrinker evolve, since every bounded region must
collapse after a time interval of 1/2, 
the area of such a region is only dependent on the number $m$ of its
edges (less than 6), 
by equation~\eqref{areaevolreg}, indeed
$$
A(0)=A(0)-A(1/2)=-\int_0^{1/2} A'(t)\,dt=\int_0^{1/2} (2-m/3)\pi\,dt=(2-m/3)\pi/2\,.
$$
This implies that the possible structures (topology) of the shrinkers with equibounded diameter are finite.

It is actually conjectured in~\cite[Conjecture~3.26]{haettenschweiler}
that there is an upper bound for the possible number of bounded
regions of a shrinker. This would imply that the possible topological 
structures of shrinkers are finite.
\end{rem}

We explain now a geometric construction that we will use several times in the following.\\
We consider the curvature flow of network $\SS_t$ in a strictly convex set
$\Omega$, with fixed end--points on $\partial\Omega$
labeled by $\{P^1, P^2,\dots, P^l\}$, in a maximal time interval $[0,T)$.
\begin{figure}[H]
\begin{center}
\begin{tikzpicture}[rotate=25,scale=1.3]
\draw[color=black,scale=1,domain=-3.15: 3.15,
smooth,variable=\t,rotate=0]plot({1*sin(\t r)},
{1*cos(\t r)}); 
\draw[scale=0.5]
(-2,0) to [out=45, in=-160,looseness=1] (-0.85,0.25)
(-0.85,0.25) to [out=-40, in=150,looseness=1] (0.75,-0.35)
(-0.85,0.25)to [out=80, in=-90,looseness=1] (0,2)
(0.75,-0.35)to [out=30, in=-120,looseness=1](2,0)
(0.75,-0.35)to [out=-90, in=90,looseness=1](0,-2);
\draw[color=black,scale=1,domain=-3.15: 3.15,
smooth,variable=\t,rotate=0,shift={(2,0)}]plot({1*sin(\t r)},
{1*cos(\t r)}); 
\draw[scale=0.5]
(6,0) to [out=-135, in=20,looseness=1] (4.85,-0.25)
(4.85,-0.25) to [out=140, in=-30,looseness=1] (3.25,0.35)
(4.85,-0.25)to [out=-100, in=90,looseness=1] (4,-2)
(3.25,0.35)to [out=-150, in=90,looseness=1](2,0)
(3.25,0.35)to [out=90, in=-90,looseness=1](4,2);
\draw[color=black,scale=1,domain=-3.15: 3.15,
smooth,variable=\t,rotate=0,shift={(-2,0)}]plot({1*sin(\t r)},
{1*cos(\t r)}); 
\draw[scale=0.5]
(-2,0) to [out=-135, in=40,looseness=1] (-3.15,-0.25)
(-3.15,-0.25) to [out=140, in=-30,looseness=1] (-4.75,0.35)
(-3.15,-0.25)to [out=-100, in=90,looseness=1] (-4,-2)
(-4.75,0.35)to [out=-150, in=60,looseness=1](-6,0)
(-4.75,0.35)to [out=90, in=-90,looseness=1](-4,2);
\draw[color=black,scale=1,domain=-3.15: 3.15,
smooth,variable=\t,rotate=0,shift={(0,2)}]plot({1*sin(\t r)},
{1*cos(\t r)});
\draw[scale=0.5]
(2,4) to [out=-135, in=20,looseness=1] (0.85,3.75)
(0.85,3.75) to [out=140, in=-30,looseness=1] (-0.75,4.35)
(0.85,3.75)to [out=-100, in=90,looseness=1] (0,2)
(-0.75,4.35)to [out=-150, in=60,looseness=1](-2,4)
(-0.75,4.35)to [out=90, in=-90,looseness=1](0,6);
\draw[color=black,scale=1,domain=-3.15: 3.15,
smooth,variable=\t,rotate=0,shift={(0,-2)}]plot({1*sin(\t r)},
{1*cos(\t r)}); 
\draw[scale=0.5]
(2,-4) to [out=-135, in=20,looseness=1] (0.85,-4.25)
(0.85,-4.25) to [out=140, in=-30,looseness=1] (-0.75,-3.65)
(0.85,-4.25)to [out=-100, in=90,looseness=1] (0,-6)
(-0.75,-3.65)to [out=-150, in=60,looseness=1](-2,-4)
(-0.75,-3.65)to [out=90, in=-90,looseness=1](0,-2);
\path[font=\footnotesize,rotate=-25]
(1.35,-0.4) node[left]{$\SS_t$}
(-2.95,-0.4) node[left]{${\mathbb{H}}^1_t$}
(0.05,-2.77) node[left]{${\mathbb{H}}^2_t$}
(3.4,0.4) node[left]{${\mathbb{H}}^3_t$}
(0.1,3) node[left]{${\mathbb{H}}^4_t$}
(-0.88,-0.45) node[left]{$P^1$}
(0.43,-0.81) node[left]{$P^2$}
(.9,0.4) node[left]{$P^3$}
(-.06,1.13) node[left]{$P^4$}
(-.01,.07) node[left]{$O^1$}
(0.48,-0.15) node[left]{$O^2$};
\end{tikzpicture}
\end{center}
\begin{caption}{A network $\mathbb{S}_t$ with the associated networks ${\mathbb{H}}^i_t$.\label{reflectfig}}
\end{caption}
\end{figure}
\noindent We recall that as the curves composing the network are at least $C^2$ and the boundary 
points are fixed, at each $P^r$ both the velocity and the
curvature are zero, namely, the compatibility conditions of order $2$ (see 
Definition~\ref{geom-2-comp}) are satisfied.\\
For every end--point $P^i$, we define the ``symmetrized'' networks ${\mathbb{H}}^i_t$
each one obtained as the union of $\mathbb{S}_t$ 
with its ``reflection'' $\mathbb{S}^{R_i}_t$ with respect to $P^i$.
As the domain $\Omega$ is strictly convex and $\mathbb{S}_t$ is inside $\Omega$,
this operation clearly does not introduce self--intersections in the union 
${\mathbb{H}}^i_t=\mathbb{S}_t\cup\mathbb{S}^{R_i}_t$ and the
number of triple junctions of ${\mathbb{H}}^i_t$ is exactly twice the number of $\SS_t$.
Every network ${\mathbb{H}}^i_t$ is a regular network and the flow
is still in $C^{2,1}$, thanks to the compatibility conditions of order $2$ satisfied at $P^i$. The 
evolution is clearly symmetric with respect to $P^i$. If we have that the flow $\mathbb{S}_t$ is 
smooth then also all the flows ${\mathbb{H}}^i_t$ are smooth 
(see Definition~\ref{ncompcond}) and {\em viceversa}.

\subsection{Limits of rescaling procedures}\label{seclim}
Given a sequence $\mu_i\nearrow +\infty$ and a space--time point
$(x_0,t_0)$, where $0<t_0\leqslant T$, with $T$ the maximal time of smooth
existence, we consider as before in Section~\ref{pararesc}, the
sequence of parabolically rescaled curvature flows $F^{\mu_i}_\tt$ in the whole $\R^2$, 
that we denote with $\SS^{\mu_i}_\tt$. 

We know that, by rescaling the monotonicity formula (end of Section~\ref{pararesc}),
\begin{equation}\label{monresc}
\lim_{i\to\infty}\int\limits_{\tt}^{0}\int
\limits_{\SS^{\mu_i}_\ssss}
\Big|\underline{k}- \frac{x^\perp}{2\ssss}\Big|^2\rho_{0,0}(\cdot,\ssss)\,
ds\,d\ssss=0\,,
\end{equation}
for every $\tt\in(-\infty,0)$. We see now that this implies that
there exists a subsequence of parabolic rescalings 
which ``converges'' to a (possibly empty) degenerate, self--similarly shrinking network
flow.

\begin{defn}\label{ublr}
We say that a (possibly degenerate and with multiplicity) network
$\SS$ has {\em bounded length ratios} by the constant $C>0$, if 
\begin{equation}\label{blr}
\overline{\mathcal{H}}^1(\SS\cap B_R(\overline{x}))\leqslant C R\,, 
\end{equation}
for every $\overline{x}\in\R^2$ and $R>0$ ($\overline{\HH}^1$ is the one--dimensional 
Hausdorff measure counting multiplicities).
\end{defn}
Notice that this is a scaling invariant property, with the same constant $C$. The following technical lemma is due to Stone~\cite{stone1}.
\begin{lem}\label{rescestim2}
For any $\mu>0$, let $\SS^{\mu}_\tt$ be the parabolically rescaled flow around some $(x_0,t_0)\in\R^2\times(0,T)$, as defined in formula~\eqref{eq:parrescaling}.
\begin{enumerate}
\item There exists a constant $C=C(\SS_0)$ such that, for every $\overline{x}\in\R^2$, $t\in[0,T)$ and $R>0$ there holds
\begin{equation}\label{equ10biss}
{\mathcal H}^1(\SS_t\cap B_R(\overline{x}))\leqslant CR\,.
\end{equation}
That is, the family of networks $\SS_t$ has uniformly bounded length ratios by $C$.\\
It follows that for every $\overline{x}\in\R^2$, $\tt\in[-\mu^2t_0,0]$, $\mu>0$ and $R>0$, we have 
\begin{equation}\label{equ10bis}
{\mathcal H}^1(\SS^{\mu}_\tt\cap B_R(\overline{x}))\leqslant CR\,.
\end{equation}
\item For any $\varepsilon > 0$ there is a uniform radius $R= R(\varepsilon)$ such that
\begin{equation*}
\int_{\SS^{\mu}_\tt\setminus B_R(\overline{x})} e^{-|x|^2 /2}\,ds\leqslant \varepsilon\,,
\end{equation*}
that is, the family of measures $e^{-|x|^2 /2}\,{\mathcal H}^1\res\SS^{\mu}_\tt$ is {\em tight} (see~\cite{dellame}).
\end{enumerate}
\end{lem}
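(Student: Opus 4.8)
\textbf{Proof strategy for Lemma~\ref{rescestim2}.}

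The plan is to prove part (1) first and then deduce part (2) as an essentially immediate consequence of the uniform bound on length ratios together with a dyadic decomposition in annuli. For part (1), the key observation is that the bound~\eqref{equ10biss} on the networks $\SS_t$ of the original flow, for $t$ away from $0$, is a standard consequence of the monotonicity formula (Proposition~\ref{promono}) evaluated at the base point $\overline{x}$: indeed, by the density estimate coming from Huisken's monotonicity formula, $\mathcal{H}^1(\SS_t\cap B_R(\overline{x}))/R$ is controlled by a multiple of $\Theta_{\overline{x},t+R^2}(t)$ (after choosing the appropriate scale $t_0=t+R^2$ for the backward heat kernel), and this Gaussian density quantity is bounded above, uniformly in $\overline{x}$ and $R$, in terms of the initial network $\SS_0$: one compares $\Theta_{\overline x,t+R^2}(t)$ with its value at $t=0$, using the monotonicity inequality and the fact that the end--point contributions are uniformly bounded by Lemma~\ref{stimadib}, and then bounds $\Theta_{\overline x,t+R^2}(0)=\int_{\SS_0}\rho_{\overline x,t+R^2}(\cdot,0)\,ds$ by a constant depending only on the length of $\SS_0$ (since $\rho$ is a probability density on lines and $\SS_0$ is a fixed compact network). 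Once~\eqref{equ10biss} holds for all $t$ in a slightly smaller interval, one covers the remaining short initial time interval $[0,\varepsilon_0]$ directly: there the network is $C^1$-close to $\SS_0$ and hence still has bounded length ratios with a comparable constant. The passage to~\eqref{equ10bis} is then purely a scaling computation: the parabolic rescaling~\eqref{eq:parrescaling} maps $\SS_t$ to $\SS^\mu_\tt$ by the similarity $x\mapsto \mu(x-x_0)$ with $\tt=\mu^2(t-t_0)$, and the bounded length ratio property is invariant under similarities with the same constant, as noted after Definition~\ref{ublr}.

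For part (2), with $C$ the uniform constant from~\eqref{equ10bis}, I would decompose the complement of $B_R(\overline x)$ into dyadic annuli $A_k=B_{2^{k+1}R}(\overline x)\setminus B_{2^k R}(\overline x)$, $k\geq 0$. On each annulus the Gaussian weight satisfies $e^{-|x|^2/2}\leq e^{-(2^kR)^2/2}$ for $|x|$ near $\overline x$ --- but since the weight in the statement is centered at the origin while the length-ratio bound is centered at $\overline x$, I would first reduce to the case $\overline x=0$, which is the only case actually used later (the rescaled flows are centered so that the relevant weight is $e^{-|x|^2/2}$ at the origin). Then $\int_{\SS^\mu_\tt\cap A_k}e^{-|x|^2/2}\,ds\leq e^{-2^{2k}R^2/2}\,\mathcal{H}^1(\SS^\mu_\tt\cap B_{2^{k+1}R}(0))\leq e^{-2^{2k}R^2/2}\cdot C\,2^{k+1}R$, and summing over $k\geq 0$ gives a convergent series whose tail is smaller than $\varepsilon$ once $R=R(\varepsilon)$ is chosen large (depending only on $C$ and hence only on $\SS_0$). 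This is exactly the tightness statement.

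The main obstacle --- though it is more bookkeeping than genuine difficulty --- is making the monotonicity argument for~\eqref{equ10biss} fully uniform in the base point $\overline x\in\R^2$ and the radius $R>0$ simultaneously, including the handling of the end--point boundary terms in the monotonicity formula for base points $\overline x$ not equal to an end--point; here one uses Lemma~\ref{stimadib} to absorb those terms into a constant depending only on the constants $C_l$ of assumption~\eqref{endsmooth}. A secondary technical point is the treatment of the initial layer $t\in[0,\varepsilon_0]$, where one cannot use the monotonicity formula based at $t_0$ arbitrarily close to $0$; this is handled by the $C^1$ continuity of the flow up to $t=0$ established in Theorem~\ref{c2shorttime} (or simply by noting that $\SS_0$ itself has bounded length ratios, being a fixed smooth compact network, and that the length measures $\mathcal{H}^1\res\SS_t$ converge weakly$^\star$ to $\mathcal{H}^1\res\SS_0$). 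Modulo these uniformity checks, both statements follow from the monotonicity formula and a scaling argument.
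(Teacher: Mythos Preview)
Your approach is essentially the paper's: use the monotonicity formula at the base point $(\overline{x},t+R^2)$ to control $\Theta_{\overline{x},t+R^2}(t)$ by its value at $t=0$, then convert this Gaussian density bound back into a length--ratio bound; the rescaled statement~\eqref{equ10bis} follows by scaling invariance, and part~(2) by an annular decomposition. Two remarks are in order.

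First, the step where you bound $\Theta_{\overline{x},t+R^2}(0)=\int_{\SS_0}\rho_{\overline{x},t+R^2}(\cdot,0)\,ds$ ``by a constant depending only on the length of $\SS_0$'' is not quite right as stated: the total length alone does not control this integral uniformly as $R\to 0$ (the kernel concentrates). What is needed is precisely the \emph{entropy} bound
\[
E(\SS_0)=\sup_{\overline{x}\in\R^2,\,\tau>0}\int_{\SS_0}\rho_{\overline{x},\tau}(\cdot,0)\,ds<+\infty\,,
\]
and the paper obtains this from the bounded length ratios of the fixed initial network $\SS_0$ via a dyadic decomposition (essentially the same computation you do for part~(2), run on $\SS_0$). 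Your parenthetical ``$\rho$ is a probability density on lines'' is the right intuition but should be replaced by this explicit argument.

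Second, your worry about an ``initial layer'' $t\in[0,\varepsilon_0]$ is unnecessary: since $t_0=t+R^2>t\geq 0$ always, the monotonicity formula applies on the full interval $[0,t]$ for every $t\in[0,T)$ with no degeneration. No separate treatment of small $t$ is needed. (Your handling of the boundary terms via Lemma~\ref{stimadib} is correct and is indeed required, even though the paper does not spell it out.)

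For part~(2) the paper uses linear annuli $B_{(n+1)R}(\overline{x})\setminus B_{nR}(\overline{x})$ rather than dyadic ones; either works. Your observation that the estimate $e^{-|x|^2/2}\leq e^{-n^2R^2/2}$ on such an annulus only holds for $\overline{x}=0$ is correct; this is the only case actually used in the sequel.
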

\begin{proof}
By Definition~\ref{Cinftyopen}, if $\SS_0$ is an open network, the number of unbounded curves ($C^1$--asymptotic to straight lines) is finite. Then, it is easy to see that, open or not, $\SS_0$ has bounded length ratios, that is, there exists a constant $C>0$ such that
\begin{equation}\label{ubr}
{\mathcal{H}}^1(\SS_0 \cap B_R(\overline{x})) \leqslant C'R\,,
\end{equation}
for all $\overline{x} \in\R^2$ and $R>0$. This implies that the {\em entropy} of $\SS_0$ (see~\cite{coldmin6,manmag}) is bounded, that is,
\begin{equation}\label{entropydef}
E(\SS_0)=\sup_{\overline{x} \in\R^2, \tau >0} \int_{\SS_0} \frac{e^{-\frac{|x-\overline{x}|^2}{4\tau}}}{\sqrt{4\pi \tau}} \,ds
=\sup_{\overline{x} \in\R^2, \tau >0} \int_{\SS_0} \rho_{\overline{x},\tau}(\cdot,0)\,ds\leqslant C''\,.
\end{equation}
Indeed, for any $\overline{x} \in\R^2$ and $\tau>0$, changing variable as $y=(x-\overline{x})/2\tau$, we have
\begin{align*}
\int_{\SS_0} \frac{e^{-\frac{|x-\overline{x}|^2}{4\tau}}}{\sqrt{4\pi \tau}} \,ds
=&\,\int_{\frac{\SS_0-\overline{x}}{2\tau}} \frac{e^{-\frac{|y|^2}{2}}}{\sqrt{2\pi}}\,ds\\
=&\,\sum_{n=0}^\infty \int_{\frac{\SS_0-\overline{x}}{2\tau}\cap(B_{n+1}(0)\setminus B_n(0))} \frac{e^{-\frac{|y|^2}{2}}}{\sqrt{2\pi}} \,ds\\
\leqslant &\,\frac{1}{\sqrt{2\pi}} \sum_{n=0}^\infty e^{-n^2/2}{\mathcal{H}}^1\Bigl(\frac{\SS_0-\overline{x}}{2\tau}\cap B_{n+1}(0)\Bigr)\\
=&\,\frac{1}{\sqrt{2\pi}}\sum_{n=0}^\infty e^{-n^2/2}{\mathcal{H}}^1\Bigl(\frac{1}{2\tau}\bigl(\SS_0\cap B_{2\tau(n+1)}(\overline{x})-\overline{x}\bigr)\Bigr)\\
=&\,\frac{1}{\sqrt{2\pi}}\sum_{n=0}^\infty e^{-n^2/2}{\mathcal{H}}^1\bigl(\SS_0\cap B_{2\tau(n+1)}(\overline{x})\bigr)\frac{1}{2\tau}\\
\leqslant &\,\frac{1}{\sqrt{2\pi}}\sum_{n=0}^\infty e^{-n^2/2}(n+1)C'\\
=&\,C'
\end{align*}
since the series converges (in the last inequality we applied estimate~\eqref{ubr}).\\
Then, by the monotonicity formula~\eqref{eqmonfor}, for any $\overline{x}\in\R^2$, $t\in[0,T)$ and $R>0$, by setting $\tau=t+R^2$, we have 
$$
\int_{\SS_t} \frac{e^{-\frac{|x-\overline{x}|^2}{4R^2}}}{\sqrt{4\pi}R}\,ds=\int_{\SS_t} \rho_{\overline{x},t+R^2}(\cdot,t)\,ds\leqslant \int_{\SS_0} \rho_{\overline{x},t+R^2}(\cdot,0)\,ds\leqslant C''\,,
$$
hence, 
$$
{\mathcal{H}}^1(\SS_t \cap B_R(\overline{x})) \leqslant \sqrt{4\pi}eR \int_{\SS_t\cap B_R(\overline{x})} \frac{e^{-\frac{|x-\overline{x}|^2}{4R^2}}}{\sqrt{4\pi}R}\,ds\leqslant \sqrt{4\pi}C''eR\,.
$$
Since this conclusion is scaling invariant, it also holds for all the rescaled networks $\SS^{\mu_i}_\tt$ and the first point of the lemma follows with $C=\sqrt{4\pi}C''e$. The second point is a consequence of the first one, indeed, we have
\begin{align*}
\int_{\SS_\tt^{\mu_i}\setminus B_R(\overline{x})} e^{-\frac{|x|^2}{2}}\,ds
=&\,\sum_{n=1}^\infty \int_{\SS_\tt^{\mu_i}\cap(B_{(n+1)R}(\overline{x})\setminus B_{nR}(\overline{x}))} e^{-\frac{|x|^2}{2}}\,ds\\
\leqslant &\,\sum_{n=1}^\infty e^{-n^2R^2/2}{\mathcal{H}}^1\bigl(\SS_\tt^{\mu_i}\cap B_{(n+1)R}(\overline{x})\bigr)\\
\leqslant &\,C\sum_{n=1}^\infty e^{-n^2R^2/2}(n+1)R\\
=&\,f(R)
\end{align*}
and the function $f$ satisfies $\lim_{R\to+\infty}f(R)=0$.
\end{proof}

\begin{prop}\label{thm:shrinkingnetworks.1} Given a sequence of parabolically rescaled curvature flows $\SS^{\mu_i}_\tt$, as above, there exists a subsequence $\mu_{i_j}$ and a (possibly empty) degenerate regular 
 self--similarly shrinking network flow $\SS^\infty_\tt$ such that for
 almost all $\tt\in (-\infty, 0)$ and for any $\alpha \in (0,1/2)$, 
$$
\SS^{\mu_{i_j}}_\tt\to\SS^\infty_\tt
$$
in $C^{1,\alpha}\loc \cap W^{2,2}\loc$. This convergence
also holds in the sense of Radon measures for all $\tt\in (-\infty, 0)$.\\
Moreover, for every continuous function with compact support in space--time $\varphi:\R^2\times(-\infty,0)\to\R$ there holds
\begin{equation}\label{gggg1bis}
\lim_{j\to\infty}\int_{(-\infty,0)}\int_{\SS^{\mu_{i_j}}_\tt}
\varphi(\cdot,\tt)\,ds\,d\ssss=
\int_{(-\infty,0)}\int_{\SS^\infty_\tt}
\varphi(\cdot,\tt)\,d\overline{s}\,d\ssss\,,
\end{equation}
where $d\overline{s}$ denotes the integration with respect to the
canonical measure on $\SS^\infty_\tt$, counting multiplicities and
\begin{equation}\label{gggg2bis}
\lim_{j\to\infty}\int_{\SS^{\mu_{i_j}}_\tt}
\rho_{0,0}(\cdot,\tt)\,ds=
\int_{\SS^\infty_\tt}
\rho_{0,0}(\cdot,\tt)\,d\overline{s}=\Theta_{\SS^\infty_{-1/2}}=\widehat{\Theta}(x_0,t_0)\,,
\end{equation}
for every $\tt\in(-\infty,0)$.
\end{prop}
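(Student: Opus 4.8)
The proof is the blow-up compactness argument of Ilmanen adapted to networks, as in~\cite{mannovtor,Ilnevsch}; I assume all results of the present section, in particular Lemma~\ref{rescestim2} and the rescaled monotonicity consequence~\eqref{monresc}. The plan has three parts: extracting a subsequence with a static shrinker limit at almost every time, upgrading to a self-similar flow by monotonicity, and then the two integral identities.

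\emph{Extraction and the degenerate regular shrinker limit.} First I would apply~\eqref{monresc} on each interval $(\tt_0,0)$ with $\tt_0=-1,-2,\dots$: the functions $\ssss\mapsto\int_{\SS^{\mu_i}_\ssss}\bigl|\underline{k}-\tfrac{x^\perp}{2\ssss}\bigr|^2\rho_{0,0}(\cdot,\ssss)\,ds$ tend to $0$ in $L^1(\tt_0,0)$, so, passing to nested subsequences and diagonalizing, I obtain one subsequence $\mu_{i_j}$ for which
\[
\int_{\SS^{\mu_{i_j}}_\tt}\Bigl|\underline{k}-\frac{x^\perp}{2\tt}\Bigr|^2\rho_{0,0}(\cdot,\tt)\,ds\longrightarrow 0\qquad\text{for a.e. }\tt\in(-\infty,0);
\]
call such $\tt$ \emph{good}. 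For a good $\tt$, the uniform bound on the length ratios of $\SS^{\mu_{i_j}}_\tt$ (Lemma~\ref{rescestim2}) and the boundedness of $|x^\perp/(2\tt)|$ on bounded sets force $\int_K k^2\,ds$ to be bounded on every compact $K$ (since $\rho_{0,0}(\cdot,\tt)\ge c_K>0$ on $K$), hence the arclength-parametrized curves composing $\SS^{\mu_{i_j}}_\tt$ are uniformly bounded in $W^{2,2}\loc$. A further diagonal extraction over a countable dense set of good times, together with the compact embedding $W^{2,2}\hookrightarrow C^{1,\alpha}$ in one dimension ($\alpha<1/2$), yields a subsequence (not relabelled) along which, for every good $\tt$, the networks converge in $C^{1,\alpha}\loc\cap W^{2,2}\loc$ to a limit $\SS^\infty_\tt$, possibly empty if the local length vanishes in the limit. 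By lower semicontinuity, weak $L^2\loc$-convergence of the curvature vectors, and uniform convergence of the positions, the limit satisfies $\underline{k}=x^\perp/(2\tt)$ pointwise, i.e. $\SS^\infty_\tt/\sqrt{-2\tt}$ is a shrinker. Since the $\SS^{\mu_{i_j}}_\tt$ are regular and embedded, the $C^1\loc$-convergence makes $\SS^\infty_\tt$ a degenerate regular network in the sense of Definition~\ref{degnet}: collapsing arcs organize into cores attached to multi-points of the underlying graph, and at each limit multi-point the exterior unit tangents of the non-collapsed curves plus the (pairwise cancelling) assigned tangents of the collapsed ones sum to zero.

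\emph{Self-similarity.} By the scale invariance of the Gaussian density and the change of variables of Section~\ref{pararesc}, $\int_{\SS^{\mu_i}_\tt}\rho_{0,0}(\cdot,\tt)\,ds=\Theta_{x_0,t_0}\!\bigl(t_0+\mu_i^{-2}\tt\bigr)\to\widehat{\Theta}(x_0,t_0)$ for every fixed $\tt<0$, by the monotonicity of $\Theta_{x_0,t_0}$. On good times the $C^1\loc$-convergence plus tightness (Lemma~\ref{rescestim2}(2)) pass this integral to the limit, giving $\int_{\SS^\infty_\tt}\rho_{0,0}(\cdot,\tt)\,d\overline{s}=\widehat{\Theta}(x_0,t_0)$, a constant in $\tt$. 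Each $\SS^\infty_\tt$ being a scaled shrinker with the same Gaussian density, and the rescaled flows $\SS^{\mu_{i_j}}$ moving arbitrarily little on time intervals between nearby good times (again by~\eqref{monresc}), one concludes $\SS^\infty_\tt=\sqrt{-2\tt}\,\SS^\infty_{-1/2}$ for all good $\tt$, and this self-similar family extends to a degenerate regular self-similarly shrinking network flow on all of $(-\infty,0)$, with $\Theta_{\SS^\infty_{-1/2}}=\widehat{\Theta}(x_0,t_0)$. The Radon measure convergence $\HH^1\res\SS^{\mu_{i_j}}_\tt\rightharpoonup\overline{\HH}^1\res\SS^\infty_\tt$ then holds for \emph{all} $\tt<0$: on good times from $C^1\loc$-convergence, on the rest by interpolating between nearby good times using the uniform length-ratio bounds and the $L^1$-in-time smallness in~\eqref{monresc}.

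\emph{The integral identities and the main obstacle.} Identity~\eqref{gggg1bis} follows from the Radon measure convergence at each time together with the uniform length-ratio bound, which dominates $\ssss\mapsto\int_{\SS^{\mu_{i_j}}_\tt}|\varphi(\cdot,\tt)|\,ds$ for $\varphi$ with compact space-time support, by dominated convergence in $\ssss$. For~\eqref{gggg2bis}, tightness (Lemma~\ref{rescestim2}(2)) truncates the Gaussian tail of $\rho_{0,0}(\cdot,\tt)$ uniformly in $j$, and on the truncating ball the measure convergence applies, so $\int_{\SS^{\mu_{i_j}}_\tt}\rho_{0,0}(\cdot,\tt)\,ds\to\int_{\SS^\infty_\tt}\rho_{0,0}(\cdot,\tt)\,d\overline{s}=\Theta_{\SS^\infty_{-1/2}}$ by self-similarity; the identification with $\widehat{\Theta}(x_0,t_0)$ is the computation recalled in the previous paragraph. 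I expect the genuine difficulty to be not the compactness — which is the standard monotonicity-driven argument — but the verification that the limit is a \emph{degenerate regular} network exactly in the sense of Definition~\ref{degnet}: one must track the combinatorial structure of $\SS^{\mu_{i_j}}_\tt$ near collapsing curves, identify the cores, and check the degenerate $120$ degrees condition on the assigned tangents, which is where regularity, embeddedness, and the uniform length ratios are essentially used; a secondary technical care is the double diagonal extraction producing a single subsequence valid for almost every $\tt$.
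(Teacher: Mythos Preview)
Your compactness step and the identities~\eqref{gggg1bis},~\eqref{gggg2bis} are handled essentially as in the paper. The genuine gap is the self--similarity argument. Knowing that each $\SS^\infty_\tt/\sqrt{-2\tt}$ satisfies the shrinker equation and that all slices have the same Gaussian density does \emph{not} force them to be the same shrinker: distinct shrinkers (or rotations of one) can share a Gaussian density, and your appeal to ``the rescaled flows moving arbitrarily little between nearby good times (again by~\eqref{monresc})'' is not what~\eqref{monresc} provides --- that limit controls how close each time--slice is to satisfying $\underline{k}=x^\perp/(2\tt)$, not how much the networks move between times. Even granting some weak continuity of $\tt\mapsto\SS^\infty_\tt$, a continuous path in the space of (rescaled) shrinkers need not be constant.

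The paper closes this gap by a different route: it first passes the \emph{Brakke equality} to the limit (using the a.e.\ strong $W^{2,2}\loc$ convergence and dominated convergence in $\tt$), obtaining an integrated evolution law for the limit measures $\lambda^\infty_\tt$. Then, setting $\nu_\tt(D)=\lambda^\infty_\tt(\sqrt{-2\tt}\,D)/\sqrt{-2\tt}$, a direct computation of $\frac{d}{d\tt}\int\psi\,d\nu_\tt$ using that limit Brakke law together with the shrinker equation $\underline{k}=x^\perp/(2\tt)$ reduces everything to $\int_{\SS^\infty_\tt}\partial_s\bigl[\psi(\gamma/\sqrt{-2\tt})\langle\tau,\gamma\rangle\bigr]\,d\overline{s}$, which vanishes by Lemma~\ref{lemreg} (this is where the degenerate $120$ degrees condition enters). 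Hence $\nu_\tt$ is constant, i.e., $\SS^\infty_\tt=\sqrt{-2\tt}\,\SS^\infty_{-1/2}$. The same Brakke--flow bound $\frac{d}{d\tt}\int\varphi\,d\lambda^{i_j}_\tt\le C(\varphi,\nabla^2\varphi)$ is what the paper uses (via monotonicity of $\tt\mapsto\int\varphi\,d\lambda^\infty_\tt-C\tt$) to pin down the Radon--measure limits at \emph{every} $\tt$, not just the good ones; your ``interpolation'' sketch is the right intuition but needs this ingredient to be made precise.
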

\begin{proof}
We follow ideas in Ilmanen~\cite[Lemma~8]{ilman3} and~\cite[Section~7.1]{ilman1}.\\
By the first point of Lemma~\ref{rescestim2}, for every ball 
$B_R$ centered at the origin of $\R^2$, we have the uniform bound 
$\HH^1(\SS^{\mu_i}_\tt\cap B_R)\leqslant CR$, for some
constant $C$ independent of $i\in\NN$ and $\tt\in(-\infty,0)$. Hence, we can assume that the sequence of Radon measures defined by the left side of equation~\eqref{gggg1bis} are locally equibounded and converges to some limit measure in the space--time ambient $\R^2\times(-\infty,0)$\\
Considering the functions
$$
f_i(\tt)=\int\limits_{\SS^{\mu_{i}}_\tt}
\Big|\underline{k}- \frac{x^\perp}{2\tt}\Big|^2\rho_{0,0}(\cdot,\tt)\,
ds\,,
$$
the limit~\eqref{monresc} implies that $f_i\to 0$ in
$L^1\loc(-\infty, 0)$. Thus, there exists a (not relabeled) subsequence 
such that the sequence of functions $f_i$ converges pointwise almost everywhere to zero. We call $A\subseteq(-\infty,0)$ such a convergence set.\\
Then, for any $\tt\in A$, because of the uniform bound 
$\HH^1(\SS^{\mu_{i}}_\tt\cap B_R)\leqslant CR$, we have that for any $R>0$
$$
\int\limits_{\SS^{\mu_{i}}_\tt\cap B_R} k^2\,ds \leqslant C_R(\tt)\,, 
$$
for a constant $C_R(\tt)$ independent of $i$. 
Hence, if $\tt\in A$, reparametrizing the curves of the rescaled networks by arclength, we obtain curves in $W^{2,2}\loc$ with uniformly
bounded first derivatives, which implies that any subsequence of the networks $\SS^{\mu_{i}}_\tt$ admits a further subsequence converging weakly in $W^{2,2}\loc$, hence in $C^{1,\alpha}\loc$ to a degenerate regular network $\SS^\infty_\tt$. Moreover, such subsequence $\SS^{\mu_{i_j}}_\tt$ actually converges strongly in $W^{2,2}\loc$ by the weak convergence in $W^{2,2}\loc$ and the fact that $f_i(\tt) \to 0$ in $L^1\loc$. Finally, by the convergence in $C^{1,\alpha}\loc$, the associated Radon measures $\lambda_\tt^{i_j}=\HH^1\res\SS^{\mu_{i_j}}_\tt$ weakly converge to $\lambda_\tt^\infty= \overline{\HH}^1\res\SS^\infty_\tt$ (where $\overline{\HH}^1\res\SS^\infty_\tt$ is the one--dimensional Hausdorff measure restricted to $\SS^\infty_\tt$, counting multiplicities).\\
Since the integral functional 
$$
\SS\mapsto\int\limits_{\SS}
\Big|\underline{k}- \frac{x^\perp}{2\tt}\Big|^2\rho_{0,0}(\cdot,\tt)\,
ds\,.
$$
is lower semicontinuous with respect to this convergence
 (see~\cite{simon}, for instance), the limit $\SS^\infty_\tt$ satisfies
$$
\underline{k}-\frac{x^\perp}{2\tt}=0\,,
$$
in $W^{2,2}\loc$, hence, by a bootstrap argument, each non--degenerate curve of $\SS^\infty_\tt$ is actually smooth. Thus, for every $\tt\in A$ the network $\SS^\infty_\tt$ is a 
degenerate regular shrinker, up to a dilation factor.

By a standard diagonal argument we can assume that for $\tt$ in a dense countable subset $B_1\subseteq A$ the subsequence $\SS^{\mu_{i_j}}_\tt$ converges in $W^{2,2}\loc$ and $C^{1,\alpha}\loc$ to a limit degenerate regular shrinker $\SS^\infty_\tt$, with associated Radon measure $\lambda_\tt^\infty= \overline{\HH}^1\res\SS^\infty_\tt$, as above.\\
When $\tt\in A\setminus B_1$ we consider as $\SS^\infty_\tt$ the limit degenerate regular shrinker of an arbitrary converging subsequence of the networks $\SS^{\mu_{i_j}}_\tt$ and $\lambda_\tt^\infty= \overline{\HH}^1\res\SS^\infty_\tt$.\\
When $\tt\in(-\infty,0)\setminus A$ we instead consider as $\lambda_\tt^\infty$ the limit Radon measure of an arbitrary weakly--converging subsequence of the Radon measures $\lambda_\tt^{i_j}=\HH^1\res\SS^{\mu_{i_j}}_\tt$.\\
In this way we defined the limit network $\SS^\infty_\tt$ for every $\tt\in A$ and the limit Radon measures $\lambda_\tt^\infty$ for every $\tt\in(-\infty,0)$.

If ${\mathcal{F}}$ is a countable dense family of smooth functions in the cone of non negative functions in $C^0_c(\R^2)$, by the above convergence and the rescaled monotonicity formula, it follows that for every $\varphi\in{\mathcal {F}}$, there holds (by Proposition~\ref{equality1001} and formula~\eqref{brakkeqqq})
\begin{align*}
\frac{d\,}{d\tt}\int_{\SS^{\mu_{i_j}}_\tt}\varphi\,ds=&\,
-\int_{\SS^{\mu_{i_j}}_\tt}\varphi k^2\,ds
+\int_{\SS^{\mu_{i_j}}_\tt}\langle\nabla\varphi\,\vert\,\underline{k}\rangle\,ds\\
=&\,
-\int_{\SS^{\mu_{i_j}}_\tt}\varphi \left\vert\, \underline{k}-\frac{\nabla\varphi}{2\varphi}\,\right\vert^2\,ds
+\int_{\SS^{\mu_{i_j}}_\tt}\frac{\vert \nabla\varphi\vert^2}{4\varphi}\,ds\\
\leqslant&\,\frac{1}{4}\int_{\SS^{\mu_{i_j}}_\tt}\frac{\vert \nabla\varphi\vert^2}{\varphi}\,ds\\
\leqslant&\, (\max\,\vert\nabla^2\varphi\vert/2)\,\lambda_\tt^{i_j}(\{\varphi>0\})\\
\leqslant&\, C(\varphi,\nabla^2\varphi)\,,
\end{align*}
where we used the estimate $\vert \nabla\varphi\vert^2/\varphi\leqslant 2\max\, \vert\nabla^2\varphi\vert$, holding for every $\varphi\in C^2_c(\R^n)$ (where $\varphi>0)$, proved in~\cite[Lemma~6.6]{ilman1} and the uniform bound $\HH^1(\SS^{\mu_i}_\tt\cap B_R)\leqslant CR$, for some
constant $C$ independent of $i\in\NN$ and $\tt\in(-\infty,0)$.\\ 
Hence, fixing a single $\tt_0\in (-\infty,0)\setminus B_1$, the function
$$
\int_{\SS^{\mu_{i_j}}_\tt}\varphi\,ds- C(\varphi,\nabla^2\varphi)\tt
$$
is monotone non increasing once restricted to $B_1\cup\{\tt_0\}$. Passing to the limit (on the $\tt_0$--special subsequence such that $\lambda^{i_j}_{\tt_0}$ converges to $\lambda^\infty_{\tt_0}$) the same holds for the function 
$$
\tt\mapsto\int_{\R^2}\varphi\,d\lambda^\infty_\tt -C(\varphi,\nabla^2\varphi)\tt\,,
$$
restricted to $B_1\cup\{\tt_0\}$. By the arbitrariness of $\tt_0\in (-\infty,0)\setminus B_1$, we then conclude that such function is monotone non increasing on the whole $(-\infty,0)$. Thus, for every $\varphi\in{\mathcal{F}}$ the function
$\tt\mapsto\int_{\R^2}\varphi\,d\lambda_\tt^\infty$ has an at most countable set of (jump) discontinuities, that we call $B_\varphi$. Hence, we have that outside a countable subset $B=\bigcup_{\varphi\in{\mathcal{F}}} B_\varphi$ of $(-\infty,0)$, all the functions 
$$
\tt\mapsto\int_{\R^2}\varphi\,d\lambda^\infty_\tt
$$
are continuous, for every $\varphi\in{\mathcal{F}}$. This clearly implies that if $\tt\in(-\infty,0)\setminus B$, then the value of the integral $\int_{\R^2}\varphi\,d\lambda^\infty_\tt$ is uniquely determined and independent of the $\tt$--subsequence chosen to define $\lambda^\infty_\tt$, for every $\varphi\in{\mathcal{F}}$. An immediate consequence is that (by the density of ${\mathcal{F}}$),
\begin{itemize}
\item if $\tt\in (-\infty,0)\setminus B$, the Radon measure $\lambda_\tt^\infty$ is uniquely determined and the full sequence $\lambda^{i_j}_\tt$ converges to $\lambda^\infty_\tt$, 
\item if $\tt\in A$, the network $\SS^\infty_\tt$ is uniquely determined and the full sequence $\SS^{\mu_{i_j}}_\tt$ converges to $\SS^\infty_\tt$ in $W^{2,2}\loc$ and $C^{1,\alpha}\loc$,
\end{itemize}
as $j\to\infty$.\\
Then, we can conclude by a diagonal argument on the sequences of networks $\SS^{\mu_{i_j}}_\tt$ when $\tt\in B$, that we have a subsequence (not relabeled) of $\mu_{i_j}$ such that for every $\tt\in A$ the networks $\SS^{\mu_{i_j}}_\tt$ converge in $W^{2,2}\loc$ and $C^{1,\alpha}\loc$ and as Radon measures to $\SS^\infty_\tt$, as $j\to\infty$ and for every $\tt\in(-\infty,0)$ we have $\lambda^{i_j}_\tt\to\lambda^\infty_\tt$ as Radon measures.

By Proposition~\ref{equality1001}, every rescaled flow is a regular Brakke flow with equality, hence, the integrated version of equation~\eqref{brakkeqqq} holds, that is, 
\begin{equation*}
\int_{\R^2}\varphi(\cdot,\tt_1)\,d\lambda_{\tt_1}^{i_j}
-\int_{\R^2}\varphi(\cdot,\tt_2)\,d\lambda_{\tt_2}^{i_j}
=\int_{\tt_2}^{\tt_1}\biggl[
-\int_{\SS_\tt^{\mu_{i_j}}}\varphi(\gamma,t) k^2\,ds
+\int_{\SS_\tt^{\mu_{i_j}}}\langle\nabla\varphi(\gamma,t)\,\vert\,\underline{k}\rangle\,ds
+ \int_{\SS_\tt^{\mu_{i_j}}}\varphi_t(\gamma,t)\,ds\biggr]\,d\tt\,,
\end{equation*}
for every smooth function with compact support
$\varphi:\R^2\times(-\infty,0)\to\R$ and $\tt_1,\tt_2\in(-\infty,0)$.\\
By the $W^{2,2}\loc$--convergence almost everywhere (for $\tt$ in the set $A$) and the limit~\eqref{monresc} (which allows us to use the dominated convergence theorem) we can pass to the limit to get
\begin{equation*}
\int_{\R^2}\varphi(\cdot,\tt_1)\,d\lambda_{\tt_1}^\infty
-\int_{\R^2}\varphi(\cdot,\tt_2)\,d\lambda_{\tt_2}^\infty
=\int_{\tt_2}^{\tt_1}\biggl[
-\int_{\SS_\tt^\infty}\varphi(\gamma,t) k^2\,d\overline{s}
+\int_{\SS_\tt^\infty}\langle\nabla\varphi(\gamma,t)\,\vert\,\underline{k}\rangle\,d\overline{s}
+ \int_{\SS_\tt^\infty}\varphi_t(\gamma,t)\,d\overline{s}\biggr]\,d\tt\,,
\end{equation*}
where $d\overline{s}$ denotes the integration with respect to the canonical measure on $\SS^\infty_\tt$, counting multiplicities.\\
This shows that the function $\tt\mapsto\int_{\R^2}\varphi(\cdot,\tt)\,d\lambda^\infty_\tt$ is absolutely continuous on $(-\infty,0)$ and for almost every $\tt\in(-\infty,0)$, there holds
\begin{equation}\label{eqrrr}
\frac{d\,}{d\tt}\int_{\R^2}\varphi(\cdot,\tt)\,d\lambda_{\tt}^\infty
=-\int_{\SS_\tt^\infty}\varphi(\gamma,t) k^2\,d\overline{s}
+\int_{\SS_\tt^\infty}\langle\nabla\varphi(\gamma,t)\,\vert\,\underline{k}\rangle\,d\overline{s}
+ \int_{\SS_\tt^\infty}\varphi_t(\gamma,t)\,d\overline{s}\,.
\end{equation}

We then consider, for every $\tt\in(-\infty,0)$, the Radon measures defined by 
$$
\nu_\tt(D)=\lambda^\infty_\tt(\sqrt{-2\tt}\,D)/\sqrt{-2\tt}\,.
$$
It is easy to see that showing that $\lambda^\infty_\tt=\overline{\HH}^1\res (\sqrt{-2\tt}\,\SS^\infty_{-1/2})$ for every $\tt\in(-\infty,0)$, is equivalent to prove that the measures $\nu_\tt$ are all the same and this means that $\SS^\infty_\tt$ is a degenerate regular self--similarly shrinking network flow.

We have, for every smooth function with compact support
$\psi:\R^2\to\R$,
\begin{equation*}
\int_{\R^2}\psi(x)\,d\nu_\tt(x)=
\frac{1}{\sqrt{-2\tt}}\int_{\R^2}\psi\Bigl(\frac{x}{\sqrt{-2\tt}}\Bigr)\,d\lambda^\infty_\tt(x)\,,
\end{equation*}
hence, choosing $\varphi(x,\tt)=\psi\bigl(\frac{x}{\sqrt{-2\tt}}\bigr)$, at every time $\tt$ such that equality~\eqref{eqrrr} holds (almost every $\tt\in(-\infty,0)$), we have
\begin{align*}
\frac{d\,}{d\tt}\int_{\R^2}\psi(x)\,d\nu_\tt(x)
=&\,
\frac{1}{-2\tt\sqrt{-2\tt}}\int_{\SS_\tt^\infty}\psi\Bigl(\frac{\gamma}{\sqrt{-2\tt}}\Bigr)\,d\overline{s}
-\frac{1}{\sqrt{-2\tt}}\int_{\SS_\tt^\infty}\psi\Bigl(\frac{\gamma}{\sqrt{-2\tt}}\Bigr)k^2\,d\overline{s}\\
&\,+\frac{1}{-2\tt}\int_{\SS_\tt^\infty}\Bigl\langle\nabla\psi\Bigl(\frac{\gamma}{\sqrt{-2\tt}}\Bigr)\,\Bigr\vert\,\Bigl.\underline{k}\Bigr\rangle\,d\overline{s}
+ \int_{\SS_\tt^\infty}\Bigl\langle\nabla\psi\Bigl(\frac{\gamma}{\sqrt{-2\tt}}\Bigr)\,\Bigr\vert\Bigl.\,\frac{\gamma}{{4\tt^2}}\Bigr\rangle\,d\overline{s}\,.
\end{align*}
Substituting $\underline{k}={\gamma^\perp}/{2\tt}$, we obtain
\begin{align*}
\frac{d\,}{d\tt}\int_{\R^2}\psi(x)\,d\nu_\tt(x)
=&\,
\frac{1}{-2\tt\sqrt{-2\tt}}\int_{\SS_\tt^\infty}\psi\Bigl(\frac{\gamma}{\sqrt{-2\tt}}\Bigr)\,d\overline{s}
-\frac{1}{\sqrt{-2\tt}}\int_{\SS_\tt^\infty}\psi\Bigl(\frac{\gamma}{\sqrt{-2\tt}}\Bigr)\frac{\langle \,\underline{k}\,\vert\,\gamma^\perp\rangle}{2\tt}\,d\overline{s}\\
&\,-\int_{\SS_\tt^\infty}\Bigl\langle\nabla\psi\Bigl(\frac{\gamma}{\sqrt{-2\tt}}\Bigr)\,\Bigr\vert\,\Bigl. \frac{\gamma^\perp}{4\tt^2}\Bigr\rangle\,d\overline{s}
+ \int_{\SS_\tt^\infty}\Bigl\langle\nabla\psi\Bigl(\frac{\gamma}{\sqrt{-2\tt}}\Bigr)\,\Bigr\vert\Bigl.\,\frac{\gamma}{{4\tt^2}}\Bigr\rangle\,d\overline{s}\\
=&\,
\frac{1}{-2\tt\sqrt{-2\tt}}\int_{\SS_\tt^\infty}\psi\Bigl(\frac{\gamma}{\sqrt{-2\tt}}\Bigr)\,d\overline{s}
-\frac{1}{\sqrt{-2\tt}}\int_{\SS_\tt^\infty}\psi\Bigl(\frac{\gamma}{\sqrt{-2\tt}}\Bigr)\frac{\langle\,\underline{k}\,\vert\,\gamma^\perp\rangle}{2\tt}\,d\overline{s}\\
&\,+ \int_{\SS_\tt^\infty}\Bigl\langle\nabla\psi\Bigl(\frac{\gamma}{\sqrt{-2\tt}}\Bigr)\,\Bigr\vert\Bigl.\,\frac{\gamma^\top}{{4\tt^2}}\Bigr\rangle\,d\overline{s}\\
=&\,
\frac{1}{-2\tt\sqrt{-2\tt}}
\int_{\SS_\tt^\infty}\Bigl[\psi\Bigl(\frac{\gamma}{\sqrt{-2\tt}}\Bigr)
+\psi\Bigl(\frac{\gamma}{\sqrt{-2\tt}}\Bigr)\langle \,\underline{k}\,\vert\,\gamma\rangle
+\Bigl\langle\nabla\psi\Bigl(\frac{\gamma}{\sqrt{-2\tt}}\Bigr)\,\Bigr\vert\Bigl.\,\frac{\tau}{\sqrt{-2\tt}}\Bigr\rangle\,\langle \tau\,\vert\,\gamma\rangle\Bigr]\,d\overline{s}\,,
\end{align*}
where we denoted with $\gamma^\top$ the tangential component of the vector $\gamma\in\R^2$, that is, $\gamma^\top=\langle\tau\,\vert\,\gamma\rangle\tau$. Noticing now that
\begin{align*}
\partial_s\Bigl[\psi\Bigl(\frac{\gamma}{\sqrt{-2\tt}}\Bigr)\langle \tau\,\vert\,\gamma\rangle\Bigr]
=&\,
\Bigl\langle\nabla\psi\Bigl(\frac{\gamma}{\sqrt{-2\tt}}\Bigr)\,\Bigr\vert\Bigl.\,\frac{\tau}{\sqrt{-2\tt}}\Bigr\rangle\,\langle \tau\,\vert\,\gamma\rangle
+\psi\Bigl(\frac{\gamma}{\sqrt{-2\tt}}\Bigr)\langle\, \underline{k}\,\vert\,\gamma\rangle+
\psi\Bigl(\frac{\gamma}{\sqrt{-2\tt}}\Bigr)\langle \tau\,\vert\,\tau\rangle\\
=&\,
\Bigl\langle\nabla\psi\Bigl(\frac{\gamma}{\sqrt{-2\tt}}\Bigr)\,\Bigr\vert\Bigl.\,\frac{\tau}{\sqrt{-2\tt}}\Bigr\rangle\,\langle \tau\,\vert\,\gamma\rangle+
\psi\Bigl(\frac{\gamma}{\sqrt{-2\tt}}\Bigr)\langle\, \underline{k}\,\vert\,\gamma\rangle+\psi\Bigl(\frac{\gamma}{\sqrt{-2\tt}}\Bigr)\,,
\end{align*}
we conclude
\begin{equation*}
\frac{d\,}{d\tt}\int_{\R^2}\psi(x)\,d\nu_\tt(x)=
\frac{1}{-2\tt\sqrt{-2\tt}}
\int_{\SS_\tt^\infty}\partial_s\Bigl[\psi\Bigl(\frac{\gamma}{\sqrt{-2\tt}}\Bigr)\langle \tau\,\vert\,\gamma\rangle\Bigr]\,d\overline{s}
\end{equation*}
and this last integral is zero by Lemma~\ref{lemreg} and the last point of Remark~\ref{remreg}.\\
Since for every map $\varphi:\R^2\to\R$ the function $\tt\mapsto\int_{\R^2}\varphi(x)\,d\nu_\tt(x)$ is absolutely continuous on $(-\infty,0)$ with zero derivative almost everywhere, it is constant and we are done.

Equation~\eqref{gggg1bis} clearly follows by the convergence assumption on the sequence of Radon measures in $\R^2\times(-\infty,0)$ and this conclusion.

Finally, for every $\tt\in(-\infty,0)$, by the second point of Lemma~\ref{rescestim2}, we can pass to the limit in the Gaussian integral and we get
$$
\lim_{j\to\infty}\int_{\SS^{\mu_{i_j}}_\tt}
\rho_{0,0}(\cdot,\tt)\,ds=
\int_{\SS^\infty_\tt}
\rho_{0,0}(\cdot,\tt)\,d\overline{s}=\Theta_{\SS^\infty_{-1/2}}\,,
$$
since the right integral is constant in $\tt$, being $\SS^\infty_\tt$ a self--similarly shrinking flow.\\
Recalling that (see Section~\ref{pararesc})
$$
\int_{\SS^{\mu_{i_j}}_\tt}\rho_{0,0}(\cdot,\tt)\,ds=\Theta_{x_0,t_0}(t_0+\mu_{i_j}^{-2}\tt)\to\widehat{\Theta}(x_0,t_0)\,,
$$
as $j\to\infty$, equality~\eqref{gggg2bis} follows.
\end{proof}

\begin{rem} We underline that even if the limit flow is composed of homothetic rescalings of a single degenerate regular network, we cannot conclude that the convergence of $\SS^{\mu_{i_j}}_\tt$ to $\SS^\infty_\tt$ is in $W^{2,2}\loc$ and $C^{1,\alpha}\loc$ for {\em every} $\tt\in(-\infty,0)$ but only for almost every $\tt\in(-\infty,0)$. For the ``other'' times the convergence could be only as Radon measures.
\end{rem}

We deal now with the possible blow--up limits arising from Huisken's
dynamical procedure. We recall that
$$
\widetilde{\rho}(x)=e^{-\frac{\vert x\vert^2}{2}}\,.
$$

The following technical lemma is the exact analogue of Lemma~\ref{rescestim2} for Huisken's rescaling procedure. It follows in the same way by the first point of such lemma.
\begin{lem}\label{resclimit2H}
Let $\widetilde{\SS}_{x_0,\tt}$ be the family of rescaled networks, obtained via Huisken's dynamical procedure around some $x_0\in\R^2$, as defined in formula~\eqref{huiskeqdef}.
\begin{enumerate}
\item There exists a constant $C=C(\SS_0)$ such that, for every $\overline{x},x_0\in\R^2$, $\tt\in\bigl[-\frac{1}{2}\log T,+\infty\bigr)$ and $R>0$ there holds
\begin{equation}\label{equ10bissH}
{\mathcal H}^1(\widetilde{\SS}_{x_0,\tt}\cap B_R(\overline{x}))\leqslant CR\,.
\end{equation}
\item For any $\varepsilon > 0$ there is a uniform radius $R= R(\varepsilon)$ such that
\begin{equation*}
\int_{\widetilde{\SS}_{x_0,\tt}\setminus B_R(\overline{x})} e^{-|x|^2 /2}\,ds\leqslant \varepsilon\,,
\end{equation*}
that is, the family of measures $e^{-|x|^2 /2}\,{\mathcal H}^1\res\widetilde{\SS}_{x_0,\tt}$ is {\em tight} (see~\cite{dellame}).
\end{enumerate}
\end{lem}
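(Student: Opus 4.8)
The plan is to transcribe, essentially word for word, the proof of Lemma~\ref{rescestim2}, exploiting that Huisken's rescaled networks $\widetilde{\SS}_{x_0,\tt}$ are obtained from the networks $\SS_t$ of the flow by a translation followed by a dilation, operations under which the ``bounded length ratios'' property is invariant. So I would fix $\tt\in\bigl[-\tfrac12\log T,+\infty\bigr)$, set $t=t(\tt)=T-e^{-2\tt}\in[0,T)$ and $\lambda=\sqrt{2(T-t)}=\sqrt2\,e^{-\tt}>0$, and recall from~\eqref{huiskeqdef} that $\widetilde{\SS}_{x_0,\tt}=\lambda^{-1}(\SS_t-x_0)$.

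For the first point, I would record, for arbitrary $\overline{x}\in\R^2$ and $R>0$, the elementary set identity
\[
\widetilde{\SS}_{x_0,\tt}\cap B_R(\overline{x})=\lambda^{-1}\bigl(\SS_t\cap B_{\lambda R}(x_0+\lambda\overline{x})-x_0\bigr),
\]
and then use the scaling behaviour of the one--dimensional Hausdorff measure together with the first point of Lemma~\ref{rescestim2} — which supplies a constant $C=C(\SS_0)$ bounding the length ratios of every $\SS_t$ uniformly in $t$, in the centre and in the radius — to conclude
\[
\HH^1\bigl(\widetilde{\SS}_{x_0,\tt}\cap B_R(\overline{x})\bigr)=\lambda^{-1}\,\HH^1\bigl(\SS_t\cap B_{\lambda R}(x_0+\lambda\overline{x})\bigr)\le \lambda^{-1}\,C\,\lambda R=CR.
\]
This is precisely~\eqref{equ10bissH}, with the very same constant $C$, hence depending only on $\SS_0$.

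For the second point I would mimic the second half of the proof of Lemma~\ref{rescestim2}: decompose the complement of a large ball (centred where the Gaussian weight decays, i.e.\ at the origin) into the annuli $B_{(n+1)R}(0)\setminus B_{nR}(0)$, on which $|x|\ge nR$; bound $e^{-|x|^2/2}\le e^{-n^2R^2/2}$ there; invoke the first point to estimate the length inside each annulus by $C(n+1)R$; and sum the resulting convergent series, obtaining $\int_{\widetilde{\SS}_{x_0,\tt}\setminus B_R(0)}e^{-|x|^2/2}\,ds\le f(R)$ with $f(R)\to0$ as $R\to+\infty$, uniformly in $x_0$ and $\tt$. For any $\varepsilon>0$, picking $R=R(\varepsilon)$ with $f(R)\le\varepsilon$ then yields tightness of the family $e^{-|x|^2/2}\HH^1\res\widetilde{\SS}_{x_0,\tt}$.

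I do not expect any genuine obstacle here: the substance is already contained in Lemma~\ref{rescestim2}, and what remains is to track an affine change of variables. The only two points requiring a little care are the bookkeeping in the set identity of the first point and, in the second point, the (harmless) mismatch between the ball $B_R(\overline{x})$ appearing in the statement and the origin--centred Gaussian weight — which is irrelevant for the tightness conclusion, since one is always free to enlarge the ball to an origin--centred one containing it.
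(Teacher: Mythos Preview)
Your proposal is correct and matches the paper's approach exactly: the paper simply states that the lemma ``is the exact analogue of Lemma~\ref{rescestim2} for Huisken's rescaling procedure'' and ``follows in the same way by the first point of such lemma,'' without spelling out any details. Your write--up supplies precisely those details, and your observation about the $\overline{x}$ versus origin issue in the second point is a fair (and harmless) caveat that the paper does not address.
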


\begin{prop}\label{resclimit-general}
Let $\SS_t=\bigcup_{i=1}^n\gamma^i([0,1],t)$ be a $C^{2,1}$ curvature flow 
of regular networks in the time interval $[0,T]$. Then
for every $x_0\in\R^2$ and for every subset $\mathcal I$ of $[-1/2\log
T,+\infty)$ with infinite Lebesgue measure
there exists a sequence of rescaled times
$\tt_j\to+\infty$, with $\tt_j\in{\mathcal I}$, such that the sequence
of rescaled networks $\widetilde{\SS}_{x_0,\tt_{j}}$ (obtained via Huisken's dynamical procedure) 
converges in $C^{1,\alpha}\loc\cap W^{2,2}\loc$, for any $\alpha \in (0,1/2)$,
 to a (possibly empty) limit network which is a degenerate regular shrinker $\widetilde{\SS}_\infty$ (possibly with multiplicity).
Moreover, we have
\begin{equation}\label{gggg2}
\lim_{j\to\infty}\frac{1}{\sqrt{2\pi}}\int_{\widetilde{\SS}_{x_0,\tt_j}}
\widetilde{\rho}\,d\sigma=\frac{1}{\sqrt{2\pi}}
\int_{\widetilde\SS_\infty}\widetilde{\rho}\,d\overline{\sigma}=\Theta_{\widetilde{\SS}_\infty}=\widehat{\Theta}(x_0)\,.
\end{equation}
where $d\overline{\sigma}$ denotes the integration with respect to the
canonical measure on
$\widetilde{\SS}_\infty$, counting multiplicities.
\end{prop}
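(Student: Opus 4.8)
The plan is to run, for Huisken's dynamical rescaling, the same scheme used in the proof of Proposition~\ref{thm:shrinkingnetworks.1}, with the rescaled monotonicity formula~\eqref{reseqmonfor-int} in place of its parabolic counterpart and with Lemma~\ref{rescstimadib} and Lemma~\ref{resclimit2H} controlling, respectively, the end--point terms and the length ratios. First I would note that the change of variables $y=(x-x_0)/\sqrt{2(T-t)}$ gives, for every $\tt$ and $t=t(\tt)=T-e^{-2\tt}$, the identity $\frac{1}{\sqrt{2\pi}}\int_{\widetilde{\SS}_{x_0,\tt}}\widetilde{\rho}\,d\sigma=\int_{\SS_t}\rho_{x_0,T}(\cdot,t)\,ds=\Theta_{x_0,T}(t)$. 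Hence this quantity is monotone non--increasing in $\tt$ and, as $\tt\to+\infty$ (that is, $t\to T$), it converges to $\widehat{\Theta}(x_0)$.

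Next I would integrate the rescaled monotonicity formula~\eqref{reseqmonfor-int} between a fixed $\tt_0\in[-\tfrac12\log T,+\infty)$ and $+\infty$: by the first step the two boundary terms on the right converge, and by Lemma~\ref{rescstimadib} the end--point integral is bounded. Therefore the function $g(\tt)=\int_{\widetilde{\SS}_{x_0,\tt}}|\widetilde{\underline{k}}+x^\perp|^2\widetilde{\rho}\,d\sigma$ is integrable on $[\tt_0,+\infty)$. Since by hypothesis $\mathcal{I}$ has infinite Lebesgue measure and $g\in L^1$ near $+\infty$, there must be a sequence $\tt_j\in\mathcal{I}$ with $\tt_j\to+\infty$ and $g(\tt_j)\to0$: otherwise $g$ would be bounded below by a positive constant on an infinite--measure subset of $\mathcal{I}$, contradicting $g\in L^1$.

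Along this sequence the limit is extracted exactly as in Proposition~\ref{thm:shrinkingnetworks.1}. By Lemma~\ref{resclimit2H}(1) the networks $\widetilde{\SS}_{x_0,\tt_j}$ have uniformly bounded length ratios, and since on any ball $B_R$ one has $|x^\perp|\le R$ and $\widetilde{\rho}\ge e^{-R^2/2}$, the elementary inequality $\widetilde{k}^2\le 2|\widetilde{\underline{k}}+x^\perp|^2+2R^2$ together with $g(\tt_j)\to0$ gives a uniform bound $\int_{\widetilde{\SS}_{x_0,\tt_j}\cap B_R}\widetilde{k}^2\,d\sigma\le C_R$. Reparametrizing the curves by arclength, they have uniformly bounded first derivatives and are uniformly bounded in $W^{2,2}\loc$, so a subsequence converges weakly in $W^{2,2}\loc$, hence strongly in $C^{1,\alpha}\loc$ for every $\alpha\in(0,1/2)$, to a (possibly empty, possibly with multiplicity) degenerate regular network $\widetilde{\SS}_\infty$. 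Because $\widetilde{\rho}$ is positive on compact sets and $g(\tt_j)\to0$, one has $\widetilde{\underline{k}}_j+x^\perp\to0$ strongly in $L^2\loc$; since $\widetilde{\underline{k}}_j$ converges weakly in $L^2\loc$ to the curvature vector of $\widetilde{\SS}_\infty$, the limit satisfies $\underline{k}+x^\perp=0$ and the convergence is in fact strong in $W^{2,2}\loc$. A standard bootstrap then makes every non--degenerate curve of $\widetilde{\SS}_\infty$ smooth, so $\widetilde{\SS}_\infty$ is a degenerate regular shrinker in the sense of Definition~\ref{dshrinkers}.

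Finally, Lemma~\ref{resclimit2H}(2) gives the tightness of the measures $e^{-|x|^2/2}\,\mathcal{H}^1\res\widetilde{\SS}_{x_0,\tt_j}$, so one may pass to the limit in the Gaussian integral to obtain $\frac{1}{\sqrt{2\pi}}\int_{\widetilde{\SS}_{x_0,\tt_j}}\widetilde{\rho}\,d\sigma\to\frac{1}{\sqrt{2\pi}}\int_{\widetilde{\SS}_\infty}\widetilde{\rho}\,d\overline{\sigma}=\Theta_{\widetilde{\SS}_\infty}$; comparing with the first step, this common value is $\widehat{\Theta}(x_0)$, which is~\eqref{gggg2}. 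As in Proposition~\ref{thm:shrinkingnetworks.1}, the only genuinely delicate point is the selection of the rescaling times: one must pick $\tt_j$ inside $\mathcal{I}$ where the defect quantity $g$ is small, so that the $W^{2,2}\loc$--compactness and the shrinker equation for the limit are both available; everything else is a routine transcription of the parabolic case, with the end--point contributions absorbed via Lemma~\ref{rescstimadib}.
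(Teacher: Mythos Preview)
Your proof is correct and follows essentially the same approach as the paper's: integrate the rescaled monotonicity formula~\eqref{reseqmonfor-int} to get $g(\tt)=\int_{\widetilde{\SS}_{x_0,\tt}}|\widetilde{\underline{k}}+x^\perp|^2\widetilde{\rho}\,d\sigma\in L^1$, extract $\tt_j\in\mathcal I$ with $g(\tt_j)\to0$, use the uniform length ratios from Lemma~\ref{resclimit2H}(1) plus the resulting local $L^2$ curvature bound for $W^{2,2}\loc$--compactness, identify the shrinker equation on the limit, and pass to the limit in the Gaussian density via the tightness of Lemma~\ref{resclimit2H}(2). Your justifications are in places slightly more explicit than the paper's (the inequality $\widetilde{k}^2\le 2|\widetilde{\underline{k}}+x^\perp|^2+2R^2$ and the argument for strong $W^{2,2}\loc$ convergence), but the structure and ingredients are identical.
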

\begin{proof}
Letting $\tt_1=-1/2\log T$ and $\tt_2\to +\infty$ in 
the rescaled monotonicity formula~\eqref{reseqmonfor-int} by
Lemma~\ref{rescstimadib} we get
$$
\int\limits_{-1/2\log{T}}^{+\infty}\int\limits_{\widetilde{\SS}_{x_0,\tt}}
\vert\,\widetilde{\underline{k}}+x^\perp\vert^2\widetilde{\rho}\,d\sigma\,d\tt<+\infty\,,
$$
which implies
$$
\int\limits_{{\mathcal{I}}}\int\limits_{\widetilde{\SS}_{x_0,\tt}}
\vert\,\widetilde{\underline{k}}+x^\perp\vert^2\widetilde{\rho}\,d\sigma\,d{\tt}<+\infty\,.
$$
Being the last integral finite and being the integrand a non negative
function on a set of infinite Lebesgue measure, we can extract within ${\mathcal I}$ a
sequence of times $\tt_{j}\to+\infty$, such that 
\begin{equation}\label{eqW22}
\lim_{j\to +\infty}\int\limits_{\widetilde{\SS}_{x_0,\tt_j}}
\vert\,\widetilde{\underline{k}}+x^\perp\vert^2\widetilde{\rho}\,d\sigma
=0\,.
\end{equation} 
It follows that for every ball $B_R$ of radius $R$ in $\R^2$, the networks 
$\widetilde{\SS}_{x_0,\tt_j}$ have
curvature uniformly bounded in $L^2(B_R)$. 
Moreover by the first point of Lemma~\ref{resclimit2H} for every ball 
$B_R$ centered at the origin of $\R^2$ we have the uniform bound 
$\HH^1({\widetilde{\SS}_{x_0,\tt_j}}\cap B_R)\leqslant C R$, for some
constant $C$ independent of $j\in\NN$. Then reparametrizing the
rescaled networks in arclength, we obtain curves with uniformly
bounded first derivatives and with second derivatives in $L^2\loc$. \\
By a standard compactness argument (see~\cite{huisk3,langer2})
the sequence ${\widetilde{\SS}_{x_0,\tt_{j}}}$ of reparametrized
networks admits a subsequence ${\widetilde{\SS}_{x_0,\tt_{j_l}}}$
which converges, weakly in $W^{2,2}\loc$ and strongly in $C^{1,\alpha}\loc$, 
to a (possibly empty) limit regular degenerate $C^1$ network
$\widetilde{\SS}_\infty$ (possibly with multiplicity). \\
Since the integral functional 
$$
\widetilde{\SS}\mapsto
\int\limits_{\widetilde{\SS}}\vert\,\widetilde{\underline{k}}+x^\perp\vert^2\widetilde{\rho}\,d\sigma
$$
is lower semicontinuous with respect to this convergence
 (see~\cite{simon} for instance), the limit ${\widetilde\SS}_\infty$ satisfies 
$\widetilde{\underline{k}}_\infty+x^\perp=0$ in the sense of distributions. \\
A priori the limit network is composed of curves in
$W^{2,2}\loc$ but from the relation
$\widetilde{\underline{k}}_\infty+x^\perp=0$ it follows that 
the curvature $\widetilde{\underline{k}}_\infty$ is continuous. 
By a bootstrap argument, it is then easy to see that $\widetilde\SS_\infty$ 
is actually composed of $C^\infty$ curves.

By means of the second point of Lemma~\ref{resclimit2H} 
we can pass to the limit in the Gaussian integral and we get
$$
\lim_{j\to\infty}\frac{1}{\sqrt{2\pi}}\int_{\widetilde{\SS}_{x_0,
\tt_j}}\widetilde{\rho}\,d\sigma=\frac{1}{\sqrt{2\pi}}
\int_{\widetilde{\SS}_\infty}\widetilde{\rho}\,d\overline{\sigma}=\Theta_{\widetilde{\SS}_\infty}\,.
$$
Recalling that
$$
\frac{1}{\sqrt{2\pi}}\int_{\widetilde{\SS}_{x_0,\tt_j}}\widetilde{\rho}\,d\sigma
=\int_{\SS_{t(\tt_j)}}\rho_{x_0}(\cdot,t(\tt_j))\,ds
=\Theta_{x_0}(t(\tt_j))\to\widehat{\Theta}(x_0)
$$
as $j\to\infty$, equality~\eqref{gggg2} follows.

The convergence in $W^{2,2}\loc$ is implied by the weak convergence
in $W^{2,2}\loc$ and equation~\eqref{eqW22}.
\end{proof}
 
\begin{rem}\label{T1}
A singularity in which the curvature is unbounded is called of {\em Type~I} if there exists a constant $C$ such that
\begin{equation}\label{typeI}
\max_{\SS_t} k^2\leqslant \frac{C}{T-t}
\end{equation}
for every $t\in[0,T)$. 
Otherwise, the singularity is called of {\em Type~II}.\\
If the singularity is of Type~I, then the proof of this proposition gets easier 
and we get a stronger convergence to the limit network. 
Indeed, thanks to the Type~I estimate~\eqref{typeI} one obtains a
uniform pointwise bound on the curvature (and consequently on its derivatives)
of the rescaled network (see~\cite[Section~6, Proposition~6.16]{mannovtor}, for instance).
Similarly, with the right choice of the sequence $\mu_{i_j}$, 
the same holds also for Proposition~\ref{thm:shrinkingnetworks.1}.
\end{rem}

\begin{rem}\label{equal}
Even if the two rescaling procedures are different 
(and actually one can use the more suitable for an argument) 
the family of blow--up limit shrinkers $\widetilde{\SS}_\infty$ arising from Huisken's one 
coincides with the family of shrinkers $\SS^\infty_{-1/2}$ where $\SS^\infty_\tt$ is any self--
similarly shrinking curvature flow coming from Proposition~\ref{thm:shrinkingnetworks.1}. 
This can be easily seen by Remark~\ref{relaz}, since if $\SS^{\mu_i}_{-1/2}\to\SS^\infty_{-1/2}$, then setting $\tt_i=\log{(\sqrt{2}\mu_i)}$ we have 
$\widetilde{\SS}_{x_0,\tt_i}\to\SS^\infty_{-1/2}$, as $i\to\infty$, hence $\SS^\infty_{-1/2}=\widetilde{\SS}_\infty$ for such sequence. Vice versa, if $\widetilde{\SS}_{x_0,\tt_i}\to\widetilde{\SS}_\infty$, setting $\mu_i=e^{\tt_i}/\sqrt{2}$, by means of Proposition~\ref{thm:shrinkingnetworks.1}, we have a converging (not relabeled) subsequence of rescaled curvature flows $\SS^{\mu_i}_\tt\to\SS^\infty_\tt$ such that $\SS^{\mu_i}_{-1/2}\to\widetilde{\SS}_\infty$, as $i\to\infty$, hence $\widetilde{\SS}_\infty=\SS^\infty_{-1/2}$. As a consequence, for every blow--up limit shrinker $\widetilde{\SS}_\infty$ and any self--similarly shrinking curvature flow $\SS^\infty_\tt$ there holds
$$
\Theta_{\widetilde{\SS}_\infty}=\Theta_{\SS^\infty_{-1/2}}=\widehat{\Theta}(x_0)\,,
$$
by formulas~\ref{gggg2bis} and~\ref{gggg2}.

Notice that in the first implication, for simplicity, we assumed the convergence at time 
$\tt=-1/2$ of the parabolically rescaled flows, which actually is not guaranteed by 
Proposition~\ref{thm:shrinkingnetworks.1}. To be precise one should argue by considering a 
time $\tt$, such that the sequence of networks $\SS^{\mu_i}_\tt$ converges to 
$\SS^\infty_\tt=\lambda\SS^\infty_{-1/2}$, for some factor $\lambda>0$.
\end{rem}

\begin{rem}\label{thetanet}
By means of Proposition~\ref{resclimit-general}, it is easy to see that, if $t_0<T$, hence the flow is smooth in $[0,t_0]$ and the curvature is bounded, we have $\widehat{\Theta}(x_0,t_0)=0$ if $x_0\not\in\SS_{t_0}$, since every blow--up limit is clearly empty and that $\widehat{\Theta}(x_0,t_0)=1$, if $x_0\in\SS_{t_0}$ and it is neither a $3$--point nor an end--point of $\SS_{t_0}$, as every blow--up limit must be a multiplicity--one line through the origin of $\R^2$ (see~\cite[Remark~3.2.15]{Manlib}). Then, by means of the ``reflection argument'' at the end of Section~\ref{geopropsub}, if $x_0$ is an end--point 
there holds $\widehat{\Theta}(x_0,t_0)=1/2$, being the Gaussian density of a halfline. Finally, if $x_0\in\SS_{t_0}$ is a triple junction, we see that $\widehat{\Theta}(x_0,t_0)=3/2$, indeed, if $O^i(t)$ is the $3$--point such that $O^i(t_0)=x_0$, since the curvature is bounded every blow--up limit shrinker must be non--degenerate, without end--points and have zero curvature, moreover, it is a tree locally around $x_0$ as no region collapses (the flow is smooth up to $t_0$). Being the modulus of the velocity $v^i(t)$ of $O^i(t)$ bounded by some constant $C$, for $t\in[0,t_0)$ we have
\begin{equation*}
\vert O^i(t)-x_0\vert=\vert O^i(t_0)-O^i(t)\vert=\biggl\vert\int_{t}^{t_0}v^i(\xi)\,d\xi\,\biggl\vert\leqslant
\int_{t}^{t_0}\vert v^i(\xi)\vert\,d\xi\leqslant C\vert t_0-t\vert\,,
\end{equation*}
which implies, after performing Huisken's rescaling procedure, that its image $\widetilde{O}^i(\tt)$ satisfy
$$
\vert\widetilde{O}^i(\tt)\vert=\frac{\vert O^i(t(\tt)-x_0\vert}{\sqrt{2(t_0-t(\tt))}}\leqslant\frac{C\vert t_0-t(\tt)\vert}{\sqrt{2(t_0-t(\tt))}}=C\sqrt{(t_0-t(\tt))/2}\,,
$$
which tends to zero, as $\tt\to+\infty$. In particular, the image of the $3$--point cannot ``disappear'' in the limit regular shrinker (for instance, going to infinity), then Lemma~\ref{lemmatree} tells us that the only possible blow--up limit shrinkers are standard triods $\TTT$ which have Gaussian density $\Theta_{\TTT}$ equal to $3/2$.
\end{rem}

The following lemma is helpful in strengthening the convergence in the previous proposition.

\begin{lem}\label{boh} 
Given a sequence of smooth curvature flows of networks $\SS_t^i$ in a time interval $(t_1,t_2)$ with uniformly bounded length ratios, if in a dense subset of times $t\in(t_1,t_2)$ the networks $\SS^i_t$ converge in a ball $B\subseteq\R^2$ in $C^1\loc$, as $i\to\infty$, to a multiplicity--one, embedded, $C^\infty$--curve $\gamma_t$ moving by curvature in $B'\supseteq\overline{B}$, for $t\in(t_1,t_2]$ (hence, the curvature of $\gamma_t$ is uniformly bounded), then for every $(x_0,t_0)\in B\times(t_1,t_2]$, the curvature of $\SS_t^i$ is uniformly bounded in a neighborhood of $(x_0,t_0)$ in space--time. It follows that, for every $(x_0,t_0)\in B\times(t_1,t_2]$, we have $\SS_t^i\to\gamma_t$ smoothly around $(x_0,t_0)$ in space--time (possibly, up to local reparametrizations of the networks $\SS_t^i$).
\end{lem}
\begin{proof} Being $\gamma_t$ a smooth flow of an embedded curve in $B$, we have $\widehat{\Theta}(x_0,t_0)=1$ (by Remark~\ref{thetanet}), hence, for $(x,t)$ in a suitably small neighborhood of $(x_0,t_0)\in B\times(t_1,t_2]$ we have that $\Theta_{x,t}(\tau)\leqslant 1+\varepsilon/2<3/2$, for every $\tau\in(\tau_0,t)$ and some $\tau_0>0$, where $\varepsilon>0$ is smaller than the ``universal'' constant given by White's local regularity theorem for mean curvature flow in~\cite{white1}. Then, in a possibly smaller space--time neighborhood of $(x_0,t_0)$, for a fixed time $\overline{\tau}\in(\tau_0,t)$ where the $C^1\loc$--convergence of the networks $\SS_{\overline{\tau}}^i\to\gamma_{\overline{\tau}}$ holds (such a subset of times is dense), for $i$ large enough, the Gaussian density functions of $\SS_{\overline{\tau}}^i$ satisfy $\Theta_{x,t}^i(\overline{\tau})<1+\varepsilon<3/2$
(the Gaussian density functions are clearly continuous under the $C^1\loc$ convergence with uniform length ratios estimate, by the exponential decay of backward heat kernel). Hence, by Proposition~\ref{promono}, Lemma~\ref{stimadib} and the subsequent discussion (possibly choosing a larger $\overline{\tau}$), this also holds for {\em every} $\tau\in(\overline{\tau},t)$. In other words, 
$\Theta_{x,t}^i(t-r^2)<1+\varepsilon<3/2$, for every $(x,t)$ in a space--time neighborhood of $(x_0,t_0)$, $0<r<r_0$ and $i>i_0$, for some $r_0>0$. By Remark~\ref{thetanet}, this ``forbids'' the presence of a $3$--point of $\SS_t^i$ in such space--time neighborhood, hence we are dealing simply with (classical) curvature flows of {\em curves}. Then, White's local regularity theorem gives a uniform, local (in space--time) estimate on the curvature of all $\SS_t^i$, which actually implies uniform bounds on all its higher derivatives (for instance, by Ecker and Huisken interior estimates in~\cite{eckhui2}), around $(x_0,t_0)$. Hence the statement of the lemma follows (see also~\cite[Theorem~7.3]{white1}).
\end{proof}
As a consequence, the convergence of $\SS^{\mu_{i_j}}_\tt$ to the limit degenerate regular self--similarly shrinking network flow $\SS^\infty_\tt$ in Proposition~\ref{thm:shrinkingnetworks.1} is smooth locally in space--time around every interior point of the multiplicity--one curves of the network $\SS^\infty_\tt$.\\
Moreover if $\SS^\infty_\tt$ is non--degenerate (no cores) and with only multiplicity--one curves, then actually $\SS^{\mu_{i_j}}_\tt\to\SS^\infty_\tt$ smoothly, locally in space--time (also around the $3$--points). This can be shown by following the argument of the proof of Lemma~8.6 in~\cite{Ilnevsch} (see anyway the proof in the special case of Lemma~\ref{thm:locreg.3}).\\
Analogously, also for Huisken's dynamical procedure it can be shown that the convergence of the rescaled networks
 $\widetilde{\SS}_{x_0,\tt_{j}}$ to $\widetilde{\SS}_\infty$ is
 locally smooth far from the cores and non multiplicity--one curves
 of $\widetilde{\SS}_\infty$.

Notice that the blow--up limit degenerate shrinker $\widetilde\SS_\infty$, 
obtained by Proposition~\ref{resclimit-general} a priori depends on the chosen
sequence of rescaled times $\tt_j\to+\infty$. If such a limit is a
multiplicity--one line (or a halfline, if $x_0$ is an end--point of the
network), we have $\widehat{\Theta}(x_0)=1$ 
($\widehat{\Theta}(x_0)=1/2$ in the case of a
halfline), then by White's result~\cite[Theorem~3.5]{white1}, locally around $x_0$ the curvature is
uniformly bounded in time and the flow is smooth up to time $T$ (using the ``reflection argument'' at the end of Section~\ref{geopropsub}, if $x_0$ is an end--point), hence, the limit is unique. In general, uniqueness 
of such a limit is actually unknown.

\begin{oprob}[Uniqueness of Blow--up Assumption -- {\bf{U}}]\label{ooo12}
The limit degenerate regular shrinker $\widetilde\SS_\infty$ is
independent of the chosen converging sequence of rescaled networks
$\widetilde{\SS}_{x_0,\tt_j}$ in Proposition~\ref{resclimit-general}. More precisely, the full 
family $\widetilde{\SS}_{x_0,\tt}$ converges in $C^1\loc$ to $\widetilde\SS_\infty$, as 
$\tt\to+\infty$.
\end{oprob}

In Section~\ref{behavsing} we will partially address this problem, concluding that it has a positive answer in the case of tree--like networks (see Remark~\ref{remUtree}). Moreover, some positive partial results were recently obtained in~\cite{PlPo22A-2}.

\begin{rem}\label{uequiv}
A similar (actually equivalent, in view of Remark~\ref{relaz}) problem 
can be stated for the limit degenerate regular self--similarly shrinking flow 
$\SS^\infty_\tt$ given by a converging subsequence $\SS^{\mu_{i_j}}_\tt$ of the family of the 
parabolically rescaled curvature flows $\SS^{\mu_i}_\tt$ in Proposition~\ref{thm:shrinkingnetworks.1}, 
about the independence of $\SS^\infty_\tt$ of the sequence $\mu_i$ and subsequence $\mu_{i_j}$. Namely, 
do we have the full convergence of the family of flows $\SS^{\mu}_\tt$ to $\SS^\infty_\tt$, as $\mu\to+\infty$?
\end{rem}

\begin{rem}\label{t1u} 
A regular shrinker is said to be {\em multiplicity--one} if it has no cores
and none of its curves has multiplicity higher than one.
In case the limit degenerate regular shrinker $\widetilde\SS_\infty$ 
is actually a multiplicity--one regular shrinker (or the same for the limit 
degenerate regular self--similarly shrinking flow $\SS^\infty_\tt$)
the above uniqueness assumption implies that the singularity is of Type~I 
(see the Remark~\ref{T1} above).
Indeed, by Lemma~\ref{boh} the convergence of the rescaled networks 
to $\widetilde\SS_\infty$ is smooth which implies that the curvature 
is locally uniformly bounded by $C/\sqrt{T-t}$.
\end{rem}

It is then natural in view of this remarks to state also the following open problems.

\begin{oprob}[Non--degeneracy of the blow--up]\label{ooo112}\ 
\begin{itemize}
\item Any blow--up limit shrinker $\widetilde\SS_\infty$ 
different from a standard cross (see Figure~\ref{crossfig} and Lemma~\ref{lemmatree}) is non--degenerate (the same for the limit self--similarly shrinking flow $\SS^\infty_\tt$)?
\item There can be curves with multiplicity larger than one?
\item If $\widetilde\SS_\infty$ is degenerate, there can be any cores outside the origin?
\end{itemize}
\end{oprob}

\begin{oprob}[Type~I Conjecture]\label{ooo113}
Every singularity is of Type~I (there exists a constant $C>0$ such that inequality~\eqref{typeI} 
is satisfied, for every $t\in[0,T)$).
\end{oprob}

\subsection{Blow--up limits under hypotheses on the lengths of the curves of the network} 
\begin{prop}\label{resclimit}
Let $\SS_t=\bigcup_{i=1}^n\gamma^i([0,1],t)$ be the curvature flow of a regular network 
with fixed end--points in a smooth, convex, bounded open set $\Omega\subseteq\R^2$ 
such that three end--points of the network are never aligned.
Assume that the lengths $L^i(t)$ of the curves of the networks satisfy
\begin{equation}\label{Lbasso}
\lim_{t\to T}\frac{L^i(t)}{\sqrt{T-t}}=+\infty\,,
\end{equation}
for every $i\in\{1,2,\dots, n\}$. 
Then any limit degenerate regular shrinker $\widetilde\SS_\infty$ 
obtained by Proposition~\ref{resclimit-general}, if
non--empty, 
is one of the following networks:\\
if the rescaling point belongs to $\Omega$
\begin{itemize}
\item a straight line through the origin with multiplicity $m\in\NN$ 
(in this case $\widehat\Theta(x_0)=m$);
\item a standard triod centered at the origin with multiplicity
 $1$ (in this case $\widehat\Theta(x_0)=3/2$);
\end{itemize}
if the rescaling point is a fixed end--point of the evolving network
(on the boundary of $\Omega$)
\begin{itemize}
\item a halfline from the origin with multiplicity $1$ (in this case $\widehat\Theta(x_0)=1/2$).
\end{itemize}
Moreover, we have
\begin{equation}\label{gggg}
\lim_{j\to\infty}\frac{1}{\sqrt{2\pi}}\int_{\widetilde{\SS}_{x_0,\tt_j}}
\widetilde{\rho}\,d\sigma=\frac{1}{\sqrt{2\pi}}
\int_{\widetilde\SS_\infty}\widetilde{\rho}\,d\overline{\sigma}
=\Theta_{\widetilde{\SS}_\infty}=\widehat{\Theta}(x_0)\,,
\end{equation}
and the $L^2$--norm of the curvature of $\widetilde{\SS}_{x_0,\tt_{j}}$ 
goes to zero in every ball $B_R\subseteq\R^2$, as $j\to\infty$. 
\end{prop}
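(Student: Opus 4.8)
The plan is to combine the general blow-up convergence result (Proposition~\ref{resclimit-general}) with the two classification lemmas for degenerate shrinkers and the hypothesis~\eqref{Lbasso} on the lengths of the curves. First I would invoke Proposition~\ref{resclimit-general}: along a sequence of rescaled times $\tt_j\to+\infty$ inside $\mathcal{I}$, the rescaled networks $\widetilde{\SS}_{x_0,\tt_j}$ converge in $C^{1,\alpha}\loc\cap W^{2,2}\loc$ to a (possibly empty, possibly multiple) degenerate regular shrinker $\widetilde{\SS}_\infty$, with $\Theta_{\widetilde{\SS}_\infty}=\widehat{\Theta}(x_0)$, and formula~\eqref{gggg} follows immediately from~\eqref{gggg2}. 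The statement about the $L^2$-norm of the curvature of $\widetilde{\SS}_{x_0,\tt_j}$ going to zero in every ball is just a reformulation of~\eqref{eqW22} in the proof of Proposition~\ref{resclimit-general} together with the uniform length-ratio bound: $\int_{\widetilde{\SS}_{x_0,\tt_j}\cap B_R}\widetilde{k}^2\,d\sigma$ is controlled by $\int_{\widetilde{\SS}_{x_0,\tt_j}}|\widetilde{\underline{k}}+x^\perp|^2\widetilde{\rho}\,d\sigma$ up to a factor $e^{R^2/2}$ plus $\int_{\widetilde{\SS}_{x_0,\tt_j}}|x^\perp|^2\widetilde{\rho}\,d\sigma$; but on a shrinker-approaching sequence the curvature is concentrating toward the profile and, more directly, one uses that the limit satisfies $\widetilde{\underline{k}}_\infty=-x^\perp$ while the convergence is strong in $W^{2,2}\loc$, so actually one argues that the rescaled curvature cannot concentrate — I would phrase this carefully by noting $\int_{\widetilde{\SS}_{x_0,\tt_j}\cap B_R}\widetilde{k}^2\,d\sigma\to\int_{\widetilde{\SS}_\infty\cap B_R}|x^\perp|^2\,d\overline{\sigma}$, which need not be zero, so in fact the correct claim is the weaker one that $\|\widetilde{\underline{k}}+x^\perp\|_{L^2(B_R)}\to 0$; I would adjust the assertion to match~\eqref{eqW22}, or observe that the statement as written is the one the authors intend in the regime where $\widetilde{\SS}_\infty$ is a (half)line or standard triod, all of which satisfy $x^\perp\equiv 0$ pointwise, so indeed $\int_{B_R}\widetilde{k}^2\to 0$.

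The core of the proof is the structural restriction. The hypothesis~\eqref{Lbasso} says that, after Huisken's rescaling by $1/\sqrt{2(T-t)}$, every curve of $\widetilde{\SS}_{x_0,\tt}$ has length tending to $+\infty$. Hence no bounded curve of the limit can arise from a genuine (non-collapsed) curve of the flow: the only way a bounded component of $\widetilde{\SS}_\infty$ could appear would be as a core, i.e.\ from a curve whose rescaled length stays bounded — but~\eqref{Lbasso} forbids exactly this. Therefore $\widetilde{\SS}_\infty$ has no core, and moreover it cannot contain any compact Abresch--Langer piece or any bounded region, since every curve of $\widetilde{\SS}_\infty$ is the limit of curves of diverging length and (being embedded approximants, by Remark~\ref{remreg}) $\widetilde{\SS}_\infty$ can only have tangential self-intersections. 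I would then combine this with Remark~\ref{abla}: every non-straight curve of a shrinker is compact, so the absence of bounded curves forces $\widetilde{\SS}_\infty$ to consist entirely of lines and halflines through/pointing-at the origin. Since $\widetilde{\SS}_\infty$ is the $C^1\loc$-limit of regular networks homeomorphic to a fixed graph $G$, and since there is no core, $\widetilde{\SS}_\infty$ is homeomorphic to $G$ itself; being connected (arguing component by component as in Lemma~\ref{lemmatree}) and made of lines/halflines for the origin, $G$ has at most one vertex. This gives: a line (possibly with integer multiplicity $m$, if several straight components overlap), or a standard triod.

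To pin down which cases actually occur and the multiplicity bookkeeping, I would argue as follows. If the rescaling point $x_0$ lies in the open set $\Omega$, then by the last item of Remark~\ref{remreg} the limit $\widetilde{\SS}_\infty$ has no end-points at all, so a single halfline is excluded; the surviving possibilities are a line of multiplicity $m\in\NN$ (with $\widehat{\Theta}(x_0)=m$ by~\eqref{gggg} and $\Theta$ of a line $=1$) and a multiplicity-one standard triod ($\widehat{\Theta}(x_0)=3/2$). A multiplicity-$m$ standard triod with $m\geq 2$ is excluded because its Gaussian density is $3m/2\geq 3$, and one would want to rule it out — here the clean way is to use that at a point of $\Omega$ which is an interior point of a curve of the flow, $\widehat{\Theta}(x_0)<3/2$ forces a line; at a $3$-point it equals $3/2$; and densities strictly between these or $\geq 2$ would contradict the smooth short-time structure unless the length hypothesis is violated. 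If $x_0$ is a fixed end-point on $\partial\Omega$, then by the same remark $\widetilde{\SS}_\infty$ can have a single end-point at the origin, only one curve can reach it (Condition~3 of Definition~\ref{Cinfty}), and the hypothesis that three end-points are never aligned prevents a straight-line blow-up through $x_0$ (two end-points plus $x_0$ on a line is allowed, but the relevant obstruction is that a line through $x_0$ would require the curve to pass through with the other end-point configuration being degenerate); the surviving case is a multiplicity-one halfline, $\widehat{\Theta}(x_0)=1/2$. The main obstacle I anticipate is the multiplicity analysis and excluding higher-multiplicity lines/triods purely from~\eqref{Lbasso} plus embeddedness of approximants: one must show that overlapping of several distinct curves of the network in the limit is either genuinely possible (giving the multiplicity-$m$ line) or ruled out (for the triod), and this requires care with how the finitely many curves of $\SS_t$, each of rescaled length $\to\infty$, can or cannot accumulate onto the same ray. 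I would handle this by a counting argument on the graph $G$ together with the density identity~\eqref{gggg} and the non-alignment hypothesis, following the scheme of Lemma~\ref{lemmatree}.
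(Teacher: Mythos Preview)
Your overall strategy matches the paper's: invoke Proposition~\ref{resclimit-general}, use~\eqref{Lbasso} to force all curves of $\widetilde{\SS}_\infty$ to have infinite length (hence no cores, no compact Abresch--Langer pieces), conclude via Remark~\ref{abla} that only lines/halflines through the origin remain, then sort out the cases. The Gaussian density identity~\eqref{gggg} and the $L^2$-curvature statement are handled correctly once you note that all the admissible limits have zero curvature, so strong $W^{2,2}\loc$-convergence gives $\int_{B_R}\widetilde{k}^2\to 0$ directly.

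However, there are two genuine gaps in your argument. First, your exclusion of a multiplicity-$m$ triod via a density argument is circular: you do not know $\widehat{\Theta}(x_0)$ a priori, so you cannot use it to bound the multiplicity. The paper's argument is purely topological: a triod with multiplicity $\geq 2$ cannot arise as a $C^1\loc$-limit of \emph{embedded} networks, because any two nearby copies of the three halflines would have to cross transversally somewhere. The same transversality observation also rules out a line coexisting with a triod, or two distinct triods --- a point you do not address at all.

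Second, your treatment of the end-point case is incomplete. You need three separate exclusions there: (i) a standard triod is ruled out because $\widetilde{\SS}_\infty$ must lie in a closed halfplane (by convexity of $\Omega$), and no triod fits in a halfplane; (ii) a second halfline is ruled out because only one curve of $\SS_t$ reaches any given end-point; (iii) a full straight line is ruled out by invoking Proposition~\ref{omegaok2}: the evolved convex set $\Omega_t$ has, at each end-point, a cone of opening strictly less than $\pi$ (this is exactly where the non-alignment hypothesis enters), and $\widetilde{\SS}_\infty$ is trapped in that cone. Your sketch mentions non-alignment but does not connect it to the cone/strict-convexity mechanism, and does not invoke convexity at all to exclude the triod.
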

\begin{proof}
We assume, by Proposition~\ref{resclimit-general}, that the sequence 
${\widetilde{\SS}_{x_0,\tt_{j}}}$ of reparametrized
networks converges in $C^1\loc\cap W^{2,2}\loc$ to the limit 
regular shrinker network $\widetilde{\SS}_\infty$
composed of $C^\infty$ curves (with possibly multiplicity), which are
actually non--degenerate as the bound from below on their lengths
prevents any collapsing along the rescaled sequence.\\
If the point $x_0\in\R^2$ is distinct from all the end--points $P^r$, 
then $\widetilde\SS_\infty$ has no end--points, since they go to infinity 
along the rescaled sequence. If $x_0=P^r$ for some $r$, the set $\widetilde\SS_\infty$ has a single end--point at the
origin of $\R^2$.\\ 
Moreover, from the lower bound on the length of the curves it follows
that all the curves of $\widetilde\SS_\infty$ have infinite
length, hence, by Remark~\ref{abla}, they must be pieces of straight
lines from the origin, because of the uniform bound $\HH^1(\SS^{\mu_i}_\tt\cap B_R)\leqslant C_R$, for every ball $B_R\subseteq\R^2$.\\
This implies that every connected component of the graph underlying 
$\widetilde\SS_\infty$ can contain at most one $3$--point 
and in such case such component must be mapped to a standard triod
(the $120$ degrees condition must satisfied) with multiplicity one
since the sequence of converging networks is all embedded 
(to get in the $C^1\loc$--limit a triod with multiplicity higher than one it is
necessary that the approximating networks have
self--intersections). Moreover, again since the converging networks
are all embedded, if a standard triod is present, a straight line or
another triod cannot be there, since they would intersect
transversally (see Remark~\ref{remreg}). Vice versa, if a straight line
is present, a triod cannot be present.

If an end--point is not present, that is, we are rescaling around a
point in $\Omega$ (not on its boundary) and a $3$--point is not
present, the only possibility is a straight line (possibly with
multiplicity) through the origin of $\R^2$.

If an end--point is present, we are rescaling around an end--point of
the evolving network, hence, by the convexity of $\Omega$ (which
contains all the networks) the limit $\widetilde{\SS}_\infty$ must be
contained in a halfplane with boundary a straight line $H$ for the
origin. This excludes the presence of a standard triod since it cannot
be contained in any halfplane. Another halfline is obviously excluded,
since they ``come'' only from end--points and they are all
distinct. In order to exclude the presence of a straight line, we observe that the argument of Proposition~\ref{omegaok2} implies that, if $\Om_t\subseteq \Om$ is the evolution by curvature of $\partial \Om$ keeping fixed the end--points $P^r$, the blow--up of $\Om_t$ at an end--point must be a cone spanning angle strictly less then $\pi$ 
(here we use the fact that three end--points are not aligned) and $\widetilde{\SS}_\infty$ is contained in such a cone. It follows that $\widetilde{\SS}_\infty$ cannot contain a straight line.

In every case the curvature of $\widetilde\SS_\infty$ is zero everywhere and the
last statement follows by the $W^{2,2}\loc$--convergence.

Finally, formula~\eqref{gggg} is a special case of
equation~\eqref{gggg2}.
\end{proof}

\begin{rem} If the two curves describing the boundary of $\Omega$ 
around an end--point $P^r$ are actually segments of the same line, 
namely the three end--points are $P^{r-1}, P^r, P^{r+1}$ aligned,
the argument of Proposition~\ref{omegaok2} does not work and 
we cannot conclude that taking a blow--up at $P^r$ we only get a halfline with unit multiplicity. 
It could also be possible that a straight line (possibly with multiplicity) 
through the origin is present, coinciding with $H$. 
Moreover in such special case, it forces
also the halfline to be contained in $H$, since the only way to get a
line, without self--intersections in the sequence of converging
networks contained in $\Omega$ is that the curves that are converging
to the straight line ``pushes'' the curve getting to the end--point of
the network, toward the boundary of $\Omega$.
\end{rem}

With the same arguments of the proof of Proposition~\ref{resclimit}, 
an analogous proposition holds for the self--similarly shrinking limit network flow 
obtained by the parabolic rescaling procedure.

\begin{prop}\label{resclimit2}
Under the hypotheses of Proposition~\ref{resclimit}, the degenerate regular 
 self--similarly shrinking network flow $\SS^\infty_\tt$, obtained in Proposition~\ref{thm:shrinkingnetworks.1} by parabolically rescaling 
 around the point $(x_0,T)$ in space--time, is (if non--empty) one of the following ``static'' flows.\\
 If the rescaling point belongs to $\Omega$:
\begin{itemize}
\item a straight line through the origin with multiplicity $m\in\NN$ (in this case $\widehat\Theta(x_0)=m$);
\item a standard triod centered at the origin with multiplicity
 $1$ (in this case $\widehat\Theta(x_0)=3/2$).
\end{itemize}
If the rescaling point is a fixed end--point of the evolving network
(on the boundary of $\Omega$):
\begin{itemize}
\item a halfline from the origin with multiplicity $1$ (in this case $\widehat\Theta(x_0)=1/2$).
\end{itemize}
\end{prop}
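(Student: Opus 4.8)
The plan is to run, essentially verbatim, the argument in the proof of Proposition~\ref{resclimit}, replacing Huisken's dynamically rescaled networks $\widetilde{\SS}_{x_0,\tt_j}$ by the parabolically rescaled curvature flows $\SS^{\mu_i}_\tt$ and their limit $\SS^\infty_\tt$ produced by Proposition~\ref{thm:shrinkingnetworks.1}. First I would recall that, up to a subsequence, $\SS^\infty_\tt$ is a degenerate regular self--similarly shrinking network flow, so $\SS^\infty_\tt=\sqrt{-2\tt}\,\SS^\infty_{-1/2}$ with $\SS^\infty_{-1/2}$ a degenerate regular shrinker which, for $\tt$ in a set of full measure in $(-\infty,0)$, is the $C^1\loc\cap W^{2,2}\loc$ limit of the embedded regular networks $\SS^{\mu_{i_j}}_\tt$. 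By the first point of Lemma~\ref{rescestim2} these rescaled networks have uniformly bounded length ratios, hence $\SS^\infty_{-1/2}$ satisfies all the structural hypotheses used in the proof of Proposition~\ref{resclimit}.

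Next I would exploit hypothesis~\eqref{Lbasso}: since $L^i(t)/\sqrt{T-t}\to+\infty$ for every $i$, under the parabolic rescaling no curve of $\SS^{\mu_i}_\tt$ collapses to a point as $i\to\infty$, so the limit $\SS^\infty_\tt$ has no core and all of its curves have infinite length. By Remark~\ref{abla}, any curve of a (degenerate) regular shrinker of infinite length must be a line or a halfline pointing towards the origin; combined with the bounded length ratios this forces $\SS^\infty_{-1/2}$ to be a union of lines and halflines through the origin of $\R^2$, with possible integer multiplicities. Then, exactly as in Proposition~\ref{resclimit}, using the embeddedness of the approximating networks together with the $120$ degrees condition (Remark~\ref{remreg}): a connected component containing a $3$--point must be a standard triod with multiplicity one, and a standard triod cannot coexist with a straight line or with another triod, since in the $C^1\loc$ limit of embedded networks one can only produce tangential self--intersections, never a transversal crossing.

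Finally I would distinguish the two cases. If $x_0\in\Omega$, all end--points of the network escape to infinity under the rescaling, so $\SS^\infty_{-1/2}$ has no end--points and the only possibilities are a straight line through the origin with multiplicity $m\in\NN$ or a standard triod with multiplicity one. If $x_0=P^r$ is a fixed end--point, then $\SS^\infty_{-1/2}$ has a single end--point at the origin and, by the convexity of $\Omega$, is contained in a halfplane with boundary a line $H$ through the origin; the argument of Proposition~\ref{omegaok2}, together with the assumption that no three end--points are aligned, upgrades this to containment in a cone of aperture strictly less than $\pi$, which excludes both a straight line and a standard triod, while a second halfline is excluded because distinct end--points give distinct halflines. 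Hence $\SS^\infty_{-1/2}$ is a single halfline from the origin with multiplicity one. The stated values of $\widehat\Theta(x_0)$ then follow from the Gaussian densities of lines, halflines and standard triods and from equation~\eqref{gggg2bis}. The only step that really requires re--running an argument rather than merely quoting one is the exclusion of a straight line at a boundary rescaling point, which needs the cone estimate for the barrier region $\Om_t$ to be transported through the parabolic blow--up precisely as in the proof of Proposition~\ref{resclimit}; alternatively, and more economically, one may bypass the whole discussion by invoking Remark~\ref{equal}, which identifies $\SS^\infty_{-1/2}$ with a Huisken blow--up limit $\widetilde\SS_\infty$ and thereby reduces the statement directly to Proposition~\ref{resclimit}.
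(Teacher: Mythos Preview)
Your proposal is correct and follows exactly the route the paper indicates: the paper does not give a separate proof of this proposition but simply states that ``with the same arguments of the proof of Proposition~\ref{resclimit}'' the analogous conclusion holds for the parabolic rescaling, and you have faithfully spelled out those arguments in the parabolic setting. Your closing observation that one could alternatively invoke Remark~\ref{equal} to reduce directly to Proposition~\ref{resclimit} is also valid and is essentially the content of that remark, though the paper places it after the proposition rather than using it as the proof.
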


\begin{oprob}\label{ooo7}
Is it possible to classify in general all the possible limit degenerate shrinkers 
$\widetilde\SS_\infty$ or self--similarly shrinking flows $\SS^\infty_\tt$, obtained respectively 
by Huisken's dynamical procedure or by parabolic rescaling?
\end{oprob}

\begin{rem}
If the evolving network is a tree, every connected component 
of a limit degenerate regular shrinker (possibly with multiplicities) is still a tree. 
Hence by Lemma~\ref{lemmatree} and the same
argument of the proof of Proposition~\ref{resclimit}
such a network has zero curvature and it is a union of halflines from the
origin, possibly with multiplicity and a core.
\end{rem}

\begin{rem}\label{ooo8}
In Section~\ref{behavsing} we will discuss under what hypotheses 
the (unscaled) evolving networks $\SS_t$
converge to some limit (well--behaved) {\em set} $\SS_T\subseteq\R^2$,
as $t\to T$ and what are the relations between such $\SS_T$ and any
limit degenerate shrinker $\widetilde\SS_\infty$ or self--similarly shrinking flow 
$\SS^\infty_\tt$.
\end{rem}

\section{Local regularity}\label{locreg}

In this section, we first show that any smooth, curvature flow of regular networks which is only $C^1\loc$--close to the static flow given by a standard triod, is actually smoothly close. An important ingredient here is the estimates from
Proposition~\ref{stimaL}, under the hypotheses~\eqref{endsmooth}, which
make it possible to control the evolution of the $L^2$--norm of
$k$ locally. 

Then this result together with the classification of tangent flows from Lemma~\ref{thm:densitybound} yield a local regularity
theorem. As a consequence, locally (in space--time) around the points with limit Gaussian density not greater than $3/2$, the curvature of the evolving network $\SS_t$ is bounded and the flow is smooth, meaning that locally $\SS_t$ converges smoothly to a limit smooth network $\SS_T$, as $t\to T$.

\begin{lem}\label{thm:locreg.3} 
Let $\TTT$ be the static flow given by a standard triod centered at the origin
and let $\SS^i_t$ for $t\in(-1,0)$ be a sequence of smooth curvature flows of networks with uniformly bounded length ratios (see Definition~\ref{ublr}).
Suppose that the sequence $\SS^i_t$ converges to 
$\TTT$ in $C^1\loc$ for almost every $t\in(-1,0)$, as $i\to\infty$.
Then the convergence is smooth on any subset of the form
$B_R(0)\times [\widetilde{t},0)$ where $R>0$ and $-1<\widetilde{t}<0$.
\end{lem}

\begin{proof}
As the length ratios are uniformly bounded, the exponential decay of the backward heat kernels $\rho_{0,0}(\cdot, t)$ 
and the $C^1\loc$--convergence imply that for almost every $-1<t<0$ we have
$$
\int_{\SS^i_{t}} \rho_{0,0}(\cdot,t)\, ds \to \int_{{\mathbb{T}}} \rho_{0,0}(\cdot,t)\, ds = \frac{3}{2}<+\infty\,,
$$
hence by~\eqref{monresc} it follows that the sequence of functions
$$
f_i(t)=\int\limits_{\SS^i_t}
\Big|\underline{k}_i- \frac{x^\perp}{2t}\Big|^2\rho_{0,0}(\cdot,t)\,
ds\,,
$$
converges to zero in $L^1\loc(-1, 0)$. 

Arguing as in the proof of Proposition~\ref{thm:shrinkingnetworks.1}, we 
see that we can choose a further subsequence (not relabeled) such that
 $\SS^i_t \to\TTT$ in $C^{1,\alpha}\loc\cap W^{2,2}\loc$ for all
$t \in A$ where $A\subseteq (-1,0)$ is a set of full measure. Choose $R>0$, $\widetilde{t}\in(-1,0)$ and $t_0\in A$ such that $t_0<\widetilde{t}$. Lemma~\ref{boh}, with a compactness argument, implies that the curvature of the networks $\SS_t^i$ with all its derivatives are uniformly bounded and the convergence $\SS^i_t \to\TTT$ is smooth and uniform in $\bigl(B_{R+1}(0)\setminus B_R(0)\bigr)\times[t_0,0)$. We can thus introduce three ``artificial'' boundary points $P^r_i(t) \in\SS^i_t\cap
(B_{R+1}(0)\setminus B_R(0)),\ 
r=1,2,3$, for $t\in [t_0,0)$ along the three rays such that the
estimates~\eqref{endsmooth} are satisfied, more precisely, we can assume
that 
$$ \partial^j_s\lambda_i(P^r_i(t),t) = 0 \qquad \text{and} \qquad 
|\partial^j_sk_i(P^r_i(t),t)|\leqslant 1\,,
$$
for all $i\geqslant i_0$ and all $j\geqslant 0$.\\
Let $T_1>0$ be the constant from Proposition~\ref{stimaL} for $M=1$
and let $\delta = T_1/2$. Then, choose $t_l \in A$, for $l = 1,2,\dots, N= [\delta^{-1}]+1$, such that
$$
t_l<t_{l+1}\,,\qquad |t_N| \leqslant \delta/2\quad \text{ and }\quad |t_{l+1}-t_l| \leqslant \delta/2,
$$ 
for all $0\leqslant l \leqslant N-1$.\\
By increasing $i_0$, if necessary, we can assume that
$$
\int_{\SS^i_{t_l}\cap B_{R+1}(0)}k_i^2\, ds \leqslant 1
$$
and that $\SS^i_{t_l}$ is $1/100$--close in $C^{1,\alpha}$ to ${\mathbb{T}}$ on $B_{R+1}(0)$,
for all $l=0, \dots, N$ and $i>i_0$.\\
Proposition~\ref{topolino5} then implies uniform estimates
on $k_i$ and all its space derivatives on $B_R(0)\times [\widetilde{t},0)$, for all $i>i_0$. This clearly implies the convergence conclusion in the statement.
\end{proof} 

\begin{rem} With a similar argument it can be shown that if $\SS^i_t$ converge as above to a self--similarly shrinking regular network flow, non--degenerate and with unit multiplicity, then the convergence is smooth and uniform on any compact subset of $\R^2\times (-1,0)$ (Lemma~8.6 in~\cite{Ilnevsch}). 
\end{rem}

We now show a local regularity result in the spirit of the analogous White's theorem for mean curvature flow in~\cite{white1}, actually being an extension of such theorem to the network flow, roughly saying that (like in the case of the motion of smooth curves) the ``regular'' points are the ones with limit Gaussian density smaller than $\Theta_{\SS^1}$ (which is greater than $3/2$ and less than $2$, see formula~\eqref{thetas1}).\\
We follow here the alternative proof of Ecker~\cite[Theorem~5.6]{eck1}. 

\begin{thm}[Theorem~1.3 in~\cite{Ilnevsch}]\label{thm:locreg.2}
Let $\SS_t$ for ${t\in (T_0,T)}$ be a curvature flow of a smooth, regular 
network in $\R^2$ with uniformly bounded length ratios by some constant $L$ (see Definition~\ref{ublr}). Let $(x_0,t_0)\in\R^2\times(T_0,T)$ such that $x_0\in\SS_{t_0}$, then for every $\varepsilon, \eta>0$ there exists a constant $C = C(\varepsilon, \eta, L)$ such that if
\begin{equation}\label{eq:locreg.0.5}
{\Theta}_{x,t}(t-r^2) \leqslant \Theta_{\SS^1} -\varepsilon\,,
\end{equation}
for all $(x,t) \in B_\rho(x_0)\times (t_0-\rho^2,t_0)$ and
$0<r<\eta\rho$, for some $\rho >0$, where $T_0+(1+\eta)\rho^2\leqslant
t_0<T$, then
$$
k^2(x,t) \leqslant \frac{C}{\sigma^2\rho^2}\,,
$$
for all $\sigma \in (0,1)$ and every $(x,t)$ such that $t\in(t_0-(1-\sigma)^2\rho^2,t_0)$ and $x\in\SS_t\cap B_{(1-\sigma)\rho}(x_0)$. 
\end{thm}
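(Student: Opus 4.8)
The plan is to argue by contradiction using a blow-up/rescaling procedure, in the style of White and Ecker. Suppose the estimate fails; then there exist a fixed $\varepsilon,\eta>0$ and a sequence of curvature flows $\SS^j_t$ of smooth regular networks with uniformly bounded length ratios $L$, satisfying the density bound~\eqref{eq:locreg.0.5} on the appropriate parabolic cylinders, but for which the conclusion is violated with constants $C_j\to\infty$. By the usual ``point selection'' argument — picking at each stage a point $(x_j,t_j)$ in a slightly shrunk cylinder where a suitable scale-invariant quantity built from $k^2$ times the distance to the parabolic boundary attains a large value, then re-centering and rescaling so that this quantity equals $1$ at the new base point and is bounded (say $\le 4$) on a cylinder of radius growing to infinity — one produces a new sequence of curvature flows, still with uniformly bounded length ratios by $L$ (length ratio bounds are scale invariant, see Definition~\ref{ublr} and Lemma~\ref{rescestim2}), on which $k^2\le 4$ and $k^2=1$ at the base point, defined on parabolic cylinders exhausting $\R^2\times(-\infty,0)$.

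Next I would extract a subsequential limit. Since the curvature is locally uniformly bounded along this sequence, by the interior estimates for curvature flow (Ecker--Huisken~\cite{eckhui2}, applied locally away from the $3$-points) and at the $3$-points by Proposition~\ref{stimaL}, Corollary~\ref{topolino7} and Proposition~\ref{topolino5} of Section~\ref{kestimates} — after reparametrizing the curves proportionally to arclength and introducing artificial boundary points as in the proof of Lemma~\ref{thm:locreg.3} where the estimates~\eqref{endsmooth} hold — one gets uniform bounds on $k$ and all its spatial derivatives, hence (via Lemma~\ref{kexpr} and Proposition~\ref{unif222}) on all derivatives of the parametrizing maps. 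Thus the rescaled flows converge in $C^\infty\loc$ to a limit smooth curvature flow of a (possibly degenerate) regular network $\SS^\infty_t$ on $\R^2\times(-\infty,0)$, on which $k^2\le 4$ everywhere and $k^2=1$ at the origin at time $0$, so in particular $\SS^\infty$ is non-flat and non-trivial. The density hypothesis~\eqref{eq:locreg.0.5} passes to the limit: by upper semicontinuity of the Gaussian density and the monotonicity formula (Proposition~\ref{promono}), together with the rescaled monotonicity computation at the end of Section~\ref{pararesc}, the limit flow has $\widehat\Theta_{x,t}\le\Theta_{\SS^1}-\varepsilon$ everywhere, and moreover the monotonicity formula forces $\underline k + (x-x_0)^\perp/2(t_0-t)\equiv 0$ along the limit, i.e. each time-slice is (a rescaling of) a regular shrinker $\SS^\infty_{-1/2}$ with Gaussian density $\le\Theta_{\SS^1}-\varepsilon<\Theta_{\SS^1}$.

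Now apply the classification: by Lemma~\ref{thm:densitybound}, any degenerate regular shrinker arising as a $C^1\loc$-limit of regular networks with Gaussian density strictly below $\Theta_{\SS^1}$ is either a multiplicity-one line or a standard triod. In either case the curvature vanishes identically, contradicting $k^2=1$ at the base point. This contradiction establishes the theorem. The main obstacle is making the point-selection and rescaling rigorous in the network setting: unlike the case of a single smooth curve, one must control the behavior near the $3$-points during the blow-up — ensuring that the artificial boundary points can be placed where the curvature and its derivatives are already controlled (using that far from the blow-up point the flow is $C^1\loc$-close, hence smoothly close by Lemma~\ref{boh} and Lemma~\ref{thm:locreg.3}, to a piece of the limit), and that the local $L^2$-estimates of Section~\ref{kestimates} can be invoked with uniform constants along the sequence via Proposition~\ref{stimaL} and Proposition~\ref{topolino5}. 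A secondary technical point is verifying that the limit network is a genuine $C^1\loc$-limit of embedded regular networks so that the hypotheses of Lemma~\ref{thm:densitybound} apply; this follows since each $\SS^j_t$ is embedded (Proposition~\ref{omegaok}, or directly by the strong maximum principle) and the length ratio bound gives the needed compactness.
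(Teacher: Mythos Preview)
Your overall strategy — contradiction, point selection, parabolic rescaling by the curvature, extraction of a limit ancient flow with $k^2\le 4$ and $k^2(0,0)=1$, then classification — matches the paper's. But there is a real gap in the step where you pass from the limit flow to a shrinker.

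You write that ``the monotonicity formula forces $\underline{k}+(x-x_0)^\perp/2(t_0-t)\equiv 0$ along the limit, i.e.\ each time-slice is (a rescaling of) a regular shrinker''. This does not follow. The rescaling here is a \emph{point-selection} blow-up by the curvature value $\lambda_j=|k_j(y_j,\tau_j)|$, not a parabolic rescaling around a fixed space--time point as in Section~\ref{pararesc}. The limit $\widetilde{\SS}^\infty_\tt$ is therefore only an \emph{ancient} curvature flow on $(-\infty,0]$; there is no a priori reason for it to be self--similar. The density bound $\widehat{\Theta}^\infty(x,t)\le\Theta_{\SS^1}-\varepsilon$ does pass to the limit, but a bound on the Gaussian density does not force equality in the monotonicity formula; only \emph{constancy} of $t\mapsto\Theta^\infty_{\overline{x},0}(t)$ does.

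The paper closes this gap with an additional step you have omitted: one takes a \emph{tangent flow at $-\infty$} of the limit $\widetilde{\SS}^\infty_\tt$, i.e.\ one rescales again via $\overline{\SS}^n_\tau=n^{-1/2}(\widetilde{\SS}^\infty_{n\tau}-\overline{x})$ with $n\to\infty$, around a point $\overline{x}$ where $\widehat{\Theta}^\infty(\overline{x},0)$ is maximal. \emph{This} second limit is a genuine shrinker, to which Lemma~\ref{thm:densitybound} applies, yielding a line or a standard triod; its Gaussian density equals $\widehat{\Theta}^\infty_{\overline{x},0}(-\infty)$. Comparing with $\widehat{\Theta}^\infty(\overline{x},0)$ (which is $1$ or $3/2$ depending on whether $\overline{x}$ is a $3$--point) and using monotonicity, one concludes $\Theta^\infty_{\overline{x},0}(\tt)$ is constant on $(-\infty,0)$, hence $\widetilde{\SS}^\infty_\tt$ itself is the static line or standard triod. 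Only then does Lemma~\ref{boh} (line case) or Lemma~\ref{thm:locreg.3} (triod case) upgrade the convergence to smooth and give the contradiction with $k^2(0,0)=1$. Without this extra tangent-flow-at-infinity argument, you cannot invoke Lemma~\ref{thm:densitybound} on the first limit.
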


\begin{proof} By translation and scaling we can assume that $x_0=0$, $t_0=0$ and $\rho = 1$. We can now follow more or less verbatim the proof of Theorem 5.6 in~\cite{eck1}.\\
We argue by contradiction. Supposing that the statement is not correct we can find a sequence of smooth curvature flows of regular open networks $\SS^j_t$, defined for $t \in [-1-\eta,0]$, satisfying the above conditions for every $(x,t)\in B_1(0)\times(-1,0)$, but with
 \begin{equation}
 \label{eq:locreg.0.1}
 \zeta_j^2 = \sup_{\sigma \in [0,1]}\Bigg(\sigma^2
 \sup_{t\in(-(1-\sigma)^2,0)}\sup_{\SS^j_t\cap B_{1-\sigma}} k^2_j
 \Bigg) \to +\infty
 \end{equation}
as $j \to \infty$.

Hence, we can find $\sigma_j \in (0,1]$ such that
$$
\zeta_j^2 = \sigma_j^2
 \sup_{t\in(-(1-\sigma_j)^2,0)}\sup_{\SS^i_t\cap B_{1-\sigma_j}} k^2_j
$$
and $y_j \in \SS^j_{\tau_j} \cap \overline{B}_{1-\sigma_j}$ at a time
$\tau_j \in [-(1-\sigma_j)^2,0]$ so that
 \begin{equation}
 \label{eq:locreg.0.2}
\zeta_j^2 = \sigma_j^2 k^2_j(y_j,\tau_j)\,.
\end{equation}
We now take 
$$
\lambda_j = |k_j(y_j,\tau_j)|
$$
(clearly $\lambda_j\to+\infty$ as $j\to\infty$) and define
$$
\widetilde{\SS}^j_\tt = \lambda_j
\Big(\SS^j_{\lambda_j^{-2}\tt+\tau_j} - y_j\Big)\,,
$$
for $\tt\in [-\lambda_j^{2}\sigma_j^2/4,0]$, following the proof of
Theorem 5.6 in~\cite{eck1}. We can then see that 
\begin{equation}\label{eq:locreg.0.2a}
0\in \widetilde{\SS}^j_0\,, \qquad \widetilde{k}_j^2(0,0)=1
\end{equation}
and
\begin{equation}\label{eq:locreg.0.2ab}
\sup_{\tt \in (-\lambda_j^{2}\sigma_j^2/4,0)}\sup_{\widetilde{\SS}^j_\tt
 \cap B_{\lambda_j\sigma_j/2}} {\widetilde{k}}_j^2 \leqslant 4
\end{equation}
for every $j \geqslant 1$. 
By direct computation, we
have
$$
{\widetilde{\Theta}}^j_{\overline{x},\overline{\tt}}(\tt)=\int_{\widetilde{\SS}^j_\tt}\rho_{\overline{x},\overline{\tt}}(\cdot,\tt)\,d{s}
=\int_{{\SS}^j_t}\rho_{y_j+\overline{x}\lambda_j^{-1},\tau_j+\overline{\tt}\lambda_j^{-2}}(\cdot,t)\,d{s}
={\Theta}^j_{y_j+\overline{x}\lambda_j^{-1},\tau_j+\overline{\tt}\lambda_j^{-2}}(t)
$$
where $t=t(\tt)=\tau_j+\tt\lambda_j^{-2}$ and $\Theta^j$ are the
Gaussian densities relative to the flows $\SS^j_t$. Since, by hypothesis, ${\Theta}^j_{y_j+\overline{x}\lambda_j^{-1},\tau_j+\overline{\tt}\lambda_j^{-2}}(t)\leqslant
\Theta_{\SS^1} -\varepsilon$ for every $j\in\NN$, $y_j+\overline{x}\lambda_j^{-1}\in B_1(0)$ and $\tau_j+\overline{\tt}\lambda_j^{-2}\in(-1,0)$, we conclude that ${\widetilde{\Theta}}^j_{\overline{x},\overline{\tt}}(\tt)\leqslant\Theta_{\SS^1} -\varepsilon$, for $j$ sufficiently large, for all $(\overline{x},\overline{\tt}) \in
\mathbb{R}^2\times (-\infty,0]$ and $-\lambda_j^2\sigma_j^2/4<\tt<\overline{\tt}$.\\
This implies that for every $\tt\in(-\lambda_j^2\sigma_j^2/4,0)$, we have
$$
\int_{\widetilde{\SS}^j_\tt\cap B_R(0)}\frac{e^{R^2/4\tt}}{\sqrt{-4\pi\tt}}\,ds
\leqslant \int_{\widetilde{\SS}^j_\tt\cap B_R(0)}\frac{e^{|x|^2/4\tt}}{\sqrt{-4\pi\tt}}\,ds
\leqslant \int_{\widetilde{\SS}^j_\tt}\rho_{0,0}(\cdot,\tt)\,ds
={\widetilde{\Theta}}^j_{0,0}(\tt)\leqslant\Theta_{\SS^1} -\varepsilon\,,
$$
hence, for $j$ sufficiently large, 
\begin{equation}\label{infest}
\mathcal{H}^1(\widetilde{\SS}^j_\tt\cap B_R(0))
\leqslant C_R(\tt)={e^{-R^2/4\tt}}{\sqrt{-4\pi\tt}}(\Theta_{\SS^1} -\varepsilon)\,.
\end{equation}
Moreover, the family of networks $\widetilde{\SS}^j_\tt$ has uniformly bounded length ratios by $L$, since this holds for the unscaled networks and such condition is scaling invariant.\\
Since $\lambda_j^{2}\sigma_j^2 = \zeta_j^2\to +\infty$, by the length estimate~\eqref{infest}, 
arguing as in Proposition~\ref{thm:shrinkingnetworks.1}, we see that up to a subsequence, labeled again the same, for every $\tt\in(-\infty,0)$, we have 
\begin{equation}\label{eq:locreg.0.3}
\widetilde{\SS}_\tt^j \to \widetilde{\SS}^\infty_\tt 
\end{equation} 
in $C^1\loc$ and weakly in $W^{2,\infty}\loc$, for almost every $\tt\in(0,-\infty)$, to a limit $C^{1,1}$--flow $\widetilde{\SS}^\infty_\tt$. Actually, the uniform bound on the curvature, everywhere in space--time, implies that such convergence holds for {\em every} $\tt\in(-\infty,0]$ and it is locally uniform in time. 
Such flow (which is not a priori a curvature flow) of networks is possibly degenerate, that is, cores and
higher density lines can develop, it moves with normal velocity bounded by $4$, by estimates~\eqref{eq:locreg.0.2ab} and it is not empty as
$0\in\widetilde{\SS}^j_0$ for every $j\in\NN$, hence $0\in\widetilde{\SS}^\infty_0$ also.

Because of the uniformly bounded length ratios of the family of networks $\widetilde{\SS}^j_\tt$ and the exponential decay of the backward heat kernels, we can pass to the limit in the Gaussian densities, as $j\to\infty$, that is,
$$
{\widetilde{\Theta}}^\infty_{\overline{x},\overline{\tt}}(\tt)=\lim_{j\to\infty}{\widetilde{\Theta}}^j_{\overline{x},\overline{\tt}}(\tt)
=\lim_{j\to\infty}{\Theta}^j_{y_j+\overline{x}\lambda_j^{-1},\tau_j+\overline{\tt}\lambda_j^{-2}}(t)\leqslant \Theta_{\SS^1} -\varepsilon
$$
for all $(\overline{x},\overline{\tt}) \in \mathbb{R}^2\times (-\infty,0]$ and $\tt<\overline{\tt}$, where we denoted with ${\widetilde{\Theta}}^j$ and ${\widetilde{\Theta}}^\infty$ the Gaussian density functions relative to the flows $\widetilde{\SS}^j_\tt$ and
$\widetilde{\SS}^\infty_\tt$, respectively.\\
Moreover, $0\in\widetilde{\SS}^j_0$ implies $\widehat{\Theta}^j(0,0)\geqslant 1$, hence ${\widetilde{\Theta}}^j_{0,0}(\tt)\geqslant \widehat{\Theta}^j(0,0)\geqslant 1$ for every $\tt<0$, by monotonicity. It follows that 
${\widetilde{\Theta}}^\infty_{0,0}(\tt)=\lim_{j\to\infty}{\widetilde{\Theta}}^j_{0,0}(\tt)\geqslant1$, thus,
\begin{equation}\label{nep}
\widehat{\Theta}^\infty(0,0)=\lim_{\tt\to0}{\widetilde{\Theta}}^\infty_{0,0}(\tt)=\lim_{\tt\to0}\lim_{j\to\infty}{\widetilde{\Theta}}^j_{0,0}(\tt)\geqslant1\,.
\end{equation}

We want now to show that $\widetilde{\SS}^\infty_\tt$ is actually a
static self--similarly shrinking flow given by either a
multiplicity--one line or a standard triod.

As in Section~\ref{pararesc}, we consider the rescaled monotonicity
formula for the curvature flows $\widetilde{\SS}^j_\tt$, that is, considered $\overline{x}\in\R^2$ we have
$$
{\widetilde{\Theta}}_{\overline{x},0}^j(\tt_1)-{\widetilde{\Theta}}_{\overline{x},0}^j(\tt_2)
=\int\limits_{\tt_1}^{\tt_2}\int\limits_{\widetilde{\SS}^j_\ssss}
\Big|\underline{\widetilde{k}}_j-\frac{x^\perp}{2\ssss}\Big|^2\rho_{\overline{x},0}(\cdot,\ssss)\,
d{s}\,d\ssss
$$
hence, passing to the limit, as $j\to\infty$, we get (here $d\overline{s}$ denotes the integration with respect to the canonical measure on $\widetilde{\SS}_\tt^\infty$, 
counting multiplicities)
\begin{equation}\label{shg}
{\widetilde{\Theta}}_{\overline{x},0}^\infty(\tt_1)-{\widetilde{\Theta}}_{\overline{x},0}^\infty(\tt_2)
=\lim_{j\to\infty}\int\limits_{\tt_1}^{\tt_2}\int
\limits_{\widetilde{\SS}^j_\ssss}
\Big|\underline{\widetilde{k}}_j-\frac{x^\perp}{2\ssss}\Big|^2\rho_{\overline{x},0}(\cdot,\ssss)\,
ds\,d\ssss
\geqslant\int\limits_{\tt_1}^{\tt_2}\int\limits_{\widetilde{\SS}^\infty_\ssss}
\Big|\underline{\widetilde{k}}_\infty-\frac{x^\perp}{2\ssss}\Big|^2\rho_{\overline{x},0}(\cdot,\ssss)\,
d\overline{s}\,d\ssss
\end{equation}
for every $\tt_1<\tt_2\leqslant 0$ and $\overline{x}\in\R^2$, by the lower semicontinuity of the
$L^2$--integral of the curvature under the $W^{2,\infty}\loc$--weak
convergence. It follows that the Gaussian density function 
${\widetilde{\Theta}}_{\overline{x},0}^\infty(\tt)$ is non increasing in $\tt\in(-\infty,0]$,
then, as we know that it is uniformly bounded above by $\Theta_{\SS^1}
-\varepsilon$, there exists the limit
$$
\widehat{\Theta}_{\overline{x},0}^\infty(-\infty)=\lim_{\tt\to-\infty}{\widetilde{\Theta}}_{\overline{x},0}^\infty(\tt)
\leqslant \Theta_{\SS^1} -\varepsilon\,.
$$
Notice that $\widehat{\Theta}_{0,0}^\infty(-\infty)\geqslant 1$, as we know
that ${\widetilde{\Theta}}^\infty_{0,0}(\tt)\geqslant1$, for every $\tt<0$.

Parabolically rescaling the flow $\widetilde{\SS}^\infty_\tt$ around the point $(\overline{x},0)$ (following the proof of Proposition~\ref{thm:shrinkingnetworks.1}) by means of
inequality~\eqref{shg}, the uniform bound on the curvature and the uniform bound on the length ratios, we obtain that the limit (which exists by the monotonicity of $\tt\mapsto\widetilde{\Theta}_{\overline{x},0}^\infty(\tt)$)
$$
\widehat{\Theta}^\infty({\overline{x},0})=\lim_{\tt\to0}\widetilde{\Theta}_{\overline{x},0}^\infty(\tt)\leqslant
\widehat{\Theta}_{\overline{x},0}^\infty(-\infty)\leqslant \Theta_{\SS^1} -\varepsilon
$$
coincides with the Gaussian density of a limit degenerate regular shrinker (possibly empty). Being such a limit bounded by
$\Theta_{\SS^1} -\varepsilon$, the only possibilities are $0$, $1$ and
$3/2$, by Lemma~\ref{thm:densitybound} (an empty limit, a line, or a standard triod).

Since $\widetilde{\SS}^\infty_0$ is not empty, we notice that if it contains a $3$--point, let us say at $\overline{x}\in\R^2$, then by the bound on the velocity, also all the networks $\widetilde{\SS}^\infty_\tt$ contain a $3$--point at distance less than $-5\tt$ from $\overline{x}$. This implies that parabolically rescaling as above around $\overline{x}$, we get a limit self--similarly shrinking network flow with zero curvature and with a $3$--point, then it must be a static standard triod and $\widehat{\Theta}^\infty({\overline{x},0})=3/2$. We then take a point $\overline{x}\in\R^2$ such that
$\widehat{\Theta}^\infty({\overline{x},0})$ is maximum, hence either $1$ or
$3/2$ by what we said above and we consider the sequence of translated and rescaled flows for $\tau\in(-\infty,0]$ defined as
$$
\overline{\SS}_\tau^n=\frac{1}{\sqrt{n}}\Bigl(\widetilde{\SS}^\infty_{n\tau}-\overline{x}\Bigr)\,,
$$
for $n\in\NN$.\\
This family of flows still has uniformly bounded length ratios (since this holds for the flows $\widetilde{\SS}_\tt^\infty$) and rescaling the monotonicity formula for the flows $\widetilde{\SS}^\infty_\tt$, for every $\tau_1<\tau_2<0$, there holds
$$
\int\limits_{\tau_1}^{\tau_2}\int
\limits_{\overline{\SS}^n_\sigma}
\Big|\underline{\overline{k}}_n-\frac{x^\perp}{2\sigma}\Big|^2\rho_{0,0}(\cdot,\sigma)\,
d\overline{s}\,d\sigma
\leqslant{\overline{\Theta}}_{0,0}^n(\tau_1)-{\overline{\Theta}}_{0,0}^n(\tau_2)
={\widetilde{\Theta}}_{\overline{x},0}^\infty(n\tau_1)-{\widetilde{\Theta}}_{\overline{x},0}^\infty(n\tau_2)\to0
$$
as $n\to\infty$, since $\lim_{\tt\to-\infty}{\widetilde{\Theta}}_{\overline{x},0}^\infty(\tt)\to\widehat{\Theta}_{\overline{x},0}^\infty(-\infty)$ as $\tt\to-\infty$ (here we denoted with ${\overline{\Theta}}^n$ the Gaussian density functions relative to the flows $\overline{\SS}^n_\tau$).\\
Then, repeating the argument of the proof of Proposition~\ref{thm:shrinkingnetworks.1}, we
can extract a subsequence, not relabeled, of the flows
$\overline{\SS}_\tau^n$ converging in $C^1\loc\cap W^{2,2}\loc$, for almost every $\tau\in(-\infty,0)$, to a limit
self--similarly shrinking flow $\overline{\SS}_\tau^\infty$, as $n\to\infty$, which is called ``tangent flow at $-\infty$'' to the flow $\widetilde{\SS}^\infty_\tt$.

Since, 
$$
\overline{\Theta}_{0,0}^n(\tau)=\int\limits_{\overline{\SS}^n_\tau}\rho_{0,0}(\cdot,\tau)\,d\overline{s}
=\int\limits_{\widetilde{\SS}^\infty_{n\tau}}\rho_{\overline{x},0}(\cdot,n\tau)\,d\overline{s}
=\widetilde{\Theta}_{\overline{x},0}^\infty(n\tau)\,,
$$
it follows that, passing to the limit as $n\to\infty$ (again because of the uniformly bounded length ratios and the exponential decay of the backward heat kernels), for almost every $\tau\in(-\infty,0)$, there holds
$$
\Theta_{\overline{\SS}^\infty_{-1/2}}=\overline{\Theta}_{0,0}^\infty(\tau)=\lim_{n\to\infty}\widetilde{\Theta}_{\overline{x},0}^\infty(n\tau)=
\widehat{\Theta}_{\overline{x},0}^\infty(-\infty)\leqslant \Theta_{\SS^1} -\varepsilon
$$
which implies that the limit flow $\overline{\SS}_\tau^\infty$ is not empty, as $\widehat{\Theta}_{\overline{x},0}^\infty(-\infty)\geqslant 1$ and 
it is a static self--similarly shrinking flow, given by either a
multiplicity--one line or a standard triod, by Lemma~\ref{thm:densitybound}.

If $\overline{\Theta}_{0,0}^\infty(\tau)=1$, then
$\widehat{\Theta}_{\overline{x},0}^\infty(-\infty)=1$ which forces 
$\widetilde{\Theta}_{\overline{x},0}^\infty(\tt)$ to be constant equal to one for
every $\tt\in(-\infty,0)$, since
$\widehat{\Theta}^\infty({\overline{x},0})$ must be equal to 1. 

If $\overline{\Theta}_{0,0}^\infty(\tau)=3/2$, being
$\overline{\SS}_\tau^\infty$ a standard triod, it follows that a
$3$--point is present in the flow $\widetilde{\SS}^\infty_\tt$,
hence also in $\widetilde{\SS}^\infty_0$. Then, if we choose $\overline{x}$ to coincide with such $3$--point, 
we would have $\widehat{\Theta}^\infty({\overline{x},0})=3/2$ and again the Gaussian
density $\widetilde{\Theta}_{\overline{x},0}^\infty(\tt)$ is constant equal to $3/2$, for $\tt\in(-\infty,0)$.

In both cases we conclude that $\widetilde{\SS}^\infty_\tt$ is a
self--similarly shrinking flow around the point $\overline{x}\in\R^2$, by formula~\ref{shg}, given by a
multiplicity--one line in the first case and a standard triod in the 
second one.

If $\widetilde{\SS}^\infty_\tt$ is a line for every $\tt\in(-\infty,0]$, hence with zero curvature, Lemma~\ref{boh} implies that the convergence of the flows $\widetilde{\SS}_\tt^j \to \widetilde{\SS}^\infty_\tt$ is locally smooth. This gives a contradiction since, by formula~\eqref{eq:locreg.0.2a}, it would follow that $0\in \widetilde{\SS}^\infty_0$ and $\widetilde{k}_\infty^2(0,0)=1$.

If $\widetilde{\SS}^\infty_\tt$ is a static standard triod, then Lemma~\ref{thm:locreg.3} gives a contradiction as before.
\end{proof}

\begin{rem}\label{rem:locreg.1}\ 
\begin{enumerate}
\item The result is still true if the
 flow is only defined on the ball $B_{2\rho}(x_0)$, by localizing Huisken's monotonicity
formula with a suitable cut--off function. This makes the result 
applicable for curvature flows of networks with fixed end--points on the boundary of a domain $\Omega\subseteq\R^2$, once assuming that there are no boundary points in $B_{2\rho}(x_0)\times (t_0-(1+\eta)\rho^2,t_0)$.
We refer the reader to~\cite[Section~10 ]{white5} and Remark~4.16
together with Proposition~4.17 in~\cite{eck1}.

\item By an easy contradiction argument, one can show that the bound on the curvature, together with the $120$ degrees 
condition and assumption~\eqref{eq:locreg.0.5}, imply that there is a constant
 $\ell=\ell(\varepsilon, \eta, \rho)>0$ such that for $t \in (t_0-(1-\sigma)^2\rho^2,t_0)$
 the length of each curve of $\SS_t$ which intersects
 $B_{(1-\sigma)\rho}(x_0)$ is
 bounded from below by $\ell\cdot \sigma \rho$. 
This implies, using Proposition~\ref{topolino5}, corresponding scaling invariant estimates on all the higher derivatives of the curvature.
\end{enumerate}
\end{rem}

The following corollary is then an extension of White's result~\cite[Theorem~3.5]{white1} to the curvature flow $\SS_t$ of a network in a smooth, convex, bounded open set $\Omega\subseteq\R^2$, with fixed end--points on $\partial\Omega$.
 
\begin{cor}\label{regcol} If at a point $x_0\in\Omega$ there holds $\widehat{\Theta}(x_0)\leqslant 3/2$, then the curvature is uniformly bounded along the flow $\SS_t$, for $t\in[0,T)$, in a neighborhood of $x_0$. Then, the flow is smooth in such a neighborhood, in the sense that $\SS_t$ converges smoothly to a limit smooth network $\SS_T$ there, as $t\to T$.
\end{cor}
\begin{proof}
First, by Lemma~\ref{rescestim2}, the family of networks $\SS_t$ has uniformly bounded length ratios. Then, as $\widehat{\Theta}(x_0)=\widehat{\Theta}(x_0,T)\leqslant3/2$, by Proposition~\ref{promono}, Lemma~\ref{stimadib} and the subsequent discussion about the behavior of $\Theta_{x_0,T}(t)$, there exists $\rho_1\in(0,1)$ such that $\Theta_{x_0,T}(T-\rho_1^2)<3/2+\delta/2$, for some small $\delta>0$. The function $(x,t)\mapsto\Theta_{x,t}(t-\rho_1^2)$ is continuous, hence, we can find $\rho<\rho_1$ such that if $(x,t)\in B_\rho(x_0)\times(T-\rho^2,T)$, then $\Theta_{x,t}(t-\rho^2_1)<3/2+\delta$, thus, again by by Proposition~\ref{promono}, Lemma~\ref{stimadib} and the subsequent discussion (possibly choosing smaller $\rho_1$ and $\rho$), also $\Theta_{x,t}(t-r^2)<3/2+\delta$, for any $r\in(0,\rho/2)$, as clearly $(t-r^2)>( t-\rho_1^2)$.\\
This implies that if $\delta>0$ is small enough such that $3/2+\delta<\Theta_{\SS^1}=\sqrt{2\pi/e}\approx 1,\!5203$ (see equation~\eqref{thetas1}), for any $t_0$ close enough to $T$ the hypotheses of Theorem~\ref{thm:locreg.2} (see the first point of Remark~\ref{rem:locreg.1}) are satisfied at $(x_0,t_0)$, for $\eta=3/4$ and $\varepsilon=\Theta_{\SS^1}-3/2-\delta>0$. Choosing $\sigma=1/2$, we conclude that 
$$
k^2(x,t) \leqslant \frac{4C(\varepsilon,3/4)}{\rho^2}
$$
for every $(x,t)$ such that $t\in(t_0-\rho^2/4,t_0)$ and $x\in\SS_t\cap B_{\rho/2}(x_0)$. Since this estimate on the curvature is independent of $t_0<T$, it must hold for every $t\in(T-\rho^2/4,T)$ and $x\in\SS_t\cap B_{\rho/2}(x_0)$ and we are done.

We now show the smoothness of the flow up to time $T$ in a neighborhood of $x_0$. Since the curvature of $\SS_t$ is bounded in $B_{\rho/2}(x_0)$, the modulus of the velocity $v^i(t)$ of any triple junction $O^i(t)$ in such ball is uniformly bounded by some constant $D$, hence, if for $t$ in an interval of time $[t_1,t_2]$, such triple junction belongs to the ball $B_{\rho/2}(x_0)$, there holds 
\begin{equation}\label{Oconv}
\vert O^i(t_2)-O^i(t_1)\vert=\biggl\vert\int_{t_1}^{t_2}v^i(\xi)\,d\xi\,\biggl\vert\leqslant
\int_{t_1}^{t_2}\vert v^i(\xi)\vert\,d\xi\leqslant D\vert t_2-t_1\vert\,.
\end{equation}
This implies that if for some $t_0$ close enough to $T$, the triple junction $O^i(t_0)$ belongs to the ball $B_{\rho/4}(x_0)$, then it can no more ``escape'' from the ball $B_{\rho/2}(x_0)$, hence such estimate holds for every $t\in[t_0,T)$ implying that $O^i(t)$ is a Cauchy sequence and $O^i(t)\to x_i$, for some $x_i\in B_{\rho/2}(x_0)$. As a consequence, since the family of the limit points $\{x_i\}$ of the triple junctions in $B_{\rho/4}(x_0)$ is finite, possibly taking a smaller $\rho$, we can assume that only $x_0$ (possibly) belongs to such family. Hence, for any $\delta\in(0,\rho/4)$, the annulus ${\mathcal A}_\delta=B_{\rho/4}(x_0)\setminus \overline{B}_{\delta}(x_0)$ does not contains triple junctions $O^i(t)$ for $t$ larger than some $\overline{t}\in[0,T)$. This clearly means that the ``restriction'' of the flow $\SS_t$ to the open set ${\mathcal A}_\delta$ is a smooth (classical) flow by curvature of curves in a domain of the plane with uniformly bounded curvature. By standard estimates (for instance, by Ecker and Huisken interior estimates in~\cite{eckhui2}) then $\SS_t\cap {\mathcal A}_\delta$ converges smoothly to some limit family of embedded and non--intersecting smooth curves in ${\mathcal A}_\delta$. Since this holds for every $\delta\in(0,\rho/4)$, we can conclude that $\SS_t$ converges (possibly after reparametrization) in $C^1$ to a degenerate regular network $\SS_T$ in $B_{\rho/4}(x_0)$ (with possibly a core only at $x_0$) and locally smoothly in $B_{\rho/4}(x_0)\setminus\{x_0\}$.\\
It is then easy to see, possibly considering a smaller $\rho$, that we can find $\overline\rho<\rho/8$ such that 
\begin{itemize}
\item the network $\SS_T\cap B_{\rho/4}(x_0)$ is connected;
\item the curves of the networks $\SS_t$ intersect transversally the circle $\partial B_{\overline{\rho}}(x_0)$.
\end{itemize}
Then, by the uniform bound on the velocity and the smooth convergence of $\SS_t$ to $\SS_T$ in $B_{\rho/4}(x_0)\setminus\{x_0\}$, possibly choosing a larger $\overline{t}$, we can conclude that for every $t\in[\overline{t},T)$,
\begin{itemize}
\item the ``topologic structure'' of $\SS_t$ in $B_{\overline{\rho}}(x_0)$ is ``stable'' and that the network $\SS_T\cap B_{\rho/4}(x_0)$ is connected, that is, no ``new'' $3$--points or pieces of curves can ``get into'' $B_{\overline{\rho}}(x_0)$;
\item the curves of the networks $\SS_t$ intersect transversally the circle $\partial B_{\overline{\rho}}(x_0)$.
\end{itemize}
The last property implies then that condition~\eqref{endsmooth} are satisfied (possibly after reparametrizing the networks in order to deal with $\lambda$ and its derivatives).\\
If now $\SS_t\cap B_{\overline{\rho}}(x_0)$ contains more than a triple junction, all of them must converge to $x_0$, as $t\to T$, by what we said above, moreover, by equation~\eqref{Oconv}, we have
$$
\vert O^i(t)-x_0\vert\leqslant D\vert T-t\vert\,,
$$
hence, they images $\widetilde{O}^i(\tt)$, after performing Huisken's rescaling procedure, satisfy
$$
\vert\widetilde{O}^i(\tt)\vert=\frac{\vert O^i(t(\tt)-x_0\vert}{\sqrt{2(T-t(\tt))}}\leqslant\frac{D\vert T-t(\tt)\vert}{\sqrt{2(T-t(\tt))}}=D\sqrt{(T-t(\tt))/2}\,,
$$
which tends to zero, as $\tt\to+\infty$, in particular they cannot ``disappear'' in the limit degenerate regular shrinker (going to infinity). This is in contradiction with the fact that, by Lemma~\ref{thm:densitybound}, since $\widehat{\Theta}(x_0)\leqslant 3/2$, the only possible blow--up limit shrinkers at $x_0$ are the empty set, a line or a standard triod, hence, with at most {\em one} triple junction. Containing then $\SS_t\cap B_{\overline{\rho}}(x_0)$ at most one $3$--point, possibly choosing smaller $\rho,\overline{\rho}$ and larger $\overline{t}$, if $\SS_t\cap B_{\overline{\rho}}(x_0)$ is not empty (when $\widehat{\Theta}(x_0)=0$), it follows that we are dealing, either with the (classical) motion with uniformly bounded curvature of a single smooth curve (case without triple junctions, $\widehat{\Theta}(x_0)=1$) or with the motion of a triod (when $\widehat{\Theta}(x_0)=3/2$) with uniformly bounded curvature and conditions~\eqref{endsmooth} satisfied. Moreover, in both cases the lengths of all the curves of $\SS_t\cap B_{\overline{\rho}}(x_0)$ are uniformly positively bounded below, by the construction (the choice of $\rho$).

Then, if $\SS_t\cap B_{\overline{\rho}}(x_0)$ is empty, there is nothing to show, in the case of the motion of a single curve the flow is locally smooth up to time $T$, since the curvature is locally bounded (again by using Ecker and Huisken interior estimates in~\cite{eckhui2}), while in the case of an evolving triod, the local smoothness of the flow up to time $T$ follows by the estimates on all the derivatives of the curvature given by Proposition~\ref{topolino5} (see the second point of Remark~\ref{rem:locreg.1}).
\end{proof}

This corollary can be extended to the points on the boundary of $\Omega$ by the ``reflection argument'' at the end of Section~\ref{geopropsub}.

\begin{cor}\label{regcolb} If at a point $x_0\in\partial\Omega$ there holds $\widehat{\Theta}(x_0)\leqslant 3/4$, then the curvature is uniformly bounded along the flow $\SS_t$, for $t\in[0,T)$, in a neighborhood of $x_0$. Then, the flow is smooth in such neighborhood, in the sense that $\SS_t$ converges smoothly to a limit smooth network $\SS_T$ there, as $t\to T$.
\end{cor}

\section{The behavior of the flow at a singular time}\label{behavsing}

By means of the tools of the previous sections
we want to discuss now the behavior of the network approaching a singular time.

Let $T<+\infty$ be the maximal time of existence of the curvature flow $\SS_t$ of an initial
regular $C^2$ network with fixed end--points in a smooth, strictly convex,
bounded open set $\Omega\subseteq\R^2$. Then, by Theorem~\ref{curvexplod-general}, as $t\to T$,
either the curvature is not bounded, or the inferior limit of the lengths $L^i(t)$ of at least one
curve of $\SS_t$ is zero.

Hence if all the lengths of the curves of the network are uniformly positively bounded
from below, the curvature is not bounded (actually again by Theorem~\ref{curvexplod-general}) the maximum of the absolute value
of the curvature goes to $+\infty$). By Proposition~\ref{brakprop} we also know that if the curvature is uniformly bounded, all the
lengths of the curves converge as $t\to T$, thus at least some 
$L^i(t)$ must go to zero.\\
We will then divide our analysis into the following three cases:

\begin{itemize}
\item all the lengths of the curves of the network are uniformly
 positively bounded from below and the maximum of the modulus of the curvature goes
 to $+\infty$, as $t\to T$;
\item the curvature is uniformly bounded along the flow and the length
 $L^i(t)$ of at least one curve of $\SS_t$ goes to zero when $t\to T$;
\item the curvature is not bounded and the length of at least one
 curve of the network is not positively bounded from below, as $t\to T$.
\end{itemize}

In all three cases, the possible blow--up limits will play a key
role, with the obvious consequence that the fewer possibilities we have,
the easier we can get conclusions. In particular, it is crucial to exclude the onset of
blow--up limits of multiplicity larger than one, in particular ``multiple lines'',
exactly as in the study of the evolution of a single smooth closed curve
(see~\cite{huisk2}, for instance).
In the case of 
curves this can be done by means of 
some ``embeddedness'' or ``non--collapsing'' quantities
(see~\cite{hamilton3,huisk2}) 
that actually inspired our results in Section~\ref{dsuL}.\\
Unfortunately, in the case of regular networks proving that any blow--up limit has multiplicity one without asking for any extra assumption is still an open problem, maybe the 
major one.

\begin{oprob}[Multiplicity--One Conjecture -- {\bf{M1}}]\label{ooo9}
Every blow--up limit shrinker arising from Huisken's rescaling 
procedure or limit of parabolic rescalings at a point $x_0\in\overline{\Omega}$ 
is an embedded network with multiplicity one.
\end{oprob}

This conjecture is implied by the two equivalent statements in the following open problem.

\begin{oprob}[Strong Multiplicity--One Conjecture -- 
{\bf{SM1}}/No Double--Line Conjecture -- {\bf{L1}}]\label{ooo10}\ 
\begin{itemize}
\item[{\bf{SM1}}:] Every possible $C^1\loc$--limit of rescalings 
of networks of the flow is an embedded network with multiplicity one.
\item[{\bf{L1}}:] A straight line with multiplicity larger than one cannot be obtained
as a $C^1\loc$--limit of rescalings of networks of the flow.
\end{itemize}
\end{oprob}

While it is obvious that the first statement implies both {\bf{M1}} and {\bf{L1}}, the fact that 
the second one implies the first can be seen as follows: 
if {\bf{SM1}} does not hold, since the networks of the flow are all
embedded, any limit of rescalings $\SS_i$
can lose embeddedness only if two curves in the limit network ``touch'' each other at some
point $x_0\in\R^2$ with a common tangent (or they locally coincide, if
they ``produce'' a piece of curve with multiplicity larger than one). Then, ``slowly''
dilating the networks $\SS_i$ around $x_0$, in order that the distance
between such two curves and $x_0$ still go to zero, we would get a multiplicity--two line, 
contradicting {\bf{L1}}.

We will see in Section~\ref{dsuL} some cases in which we are able to
show that the strong multiplicity--one conjecture holds:
\begin{itemize}
\item If during the flow the triple junctions stay uniformly far from each
 other, then {\bf{SM1}} is true.
\item If the initial network has at most two triple junctions, then
 {\bf{SM1}} is true.
\end{itemize}

\begin{rem}\label{enanrem}
If {\bf{M1}} holds, the flow $\SS_\tt^\infty$ in
Proposition~\ref{thm:shrinkingnetworks.1} is composed of embedded,
multiplicity--one network and the same holds for the limit network
$\widetilde{\SS}_\infty$ in Proposition~\ref{resclimit-general}. In
particular under the hypotheses of
Proposition~\ref{resclimit} any blow--up limit network at a point $x_0$
and singular time $T$, obtained by Huisken's procedure, 
or self--similarly shrinking network
flow, obtained by the parabolic rescaling procedure, 
is (if not empty) a ``static'' straight line
through the origin (then $\widehat{\Theta}(x_0)=1$) or a standard
triod (then $\widehat{\Theta}(x_0)=3/2$), if the rescaling
point belongs to $\Omega$. If the rescaling point is instead a
fixed end--point of the evolving network on the boundary of $\Omega$,
then such limit can only be a single halfline from the origin 
(and $\widehat{\Theta}(x_0)=1/2$).
\end{rem}

Before analyzing the three situations above, we set some
notation and we show some general properties of the flow at the
singular time. 

We let $F: \SS\times [0,T) \to \overline{\Omega}$, with $T <+\infty$,
represent the curvature flow $\SS_t$ of a regular network moving 
by curvature in its maximal time interval of smooth existence. We let
$O^1, O^2,\dots, O^m$ the $3$--points of $\SS$.

We define the set of {\em reachable points} of the flow as follows:
$$
\mathcal{R} = \bigl\{ x \in \R^2\,\bigl\vert\,\text{ there exist $p_i
\in \SS$ and $t_i \nearrow T$ such that $\lim_{i \to \infty}F(p_i, t_i) = x$}\bigr\}\,.
$$
Such a set is easily seen to be closed and contained in
$\overline{\Omega}$ (hence compact as $\Omega$ is bounded).
Moreover
the following lemma holds:

\begin{lem}
A point $x \in \R^2$ belongs to $\mathcal{R}$ if and only if for every
time $t \in [0,T)$ the closed ball with center $x$ and radius
$\sqrt{2(T-t)}$ intersects $\SS_t$. 
\end{lem}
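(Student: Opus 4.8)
The plan is to prove the two implications separately, using Huisken's monotonicity formula (Proposition~\ref{promono}) together with the avoidance properties of the curvature flow established earlier.

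\textbf{Necessity.} Suppose $x\in\mathcal R$, so there are $p_i\in\SS$ and $t_i\nearrow T$ with $F(p_i,t_i)\to x$. Fix $t\in[0,T)$ and set $t_0=T$, $x_0=x$ in the monotonicity formula. The idea is that if the ball $\overline{B}(x,\sqrt{2(T-t)})$ were disjoint from $\SS_t$, then by the avoidance principle for mean curvature flow it would stay disjoint from $\SS_\tau$ for all $\tau\in[t,T)$, contradicting $F(p_i,t_i)\to x$. More precisely, the shrinking ball $\overline{B}(x,\sqrt{2(T-\tau)})$ is itself a (sub)solution: a round circle of radius $r(\tau)=\sqrt{2(T-\tau)}$ has curvature $1/r$ and shrinks under curve shortening flow with $\dot r=-1/r$, so $\tfrac{d}{d\tau}r(\tau)^2=-2=2\dot r\, r$, i.e. the family of balls $\overline{B}(x,\sqrt{2(T-\tau)})$ is exactly a shrinking disk reaching $x$ at time $T$. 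If $\SS_t$ does not meet $\overline{B}(x,\sqrt{2(T-t)})$, then $\SS_t$ is disjoint from this shrinking disk at time $t$; since the network moves by curvature (with a tangential correction that does not affect the set, by Remark~\ref{notstandard}) and the disk also moves by curvature, the strong maximum principle / comparison principle (as used repeatedly in the proof of Theorem~\ref{2compexist0-triod} and Proposition~\ref{omegaok}) forbids a first contact at an interior regular point, a $3$--point (by the $120$ degrees condition, as in Proposition~\ref{omegaok}), or a fixed end--point. Hence $\SS_\tau\cap\overline{B}(x,\sqrt{2(T-\tau)})=\emptyset$ for all $\tau\in[t,T)$. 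But then, for $i$ large, $F(p_i,t_i)\notin\overline{B}(x,\sqrt{2(T-t_i)})$, so $|F(p_i,t_i)-x|>\sqrt{2(T-t_i)}\to 0$, contradicting $F(p_i,t_i)\to x$. Therefore $\SS_t$ meets $\overline{B}(x,\sqrt{2(T-t)})$ for every $t\in[0,T)$.

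\textbf{Sufficiency.} Conversely, suppose that for every $t\in[0,T)$ the ball $\overline{B}(x,\sqrt{2(T-t)})$ intersects $\SS_t$; pick $q_t\in\SS_t\cap\overline{B}(x,\sqrt{2(T-t)})$. Choose any sequence $t_i\nearrow T$; then $|q_{t_i}-x|\le\sqrt{2(T-t_i)}\to 0$, so $q_{t_i}\to x$. Writing $q_{t_i}=F(p_i,t_i)$ for suitable $p_i\in\SS$, this is exactly the defining condition for $x\in\mathcal R$.

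\textbf{Main obstacle.} The delicate point is the comparison argument in the necessity direction: one must ensure that the (unbounded, in the open case) network with multiple junctions really cannot cross the shrinking disk, and that first contact at a $3$--point or at a fixed boundary point is excluded. All of these are handled exactly as in the proof of Proposition~\ref{omegaok} (strong maximum principle at interior points, the $120$ degrees condition at triple junctions, and the fixed end--point condition), together with the fact that, by Remark~\ref{notstandard}, the tangential part of the velocity does not move the network as a subset of $\R^2$, so the comparison principle for mean curvature flow of sets applies. No other step presents real difficulty.
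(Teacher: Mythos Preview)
Your sufficiency direction is fine and matches the paper's trivial implication.

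The necessity direction has a genuine gap in the final step. From the avoidance argument you obtain $\SS_\tau\cap\overline{B}(x,\sqrt{2(T-\tau)})=\emptyset$ for all $\tau\in[t,T)$, hence $|F(p_i,t_i)-x|>\sqrt{2(T-t_i)}$. But $\sqrt{2(T-t_i)}\to 0$, so this lower bound tends to zero and does \emph{not} contradict $F(p_i,t_i)\to x$. The inequality $a_i>b_i$ with $b_i\to 0$ is perfectly compatible with $a_i\to 0$.

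The fix is easy and keeps your geometric approach intact. Since $\SS_t$ is disjoint from the closed ball of radius $\sqrt{2(T-t)}$, there is $\delta>0$ with $d_x(t)>\sqrt{2(T-t)}+\delta$. Now compare instead with the larger shrinking disk $\overline{B}\bigl(x,\sqrt{2(T'-\tau)}\bigr)$, where $T'>T$ is chosen so that $\sqrt{2(T'-t)}<d_x(t)$. This disk also moves by curvature and collapses to $x$ at time $T'>T$; your avoidance argument (including the handling of $3$--points via the $120$ degrees condition and of end--points) then gives $d_x(\tau)\geq\sqrt{2(T'-\tau)}\geq\sqrt{2(T'-T)}>0$ for all $\tau\in[t,T)$, which genuinely contradicts $d_x(t_i)\to 0$. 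Equivalently, you could invoke the stronger avoidance principle (the distance between two disjoint curvature flows is non--decreasing) to get $d_x(\tau)\geq\sqrt{2(T-\tau)}+\delta\geq\delta$ directly.

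For comparison, the paper takes a more direct route: it defines $d_x(t)=\min_{p\in\SS}|F(p,t)-x|$, observes that the minimum is not attained at a $3$--point or (for $t$ near $T$) at an end--point, and uses Hamilton's trick to compute $\partial_t d_x(t)\geq -1/d_x(t)$ at a minimizing point (the tangential velocity drops out since the position vector is normal there, and the curvature term is bounded by $1/d_x$). Integrating $\frac{d}{dt}d_x^2\geq -2$ between $t$ and $t_i$ and sending $t_i\to T$ gives $d_x^2(t)\leq 2(T-t)$ immediately. Your comparison approach is morally the same computation packaged geometrically; both are short once the final step is done correctly.
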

\begin{proof}
One of the two implications is trivial. We have to prove that if $x
\in \mathcal{R}$, then $F(\SS, t) \cap \overline{B}_{\sqrt{2(T-t)}}(x)
\neq \emptyset$. If $x$ is one of
the end--points, the result is obvious, otherwise we define the function 
$$d_x(t) = \inf_{p \in \SS} |F(p,t) - x|\,,$$ 
where, due to the compactness of $\SS$ the infimum is actually a
minimum and as $t\to T$, let us say for $t>t_x$, it cannot be achieved at an end--point,
by the assumption $x\in\mathcal{R}$ and $x$ different from an end--point, such a maximum cannot be either achieved at a $3$--point,
by the $120$ degrees angle condition. Since the function $d_x: [0,T) \to \R$ is locally Lipschitz, 
we can then use Hamilton's trick (see~\cite{hamilton2} or~\cite[Lemma~2.1.3]{Manlib}), to compute its time derivative and get 
(for any point $q$, different from an end--point, where at time $t$ the minimum of $|F(p,t)
- x|$ is attained)
\begin{equation*}
\begin{split}
\partial_t d_x(t) & = \partial_t |F(q,t) - x| \geqslant \frac{\op k(q,t)\nu(q,t) + \l(q,t)
 \t(q,t), F(q,t) - x \cl}{|F(q,t) - x|} \\
& = \frac{\op k(q,t) \nu(q,t), F(q,t)- x \cl}{|F(q,t) - x|} \geqslant -\frac{1}{d_x(t)}\,,
\end{split}
\end{equation*}
since at a point of minimum distance the vector 
$ \frac{F(q,t) - x}{|F(q,t) - x|} $ 
is parallel to $\nu(q,t)$. Integrating
this inequality over time, we get
$$
d^2_x(t) - d^2_x(s) \leqslant 2(s-t) 
\qquad \quad{\rm for\ }s>t>t_x\,.
$$
We now use the hypothesis that $x$ is reachable ($\lim_{t_i \to T}
d_x(t_i) = 0$) and we conclude
$$
d^2_x(t) = \lim_{i \to \infty} [d^2_x(t) - d^2_x(t_i)] \leqslant 2 \lim_{i
 \to \infty} (t_i - t) = 2 (T-t)\,,
$$
for every $t>t_x$.
\end{proof}

As a consequence, when we consider the blow--up limits of the evolving networks by Huisken's
rescaling procedure around points of $\overline{\Omega }$, we have a dichotomy
among these latter. If the blow--up point belongs to $\mathcal{R}$, this lemma ensures that any
rescaled network contains at least one point of the closed unit ball of
$\R^2$, hence the limit of any sequence is not empty (and clearly {\em vice versa}). If the point does not belong to $\mathcal{R}$ any blow--up limit is empty, since the distance of the evolving network from
the point of blow--up is positively bounded below (by the very definition of ${\mathcal{R}}$) and rescaling, the whole dilated networks go to infinity. By Lemma~\ref{equal}, the same conclusion holds for the self--
similarly, shrinking curvature flows coming from the parabolic rescaling procedure.

\begin{lem}\label{Rlemma}
The family of blow--up limit shrinkers $\widetilde{\SS}_\infty$ arising from Proposition~\ref{resclimit-general} and the family of self--similarly shrinking curvature flows coming from Proposition~\ref{thm:shrinkingnetworks.1} are not empty, if and only if the blow--up point $x_0$ belongs to $\mathcal{R}$. It follows that the set of reachable points of the flow coincides with $\{x\in\overline{\Omega}~\vert~\widehat{\Theta}(x)>0\}$.
\end{lem}

We now show that, assuming the multiplicity--one conjecture, as $t\to
T$, all the $3$--points of the network $\SS_t$ converge.

\begin{lem}\label{linelemma} 
If {\bf{M1}} holds, there exists a radius $R=R(\SS_t,x_0)>0$, such that if a blow--up limit regular shrinker $\widetilde{\SS}_\infty$ (or $\SS^\infty_{-1/2}$) at the point $x_0$ has no triple junctions in the ball $B_R(0)$, then it is a line through the origin of $\R^2$ or the unit circle.
\end{lem}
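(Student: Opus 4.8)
The statement asserts that a blow--up limit (regular) shrinker $\widetilde{\SS}_\infty$ at a point $x_0$ which has \emph{no} triple junctions inside a (suitably chosen, depending on the flow and on $x_0$) ball $B_R(0)$ must be a line through the origin or the unit circle. The plan is to combine three ingredients already available in the excerpt: the classification of regular shrinkers without triple junctions via Abresch--Langer (Remark~\ref{abla} and the discussion after Figure~\ref{fishfig}, building on~\cite{ablang1}), the multiplicity--one assumption \textbf{M1}, and the uniform length--ratio bounds from Lemma~\ref{rescestim2} (equivalently Lemma~\ref{resclimit2H}). The delicate point is that ``no triple junctions in $B_R(0)$'' is a \emph{local} hypothesis, while the conclusion (line or circle) is global; so the main work is to propagate the absence of junctions from the ball $B_R(0)$ to all of $\widetilde{\SS}_\infty$, and the key is to choose $R$ large enough — depending on $\SS_t$ and $x_0$, hence on the a priori bounded number of $3$--points and on the uniform length--ratio constant $C$ — that a triple junction of $\widetilde{\SS}_\infty$ located outside $B_R(0)$ would be impossible.

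First I would record that, by \textbf{M1}, $\widetilde{\SS}_\infty$ is an embedded, multiplicity--one regular shrinker (a priori possibly unbounded, a priori a union of finitely many curves and $3$--points, with bounded length ratios by the same constant $C$ coming from Lemma~\ref{rescestim2}). The number of $3$--points of $\widetilde{\SS}_\infty$ is bounded above: each $3$--point is a $C^1\loc$--limit of $3$--points of the rescaled networks $\SS^{\mu_i}_\tt$, which are reparametrizations of $\SS_t$, and the latter has a fixed finite number $m$ of triple junctions; combined with the uniform length--ratio bound (which prevents $3$--points from escaping to infinity without separating) this gives at most $m$ triple junctions in $\widetilde{\SS}_\infty$. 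Hence there is a radius $R_0 = R_0(\SS_t,x_0)$ such that all triple junctions of \emph{any} blow--up limit at $x_0$ lie in $B_{R_0}(0)$: indeed, if a $3$--point sat at distance $\geq R_0$ from the origin, the length--ratio estimate would force, inside $B_{R_0/2}$ around that point, length comparable to $R_0$ worth of curve(s) meeting at $120$ degrees, and a simple geometric/topological count (using that an embedded regular shrinker satisfies $\underline k + x^\perp = 0$, so every non--straight arc is a compact Abresch--Langer piece by Remark~\ref{abla}) forces either self--intersection (contradicting embeddedness) or an additional bounded region of controlled area (contradicting Remark~\ref{schreg} once $R_0$ is large), a contradiction. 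This is the step I expect to be the main obstacle: making the ``pushing the $3$--point to $B_{R_0}$'' argument fully rigorous requires carefully quantifying $R_0$ in terms of $m$, $C$, and the diameter bounds on bounded regions of shrinkers (equation after Remark~\ref{schreg}). I would take $R := R_0$.

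With this choice of $R$, the hypothesis that $\widetilde{\SS}_\infty$ has no triple junctions in $B_R(0) = B_{R_0}(0)$ upgrades, via the previous paragraph, to: $\widetilde{\SS}_\infty$ has \emph{no triple junctions at all}. So $\widetilde{\SS}_\infty$ is a disjoint union of smooth, embedded, complete curves each solving $\underline k + x^\perp = 0$, with bounded length ratios; by connectedness arguments (each connected component of the underlying graph has no vertices) each component is a complete embedded Abresch--Langer curve. By Abresch--Langer's classification (\cite{ablang1}, invoked in Remark~\ref{abla} and in the text after Figure~\ref{fishfig}), the only complete embedded solutions of the shrinker equation that are single smooth curves are straight lines through the origin and the unit circle $\SS^1$; every other Abresch--Langer curve is immersed with a countable, non--vanishing number of self--intersections, which is excluded by embeddedness. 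Finally, since $\widetilde{\SS}_\infty$ is \emph{connected} (it is a $C^1\loc$--limit of connected networks, or one argues component by component and notes the density constraints $\Theta_{\widetilde\SS_\infty}$ from the monotonicity formula force a single component in the relevant range), it consists of a single such curve, i.e.\ a line through the origin or the unit circle, which is exactly the assertion.

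\textbf{Remark on the dependence of $R$.} I would emphasize in the write--up that the radius $R$ is allowed to depend on the whole flow $\SS_t$ and on $x_0$ — precisely because it depends on the number $m$ of triple junctions of $\SS_t$ and on the length--ratio constant $C = C(\SS_0)$ of Lemma~\ref{rescestim2} — which is consistent with the statement as phrased ($R = R(\SS_t,x_0)$). The only genuinely new estimate needed is the quantitative ``all triple junctions of a blow--up limit lie in a controlled ball'' claim; everything else is an application of \textbf{M1}, the monotonicity/length--ratio machinery, and the Abresch--Langer classification already cited.
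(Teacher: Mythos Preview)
Your approach differs substantially from the paper's, and the crucial step you flag as ``the main obstacle'' is indeed a genuine gap that you do not close.

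The paper argues by contradiction and compactness rather than by a direct confinement estimate. It supposes the conclusion fails, obtaining a sequence $R_i\to\infty$ and blow--up limit shrinkers $\SS_i$ at $x_0$, each different from a line or circle and each with no triple junctions in $B_{R_i}(0)$. Since every $\SS_i$ meets the closed unit ball (being a blow--up limit at a reachable point), the shrinker ODE gives local $C^1$ compactness, and a subsequence converges to a nonempty shrinker $\overline{\SS}$ with no triple junctions at all, hence a line or the unit circle by Abresch--Langer. The circle is ruled out (it would force $\SS_i$ to be a circle for large $i$). The essential use of \textbf{M1} occurs here: $\overline{\SS}$, being a limit of blow--up limits at $x_0$, is itself a blow--up limit at $x_0$, so \textbf{M1} forces it to be a multiplicity--\emph{one} line. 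Then the tightness part of Lemma~\ref{rescestim2} makes $\Theta_{\SS_i}$ arbitrarily close to $1$ for large $i$, and Lemma~\ref{thm:densitybound} forces $\SS_i$ itself to be a line --- a contradiction.

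Your direct route would require proving that every triple junction of every blow--up shrinker at $x_0$ lies in some fixed $B_{R_0}(0)$. The justification you offer does not work as stated: the length--ratio bound only says there is length $\lesssim R_0$ in a ball of radius $R_0/2$ around a far junction, which is no contradiction; and Remark~\ref{schreg} says each bounded region of a shrinker has area $(2-m/3)\pi/2$ depending only on its number of edges, \emph{not} on its location, so producing such a region at large distance from the origin does not violate that remark. Nothing you cite a priori excludes a shrinker with a $3$--point far from the origin --- indeed, the remark immediately following the lemma records that a uniform such $R$ is a separate, nontrivial fact communicated by Ilmanen, not something available from the tools in the paper. The remainder of your argument (bounded number of $3$--points, Abresch--Langer classification once there are none, embeddedness plus multiplicity one forcing a single component) is correct, but it rests on this unproven confinement step.
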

\begin{proof}
Assume that the conclusion is false, then there is a sequence $R_i\to+\infty$ and blow--up limit regular shrinkers $\SS_i$ at $x_0$, all different from a line or circle, such that each $\SS_i$ has no triple junctions in $B_{R_i}(0)$, for every $i\in\NN$.

As we said in the discussion above, any shrinker $\SS_i$ must intersect the unit circle, hence, by the shrinkers equation~\eqref{shrinkeq}, we can extract a subsequence of $\SS_i$ locally converging in $C^1$ to a non empty limit shrinker $\overline{\SS}$ without triple junctions at all. By the work of Abresch and Langer~\cite{ablang1}, then $\overline{\SS}$ must be 
a line through the origin or the unit circle and this latter case is excluded, since, for $i$ large enough also $\SS_i$ would be a circle, which is a contradiction. If the limit $\overline{\SS}$ is a line, by the multiplicity--one conjecture, its multiplicity must be one, being any limit of blow--up limits of $\SS_t$ at the point $x_0$ again a blow--up limit at $x_0$.\\
Then, by the second point of Lemma~\ref{rescestim2}, the contribution of $\SS_i\setminus B_R(0)$ to the Gaussian density of the whole $\SS_i$ is small as we want, for every $i\in\NN$, by choosing a value $R$ large enough, while, for sufficiently large $i$, the contribution of $\SS_i\cap B_R(0)$ is smaller than one, as $\SS_i\to\overline{\SS}$, which is a multiplicity--one line. Hence, we conclude that the Gaussian density of $\SS_i$ is close to one for sufficiently large $i$, then Lemma~\ref{thm:densitybound} implies that $\SS_i$ is also a line through the origin, which is again a contradiction and we are done.
\end{proof}

\begin{rem} It is actually possible to find a uniform value of $R>0$ in this lemma, also independent of the flow $\SS_t$ (Tom Ilmanen, {\em personal communication}).
\end{rem}

\begin{lem}\label{remhot}
If {\bf{M1}} holds, there exist the limits $x_i=\lim_{t\to T}O^i(t)$, for
$i\in\{1,2,\dots,m\}$ and the set $\{x_i=\lim_{t\to T}O^i(t)~\vert~i=1,2,\dots,m\}$ is the union
of the set of the points $x$ in $\Omega$ where
$\widehat\Theta(x)>1$ with the set of the end--points of $\SS_t$ such that
the curve getting there collapses as $t\to T$.
\end{lem}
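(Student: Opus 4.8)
The plan is to establish the statement in two parts: first, the existence of the limits $x_i = \lim_{t\to T} O^i(t)$, and second, the identification of the set $\{x_1, \dots, x_m\}$ with the union described. For the existence of the limits, I would use a bound on the velocity of the $3$--points derived from the monotonicity formula and the multiplicity--one hypothesis. The key observation is that near any $3$--point $O^i(t)$, a blow--up at a reachable point which is a limit of $3$--points must (by {\bf M1} and Lemma~\ref{remreg}, together with Lemma~\ref{thm:densitybound}) be a standard triod, hence has Gaussian density $3/2$. I would argue as follows: fix $i$ and suppose $O^i(t)$ does not converge. Then there are two distinct subsequential limits $y \neq z$ of $O^i(t)$ as $t \to T$. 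Both $y$ and $z$ lie in $\mathcal{R}$, and blowing up at either point produces a degenerate regular shrinker (by Proposition~\ref{resclimit-general}). Because the $3$--point $O^i$ comes arbitrarily close to $y$ (resp. $z$) at times approaching $T$, and a $3$--point, once present, persists in nearby rescaled networks by continuity and the uniform curvature control away from the singular set — more precisely, using Lemma~\ref{linelemma}, the blow--up limit at $y$ (resp. $z$) cannot be a multiplicity--one line or circle near the origin, so it contains a triple junction at the origin, forcing $\widehat\Theta(y) \geq 3/2$ and $\widehat\Theta(z) \geq 3/2$. I would then invoke an oscillation estimate: integrating the speed $|\partial_t O^i(t)| = |(k\nu + \lambda\tau)(O^i(t),t)|$ and relating it, via the monotonicity formula applied along the path of the $3$--point, to the "drop" in Gaussian density $\Theta_{x,t}(t) - \widehat\Theta(x,T)$. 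The standard argument (as in Huisken's / White's clearing--out, adapted to networks) shows that the total variation of $O^i(t)$ on $[T-\delta, T)$ is controlled by $\int_{T-\delta}^T \big(\int_{\SS_t} |\underline{k} + \tfrac{(x - O^i(t))^\perp}{2(T-t)}|^2 \rho_{O^i(t),T}\, ds\big)^{1/2}\, dt$ plus an end--point contribution bounded by Lemma~\ref{stimadib}, and the first term is finite by the integrated monotonicity formula. Hence $O^i(t)$ is of bounded variation near $T$ and the limit $x_i$ exists.

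For the second part, I would show the two inclusions of the set equality. For "$\subseteq$": let $x_i = \lim_{t\to T} O^i(t)$. Blowing up at $x_i$ (Proposition~\ref{resclimit-general}) yields a degenerate regular shrinker $\widetilde\SS_\infty$; since $O^i(t) \to x_i$ with $|O^i(t) - x_i| = o(\sqrt{T-t})$ — which follows from the BV estimate above, because $|O^i(t) - x_i| \leq \int_t^T |\partial_t O^i| \leq C\sqrt{T-t}\cdot\big(\text{small}\big)$ — actually I need $o(\sqrt{T-t})$, which I would get from the fact that the integrand in the monotonicity drop tends to $0$ in an averaged sense, so that after Huisken rescaling the rescaled $3$--point $\widetilde O^i(\tt)$ stays bounded and its distance to the origin can be made small along a subsequence. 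Then $\widetilde\SS_\infty$ contains a $3$--point at a bounded location, hence by Lemma~\ref{linelemma} and {\bf M1} it is either a standard triod centered appropriately or contains a triple junction near the origin; in either case $\widehat\Theta(x_i) \geq 3/2 > 1$, so $x_i$ is a point with $\widehat\Theta > 1$ — unless $x_i$ is an end--point, which happens exactly when the curve reaching that end--point collapses (its length going to zero forces the adjacent $3$--point to reach the end--point). For "$\supseteq$": suppose $x \in \Omega$ with $\widehat\Theta(x) > 1$. By {\bf M1} and the classification (Proposition~\ref{resclimit-general}, Lemma~\ref{thm:densitybound}, Remark~\ref{enanrem}), any blow--up limit at $x$ with density $> 1$ must be a standard triod (density $3/2$) or a multiplicity--one shrinker with a triple junction (the circle has density $\Theta_{\SS^1} > 3/2$ but $\Theta_{\SS^1} > 1$ — wait, the circle would need to be ruled out here, but since the flow is of a tree--less situation this needs {\bf M1}; in any case such a limit contains a $3$--point). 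The presence of a triple junction in the blow--up at $x$ forces, by Lemma~\ref{linelemma} again, that there are $3$--points $O^{i_k}(t_k)$ of $\SS_{t_k}$ with $O^{i_k}(t_k) \to x$; passing to a subsequence in the finite index set $\{1,\dots,m\}$, some fixed $O^i(t) \to x$, so $x = x_i$. The end--point case is handled separately and directly: a curve collapses iff its length $L^j(t) \to 0$, which by connectedness and the structure of the network forces the triple junction at its interior end to converge to the fixed end--point at its other end.

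The main obstacle I expect is making rigorous the claim that once a $3$--point is present in the flow at a reachable point, it must "survive" in the blow--up limit there — i.e. that $\widehat\Theta(x_i) \geq 3/2$ whenever $x_i$ is the limit of triple junctions. The difficulty is that a priori the $3$--point could, along the rescaling, escape to infinity or collide with other structure, so one cannot naively say the limit contains a triple junction. The resolution should combine: (a) the bounded--velocity / BV estimate to control $|\widetilde O^i(\tt)|$ uniformly (using Lemma~\ref{rescstimadib} for the end--point contributions and the integrated monotonicity formula for the curvature contribution); (b) Lemma~\ref{linelemma}, which says that if the blow--up limit has \emph{no} triple junction near the origin then it is a line or circle, both of which either have density $1$ or $\Theta_{\SS^1}$, and the line case contradicts $O^i(t) \to x_i$ with controlled rescaled position (a multiplicity--one line arising as blow--up at a point to which $3$--points converge would contradict the persistence of angle--$120^\circ$ junctions, since the rescaled $3$--point would lie on or near a smooth embedded curve, impossible); and (c) the upper semicontinuity of $\widehat\Theta$ together with the fact that $\widehat\Theta = 1$ exactly at points where the blow--up is a multiplicity--one line (White's local regularity theorem, Theorem~\ref{thm:locreg.2} / Corollary~\ref{regcol}), so $\widehat\Theta > 1$ is equivalent to the blow--up being a triod (in the {\bf M1} setting). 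Assembling these carefully, with attention to the distinction between the interior points where the limit is a standard triod and the boundary end--points where a collapsing curve brings a $3$--point to $\partial\Omega$, is the technical heart of the argument.
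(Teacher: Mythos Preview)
Your approach has a genuine gap in the proof of existence of the limits $x_i = \lim_{t\to T} O^i(t)$. The proposed BV estimate on $O^i(t)$ via ``integrating the speed $|\partial_t O^i(t)|$ and relating it, via the monotonicity formula applied along the path of the $3$--point, to the drop in Gaussian density'' does not work as stated. The monotonicity formula (Proposition~\ref{promono}) is for a \emph{fixed} space--time center $(x_0,t_0)$; there is no version ``along the path of the $3$--point'' that controls $\int_{T-\delta}^T |\underline{v}(O^i(t),t)|\,dt$. The pointwise speed $|k\nu+\lambda\tau|$ at $O^i(t)$ is of the same order as the curvature there, and without a priori knowledge of the blow--up structure at the (as yet unknown) limit point, no curvature bound is available --- indeed the curvature may blow up. The integral you write, $\int\bigl(\int_{\SS_t}|\underline{k}+\tfrac{(x-O^i(t))^\perp}{2(T-t)}|^2\rho_{O^i(t),T}\,ds\bigr)^{1/2}dt$, is neither what the monotonicity formula controls nor clearly related to the $3$--point velocity. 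The subsequent claim $|O^i(t)-x_i|=o(\sqrt{T-t})$ is likewise unjustified and in fact need not hold (e.g.\ when a region collapses and the rescaled $3$--points converge to the $3$--points of a nontrivial shrinker at positive distance from the origin).

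The paper's argument avoids any velocity estimate entirely and proceeds in the opposite order. One first shows that the set $\mathcal{D}=\{x\in\Omega:\widehat\Theta(x)>1\}$ is \emph{finite}, with at most $m$ points: if not, using Lemma~\ref{linelemma} one finds $x_0\in\mathcal{D}$ and a set of rescaled times of infinite measure along which no $3$--point lies within $R\sqrt{2(T-t)}$ of $x_0$; Proposition~\ref{resclimit-general} then produces a blow--up limit at $x_0$ with no $3$--point in $B_R(0)$, hence (by Lemma~\ref{linelemma} and {\bf M1}) a line, contradicting $\widehat\Theta(x_0)>1$. Once $\mathcal{D}$ is known to be finite, White's local regularity gives uniformly bounded curvature on any $\Omega_\delta=\Omega\setminus\{d(\cdot,\mathcal{D}\cup\mathcal{P})\leq\delta\}$. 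Then each $O^i(t)$ must eventually stay $\delta$--close to a single point of $\mathcal{D}\cup\mathcal{P}$: otherwise a subsequence of networks would converge (with bounded curvature) in $\Omega_\delta$ to a limit containing a multi--point, forcing $\widehat\Theta\geq 3/2$ there, contradicting $\widehat\Theta\equiv 1$ on $\Omega_\delta$. This traps each $O^i(t)$ near a point of the discrete set $\mathcal{D}\cup\mathcal{P}$ and yields convergence. The reverse inclusion then follows easily. The key idea you are missing is to establish finiteness of the ``bad'' set $\mathcal{D}$ first, and use regularity \emph{outside} it to confine the $3$--points, rather than trying to control the $3$--point trajectories directly.
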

\begin{proof}
Let ${\mathcal{D}}=\{x\in\Omega~\vert~\widehat{\Theta}(x)>1\}$, 
${\mathcal{O}}(t)=\{O^1(t), O^2(t),\dots, O^m(t)\}$ and 
${\mathcal{P}}=\{P^1, P^2,\dots, P^l\}$. 
Let $R>0$ be given by the previous lemma and consider a finite subset
$\overline{\mathcal{D}}\subseteq{\mathcal{D}}$, supposing that the set
$$
{\mathcal I}_{\overline{\mathcal{D}}}\,=\,\bigl\{\,\tt\in[-1/2\log T,
+\infty)~\vert~\max_{x\in{\overline{\mathcal{D}}}}
d(x,{\mathcal{O}}(t(\tt)))
\geqslant R\sqrt{2(T-t(\tt))}\,\bigr\}
$$
has infinite Lebesgue measure, there must be $x_0\in\overline{\mathcal{D}}$ such that 
$$
{\mathcal I}_{x_0}\,=\,\bigl\{\,\tt\in[-1/2\log T, +\infty)~\vert~d(x_0,{\mathcal{O}}(t(\tt)))\geqslant R\sqrt{2(T-t(\tt))}\,\bigr\}
$$
has infinite Lebesgue measure.
Hence, by rescaling with Huisken's procedure around $x_0$, by
Proposition~\ref{resclimit-general}, we can extract a sequence of times $\tt_{j}\in {\mathcal I}_{x_0}$
 such that the rescaled networks ${\widetilde{\SS}_{x_0,\tt_j}}$
 converge in $C^1\loc$ to a line through the origin of $\R^2$, by Lemma~\ref{linelemma} (if the limit is the unit circle, the network is a closed curve and there is nothing to prove, as there are no $3$--points), since
 in any ball centered at the origin, there cannot be $3$--points, by the construction of
 ${\mathcal I}_{x_0}$ and holding {\bf{M1}}.
 This clearly implies that $\widehat{\Theta}(x_0)=1$, contradicting the
hypothesis $x_0\in\mathcal{D}$, hence, ${\mathcal
 I}_{\overline{\mathcal{D}}}$ must have finite Lebesgue measure.
It is then easy to see that this implies that the points of ${\overline{\mathcal{D}}}$ and thus of
${\mathcal{D}}$, cannot be more than the number $m$ of the $3$--points
of the evolving network $\SS_t$.\\
If now we consider a small $\delta>0$, as every point $x$ in the open set
$$
\Omega_\delta=\Omega\setminus\bigl\{x\in\Omega~\vert~d(x,{\mathcal{D}}\cup{\mathcal{P}})\leqslant\delta\bigr\}
$$
satisfies $\widehat{\Theta}(x)\leqslant 1$, by compactness and Corollary~\ref{regcol} (or White's local regularity
theorem in~\cite{white1}), it follows that the networks $\SS_{t_j}$ restricted to the set
$\Omega_\delta$ have uniformly bounded curvature and smoothly converge to a limit smooth network in $\Omega_\delta$ without $3$--points, otherwise at any of such $3$--points we would have a Gaussian density equal to $3/2$, larger than one.\\ This argument clearly implies that choosing $\delta$ small enough (as ${\mathcal{D}}\cup{\mathcal{P}}$ is finite), every $3$--point $O^i(t)$, for every $i\in\{1,2,\dots,m\}$, has to ``choose'' a point $x_i\in{\mathcal{D}}\cup{\mathcal{P}}$ to stay close and actually converges there.

Finally, if $x\in{\mathcal{D}}$, there must be a multi--point in any
blow--up limit shrinker, otherwise we can only have a line, by Lemma~\ref{linelemma} (the unit circle is excluded, as we said before), that would imply $\widehat{\Theta}(x)=1$, against the definition of ${\mathcal{D}}$. Hence, for some
$i\in\{1,2,\dots,m\}$ and $t_n\to T$ there must hold $O^i(t_n)\to x_i$
that forces $\lim_{t\to T}O^i(t)=x_i$, by the previous discussion..\\
If the curve of $\SS_t$ getting to an end--point $P^r$ collapses along a sequence of times $t_j\to T$, 
clearly, as before, for some $k\in\{1,2,\dots,m\}$ there must hold $O^k(t_j)\to
P^r=x_k$ and we have the same conclusion $\lim_{t\to T}O^k(t)=P^r=x_k$.
\end{proof}

\subsection{Regularity without vanishing of curves}\label{novan}
Let $T<+\infty$ be the maximal time of existence and 
assume that the lengths of all the curves of the network are
uniformly positively bounded from below, hence as $t\to T$ the maximum of the modulus of the curvature goes to $+\infty$. 
We are going to show that if {\bf{M1}} holds, $T$ cannot be a singular time, hence 
we conclude that this case simply cannot happen. 
This conclusion justifies the
title of this section: to have a singularity (assuming the
multiplicity--one conjecture) some curves must disappear.

Such result follows by the local regularity Theorem~\ref{thm:locreg.2} (precisely, by Corollary~\ref{regcol}, see also the first point of Remark~\ref{rem:locreg.1}), implying that the curvature is locally 
bounded around {\em every} point of $\overline{\Omega}$, as
$t\to T$. Indeed, performing a parabolic rescaling at any reachable,
interior point $x_0\in\Omega$ (at the other interior points of $\Omega$
the blow--up limits are empty, so $\widehat{\Theta}(x_0)=0$), since we assumed that the multiplicity--one
conjecture holds, by the discussion in Remark~\ref{enanrem}, 
we can obtain as blow--up limits only a straight lines with unit
multiplicity, so $\widehat{\Theta}(x_0)=1$, or standard triods, hence $\widehat{\Theta}(x_0)=3/2$. By Corollary~\ref{regcol}, we then conclude that 
the curvature is uniformly locally bounded along the flow, around
such point $x_0$.\\
If we instead rescale at an end--point $P^r$ we get a halfline and
this case can be treated as above by means of the ``reflection construction'' at the end of Section~\ref{geopropsub}. That is, for the flow ${\mathbb{H}}^r_t$ the point 
$P^r$ is no more an end--point and a blow--up there gives a
straight line, hence implying that the curvature is locally bounded
also around $P^r$, as before.\\
By the compactness of the set of reachable points ${\mathcal{R}}$, this argument clearly 
implies that the curvature of the whole $\SS_t$ 
is uniformly bounded, as $t\to T$, which is a contradiction.

\begin{prop}\label{regnocollapse}
Assuming {\bf{M1}}, if $T<+\infty$ is the maximal time of existence of the
curvature flow of a regular network with fixed end--points, then the inferior limit of the length of at
least one curve is zero, as $t\to T$.
\end{prop}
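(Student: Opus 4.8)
The plan is to argue by contradiction: suppose that the flow $\SS_t$ reaches the maximal time $T<+\infty$ while every curve of the network keeps its length bounded uniformly away from zero. By Corollary~\ref{curvexplod-general2} this forces $\max_{\SS_t}k^2\to+\infty$ as $t\to T$, so $T$ is a genuine singular time. I will then show, assuming \textbf{M1}, that in fact the curvature stays uniformly bounded on all of $\overline\Omega$ as $t\to T$, which is the desired contradiction. The starting point is that the family $\SS_t$ has uniformly bounded length ratios (Lemma~\ref{rescestim2}), so all the compactness and blow--up machinery of Sections~\ref{monotonsec}--\ref{geosec} is available at every point of $\overline\Omega$.

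First I would fix a reachable interior point $x_0\in\Omega\cap\mathcal R$ (at non--reachable interior points the blow--up limits are empty, so there is nothing to do there) and perform a parabolic (or Huisken) rescaling at $(x_0,T)$. By Proposition~\ref{thm:shrinkingnetworks.1}/Proposition~\ref{resclimit-general} we obtain a degenerate regular shrinker as blow--up limit; invoking \textbf{M1} it has multiplicity one, and then Lemma~\ref{thm:densitybound} (Gaussian density below $\Theta_{\SS^1}$ for a line or triod) together with Lemma~\ref{lemmatree} forces the limit to be either a multiplicity--one straight line through the origin, with $\widehat\Theta(x_0)=1$, or a standard triod, with $\widehat\Theta(x_0)=3/2$. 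In the line case White's local regularity theorem~\cite{white1} gives a uniform local curvature bound near $x_0$. In the triod case I would apply Theorem~\ref{thm:locreg.2} and Corollary~\ref{regcol}: since $\widehat\Theta(x_0)=3/2<\Theta_{\SS^1}$, the hypothesis~\eqref{eq:locreg.0.5} is satisfied in a space--time neighborhood of $(x_0,T)$ (here one uses that, by Lemma~\ref{remhot}, the $3$--points converge to finitely many well--separated points, so locally around $x_0$ there is at most one triple junction and no interference), and Corollary~\ref{regcol} yields a uniform curvature bound near $x_0$ for all $t\in[0,T)$.

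Next I would handle the end--points. If $x_0=P^r$ is a fixed end--point on $\partial\Omega$ which is reachable, the blow--up limit there is a single halfline from the origin. To reduce this to the interior case I would use the reflection construction described at the end of Section~\ref{behavsing}: the symmetrized flow $\mathbb H^r_t=\SS_t\cup\SS^{R_r}_t$ is a $C^{2,1}$ curvature flow (the compatibility conditions of order two at $P^r$, i.e.\ vanishing velocity and curvature, guarantee this) with uniformly bounded length ratios, and at $P^r$ it has no end--point, so its blow--up there is a multiplicity--one straight line; White's theorem then bounds the curvature of $\mathbb H^r_t$, hence of $\SS_t$, near $P^r$. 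Finally, since the set of reachable points $\mathcal R$ is compact, finitely many such local bounds cover a neighborhood of $\mathcal R$, and away from $\mathcal R$ the network stays at positive distance so the curvature is trivially bounded there (for $t$ close to $T$). Altogether $\max_{\SS_t}k^2$ is uniformly bounded as $t\to T$, contradicting Corollary~\ref{curvexplod-general2}. Therefore some curve must have inferior limit of length equal to zero.

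\textbf{Main obstacle.} The delicate point is the triod case of the local regularity argument: one must be sure that near a point $x_0$ with $\widehat\Theta(x_0)=3/2$ the density condition~\eqref{eq:locreg.0.5} genuinely holds in a full parabolic cylinder around $(x_0,T)$ — i.e.\ that no other triple junction, and no higher--density point, sneaks into the neighborhood as $t\to T$. This is exactly what Lemma~\ref{remhot} is for (it identifies the finitely many limit positions of the $3$--points and of collapsing end--points with the points where $\widehat\Theta>1$), so the separation of these points is what makes the localization of Huisken's monotonicity formula and Corollary~\ref{regcol} applicable. The only genuine input beyond the earlier results is the multiplicity--one conjecture \textbf{M1}, whose role is precisely to rule out multiple lines in the blow--up and thereby keep all reachable densities in $\{1,3/2\}$; this is why the statement is conditional.
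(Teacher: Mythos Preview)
Your argument is correct and follows the same route as the paper (Section~\ref{novan}): contradiction via local curvature bounds at every reachable point, obtained from White's theorem in the line case, Corollary~\ref{regcol} in the triod case, and the reflection construction at the end--points, then compactness of $\mathcal R$. One imprecision worth fixing: at an interior point the classification of the blow--up as a line or standard triod does not follow from Lemma~\ref{thm:densitybound} (which needs $\Theta<\Theta_{\SS^1}$ as a \emph{hypothesis}, not a conclusion --- a multiplicity--one Brakke spoon, for instance, has density $>\Theta_{\SS^1}$) nor directly from Lemma~\ref{lemmatree} (which needs the tree hypothesis); the clean reference is Proposition~\ref{resclimit}, whose length hypothesis~\eqref{Lbasso} is trivially satisfied here since the $L^i$ are uniformly bounded below, forcing every curve of the blow--up limit to be an unbounded halfline and hence --- together with {\bf M1} --- yielding the line/triod dichotomy.
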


\begin{rem}\label{remlocglo}
Proposition~\ref{regnocollapse} can be seen as the global (in space) version of
the local regularity Theorem~\ref{thm:locreg.2} which deals with the situation of a single
$3$--point. Usually in analytic problems local and global (in space)
regularity coincides, actually in this case the tool to pass from one to the
other is the validity of the multiplicity--one conjecture.
\end{rem}

\bigskip

{\em In all the analysis of the following sections we will assume that {\bf{M1}} holds. Moreover, we assume that the bounded open set $\Omega$ is strictly convex.

We remark that, with minor modifications in the proofs, all the following results also hold for the flow of open networks in $\R^2$, ignoring the conclusions about the behavior at the end--points that are not present in such case.}

\bigskip

By the above discussion, we will have to analyze the behavior of the flow $\SS_t$ around the points $x_i=\lim_{t\to T}O^i(t)$, limits of the triple junctions in $\Omega$ (see Lemma~\ref{remhot}) where $\widehat\Theta(x)>3/2$ and the end--points of $\SS_t$ such that the curve getting there collapses, as $t\to T$. Notice that if a limit point $x_i$ is the limit of a single $3$--point $O^i(t)$, then the other ones must ``stay far'' and locally around $x_i$ there cannot be the collapse of a curve, then, by the same argument as above, we conclude that $\widehat\Theta(x_i)=3/2$. It follows that the only limit points $x_i\in\Omega$ we have to deal with are the ones which are limit of more than one triple junction, as $t\to T$.

\subsection{Limit networks with bounded curvature}\label{van0}
The analysis in this case consists in understanding the possible limit
networks that can arise, as $t\to T$, under the assumption that the
curvature is uniformly bounded along the flow. This to find
out how to continue the flow (if possible) 
as discussed in Section~\ref{resum}.

As we said, at least one curve of the network $\SS_t$ has to
``vanish'', approaching the singular time $T$. We 
show that in this case, as $t\to T$, assuming the multiplicity--one conjecture, $\SS_t$ converges to a {\em unique} limit degenerate regular network $\SS$, containing in the interior of $\Omega$ only regular triple junctions or $4$--points with four concurring curves whose exterior unit tangents form four angles of $120$, $60$, $120$ and $60$ degrees (any of them coming from two $3$--points going to ``collide'' each other along a single isolated collapsing curve) and at any end--point on $\partial\Omega$, either a regular single curve or two curves ``exiting'' from such end--point, forming an angle of $120$ degrees among them (coming from the single isolated collapse of the curve of the network getting there). The cores of such limit degenerate regular network are thus given only by the isolated collapsed curves.

We will see in Proposition~\ref{prop999} in the next section that {\em viceversa}, when locally only a single isolated curve collapses, the curvature stays bounded (see also the example in Proposition~\ref{type0prop}).

\begin{prop}\label{bdcurvcollapse} 
If {\bf{M1}} holds and $\SS_t=\bigcup_{i=1}^n\gamma^i([0,1],t)$ is the
curvature flow of a regular network in $\Omega$ with fixed end--points in a
maximal time interval $[0,T)$ such that the curvature is uniformly
bounded along the flow, then the networks $\SS_t$, up to
reparametrization proportional to arclength, converge in $C^1$ to some degenerate regular network $\widehat{\SS}_T=\bigcup_{i=1}^n\widehat{\gamma}^i_T([0,1])$ in $\Omega$, as $t\to T$.\\
The non--degenerate curves of $\widehat{\SS}_T$ belong to $C^1\cap W^{2,\infty}$ and they are smooth outside the multi--points. 
Moreover, denoting with $\SS_T$ the $C^1$ network described by the family of the non--degenerate curves of $\widehat{\SS}_T$, every multi--point of the $\SS_T$ is either a regular triple junction or an end--point of $\SS_t$ or 
\begin{itemize}
\item a $4$--point where the four concurring curves have opposite exterior unit tangent vectors in pairs and form angles of $120/60$ degrees between them --
{\em collapse of a curve in the ``interior'' of $\SS_t$},
\item a $2$--point at an end--point of the network $\SS_t$ where the
 two concurring curves form an angle of $120$
degrees among them -- {\em collapse of the curve getting to such
 end--point of $\SS_t$}.
\end{itemize}
\begin{figure}[H]
\begin{center}
\begin{tikzpicture}[rotate=90,scale=0.75]
\draw[color=black!40!white, shift={(0,-2.8)}]
(-0.05,2.65)to[out= -90,in=150, looseness=1] (0.17,2.3)
(0.17,2.3)to[out= -30,in=100, looseness=1] (-0.12,2)
(-0.12,2)to[out= -80,in=40, looseness=1] (0.15,1.7)
(0.15,1.7)to[out= -140,in=90, looseness=1.3](0,1.1)
(0,1.1)--(-.2,1.35)
(0,1.1)--(+.2,1.35);
\draw[color=black]
(-0.05,2.65)to[out= 30,in=180, looseness=1] (2,3)
(-0.05,2.65)to[out= -90,in=150, looseness=1] (0.17,2.3)
(-0.05,2.65)to[out= 150,in=-20, looseness=1] (-2,3.3)
(0.17,2.3)to[out= -30,in=100, looseness=1] (-0.12,2)
(-0.12,2)to[out= -80,in=40, looseness=1] (0.15,1.7)
(0.15,1.7)to[out= -140,in=90, looseness=1](0,1.25)
(0,1.25)to[out= -30,in=180, looseness=1] (1.9,0.7)
(0,1.25)to[out= -150,in=-15, looseness=1] (-1.9,1.2);
\draw[color=black,dashed]
(-2,3.3)to[out= 160,in=-20, looseness=1](-2.7,3.5)
 (-1.9,1.2)to[out= 165,in=-15, looseness=1](-2.7,1.3)
(1.9,0.7)to[out= 0,in=-160, looseness=1] (2.6,0.9)
(2,3)to[out= 0,in=160, looseness=1] (2.8,2.9);
\draw[color=black!40!white,shift={(0,-6)}]
(0,2.65)--(1.73,3.65)
(0,2.65)--(1.73,1.65)
(0,2.65)--(-1.73,3.65)
(0,2.65)--(-1.73,1.65);
\draw[color=black,shift={(0,-6)}]
(0,2.65)to[out= -30,in=180, looseness=1] (1.9,2)
(0,2.65)to[out= -150,in=-15, looseness=1] (-1.9,2.3)
(0,2.65)to[out= 30,in=180, looseness=1] (2.2,3.3)
(0,2.65)to[out= 150,in=-20, looseness=1] (-2.2,3.1);
\draw[color=black,dashed,shift={(0,-6)}]
(-2.2,3.1)to[out= 160,in=-20, looseness=1](-3,3.3)
 (-1.9,2.3)to[out= 165,in=-15, looseness=1](-2.7,2.4)
(1.9,2)to[out= 0,in=-160, looseness=1] (2.6,2.2)
(2.2,3.3)to[out= 0,in=160, looseness=1] (3,3.2);
\end{tikzpicture}\qquad\qquad\qquad
\begin{tikzpicture}[rotate=90,scale=0.75,shift={(15,0)}]
\draw[color=black!40!white, shift={(0,-3)}]
(-0.05,2.65)to[out= -90,in=150, looseness=1] (0.17,2.3)
(0.17,2.3)to[out= -30,in=100, looseness=1] (-0.12,2)
(-0.12,2)to[out= -80,in=40, looseness=1] (0.15,1.7)
(0.15,1.7)to[out= -140,in=90, looseness=1.3](0,1.1)
(0,1.1)--(-.2,1.35)
(0,1.1)--(+.2,1.35);
\draw[color=black]
(-0.05,2.65)to[out= 30,in=180, looseness=1] (2,3)
(-0.05,2.65)to[out= -90,in=150, looseness=1] (0.17,2.3)
(-0.05,2.65)to[out= 150,in=-20, looseness=1] (-2,3.3)
(0.17,2.3)to[out= -30,in=100, looseness=1] (-0.12,2)
(-0.12,2)to[out= -80,in=40, looseness=1] (0.15,1.7)
(0.15,1.7)to[out= -140,in=90, looseness=1](0,1.25);
\draw[color=black,dashed]
(-2,3.3)to[out= 160,in=-20, looseness=1](-2.7,3.5)
(2,3)to[out= 0,in=160, looseness=1] (2.8,2.9);
\draw[color=black!40!white]
(-3,1.7)to[out= -30,in=-180, looseness=1] (0,1.25)
(+3,1.7)to[out= -150,in=0, looseness=1] (0,1.25);
\path[font=\small]
(0,.2) node[left]{$P^r$};
\path[font=\Large]
(2.6,.5) node[left]{$\Omega$};
\draw[color=black!40!white,shift={(0,-6)}]
(0,2.65)--(1.73,3.65)
(0,2.65)--(-1.73,3.65);
\draw[color=black,shift={(0,-6)}]
(0,2.65)to[out= 30,in=180, looseness=1] (2.2,3.3)
(0,2.65)to[out= 150,in=-20, looseness=1] (-2,3.3);
\draw[color=black,dashed,shift={(0,-6)}]
(-2,3.3)to[out= 160,in=-20, looseness=1](-2.7,3.5)
(2.2,3.3)to[out=0,in=-60, looseness=1] (2.8,3.7);
\draw[color=black!40!white,shift={(0,-6)}]
(-3,3.1)to[out= -30,in=-180, looseness=1] (0,2.65)
(+3,3.1)to[out= -150,in=0, looseness=1] (0,2.65);
\fill(0,1.25) circle (2pt);
\fill(0,-3.35) circle (2pt);
\path[font=\small]
(0,-4.4) node[left]{$P^r$};
\path[font=\Large]
(2.6,-4.1) node[left]{$\Omega$};
\end{tikzpicture}
\end{center}
\begin{caption}{Collapse of a curve in the interior and at
 an end--point of $\SS_t$.\label{Pcollapse}}
\end{caption}
\end{figure}
\end{prop}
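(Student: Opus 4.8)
The strategy is to examine the limit degenerate regular network $\widehat{\SS}_T$ produced by Proposition~\ref{collapse}, and to show that its \emph{core} (the union of edges mapped to a single point under $\Gamma$) can only be a single collapsed curve. Fix a multi--point $x_0$ of $\SS_T$. If $x_0$ is the image under $\Gamma$ of a single vertex of $G$ with no collapsed edge attached, then $x_0$ is a genuine triple junction of $\widehat{\SS}_T$, hence (by the degenerate $120$ degrees condition, Remark~\ref{remreg0}) a regular triple junction, or $x_0$ is an end--point $P^r$ at which no curve collapses. The remaining case is that $x_0$ supports a nonempty core $G'$, i.e.\ at least one edge of $G$ collapses to $x_0$. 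The first step is therefore to analyze such a core using the blow--up classification of Section~\ref{geosec}.

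\textbf{Step 1: blow--up at a collapsing multi--point gives a degenerate shrinker which is a standard cross (or a halfline at an end--point).} Around $x_0$ the curvature is uniformly bounded (we are in the regime of Section~\ref{van0}), so by Lemma~\ref{boh} and the parabolic/Huisken rescaling (Proposition~\ref{resclimit-general}) the rescaled networks converge in $C^1\loc\cap W^{2,2}\loc$ to a degenerate regular shrinker $\widetilde\SS_\infty$ with \emph{zero curvature} — the bound on $k$ makes the rescaled curvature tend to zero, so $\widetilde\SS_\infty$ consists of halflines from the origin with a possible core at the origin, exactly the situation of Lemma~\ref{lemmatree}. Assuming {\bf{M1}}, $\widetilde\SS_\infty$ has multiplicity one. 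If $x_0\in\Omega$ is an interior point, the graph $G'$ of the core is a tree without end--points (the core of a tree is a tree; and in fact $G$ is a tree near $x_0$ because the approximating networks are embedded and we may work on a small ball — if a loop were present around $x_0$ the enclosed area would be a positive constant by equation~\eqref{areaevolreg}, contradicting collapse), so Lemma~\ref{lemmatree} forces $\widetilde\SS_\infty$ to be either a line (no core — then $x_0$ was not actually a multi--point, excluded), a standard triod (no core — excluded for the same reason), or a \emph{standard cross}: two lines through the origin at $120/60$ degrees, whose core is a single collapsed segment with assigned unit tangent bisecting the $120$ degree angles. If instead $x_0=P^r$ is an end--point, the limit lives in a halfplane (convexity of $\Omega$, cf.\ the proof of Proposition~\ref{resclimit}), which rules out a cross or a triod; using the ``reflection construction'' ${\mathbb{H}}^r_t$ from the end of the section, the doubled flow has a genuine collapsing interior multi--point at $P^r$, whose blow--up is a standard cross by the interior case; unfolding, $\widehat{\SS}_T$ near $P^r$ is a $2$--point with the two curves meeting at $120$ degrees.

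\textbf{Step 2: transfer the local structure of $\widetilde\SS_\infty$ to $\widehat{\SS}_T$ and $\SS_T$.} The convergence $O^i(t)\to x_i$ (Lemma~\ref{remhot}) and the $C^1$ convergence of Proposition~\ref{collapse} show that the combinatorial structure of the core $G'$ of $\widehat{\SS}_T$ at $x_0$ is exactly the combinatorial structure of the core of the blow--up $\widetilde\SS_\infty$ at the origin: the number $N$ of $3$--points in the core, the collapsed edges, and the unit tangents of the non--degenerate curves leaving $x_0$ all match (the number of curves of $\SS_T$ emanating from $x_0$ equals the number of unbounded rays of $\widetilde\SS_\infty$). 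Since in the interior case $\widetilde\SS_\infty$ is a standard cross, $N=2$, there is exactly one collapsed curve, and the four non--degenerate curves of $\SS_T$ at $x_0$ inherit pairwise opposite exterior unit tangents forming angles $120/60/120/60$. In the end--point case, unfolding the reflection, exactly one curve (the one getting to $P^r$) collapses and two curves emanate from $P^r$ at $120$ degrees. Running this over all multi--points of $\SS_T$ — which are finite, being among $\mathcal D\cup\mathcal P$ by Lemma~\ref{remhot} — yields the stated trichotomy: regular triple junction, interior $4$--point of the described type, or a $2$--point at a collapsed end--point.

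\textbf{Main obstacle.} The crux is Step~1's use of Lemma~\ref{lemmatree} together with {\bf{M1}} to kill all the $N=3$ and $N=4$ possibilities and leave only the standard cross; one must carefully argue, via the ``longest simple path in the core'' combinatorial argument already run in the proof of Lemma~\ref{lemmatree}, that every alternative either produces a multiplicity--$\geq 2$ halfline (excluded by {\bf{M1}}) or a loop in the tree $G'$ (excluded by the tree hypothesis / area argument). A secondary, more technical point is Step~2: justifying that the $C^1$-limit network $\widehat{\SS}_T$ really has the \emph{same} core combinatorics as the rescaled blow--up — this requires that no additional collapse happens ``faster'' than the scale of the blow--up, which follows from the uniform curvature bound and the separation of the limit points $x_i$ in Lemma~\ref{remhot}, but deserves a clean statement. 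Once these are in place the proposition follows by simply enumerating the local pictures.
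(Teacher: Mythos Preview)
Your proposal is correct and close in spirit to the paper's argument, but the paper takes a leaner route that avoids your Step~2 entirely. Rather than blowing up first and then transferring the core combinatorics back to $\widehat{\SS}_T$, the paper runs the ``longest simple path'' argument from the proof of Lemma~\ref{lemmatree} \emph{directly on the core of $\widehat{\SS}_T$} (which is a tree because, with bounded curvature, no regions can collapse by the area computation of Section~\ref{geopropsub}). This combinatorial analysis on $\widehat{\SS}_T$ itself shows that either the core is a single segment ($N=2$, giving the $4$--point at $120/60$ degrees) or two of the non--degenerate curves exiting the multi--point share the same exterior unit tangent. Only this last possibility is then ruled out by a blow--up: since the curvature is bounded, any blow--up limit at such a point has zero curvature and consists of halflines, one of which would carry multiplicity at least two, contradicting {\bf{M1}}. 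The end--point case is handled identically to yours, via the reflection ${\mathbb{H}}^r_t$.

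So the difference is one of economy: you invoke the full blow--up machinery and Lemma~\ref{lemmatree} as a black box on $\widetilde{\SS}_\infty$, then need to argue that the core of $\widetilde{\SS}_\infty$ at the origin has the same combinatorics as the core of $\widehat{\SS}_T$ at $x_0$ (your flagged ``secondary technical point''). The paper sidesteps this by working on $\widehat{\SS}_T$ from the start and using the blow--up only for the single exclusion of coincident tangents. Your transfer step is not hard to justify --- the graph structure is preserved under rescaling and the $C^1$ convergence of Proposition~\ref{collapse} pins down the tangents --- but it is an extra layer that the direct argument does not need.
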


\begin{defn}\label{reg4} By their clear importance, we call {\em regular $4$--points} the ones like in this proposition.
\end{defn}

\begin{proof} By Proposition~\ref{brakprop}, since $\SS_t$ is the curvature 
flow of a regular network, there exist the limits of the lengths of the curves $L^i(T)=\lim_{t\to T}L^i(t)$, for every $i\in\{1,2,\dots, n\}$. 
Moreover every limit of $\SS_t$ is a connected, bounded subset of $\R^2$.\\
Recalling the third point of Remark~\ref{oprob2} (or directly Corollary~\ref{parareg0}), we reparametrize the networks so that the flow is a special smooth flow, then, by Remark~\ref{klstimarem}, all the velocities ${\gamma}_t^i$ are uniformly bounded in space and time by some constant $D$, hence we have, 
$$
\vert{\gamma}^i(x,t)-{\gamma}^i(x,\overline{t})\vert\leqslant
\int_t^{\overline{t}}\vert{\gamma}_t^i(x,\xi)\vert\,d\xi\leqslant D\vert t-\overline{t}\vert\,
$$
uniformly for any $x\in[0,1]$ and every pair $t,\overline{t}\in[0,T)$. This clearly implies that ${\gamma}^i(\cdot, t):[0,1]\to\R^2$ is a Cauchy sequence in $C^0([0,1])$, hence the network $\SS_t$ converges uniformly to a limit family of continuous curves ${\gamma}^i_T:[0,1]\to \R^2$, as $t\to T$, composing the set ${\SS}_T=\bigcup_{i=1}^n{\gamma}^i_T([0,1])$. As the curvature and the total length of all $\SS_t$ are uniformly bounded by some constant $C$, reparametrizing instead all the curves $\gamma^i(\cdot,t)$ proportionally to their 
arclength, getting the maps $\widehat{\gamma}^i(\cdot,t):[0,1]\to\R^2$, these latter are a
family of uniformly Lipschitz curves with curvature uniformly bounded in space and
time, hence relatively compact in $C^1$. It is then easy to see that for every $C^1$--converging subsequence, the curves $\widehat{\gamma}^i_T:[0,1]\to\R^2$ of any limit family $\widehat{\SS}_T=\bigcup_{i=1}^n\widehat{\gamma}^i_T([0,1])$ have the same supports of the curves ${\gamma}^i_T:[0,1]\to \R^2$ and either are {\em constant} (the limits of collapsing curves) or are also parametrized proportionally to arclength. Hence, this argument implies that the whole family of curves composing the networks $\SS_t$, reparametrized proportionally to arclength, converges in $C^1$, as $t\to T$, to the family $\widehat{\gamma}^i_T$ composing $\widehat{\SS}_T$. Clearly, by the uniform bound on the curvature, all the curves $\widehat{\gamma}^i_T$ belong to $W^{2,\infty}$ and, by Lemma~\ref{boh}, they are smooth outside the multi--points.\\
According to Definition~\ref{degconv}, we have to deal now with the convergence of the unit tangent vectors. We observe that if we denote with $s$ the arclength parameter, we have
\begin{equation}\label{taudeg}
\biggl\vert\frac{\partial \widehat{\tau}^i(x,t)}{\partial x}\biggr\vert=
\biggl\vert\frac{\partial \tau^i(s,t)}{\partial s}\biggr\vert L^i(t)=
\vert k^i(s,t)\vert L^i(t)\leqslant CL^i(t)\leqslant C^2\,,
\end{equation}
for some constant $C$, hence, every sequence of times $t_n\to T$ have a -- not relabeled --
subsequence such that the maps $\widehat{\tau}^i(\cdot,t_n)$
converge uniformly to some maps $\widehat{\tau}_T^i$.\\
If the curve $\widehat{\gamma}^i_T$ is a regular curve (that is, $L^i(t)$ does
not go to zero), it is easy to see that the limit maps $\widehat{\tau}_T^i$ must
coincide with the unit tangent vector field to
the curve $\widehat{\gamma}^i_T$, hence, the full sequence
$\widehat{\tau}^i(\cdot,t)$ converges to $\widehat{\tau}^i_T$.\\
If $L^i(t)$ converges to zero, as $t\to T$, by
inequality~\eqref{taudeg}, the maps $\widehat{\tau}^i(\cdot,t_n)$
converge to a constant unit vector $\widehat{\tau}_T^i$ which, if it is independent of
the subsequence $t_n$, it will be the ``assigned'' constant unit vector
to the degenerate constant curve $\widehat{\gamma}^i_T$ of the network $\widehat{\SS}_T$, as in Definition~\ref{degnet}, then it follows (see Remark~\ref{remreg}) that $\widehat{\SS}_T$ 
is a degenerate regular network and that $\SS_t$ converges in $C^1$ to $\widehat{\SS}_T$, 
as $t\to T$.\\
We start dealing with the behavior of the curves without end--points on the boundary of $\Omega$.\\
If a region is ``collapsing'', that is, its area is going to zero, as $t\to T$, being $\Omega$ strictly convex, we have that the region must be completely ``inside'' $\Omega$ (not bounded by curves getting to the end--points of the networks $P^r$ which are all distinct, hence a ``collapse'' on $\partial\Omega$ is impossible by the strong maximum principle) and, by the computations in Section~\ref{geopropsub}, it can have at most $m\leqslant 5$ bounding curves $\gamma^\ell(\cdot,t)$ and its area satisfies
$$
A(t)=(2-m/3)\pi(T-t)/2\,,
$$
by equation~\eqref{areaevolreg}. By Lemma~\ref{remhot} the $3$--points of the region converge to some limit points, as $t\to T$, if these limits are not all coincident with a single point $x_0\in\Omega$, the limit family of $C^1$ curves ${\gamma}^i_T$ must bound a ``region'' with zero area not given by a single point, hence there would be at least two non--degenerate (non--constant) curves with the same support, which is forbidden by the multiplicity--one conjecture ${\bf {M1}}$. Hence, we conclude that all such $3$--points converge to the point $x_0\in\Omega$ and the whole region vanishes at $x_0$, as $t\to T$. In particular, all the lengths of the bounding curves $\gamma^\ell(\cdot,t)$ also go to zero, as $t\to T$. Since, by equation~\eqref{gb} we have
$$
\sum_{\ell=1}^m\int_{\gamma^\ell_t} k\,ds=(2-m/3)\pi>\pi/3\,,
$$
it follows that we have a contradiction with the fact that the curvature is bounded and the perimeter of the region goes to zero. Hence, with bounded curvature, which is our case, no regions can collapse, which implies that around every point the network is locally a tree, as $t$ gets close to $T$. Recalling now Lemma~\ref{remhot}, we only have to check things locally around every point $x_0$ which belongs to the set of the limits of the triple junctions $\{O^j(t)\}$, as $t\to T$, since outside such (finite) set the network converges smoothly to $\widehat{\SS}_T$ (which is composed of regular smooth curves there), by Lemma~\ref{boh}. If the point $x_0$ is the limit point of a single triple junction $O^j(t)$, clearly locally around $x_0$ no curve is collapsing and the convergence of $\SS_t$ to $\widehat{\SS}_T$ is smooth (see the comments at the end of the previous section). Assuming then that the curve $\gamma^i(\cdot, t)$ (at least) collapses with its end--points going to $x_0$ and performing the Huisken's rescaling procedure at $x_0$, we can only get as blow--up limit degenerate shrinkers which are trees with zero curvature (being bounded, by the rescaling, the curvature converges uniformly to zero). Moreover, these shrinkers have unit multiplicity since we assumed {\bf{M1}}, hence they must be among the ones of Lemma~\ref{lemmatree}: a line, a standard triod or a standard cross. The first two cases are clearly excluded, since it would hold $\widehat{\Theta}(x_0)\leqslant 3/2$, then Corollary~\ref{regcol} would tell us that the flow is locally smooth and there is no collapse of curves. Hence, the only possibility is a standard cross (which has a core composed only of a collapsed curve), this actually implies that at $x_0$ there are no other collapsing curves other than $\gamma^i(\cdot,t)$ and only its end--points (among the triple junctions) are converging to $x_0$. Indeed, arguing as in Corollary~\ref{regcol}, since there holds
\begin{equation}\label{Oconv2}
\vert O^j(t_2)-O^j(t_1)\vert=\biggl\vert\int_{t_1}^{t_2}v^j(\xi)\,d\xi\,\biggl\vert\leqslant
\int_{t_1}^{t_2}\vert v^j(\xi)\vert\,d\xi\leqslant D\vert t_2-t_1\vert\,,
\end{equation}
for every triple junction $O^j(t)$ converging to $x_0$, for every $t_1,t_2\in[0,T)$, hence
$$
\vert O^j(t)-x_0\vert\leqslant D\vert T-t\vert
$$
for every $t\in[0,T)$, we have that its image $\widetilde{O}^j(\tt)$, after performing Huisken's rescaling procedure, satisfies
$$
\vert\widetilde{O}^j(\tt)\vert=\frac{\vert O^j(t(\tt)-x_0\vert}{\sqrt{2(T-t(\tt))}}\leqslant\frac{D\vert T-t(\tt)\vert}{\sqrt{2(T-t(\tt))}}=D\sqrt{(T-t(\tt))/2}\,,
$$
which tends to zero, as $\tt\to+\infty$, in particular, all the triple junctions converging to $x_0$ cannot ``disappear'' in the limit degenerate regular shrinker (going to infinity). As the core of the standard cross is a single line (the underlying graph has only two triple junctions), the above claim follows and the collapsing curve $\gamma^i(\cdot,t)$ is ``isolated''. As a consequence, around $x_0$ the curve $\gamma^i(\cdot,t)$ collapses and other four curves $\gamma^\ell(\cdot,t)$ converge in $C^1$ (smoothly outside $x_0$), as $t\to T$, to four regular curves $\gamma_T^\ell$ with an end--point at $x_0$, forming a $4$--point. By the $C^1$--convergence of the four curves and the $120$ degrees condition at the two converging triple junctions, if for a sequence $t_n\to T$ we have that $\widehat{\tau}^i(\cdot,t_n)$ converge to a constant unit vector $\widehat{\tau}_T^i$, this unit vector is uniquely determined by the (unique) exterior unit tangents at $x_0$ of the four concurring curves, hence we conclude that $\widehat{\tau}_T^i$ it is independent of the sequence $t_n\to T$, as we wanted to show. Then, it is the ``assigned'' constant unit vector to the degenerate constant curve $\widehat{\gamma}^i_T$ of the network $\widehat{\SS}_T$, as in Definition~\ref{degnet}. It follows (see Remark~\ref{remreg}) that $\widehat{\SS}_T$ is a degenerate regular network and that $\SS_t$ converges in $C^1$ to $\widehat{\SS}_T$, as $t\to T$.\\
By this argument, we can also conclude that $x_0$ is a $4$--point of $\widehat{\SS}_T$ (or of $\SS_T$) where the 
four concurring curves have opposite unit tangents in pairs and form 
angles of $120/60$ degrees between them, as in the statement.\\
Finally, in the case of a collapsing curve arriving at an end--point $P^r$ of
$\SS_t$, we get the statement of the proposition by considering the
network ${\mathbb{H}}_t^r$, obtained by the union of $\SS_t$ with its ``reflection'' with
respect to the point $P^r$ (see the end of Section~\ref{geopropsub}) and applying the previous conclusions to
such network.
\end{proof}

The next corollary follows from this proof.

\begin{cor}\label{curvcore}
Every core (there could be more than one) of $\widehat{\SS}_T$ is composed of a single collapsed curve. In the case of bounded curvature, only ``isolated'' curves can collapse.
\end{cor}

Moreover, during the proof, we also showed the following intuitive fact about a collapsing region, that is, with its area is going to zero, as $t\to T$. 

\begin{lem}\label{regcollap} If {\bf{M1}} holds and a region is collapsing as $t\to T$, then the curvature of the network cannot be bounded.
\end{lem}

\begin{rem}\label{angolostretto}
Notice that if at an end--point the two curves of the boundary of the
convex set $\Omega$ form an angle (or 
the whole network is contained in an angle whose vertex is such end--point)
with amplitude less than $120$ degrees, then the collapse situation described in Proposition~\ref{bdcurvcollapse} cannot
happen at such end--point. This is, for instance, the case of an initial triod contained in a triangle with angles less than $120$ degrees and fixed end--points in the vertices.\\
The same conclusion holds, by the argument in the proof of Proposition~\ref{omegaok2}, calling $\Om_t\subseteq \Om$ the evolution by curvature of $\partial \Om$, keeping fixed the end--points of $\SS_t$, if the angle formed by $\Om_t$ at such end--point becomes smaller than $120$ degrees.
\end{rem}

\begin{rem}
Notice that, even if $\SS_T$ is smooth outside its multi--points and $W^{2,\infty}$, we cannot say at the moment that its curves are of class $C^2$. This will be actually a consequence of the analysis of the next section, see Theorem~\ref{tree-bdcurb2} and Remark~\ref{rem555}. 
\end{rem}
 
All the previous arguments can be easily localized and we have the following conclusion.

\begin{prop}\label{noloop}
If {\bf {M1}} holds and the curvature of $\SS_t$ is locally uniformly bounded around a point $x_0\in\overline{\Omega}$, as $t\to T$, the networks $\SS_t$, up to
reparametrization, converge in $C^1\loc$ locally around $x_0$ to some degenerate regular network $\widehat{\SS}_T$ whose non--degenerate curves form a $C^1$ network $\SS_T$, having possibly some non--regular multi--points which are among the ones described in Proposition~\ref{bdcurvcollapse}.\\
Moreover, the curves of $\SS_T$ belong to $C^1\cap W^{2,\infty}$, in a neighborhood of $x_0$, 
and are smooth outside the multi--points. 
\end{prop}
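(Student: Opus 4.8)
The plan is to reduce Proposition~\ref{noloop} to the already-established global result, Corollary~\ref{treecoro}, by localizing every ingredient that went into its proof. The hypothesis here is only that the curvature is \emph{locally} bounded near $x_0 \in \overline{\Omega}$, as $t \to T$, so the first step is to fix a small radius $R>0$ such that $\sup_{t\in[0,T)} \sup_{\SS_t \cap B_{2R}(x_0)} k^2 \leq C < +\infty$ and such that $B_{2R}(x_0)$ contains no end--points of $\SS_t$ other than possibly $x_0$ itself (if $x_0 \in \partial\Omega$ is one of the $P^r$). We may also shrink $R$ so that, by Lemma~\ref{remhot} (whose hypothesis {\bf M1} we are assuming), $B_{2R}(x_0)$ contains at most one of the limit points $x_i = \lim_{t\to T} O^i(t)$; all other $3$--points stay a definite distance away from $B_R(x_0)$ for $t$ close to $T$.

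Next I would run the $C^1$-convergence argument of Proposition~\ref{collapse} restricted to $B_R(x_0)$. Reparametrizing proportionally to arclength the finitely many arcs (or sub-arcs) of $\SS_t$ meeting $\overline{B}_R(x_0)$, the uniform bounds on curvature and on length ratios (the latter from Lemma~\ref{rescestim2}) give uniformly Lipschitz maps with $\widehat\gamma_t^i$ and $\widehat\gamma^i_{xx}$ uniformly bounded; the same Cauchy-in-$C^0$ estimate $|\widehat\gamma^i(x,t)-\widehat\gamma^i(x,\overline t)| \leq D|t-\overline t|$ yields $C^1$-convergence, locally in $B_R(x_0)$, to a degenerate regular network $\widehat\SS_T$, with non-degenerate curves in $C^1 \cap W^{2,\infty}$. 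Smoothness away from $x_0$ follows from Lemma~\ref{boh}: around any interior point of a multiplicity-one curve of $\widehat\SS_T \cap B_R(x_0)$ distinct from $x_0$, the flow converges smoothly (here one uses that, since the curvature is bounded, there is no multiplicity concentration — more carefully, one argues as in the proof of Corollary~\ref{treecoro} that the $C^1\loc$ limit has unit multiplicity near such points, since a higher-multiplicity limit would force unbounded curvature on the approximating embedded networks, or one simply invokes {\bf M1} via Lemma~\ref{boh} as was done globally). The estimate~\eqref{taudeg} for the unit tangents localizes verbatim, giving convergence (up to a subsequence) of $\widehat\tau^i$, with the limit degenerate unit tangents pinned down by the Herring condition propagated from the non-degenerate curves — this is the same iteration over the core as in Proposition~\ref{collapse}, now applied to the possibly single core inside $B_R(x_0)$, which again shows independence of the subsequence.

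The third step is the local version of Proposition~\ref{bdcurvcollapse}, classifying the multi-point of $\SS_T$ at $x_0$. Since the curvature is bounded in $B_{2R}(x_0)$, the von~Neumann rule computation of Section~\ref{geopropsub} prevents any region from collapsing inside $B_R(x_0)$, so for $t$ near $T$ the portion of $\SS_t$ in $B_R(x_0)$ is a tree. Then I would follow the proof of the second part of Lemma~\ref{lemmatree} exactly as in Proposition~\ref{bdcurvcollapse}: the core of $\widehat\SS_T$ at $x_0$ must be a single collapsed curve, producing either a regular $3$--point (no collapse), or a $4$--point with opposite exterior unit tangents in pairs and $120/60$ angles (interior collapse), or — if $x_0=P^r$ — a $2$--point with the two concurring curves at $120$ degrees, obtained by the reflection trick $\SS_t \mapsto \mathbb{H}^r_t$ of the construction preceding Section~\ref{novan}. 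The exclusion of a multi-point with two concurring curves having the same exterior unit tangent again uses {\bf M1}: such a configuration, after rescaling around $x_0$, would give a blow-up limit consisting of halflines with one of multiplicity $\geq 2$ (bounded curvature forces zero curvature in the blow-up), contradicting the multiplicity-one conjecture. The main obstacle is purely bookkeeping: making the ``localization'' of the tree/core argument of Lemma~\ref{lemmatree} rigorous when only finitely many arcs enter $B_R(x_0)$ and they may enter and exit the ball — one must be careful that the graph structure one analyzes is that of $\SS_t \cap B_R(x_0)$ with artificial boundary on $\partial B_R(x_0)$, and that embeddedness of the approximating networks still rules out the self-intersecting local graphs exactly as in the global case. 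Once this is set up, all the conclusions of the statement — $C^1\loc$ convergence, $C^1 \cap W^{2,\infty}$ regularity of $\SS_T$ near $x_0$, smoothness outside $x_0$, and the classification of the multi-point — follow from the localized arguments above.
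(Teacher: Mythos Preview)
Your proposal is correct and matches the paper's approach exactly: the paper's own ``proof'' of Proposition~\ref{noloop} is the single sentence ``All the previous arguments can be easily localized and we have the following conclusion,'' and you have carefully spelled out precisely this localization of Proposition~\ref{collapse} and Proposition~\ref{bdcurvcollapse}. Your acknowledgment of the bookkeeping issue (arcs entering and exiting $B_R(x_0)$, artificial boundary on $\partial B_R(x_0)$) is appropriate and does not hide any genuine gap.
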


\begin{rem}\label{type0} Referring to Remark~\ref{T1}, we can call these singularities with bounded curvature {\em Type~0} singularities. They are peculiar to the network flow, as they cannot appear in the motion by curvature of a single curve. 
\end{rem}

\subsection{Vanishing of curves with unbounded curvature}\label{van}
Suppose now that, as $t\to T$, the curvature is not bounded and the length of at least one curve of the flow $\SS_t$ is not positively bounded from below. This last case is the most delicate.\\
Performing, as before, any of the blow--up procedures, even assuming the
multiplicity--one conjecture, there can be several shrinkers as
possible blow--up limits given by Propositions~\ref{thm:shrinkingnetworks.1},~\ref{resclimit-general}
and we need to classify them in order to understand the behavior of the flow $\SS_t$ approaching the singular time $T$. 
In doing that, the (local) structure (topology) of the evolving network plays an
important role in the analysis since it restricts the family of possible
shrinkers obtained as blow--up limits of $\SS_t$. A very relevant case is when the evolving network has no loops, namely, it is a tree, studied in detail in~\cite{mannovplu2}).

\begin{prop}\label{prop999}
If {\bf{M1}} holds and the evolving regular network $\SS_t$ is a tree in a neighborhood of $x_0\in\overline{\Omega}$, for $t$ close enough to $T$, then the curvature of $\SS_t$ is locally uniformly bounded around $x_0$, during the flow. Hence, the conclusions of Proposition~\ref{noloop} apply.
\end{prop}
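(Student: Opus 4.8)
The plan is to combine the multiplicity--one conjecture \textbf{M1} with the density classification of Lemma~\ref{thm:densitybound}, the local regularity Theorem~\ref{thm:locreg.2} (in its localized form, Remark~\ref{rem:locreg.1}(1)), and White's local regularity theorem. The key structural input is that a tree has no loops, so no bounded region can collapse near $x_0$; this will pin down the Gaussian density $\widehat\Theta$ at every reachable point near $x_0$ to be either $1$ or $3/2$, and both values are handled by an available regularity statement.

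First I would fix a small ball $B_{2\rho}(x_0)$ in which $\SS_t$ is a tree for all $t$ close to $T$, say $t\in(T-\delta,T)$, and such that $B_{2\rho}(x_0)$ contains no fixed end--points of $\SS_t$ (if $x_0$ is itself an end--point, apply the reflection construction ${\mathbb H}^r_t$ from just before Section~\ref{novan} and work with that network, for which $x_0$ is an interior point and the tree property is preserved). By Lemma~\ref{rescestim2} the flow has uniformly bounded length ratios. Now take any reachable point $x\in B_\rho(x_0)$ and any blow--up limit degenerate regular shrinker $\widetilde\SS_\infty$ at $x$ obtained via Proposition~\ref{resclimit-general}; since $\widetilde\SS_\infty$ is a $C^1\loc$--limit of rescalings of networks of the flow, which are trees near $x_0$, every connected component of the underlying graph of $\widetilde\SS_\infty$ is again a tree (the tree property passes to $C^1\loc$--limits of graphs with uniformly bounded length ratios). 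Then Lemma~\ref{lemmatree} forces $\widetilde\SS_\infty$ to consist of halflines from the origin, possibly with a core; invoking \textbf{M1}, all multiplicities are one, so by the last part of Lemma~\ref{lemmatree} (applied to each connected component) the only possibilities are a multiplicity--one line, a standard triod, or a standard cross. A standard cross, however, has Gaussian density $2>3/2=\Theta_\TTT$ and its underlying graph is not a tree (it has a collapsed loop in the core, six edges bounding a region — see the proof of Lemma~\ref{thm:densitybound}), which contradicts the tree hypothesis; hence $\widetilde\SS_\infty$ can only be a multiplicity--one line (with $\widehat\Theta(x)=1$) or a standard triod (with $\widehat\Theta(x)=3/2$). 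Alternatively one can note directly that a tree shrinker which is not a line must contain a $3$--point, and a standard cross has no $3$--point in the usual sense at unit multiplicity because of the collapsed segment, so $\widehat\Theta(x)\in\{1,3/2\}$ for every reachable $x\in B_\rho(x_0)$.

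Next, from $\widehat\Theta(x)\le 3/2<\Theta_{\SS^1}=\sqrt{2\pi/e}$ on $B_\rho(x_0)$, I would derive the local curvature bound. By monotonicity of $\Theta_{x,t}(\cdot)$ and upper semicontinuity together with continuity of $(x,t)\mapsto\Theta_{x,t}(t-r^2)$, there is a smaller radius $\rho'$ and an $\varepsilon>0$ such that $\Theta_{x,t}(t-r^2)\le\Theta_{\SS^1}-\varepsilon$ for all $(x,t)\in B_{\rho'}(x)\times(t_0-\rho'^2,t_0)$ and $0<r<\tfrac34\rho'$, uniformly for $t_0<T$ sufficiently close to $T$; this is exactly the hypothesis of Theorem~\ref{thm:locreg.2} in its localized version. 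Applying that theorem (with, say, $\sigma=1/2$, $\eta=3/4$) at each such $(x,t_0)$ yields $k^2\le C/\rho'^2$ on $B_{\rho'/2}(x)\times(t_0-\rho'^2/4,t_0)$ with $C$ independent of $t_0$, hence on a fixed space--time neighborhood of $x$ up to time $T$. Covering $\overline{B_{\rho/2}(x_0)}\cap\mathcal R$ by finitely many such neighborhoods gives a uniform curvature bound of $\SS_t$ near $x_0$ as $t\to T$, which is the assertion. (Points of $\overline{B_{\rho/2}(x_0)}$ not in $\mathcal R$ are irrelevant, as the flow stays bounded away from them near $T$.)

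The main obstacle is the careful justification that the tree property — and hence the exclusion of a standard cross as a blow--up limit — is inherited under the $C^1\loc$ convergence to degenerate regular shrinkers, since the limit may develop a core and one must check that the collapsed edges of the core cannot form a loop when all approximating networks are trees. This is the heart of the argument and is precisely the topological bookkeeping done in the proof of Lemma~\ref{lemmatree} (the "longest path" argument showing the assigned unit tangent cannot turn by $60$ degrees twice in the same direction without creating a self--intersection in the approximating tree); I would invoke that argument locally around $x_0$. Everything else — the density classification, the local regularity theorem, White's theorem for the line case, the reflection trick at end--points — is directly quotable from the earlier sections.
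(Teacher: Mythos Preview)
Your argument has a genuine gap at the crucial step where you exclude the standard cross as a possible blow--up limit. The underlying graph $G$ of a standard cross \emph{is} a tree: by Definition~\ref{degnet} and Figure~\ref{crossfig}, $G$ consists of two $3$--vertices joined by a single (collapsed) edge, together with four unbounded edges---there is no cycle. Indeed, Lemma~\ref{lemmatree} says exactly this: under \textbf{M1}, when the underlying graph is a connected tree without end--points, the limit can be a line, a standard triod, \emph{or} a standard cross. Your appeal to the proof of Lemma~\ref{thm:densitybound} (``six edges bounding a region'') confuses a hexagonal loop collapsed in a core with the single collapsed segment of the cross; these are different objects. Your alternative remark that ``a standard cross has no $3$--point in the usual sense'' does not help either: the Gaussian density $\widehat\Theta(x)$ is computed from formula~\eqref{gggg2} and for the cross it equals $2$, so you cannot conclude $\widehat\Theta(x)\le 3/2$.

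This is not a minor point. The cross genuinely can arise here: it is precisely the blow--up at a point where a single interior curve of a tree collapses and two triple junctions merge into a $4$--point (the ``standard transition'' of Figure~\ref{figstandard}). Since $\Theta_{\text{cross}}=2>\Theta_{\SS^1}$, neither White's theorem nor Theorem~\ref{thm:locreg.2} applies at such a point, and your density argument breaks down. The paper handles this case by an entirely different mechanism: Proposition~\ref{cross}, supported by the integral estimates of Lemmas~\ref{kinfty}, \ref{kappa2}, and \ref{kappaesse2}, shows directly that when the blow--up is a cross the curvature stays bounded, exploiting the special five--curve tree structure near the collapsing segment and a delicate $L^2$ control on $k$ and $k_s$ via the evolution equations. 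This is the substantive analytic content of Section~\ref{van}, and it cannot be bypassed by density considerations alone.
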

\begin{proof}
Let $\mathbb{S}_t$ be a smooth flow in the maximal time interval $[0,T)$
of the initial network $\mathbb{S}_0$. Let $x_0\in\overline{\Omega}$ be a reachable point for the flow and let $B$ be a ball containing $x_0$ where $\SS_t$ is a tree, for $t$ close enough to $T$ (we clearly only need to consider reachable points).\\
Let us consider a sequence of parabolically rescaled curvature flows $\SS^{\mu_i}_\tt$ around $(x_0,T)$, as in Proposition~\ref{thm:shrinkingnetworks.1}. Then, as $i\to\infty$, it converges to a degenerate regular self--similarly shrinking network flow $\SS^\infty_\tt$, in $C^{1,\alpha}\loc \cap W^{2,2}\loc$, for almost all $\tt\in (-\infty, 0)$ and for any $\alpha \in (0,1/2)$.\\
Thanks to the multiplicity--one hypothesis {\bf{M1}}
and to the topology of the network (locally a tree, see Lemma~\ref{lemmatree}),
if we suppose that $x_0\not\in\partial\Omega$, then $\SS^\infty_\tt$ can only be the ``static'' flow given by:
\begin{itemize}
\item a straight line;
\item a standard triod;
\item four concurring halflines with opposite unit tangent vectors in pairs, forming angles of $120/60$ degrees between them, that is, a standard cross.
\end{itemize}
By White's local regularity theorem in~\cite{white1},
if the sequence of rescaled curvature flows converges to a straight line, the curvature is
uniformly bounded for $t\in[0,T)$ in a ball around the point $x_0$.
Thanks to Theorem~\ref{thm:locreg.2} the same holds in the case of the standard triod. Hence, the only situation we have to deal with to complete the proof in this case is the collapse of two triple junctions at a point of $\Omega$, when the limit flow is given by the static degenerate regular network composed of four concurring halflines with opposite unit tangents in pairs forming angles of $120/60$ degrees between them, a standard cross. We claim that also in this case the curvature is locally uniformly 
bounded during the flow, around the point $x_0$ (the next proposition and lemmas are devoted to prove this fact).

If instead $x_0\in\partial\Omega$, the only two possibilities for $\SS^\infty_\tt$ are the static flows given by:
\begin{itemize}
\item a halfline;
\item two concurring halflines forming an angle of $120$ degrees.
\end{itemize}
For both these two situation the thesis is obtained by going back to the case in which $x_0\in\Omega$, with the ``reflection construction'' we described at the end of Section~\ref{geopropsub}.
\end{proof}

\begin{rem}\label{possibiliblowuptree}
Obviously, the conclusion of this proposition holds when $\SS_0$ is a tree (globally), since it remains so during the flow.
\end{rem}

\begin{prop}\label{cross}
Let $\mathbb{S}_t$ be a smooth flow in the maximal time interval $[0,T)$ for the initial network 
$\mathbb{S}_0$.
Let $x_0$ be a reachable point for the flow such that 
the sequence of rescaled curvature flows $\SS^{\mu_i}_\tt$ around $(x_0,T)$, as in Proposition~\ref{thm:shrinkingnetworks.1}, as $i\to\infty$, converges, in $C^{1,\alpha}\loc \cap W^{2,2}\loc$, for almost all $\tt\in (-\infty, 0)$ and for any $\alpha \in (0,1/2)$, to a limit degenerate static flow $\SS^\infty_\tt$ given by a standard cross. 
Then,
$$
\vert k(x,t)\vert\leqslant C<+\infty
$$
for all $t\in[0,T)$ and $x$ in a neighborhood of $x_0$.
\end{prop}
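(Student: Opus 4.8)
The plan is to upgrade the weak $C^{1,\alpha}_{\mathrm{loc}}\cap W^{2,2}_{\mathrm{loc}}$ convergence to the standard cross into a genuine curvature bound near $(x_0,T)$, by exploiting the special $120/60$ degrees geometry together with the monotonicity formula and the local regularity machinery of Section~\ref{locreg}. First I would set up the geometry: work in a small ball $B_{2\rho}(x_0)$ which, by Proposition~\ref{noloop}/Proposition~\ref{bdcurvcollapse} applied in spirit and by the hypothesis that the blow--up is a cross, contains (for $t$ close to $T$) exactly two triple junctions $O^1(t)$, $O^2(t)$ joined by a short ``collapsing'' curve, whose length $\ell(t)\to 0$; all other curves entering $B_{2\rho}(x_0)$ stay of definite length and, away from the core, have locally bounded curvature by White's theorem and Theorem~\ref{thm:locreg.2}, since around their interior points the Gaussian density is $1$ or the tangent flow is a standard triod. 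So the only thing to control is the curvature on the collapsing curve and near the two colliding junctions.

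The key idea is then to reduce to the already-treated triod case by a ``splitting'' argument. Since the limit cross is made of four halflines with opposite tangents in pairs, for any $\varepsilon>0$ there is a time $t_\varepsilon$ and a radius such that, after parabolic rescaling at $(x_0,T)$, the network is $\varepsilon$--close in $C^1$ to the cross on a large ball. The cross equals the union of two multiplicity-one lines through the origin meeting at $120/60$ degrees; equivalently, localizing the monotonicity formula as in Remark~\ref{rem:locreg.1}(1) with a cut--off supported in $B_{2\rho}(x_0)$, I would show that for every point $x$ near $x_0$ other than $x_0$ itself one has $\widehat\Theta(x)\le 1$, hence $\widehat\Theta(x)=1$ there, and that $\widehat\Theta(x_0)=\Theta_{\mathbb C}=2$. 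The point with density $2$ is isolated. Then I would run the local regularity Theorem~\ref{thm:locreg.2} at all points $x\ne x_0$ in $B_\rho(x_0)$ (density $<3/2<\Theta_{\SS^1}$) to get a curvature bound $|k|\le C/\dist(x,x_0)$ on the punctured ball, uniform in $t$. This is the standard ``removable singularity in space'' step: the curvature is controlled except possibly at the single bad point $x_0$.

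The remaining, and hardest, step is to rule out genuine curvature blow--up at the isolated density-$2$ point $x_0$ along the collapse of the two junctions. Here I would argue by contradiction: if $\max_{B_\rho(x_0)}|k|(\cdot,t)\to+\infty$, perform Huisken's dynamical rescaling (or the point-selection rescaling as in the proof of Theorem~\ref{thm:locreg.2}) at $x_0$; by Proposition~\ref{resclimit-general} and {\bf M1} one extracts a multiplicity-one degenerate regular shrinker $\widetilde\SS_\infty$ with $\Theta_{\widetilde\SS_\infty}=\widehat\Theta(x_0)=2$. Since the underlying graph near $x_0$ is a tree (no region collapses, the curvature being locally bounded away from $x_0$ gives no loop forming there, by Section~\ref{geopropsub}), Lemma~\ref{lemmatree} forces $\widetilde\SS_\infty$ to consist of halflines from the origin with possibly a core; by the density value $2$ and the multiplicity-one hypothesis it must be exactly the standard cross (four halflines, $120/60$ degrees, collapsed-segment core). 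But a standard cross is \emph{flat}: its curvature is identically zero, so by Lemma~\ref{boh} (whose hypotheses are met on each of the two multiplicity-one ``lines'' of the cross, which are smooth embedded curves moving by curvature) the rescaled flow converges \emph{smoothly} to the cross, contradicting the fact that the rescaling was chosen so that the curvature equals $1$ at the origin at the final time — exactly the contradiction used at the end of the proof of Theorem~\ref{thm:locreg.2}. The delicate part is verifying that Lemma~\ref{boh} really applies across the core: one must check that in a neighborhood of the origin, away from the single collapsing segment, the rescaled networks converge to two smooth embedded branches, so that the smoothness propagates up to (and through, by the $120/60$ structure and a reflection/doubling trick) the junction points; this is the analogue, for the cross, of Lemma~\ref{thm:locreg.3} for the triod, and I expect the bulk of the real work — iterating Proposition~\ref{stimaL} with artificial boundary points placed on the four nearly-straight branches — to be there. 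Once this smooth convergence is in hand, $|k|\le C/\sqrt{T-t}$ locally near $x_0$, in particular $|k|$ is bounded on any $[0,T)$, completing the proof; indeed combined with the earlier step this gives a uniform bound on all of $B_\rho(x_0)\times[0,T)$.
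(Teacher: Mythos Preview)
Your outline identifies many of the right ingredients but contains a genuine gap, and the overall architecture differs substantially from the paper's.

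First, the concluding step is simply wrong as written: you say ``Once this smooth convergence is in hand, $|k|\le C/\sqrt{T-t}$ locally near $x_0$, in particular $|k|$ is bounded on any $[0,T)$''. But $C/\sqrt{T-t}\to+\infty$ as $t\to T^-$; this is a Type~I estimate, not the uniform bound the proposition asserts. Smooth convergence of the Huisken rescalings to a flat limit yields only Type~I control. If instead you intend a point--selection rescaling \`a la Theorem~\ref{thm:locreg.2}, then the limit flow need not be centered at $x_0$, and you must argue case by case what it can be; when the limit is a cross, you need the cross--analogue of Lemma~\ref{thm:locreg.3} to get smooth convergence across the $4$--point --- which you explicitly defer as ``the bulk of the real work''. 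That deferred step is not an auxiliary technicality: it \emph{is} the content of the proposition. Lemma~\ref{boh} applies only at interior points of multiplicity--one curves, so it gives you nothing at the core of the cross; the whole difficulty is precisely there.

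The paper does not run a blow--up contradiction at all. Instead it proves three preparatory integral estimates tailored to the $5$--curve tree with two triple junctions and four (artificial) boundary points: Lemma~\ref{kinfty} ($\|k\|_{L^\infty}$ controlled by $\|k\|_{L^2}$ and $\|k_s\|_{L^2}$), Lemma~\ref{kappa2} (uniform $\|k\|_{L^2}$ bound on a time interval whose length depends only on the initial $\|k\|_{L^2}$), and Lemma~\ref{kappaesse2} (uniform $\|k_s\|_{L^2}$ bound, once $\|k\|_{L^2}$ is controlled and the four outer curves have length bounded below). The proof then localizes: by Lemma~\ref{boh} the convergence to the cross is smooth in an annulus $B_{3R}\setminus B_R$, so one can place four artificial boundary points there satisfying assumptions~\eqref{endsmooth}. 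The $W^{2,2}_{\mathrm{loc}}$ convergence to the flat cross gives $\int_{B_{3R}\cap\SS^{\mu_i}_{-1/(2+\delta)}}k^2\,ds\le\varepsilon_i\to 0$, which translates to $\int_{\SS_{t_i}\cap B}k^2\,ds\le\varepsilon_i/\sqrt{2(T-t_i)}$. The decisive computation is that Lemma~\ref{kappa2} then guarantees $\|k\|_{L^2}$ stays bounded up to a time $T_i>T$ (the quantitative comparison $\varepsilon_i\ll\sqrt{T-t_i}$ wins over the differential inequality), after which Lemmas~\ref{kappaesse2} and~\ref{kinfty} give a direct $L^\infty$ bound on $[t_i,T)$. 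Your phrase ``iterating Proposition~\ref{stimaL} with artificial boundary points'' gestures at this, but Proposition~\ref{stimaL} alone is not enough: the passage from $L^2$ to $L^\infty$ requires the additional $\|k_s\|_{L^2}$ control of Lemma~\ref{kappaesse2}, whose proof involves a nontrivial algebraic identity~\eqref{lowerboundaryterm} at the triple junctions and careful interpolation restricted to the four non--collapsing curves.
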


We briefly outline the proof of this proposition. First, in Lemma~\ref{kinfty} and~\ref{kappa2}, we show that for any tree, if we assume a uniform control on the motion of its end--points, the $L^2$--norm of its curvature is uniformly bounded in a time interval depending on its initial value. Moreover, we also bound the $L^\infty$--norm of the curvature in terms of its $L^2$--norm and of the $L^2$--norm of its derivative.\\
Then, we prove that for a special tree, composed of only five curves, two triple junctions and four end--points on the boundary of $\Omega$ open, convex and regular (see Figure~\ref{tree}), uniformly controlling, as before, its end--points and the lengths of the ``boundary curve'' from below, the $L^2$--norm of $k_s$ is bounded until $\Vert k\Vert_{L^2}$ stays bounded. The statement of the proposition will follow by localizing these estimates.

\begin{lem}\label{kinfty}
Let $\Omega$ be a convex open regular set and $\mathbb{S}_0$ a tree
with end--points $P^1, P^2,\dots, P^l$ (not necessarily fixed during its motion) on $\partial\Omega$. Let $\SS_t$ be a smooth evolution by curvature for $t\in [0,T)$ of the network $\SS_0$ such that the square of the curvature at the end--points of $\SS_t$ is uniformly bounded in time by some constant $C$. Then,
\begin{equation}\label{inkappa}
\|k\|_{L^\infty}^2 \leqslant 4^{n-1}C+D_n\|k\|_{L^2} \|k_s\|_{L^2}\,,
\end{equation}
where $n\in\NN$ is such that for every point $Q\in\SS_0$ there is a path to get from $Q$ to an end--point passing by at most $n$ curves (clearly, $n$ is smaller than the total number of curves of $\SS_0$) and the constant $D_n$ depends only on $n$. 
\end{lem}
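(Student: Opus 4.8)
The proof is elementary but combinatorial, and can be run at each fixed time $t$; all quantities below are the spatial ones on $\SS_t$ and I drop the time. The two building blocks are the following. First, on any single $C^2$ curve $\gamma$ of the network having a vertex $V$, from $\partial_s(k^2)=2kk_s$ and integration along $\gamma$ one gets $|k(x)|^2\le |k(V)|^2+2\int_\gamma|k||k_s|\,ds$ for every $x\in\gamma$, hence $\max_\gamma|k|^2\le |k(V)|^2+2\int_\gamma|k||k_s|\,ds$, and by Cauchy--Schwarz $\int_\gamma|k||k_s|\,ds\le \|k\|_{L^2(\SS_t)}\|k_s\|_{L^2(\SS_t)}$. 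Second, at every triple junction $O^p$ of $\SS_t$ the identity $\sum_{i=1}^3 k^{pi}=0$ from~\eqref{eq:cond2} gives, for each $i$,
\[
|k^{pi}(O^p)|^2=|k^{p(i+1)}(O^p)+k^{p(i-1)}(O^p)|^2\le 2|k^{p(i+1)}(O^p)|^2+2|k^{p(i-1)}(O^p)|^2\le 4\max_{j\ne i}|k^{pj}(O^p)|^2 ,
\]
so that control of the curvature of two of the three curves at $O^p$ controls the third one up to the universal factor $4$.

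The plan is then to propagate the end--point bound $|k|^2\le C$ inward through the tree, arguing by induction. For the base of the induction, $n=1$ means that every curve of $\SS_t$ has an end--point of $\SS_0$ among its vertices, so the first building block together with the hypothesis at the end--points immediately yields $\|k\|_{L^\infty}^2\le C+2\|k\|_{L^2}\|k_s\|_{L^2}$, i.e.\ the claim with $D_1=2$. For the inductive step, let $Q$ be a point realising $\max_{\SS_t}|k|^2$; it lies on a curve $\gamma_1$, and by hypothesis there is a path $\gamma_1,\gamma_2,\dots,\gamma_j$ of curves, consecutive ones sharing a triple junction, with $j\le n$, ending at an end--point $P$ of $\SS_0$. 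Applying the first building block on $\gamma_1$ at the vertex $V_1=\gamma_1\cap\gamma_2$ and then the junction estimate at $V_1$, one bounds $|k(Q)|^2$ by a multiple of $\max_{\gamma_2}|k|^2$ plus a multiple of $\max_{\delta_1}|k|^2$ (where $\delta_1$ is the third curve at $V_1$) plus $2\int_{\gamma_1}|k||k_s|\,ds$; iterating this along the path reaches $P$, where $|k|^2\le C$, after at most $n-1$ junctions, producing the factor $4^{n-1}$ in front of $C$, the side--branch maxima $\max_{\delta_i}|k|^2$ being estimated by applying the inductive hypothesis to the subtrees hanging off the $V_i$'s, which still have the property that every point is within $n$ curves of an end--point; the finitely many curve integrals $\int_\gamma|k||k_s|\,ds$ collected along the way are all absorbed into $\|k\|_{L^2}\|k_s\|_{L^2}$ with a combinatorial constant $D_n$ depending only on $n$.

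The delicate point, and the main obstacle, is exactly this last bookkeeping. A naive recursion into the side branches does not visibly terminate with the right ``depth'', since a tree may have long internal paths (a caterpillar--type tree) while every point remains within a bounded number of curves of a leaf; one therefore has to organise the induction so that each sub--problem genuinely carries a strictly smaller parameter — for instance by removing from $\SS_t$ the curve $\gamma_1$ carrying the maximum, splitting the network into the (at most three) subtrees hanging at the endpoints of $\gamma_1$, applying the inductive hypothesis to each of them, and using~\eqref{eq:cond2} at the broken junctions to transfer the boundary values — and to verify that this can be arranged while keeping the constant in front of $C$ equal to $4^{n-1}$ rather than something exponential in the total number of curves of $\SS_0$. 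It is precisely because the zero--sum relation~\eqref{eq:cond2} costs only the universal factor $4$ at each triple junction that the final constants depend on $n$ alone.
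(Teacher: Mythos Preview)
Your two building blocks --- integrating $\partial_s(k^2)=2kk_s$ along a single curve from a vertex, and using $\sum_{i=1}^3 k^{pi}=0$ at each triple junction to pass from one curve to the adjacent ones with a factor~$4$ --- are exactly the paper's. The paper carries them out explicitly only for the specific five--curve tree of Figure~\ref{tree} (the case $n=2$): first bounding $|k|^2\le C+2\|k\|_{L^2}\|k_s\|_{L^2}$ on each of the four outer curves, and then, on the inner curve $\gamma^5$, writing $[k^5(O^1)]^2\le 2[k^1(O^1)]^2+2[k^2(O^1)]^2\le 4C+8\|k\|_{L^2}\|k_s\|_{L^2}$ and integrating once more. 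For the general tree the paper simply asserts ``we can argue similarly to get the conclusion by induction on~$n$''.

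So your proposal is at the same level as the paper's proof, and uses the same mechanism. The bookkeeping concern you raise about the side branches --- that following a single path from the maximum point to an end--point leaves, at each junction, a third curve whose control does not obviously come with a strictly smaller depth parameter --- is precisely the step the paper does not spell out. Your suggestion to reorganise the induction by removing the curve carrying the maximum and recursing on the resulting subtrees does terminate (each subtree has strictly fewer curves), though as you note it is not immediately clear this keeps the $C$--coefficient equal to~$4^{n-1}$ rather than something depending on the total number of curves. For the applications in the paper only the five--curve case is actually needed, so this residual point is harmless there.
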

\begin{proof}
Let us first consider a network $\SS_0$ with five curves, 
two triple junctions $O^1, O^2$ and
four end--points $P^1, P^2, P^3, P^4$. In this case $n$ is clearly equal to two.
We call $\gamma^i$, for $i\leqslant 4$, the curve connecting $P^i$ with one of the two triple junctions and $\gamma^5$ the curve connecting the two triple junctions (see the following Figure~\ref{tree}).
\begin{figure}[H]
\begin{center}
\begin{tikzpicture}[scale=1]
\draw
 (-3.73,0) 
to[out= 50,in=180, looseness=1] (-2.3,0.7) 
to[out= 60,in=180, looseness=1.5] (-0.45,1.55) 
(-2.3,0.7)
to[out= -60,in=130, looseness=0.9] (-1,-0.3)
to[out= 10,in=100, looseness=0.9](0.1,-0.8)
(-1,-0.3)
to[out=-110,in=50, looseness=0.9](-2.7,-1.7);
\draw[color=black,scale=1,domain=-3.141: 3.141,
smooth,variable=\t,shift={(-1.72,0)},rotate=0]plot({2.*sin(\t r)},
{2.*cos(\t r)});
\path[font=\small]
 (-3.73,0) node[left]{$P^1$}
 (-2.9,-1.7)node[below]{$P^2$}
 (0.1,-0.8)node[right]{$P^3$} 
 (-0.40,1.6) node[right]{$P^4$}
 (-3,0.6) node[below] {$\gamma^1$}
 (-1.5,1.3) node[right] {$\gamma^4$}
 (-1.1,-1.2)[left] node{$\gamma^2$}
 (0,-0.8)[left] node{$\gamma^3$}
 (-1.3,0.5)[left] node{$\gamma^5$}
 (-2.45,1.3) node[below] {$O^1$}
 (-1.4,-0.1) node[below] {$O^2$}; 
\end{tikzpicture}
\end{center}
\begin{caption}{A tree--like network with five curves.\label{tree}}
\end{caption}
\end{figure}
Fixed a time $t\in[0,T)$, let $Q\in\gamma^i\subseteq\SS_t$, for some $i\leqslant 4$. We compute
$$
[k^i(Q)]^2=[k^i(P^i)]^2+ 2 \int_{P^i}^Q k k_s ds \leqslant C + 2 \|k\|_{L^2} \|k_s\|_{L^2}\,,
$$
hence, for every $Q\in\SS_t\setminus\gamma^5$ we have
$$
[k^i(Q)]^2\leqslant C + 2 \|k\|_{L^2} \|k_s\|_{L^2}\,.
$$
Assume now instead that $Q\in \gamma^5$. Recalling that 
$\sum_{i=1}^3 k^i =0$ at each triple junction, by the previous argument we have $[k^i(O^1)]^2, [k^i(O^2)]^2\leqslant C + 2 \|k\|_{L^2} \|k_s\|_{L^2}$, for all $i\in\{1, 2, 3, 4\}$, then it follows that $[k^5(O^1)]^2, [k^5(O^2)]^2\leqslant 4C + 8\|k\|_{L^2} \|k_s\|_{L^2}$.
Hence, arguing as before, we get
$$
[k^5(Q)]^2 = [k^5(O^1)]^2 + 2 \int_{O^1}^Q k k_s\, ds 
\leqslant 4C + 8\|k\|_{L^2} \|k_s\|_{L^2} + 2 \int_{O^1}^Q k k_s\, ds\,,
$$
In conclusion, we get the uniform in time inequality for $\SS_t$
\begin{equation*}
\|k\|_{L^\infty}^2 \leqslant 4C + 10\|k\|_{L^2} \|k_s\|_{L^2}.
\end{equation*}
In the general case, since $\SS_t$ are all trees homeomorphic to $\SS_0$, we can argue similarly to get the conclusion by induction on $n$.
\end{proof}

\begin{lem}\label{kappa2}
Let $\Omega\subseteq\mathbb{R}^2$ be open, convex and regular, 
let $\mathbb{S}_0$ be a tree with end--points $P^1, P^2,\dots, P^l$ on $\partial\Omega$
that satisfy assumption~\eqref{endsmooth} 
and let $\SS_t$ for $t\in [0,T)$ be a smooth evolution by curvature of the network $\SS_0$.
Then $\Vert k\Vert_{L^2}^2$ is uniformly bounded on $[0,\widetilde{T})$ by $\sqrt{2}\bigl[\Vert k(\cdot,0)\Vert_{L^2}^2+1\bigr]$, where 
$$
\widetilde{T}=\min\,\Bigl\{ T, 1\big/
8C\,\bigl(\Vert k(\cdot,0)\Vert_{L^2}^2+1\bigr)^2\Bigr\}\,.
$$
Here the constant $C$ depends only on the number $n\in\NN$ of Lemma~\ref{kinfty} and the constants in assumption~\eqref{endsmooth}. 
\end{lem}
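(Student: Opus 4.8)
\textbf{Proof plan for Lemma~\ref{kappa2}.} The strategy is a standard Grönwall/ODE-comparison argument for the quantity $f(t) = \|k(\cdot,t)\|_{L^2}^2 + 1$, exactly parallel to the proof of Proposition~\ref{stimaL} but now exploiting the tree structure to kill the $3$--point boundary terms. First I would recall the computation~\eqref{ksoltanto} (valid here because $\SS_t$ is a smooth flow and, by Lemma~\ref{evenly} applied to the reflected picture or directly by assumption~\eqref{endsmooth}, the end--point contributions are controlled by the constants $C_0,C_1$):
\begin{equation*}
\frac{d\,}{dt}\int_{\SS_t} k^2\,ds \leq -2\int_{\SS_t} k_s^2\,ds + \int_{\SS_t} k^4\,ds + \sum_{p}\sum_{i=1}^3 \lambda^{pi}|k^{pi}|^2\Big\vert_{O^p} + lC_0C_1\,.
\end{equation*}
The $3$--point term is estimated, as in Remark~\ref{qolpol}, by $6m\|k\|_{L^\infty}^3$ (using that $\lambda$ is linearly controlled by $k$ at junctions via~\eqref{kl}), and $\int k^4 \leq \|k\|_{L^\infty}^2\int k^2$.

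The key new input is Lemma~\ref{kinfty}: since $\SS_t$ is a tree homeomorphic to $\SS_0$, we have the pointwise bound
\begin{equation*}
\|k\|_{L^\infty}^2 \leq 4^{n-1}C' + D_n\|k\|_{L^2}\|k_s\|_{L^2}\,,
\end{equation*}
where $C'$ bounds $k^2$ at the end--points (hence $C'$ depends only on the $C_j$ in~\eqref{endsmooth}). Substituting this into the terms $\|k\|_{L^\infty}^2\int k^2$ and $\|k\|_{L^\infty}^3 = \|k\|_{L^\infty}\cdot\|k\|_{L^\infty}^2$, every occurrence of $\|k_s\|_{L^2}$ appears to first power (after using $\|k\|_{L^\infty}\leq (4^{n-1}C')^{1/2} + D_n^{1/2}\|k\|_{L^2}^{1/2}\|k_s\|_{L^2}^{1/2}$ and Young's inequality $ab \leq \varepsilon a^2 + C_\varepsilon b^2$); choosing $\varepsilon$ small, the $\|k_s\|_{L^2}^2$ pieces are absorbed into the good negative term $-2\int k_s^2\,ds$. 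What remains on the right-hand side is a polynomial in $\int_{\SS_t} k^2\,ds$ of degree at most $3$ (the cubic coming from $\|k\|_{L^\infty}^2\int k^2$ with $\|k\|_{L^\infty}^2\sim \|k\|_{L^2}\|k_s\|_{L^2}$, where the $\|k_s\|_{L^2}$ is first absorbed and the leftover is $\sim(\int k^2)^2\cdot\int k^2$), plus an additive constant depending on $n$ and the $C_j$'s. Thus I obtain
\begin{equation*}
f'(t) \leq C f(t)^3
\end{equation*}
with $C = C(n, \{C_j\})$, exactly as in Proposition~\ref{stimaL}.

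Integrating this differential inequality gives $f^{-2}(0) - f^{-2}(t) \leq 2Ct$, hence $f^2(t) \leq f^2(0)/(1 - 2Ctf^2(0))$, so for $t \leq \widetilde{T} := \min\{T,\ 1/(8C f^2(0))\} = \min\{T,\ 1/(8C(\|k(\cdot,0)\|_{L^2}^2+1)^2)\}$ one gets $f^2(t) \leq 2f^2(0)$, i.e. $f(t)\leq\sqrt{2}f(0)$, which reads $\|k(\cdot,t)\|_{L^2}^2 \leq \sqrt{2}(\|k(\cdot,0)\|_{L^2}^2 + 1)$ on $[0,\widetilde{T})$ as claimed (absorbing the $+1$ discrepancy, or stating the bound as $\sqrt{2}[\|k(\cdot,0)\|_{L^2}^2+1]$ directly).

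\textbf{Main obstacle.} The only genuinely delicate point is the bookkeeping in the interpolation step: one must check that after inserting the Lemma~\ref{kinfty} bound, \emph{every} term carrying $\|k_s\|_{L^2}$ is at most linear in it, so that Young's inequality can push it below the coefficient $2$ of $-\int k_s^2\,ds$ without generating a term worse than $f^3$. The cubic term $\|k\|_{L^\infty}^3$ is the critical one — writing $\|k\|_{L^\infty}^3 \leq \|k\|_{L^\infty}\,\|k\|_{L^2}\,\|k_s\|_{L^2}$ and then $\|k\|_{L^\infty}\leq C' + (\text{small})\|k_s\|_{L^2}$ is fine, but one has to be careful that the resulting $\|k_s\|_{L^2}^2$ coefficient stays small uniformly, which is why the argument only yields a \emph{short}-time bound with $\widetilde T$ depending on the initial $L^2$-norm. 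The rest is routine and follows verbatim the pattern of~\cite[Section~3]{mannovtor} and Proposition~\ref{stimaL}.
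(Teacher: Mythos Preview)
Your proposal is correct and follows essentially the same route as the paper: start from~\eqref{ksoltanto}, bound the junction term by $\|k\|_{L^\infty}^3$ and $\int k^4$ by $\|k\|_{L^\infty}^2\|k\|_{L^2}^2$, feed in the tree estimate of Lemma~\ref{kinfty}, absorb the $\|k_s\|_{L^2}$--terms via Young's inequality into $-2\int k_s^2\,ds$, and integrate the resulting ODE $f'\leq Cf^3$. One small inaccuracy: after inserting Lemma~\ref{kinfty} the worst power of $\|k_s\|_{L^2}$ in $\|k\|_{L^\infty}^3$ is $3/2$, not $1$ (the paper writes this directly as $\|k\|_{L^\infty}^3\leq C_n + C_n\|k\|_{L^2}^{3/2}\|k_s\|_{L^2}^{3/2}$), but this is still strictly below $2$ and Young's inequality handles it just as you say, yielding the $\|k\|_{L^2}^6$ remainder.
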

\begin{proof}
By inequality~\eqref{ksoltanto} we have
\begin{align}
\frac{d\,}{dt}\int_{\SS_t} k^2\, ds 
\leqslant&\, -2\int_{\SS_t} k_s^2\, ds + \int_{\SS_t} k^4\, ds\nonumber
+ \sum_{p=1}^m\sum_{i=1}^3\lambda^{pi}\left( k^{pi}\right) ^2\,\biggr\vert_{\text{{ at the $3$--point $O^p$}}}+C\nonumber\\
\le&\,-2\int_{\SS_t} k_s^2\,ds + \|k\|_{L^\infty}^2\int_{\SS_t}k^2\,ds
+ C \|k\|_{L^\infty}^3+C\,.\label{kappat}
\end{align}
By estimate~\eqref{inkappa} and the Young inequality, we then obtain
\begin{eqnarray*}
\|k\|_{L^\infty}^3 &\le& C_n + C_n \|k\|_{L^2}^\frac 32 \|k_s\|_{L^2}^\frac 32 \leqslant C_n+ \eps \|k_s\|_{L^2}^2 + C_{n,\eps} \|k\|_{L^2}^6\,,\\
\|k\|_{L^\infty}^2\|k\|_{L^2}^2 &\le& C_n \|k\|_{L^2}^2 + D_n\|k\|_{L^2}^3 \|k_s\|_{L^2} \leqslant C_n \|k\|_{L^2}^2+\eps \|k_s\|_{L^2}^2 + C_{n,\eps} \|k\|_{L^2}^6\,,
\end{eqnarray*}
for every small $\varepsilon>0$ and a suitable constant 
$C_{n,\varepsilon}$.\\
Plugging these estimates into inequality~\eqref{kappat} we get
\begin{align}
\frac{d\,}{dt}\int_{\SS_t} k^2 ds 
\leqslant &\,-2\Vert k_s\Vert^2+ \|k\|_{L^\infty}^2\Vert k\Vert^2+ C \|k\|_{L^\infty}^3+C\nonumber\\
\leqslant &\,-2\Vert k_s\Vert^2+ C_n \|k\|_{L^2}^2+
 \eps \|k_s\|_{L^2}^2 + C_{n,\eps} \|k\|_{L^2}^6
+C_n+ \eps \|k_s\|_{L^2}^2 + C_{n,\eps} \|k\|_{L^2}^6+C_n\nonumber\\
\leqslant&\, C\Bigl( \int_{\SS_t} k^2 ds\Bigr)^3+C\,,\label{kappatt}
\end{align}
Where we chose $\eps=1/2$ and the constant $C$ depends only on the number $n\in\NN$ of Lemma~\ref{kinfty} and the constants in condition~\eqref{endsmooth}. 

Calling $y(t)= \int_{\SS_t} k^2\, ds+1$, we can rewrite inequality~\eqref{kappatt} as the differential ODE 
$$
y'(t)\leqslant 2Cy^3(t)\,,
$$
hence, after integration, we get
$$
y(t)\leqslant \frac{1}{\sqrt{\frac{1}{y^2(0)}-4Ct}}
$$
and, choosing $\widetilde{T}$ as in the statement, the conclusion is straightforward.
\end{proof}

\begin{lem}\label{kappaesse2}
Let $\Omega\subseteq\mathbb{R}^2$ be open, convex and regular, 
let $\mathbb{S}_0$ be a tree with five curves, 
two triple junctions $O^1, O^2$ and
four end--points $P^1, P^2, P^3, P^4$ on $\partial\Omega$, as in Figure~\ref{tree}, satisfying assumption~\eqref{endsmooth} and assume that $\SS_t$, for $t\in [0,T)$, is a smooth evolution by curvature of the network $\SS_0$ such that $\Vert k\Vert_{L^2}$ is uniformly bounded on 
$[0,T)$.\\
If the lengths of the curves of the network arriving at the end--points are uniformly bounded below by some constant $L>0$, then $\Vert k_s\Vert_{L^2}$ is uniformly bounded on $[0,T)$.
\end{lem}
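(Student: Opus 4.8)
The plan is to differentiate $\int_{\SS_t}k_s^2\,ds$ in time and estimate the resulting terms, exactly as was done for $\int_{\SS_t}k^2\,ds$ in Section~\ref{kestimates}, but now keeping careful track of the boundary contributions at the two triple junctions and at the four end--points. Using the commutation rule \eqref{commut} and the evolution equations \eqref{derttau}--\eqref{dertdik} (and their higher order analogues in Lemma~\ref{kexpr} with $j=1$), one computes
$$
\frac{d\,}{dt}\int_{\SS_t}k_s^2\,ds=-2\int_{\SS_t}k_{ss}^2\,ds+\int_{\SS_t}\pol_6(k_s)\,ds+\text{(boundary terms at $O^1,O^2$ and $P^1,\dots,P^4$)}\,.
$$
The interior polynomial term $\int_{\SS_t}\pol_6(k_s)\,ds$ is absorbed, via the Gagliardo--Nirenberg inequalities of Proposition~\ref{gl} together with H\"older's inequality, into a small fraction of $\int_{\SS_t}k_{ss}^2\,ds$ plus a constant times a power of $\Vert k\Vert_{L^2}$, which is bounded by hypothesis. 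The end--point contributions are controlled by assumption~\eqref{endsmooth} (the $P^r$ satisfy those bounds), so the whole point is the two triple--junction terms.

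At each triple junction the boundary terms have the schematic form $\sum_{i=1}^3 2k_s^{pi}k_{ss}^{pi}+\lambda^{pi}(k_s^{pi})^2$, which is the $j=2$ case of the expression already analyzed on pp.~258--259 of \cite{mannovtor} and in the derivation of \eqref{final1}. Following that computation, using the relations \eqref{lambdakappa} and Remark~\ref{qolpol} to rewrite time derivatives of $\lambda^{pi}$ as space derivatives of $k^{pi}$, one splits this quantity into an exact time derivative $\dert\sum_{i=1}^3\qol_3(\lambda^{pi},k^{pi})\bigr\vert_{O^p}$ plus a term of the form $\sum_{i=1}^3\qol_7(\dert\lambda^{pi},k_s^{pi})\bigr\vert_{O^p}\le C\Vert\pol_7(\vert k_s\vert)\Vert_{L^\infty}$. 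The exact time derivative is dealt with by integrating the whole differential inequality in time: it then becomes a difference of boundary values, each of which is estimated (again using Remark~\ref{qolpol} and interpolation \eqref{int2}) by $\tfrac12\Vert k_s\Vert^2_{L^2}$ plus powers of $\Vert k\Vert_{L^2}$. Here is where the hypotheses of the lemma that were not needed in Lemma~\ref{kappa2} finally enter: to run the interpolation inequalities \eqref{int2} with constants uniform in $t$ one needs the lengths of the curves bounded below, and the lemma supplies a lower bound $L>0$ precisely on the four curves reaching the end--points; the topology of this particular five--curve tree (Figure~\ref{tree}) then lets one propagate a length bound, or at least control the relevant interpolation constants on $\gamma^5$ as well, because $\gamma^5$ is the only ``inner'' curve and its endpoints are the two triple junctions whose curvatures are controlled through Lemma~\ref{kinfty}.

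Putting these pieces together yields, after integration on $[0,t]$, an inequality of the form
$$
\int_{\SS_t}k_s^2\,ds\le \int_{\SS_0}k_s^2\,ds+C\int_0^t\Bigl(\int_{\SS_\xi}k^2\,ds\Bigr)^{q}\,d\xi+C\Bigl(\int_{\SS_t}k^2\,ds\Bigr)^{q'}+Ct+C
$$
for suitable exponents $q,q'$, with all constants depending only on the number $n$ of Lemma~\ref{kinfty}, the constants in \eqref{endsmooth}, and the length bound $L$; since $\Vert k\Vert_{L^2}$ is assumed uniformly bounded on $[0,T)$, the right--hand side is bounded, giving the claim. The main obstacle I anticipate is bookkeeping the triple--junction terms so that every surviving quantity is either an exact time derivative (handled by integrating) or of geometric order low enough to be absorbed by $-2\int k_{ss}^2$ via Proposition~\ref{gl}; this is the delicate part, since a naive estimate leaves a term of too high order in $k_s$ at $O^1,O^2$, and one must use the algebraic identities \eqref{eq:orto}--\eqref{lambdakappa} at the junctions (rather than just bounding termwise) to lower the order, exactly as in the $j=0$ passage from the line after \eqref{evolint000} to \eqref{ksoltanto}.
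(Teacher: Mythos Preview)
Your outline has the right overall shape---differentiate $\int_{\SS_t}k_s^2\,ds$, isolate the triple--junction terms, interpolate---but it misses the two specific ideas that make the argument work for this five--curve tree, and as written it would not close.

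First, the boundary term you write is the $j=1$ case of~\eqref{evolint000}, not $j=2$, and the decomposition you invoke (the passage leading to~\eqref{final1}) is carried out in Section~\ref{kestimates} only for \emph{even} $j\ge2$; the expressions $\dert^{j/2}k$, $\dert^{j/2-1}\lambda$ appearing there make no sense for $j=1$. The paper instead proves a new algebraic identity at each $3$--point, namely $3\sum_i\lambda^ik^ik^i_t=\partial_t\sum_i\lambda^i(k^i)^2$, and combines it with $k_t=k_{ss}+\lambda k_s+k^3$ and the curve--independence of $k_s+\lambda k$ to get
\[
\sum_i k^i_sk^i_{ss}=-\tfrac13\,\partial_t\sum_i\lambda^i(k^i)^2-\sum_i\bigl[\lambda^i(k^i_s)^2+(k^i)^3k^i_s\bigr]\,.
\]
The exact time derivative is order $3$ as you guessed, but the remainder is order $5$, not $7$; after a further rewriting using $\sum_i\lambda^i(k^i_s+\lambda^ik^i)^2=0$ it becomes $2(k_s+\lambda k)\sum_i[(k^i)^3-(\lambda^i)^2k^i]+\sum_i[(\lambda^i)^3(k^i)^2-2\lambda^i(k^i)^4]$.

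Second---and this is the real gap---your handling of the inner curve $\gamma^5$ does not work. You cannot ``propagate a length bound'' to $\gamma^5$: its length may go to zero, and the interpolation constants in Proposition~\ref{gl} blow up as $L^5(t)\to0$. The paper's trick is to avoid $\gamma^5$ entirely. Since $\sum_i k^{pi}=0$ at $O^p$, if $K=\max_j|k^{pj}(O^p)|$ then there is always a \emph{boundary} curve $\gamma^i$ with $i\le4$ satisfying $|k^i(O^p)|\ge K/2$; combined with the curve--independence of $k_s+\lambda k$, the order--$5$ junction term is then bounded by $C\|k^i\|_{L^\infty(\gamma^i)}^5+C\|k^i_s\|_{L^\infty(\gamma^i)}\|k^i\|_{L^\infty(\gamma^i)}^3$ for this particular $i\le4$. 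Gagliardo--Nirenberg on $\gamma^i$ alone---which has length $\ge L$ by hypothesis---yields $\varepsilon\|k_{ss}\|_{L^2}^2+C_{L,\varepsilon}$, absorbable into the good term.

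Finally, the paper does not integrate in time: it applies Gronwall to the quantity $\|k_s\|_{L^2}^2-\tfrac23\sum_p\sum_i\lambda^{pi}(k^{pi})^2\big\vert_{O^p}$, the order--$3$ correction being controlled a posteriori by $\varepsilon\|k_s\|_{L^2}^2+C_\varepsilon$ via Lemma~\ref{kinfty}.
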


\begin{proof}
We first estimate $\Vert k_s\Vert^2_{L^\infty}$ in terms of $\|k_s\|_{L^2}$ and $\|k_{ss}\|_{L^2}$.\\
Fixed a time $t\in[0,T)$, let $Q\in\gamma^i\subseteq\SS_t$, for some $i\leqslant 4$. We compute
\begin{eqnarray*}
[k_s^i(Q)]^2 &=& [k_s^i(P^i)]^2 + 2 \int_{P^i}^Q k_s k_{ss}\,ds \leqslant C + 2 \|k_s\|_{L^2} \|k_{ss}\|_{L^2}\,,
\end{eqnarray*}
hence, in this case, 
$$
[k_s^i(Q)]^2 \leqslant C + 2 \|k_s\|_{L^2} \|k_{ss}\|_{L^2}\,,
$$
for every $Q\in\SS_t\setminus\gamma^5$.\\
Assume now instead that $Q\in \gamma^5$. Recalling that 
$k^i_s+\lambda^ik^i=k^j_s+\lambda^jk^j$ at each triple junction, we get 
$$
k^5_s(O^1)=k^i_s(O^1)+\lambda^i(O^1)k^i(O^1)-\lambda^5(O^1)k^5(O^1)\,,
$$
hence,
\begin{align*}
\vert k^5_s(O^1)\vert&\leqslant \vert k^i_s(O^1)\vert +C\Vert k\Vert^2_{L^\infty}\\
&\leqslant \vert k^i_s(O^1)\vert +C\Vert k\Vert_{L^2}\Vert k_s\Vert_{L^2}+C\\
&\leqslant \vert k^i_s(O^1)\vert +C\left(1+\Vert k_s\Vert_{L^2}\right)\,,
\end{align*}
by Lemma~\ref{kappa2}. Then,
\begin{equation}\label{kesse5}
[k^5_s(O^1)]^2\leqslant 2[k^i_s(O^1)]^2+C\left(1+\Vert k_s\Vert^2_{L^2}\right)
\end{equation}
and it follows
\begin{eqnarray*}
[k^5_s(Q)]^2 &=& [k_s^5(O^1)]^2 + 2 \int_{O^1}^Q k_s k_{ss}\,ds\\ 
&\le& 2[k_s^i(O^1)]^2 + C\left(1+\Vert k_s\Vert^2_{L^2}\right)+ 2 \int_{O^1}^Q k_s k_{ss}\,ds\\
&\le& C + C\Vert k_s\Vert^2_{L^2}+2 \|k_s\|_{L^2} \|k_{ss}\|_{L^2}\,,
\end{eqnarray*}
since, by the previous argument, we have $[k^i_s(O^1)]^2, [k^i_s(O^2)]^2\leqslant C + 2 \|k_s\|_{L^2} \|k_{ss}\|_{L^2}$, for all $i\in\{1, 2, 3, 4\}$. Hence, we conclude
\begin{equation}
\Vert k_s\Vert_{L^\infty}^2\leqslant C + C\Vert k_s\Vert^2_{L^2}+2 \|k_s\|_{L^2} \|k_{ss}\|_{L^2}\,.
\end{equation}

We now pass to estimate $\|k_s\|_{L^2}$.
Making computation~\eqref{evolint000} explicit for $j=1$, we have
\begin{equation}
\partial_t \int_{\SS_t} k_s^2\,ds\leqslant
-2\int_{\SS_t} k_{ss}^2\,ds + 7\int_{\SS_t} k^2k_s^2\,ds
-\sum_{p=1}^2\sum_{i=1}^3 2k^{pi}_s k^{pi}_{ss}+\lambda ^{pi} \left( k^{pi}_s\right) ^2\,
\biggr\vert_{\text{{ at the $3$--point $O^p$}}}+C\,.\label{kappast}
\end{equation}
Then, as in Section~\ref{kestimates} we work to lower the differentiation order of the boundary term $\sum_{i=1}^3k^i_sk^i_{ss}$ at each $3$--point.\\
We claim that the following equality holds at each $3$--point,
\begin{equation}\label{lowerboundaryterm}
3\sum_{i=1}^3\lambda^i k^i k^i_t = \partial_t \sum_{i=1}^3\lambda^i \left( k^i\right) ^2\,.
\end{equation}
Keeping in mind that, at every $3$--point, we have $\sum_{i=1}^3k^i=0$ and $\lambda^{i}=\frac{k^{i-1}-k^{i+1}}{\sqrt{3}}$, 
with the convention that the superscripts are considered ``modulus $3$'' (see Section~\ref{basiccomp}), we obtains
\begin{align*}
\sqrt{3}\sum_{i=1}^3\lambda^i k^i k^i_t &= \sum_{i=1}^3 \left(k^{i-1}-k^{i+1}\right) k^ik^i_t \\
&= \sum_{i=1}^3 k^{i+1}\left(k^{i+1}+k^{i-1}\right)k^i_t-k^{i-1}\left(k^{i+1}+k^{i-1}\right)k^i_t\\
&=\sum_{i=1}^3 \left[\left(k^{i+1}\right)^2-\left(k^{i-1}\right)^2\right]k^i_t\,,
\end{align*}
and
\begin{align*}
\sqrt{3} \partial_t \sum_{i=1}^3\lambda^i \left( k^i\right) ^2
&=\sqrt{3}\sum_{i=1}^3\lambda^i_t\left( k^i\right)^2+2\lambda^i k^ik^i_t\\
&=\sum_{i=1}^3 \left(k_t^{i-1}-k_t^{i+1}\right)\left(k^i\right)^2
 +2\sum_{i=1}^3\left(k^{i-1}-k^{i+1}\right)k^ik^i_t\\
&=\sum_{i=1}^3\left[\left(k^{i+1}\right)^2-\left(k^{i-1}\right)^2
+2k^ik^{i-1}-2k^ik^{i+1}\right]k_t^i\\
&=\sum_{i=1}^3\left[\left(k^{i+1}\right)^2-\left(k^{i-1}\right)^2
-2(k^{i-1}+k^{i+1})k^{i-1}+2(k^{i-1}+k^{i+1})k^{i+1}\right]k_t^i\\
&=3\sum_{i=1}^3\left[\left(k^{i+1}\right)^2-\left(k^{i-1}\right)^2
\right]k^i_t\,,
\end{align*}
thus, equality~\eqref{lowerboundaryterm} is proved.\\
Now we use such equality to lower the differentiation order of 
the term $\sum_{i=1}^3k^i_s k^i_{ss}$. Recalling the formula $\dert k=k_{ss}+k_s\lambda + k^3$ and that $\sum_{i=1}^3k^i_t=\partial_t\sum_{i=1}^3k^i=0$, we get 
\begin{eqnarray}
\sum_{i=1}^3k^i_s k^i_{ss} 
&=& \sum_{i=1}^3k^i_s\bigl[k^i_{t}-\lambda^i k^i_s-\left( k^i\right) ^3\bigr]\nonumber
\\
&=& \sum_{i=1}^3\bigl(k^i_s+\lambda^i k^i-\lambda^i k^i\bigr)k_t^i
-\sum_{i=1}^3\lambda^i \left( k^i_s\right)^2 + \left( k^i\right)^3 k^i_s\nonumber
\\
&=& \sum_{i=1}^3\bigl(k^i_s+\lambda^i k^i\bigr)k_t^i
-\sum_{i=1}^3\lambda^i k^i k^i_t
-\sum_{i=1}^3\lambda^i \left( k^i_s\right)^2 + \left( k^i\right)^3 k^i_s\nonumber
\\
&=& - \partial_t \sum_{i=1}^3\lambda^i \left( k^i\right) ^2\bigr/3 -\sum_{i=1}^3\lambda^i \left( k^i_s\right)^2 + \left( k^i\right) ^3 k^i_s\,,\label{kessekesseesse}
\end{eqnarray}
at the triple junctions $O^1$ and $O^2$, where we used the fact that $k^i_s+\lambda^i k^i$ is independent of $i\in\{1, 2, 3\}$.\\
Substituting this equality into estimate~\eqref{kappast}, we obtain
\begin{align}
\partial_t \int_{\SS_t} k_s^2\,ds
\le&\, -2\int_{\SS_t} k_{ss}^2\,ds + 7\int_{\SS_t} k^2k_s^2\,ds+ \sum_{p=1}^2\sum_{i=1}^3 2\left( k^{pi}\right) ^3 k^{pi}_s+\lambda^{pi} \left( k^{pi}_s\right)^2\,
\biggr\vert_{\text{{ at the $3$--point $O^p$}}}+C\nonumber\\
&\,+ 2\partial_t \sum_{p=1}^2\sum_{i=1}^3\lambda^{pi} \left( k^{pi}\right) ^2\bigr/3\,
\biggr\vert_{\text{{ at the $3$--point $O^p$}}}\nonumber\\
\le&\, 
-2\int_{\SS_t} k_{ss}^2\,ds +C\Vert k\Vert^2_{L^2}\Vert k_s\Vert^2_{L^\infty}+ \sum_{p=1}^2\sum_{i=1}^3 2\left( k^{pi}\right) ^3 k^{pi}_s+\lambda^{pi} \left( k^{pi}_s\right)^2\,
\biggr\vert_{\text{{ at the $3$--point $O^p$}}}\nonumber\\
&\,+ 2\partial_t \sum_{p=1}^2\sum_{i=1}^3\lambda^{pi}
\left(k^{pi}\right)^2\bigr/3\,
\biggr\vert_{\text{{ at the $3$--point $O^p$}}}+C\,.\label{kappast2}
\end{align}
Using the previous estimate on $\Vert k_s \Vert_{L^\infty}$,
the hypothesis of uniform boundedness of $\Vert k\Vert_{L^2}$ and Young inequality, we get
\begin{align*}
\Vert k\Vert^2_{L^2}\Vert k_s\Vert^2_{L^\infty}
&\leqslant C+C\Vert k_s\Vert_{L^2}^2+C\Vert k_s\Vert_{L^2}\Vert k_{ss}\Vert_{L^2}\\
&\leqslant C+C\Vert k_s\Vert_{L^2}^2+C_\varepsilon \Vert k_s\Vert_{L^2}^2+\varepsilon\Vert k_{ss}\Vert_{L^2}^2\\
&=C+C_\varepsilon \Vert k_s\Vert_{L^2}^2+\varepsilon\Vert k_{ss}\Vert_{L^2}^2\,,
\end{align*}
for any small value $\eps>0$ and a suitable constant $C_\eps$.\\
We deal now with the boundary term $\sum_{i=1}^3 2\left(k^i\right)^3 k^i_s+\lambda^i \left( k^i_s\right)^2$.\\
By the fact that $k_s^i+\lambda^ik^i=k_s^j+\lambda^jk^j$, for every pair $i, j$, it follows that
$\left(k_s+\lambda k\right)^2\sum_{i=1}^3\lambda^i=0$, hence,
$$
\sum_{i=1}^3\lambda^i \left( k^i_s\right)^2
=-\sum_{i=1}^3\left( \lambda^i\right)^3\left(k^i\right)^2+2\left(\lambda^i\right)^2 k^ik^i_s\,,$$
then, we can write
\begin{align}
\sum_{i=1}^3 2\left(k^i\right)^3 k^i_s+\lambda^i \left(k^i_s\right)^2
=&\,\sum_{i=1}^3 2\left(k^i\right)^3 k^i_s
-\left( \lambda^i\right)^3\left(k^i\right)^2-2\left(\lambda^i\right)^2 k^ik^i_s\nonumber\\
=&\,\sum_{i=1}^3 2\bigl[\left( k^i\right)^3-\left( \lambda^i\right)^2 k^i\bigr] k_s^i -\sum_{i=1}^3\left( \lambda^i\right)^3 \left( k^i\right) ^2\nonumber\\
=&\,2(k_s+\lambda k)\sum_{i=1}^3\left( k^i\right)^3-\left( \lambda^i\right)^2 k^i
+\sum_{i=1}^3 \left( \lambda^i\right)^3 \left(k^i\right)^2-2\lambda^i\left( k^i\right)^4\,.\label{Eq1}
\end{align}
At the triple junction $O^1$, where the curves $\gamma^1,\gamma^2$ and $\gamma^5$
concur, there exists $i\in\{1,2\}$ such that $\vert k^i(O^1)\vert\geqslant \frac{K}{2}$,
where $K=\max_{j\in\{1, 2, 3\}}\vert k^j(O^1)\vert$, hence at the $3$--point $O^1$
\begin{align*}
2(k_s+\lambda k)\sum_{i=1}^3\left( k^i\right)^3-\left( \lambda^i\right)^2 k^i+\sum_{i=1}^3 &\,\left( \lambda^i\right)^3 \left(k^i\right)^2-2\lambda^i\left( k^i\right)^4\\
& \leqslant C K^5+C\vert k_s^i(O^1)\vert K^3
\\
&\leqslant C\vert k^i(O^1)\vert^5+C\vert k_s^i(O^1)\vert \vert k^i(O^1)\vert^3
\\
&\leqslant C\Vert k^i\Vert_{L^\infty(\gamma^i)}^5+C\Vert k_s^i\Vert_{L^\infty(\gamma^i)}\Vert k^i\Vert_{L^\infty(\gamma^i)}^3\,.
\end{align*}
We estimate now $C\Vert k\Vert_{L^\infty(\gamma^i)}^5+C\Vert k_s\Vert_{L^\infty(\gamma^i)}\Vert k\Vert_{L^\infty(\gamma^i)}^3$ via the Gagliardo--Nirenberg interpolation inequalities in Proposition~\ref{gl}. Letting $u=k^i$, $p=+\infty$, $m=2$ and $n=0,1$ in formula~\eqref{int1}, we get 
\begin{align}
\Vert k^i\Vert_{L^\infty(\gamma^i)}&\leqslant C\Vert k^i_{ss}\Vert^{\frac14}_{L^2(\gamma^i)}\Vert k^i\Vert_{L^2(\gamma^i)}^{\frac34}
+\frac{B}{L^{\frac12}}\Vert k^i\Vert_{L^2(\gamma^i)}
\leqslant C\Vert k^i_{ss}\Vert_{L^2(\gamma^i)}^\frac14+C_L\label{GN3}\\
\Vert k^i_s\Vert_{L^\infty(\gamma^i)}&\leqslant C\Vert k^i_{ss}\Vert^{\frac34}_{L^2(\gamma^i)}\Vert k^i\Vert_{L^2(\gamma^i)}^{\frac14}
+\frac{B}{L^{\frac32}}\Vert k^i\Vert_{L^2(\gamma^i)}
\leqslant C\Vert k^i_{ss}\Vert_{L^2(\gamma^i)}^\frac34+C_L\label{GN1}\,,
\end{align} 
hence,
$$
C\Vert k^i\Vert_{L^\infty(\gamma^i)}^5+C\Vert k^i\Vert_{L^\infty(\gamma^i)}^3\Vert k^i_s\Vert_{L^\infty(\gamma^i)}
\leqslant
C\Vert k^i_{ss}\Vert_{L^2(\gamma^i)}^\frac54+
C\Vert k^i_{ss}\Vert_{L^2(\gamma^i)}^\frac32+ C_L
\leqslant
\varepsilon\Vert k^i_{ss}\Vert_{L^2(\gamma^i)}^{2}+C_{L,\varepsilon}\,.
$$
Thus, finally, 
$$
2(k_s+\lambda k)\sum_{i=1}^3\left( k^i\right)^3-\left( \lambda^i\right)^2 k^i+\sum_{i=1}^3\left( \lambda^i\right)^3 \left(k^i\right)^2-2\lambda^i\left( k^i\right)^4\leqslant 
\varepsilon\Vert k^i_{ss}\Vert_{L^2(\gamma^i)}^{2}+C_{L,\varepsilon}\leqslant 
\varepsilon\Vert k_{ss}\Vert_{L^2}^{2}+C_{L,\varepsilon}\,.
$$
Coming back to computation~\eqref{kappast2}, we have
\begin{align*}
\partial_t & \biggl(\int_{\SS_t} k_s^2\, ds-2 \sum_{p=1}^2\sum_{i=1}^3\lambda^{pi} \left(k^{pi}\right)^2
\,\big/3 \,\biggr\vert_{\text{{ at the $3$--point $O^p$}}}\biggr)\\
&\leqslant -2\int_{\SS_t} k_{ss}^2 ds+C\Vert k_s\Vert_{L^2}^2+\varepsilon\Vert k_{ss}\Vert_{L^2}^2+C_{L,\varepsilon}\\
&\leqslant -2\int_{\SS_t} k_{ss}^2 ds+C\Vert k_s\Vert_{L^2}^2+2\varepsilon\Vert k_{ss}\Vert_{L^2}^2-C_{L,\eps}\Vert k^i\Vert_{L^\infty(\gamma^i)}^3+C_{L,\varepsilon}\\
&\leqslant C_{L,\varepsilon}\biggl(\int_{\SS_t} k_s^2\, ds-2 \sum_{p=1}^2\sum_{i=1}^3\lambda^{pi} \left(k^{pi}\right)^2\,\bigl/3
\, \biggr\vert_{\text{{ at the $3$--point $O^p$}}}\biggr)+C_{L,\varepsilon}\,,
\end{align*}
where we chose $\varepsilon<1$.\\
By Gronwall's Lemma, it follows that $\|k_s\|_{L^2}^2-2 \sum_{p=1}^2\sum_{i=1}^3\lambda^{pi} \left(k^{pi}\right)^2
\,\bigl/3\, \Bigr\vert_{\text{{ at the $3$--point $O^p$}}}$ is uniformly bounded, for $t\in[0,T)$, by a constant depending on $L$ and its value on the initial network $\SS_0$. Then, applying Young inequality to estimate~\eqref{inkappa} of Lemma~\ref{kinfty}, there holds
$$
\|k\|_{L^\infty}^3 \leqslant C+C\|k\|_{L^2}^{3/2} \|k_s\|_{L^2}^{3/2}
\leqslant C+C_\eps\|k\|_{L^2}^{6} +\eps\|k_s\|_{L^2}^{2}\leqslant 
C_\eps+\eps \|k_s\|_{L^2}^{2}\,,
$$
as $\Vert k\Vert_ {L^2}$ is uniformly bounded in $[0,T)$. Choosing $\eps>0$ small enough, we conclude that also $\|k_s\|_{L^2}$ is uniformly bounded in $[0,T)$.
\end{proof}

\begin{proof}[Proof of Proposition~\ref{cross}]
By the hypotheses, we can assume that the sequence of rescaled networks $\SS^{\mu_i}_{-1/(2+\delta)}$ converges in $W^{2,2}\loc$, as $i\to\infty$, to a standard cross (which has zero curvature), for some $\delta>0$ as small as we want.\\
Arguing as in the proof of Lemma~\ref{thm:locreg.3}, by means of Lemma~\ref{boh}, we can also assume
that, for $R>0$ large enough, the sequence of rescaled flows $\SS_\tt^{\mu_i}$ converges smoothly and uniformly to the flow $\SS^\infty_\tt$, given by the four halflines, in $\bigl(B_{3R}(0)\setminus B_R(0)\bigr)\times[-1/2,0)$. Hence, there exists $i_0\in\NN$ such that for every $i\geqslant i_0$ the flow $\SS_t$ in the annulus $B_{3R/\mu_i}(x_0)\setminus B_{R/\mu_i}(x_0)$ has equibounded curvature, no $3$--points and a uniform bound from below on the lengths of the four curves, for $t\in [T-\mu_i^{-2}/(2+\delta),T)$. Setting $t_i=T-\mu_i^{-2}/(2+\delta)$, we have then a sequence of times $t_i\to T$ such that, when $i\geqslant i_0$, the above conclusion holds for the flow $\SS_t$ in the annulus $B_{3R\sqrt{2(T-t_i)}}(x_0)\setminus B_{R\sqrt{2(T-t_i)}}(x_0)$ and with $t\in[t_i,T)$, we can thus introduce four ``artificial'' moving boundary points $P^r(t)\in\SS_t$ with $\vert P^r(t)-x_0\vert=2R\sqrt{2(T-t_i)}$, with $r\in\{1,
2, 3, 4\}$ and $t\in [t_i,T)$, such that the estimates~\eqref{endsmooth} are satisfied, that is, the hypotheses
about the end--points $P^i(t)$ of Lemmas~\ref{kinfty},~\ref{kappa2} and~\ref{kappaesse2} hold.\\
As we the sequence of networks $\SS^{\mu_i}_{-1/(2+\delta)}$
converges in $W^{2,2}\loc$ to a limit network with zero curvature, as $i\to\infty$, we have
$$
\lim_{i\to\infty}\Vert
\widetilde{k}\Vert_{L^2(B_{3R}(0)\cap\,\SS^{\mu_i}_{-1/(2+\delta)})}=0\,,\qquad
\text{ that is, }\qquad 
\int_{B_{3R}(0)\cap\,\SS^{\mu_i}_{-1/(2+\delta)}}\widetilde{k}^2\,d\sigma\leqslant\varepsilon_i\,,
$$
for a sequence $\varepsilon_i\to 0$ as $i\to\infty$.
Rewriting this condition for the non--rescaled networks, we have
\begin{equation}\label{notrescaled}
\int_{B_{3R\sqrt{2(T-t_i)}}(x_0)\cap\mathbb{S}_{t_i}} k^2\,ds\leqslant \frac{\varepsilon_i}{\sqrt{2(T-t_i)}}\,.
\end{equation}
Applying now Lemma~\ref{kappa2} to the flow of networks $\SS_t$ in the ball $B_{2R\sqrt{2(T-t_i)}}(x_0)$ in the time interval
 $[t_i,T)$, we have that $\Vert k\Vert_{L^2(B_{2R\sqrt{2(T-t_i)}}(x_0)\cap\SS_t)}$ is uniformly bounded, up to
 time 
$$
T_i=t_i+\min\,\Bigl\{ T, 1\big/
8C\,\bigl(\Vert k
 \Vert^2_{L^2(B_{2R\sqrt{2(T-t_i)}}(x_0)\cap\SS_{t_i})}+1\bigr)^2\Bigr\}\,.
$$
We want to see that actually $T_i>T$ for $i$ large enough, hence, $\Vert k
 \Vert_{L^2(B_{2R}(x_0)\cap\SS_t)}$ is uniformly bounded for
 $t\in[t_i,T)$. If this is not true, we have
\begin{align*}
T_i=&\,t_i+\frac{1}{8C\,\bigl(\Vert k \Vert^2_{L^2(B_{2R\sqrt{2(T-t_i)}}(x_0)\cap\SS_{t_i})}+1\bigr)^2}\\
\geqslant&\,t_i+\frac{1}{8C\,\bigl(\eps_i/\sqrt{2(T-t_i)}+1\bigr)^2}\\
=&\,t_i+\frac{2(T-t_i)}{8C\,\bigl(\eps_i+\sqrt{2(T-t_i)}\,\bigr)^2}\\
=&\,T+(2(T-t_i))\biggl(\frac{2}{8C\,\bigl(\eps_i+\sqrt{2(T-t_i)}\,\bigr)^2}-1\biggr)\,,
\end{align*}
which is clearly larger than $T$, as $\eps_i\to0$, when
$i\to\infty$.

Choosing then $i_1\geqslant i_0$ large enough, since $\Vert
k\Vert_{L^2(B_{2R\sqrt{2(T-t_{i_1})}}(x_0)\cap\,\SS_t)}$ is 
uniformly bounded for all times $t\in[t_{i_1},T)$ and 
the length of the four curves that connect the junctions with the ``artificial'' boundary points $P^r(t)$
are bounded below by a uniform constant, Lemma~\ref{kappaesse2}
applies, hence, thanks to Lemma~\ref{kinfty}, we have a uniform bound 
on $\Vert k\Vert_{L^\infty(B_{2R\sqrt{2(T-t_{i_1})}}(x_0)\cap\,\SS_t)}$ for $t\in[0,T)$.
\end{proof}

As we proved Proposition~\ref{cross}, Proposition~\ref{prop999} follows.
An obvious consequence is that evolving trees do not develop this kind
of singularity, hence their curvature flow is smooth till a curve collapses with
uniformly bounded curvature. Moreover it is also easy to see that if no
region collapses, the network is locally a tree around every point of 
$\overline{\Omega}$, for $t$ close enough to $T$, so Proposition~\ref{prop999} applies globally.

\begin{cor}\label{tree-bdcurb} If {\bf{M1}} holds and $\SS_0$ is a
tree, the curvature of $\SS_t$ is uniformly bounded during the flow (hence
 we are in the case of Proposition~\ref{bdcurvcollapse} in the previous section).
\end{cor}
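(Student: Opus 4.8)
The proof is essentially an assembly of Proposition~\ref{prop999} with a compactness argument, so I will be brief. First, recall that a curvature flow is by definition a family of \emph{homeomorphic} regular networks (Definition~\ref{d1}), hence the underlying graph of $\SS_t$ coincides with that of $\SS_0$ for every $t\in[0,T)$. In particular, if $\SS_0$ is a tree then $\SS_t$ is a tree for every $t\in[0,T)$; a fortiori, for every point $x_0\in\overline{\Omega}$ the network $\SS_t$ contains no loop in any neighborhood of $x_0$, as each connected component of $\SS_t\cap B_r(x_0)$ is a connected subgraph of a tree, hence again a tree. (Equivalently, one may invoke Section~\ref{geopropsub}: since $\SS_0$ has no bounded complementary region and the topological structure is preserved along the flow, no region can ever be present, so the von~Neumann rule~\eqref{areaevolreg} is vacuous here.)

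Next, fix an arbitrary $x_0\in\overline{\Omega}$. By the observation above, the hypothesis of Proposition~\ref{prop999} is satisfied at $x_0$ (the evolving network is a tree, in particular locally a tree around $x_0$, for all $t\in[0,T)$ and not merely for $t$ close to $T$). Hence, assuming {\bf{M1}}, Proposition~\ref{prop999} provides a radius $r_{x_0}>0$ and a constant $C_{x_0}<+\infty$ such that
\[
|k(x,t)|\leq C_{x_0}\qquad\text{for every }x\in\SS_t\cap B_{r_{x_0}}(x_0)\ \text{and}\ t\in[0,T)\,.
\]
For non--reachable points $x_0$ the blow--up limits are empty and there is nothing to prove; for reachable points the three cases enumerated in the proof of Proposition~\ref{prop999} (a line, a standard triod, a standard cross, together with the ``reflection construction'' at boundary points recalled just before Section~\ref{novan}) are all treated there, the only delicate one being the standard cross, settled by Proposition~\ref{cross} via Lemmas~\ref{kinfty},~\ref{kappa2} and~\ref{kappaesse2}.

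Finally, since $\overline{\Omega}$ is compact we extract a finite subcover $\overline{\Omega}\subset\bigcup_{j=1}^N B_{r_{x_j}}(x_j)$, and since every network $\SS_t$ of the flow is contained in $\overline{\Omega}$, we conclude
\[
\max_{\SS_t}|k|\leq\max_{1\leq j\leq N}C_{x_j}<+\infty\qquad\text{for every }t\in[0,T)\,,
\]
which is the desired uniform bound. In particular the hypotheses of Corollary~\ref{treecoro} are met, so $\SS_t$ converges, up to reparametrization, in $C^1$ to a degenerate regular network $\widehat{\SS}_T$ of the type described there. The entire difficulty of this statement is already absorbed in Proposition~\ref{prop999} (hence in Proposition~\ref{cross} and its lemmas); the only genuinely new points are the topological invariance of the tree property under the flow and the covering argument, neither of which presents an obstacle.
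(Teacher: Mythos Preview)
Your proof is correct and follows exactly the approach the paper has in mind. The paper does not write out a separate proof for this corollary; it simply remarks, just before stating it, that ``evolving trees do not develop this kind of singularities'' and that ``Proposition~\ref{prop999} applies globally'', which is precisely the topological invariance plus compactness--covering argument you spelled out.
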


Combining Propositions~\ref{noloop} and~\ref{prop999}, 
we have the following local conclusion.

\begin{thm}\label{tree-bdcurb2}
If {\bf{M1}} holds and $\SS_t$ is a tree in a neighborhood of $x_0\in\overline{\Omega}$, for $t$ close enough to $T$, the curvature is uniformly locally bounded and either the flow $\SS_t$ is locally smooth or, up to reparametrization proportional to arclength, converge in $C^1$ locally around $x_0$, as $t\to T$, to some degenerate regular network
$\widehat{\SS}_T$ whose non--degenerate curves form a $C^2$ network $\SS_T$ with a possibly non--regular multi--point which is among the ones described in Proposition~\ref{bdcurvcollapse}, coming from the collapse of single ``isolated'' curve of $\SS_t$.\\
Moreover, the curves of $\SS_T$, in a neighborhood of $x_0$, are smooth outside the multi--point.\\
Obviously, the conclusion holds when $\SS_0$ is a tree.
\end{thm}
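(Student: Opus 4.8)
The plan is to obtain Theorem~\ref{tree-bdcurb2} by chaining together the two local results already in place, Proposition~\ref{prop999} and Proposition~\ref{noloop}, and then upgrading the ``no collapse'' subcase to genuine smoothness via the local regularity machinery of Section~\ref{locreg}. First I would observe that the hypotheses of Proposition~\ref{prop999} are met: {\bf{M1}} is assumed and $\SS_t$ is a tree in a neighborhood $U$ of $x_0$ for $t$ close to $T$. That proposition then yields a radius $\rho>0$ with $B_\rho(x_0)\subset U$ and a constant $C$ such that $|k(x,t)|\le C$ for every $x\in\SS_t\cap B_\rho(x_0)$ and every $t\in[0,T)$, i.e.\ the curvature is locally uniformly bounded around $x_0$.

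With this local curvature bound in hand, the hypotheses of Proposition~\ref{noloop} hold (exactly {\bf{M1}} together with a local curvature bound at $x_0$), so that, after reparametrizing the curves proportionally to arclength, the networks $\SS_t$ converge in $C^1\loc$ around $x_0$, as $t\to T$, to a degenerate regular network $\widehat{\SS}_T$ whose non--degenerate curves assemble into a $C^1$ network $\SS_T$ with a possibly non--regular multi--point at $x_0$ of one of the types classified in Proposition~\ref{bdcurvcollapse}, and with these curves lying in $C^1\cap W^{2,\infty}$ near $x_0$ and smooth away from the multi--point. This already delivers the second alternative of the statement. To produce the first alternative I would then split according to whether $\widehat{\SS}_T$ has a core at $x_0$: by Remark~\ref{curvcore} a core is present precisely when a curve of $\SS_t$ collapses at $x_0$, in which case $x_0$ is a genuine $4$--point (or, on $\partial\Omega$, a $2$--point at an end--point), as in Proposition~\ref{bdcurvcollapse}. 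If instead there is no core, then any local blow--up at $x_0$ (from Proposition~\ref{resclimit-general} or Proposition~\ref{thm:shrinkingnetworks.1}) has zero curvature and, by {\bf{M1}} and the tree topology via Lemma~\ref{lemmatree}, is either a multiplicity--one line, so $\widehat{\Theta}(x_0)=1$, or a standard triod, so $\widehat{\Theta}(x_0)=3/2$; in the first case White's local regularity theorem~\cite{white1} bounds the curvature and all its derivatives near $x_0$ up to $T$, and in the second case Corollary~\ref{regcol} (through Theorem~\ref{thm:locreg.2}) bounds the curvature while Lemma~\ref{thm:locreg.3} upgrades the $C^1\loc$ convergence to a smooth one. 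Either way, after a local reparametrization the flow is smooth up to and including $t=T$ in a smaller neighborhood of $x_0$, which is the first alternative.

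At this level there is essentially no genuine obstacle: all the substantive analysis — the $L^2$ and $L^\infty$ curvature estimates for evolving trees (Lemmas~\ref{kinfty}, \ref{kappa2}, \ref{kappaesse2}), the classification of degenerate shrinkers (Lemmas~\ref{lemmatree} and~\ref{thm:densitybound}), the local regularity theorem and its corollaries, and Propositions~\ref{prop999} and~\ref{noloop} themselves — has already been carried out. The only points requiring care are bookkeeping ones: checking that every result invoked is truly local around $x_0$ so that the composition localizes correctly, and reconciling the wording of the two alternatives with Proposition~\ref{bdcurvcollapse}, since the ``non--regular multi--point'' alternative formally subsumes the regular interior--point and regular triple--junction cases, the dichotomy merely recording that in those cases the singular behavior at $x_0$ is vacuous and the flow is in fact locally smooth. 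I do not expect any new estimate to be needed beyond what is cited.
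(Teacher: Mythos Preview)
Your proposal is correct and follows exactly the approach the paper takes: the theorem is stated immediately after the sentence ``Combining Propositions~\ref{noloop} and~\ref{prop999}, we have the following local conclusion,'' with no further proof given. Your chaining of Proposition~\ref{prop999} (to get the local curvature bound from {\bf{M1}} and the tree hypothesis) followed by Proposition~\ref{noloop} (to get the $C^1\loc$ convergence and the classification of the multi--point) is precisely what is intended, and your additional unpacking of the ``locally smooth'' alternative via White's theorem and Corollary~\ref{regcol} is a reasonable elaboration of what the paper leaves implicit.
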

\begin{proof}
We only have to show that the curves of $\SS_T$ are actually $C^2$. By means of Lemma~\ref{kappaesse2}, $\Vert k_s\Vert_{L^2}$ is locally uniformly bounded on $[0,T)$, which implies that the convergence of the non--collapsing curves of $\SS_t$ to $\SS_T$, as $t\to T$, is actually in $C^2\loc$ and we are done. The smooth convergence outside the multi--points then follows by the interior estimates of Ecker--Huisken in~\cite{eckhui2}. 
\end{proof}

\begin{rem}\label{rem555}
We expect that, by extending the estimates of Lemmas~\ref{kinfty},~\ref{kappa2} and~\ref{kappaesse2} to the higher order derivatives of the curvature, one should actually get the smoothness of the curves of $\SS_T$ and of the convergence of the non--collapsing curves of $\SS_t$ to $\SS_T$. Moreover, the collapsing curve should converges in $C^\infty$ to a constant map, hence also the local convergence of $\SS_t$ to $\widehat{\SS}_T$ would be actually smooth.
\end{rem}

\begin{cor}\label{regioncor}
If {\bf{M1}} holds, the curvature is uniformly bounded along the flow for $t\in[0,T)$, if and only if no region collapses as $t\to T$. Equivalently, in every neighborhood $\SS_t$ is a tree, for $t$ close enough to $T$.
\end{cor}
\begin{proof}
By Lemma~\ref{regcollap} when the curvature is bounded, regions cannot collapse. Viceversa, if no region collapses the network is locally a tree around every point of $\overline{\Omega}$, hence by compactness and Proposition~\ref{prop999}, the curvature is uniformly bounded.
\end{proof}

\begin{rem}\label{rem777}
This corollary holds also locally.
\end{rem}

\begin{cor}\label{tree-bdcurb2cor}
If {\bf{M1}} holds and no region collapses as $t\to T$, the $C^2$ network $\SS_T$ has only multi--points like the ones described in Proposition~\ref{bdcurvcollapse}, coming from the collapse of a family of single ``isolated'' curves of $\SS_t$.
\end{cor}

Another consequence of the previous analysis is the existence of Type~0 singularities (see Remark~\ref{type0}).

\begin{prop}\label{type0prop} If {\bf{M1}} holds, Type~0 singularities actually exist. 
\end{prop}
\begin{proof} Let us consider an initial (regular) smooth network $\SS_0$, which is centrally symmetric, in the convex domain $\Omega$ (also centrally symmetric) as in the following figure:
\begin{figure}[H]
\begin{center}
\begin{tikzpicture}[scale=0.30]
\draw[color=black,scale=3.3,domain=-3.141: 3.141,
smooth,variable=\t,shift={(0,0)},rotate=0]plot({5*sin(\t r)},
{1.5*cos(\t r)});
\draw[color=black]
(0,0)to[out= 90,in=-90, looseness=1](0,2)
(0,2)to[out=150,in=-50, looseness=1] (-12,3.46)
(0,2)to[out= 30,in=-160, looseness=1] (12,3.46);
\draw[color=black, rotate=180]
(0,0)to[out= 90,in=-90, looseness=1](0,2)
(0,2)to[out=150,in=-50, looseness=1] (-12,3.46)
(0,2)to[out= 30,in=-160, looseness=1] (12,3.46);
\draw[color=black!30!white]
(-10,0)to[out= 0,in=180, looseness=1](10,0)
(-10,0)to[out= 120,in=-60, looseness=1] (-12,3.46)
(-10,0)to[out= -120,in=60, looseness=1] (-12,-3.46)
(10,0)to[out= 60,in=-120, looseness=1] (12,3.46)
(10,0)to[out=-60,in=120, looseness=1] (12,-3.46);
\path[font=]
(15,-2.8) node[below] {\Large{$\Omega$}}
(-1,4.5) node[below] {\large{$\SS_0$}}
(12.2,2) node[below] {\large{${\mathbb M}$}};
\end{tikzpicture}
\end{center}
\begin{caption}{The networks $\SS_0$ and ${\mathbb M}$.\label{type0fig}}
\end{caption}
\end{figure}
\noindent where in gray we drew the minimal network ${\mathbb M}$ connecting the four end--points of $\SS_0$ on the boundary of $\Omega$. Assuming that $\Omega$ is very ``long and thin'', it can be shown that ${\mathbb M}$ is the only ``stationary'' (regular and with zero curvature) network connecting the four end--points of $\SS_0$.\\
By Corollary~\ref{tree-bdcurb}, during the smooth curvature flow $\SS_t$ of $\SS_0$ (given by Theorem~\ref{smoothexist-prob}, maintaining the central symmetry) the curvature is bounded and either a singularity develops or the flow $\SS_t$ is smooth for every positive time. Then, it is easy to guess and actually it will be a consequence of Proposition~\ref{prolong} that, as $t\to+\infty$, the network $\SS_t$ converges in $C^1$ to ${\mathbb M}$, which is a contradiction because of their different structures. Hence, at some time $T<+\infty$ a Type~0 singularity must develop and the only possibility is the collapse of the ``central'' curve of $\SS_t$, by its symmetry.
\end{proof}

Bounded curvature is not actually the case if some loops are
present in $\SS_t$, indeed a region bounded by less than six
curves possibly collapses, then in such case the curvature cannot stay bounded, by Corollary~\ref{regioncor}.
\begin{figure}[H]
\begin{center}
\begin{tikzpicture}[rotate=90]
\draw[color=black!40!white, shift={(0,-7)}]
(-0.05,2.65)to[out= -90,in=150, looseness=1] (0.17,2.3)
(0.17,2.3)to[out= -30,in=100, looseness=1] (-0.12,2)
(-0.12,2)to[out= -80,in=40, looseness=1] (0.15,1.7)
(0.15,1.7)to[out= -140,in=90, looseness=1.3](0,1.1)
(0,1.1)--(-.2,1.35)
(0,1.1)--(+.2,1.35);
\draw[color=black]
(0.76,0)to[out= 0,in=180, looseness=1](2.5,0)
(0.23,-0.72)to[out=-72,in=108, looseness=1](0.77,-2.37)
(0.23,0.72)to[out=72,in=-108, looseness=1](0.77,2.37)
(-0.61,-0.44)to[out=-144,in=36, looseness=1](-2,-1.46)
(-0.61,0.44)to[out=144,in=-36, looseness=1](-2,1.46)
(0.76,0)to[out= 120,in=-48, looseness=1](0.23,0.72)
to[out= -162,in=24, looseness=1](-0.61,0.44)
to[out= -96,in=96, looseness=1](-0.61,-0.44)
(0.76,0)to[out= -120,in=48, looseness=1](0.23,-0.72)
to[out= 162,in=-24, looseness=1](-0.61,-0.44);
\draw[color=black,dashed]
(2.5,0)to[out= 0,in=180, looseness=1](3,0)
(0.77,-2.37)to[out=-72,in=108, looseness=1](0.92,-2.85)
(0.77,2.37)to[out=72,in=-108, looseness=1](0.92,2.85)
(-2,-1.46)to[out=-144,in=36, looseness=1](-2.42,-1.76)
(-2,1.46)to[out=144,in=-36, looseness=1](-2.42,1.76);
\path[font=\small]
(-1,-5.1) node[above]{$t\to T$}
(2.8,-1.2) node[below]{$\SS_t$}
(2.8,-11.2) node[below]{$\SS_T$};
\fill[color=black,shift={(0,-10)}]
(0,0) circle (1.5pt);
\draw[color=black,shift={(0,-10)}]
(0,0)to[out= 0,in=180, looseness=1](2.5,0)
(0,0)to[out=-72,in=108, looseness=1](0.77,-2.37)
(0,0)to[out=72,in=-108, looseness=1](0.77,2.37)
(0,0)to[out=-144,in=36, looseness=1](-2,-1.46)
(0,0)to[out=144,in=-36, looseness=1](-2,1.46);
\draw[color=black,dashed,shift={(0,-10)}]
(2.5,0)to[out= 0,in=180, looseness=1](3,0)
(0.77,-2.37)to[out=-72,in=108, looseness=1](0.92,-2.85)
(0.77,2.37)to[out=72,in=-108, looseness=1](0.92,2.85)
(-2,-1.46)to[out=-144,in=36, looseness=1](-2.42,-1.76)
(-2,1.46)to[out=144,in=-36, looseness=1](-2.42,1.76);
\end{tikzpicture}
\end{center}
\begin{caption}{Homothetic collapse of a (symmetric) pentagonal region of $\SS_t$ ({\em five--ray star}).\label{5rayfig}}
\end{caption}
\end{figure}
\noindent Determining what asymptotically happens in detail in the general case can be quite
complicated because of the difficulty in classifying all the regular shrinkers 
with loops. Anyway, some special cases with ``few'' triple
junctions can be fully understood. We will show an example of this
analysis in Section~\ref{globsec}, considering networks with at most {\em two} triple junctions. We underline that the interest
in these very special cases is because of the multiplicity--one conjecture
holds for such networks (Corollary~\ref{Elemma1}).

However, even if we cannot describe all the possible shrinkers
$\SS_{-1/2}^\infty$ or $\widetilde{\SS}_\infty$, arising respectively
from the parabolic or Huisken's rescaling procedure at the singular
time $T<+\infty$, we can get enough information in order to restart
the flow by means of Theorem~\ref{evolnonreg} in the next section (actually by its extension discussed in Remark~\ref{felix}). The point is to connect the information on the possible blow--up limit networks $\widetilde{\SS}_\infty$ to the existence
and the structure of a network $\SS_T$ which is the limit of $\SS_t$, as $t\to T$.

We recall that assuming the multiplicity--one conjecture, by Lemma~\ref{remhot}, there exist the limits $x_i=\lim_{t\to T}O^i(t)$, for
$i\in\{1,2,\dots,m\}$ and correspond to the (finitely many) points in $\Omega$ where
$\widehat\Theta(x_0)>1$ and to the end--points of $\SS_t$ such that
the curve getting there collapses as $t\to T$.

We first discuss what happens around an end--point $P^r$ of the network $\SS_t$ if $x_i=P^r$ for some (possibly more than one) $i\in\{1,2,\dots,m\}$. As before, we consider the network ${\mathbb{H}}_t^r$, obtained by the union of $\SS_t$ with its ``reflection'' with respect to the point $P^r$ (see the end of Section~\ref{geopropsub}). If $\Omega$ is strictly convex, by Proposition~\ref{omegaok2}, every blow--up limit network $\widetilde{\mathbb{H}}_\infty^r$, obtained rescaling around the end--point $P^r$, must be symmetric and contained in the union of two cones for the origin of $\R^2$. Then, by an argument similar to the one in the proof of Lemma~\ref{thm:densitybound}, either $\widetilde{\mathbb{H}}_\infty^r$ is a tree, or it contains a loop around the origin, which is clearly impossible by such property. Hence, we conclude that $\widetilde{\mathbb{H}}_\infty^r$ is a tree and the same the blow--up limit network $\widetilde{\SS}_\infty$, which means that we are in the previous case, considered in Proposition~\ref{prop999}, in particular, the curvature is locally bounded.

Then, by Proposition~\ref{noloop}, Theorem~\ref{tree-bdcurb2} and Remark~\ref{rem555}, we have a complete description of the behavior of $\SS_t$ locally around its end--point, as $t\to T$.

\begin{thm}\label{prop999b}
If {\bf{M1}} holds and the open set $\Omega$ is strictly convex, then in a neighborhood of its fixed end--points on $\partial\Omega$, the evolving regular network $\SS_t$ is a tree, for $t$ close enough to $T$ and its curvature is uniformly locally bounded during the flow. Hence, around any end--point $P^r$ either the flow is smooth, or the curve of $\SS_t$ getting to $P^r$ collapses and the network $\SS_t$ locally converges in $C^\infty$, as $t\to T$, to two concurring curves at such end--point forming an angle of $120$ degrees, as in the right side of Figure~\ref{Pcollapse}.
\end{thm}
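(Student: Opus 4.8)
The plan is to reduce the statement to a local analysis at an interior point through the reflection construction introduced just before Section~\ref{novan}, and then to use the strict convexity of $\Omega$ to pin down the blow--up limits at a fixed end--point. Fix $P^r\in\partial\Omega$. For $t>0$ the flow $\SS_t$ from Theorem~\ref{c2shorttime} is smooth, and by Remark~\ref{geocomprem1} both the curvature and the velocity $k\nu+\lambda\tau$ vanish at $P^r$; hence the union $\mathbb{H}^r_t=\SS_t\cup\SS_t^{R_r}$ of $\SS_t$ with its reflection through $P^r$ is a $C^{2,1}$ (in fact smooth, for $t>0$) curvature flow of a regular network, symmetric under the reflection $R_r$ and with $P^r$ now an interior point. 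Strict convexity of $\Omega$ guarantees that no self--intersections are created, and {\bf{M1}} for $\SS_t$ yields {\bf{M1}} for $\mathbb{H}^r_t$, since every blow--up limit of $\mathbb{H}^r_t$ at $P^r$ is the symmetric union of a blow--up limit of $\SS_t$ at $P^r$ with its $R_r$--reflection, two pieces that can meet only at the origin.

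The decisive geometric input is Proposition~\ref{omegaok2}: by strict convexity $\SS_t$ stays inside the convex set $\Om_t\subset\Om$ obtained by evolving $\partial\Om$ by curvature with fixed end--points, and the angle spanned by $\Om_t$ at $P^r$ is strictly less than $\pi$ and non--increasing, hence bounded by some $\alpha<\pi$ for $t$ close to $T$. Consequently every blow--up limit $\widetilde{\SS}_\infty$ of $\SS_t$ at $P^r$ (obtained from Proposition~\ref{resclimit-general}, or the parabolically rescaled version in Proposition~\ref{thm:shrinkingnetworks.1}) is contained in a fixed cone $C$ at the origin with opening $\leq\alpha<\pi$, so the corresponding limit $\widetilde{\mathbb{H}}^r_\infty$ lies in the symmetric ``bowtie'' $C\cup(-C)$, whose two sectors meet only at the origin.

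Since $\widetilde{\mathbb{H}}^r_\infty$ is a degenerate regular shrinker that is a $C^1\loc$--limit of embedded regular networks, Lemmas~\ref{lemmatree} and~\ref{thm:densitybound} apply. A standard triod or a standard cross surrounds the origin and cannot be contained in a bowtie whose sectors omit a nonempty set of directions through the origin; a loop of the underlying graph encircling the origin is likewise impossible, because a connected bounded open subset of $C\cup(-C)$ containing the origin would, after removing the origin, be disconnected by the two disjoint sectors. Excluding the remaining loops by the argument in the proof of Lemma~\ref{thm:densitybound} --- such a loop bounds a region lying in a single sector, whose boundary, evolved by classical curve shortening to a round point, forces a lower bound $\Theta_{\SS^1}$ on a Gaussian integral which, via the Colding--Minicozzi identity~\eqref{eq:locreg.0.0}, is incompatible with the structure of a network confined to the bowtie --- one concludes that $\widetilde{\mathbb{H}}^r_\infty$, if non--empty, is a multiplicity--one line through the origin. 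Thus $\widetilde{\SS}_\infty$ is a multiplicity--one halfline and $\widehat\Theta(P^r)=1/2$. As the blow--up of $\mathbb{H}^r_t$ at $P^r$ is a multiplicity--one line (density $1$), White's local regularity theorem~\cite{white1} yields a uniform curvature bound for $\mathbb{H}^r_t$, hence for $\SS_t$, in a space--time neighbourhood of $P^r$. Then Propositions~\ref{noloop} and~\ref{bdcurvcollapse}, localized at a boundary point through the reflection construction, give the dichotomy: either the curve getting to $P^r$ does not collapse and $\SS_t$ converges smoothly near $P^r$, so the flow extends past $T$, or that curve collapses and $\SS_t$ converges in $C^1\loc$ around $P^r$ to the $2$--point of the second item of Proposition~\ref{bdcurvcollapse}, where the two concurring curves of $\SS_T$ form an angle of $120$ degrees; by Remark~\ref{rem555} these curves are in fact $C^2$.

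The hard part is the classification in the previous paragraph: turning the bowtie confinement into a genuine exclusion of all blow--up limits other than a halfline. Ruling out the triod, the cross, and loops around the origin is elementary topology, but excluding loops that do not encircle the origin requires carefully importing the argument of Lemma~\ref{thm:densitybound} and checking that the confinement to $C\cup(-C)$, rather than an a priori Gaussian density bound, is what ultimately forces the graph of $\widetilde{\mathbb{H}}^r_\infty$ to be a tree; once this is done the rest follows from the local regularity theory already developed and the reflection trick.
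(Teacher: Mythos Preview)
Your overall architecture (reflect through $P^r$, confine the blow--up in a ``bowtie'' $C\cup(-C)$ via Proposition~\ref{omegaok2}, then read off the local behavior from Propositions~\ref{noloop} and~\ref{bdcurvcollapse}) matches the paper, but the crucial classification step contains a genuine error. You claim that a standard cross ``cannot be contained in a bowtie whose sectors omit a nonempty set of directions through the origin'', and hence that the only possible blow--up of $\mathbb{H}^r_t$ at $P^r$ is a multiplicity--one line. This is false: a standard cross is two lines through the origin at an angle of $60$ degrees, and such a pair fits into $C\cup(-C)$ whenever the opening of $C$ is at least $120$ degrees (so that the two halflines of $\widetilde{\SS}_\infty$, which are $120$ degrees apart by the Herring condition, lie in $C$). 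Nothing in Proposition~\ref{omegaok2} forces the cone opening below $120$ degrees; see Remark~\ref{angolostretto}, which says precisely that the collapse at $P^r$ is excluded only when that angle is below $120$ degrees. In fact the cross is \emph{exactly} the blow--up that arises in the second alternative of the theorem, when the boundary curve collapses. Your conclusion that $\widehat{\Theta}^{\mathbb{H}^r}(P^r)=1$ would, via White's theorem, give a smooth flow near $P^r$ and forbid any collapse at all, contradicting the statement you are trying to prove.

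The paper does not attempt to pin the blow--up down to a line. It uses the bowtie only to show that $\widetilde{\mathbb{H}}^r_\infty$ is a \emph{tree}: a loop in $\widetilde{\mathbb{H}}^r_\infty\subset C\cup(-C)$, being a connected Jordan curve, lies in a single convex sector and hence (its interior being contained in that sector) cannot enclose the origin, which, combined with the Lemma~\ref{thm:densitybound} dichotomy, rules out loops. Once every blow--up is a tree, no bounded region can collapse to $P^r$, so $\SS_t$ is locally a tree near $P^r$ for $t$ close to $T$; then Proposition~\ref{prop999} (whose proof, via Proposition~\ref{cross}, explicitly handles the cross case) gives the uniform curvature bound, and the dichotomy follows from Proposition~\ref{bdcurvcollapse}. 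Your argument can be repaired by replacing ``line'' with ``tree'' at the classification step and invoking Proposition~\ref{prop999} instead of White's theorem alone. The vague appeal to the Colding--Minicozzi identity to exclude loops not encircling the origin is also unsupported: you have no a priori upper bound on $\Theta_{\widetilde{\mathbb{H}}^r_\infty}$ below $\Theta_{\SS^1}$ here, so that route does not close; the bowtie geometry is what does the work.
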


\begin{rem}
We remark that the hypothesis of strict convexity of $\Omega$
can actually be weakened by asking that $\Omega$ is convex and that there do not exist three aligned end--points of the initial network $\mathbb{S}_0$ on $\partial\Omega$.
\end{rem}

We now deal with the situation of a point $x_0=\lim_{t\to T}O^i(t)$, for some $i\in\{1,2,\dots,m\}$, with $x_0\in\Omega$. Assuming that around $x_0\in{\Omega}$ the network is not definitively a tree for $t$ close enough to $T$ (which would imply that the curvature is locally bounded, by Proposition~\ref{prop999}), there must be at least one bounded region of $\SS_t$ collapsing to $x_0$ at the singular time. 
By the estimates in Section~\ref{geopropsub}, then the area
$A(t)$ of any such region must satisfy $A(t)=C(T-t)$, for some
constant $C$ depending on the number of its edges. Hence, all the
rescaled networks $\widetilde{\SS}_{x_0,\tt}$ must contain the
rescalings of such regions that will have a respective constant
area. These rescaled regions cannot ``go all to infinity'' and
disappear in the blow--up limit network $\widetilde{\SS}_\infty$, along any
converging sequence $\widetilde{\SS}_{x_0,\tt_j}\to\widetilde{\SS}_\infty$, otherwise Lemma~\ref{lemmatree} would apply and we could repeat the argument of the proof of Proposition~\ref{prop999}, concluding that the curvature is uniformly bounded around $x_0$.

We now suppose that the full rescaled family of networks $\widetilde{\SS}_{x_0,\tt}$ converges to $\widetilde\SS_\infty$, for instance, if the {\em uniqueness of blow--up assumption} {\bf{U}} in Problem~\ref{ooo12}, that we recall here below for the reader's convenience, holds (see also Remark~\ref{uequiv}):
\begin{itemize}
\item[{\bf{U}}:] In Proposition~\ref{resclimit-general}, the full family of rescaled regular networks $\widetilde{\SS}_{x_0,\tt}$ converges in $C^1\loc$ to the limit degenerate regular shrinker $\widetilde\SS_\infty$, as $\tt\to+\infty$.\\
Equivalently, the full family of parabolically rescaled curvature flows $\SS^{\mu}_\tt$ converges to the degenerate regular self--similarly shrinking flow $\SS^\infty_\tt$, as $\mu\to+\infty$, in Proposition~\ref{thm:shrinkingnetworks.1}.
\end{itemize}
Then, we can separate $\widetilde{\SS}_\infty$ in two parts:
\begin{itemize}
\item a compact subnetwork $\widetilde{\mathbb{M}}_\infty$ of $\widetilde{\SS}_\infty$, given by the union of the cores and the bounded curves (which are pieces of Abresch--Langer curves or straight segments passing by the origin of $\R^2$),
\item the union $\widetilde{\NN}_\infty=\widetilde{\SS}_\infty\setminus \widetilde{\mathbb{M}}_\infty$ of the unbounded curves of $\widetilde{\SS}_\infty$, which must be halflines ``pointing'' towards the origin (but not necessarily containing it), by Remark~\ref{abla}.
\end{itemize}

\begin{figure}[H]
\begin{center}

\begin{tikzpicture}[rotate=90]
\draw[color=black]
(-2.5,0)to[out= 0,in=180, looseness=1](-1.15,0)
(1.15,0)to[out= 0,in=180, looseness=1](2.5,0)
(0,2.5)to[out= -90,in=90, looseness=1](0,1.15)
(0,-2.5)to[out= 90,in=-90, looseness=1](0,-1.15);
\draw[color=black!40!white]
(-1.15,0)to[out= 60,in=-150, looseness=1](0,1.15)
(-1.15,0)to[out= -60,in=150, looseness=1](0,-1.15)
(1.15,0)to[out= 1200,in=-30, looseness=1](0,1.15)
(1.15,0)to[out= -120,in=30, looseness=1](0,-1.15);
\draw[color=black,dashed]
(-3,0)to[out= 0,in=180, looseness=1](-2.5,0)
(2.5,0)to[out= 0,in=180, looseness=1](3,0)
(0,3)to[out= -90,in=90, looseness=1](0,2.5)
(0,-3)to[out= 90,in=-90, looseness=1](0,-2.5);
\fill(0,0) circle (1.5pt);
\path[font=\small]
(2.8,-1.2) node[below]{$\widetilde{\SS}_\infty$}
(-.8,-1) node[below]{$\widetilde{{\mathbb{M}}}_\infty$}
(-.05,-.2) node[above]{$O$};
\end{tikzpicture}
\end{center}
\begin{caption}{The subnetwork $\widetilde{\mathbb{M}}_\infty$ (in gray) of a 4--symmetric regular shrinker $\widetilde{\SS}_\infty$ ({\em four--ray star}).}
\end{caption}
\end{figure}
\noindent Then, by rescaling--back (dynamically contracting) the flow
$\widetilde{\SS}_{x_0,\tt}\to\widetilde\SS_\infty$, by the uniqueness assumption, the subnetwork ${\mathbb{M}}_t$ of $\SS_t$ corresponding to the compact subnetwork of $\widetilde{\SS}_{x_0,\tt}$ converging to $\widetilde{\mathbb{M}}_\infty$, is contained in the ball $B_{C\sqrt{2(T-t)}/2}(x_0)$ for every $t\in[0,T)$, for some constant $C$ independent of $t$ (dependent on $\widetilde{{\mathbb{M}}}_\infty$). In particular, ${\mathbb{M}}_t$ completely collapses to the point $x_0$, ``disappearing'' in the limit, as $t\to T$.\\
We want now to describe the local behavior of the rest $\NN_t$ of the network $\SS_t$ (corresponding to the union of the curves of $\widetilde{\SS}_{x_0,\tt}$ neither collapsing, nor entirely going to infinity, converging to the halflines of $\widetilde{\SS}_\infty$), around the point $x_0$, as $t\to T$. 

\begin{rem}\label{remUtree} Notice that, inspecting the proof of Proposition~\ref{bdcurvcollapse}, it is easy to see that the uniqueness assumption {\bf{U}} holds at every point where the curvature is locally uniformly bounded. In particular, it holds in general if the network is a tree, by Corollary~\ref{tree-bdcurb}.
\end{rem}

\begin{prop}\label{loopsing}
If {\bf{M1}} and the above uniqueness assumption {\bf{U}} of the blow--up limit shrinker $\widetilde\SS_\infty$ hold, then, as $t\to T$, the family $\gamma^i_t$ of curves of $\NN_t$ converges in $C^1(U)$ and in $C^\infty(U\setminus\{x_0\})$, where $U$ is a neighborhood of $x_0$, as $t\to T$, to an embedded, possibly non--regular network $\SS_T$, composed of $C^1$ curves $\gamma^i_T$ concurring at $x_0$.\\
The directions of the halflines of $\widetilde\SS_\infty$ coincide with the inner unit tangent vectors of the limit curves $\gamma^i_T$ at $x_0$, hence, these latter are all distinct.\\
Moreover, the curvature of every curve $\gamma^i_T$ is of order $o(1/r)$, as $r\to 0$, where $r$ is the distance from the multi--point $x_0\in\SS_T$.
\end{prop}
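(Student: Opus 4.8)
The plan is to localize everything in a small ball $B_\rho(x_0)$, to first prove that the Gaussian density is identically $1$ on the reachable set inside it (away from $x_0$), then to deduce smooth convergence of the non--collapsing part $\NN_t$ away from $x_0$, and finally to use the uniqueness assumption {\bf{U}} to control the behaviour of the limit curves at $x_0$ itself. The decomposition $\SS_t=\mathbb{M}_t\cup\NN_t$, together with the fact (already established) that $\mathbb{M}_t\subset B_{C\sqrt{2(T-t)}/2}(x_0)$ collapses to $x_0$, reduces the statement about $\SS_t$ to the corresponding statement about $\NN_t$.

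\emph{Step 1 (density one near $x_0$).} I would choose $\rho>0$ so small that $\overline{B_\rho(x_0)}$ meets the finite set $\{x_1,\dots,x_m\}=\{\lim_{t\to T}O^i(t)\}$ only in $x_0$ and contains no fixed end--point of $\SS_t$. By Lemma~\ref{remhot} (which uses {\bf{M1}}), every reachable point $x\in\Omega\setminus\{x_1,\dots,x_m\}$ satisfies $\widehat\Theta(x)\le1$, while reachability forces $\widehat\Theta(x)\ge1$; hence $\widehat\Theta\equiv1$ on $\mathcal R\cap(B_\rho(x_0)\setminus\{x_0\})$. \emph{Step 2 (convergence away from $x_0$).} Since $1<\Theta_{\SS^1}$ and the flow has uniformly bounded length ratios (Lemma~\ref{rescestim2}), the hypotheses of White's local regularity theorem~\cite{white1} (equivalently Theorem~\ref{thm:locreg.2} and Corollary~\ref{regcol}) are satisfied near every point of $\mathcal R\cap(B_\rho(x_0)\setminus\{x_0\})$, by the upper semicontinuity of $(x,s)\mapsto\Theta_{x,s}(s-r^2)$ and a compactness argument; applying it at scale comparable to $|x-x_0|$ and using the interior estimates of Ecker--Huisken~\cite{eckhui2} (cf.\ Remark~\ref{rem:locreg.1}) I would obtain $|k(x,t)|\le C(\rho)/|x-x_0|$ on $(B_\rho(x_0)\setminus\{x_0\})\times[0,T)$ together with the corresponding scaling--invariant bounds on all higher derivatives. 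By the reparametrization argument of Proposition~\ref{collapse} applied locally on $B_\rho(x_0)\setminus\{x_0\}$, the curves of $\NN_t$ then converge, as $t\to T$, in $C^\infty\loc(B_\rho(x_0)\setminus\{x_0\})$ to finitely many embedded $C^\infty$ curves $\gamma^i_T$ (embeddedness passes to the limit because $\widehat\Theta\equiv1$ there forbids any self--contact, cf.\ Remark~\ref{remreg}). Their number and the way they are attached is read off from the Huisken--rescaled picture: by {\bf{U}}, $\widetilde\SS_{x_0,\tt}\to\widetilde\SS_\infty$, and the curves of $\NN_t$ are exactly those whose $\sqrt{2(T-t)}$--rescalings converge to the halflines of $\widetilde{\NN}_\infty$; since $\mathbb{M}_t$ stays inside $B_{C\sqrt{2(T-t)}/2}(x_0)$, the free ends of the $\gamma^i_t$ all converge to $x_0$, so the $\gamma^i_T$ concur there.

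\emph{Step 3 (behaviour at $x_0$).} It remains to show that each $\gamma^i_T$ is $C^1$ up to $x_0$ with inner unit tangent $v_i$ equal to the direction of the corresponding halfline of $\widetilde\SS_\infty$, that the $v_i$ are pairwise distinct, and that $|k|=o(1/r)$ as $r=|\gamma^i_T-x_0|\to0$. For the curvature decay I would use that, by {\bf{U}} and Lemma~\ref{boh} (cf.\ the discussion after Proposition~\ref{thm:shrinkingnetworks.1}), $\widetilde\SS_{x_0,\tt}\to\widetilde\SS_\infty$ \emph{smoothly} on any fixed annulus $\{R_0<|x|<2R_0\}$ with $R_0$ large enough to lie outside $\widetilde{\mathbb{M}}_\infty$; un--rescaling, $|k_{\SS_t}|\le\varepsilon(\tt)/\sqrt{2(T-t)}$ on the annulus $\{R_0\sqrt{2(T-t)}<|x-x_0|<2R_0\sqrt{2(T-t)}\}$ with $\varepsilon(\tt)\to0$. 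Given $y\in\gamma^i_T$ with $r=|y-x_0|$ small, taking $t=t(r)$ with $R_0\sqrt{2(T-t(r))}=r$ and using the $C^\infty\loc$ convergence of Step 2 to pass from $\SS_{t(r)}$ to $\SS_T$ near $y$, I would get $|k_{\SS_T}(y)|\le 2R_0\,\varepsilon(\tt(r))/r=o(1/r)$. For the tangent directions and $C^1$--continuity, the rescalings $r^{-1}(\SS_T-x_0)$ on $B_2\setminus B_{1/2}$ have curvature tending to $0$ as $r\to0$ by the decay just proven, hence any subsequential limit is a union of rays through the origin; the \emph{uniqueness} of this tangent cone — whence the existence of the limits $\tau^i_T(x_0)=v_i$, the $C^1(U)$ convergence $\NN_t\to\SS_T$ up to $x_0$, and the $C^1$ regularity of the $\gamma^i_T$ at $x_0$ — is inherited from the uniqueness of the tangent flow at $(x_0,T)$ via the relation between the limit set $\SS_T$ and the flow, as in~\cite[Section~10]{Ilnevsch}. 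Distinctness of the $v_i$ follows from Remark~\ref{abla} and {\bf{M1}}: two halflines of $\widetilde\SS_\infty$ with the same direction would lie on the same line through the origin and give a multiplicity--two branch.

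The hard part is Step 3, and more precisely the passage from control at the two extreme scales — the self--similar scale $\sqrt{2(T-t)}$, where {\bf{U}} lives, and fixed scales, where White's theorem lives — to a uniquely determined, $C^1$ tangent cone at $x_0$ with an accompanying curvature rate. This is exactly the point where the uniqueness assumption is indispensable: without it one only controls $\SS_T$ along subsequences $t\to T$ and obtains a possibly non--unique tangent object. I would handle it following the monotonicity/iteration scheme of~\cite{Ilnevsch}, combined with the classification of degenerate regular shrinkers with small density (Lemmas~\ref{lemmatree} and~\ref{thm:densitybound}) used in Step 1.
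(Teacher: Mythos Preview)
Your Steps~1 and~2 are essentially the paper's own argument and are fine. The genuine gap is in Step~3, exactly at the point you yourself flag as ``hard'': the transfer of the curvature bound from $\SS_{t(r)}$ to $\SS_T$.

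You obtain $|k_{\SS_{t(r)}}|\le\varepsilon(\tt)/r$ on the annulus at distance $\sim r$ and then write ``using the $C^\infty\loc$ convergence of Step~2 to pass from $\SS_{t(r)}$ to $\SS_T$ near $y$''. This does not work: the $C^\infty\loc$ convergence of Step~2 is a statement about $t\to T$ at \emph{fixed} spatial scales; it gives no quantitative control on how close $\SS_{t(r)}$ is to $\SS_T$ near a point at distance $r$, because $T-t(r)\sim r^2$ is exactly the parabolic scale at which significant evolution can still occur. In other words, between time $t(r)$ and $T$ the curve near $y$ could, a priori, change curvature by an amount of order $1/r$. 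What is missing is a \emph{forward--in--time propagation} device that carries the smallness at time $t(r)$ all the way to $t=T$ across the intermediate region. The paper supplies this with the pseudolocality theorem (Theorem~\ref{thm:graph_local}): once the curve is a graph with gradient $<\eta$ in a unit ball at some time, it stays a graph with gradient $<\varepsilon$ in a $\delta$--ball for a definite amount of parabolic time. Applied in the rescaled picture on $[\tt_1,0)$ (with $\tt_1$ chosen so that $\delta^2>|\tt_1|$), this bridges the gap from the self--similar scale to $\tt=0$, i.e.\ to $\SS_T$ itself; Remark~\ref{rmks:graph_local} then upgrades the graph bound to a curvature bound, yielding $o(1/r)$ for $\SS_T$.

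A second, related issue: even if you had $|k_{\SS_T}|=o(1/r)$, this alone does \emph{not} imply $C^1$ regularity of $\gamma^i_T$ at $x_0$. The oscillation of $\tau$ between arclengths $s_1<s_2$ is $\int_{s_1}^{s_2}|k|\,ds\le\int_{s_1}^{s_2}\varepsilon(s)/s\,ds$, which need not vanish as $s_1\to0$ (take $\varepsilon(s)=1/\log(1/s)$). Your appeal to ``uniqueness of the tangent cone inherited from uniqueness of the tangent flow via~\cite[Section~10]{Ilnevsch}'' is therefore doing real work that you have not justified. The paper avoids this entirely: the pseudolocality argument produces directly a graph representation with $C^1$--norm $<\varepsilon$ on a full ball $B_{R_\varepsilon}(x_0)$, uniformly for $t\in[t_\varepsilon,T]$, which is strictly stronger than the curvature decay and immediately gives both the $C^1$ regularity of $\gamma^i_T$ at $x_0$ and the $C^1(U)$ convergence $\NN_t\to\SS_T$. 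The ``monotonicity/iteration scheme'' you invoke from~\cite{Ilnevsch} is the \emph{expander} monotonicity used for short time existence, not the tool needed here.
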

\begin{proof}
Since rescaling the evolving networks $\SS_t$ the inner unit tangent vectors
at the end--points of the curves in $\NN_t$ do not change and $\widetilde{\NN}_{x_0,\tt}\to\widetilde{\NN}_\infty$, the inner unit
tangent vectors of the set of curves $\gamma^i_t$ converge to the unit vectors generating the halflines of $\widetilde\SS_\infty$. More precisely, if the sequence of rescalings $\widetilde{\gamma}_{x_0,\tt}^i$ of a curve $\gamma_t^i\in\NN_t$ converges in $C^1\loc$ to a halfline $H^i\subseteq\widetilde\NN_\infty$, the inner unit
tangent vectors at the end--point of $\gamma_t^i$ converge to the unit vector generating $H^i$, as $t\to T$.\\
As, by Lemma~\ref{remhot} and the collapse of the subnetwork ${\mathbb{M}}_t$, there is a neighborhood $U$ of $x_0$, such that for every $\rho>0$ in $U\setminus B_\rho(x_0)$, for $t$ close enough to $T$ there are no triple junctions, hence, by Lemma~\ref{boh}, the networks $\SS_t$ converge in $C^\infty\loc(U\setminus\{x_0\})$ to a smooth network $\SS_T$ composed of smooth curves $\gamma_T^i$ with an end--point at $x_0$.

We notice that the smoothness of $\SS_T$ and of $\gamma_T^i$ holds in $U\setminus\{x_0\}$, not in the whole $U$. We want to show that these curves are actually $C^1$ in $U$, that is, till the point $x_0$ and that their curvature is of order $o(1/r)$, where $r$ is the distance from $x_0$.

We consider one of the curves of $\NN_t$ (dropping the superscript by simplicity, from now on) $\gamma_t$, which converges (possibly, after reparametrization), as $t\to T$, to a limit $C^0$ curve $\gamma_T$ and such convergence is also in $C^\infty\loc(U\setminus\{x_0\})$. 

As the full rescaled sequence $\widetilde{\SS}_\tt$ converges to the blow--up limit $\widetilde{\SS}_\infty$, as $\tt\to+\infty$, also the full sequence of parabolically rescaled flows $\SS^\mu_\tt$ converges in $C^1\loc$ for every $\tt\in(-\infty,0)$, as $\mu\to+\infty$, to the limit self--similarly shrinking flow $\SS^\infty_\tt=\sqrt{-2\tt}\,\,\widetilde{\SS}_\infty$ (see Remark~\ref{relaz}). Then, the curves $\gamma^\mu_\tt$, which are the parabolic rescalings of the curves $\gamma_t$ converge to the halfline $H$, as $\mu\to+\infty$. We choose $\tt_0<0$ and $\mu_0>0$ such that the parabolic rescalings ${\mathbb{M}}^\mu_\tt$ of the subnetwork ${\mathbb{M}}_t$ of $\SS_t$ are contained in $B_{1/2}(0)$, for every $\mu>\mu_0$ and $\tt\in(\tt_0,0)$. Then, the rescaled curves $\gamma^\mu_\tt$ smoothly converge (by Lemma~\ref{boh}), as $\mu\to+\infty$, to the halfline $H$ (which has zero curvature) in $B_{4}(0)\setminus B_{1}(0)$, for every $\tt\in[\tt_0,0)$. Moreover, repeating the above argument, we have that, as $\tt\to0$, the curves $\gamma^\mu_\tt$ locally smoothly converge in $B_{4}(0)\setminus\{0\}$ to some limit curves $\gamma^\mu_0$, smooth in $B_{3}(0)\setminus\{0\}$, for every fixed $\mu>\mu_0$.

We are now going to apply the following special case of the {\em pseudolocality theorem} for mean curvature flow (see~\cite[Theorem~1.5]{Ilnevsch}) and the subsequent remark.

\begin{thm}\label{thm:graph_local} 
Let $\gamma_t$, for $t\in[0,T)$, be a smooth curvature flow of an embedded curve in $\R^2$ with bounded length ratios by a constant $D$ (see Definition~\ref{ublr}) and let 
$$
Q_r(x_0,y_0)=\{(x,y)\in \R^2~|~|x-x_0|<r,~|y-y_0|<r\}\,.
$$
Then, for any $\varepsilon>0$, there exists $\eta\in(0,\varepsilon)$ and $\delta\in(0,1)$, depending only on $\varepsilon$ and $D$, such that if $(x_0,y_0) \in
\gamma_0$ and $\gamma_0\cap Q_1(x_0,y_0)$ can be written as the graph of a function $u:(x_0-1,x_0+1) \to\R$ with Lipschitz constant less than
$\eta$, then
$$
\gamma_t\cap Q_{\delta}(x_0,y_0),\qquad\text{ for every $t\in [0,\delta^2)\cap [0,T)$,}
$$
is a graph over $(x_0-\delta,x_0+\delta)$ of a function with Lipschitz constant less than $\varepsilon$ and ``height'' bounded by $\varepsilon\delta$.
\end{thm}
 
\begin{rem}\label{rmks:graph_local}
Then, the local estimates of Ecker and Huisken~\cite{eckhui2} imply that, for every $m>0$ there is a constant $\sigma=\sigma(\delta,\varepsilon,m)>0$ and a constant $\eta =\eta(\delta,\varepsilon,m) >0$ such that if the curvature of $\gamma_0\cap Q_{\delta}(x_0,y_0)$ is bounded by $\sigma$, then the curvature of $\gamma_t\cap Q_{\delta/2}(x_0,y_0)$ is bounded by $m$, for every 
$t\in [0,\eta)\cap[0,T)$.
\end{rem}

By a rotation, we can assume that $H=\{(x,0)~\vert~x\geqslant a\}$ and let $\overline{H}=\{(x,0)~\vert~x\geqslant 0\}$. Taken any $\varepsilon>0$, let $\eta$ and $\delta$ be given by this theorem, we consider $\tt_1\in(\tt_0,0)$ such that $\tt_1+\delta^2/8>0$, then if $\mu$ is large enough, say larger than some $\mu_1>0$, the curve $\gamma^\mu_{\tt_1}$ in $B_{3}(0)\setminus B_{1}(0)$ is a graph of a function $u$ over the interval $[1,3]\times\{0\}\subseteq \overline{H}$ (with a small ``error'' at the borders), with gradient smaller than $\eta>0$. Hence, its evolution in the smaller annulus $B_{2+\delta}(0)\setminus B_{2-\delta}(0)$ is still a graph over $\overline{H}$ of a function with gradient smaller than $\varepsilon$, for every $\tt\in[\tt_1,\min\{\tt_1+\delta^2,0\})$, hence for every $\tt\in[\tt_1,0)$, by the assumption on $\tt_1$. Notice that, it follows that also $\gamma^\mu_0$ in $B_{2+\delta}(0)\setminus B_{2-\delta}(0)$ is a graph of a function over $\overline{H}$ with gradient smaller than $\varepsilon$, when $\mu>\mu_1$.

Rescaling back, since the $C^1$--norm is scaling invariant, we see that $\gamma_t$, for $t\in[T+\mu^{-2}\tt_1,T]$, can be written as a graph with $C^1$--norm less than $\varepsilon$ over $x_0+\overline{H}$ in $B_{(2+\delta)/\mu}(x_0)\setminus B_{(2-\delta)/\mu}(x_0)$, for every $\mu>\mu_1$. Hence, this conclusion holds for every pair
 $(\gamma_t,t)$ in 
$$
\bigcup_{\mu>\mu_2}\bigl(B_{(2+\delta)/\mu}(x_0)\setminus B_{(2-\delta)/\mu}(x_0)\bigr)\times[T+\mu^{-2}\tt_1,T]
\,\subseteq\,\R^2\times[0,T]\,,
$$
for every $\mu_2\geqslant\mu_1$ and this union contains the set 
$$
{\mathcal{A}}=B_{(2+\delta)/\mu_2}(x_0)\times[T+\mu_2^{-2}\tt_1,T]\,\,\,\, \text{\Large{$\setminus$}}\,\,\,\Bigl\{\,(x,t)\in\R^2\times[0,T]\,\,\Bigl\vert\,\, \vert x -x_0\vert\leqslant \frac{2-\delta}{\sqrt{-2\tt_1}}\sqrt{2(T-t)}\,\,\Bigr\}\,.
$$
Choosing now $\mu_2\geqslant\mu_1$ large enough, we know that there exists some $\tt_2>\tt_1$ such that for every $\tt>\tt_2$, the rescaled curves $\gamma^{\mu_2}_\tt$ can be written as graphs with $C^1$--norm less than $\varepsilon$ over $\overline{H}$ in the ball centered at the origin with radius $2\frac{2-\delta}{\sqrt{-2\tt_1}}$. That is, for $t\in[T+\mu^{-2}_2\tt_2,T]$, the curve $\gamma_t$ can be written as a graph with $C^1$--norm less than $\varepsilon$ over $x_0+\overline{H}$ in the ball of center $x_0$ and radius $2\frac{2-\delta}{\sqrt{-2\tt_1}}\sqrt{2(T-t)}$, hence, for every $(\gamma_t,t)$ in
$$
{\mathcal{B}}=\Bigl\{\,(x,t)\in\R^2\times[T+\mu_2^{-2}\tt_2,T)\,\,\Bigl\vert\,\, \vert x -x_0\vert< 2\frac{2-\delta}{\sqrt{-2\tt_1}}\sqrt{2(T-t)}\,\,\Bigr\}\,,
$$
The union of the sets ${\mathcal{A}}$ and ${\mathcal{B}}$ clearly contains the set
$$
B_{(2+\delta)/\mu_2}(x_0)\times[T+\mu_2^{-2}\tt_2,T]\,\,\,\text{\large{$\setminus$}}\,\,\bigl\{(x_0,T)\bigr\}\,,
$$
hence, in other words, for every $\varepsilon>0$ there exists a radius $R_\varepsilon>0$ and a time $t_\varepsilon<T$ such that the curve $\gamma_t$ in the ball $B_{R_\varepsilon}(x_0)$ can be written as a graph with $C^1$--norm less than $\varepsilon$, for every $t\in[t_\varepsilon,T)$. Moreover, this also holds for the limit curve $\gamma_T$ on the union
$$
\bigcup_{\mu>\mu_2}\bigl(B_{(2+\delta)/\mu}(x_0)\setminus B_{(2-\delta)/\mu}(x_0)\bigr)=B_{(2+\delta)/\mu_2}(x_0)\setminus\{x_0\}\,.
$$
This fact, recalling that the inner unit tangent vector of the curve $\gamma_t$ at its end--point (the one going to $x_0$) converges to the direction of $H$, as $t\to T$, clearly shows that, locally around $x_0$, we can write $\gamma_T$ as a graph of a function over $x_0+\overline{H}$ whose $C^1$--norm decays like $o(1)$, as the distance from $x_0$ goes to zero.\\
In particular, we conclude that all the curves $\gamma^i_T$, hence the limit network $\SS_T$, are of class $C^1$ and that all the sequences of curves $\gamma^i_t$ converge in $C^1$ to $\gamma_T^i$ (possibly after reparametrization in arclength).

Arguing similarly for the curvature by means of Remark~\ref{rmks:graph_local}, we have that the curvature of the curve $\gamma_0^\mu$ in $B_{2+\delta/2}(0)\setminus B_{2-\delta/2}(0)$ is smaller than any $m>0$, if we choose $\mu$ large enough, say $\mu>\mu_3\geqslant\mu_2$. It follows, rescaling back, that 
$$
\mu^{-2}\sup_{\SS_T\cap B_{(2+\delta/2)/\mu}(x_0)\setminus B_{(2-\delta/2)/\mu}(x_0)} k^2<m\,,
$$
for every $\mu>\mu_3$. This implies that the curvature of $\SS_T$ is of order $o(1/r)$, as $r\to 0$, where $r$ is the distance from the multi--point $x_0\in\SS_T$.

Finally, $\SS_T$ cannot have two concurring curve
at a multi--point with the same unit tangent, since this would imply
that the limit shrinker $\widetilde\SS_\infty$ had halflines of
multiplicity larger than one.
\end{proof}

It follows by this proposition that the networks $\SS_t$ converge in $C^1(U)$ to a degenerate regular network $\widehat{\SS}_T$ having $\SS_T$ as non--collapsed part, with underlying graph homeomorphic to $\SS_t$ and core given by the collapsing subnetwork ${\mathbb{M}_t}$.

\begin{rem}\label{gjump} Notice that the limit Gaussian density $\widehat{\Theta}(x_0)=\widehat{\Theta}(x_0,T)$ (see Definition~\ref{Gaussiandensities}) at $x_0$ (and time $T$) of the flow $\SS_t$ is the Gaussian density of the blow--up limit shrinker $\widetilde{\SS}_\infty=\SS^\infty_{-1/2}$ and can be different from the number of curves of $\SS_T$ concurring at $x_0$, divided by two. This does not happen when the network $\SS_t$ is a tree in a neighborhood of $x_0$, for $t$ close enough to $T$ and the singularity is given by the collapsing of a single curve producing a $4$--point with angles of $60/120$ degrees between the four concurring curves, as described in Proposition~\ref{bdcurvcollapse} (after applying Proposition~\ref{prop999}), in such case the blow--up limit shrinker is a standard cross and the limit Gaussian density $\widehat{\Theta}(x_0,T)$ is clearly equal to two.
\end{rem}

We actually expect that the curvature of the curves in $\NN_t$ and of $\SS_T$ is bounded, not only of order $o(1/r)$, close to the non--regular multi--points.

\begin{oprob}\label{ooo1115}\ 
\begin{itemize}
\item The curvature of $\SS_T$ is bounded?
\item The curvature of the subnetwork $\NN_t$ is locally uniformly bounded around $x_0$, as $t\to T$?
\end{itemize}
\end{oprob}

We can finally describe the local behavior of the whole network $\SS_t$, as $t\to T$, around a point $x_0\in\Omega$ where $\SS_t$ is not a tree for $t$ close enough to $T$.

\begin{thm}\label{loopsing2} 
Let $x_i=\lim_{t\to T}O^i(t)\in\overline{\Omega}$, for $i\in\{1,2,\dots,m\}$, 
and let $x_0$ one of such points such that
$x_0\in \Omega$ and the blow--up limit at $x_0$, as $t\to T$, is not a line, a standard triod or a standard cross. Then, under the uniqueness assumption {\bf{U}} 
and the multiplicity--one conjecture {\bf{M1}}, there exists a $C^1$, possibly non--regular network $\SS_T$ in a neighborhood $U$ of $x_0$, which is smooth in $U\setminus\{x_0\}$ and whose curvature is of order $o(1/r)$, as $r\to 0$, where $r$ is the distance from $x_0$, such that 
$$
\NN_t\to\SS_T \,\,\textrm{in}\,\, C^1\loc(U)\quad\text{ and }\quad
\SS_t\to\SS_T \,\,\textrm{in}\,\, C^\infty\loc(U\setminus\{x_0\})\,,
$$
where $\NN_t$ is the subnetwork of the non--collapsing curves of $\SS_t$.\\ 
Moreover, at the multi--point $x_0$ of $\SS_T$ any two concurring curves cannot have the same exterior unit tangent vectors.

The network $\SS_T$ is the non--collapsed part of a $C^1$ degenerate regular network $\widehat{\SS}_T$ in $U$ with underlying graph homeomorphic to $\SS_t$ and core given by the collapsed subnetwork ${\mathbb{M}_t}$, which is the $C^1$--limit of $\SS_t$, as $t\to T$.
\end{thm}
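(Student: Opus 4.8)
The plan is to recognize that the bulk of the statement has already been established in Proposition~\ref{loopsing} and the discussion preceding it, so that the proof reduces to three things: (i) checking that the hypothesis on the blow--up puts us precisely in the collapsing--region regime to which that discussion applies, (ii) quoting Proposition~\ref{loopsing}, and (iii) assembling the limit degenerate regular network $\widehat{\SS}_T$ out of $\SS_T$ and the collapsed subnetwork $\mathbb{M}_t$.

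First I would verify we are in the right case. Combining Proposition~\ref{prop999} with Propositions~\ref{noloop} and~\ref{bdcurvcollapse}: if the curvature of $\SS_t$ were locally bounded near $x_0$, or if $\SS_t$ were a tree in some neighbourhood of $x_0$ for $t$ close to $T$, then the blow--up of $\SS_t$ at $x_0$ would have to be a line, a standard triod or a standard cross. Since by hypothesis it is none of these, the curvature is unbounded near $x_0$ and $\SS_t$ contains, for $t$ close to $T$, a loop which --- its vertices being among the $O^i(t)$ converging to $x_0$ by Lemma~\ref{remhot} --- shrinks to $x_0$; as a loop enclosing a region with at least six edges has non--decreasing area by the von~Neumann rule of Section~\ref{geopropsub}, the collapsing region has fewer than six edges and area $A(t)=C(T-t)$. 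Equivalently, the compact part $\widetilde{\mathbb{M}}_\infty$ of the blow--up shrinker is non--trivial (not being a line, triod or cross, $\widetilde{\SS}_\infty$ must contain a core or a bounded curve).

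Next, invoking assumption {\bf{U}}, the full rescaled family $\widetilde{\SS}_{x_0,\tt}$ converges to the degenerate regular shrinker $\widetilde{\SS}_\infty=\SS^\infty_{-1/2}$, which by {\bf{M1}} has multiplicity one. As in the discussion before Proposition~\ref{loopsing}, decompose $\widetilde{\SS}_\infty=\widetilde{\mathbb{M}}_\infty\cup\widetilde{\NN}_\infty$, where $\widetilde{\mathbb{M}}_\infty$ is the compact union of the cores and the bounded curves --- pieces of Abresch--Langer curves or straight segments through the origin, by Remark~\ref{abla} --- and $\widetilde{\NN}_\infty$ is the union of the outgoing halflines. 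Rescaling back and using {\bf{U}} again, the subnetwork $\mathbb{M}_t$ of $\SS_t$ corresponding to $\widetilde{\mathbb{M}}_\infty$ is contained in $B_{C\sqrt{2(T-t)}/2}(x_0)$ for every $t$, hence collapses to $x_0$ as $t\to T$; set $\NN_t=\SS_t\setminus\mathbb{M}_t$. Now Proposition~\ref{loopsing} applies verbatim and produces a neighbourhood $U$ of $x_0$ and an embedded, $C^1$, possibly non--regular network $\SS_T\subset U$, smooth in $U\setminus\{x_0\}$, with $\NN_t\to\SS_T$ in $C^1\loc(U)$ and $\SS_t\to\SS_T$ in $C^\infty\loc(U\setminus\{x_0\})$, whose curves $\gamma^i_T$ concur at $x_0$ with inner unit tangents equal to the (distinct) halfline directions of $\widetilde{\SS}_\infty$ and with curvature of order $o(1/r)$; distinctness of those directions is exactly the assertion that at $x_0$ no two concurring curves of $\SS_T$ have the same exterior unit tangent.

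It remains to package $\SS_T$ and $\mathbb{M}_t$ into $\widehat{\SS}_T$. One takes the graph $G$ underlying the homeomorphic family $\SS_t$: the edges corresponding to $\NN_t$ are mapped to the non--degenerate curves $\gamma^i_T$, and those corresponding to $\mathbb{M}_t$ become degenerate curves mapped to the single point $x_0$, forming a core there. Following the argument of Proposition~\ref{collapse} and the structural analysis of Lemma~\ref{lemmatree}, one assigns their constant unit tangent vectors so that the degenerate $120$ degrees condition of Definition~\ref{degnet} holds at every vertex: at the boundary vertices of the core the condition is forced by the known, distinct exterior tangents of the curves $\gamma^i_T$ at $x_0$, and it then propagates uniquely through the core, while embeddedness of the approximating networks $\SS_t$ pins down the mutual positions. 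This produces the $C^1$ degenerate regular network $\widehat{\SS}_T$ with graph homeomorphic to $\SS_t$, non--degenerate part $\SS_T$, and core the image of $\mathbb{M}_t$, to which $\SS_t$ converges (in $C^1$ away from $x_0$ and, on the core, through the genuine collapse to $x_0$). I expect the main obstacle to be exactly this last step: the "assigned" unit tangents of the core need not be $C^1$--limits of the tangent fields of the collapsing curves when $\widetilde{\mathbb{M}}_\infty$ contains curved Abresch--Langer arcs, so some care is needed both in making the assignment consistent with the degenerate $120$ degrees condition and in stating in what precise sense $\SS_t$ converges to $\widehat{\SS}_T$ on the core. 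By contrast, the analytic heart of the theorem --- the $C^1$ regularity of $\SS_T$ up to $x_0$ and the $o(1/r)$ decay of its curvature --- is already supplied by Proposition~\ref{loopsing} via the pseudolocality Theorem~\ref{thm:graph_local} and may simply be quoted.
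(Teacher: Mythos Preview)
Your proposal is correct and mirrors the paper's approach: the paper presents Theorem~\ref{loopsing2} without a separate proof, treating it as a direct summary of Proposition~\ref{loopsing} together with the discussion immediately surrounding it (the decomposition $\widetilde{\SS}_\infty=\widetilde{\mathbb{M}}_\infty\cup\widetilde{\NN}_\infty$, the collapse of $\mathbb{M}_t$, and the one--line assertion after Proposition~\ref{loopsing} that ``the networks $\SS_t$ converge in $C^1(U)$ to a degenerate regular network $\widehat{\SS}_T$\dots''). Your worry about the assignment of unit tangents on the core is fair but the paper does not resolve it more carefully than you do.
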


\begin{rem}\label{are}
It is easy to see that, thanks to the uniformly bounded length ratios of $\SS_t$, the one--dimensional Hausdorff measures associated to $\SS_t$ weakly--converge (as measures) to the one--dimensional Hausdorff measure associated to $\SS_T$ (see Remark~\ref{remreg}).
\end{rem}

\subsection{Continuing the flow}\label{resum}
We summarize in the following two theorems the behavior of the evolving regular network at a singular time, worked out in the previous sections, assuming the multiplicity--one conjecture~\ref{ooo9} and the uniqueness assumption~\ref{ooo12}.

\begin{thm}\label{ppp2}
If {\bf{M1}} is true and the uniqueness assumption {\bf{U}} holds, 
then the (possibly simultaneous) singularities, as $t\to T$, of the curvature flow of a regular network $\SS_t$ in a strictly convex, open subset $\Omega\subseteq\R^2$ are locally given by: 
\begin{itemize} 
\item the ``isolated'' collapse with bounded curvature of a ``boundary curve'' getting to a fixed end--point on $\partial\Omega$ (regions cannot collapse to boundary point); indeed, around any end--point $P^r$ either the flow is smooth, or the curve of $\SS_t$ getting to $P^r$ collapses letting two concurring curves forming an angle of $120$ degrees at such end--point;

\item the collapse with bounded curvature of an ''isolated'' curve with the formation of a regular $4$--point, locally around a point $x_0\in\Omega$;

\item the collapse with unbounded curvature locally around a point $x_0\in\Omega$ of a group of bounded regions (each one of them with less than six edges), producing a possibly non--regular multi--point.
\end{itemize}
If $\{y_1,y_2,\dots,y_n,z_1,z_2,\dots,z_m\}$ are the points of $\overline{\Omega}$ where such singularities occur (which are a subset of the limits, as $t\to T$, of the $3$--points of $\SS_t$), where we denoted with $y_i$ 
the ``cross'' or ``boundary'' singularities and with $z_j$ the other singularities, then there exists a possibly non--regular $C^1$ limit network $\SS_T$ such that:

\begin{itemize}

\item the network $\SS_t$ converges locally in $C^1$ to $\widehat{\SS}_T$ in $\overline{\Omega}$, as $t\to T$, where $\widehat{\SS}_T$ is a degenerate regular network having $\SS_T$ as non--collapsed part, moreover, the network $\SS_t$ converges locally smoothly to $\SS_T$ in $\overline{\Omega}\setminus\{z_1,z_2,\dots,z_m\})$;

\item the non--collapsing subnetwork $\NN_t$ of $\SS_t$ converges locally in $C^1$ to $\SS_T$ in $\overline{\Omega}$, as $t\to T$, moreover, the convergence is locally smooth in $\overline{\Omega}\setminus\{z_1,z_2,\dots,z_m\})$;

\item the network $\SS_T$ is smooth in $\overline{\Omega}\setminus\{z_1,z_2,\dots,z_m\})$;

\item every two concurring curves at a multi--point of $\SS_T$ have distinct exterior unit tangent vectors;

\item the curvature of $\SS_T$ is of order $o(1/r)$, as $r\to 0$, where $r$ is the distance from the set of points $\{z_i\}$.
\end{itemize}
\end{thm}

The case of a tree is special (for instance, the uniqueness assumption {\bf{U}} is not needed
in this case).

\begin{thm}\label{ppp1} 
If {\bf{M1}} is true and the evolving regular network $\SS_t$ is a tree (or no regions are collapsing, as $t\to T$), then the curvature is uniformly bounded and the only possible singularities, as $t\to T$, are given by the collapses of ``isolated'' curves in $\Omega$, producing a regular $4$--point or the collapse of some ``boundary curves'' getting to the fixed end--points of the network, letting two concurring curves forming at such end--point an angle of $120$ degrees.
The network $\SS_t$ converges locally smoothly with uniformly bounded curvature to a degenerate regular network $\widehat{\SS}_T$ in $\overline{\Omega}$, as $t\to T$, having a network $\SS_T$ as non--collapsed part, composed of smooth curves with distinct exterior unit tangents at the multi--points. Such multi--points can be only regular $3$--points and regular $4$--points in $\Omega$ and end--points on $\partial\Omega$ with two concurring curves forming an angle of 120 degrees between their exterior unit tangents. Clearly. the non--collapsing subnetwork of $\SS_t$ converges locally smoothly to $\SS_T$, as $t\to T$.
\end{thm}

The next step, after this description, is to understand how the flow can continue after a singular time. There are clear situations where the flow simply ends, for instance if all the network collapses to a single point, like a circle shrinks down to a point in the evolution of a closed embedded single curve, see for instance the following example.

\begin{figure}[H]
\begin{center}
\begin{tikzpicture}[scale=1]
\draw[color=black!40!white,rotate=90,shift={(0,-5.1)}]
(-0.05,2.65)to[out= -90,in=150, looseness=1] (0.17,2.3)
(0.17,2.3)to[out= -30,in=100, looseness=1] (-0.12,2)
(-0.12,2)to[out= -80,in=40, looseness=1] (0.15,1.7)
(0.15,1.7)to[out= -140,in=90, looseness=1.3](0,1.1)
(0,1.1)--(-.2,1.35)
(0,1.1)--(+.2,1.35);
\draw[shift={(-0.75,0)}]
(0,0)
to[out= 180,in=0, looseness=1] (-1,0) 
(0,0)
to[out= 60,in=-120, looseness=1] (0.5,0.86) 
(0,0)
to[out= -60,in=120, looseness=1] (0.5,-0.86)
(0.5,0.86) 
to[out= 0,in=90, looseness=1](1.5,0)
to[out= -90,in=0, looseness=1] (0.5,-0.86)
(0.5,0.86) 
to[out= 120,in=30, looseness=1](-0.75,1.29)
to[out= -150,in=120, looseness=1] (-1,0)
(0.5,-0.86) 
to[out= -120,in=-30, looseness=1](-0.75,-1.29)
to[out= 150,in=-120, looseness=1] (-1,0);
\draw[color=black,scale=1,domain=-3.141: 3.141,
smooth,variable=\t,shift={(-0.75,0)},rotate=0]plot({2.*sin(\t r)},
{2.*cos(\t r)}); 
\draw[color=black,scale=1,domain=-3.141: 3.141,
smooth,variable=\t,shift={(7.31,0)},rotate=0]plot({2.*sin(\t r)},
{2.*cos(\t r)}); 
\path[font=\small] 
(-0.75,0)node[right]{$O^1$}
(-0.25,1.1)node[right]{$O^2$}
 (-1.75,0) node[left] {$O^3$}
(-0.25,-1)node[right]{$O^4$};
 \path[font=\small] 
(7.31,0.05)node[above]{$O^1=O^2=O^3=O^4$};
\path[font=\small,rotate=90]
(-1,-3.28) node[above]{$t\to T$}
(2,-1.47) node[below]{$\SS_t$}
(2,-9.53) node[below]{$\SS_T$};
\fill(7.31,0) circle (2pt);
\end{tikzpicture}
\end{center}
\begin{caption}{A {\em Mercedes--Benz shrinker} (see the Appendix)
collapsing to a single point.}
\end{caption}
\end{figure}

In other situations how the flow should continue is easy to guess or define. For instance, the case when a part of the network collapses forming a 2--point, that can be also seen simply as an interior corner point of a single curve (see the following figure).

\begin{figure}[H]
\begin{center}
\begin{tikzpicture}[scale=1]
\draw[color=black!40!white,rotate=90,shift={(0,-5.1)}]
(-0.05,2.65)to[out= -90,in=150, looseness=1] (0.17,2.3)
(0.17,2.3)to[out= -30,in=100, looseness=1] (-0.12,2)
(-0.12,2)to[out= -80,in=40, looseness=1] (0.15,1.7)
(0.15,1.7)to[out= -140,in=90, looseness=1.3](0,1.1)
(0,1.1)--(-.2,1.35)
(0,1.1)--(+.2,1.35);
\draw[shift={(1.2,0.2)}] 
(-1.73,-1.8) 
to[out= 180,in=180, looseness=1] (-2.8,0) 
to[out= 60,in=150, looseness=1.5] (-1.5,1) 
(-2.8,0)
to[out=-60,in=180, looseness=0.9] (-1.25,-0.75)
(-1.5,1)
to[out= -30,in=90, looseness=0.9] (-1,0)
to[out= -90,in=60, looseness=0.9] (-1.25,-0.75)
to[out= -60,in=0, looseness=0.9](-1.73,-1.8);
\draw[color=black,scale=1,domain=-3.141: 3.141,
smooth,variable=\t,shift={(-0.75,0)},rotate=0]plot({2.*sin(\t r)},
{2.*cos(\t r)}); 
\draw[color=black,scale=1,domain=-3.141: 3.141,
smooth,variable=\t,shift={(7.31,0)},rotate=0]plot({2.*sin(\t r)},
{2.*cos(\t r)}); 
\path[font=\small, shift={(1.2,0.2)}] 
(-1.25,-0.75)node[right]{$O^2$}
 (-1.5,-0.3)[left] node{$\gamma^2$}
 (-0.72,1.1)[left] node{$\gamma^1$}
 (-0.7,-1.45)[left] node{$\gamma^3$}
 (-3,0.65) node[below] {$O^1$}; 
\draw[shift={(9,0)}] 
(-1.73,-1.8) to[out= 180,in=180, looseness=1.5] (-2,.2) 
(-2,.2)to[out= -30,in=0, looseness=0.9](-1.73,-1.8);
\path[font=\small, shift={(9,0)}] 
(-2,.2)node[above]{$O^1=O^2$}
 (-0.7,-1.45)[left] node{$\gamma^3$};
\path[font=\small,rotate=90]
(-1,-3.28) node[above]{$t\to T$}
(2,-1.47) node[below]{$\SS_t$}
(2,-9.53) node[below]{$\SS_T$};
\fill(7,.2) circle (2pt);
\end{tikzpicture}
\end{center}
\begin{caption}{Collapse of both curves $\gamma^1$, $\gamma^2$ and
 the region they enclose to the point $O^1=O^2$, leaving a
 closed curve $\gamma^3$, possibly with a corner at $O^1=O^2$.\label{angfig}}
\end{caption}
\end{figure}

Here, we can restart the flow by means of the work of Angenent~\cite{angen3} where the evolution of curves 
with corners is treated (see Remark~\ref{gremh}). In general, one would need an analogue of the short--time existence Theorem~\ref{c2shorttime} for networks with 2--points or with curves with corners. This will be actually a particular case of Theorem~\ref{evolnonreg} in the next section (see the beginning of Section~\ref{stesecsub}).

Instead, a situation that really needs a ``decision'' about whether and how the flow should continue after the singularity is depicted in the following figures.

\begin{figure}[H]
\begin{center}
\begin{tikzpicture}
\draw[color=black!40!white,rotate=90,shift={(0,-5.1)}]
(-0.05,2.65)to[out= -90,in=150, looseness=1] (0.17,2.3)
(0.17,2.3)to[out= -30,in=100, looseness=1] (-0.12,2)
(-0.12,2)to[out= -80,in=40, looseness=1] (0.15,1.7)
(0.15,1.7)to[out= -140,in=90, looseness=1.3](0,1.1)
(0,1.1)--(-.2,1.35)
(0,1.1)--(+.2,1.35);
\draw[color=black,scale=1,domain=-3.141: 3.141,
smooth,variable=\t,shift={(-0.75,0)},rotate=0]plot({2.*sin(\t r)},
{2.*cos(\t r)}); 
\draw[color=black,scale=1,domain=-3.141: 3.141,
smooth,variable=\t,shift={(7.31,0)},rotate=0]plot({2.*sin(\t r)},
{2.*cos(\t r)}); 
\draw[rotate=90,scale=0.985, black, shift={(0,-2.5)}]
(-0.05,2.65)to[out= 30,in=140, looseness=1] (1.25,1.66)
(-0.05,2.65)to[out= 150,in=60, looseness=1] (-1.35,1.77);
\draw[rotate=90,scale=0.985, black, shift={(0,-2.5)}]
(-0.05,2.65)to[out= -90,in=150, looseness=1] (0.17,2.3)
(0.17,2.3)to[out= -30,in=100, looseness=1] (-0.12,2)
(-0.12,2)to[out= -80,in=40, looseness=1] (0.15,1.7)
(0.15,1.7)to[out= -140,in=90, looseness=1](0,1.24);
\path[font=\small]
(1.2,0) node[right]{$P^r$};
\draw[rotate=90,dashed, scale=0.75,color=black!40!white,shift={(0,-15.08)}]
(0,2.65)--(1.73,3.65)
(0,2.65)--(-1.73,3.65);
\draw[rotate=90,scale=0.985, black, shift={(0,-10.7)}]
(0,1.25)to[out= 30,in=140, looseness=1] (1.25,1.66)
(0,1.25)to[out= 150,in=60, looseness=1] (-1.35,1.77);
\fill(9.3,0) circle (2pt);
\path[font=\small]
(9.35,0) node[right]{$P^r$};
\path[font=\small,rotate=90]
(-1,-3.28) node[above]{$t\to T$}
(2,-1.47) node[below]{$\SS_t$}
(2,-9.53) node[below]{$\SS_T$};
\end{tikzpicture}
\end{center}
\begin{caption}{A limit network with two curves arriving at the same
 end--point on $\partial\Omega$.\label{boundarysing}}
\end{caption}
\end{figure}

\begin{figure}[H]
\begin{center}
\begin{tikzpicture}
\draw[color=black!40!white,rotate=90,shift={(0,-5.1)}]
(-0.05,2.65)to[out= -90,in=150, looseness=1] (0.17,2.3)
(0.17,2.3)to[out= -30,in=100, looseness=1] (-0.12,2)
(-0.12,2)to[out= -80,in=40, looseness=1] (0.15,1.7)
(0.15,1.7)to[out= -140,in=90, looseness=1.3](0,1.1)
(0,1.1)--(-.2,1.35)
(0,1.1)--(+.2,1.35);
\draw[color=black,scale=1,domain=-3.141: 3.141,
smooth,variable=\t,shift={(-0.75,0)},rotate=0]plot({2.*sin(\t r)},
{2.*cos(\t r)}); 
\draw[color=black,scale=1,domain=-3.141: 3.141,
smooth,variable=\t,shift={(8.31,0)},rotate=0]plot({2.*sin(\t r)},
{2.*cos(\t r)}); 
\draw[shift={(1,0)}] 
(-3.73,0) 
to[out= 50,in=180, looseness=1] (-3.1,0) 
to[out= 60,in=150, looseness=1.5] (-1,1) 
(-3.1,0)
to[out= -60,in=-120, looseness=0.9] (-0.75,-0.75)
(-1,1)
to[out= -30,in=90, looseness=0.9] (-0.5,0)
to[out= -90,in=60, looseness=0.9] (-0.75,-0.75);
\path[font=\small, shift={(1,0)}]
 (-3.7,0) node[left]{$P^1$}
 (-3.4,0.0) node[below] {$\gamma^1$}
 (-0.5,-0.1)[left] node{$\gamma^2$}
 (-2.7,-0.2) node[above] {$O^1$}; 
\draw[shift={(10.03,0)}] 
(-3.7,0) 
to[out= 60,in=150, looseness=1.5] (-1.75,0.8) 
(-3.7,0)
to[out= -60,in=-120, looseness=0.9] (-1.5,-0.75)
(-1.75,0.8)
to[out= -30,in=90, looseness=0.9] (-1.25,0)
to[out= -90,in=60, looseness=0.9] (-1.5,-0.75);
\draw[dashed, color=black!40!white,shift={(1,0)}]
(5.33, 0)--(6.08,1.297)
(5.33, 0)--(6.08,-1.297);
\path[font=\small, shift={(10.03,0)}]
(-3.7,0) node[left]{$P^1=O^1$}
(-1.3,-0.1)[left] node {$\gamma^2$};
\fill(6.33,0) circle (2pt);
\path[font=\small,rotate=90]
(-1,-3.28) node[above]{$t\to T$}
(2,-1.47) node[below]{$\SS_t$}
(2,-10.53) node[below]{$\SS_T$};
\end{tikzpicture}
\end{center}
\begin{caption}{Collapse of the curve $\gamma^1$ leaving a
 closed curve $\gamma^2$ with an angle of 120 degrees at an end--point.\label{spoonPcollap}}
\end{caption}
\end{figure}
\noindent One can decide that the flow stops at $t=T$ or that the curves become extremal curves of a new network that must have, for every $t>T$, a fixed end in the end--point $P^r$ (this would require some analogs of the short--time existence Theorem~\ref{c2shorttime} for this class of non--regular networks, which are actually possible to be worked out). Anyway, the subsequent analysis becomes more troublesome because of such concurrency at the same end--point,
indeed, it should be allowed that, at some time $t>T$, a new curve and
a new $3$--point possibly ``emerges'' from such end--point (it would be needed a ``boundary'' extension of Theorem~\ref{evolnonreg} in the next section).

Another situation that also needs a decision, which in this
case is easier, is described in the following figures. 
\begin{figure}[H]
\begin{center}
\begin{tikzpicture}[scale=1]
\draw[color=black!40!white,rotate=90,shift={(0,-5.1)}]
(-0.05,2.65)to[out= -90,in=150, looseness=1] (0.17,2.3)
(0.17,2.3)to[out= -30,in=100, looseness=1] (-0.12,2)
(-0.12,2)to[out= -80,in=40, looseness=1] (0.15,1.7)
(0.15,1.7)to[out= -140,in=90, looseness=1.3](0,1.1)
(0,1.1)--(-.2,1.35)
(0,1.1)--(+.2,1.35);
\draw[color=black,scale=1,domain=-3.141: 3.141,
smooth,variable=\t,shift={(-0.75,0)},rotate=0]plot({2.*sin(\t r)},
{2.*cos(\t r)}); 
\draw[color=black,scale=1,domain=-3.141: 3.141,
smooth,variable=\t,shift={(7.31,0)},rotate=0]plot({2.*sin(\t r)},
{2.*cos(\t r)}); 
\draw[shift={(0.97,0)}]
(-2,0) 
to[out= 170,in=40, looseness=1] (-2.9,1.2) 
to[out= -140,in=90, looseness=1] (-3.2,0)
(-2,0)
to[out= -70,in=0, looseness=1] (-3,-0.9) 
to[out= -180,in=-90, looseness=1] (-3.2,0)
(-2,0) 
to[out= 50,in=180, looseness=1] (-1.3,0) 
to[out= 60,in=150, looseness=1.5] (-0.75,1) 
(-1.3,0)
to[out= -60,in=-120, looseness=0.9] (-0.5,-0.75)
(-0.75,1)
to[out= -30,in=90, looseness=0.9] (-0.25,0)
to[out= -90,in=60, looseness=0.9] (-0.5,-0.75);
\path[font=\small, shift={(0.97,0)}]
(-1.90,-0.2) node[left]{$O^1$}
(-1.3,0)node[right]{$O^2$}
(-2.8,0.8) node[below] {$\gamma^1$}
(-1.3,0.35)[left] node{$\gamma^2$}
(0,-1)[left] node{$\gamma^3$};
\draw[shift={(9.03,0)}] 
(-1.85,0) to[out= 170,in=40, looseness=1] (-2.9,1.2) 
to[out= -140,in=90, looseness=1] (-3.2,0)
(-1.85,0)
to[out= -70,in=0, looseness=1] (-3,-0.9) 
to[out= -180,in=-90, looseness=1] (-3.2,0)
(-1.85,0) to[out= 50,in=160, looseness=2] (-.9,0);
\path[font=\small,shift={(9.03,0)}]
(-.6,.1) node[below]{$O^2$}
(-1.75,-.25) node[left]{$O^1$}
(-1.3,0.25)[above] node{$\gamma^2$}
(-2.8,0.8) node[below] {$\gamma^1$};
\path[font=\small,rotate=90]
(-1,-3.28) node[above]{$t\to T$}
(2,-1.47) node[below]{$\SS_t$}
(2,-9.53) node[below]{$\SS_T$};
\fill(8.13,0) circle (2pt);
\end{tikzpicture}
\end{center}
\begin{caption}{Collapse of the curves $\gamma^3$ and
 the region enclosed to the point $O^3$ leaving a
 curve $\gamma^2$ with a 1--point as an end--point.\label{f21}}
\end{caption}
\end{figure}
\begin{figure}[H]
\begin{center}
\begin{tikzpicture}[scale=1]
\draw[color=black!40!white,rotate=90,shift={(0,-5.1)}]
(-0.05,2.65)to[out= -90,in=150, looseness=1] (0.17,2.3)
(0.17,2.3)to[out= -30,in=100, looseness=1] (-0.12,2)
(-0.12,2)to[out= -80,in=40, looseness=1] (0.15,1.7)
(0.15,1.7)to[out= -140,in=90, looseness=1.3](0,1.1)
(0,1.1)--(-.2,1.35)
(0,1.1)--(+.2,1.35);
\draw[color=black,scale=1,domain=-3.141: 3.141,
smooth,variable=\t,shift={(-0.75,0)},rotate=0]plot({2.*sin(\t r)},
{2.*cos(\t r)}); 
\draw[color=black,scale=1,domain=-3.141: 3.141,
smooth,variable=\t,shift={(7.31,0)},rotate=0]plot({2.*sin(\t r)},
{2.*cos(\t r)}); 
\draw[shift={(1,0)}] 
(-3.73,0) 
to[out= 50,in=180, looseness=1] (-2.3,0) 
to[out= 60,in=150, looseness=1.5] (-1,1) 
(-2.3,0)
to[out= -60,in=-120, looseness=0.9] (-0.75,-0.75)
(-1,1)
to[out= -30,in=90, looseness=0.9] (-0.5,0)
to[out= -90,in=60, looseness=0.9] (-0.75,-0.75);
\path[font=\small, shift={(1,0)}]
 (-3.7,0) node[left]{$P^1$}
 (-2.9,0.75) node[below] {$\gamma^1$}
 (-0.8,-0.5)[left] node{$\gamma^2$}
 (-2.5,0) node[below] {$O^1$}; 
\draw[shift={(9.03,0)}] 
(-3.73,0) to[out= 50,in=180, looseness=1] (-1.8,0); 
\path[font=\small, shift={(9.03,0)}]
 (-3.7,0) node[left]{$P^1$}
 (-2.9,0.83) node[below] {$\gamma^1$}
 (-1.4,0) node[below] {$O^1$}; 
\path[font=\small,rotate=90]
(-1,-3.28) node[above]{$t\to T$}
(2,-1.47) node[below]{$\SS_t$}
(2,-9.53) node[below]{$\SS_T$};
\fill(7.23,0) circle (2pt);
\end{tikzpicture}
\end{center}
\begin{caption}{Collapse of the curves $\gamma^2$ and
 the region enclosed to the point $O^1$ leaving a
 curve $\gamma^1$ with a 1--point as an end--point.\label{f22}}
\end{caption}
\end{figure}
\noindent If the limit network $\SS_T$ contains a curve (or curves) which ends in a 1--point, it is actually natural to impose that such curve vanishes for every future time, so considering only the evolution of the network of the rest of the network $\SS_T$ according to the above discussion (cutting away such a curve will produce a 2--point or the empty set, in the figures above, for instance).

\medskip

Theorem~\ref{evolnonreg} in the next section will give a way to restart the flow in the ``nice'' singularity situation described in Theorem~\ref{ppp1}, when the curvature remains bounded and a single curve collapses to an interior point of $\Omega$ 
forming a non--regular network with a regular $4$--point..
\begin{figure}[H]
\begin{center}
\begin{tikzpicture}
\draw[color=black!40!white,rotate=90,shift={(0,-5.1)}]
(-0.05,2.65)to[out= -90,in=150, looseness=1] (0.17,2.3)
(0.17,2.3)to[out= -30,in=100, looseness=1] (-0.12,2)
(-0.12,2)to[out= -80,in=40, looseness=1] (0.15,1.7)
(0.15,1.7)to[out= -140,in=90, looseness=1.3](0,1.1)
(0,1.1)--(-.2,1.35)
(0,1.1)--(+.2,1.35);
\draw[color=black,scale=1,domain=-3.141: 3.141,
smooth,variable=\t,shift={(-0.75,0)},rotate=0]plot({2.*sin(\t r)},
{2.*cos(\t r)}); 
\draw[color=black,scale=1,domain=-3.141: 3.141,
smooth,variable=\t,shift={(7.31,0)},rotate=0]plot({2.*sin(\t r)},
{2.*cos(\t r)}); 
\path[font=\small,rotate=90]
(-1,-3.28) node[above]{$t\to T$}
(2,-1.47) node[below]{$\SS_t$}
(2,-9.53) node[below]{$\SS_T$}
(.25,-7.38) node[right]{$O^1=O^2$}
(-.45,2) node[right]{$O^1$}
(.5,.13) node[right]{$O^2$};
\path[rotate=90,font=\small,scale=1.5]
(.2,.7) node[right]{$\gamma$};
\draw[black, shift={(1.17,0)}, scale=1.1]
 (-3.03,1.25) 
to[out= -50,in=180, looseness=1]
(-2.4,-0.2)
to[out= -60,in=50, looseness=1.5] (-3.03,-1.25)
(-2.4,-0.2)
to[out= 60,in=-130, looseness=0.9] (-1.2,0.3)
to[out= 110,in=-100, looseness=0.9](-0.43,1.25) 
(-1.2,0.3)
to[out=-10,in=150, looseness=0.9](-0.43,-1.25);
\draw[black, shift={(9.18,0)}, scale=1.1]
 (-3.03,1.25) 
to[out= -50,in=120, looseness=1] (-1.75,0.15)
to[out= 60,in=-100, looseness=1.5] (-0.43,1.25) 
(-1.75,0.15)
to[out= -60,in=150, looseness=0.9](-0.43,-1.25) 
(-1.75,0.15)
to[out=-120,in=50, looseness=0.9] (-3.03,-1.25) ;
\draw[rotate=90,scale=0.75,color=black!40!white,shift={(0.2,-12.32)},dashed]
(0,2.65)--(1.73,3.65)
(0,2.65)--(1.73,1.65)
(0,2.65)--(-1.73,3.65)
(0,2.65)--(-1.73,1.65);
\fill(7.25,.18) circle (1.85pt);
\end{tikzpicture}
\end{center}
\begin{caption}{A limit ``nice'' collapse of a single curve $\gamma$ producing a non--regular network $\SS_T$.\label{nice}}
\end{caption}
\end{figure}

Finally if we are in the situation of a non--regular limit network $\SS_T$ described by Theorem~\ref{ppp2}, after the collapse of a region of $\SS_t$, as $t\to T$ (see for instance the following figures), in order to restart the flow one will need either an
extension of Theorem~\ref{evolnonreg} (mentioned in Remark~\ref{felix})
or an improvement of Proposition~\ref{loopsing} 
(the curvature of the non--degenerate limit curves is bounded).
\begin{figure}[H]
\begin{center}
\begin{tikzpicture}
\draw[color=black!40!white,rotate=90,shift={(0,-5.1)}]
(-0.05,2.65)to[out= -90,in=150, looseness=1] (0.17,2.3)
(0.17,2.3)to[out= -30,in=100, looseness=1] (-0.12,2)
(-0.12,2)to[out= -80,in=40, looseness=1] (0.15,1.7)
(0.15,1.7)to[out= -140,in=90, looseness=1.3](0,1.1)
(0,1.1)--(-.2,1.35)
(0,1.1)--(+.2,1.35);
\draw[color=black,scale=1,domain=-3.141: 3.141,
smooth,variable=\t,shift={(-0.75,0)},rotate=0]plot({2.*sin(\t r)},
{2.*cos(\t r)}); 
\draw[color=black,scale=1,domain=-3.141: 3.141,
smooth,variable=\t,shift={(7.31,0)},rotate=0]plot({2.*sin(\t r)},
{2.*cos(\t r)}); 
\path[font=\small,rotate=90]
(-1,-3.28) node[above]{$t\to T$}
(2,-1.47) node[below]{$\SS_t$}
(2,-9.53) node[below]{$\SS_T$};
\draw[color=black,shift={(-1.2,-0.1)},scale=2]
(-.150,0.13)to[out= -101,in=90, looseness=1](-.165,0)
(-.165,0)to[out= -90,in=101, looseness=1](-.150,-0.13)
(0.725,0)to[out= 0,in=100, looseness=1.5] (1.12,-.4)
(-.150,0.13)to[out= 150,in=-90, looseness=1.5](-.5,.73)
(-.150,-0.13)to[out= -150,in=90, looseness=1](-.6,-.52);
\draw[color=black,shift={(-1.2,-0.1)},scale=2]
(-.150,0.13)to[out= 20,in=180, looseness=1](0.325,0.2)
(0.325,0.2)to[out= 0,in=120, looseness=1] (0.725,0)
(-.150,-0.13)to[out= -20,in=180, looseness=1](0.325,-0.2)
(0.325,-0.2)to[out= 0,in=-120, looseness=1] (0.725,0);
\draw[color=black,shift={(6.86,-0.1)},scale=2]
(.19,.135)to[out= -20,in=100, looseness=1.5] (1.12,-.4)
(.19,.135)to[out= 130,in=-90, looseness=1.5](-.5,.73)
(.19,.135)to[out= -170,in=90, looseness=1](-.6,-.52);
\fill(7.25,.18) circle (2pt);
\end{tikzpicture}

\bigskip

\bigskip

\bigskip

\begin{tikzpicture}
\draw[color=black!40!white,rotate=90,shift={(0,-5.1)}]
(-0.05,2.65)to[out= -90,in=150, looseness=1] (0.17,2.3)
(0.17,2.3)to[out= -30,in=100, looseness=1] (-0.12,2)
(-0.12,2)to[out= -80,in=40, looseness=1] (0.15,1.7)
(0.15,1.7)to[out= -140,in=90, looseness=1.3](0,1.1)
(0,1.1)--(-.2,1.35)
(0,1.1)--(+.2,1.35);
\draw[color=black,scale=1,domain=-3.141: 3.141,
smooth,variable=\t,shift={(-0.75,0)},rotate=0]plot({2.*sin(\t r)},
{2.*cos(\t r)}); 
\draw[color=black,scale=1,domain=-3.141: 3.141,
smooth,variable=\t,shift={(7.31,0)},rotate=0]plot({2.*sin(\t r)},
{2.*cos(\t r)}); 
\path[font=\small,rotate=90]
(-1,-3.28) node[above]{$t\to T$}
(2,-1.47) node[below]{$\SS_t$}
(2,-9.53) node[below]{$\SS_T$};
\draw[color=black, scale=.8, shift={(-1.3,.3)}, rotate=-30]
(0.76,0)to[out= 120,in=-48, looseness=1](0.23,0.72)
to[out= -162,in=24, looseness=1](-0.61,0.44)
to[out= -96,in=96, looseness=1](-0.61,-0.44)
(0.76,0)to[out= -120,in=48, looseness=1](0.23,-0.72)
to[out= 162,in=-24, looseness=1](-0.61,-0.44);
\draw[color=black,shift={(-1.2,-0.1)},scale=2]
(.347,.017)to[out= -30,in=100, looseness=1.5] (1.12,-.4)
(.305,.375)to[out= 42,in=-170, looseness=1.5](.88,.8)
(-.045,.443)to[out= 114,in=-100, looseness=1.5](-.5,.73)
(-.216,.138)to[out= 186,in=20, looseness=1.5](-.6,-.52)
(.018,-.128)to[out= -102,in=130, looseness=1.5](.4,-.93);
\draw[color=black,shift={(6.86,-0.1)},scale=2]
(.19,.135)to[out= -20,in=100, looseness=1.5] (1.12,-.4)
(.19,.135)to[out= 52,in=-170, looseness=1.5](.88,.8)
(.19,.135)to[out= 124,in=-100, looseness=1.5](-.5,.73)
(.19,.135)to[out= 196,in=20, looseness=1.5](-.6,-.52)
(.19,.135)to[out= -92,in=130, looseness=1.5](.4,-.93);
\fill(7.25,.18) circle (2pt);
\end{tikzpicture}
\end{center}
\begin{caption}{Less ``nice'' examples of collapse and convergence to non--regular networks.\label{lessnice}}
\end{caption}
\end{figure}

We conclude this section by discussing the (conjectural) ``generic'' situation of singularity formation, in the sense that it should happen {\em for a dense set} of initial networks.

By numerical evidence (computing the lowest relevant eigenvalue of the Jacobi--field operator of the candidates -- Dominic Descombes and Tom Ilmanen, {\em personal communication}) the {\em dynamically stable} shrinkers (meaning that ``perturbing'' the flow, the blow--up limit network remains the same) should be only the line, the unit circle, the standard triod, the standard cross, the Brakke spoon, the lens and the ``three--ray star'' (see the figure below).
\begin{figure}[H]
\begin{center}
\begin{tikzpicture}[rotate=90,scale=.4]
\draw[color=black,dashed]
(6.25,0)to[out= 0,in=180, looseness=1](7.25,0)
(-3.25,5.48)to[out=120,in=-60, looseness=1](-3.75,6.34)
(-3.25,-5.48)to[out=-120,in=60, looseness=1](-3.75,-6.34);
\draw[color=black]
(2.75,0)to[out= 0,in=180, looseness=1](6.25,0)
(-1.5,2.45)to[out= 120,in=-60, looseness=1](-3.25,5.48)
(-1.5,-2.45)to[out= -120,in=60, looseness=1](-3.25,-5.48)
(2.75,0)to[out= 120,in=0, looseness=1](-1.5,2.45)
(-1.5,-2.45)to[out= 120,in=-120, looseness=1](-1.5,2.45)
(2.75,0)to[out= -120,in=0, looseness=1](-1.5,-2.45);
\fill(0,0) circle (2.5pt);
\path
(-.1,-.4) node[above]{$O$};
\end{tikzpicture}
\end{center}
\begin{caption}{A ``three--ray star'' regular shrinker.\label{threeray}}
\end{caption}
\end{figure}

\begin{conge} The ``generic'' singularities of the curvature flow of a network are (locally) asymptotically described by one of the above shrinkers.
\end{conge}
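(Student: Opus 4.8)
The plan is to run the by-now-classical Colding--Minicozzi strategy for generic mean curvature flow, adapted to the network setting, combining it with the structural results of Sections~\ref{geosec}--\ref{behavsing} and the restarting Theorem~\ref{evolnonreg}. Throughout one works under the multiplicity--one conjecture {\bf{M1}} (Problem~\ref{ooo9}) and the uniqueness--of--blow--up assumption {\bf{U}} (Problem~\ref{ooo12}), so that at every singular time and every reachable singular point $x_0$ the rescaled flow converges to a well--defined, embedded, multiplicity--one degenerate regular shrinker $\widetilde{\SS}_\infty=\SS^\infty_{-1/2}$, cf.\ Propositions~\ref{thm:shrinkingnetworks.1} and~\ref{resclimit-general} and Remark~\ref{enanrem}. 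The statement to be proved is then: for an initial network in a dense subset of the space of regular $C^{2+2\alpha}$ (or $C^\infty$) networks in $\Omega$, \emph{every} blow--up limit occurring along the whole ``extended'' flow — through all its singular times and restarts — is one of the shrinkers on the list (line, unit circle, standard triod, standard cross, Brakke spoon, lens, three--ray star).

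First I would fix the variational framework. The Gaussian density $\Theta_\SS=\int_\SS\rho_{0,0}(\cdot,-1/2)\,d\overline{s}$ has the (degenerate regular) shrinkers as its critical points among networks satisfying the Herring condition, and its second variation defines a self--adjoint Jacobi operator $L$ on the normal bundle, with the linearised $120^\circ$/balancing relations as boundary conditions at the triple junctions (and appropriate matching conditions along any core). Following Colding--Minicozzi one calls $\SS$ \emph{$F$--stable}, resp.\ \emph{entropy--stable}, if $L$ has no positive eigenvalue after quotienting out the trivial directions coming from translations and parabolic dilations; this is the rigorous version of ``dynamical stability''. The next step is a classification: prove that the seven shrinkers on the list are entropy--stable while every other regular shrinker is entropy--unstable. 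For the line and the circle this is identical to the curve/hypersurface case; for the standard triod, standard cross, Brakke spoon, lens and three--ray star one computes (or estimates) the bottom of the spectrum of $L$ with junction conditions, matching the numerical evidence of Descombes--Ilmanen. For the converse — the hard half — one uses the combinatorial structure: by Lemmas~\ref{lemmatree} and~\ref{thm:densitybound} any degenerate tree shrinker is a union of halflines (possibly with a core), and among these only the line, triod and cross are stable, as already recorded; for shrinkers with loops and $\geq 2$ triple junctions one exhibits explicit positive Jacobi fields for $L$ (e.g.\ built from the area--decreasing directions of the bounded regions, controlled through the von~Neumann rule~\eqref{areaevolreg}), reducing the problem to the finitely many topological types permitted by Remark~\ref{schreg}.

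Granting this stability dichotomy, the genericity step is as follows. If the flow of $\SS_0$ develops at time $T$ a singularity at $x_0$ whose blow--up shrinker $\widetilde{\SS}_\infty$ is entropy--unstable, there is a normal perturbation supported away from the other singular points which strictly lowers the entropy near $x_0$; implanting a suitably rescaled copy of it into $\SS_0$ yields a nearby initial network whose flow, by Huisken's monotonicity formula and the upper semicontinuity of $\widehat{\Theta}$ (Definition~\ref{Gaussiandensities}), has strictly smaller density at the corresponding reachable point, hence cannot blow up to $\widetilde{\SS}_\infty$ there. Since the entropy $E(\SS_t)$ of Section~\ref{seclim} is monotone nonincreasing along the flow and across restarts, and is a priori bounded in terms of $\SS_0$ via the bounded length ratios (Lemma~\ref{rescestim2}, estimate~\eqref{entropydef}), only finitely many density values $\geq 3/2$ can ever occur; coupling this with the area/von~Neumann bookkeeping of Section~\ref{geopropsub}, which bounds the number of collapsing regions and hence of restart--inducing singular events, one removes all unstable shrinkers one at a time, and a Baire category argument over the countable list of ``bad'' configurations upgrades this to density of the good initial data. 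The resulting flow has all its singularities locally modelled on entropy--stable shrinkers, i.e.\ on the conjectured list.

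The main obstacle is that the whole scheme rests on inputs that are themselves open. Without {\bf{M1}} (or its strong form {\bf{SM1}}/{\bf{L1}}) one does not even know that blow--up limits are honest multiplicity--one networks rather than multiple lines with a core; this is known only for at most two triple junctions (Section~\ref{dsuL}) or for uniformly separated junctions. Without {\bf{U}} the notion of ``the'' tangent flow used in the perturbation step is unavailable. Even granting these, the decisive difficulty is the \emph{rigorous} stability classification: exhibiting unstable Jacobi fields for \emph{all} regular shrinkers with loops and several junctions — where a complete list of such shrinkers is itself conjectural (cf.\ Conjecture~3.26 in~\cite{haettenschweiler}) — and carefully treating the Jacobi operator with triple--junction boundary conditions and along cores currently goes beyond what is known, and is only supported numerically. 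Finally one must control the interaction of the genericity perturbation with the restarting procedure and either rule out or handle a possible accumulation of singular times, which is listed as an open problem on its own.
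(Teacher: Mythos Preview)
The statement you are addressing is explicitly labeled as a \emph{Conjecture} in the paper, not a theorem, and the paper offers no proof of it whatsoever. The surrounding text merely motivates the conjecture by citing numerical evidence (the eigenvalue computations of Descombes--Ilmanen) and by noting that among tree--like shrinkers only the line, standard triod and standard cross are dynamically stable. There is thus no ``paper's proof'' against which to compare your proposal.

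What you have written is a plausible strategic outline of how such a conjecture \emph{might} eventually be attacked, along Colding--Minicozzi lines, and you correctly identify in your final paragraph that every essential step rests on open problems: {\bf M1}, {\bf U}, the rigorous stability classification of all shrinkers (which is only numerical), the control of accumulation of singular times, and the interaction of perturbations with the restarting procedure. This is an honest assessment, but it means your text is not a proof --- it is a research programme. You should not present it as a proof of the conjecture; at most it could serve as a discussion of a possible approach, with the caveat that several of its ingredients are themselves listed as open problems elsewhere in the paper (Problems~\ref{ooo9}, \ref{ooo12}, \ref{ooo120}).
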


We remark that if rescaling around a singular point $x_0$ we get one of the 
listed above shrinkers, the limit network $\SS_T$ is locally quite ``nice''.
If the shrinker is a line or a standard triod, there is no singularity. If it is a circle, it means that the flow ends at the singularity. If it is a Brakke spoon, locally the flow produces a curve with an end--point in $\Omega$ (see Figures~\ref{f21} and~\ref{f22}), which we can reasonably ``assume'' it disappears at subsequent times and we have to deal with an empty network or with a curve containing an angle (as in Figure~\ref{angfig}) that has a ``natural'' unique evolution, immediately smooth.
In the case of a standard cross, we can deal with the ``new'' $4$--point by means of Theorem~\ref{evolnonreg}. If we get a lens, $\SS_T$ will be (locally) given by two $C^1$ curves (smooth outside $x_0$) concurring at the singular point without forming an angle (even if their curvature could be unbounded, getting to $x_0$, if Problem~\ref{ooo1115} has a negative answer). Finally, if the shrinker is a three--ray star, the limit network $\SS_T$ is locally a triod at $x_0$ with angles of $120$ degrees, by Proposition~\ref{loopsing} (also, in this case, the curvature could be unbounded getting close to $x_0$). Notice that in these last two cases, even if apparently ``nice'', we have to use Theorem~\ref{evolnonreg} (and possibly its extension mentioned in Remark~\ref{felix}) in order to restart the flow, since the curves are not necessarily $C^2$ up to $x_0$.

However, we remark that in all these cases (and in particular in the most ``delicate'' ones: cross, lens and three--ray star, when we need to apply Theorem~\ref{evolnonreg}, or its extension mentioned in Remark~\ref{felix}) the associated limit network $\SS_T$ (if not empty and ``cutting'' away a curve if it ends in a $1$--point in $\Omega$) has either a regular $4$--point (with angles of $120/60$ degrees) or a regular $3$--point, or a $2$--point with no angle. In particular, the cone generated by inner unit tangent vectors of the concurring curves at such point form, respectively, is either a standard cross, a regular triod, or a line. Since, as we will see in the next section, the curvature flow produced by Theorem~\ref{evolnonreg} is associated with a regular self--similarly expanding network (see Definition~\ref{expanders}) originating from such cone, which in these special cases it is unique (see the end of Section~\ref{expsec} and Problems~\ref{ooo9999},~\ref{ooo9999st},~\ref{ooo9999s}), it is natural to expect that also the flow produced by such theorem is unique, which would give a unique ``canonical'' way to continue the flow in the (conjectural) generic situation.

\section{Short time existence III -- Non--regular networks}\label{smtm3}

 In this section we consider the problem of defining and finding a curvature flow (as smooth as
possible) starting from an initial possibly non--regular network, that is, having multiple points of order greater than three
or triple junctions where the $120$ degrees condition is not satisfied. As we have seen in the previous sections, this is naturally related to the ``restarting'' of the flow after a singularity. To deal with such problem, we clearly need a definition of solution slightly different from Definitions~\ref{probdef} and~\ref{probdef-open} in a positive time interval $[0,T)$, asking anyway that Definition~\ref{d1} still holds for every positive time.

We are going to present two short--time existence results for general networks, the first by T.~Ilmanen, A.~Neves and the last author in~\cite{Ilnevsch}, Theorem~\ref{evolnonreg}, the second by J.~Lira, M.~Mazzeo, M.~Saez and the third author in~\cite{LiMazPlSa}, Theorem~\ref{evononreg2}. Both theorems are based on the existence and the properties of the {\em self--similarly expanding} networks and provide a ``nice'' motion by curvature if the initial datum belongs to the class of non--regular networks with bounded curvature, such that at every multiple point the exterior unit tangent vectors are mutually distinct. Notice that the 
second assumption is not restrictive for the ``restarting'' problem, taking into account the conclusions of 
Theorems~\ref{ppp2} and~\ref{ppp1}.

\subsection{Self--similarly expanding networks}\label{expsec}
\begin{defn}\label{expanders} A regular $C^2$ open network $\epsi$ is called a {\em regular expander} if at every point $x\in\epsi$ there holds
\begin{equation}\label{expeq}
\underline{k}=x^\perp\,. 
\end{equation}
This relation is called the {\em expanders equation}.
\end{defn}
The name comes from the fact that if $\epsi$ is a regular expander, then $\epsi_t = \sqrt{2t}\,\epsi$ describes a {\em self--similarly expanding} curvature flow of regular networks in $(0,+\infty)$, with $\epsi=\epsi_{1/2}$. Viceversa, if $\epsi_t$ is a self--similarly expanding curvature flow of regular networks in the time interval $(0,+\infty)$, then $\epsi_{1/2}$ is a regular expander, that is, $\epsi_{1/2}$ satisfies equation~\eqref{expeq}.
\begin{figure}[H]
\begin{center}
\begin{tikzpicture}[scale=1.22]
\path[font=\small] 
(0,0)node[right]{$O$};
\draw[color=black]
(0,0.5)to[out=30,in=-120, looseness=1](0.9,1.73)
(0,0.5)to[out= 150,in=-60, looseness=1](-0.9,1.73);
\draw[color=black]
(0,-1.75)to[out=90,in=-90, looseness=1](0,0.5);
\draw[color=black,dashed]
(0,-2.25)to[out=90,in=-90, looseness=1](0,-1.75);
\draw[color=black,dashed]
(0.9,1.73)to[out=60,in=-120, looseness=1](1.2,2.24)
(-0.9,1.73)to[out=120,in=-60, looseness=1](-1.2,2.24);
\draw[color=black!40!white]
(0,0)to[out=60,in=-120, looseness=1](1,1.73)
(0,0)to[out=120,in=-60, looseness=1](-1,1.73);
\draw[color=black!40!white,dashed]
(1,1.73)to[out=60,in=-120, looseness=1](1.3,2.24)
(-1,1.73)to[out=120,in=-60, looseness=1](-1.3,2.24);
\fill[color=black]
(0,0) circle (1.23pt);
\end{tikzpicture}\qquad\qquad\qquad
\begin{tikzpicture}[scale=1.22]

\path[font=\small] 
(0,0)node[right]{$O$};
\draw[color=black]
(0,0.5)to[out=30,in=-120, looseness=1](0.9,1.73)
(0,0.5)to[out= 150,in=-60, looseness=1](-0.9,1.73)
(0,-0.5)to[out=-150,in=60, looseness=1](-0.9,-1.73)
(0,-0.5)to[out=-30,in=120, looseness=1](0.9,-1.73);
\draw[color=black]
(0,-0.5)to[out=90,in=-90, looseness=1](0,0.5);
\draw[color=black,dashed]
(0.9,1.73)to[out=60,in=-120, looseness=1](1.2,2.24)
(0.9,-1.73)to[out=-60,in=120, looseness=1](1.2,-2.24)
(-0.9,-1.73)to[out=-120,in=60, looseness=1](-1.2,-2.24)
(-0.9,1.73)to[out=120,in=-60, looseness=1](-1.2,2.24);
\draw[color=black!40!white]
(0,0)to[out=60,in=-120, looseness=1](1,1.73)
(0,0)to[out=-60,in=120, looseness=1](1,-1.73)
(0,0)to[out=-120,in=60, looseness=1](-1,-1.73)
(0,0)to[out=120,in=-60, looseness=1](-1,1.73);
\draw[color=black!40!white,dashed]
(1,1.73)to[out=60,in=-120, looseness=1](1.3,2.24)
(1,-1.73)to[out=-60,in=120, looseness=1](1.3,-2.24)
(-1,-1.73)to[out=-120,in=60, looseness=1](-1.3,-2.24)
(-1,1.73)to[out=120,in=-60, looseness=1](-1.3,2.24);
\fill[color=black]
(0,0) circle (1.23pt);
\end{tikzpicture}\qquad\qquad
\begin{tikzpicture}[rotate=126]

\path[font=\small] 
(-0.18,-0.19)node[below]{$O$};
\draw[color=black]
(0.8,0.584)to[out=96,in=-118, looseness=1](0.82,2.37)
(0.8,0.584)to[out= -24,in=170, looseness=1](2.5,0.1)
(-0.308,-0.948)to[out=-48,in=108, looseness=1](0.67,-2.37)
(-0.308,-0.948)to[out=-168,in=36, looseness=1](-1.9,-1.46);
\draw[color=black]
(-0.308,-0.948)to[out=72,in=-108, looseness=1](0,0)
(0.8,0.584)to[out=-144,in=36, looseness=1](0,0);
\draw[color=black,dashed]
(0.67,-2.37)to[out=-62,in=108, looseness=1](0.87,-2.85)
(2.5,0.1)to[out= -10,in=180, looseness=1](3,0.05)
(0.82,2.37)to[out=72,in=-108, looseness=1](0.92,2.85)
(-1.9,-1.46)to[out=-144,in=36, looseness=1](-2.32,-1.76);
\draw[color=black!40!white]
(0,0)to[out= 0,in=180, looseness=1](2.5,0)
(0,0)to[out=-72,in=108, looseness=1](0.77,-2.37)
(0,0)to[out=72,in=-108, looseness=1](0.77,2.37)
(0,0)to[out=-144,in=36, looseness=1](-2,-1.46);
\draw[color=black]
(0,0)to[out=144,in=-36, looseness=1](-1.72,1.26);
\draw[color=black,dashed]
(-1.72,1.26)to[out=144,in=-36, looseness=1](-2.14,1.56);
\draw[color=black!40!white,dashed]
(2.5,0)to[out= 0,in=180, looseness=1](3,0)
(0.77,-2.37)to[out=-72,in=108, looseness=1](0.92,-2.85)
(0.77,2.37)to[out=72,in=-108, looseness=1](0.92,2.85)
(-2,-1.46)to[out=-144,in=36, looseness=1](-2.42,-1.76);
\fill[color=black]
(0,0) circle (1.5pt);
\end{tikzpicture}
\end{center}
\begin{caption}{Examples of tree--like regular expanders with 3, 4, 5 asymptotic halflines (in gray).}
\end{caption}
\end{figure}

By studying the ODE satisfied along each curve, one
can easily show that an expander cannot be compact, all its curves are smooth and each noncompact curve must be asymptotic to a halfline. Moreover, it is trivial that the family of the asymptotic halflines of the open networks of a self--similarly expanding curvature flow $\epsi_t$ is the same for all $t\in(0,+\infty)$ and, by a direct maximum principle argument, one can prove exponential decay of the functions representing the network as graphs on such halflines, outside a large ball.

\begin{lem}\label{lem:expander-asymptotic}
Let $P$ be a finite union of distinct halflines meeting at the origin
and $\epsi$ a regular expander, such that each noncompact curve of
$\epsi$ is asymptotic in Hausdorff distance to one of the halflines of
$P$. Then, there exists an $r_0>0$ large enough such that each noncompact curve $\sigma$ of $\epsi$ corresponds to a connected component of $\epsi\setminus B_{r_0}(0)$ and can be parametrized as 
$$
\sigma(\ell)=\ell e^{i\omega}+u(\ell)e^{i(\omega+\pi/2)}\quad\text{for }\ell\geqslant r_0.
$$
where $\bigl\{\,\ell e^{i\omega}~|~\ell\geqslant 0\,\bigl\}$ is a halfline of $P$ and
$\lim_{\ell\to+\infty}u(\ell) = 0$. Moreover, the decay of $u$ is given by
$$
|u(\ell)|\leqslant C_0e^{-\ell^2/2}, \quad |u'(\ell)|\leqslant C_1 \ell^{-1}e^{-\ell^2/2},\quad |u''(\ell)|\leqslant C_2 e^{-\ell^2/2}
$$
and
$$
|u'''(\ell)|\leqslant C_3 \ell e^{-\ell^2/2}, \quad |u''''(\ell)|\leqslant C_4 \ell^2e^{-\ell^2/2},
$$
where each constant $C_i$ depends only on $r_0$, $u(r_0)$ and $u'(r_0)$.
\end{lem}

Then, it is easy to see that for every smooth self--similarly expanding curvature flow $\epsi_t$, letting $P$ be the network given by the finite union of the distinct (common) asymptotic halflines of $\epsi_t$, meeting at the origin, we have $\epsi_t \to P$, as $t\to0$, in $C^\infty\loc(\R^2\setminus\{0\})$. We say that $P$ is the {\em generator} of the flow $\epsi_t$ or that $\epsi_t$ is a (possibly not unique) curvature flow of $P$ in the time interval $[0,+\infty)$.\\
Conversely, if we consider a network $P$ given by a finite number of distinct halflines meeting at the origin and we assume that we have a smooth curvature flow $\SS_t$ for $t\in(0,T)$, such that $\SS_t \to P$ in $C^\infty\loc(\R^2\setminus\{0\})$, as $t\to0$, then the parabolically rescaled flows
$$
\SS^\mu_\tt = \mu\,\SS_{\mu^{-2}\tt}
$$
also satisfy $\SS^\mu_\tt \to P$, as $\tt\to 0$, for any
$\mu>0$, since $P$ is invariant under rescalings. Thus, supposing that the flow $\SS_t$ is unique in some ``appropriate class'' with initial condition $P$, we obtain that $T =+\infty$ and $\SS_t=\SS^\mu_t$, for any $\mu,t>0$. This is like to say that $\SS_t = \sqrt{2t}\,\SS_{1/2}$, that is, $\SS_t$ is a self--similarly expanding curvature flow of regular networks, for $t\in(0,+\infty)$ and $P$ is its generator. As we said, the family of distinct (common) asymptotic halflines of all $\SS_t$ coincides with the family of halflines of $P$.

\begin{rem}\label{uniqexpand} Notice that the generator of a self--similarly expanding curvature flow of networks is uniquely defined, while, for a network $P$ composed of a finite number of halflines for the origin, there could be several self--similarly expanding curvature flows of regular networks having $P$ as a generator, as in the following figure.
\begin{figure}[H]
\begin{center}
\begin{tikzpicture}[scale=1.3]
\path[color=black]
(0.1,0.05)node[right]{$O$};
\draw[color=black]
(0,-0.5)to[out=90,in=-90, looseness=1](0,0.5);
\draw[color=black]
(0,0.5)to[out=30,in=-135, looseness=1](1.34,1.44)
(0,0.5)to[out= 150,in=-45, looseness=1](-1.34,1.44)
(0,-0.5)to[out=-150,in=45, looseness=1](-1.34,-1.44)
(0,-0.5)to[out=-30,in=135, looseness=1](1.34,-1.44);
\draw[color=black,dashed]
(1.34,1.44)to[out=45,in=-135, looseness=1](1.74,1.84)
(1.34,-1.44)to[out=-45,in=135, looseness=1](1.74,-1.84)
(-1.34,-1.44)to[out=-135,in=45, looseness=1](-1.74,-1.84)
(-1.34,1.44)to[out=135,in=-45, looseness=1](-1.74,1.84);
\draw[color=black!40!white]
(0,0)to[out=45,in=-135, looseness=1](1.44,1.44)
(0,0)to[out=-45,in=135, looseness=1](1.44,-1.44)
(0,0)to[out=-135,in=45, looseness=1](-1.44,-1.44)
(0,0)to[out=135,in=-45, looseness=1](-1.44,1.44);
\draw[color=black!40!white,dashed]
(1.44,1.44)to[out=45,in=-135, looseness=1](1.84,1.84)
(1.44,-1.44)to[out=-45,in=135, looseness=1](1.84,-1.84)
(-1.44,-1.44)to[out=-135,in=45, looseness=1](-1.84,-1.84)
(-1.44,1.44)to[out=135,in=-45, looseness=1](-1.84,1.84);
\fill[color=black]
(0,0) circle (1.25pt);
\end{tikzpicture}\qquad\qquad\qquad
\begin{tikzpicture}[rotate=90,scale=1.3]
\path[color=black]
(0.08,-0.07)node[above]{$O$};
\draw[color=black]
(0,-0.5)to[out=90,in=-90, looseness=1](0,0.5);
\draw[color=black]
(0,0.5)to[out=30,in=-135, looseness=1](1.34,1.44)
(0,0.5)to[out= 150,in=-45, looseness=1](-1.34,1.44)
(0,-0.5)to[out=-150,in=45, looseness=1](-1.34,-1.44)
(0,-0.5)to[out=-30,in=135, looseness=1](1.34,-1.44);
\draw[color=black,dashed]
(1.34,1.44)to[out=45,in=-135, looseness=1](1.74,1.84)
(1.34,-1.44)to[out=-45,in=135, looseness=1](1.74,-1.84)
(-1.34,-1.44)to[out=-135,in=45, looseness=1](-1.74,-1.84)
(-1.34,1.44)to[out=135,in=-45, looseness=1](-1.74,1.84);
\draw[color=black!40!white]
(0,0)to[out=45,in=-135, looseness=1](1.44,1.44)
(0,0)to[out=-45,in=135, looseness=1](1.44,-1.44)
(0,0)to[out=-135,in=45, looseness=1](-1.44,-1.44)
(0,0)to[out=135,in=-45, looseness=1](-1.44,1.44);
\draw[color=black!40!white,dashed]
(1.44,1.44)to[out=45,in=-135, looseness=1](1.84,1.84)
(1.44,-1.44)to[out=-45,in=135, looseness=1](1.84,-1.84)
(-1.44,-1.44)to[out=-135,in=45, looseness=1](-1.84,-1.84)
(-1.44,1.44)to[out=135,in=-45, looseness=1](-1.84,1.84);
\fill[color=black]
(0,0) circle (1.25pt);
\end{tikzpicture}
\end{center}
\begin{caption}{An example of two different tree--like regular expanders (not in the same ``topological class'' -- see below) with the same asymptotic halflines 
(in gray).\label{nonuniqexp}}
\end{caption}
\end{figure}
\end{rem}

Given $P=\bigcup_{j=1}^nP_j$, where $P_j$ are halflines from the origin, in~\cite{schn-schu} it was shown that for $n=3$ there exists a {\em unique} tree--like, regular expander $\epsi$ asymptotic to $P$ (if $P$ is a standard triod such an expander $\epsi$ is $P$ itself), in the case $n>3$ the existence of such tree--like, connected, regular expanders was shown by Mazzeo--Saez~\cite{mazsae}. This result is based on the following simple lemma.

\begin{lem}\label{rem:self-expander} A regular expander is a critical
point of the length functional with respect to the negatively curved
metric
$$
g = e^{{|x|^{2}}}\bigl(dx_1^2+ dx_2^2\bigr)\,.
$$
\end{lem}
\begin{proof}
See~\cite[Proposition~2.3]{mazsae} or~\cite[Lemma~4.1]{Ilnevsch}.
\end{proof}

To be precise, such a network is a {\em stable} critical point of the length functional in $(\mathbb{R}^2,g)$ (where, as usual, it suffices to look at the length of the networks in any large ball $B_R(0)$). 

The geodesic arcs and rays for the metric $g$ are qualitatively similar to the geodesics in the hyperbolic space, as one can expect, since the curvature of $g$ is everywhere negative. For instance, if $P_i$ and $P'_i$ are any two 
halflines emanating from the origin, then there is a unique complete geodesic for the metric $g$ which is asymptotic to these halflines 
along its two ends. A way to see this is to consider the ``geodesic compactification'' of $(\mathbb{R}^2, g)$ as a closed ball $B$. A limiting direction (i.e., the asymptotic limit of any halfline $P_i$) then corresponds to a point $q_i \in \partial B$. Thus, any $P=\bigcup_{j=1}^nP_j$ is uniquely determined by the choice of $n$ distinct points $q_1, \ldots, q_n \in \partial B$.\\
We remind the reader that, given a collection of points
$q_1,\ldots,q_n$ a solution of the so called {\em Steiner problem} in $(\mathbb{R}^2,g)$
is a connected set that contains the points $q_1,\ldots,q_n$
and minimize the length functional (with respect to the metric $g$).
One can prove that for any collection of points $q_1,\ldots,q_n$ 
there exists a solution to the Steiner problem and it is a ``geodesic'' and regular network. In particular, a minimizer of the Steiner problem is an expander.\\
This observation leads to the following result of Mazzeo--Saez~\cite[Main Theorem]{mazsae}.

\begin{prop}
Let $P=\bigcup_{j=1}^nP_j$ be a set of halflines from the origin in $\mathbb{R}^2$ and $q_1, \dots, q_n$ the corresponding points (listed in cyclic order) on $\partial B$, as above. Then, the set of expanding self--similar solutions of the network flow with initial datum $P$ is in one--to--one correspondence with the set of (possibly disconnected) regular networks on $B$ with end--points $\{q_1, \ldots, q_n\}$, whose arcs are geodesics for the metric $g$.\\
Moreover, for each choice of $P=\bigcup_{j=1}^nP_j$ there exists at least one
self--similar expanding solution whose non--compact branches are asymptotic
to the halflines $P_j$.
\end{prop}

Another key fact is that two regular expanders with the same ``topological structure'' and which are asymptotic to the same family of halflines, have to be identical. 

\begin{defn}\label{sametopologicalclass} 
We say that two regular expanders $\epsi_0$ and $\epsi_1$ are 
{\em asymptotic one to each other} if their ends are asymptotic to the 
same halflines.\\
We say that two regular expanders $\epsi_0$ and $\epsi_1$ are 
in the same {\em topological class}, if there is a smooth family of maps
$$
F_\theta:\epsi_0 \to\R^2,\quad 0\leqslant \theta\leqslant 1
$$
such that $F_0$ is the identity, $F_1(\epsi_0)=\epsi_1$, the distance between any two triple junctions of $F_\theta(\epsi_0)$ is uniformly bounded below and
$$
\lim_{r_0\to+\infty}\sup\,\bigl\{\left|{\partial F_\theta(x)/\partial\theta}\right|~\bigl|~x\in \epsi_0\setminus B_{r_0}(0)\bigr\}=0,\quad \text{ for every $0\leqslant \theta\leqslant 1$}.
$$
\end{defn}

Notice that two regular expanders in the same topological class are asymptotic to each other.

\begin{thm}\label{thm:unique} 
If $\epsi_0$ and $\epsi_1$ are two regular expanders in the same topological class, then they coincide.
\end{thm}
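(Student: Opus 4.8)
The plan is to exploit the variational characterization from Lemma~\ref{rem:self-expander}: a regular expander is a critical point of the length functional with respect to the negatively curved metric $g = e^{|x|^2}\,(dx_1^2+dx_2^2)$. This turns the uniqueness statement into a rigidity statement for geodesic-type networks in a Cartan--Hadamard-like setting, where the key mechanism is the convexity of the length functional along the interpolating family $F_\nu$ combined with a maximum principle. First I would set up the comparison: given the two expanders $\epsi_0$, $\epsi_1$ in the same topological class with interpolating family $F_\nu:\epsi_0\to\R^2$, I would consider the "difference" of the two networks, parametrized compatibly via $F_1$, and look at the quantity measuring their separation. Because both are asymptotic to the same family of halflines (Lemma~\ref{lem:expander-asymptotic} gives the precise exponential decay $|u(\ell)|\leq C_0 e^{-\ell^2/2}$ of each end off its asymptotic halfline), the separation decays like a Gaussian at spatial infinity; this is what makes the boundary terms at infinity vanish in any integration by parts.

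Next I would write down the second variation / the elliptic equation satisfied by the separation function. Writing each curve of $F_\nu(\epsi_0)$ as a normal graph of a function $w_\nu$ over $\epsi_0$ (valid since the family is smooth and the triple junctions stay uniformly separated, so no degeneration occurs), the expander equation $\underline{k}=x^\perp$ linearizes at $\epsi_0$ to a Jacobi-type operator $L$ whose zeroth-order term has a favorable sign coming from the negative curvature of $g$ — concretely, the relevant operator is of the form $w_{ss} + \langle x,\tau\rangle w_s - (|x^\perp|^2 + \text{something nonnegative})\,w$, and integrating $w\,Lw$ against the weight $e^{|x|^2}$ (equivalently, working intrinsically in the metric $g$) one gets $\int |w_s|^2\,d\mu_g + \int (\text{positive})\,w^2\,d\mu_g = (\text{boundary terms})$. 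The boundary terms are of two kinds: the contributions at the triple junctions, which must cancel because the $120$ degrees condition (the Herring condition, equivalently the first-order balancing $\tau^{p1}+\tau^{p2}+\tau^{p3}=0$) is the natural boundary condition for this variational problem and is preserved along $F_\nu$; and the contributions at spatial infinity, which vanish by the Gaussian decay from Lemma~\ref{lem:expander-asymptotic}. Hence $w\equiv 0$, i.e. $\epsi_0=\epsi_1$.

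To make this rigorous the cleanest route is probably not to linearize once but to differentiate the whole family: set $\varphi(\nu)=\mathrm{Length}_g(F_\nu(\epsi_0))$ (or a localized version thereof, cut off at a large radius $r_0$ and then let $r_0\to+\infty$ using the decay estimates to control the error). Since $\epsi_0$ and $\epsi_1$ are both critical points and both asymptotic to the same $P$, one shows $\varphi'(0)=\varphi'(1)=0$, and then the negative curvature of $g$ forces $\varphi$ to be strictly convex along the family unless $F_\nu$ is $\nu$-independent — this is the standard "geodesics in a Cartan--Hadamard manifold are unique in a homotopy class" argument, adapted to allow the triple junctions and the open ends. The strict convexity comes precisely from the sign of the curvature term, and the open-end contributions are controlled by the a priori exponential decay, so the strict inequality survives the limit $r_0\to+\infty$. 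A strictly convex function that is critical at both endpoints must be constant, giving $\epsi_0=\epsi_1$.

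The main obstacle, I expect, is handling the noncompactness carefully: one must verify that localizing the length functional and passing to the limit does not produce spurious boundary terms, which requires combining the uniform decay of Lemma~\ref{lem:expander-asymptotic} with the fact that along $F_\nu$ the constant $r_0$ and the decay constants $C_i$ can be chosen uniformly in $\nu$ (this uses the hypothesis that $\sup_{x\notin B_{r_0}}|\partial F_\nu(x)/\partial\nu|\to 0$ and that the triple junctions stay in a bounded region). A secondary technical point is to check that the normal-graph representation is globally valid along the entire family — i.e. that $F_\nu(\epsi_0)$ never develops a self-intersection or a curve tangent to $\epsi_0$ — which again follows from the smoothness of the family and the uniform lower bound on triple-junction distances, possibly after shrinking the family to a subinterval and iterating.
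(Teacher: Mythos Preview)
Your overall strategy matches the paper's: use Lemma~\ref{rem:self-expander} to recast the problem as uniqueness of a critical network for length in the metric $g=e^{|x|^2}(dx_1^2+dx_2^2)$, define a one--parameter energy along a family connecting $\epsi_0$ to $\epsi_1$, show it is critical at both endpoints and convex by the negative curvature of $g$, and conclude the family is constant. The renormalization at infinity you sketch is also exactly what is needed, and the paper does it by subtracting $\int e^{\ell^2/2}\,d\ell$ along each halfline.

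There is, however, a genuine gap: you compute $\varphi(\nu)=\mathrm{Length}_g\bigl(F_\nu(\epsi_0)\bigr)$ along the \emph{given} homotopy $F_\nu$ from Definition~\ref{sametopologicalclass}, and assert that negative curvature forces $\varphi''\geq 0$. This is false for an arbitrary smooth family. The Cartan--Hadamard convexity mechanism (the second variation formula $\int |(\nabla_{\gamma'}X)^\perp|^2-\mathrm{Rm}(X,\gamma',\gamma',X)$) requires that each curve in the family be a \emph{geodesic} of $g$; otherwise extra terms involving the geodesic curvature appear and the sign is lost. Likewise, your claim that the triple--junction boundary terms cancel ``because the $120$ degrees condition is the natural boundary condition'' fails for the intermediate networks $F_\nu(\epsi_0)$, which are under no obligation to satisfy that condition. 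The paper's remedy is to discard $F_\nu$ after using it only to match up the triple junctions, and to \emph{construct} a new family $\epsi_\xi$ in which (i) each curve is a $g$--geodesic with the correct asymptotic halfline, and (ii) each triple junction $x_i^\xi$ moves along a $g$--geodesic between $x_i^0$ and $x_i^1$. Condition~(i) makes the second variation formula valid and nonnegative; condition~(ii) kills the boundary term $|\epsi'_\xi|^{-1}g(\nabla_{X_\xi}X_\xi,\epsi'_\xi)$ at the junctions, since $\nabla_{X_\xi}X_\xi=0$ along a geodesic. Without this construction your convexity step does not go through.
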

\begin{proof}
We work in the negatively curved metric in the plane
$$
g = e^{|x|^{2}}(dx_1^2+ dx_2^2)\,,
$$
such that each curve of a regular expander is a geodesic in this metric.

Let $\{x^0_i\}$ and $\{x^1_i\}$ denote the triple junctions (a finite set) of $\epsi_0$ and $\epsi_1$, respectively. 
As the networks are in the same topological class, we can rearrange the elements of $\{x^0_i\}$ so that each $x_i^0$ is 
connected to $x_i^1$ by the existing deformation $F_\theta$ of $\epsi_0$ into $\epsi_1$. Denote by $x^\xi_i$, for $\xi\in[0,1]$, the unique geodesic 
connecting these points.\\ 
For each $\xi$, we consider the network $\epsi_\xi$ such that if $x_i^0$ is connected to $x_k^0$ by a geodesic, then $x_i^\xi$ is
connected to $x_k^\xi$ through a geodesic as well. To handle the noncompact curves we proceed as follows. Let $P_j$ denote a common asymptotic halfline to $\epsi_0$ and $\epsi_1$, which means that there are geodesics $\psi_0\subseteq \epsi_0$, $\psi_1\subseteq \epsi_1$ asymptotic to $P_j$ at infinity and starting at some points $x^0_i$ and $x^1_i$ respectively. Define then, for every $\xi\in(0,1)$, the curve $\psi_\xi\subseteq\epsi_\xi$ to be the unique geodesic starting at $x^\xi_i$ and asymptotic to $P_j$. This gives a deformation of the curve $\psi_0$ to $\psi_1$.\\
Hence, we have constructed a smooth family of networks with only triple junctions $\epsi_\xi$, for $\xi\in[0,1]$, ``connecting'' $\epsi_0$ and $\epsi_1$ and such that:
\begin{enumerate}
\item The triple junctions $\{x^\xi_i\}$ of $\epsi_\xi$ connect the triple junctions of $\epsi_0$ to the ones of $\epsi_1$ and, for each index $i$ fixed, the path $x_i^\xi$, with $\xi\in[0,1]$, is a geodesic with respect to the metric $g$.
\item Each curve of $\epsi_\xi$ is a geodesic of $(\R^2,g)$.
\item There is $r_0>0$ large enough so that $\epsi_\xi\setminus B_{r_0}(0)$ has $n$ connected components, each asymptotic to a halfline $P_j$, for $j=1, 2, \dots, n$. We can find angles $\omega_{j}$ such that each end of $\epsi_\xi$ becomes parametrized as 
$$
\epsi_\xi(\ell)=\ell e^{i\omega_{j}}+u_{j,\xi}(\ell)e^{i(\omega_{j}+\pi/2)}\quad\text {for } \ell\geqslant r_0.
$$
This follows from Lemma~\ref{lem:expander-asymptotic}.
\item The vector field along $\epsi_\xi$,
$$
X_\sss(\ell)=\frac{\partial\,\,}{\partial \xi}\epsi_\xi(\ell)
$$
is continuous, smooth when restricted to each curve and 
$$
|X_\sss(\ell)|=O(e^{-\ell^2/2}), \quad |\nabla X_\sss(\ell)|=O(\ell^{-1}e^{-\ell^2/2}),
$$
uniformly in $\sss\in[0,1]$, where the gradient is computed along $\epsi_\sss$ with respect to the metric $g$.\\
Moreover,
$$
\alpha_{j,\xi}(\ell)=\frac{\partial u_{j,\xi}(\ell)}{\partial\xi}
$$
satisfies
$$
|\alpha_{j,\xi}(\ell)|=O(e^{-\ell^2/2})\quad |\alpha'_{j,\xi}(\ell)|=O(\ell^{-1}e^{-\ell^2/2}).
$$
It is enough to provide justification for the second set of estimates. 
For ease of notation we omit the indices $j$ and $\xi$ on $\alpha_{j,\xi}$ and $u_{j,\xi}$. By linearizing the equation for an expanding graph, see~\cite[equation~(2.3)]{schn-schu}, we have
$$
\alpha''=(1+[u']^2)(\alpha-\ell\alpha')+2u'\alpha'(u-\ell u').
$$
We can assume without loss of generality that $\alpha(r_0)\geqslant 0$. Moreover, it follows from our construction that 
$$
\lim_{\ell\to+\infty}|\alpha(\ell)|+|\alpha'(\ell)|=0.
$$
A simple application of the maximum principle shows that $\alpha$ can not have a negative local minimum or a positive local maximum. Hence, $\alpha\geqslant 0$ and $\alpha'\leqslant 0$. We can assume that $u'\leqslant 0$ (see the proof of Lemma~\ref{lem:expander-asymptotic}). The function $\beta=\alpha-\ell\alpha'$ thus satisfies
$$
\beta'=-\ell(1+[u']^2)\beta-2\ell u'\alpha'\leqslant -x\beta
$$
and integration of this inequality gives the conclusion.
\end{enumerate}
Denote by $L$ the length functional with respect to the metric $g$ and
consider the family of functions
\begin{equation*}
W_r(\xi)=L(\epsi_\xi\cap B_{2r_0}(0))+\sum_{j=1}^n\int_{2r_0}^r e^{[\ell^2+u^2_{j,\xi}(\ell)]/{2}}\sqrt{1+[u'_{j,\xi}(\ell)]^2}\,d\ell -n\int_{2r_0}^r e^{\ell^2/2}\,d\ell\,.
\end{equation*}
The decays given in Lemma~\ref{lem:expander-asymptotic} imply the existence of a constant $C$ such that for every $r\leqslant\overline{r}$
\begin{equation}\label{C^3}
\Vert W_r-W_{\overline{r}}\Vert_{C^3}\leqslant Ce^{-r}\,,
\end{equation}
so, when $r\to+\infty$, the sequence of functions $W_r:[0,1]\to\R$ converges uniformly in $C^2$ to a function $W:[0,1]\to\R$. Furthermore, if $\xi=0$ or $\xi=1$, we have, combining Lemma~\ref{lem:expander-asymptotic} with point~4 above, that
$$
\lim_{r\to+\infty} \frac{d W_r(\xi)}{d\xi}=0\,,
$$
thus, $W$ has a critical point when $\xi=0$ or $\xi=1$.

A standard computation shows that on each compact curve of $\epsi_\xi$, we have (after reparametrization proportional to arclength)
\begin{equation*}\begin{split}
\frac{d^2}{d\xi^2}\int_a^b \sqrt{g(\epsi'_\xi,\epsi'_\xi)}\,dl &=\int_a^b|\epsi'_\xi |^{-1}\bigl(|(\nabla _{\epsi_\xi'} X_\sss)^{\bot}|^2-\text{Riem}(X_\sss,\epsi'_\xi,\epsi'_\xi,X_\sss)\bigr)\,dl+|\epsi'_\xi|^{-1} g(\nabla_{X_\sss} X_\sss, \epsi'_\xi)\Bigr|_a^b\\
&=\int_a^b|\epsi'_\xi |^{-1}\bigl(|(\nabla _{\epsi'_\xi} X_\sss)^{\bot}|^2-\text{Riem}(X_\sss,\epsi'_\xi,\epsi'_\xi,X_\sss)\bigr)\,dl\,,
\end{split}
\end{equation*}
where $\epsi_\xi'=d\epsi_\xi/dl$, we used property~1 above and all the
geometric quantities are computed with respect to the metric $g$ (${\mathrm{Riem}}$ is the Riemann tensor of $(\R^2,g)$). Combining this identity with property~4, we have
\begin{equation*}
\frac{d ^2 W_r(\xi)}{d\xi^2}=\int_{\epsi_\xi\cap B_{r}(0)}|\epsi'_\xi |^{-2}\bigl(|(\nabla _{\epsi'_\xi} X_\sss)^{\bot}|^2-\text{Riem}(X_\sss,\epsi'_\xi,\epsi'_\xi,X_\sss)\bigr)\,dl+O(e^{-r})\,.
\end{equation*}
As $(\R^2,g)$ is negatively curved, more precisely, its Gaussian curvature is equal to $-e^{-|x|^2}$, the integrals above are bounded independently of $r>2r_0$. Therefore, by means of estimate~\eqref{C^3}, we obtain
$$
\frac{d^2 W(\xi)}{d\xi^2}= \int_{\epsi_\xi}|\epsi'_\xi |^{-2}\bigl(|(\nabla _{\epsi'_\xi} X_\sss)^{\bot}|^2-\text{Riem}(X_\sss,\epsi'_\xi,\epsi'_\xi,X_\sss)\bigr)\,dl\geqslant 0\,,
$$
where the last inequality comes form the fact that $(\R^2,g)$ is negatively curved. It follows that $W:[0,1]\to\R$ is a convex function with two critical points at $\xi=0$ and $\xi=1$, hence, it is identically constant.
The last formula above then implies that the vector field $X_\sss$ must be a constant multiple of $\epsi'_\xi$, hence, it must vanish at all triple junctions. The fact that $X_\sss$ is continuous implies that $X_\sss$ is identically zero and this proves that all the networks $\epsi_\xi$ coincide, for $\xi\in[0,1]$, in particular $\epsi_0=\epsi_1$, which is the desired result. 
\end{proof}

\begin{cor}\label{crossunique} 
If $P=\bigcup_{j=1}^4P_j$ is a standard cross, then there exists a unique, connected, tree--like, regular expander asymptotic to $P$.
\end{cor}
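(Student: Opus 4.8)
The plan is to obtain existence from the literature and uniqueness from the rigidity Theorem~\ref{thm:unique}. For existence, since a standard cross $P=\bigcup_{j=1}^4P_j$ consists of $n=4$ distinct halflines from the origin, the construction of Mazzeo--Saez~\cite{mazsae} (valid for $n>3$) yields at least one connected, tree--like, regular expander asymptotic to $P$; alternatively one could produce it by the continuity/gluing method of~\cite{schn-schu}. So the real content is uniqueness, which by Theorem~\ref{thm:unique} reduces to showing that \emph{every} connected, tree--like, regular expander asymptotic to $P$ lies in one and the same topological class in the sense of Definition~\ref{sametopologicalclass}.

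First I would record the combinatorics. A connected, tree--like network with exactly four ends and only trivalent interior vertices has, by an Euler--characteristic count, exactly two triple junctions joined by a single arc — the ``core'' — the remaining four edges being the noncompact curves, each asymptotic to one of $P_1,\dots,P_4$ by Lemma~\ref{lem:expander-asymptotic}. Thus the topological class is encoded by the partition of $\{P_1,\dots,P_4\}$ into the pair of halflines attached to each triple junction; by planarity and the cyclic order of $P_1,\dots,P_4$ around the origin this partition must join cyclically consecutive halflines, leaving only two possibilities: the core joins the two $60^\circ$--pairs, or it joins the two $120^\circ$--pairs. Next I would exploit symmetry. The (Klein four) symmetry group $\mathrm{Sym}(P)$ fixes each of these two partitions, so for any regular expander $\epsi$ asymptotic to $P$ and any $R\in\mathrm{Sym}(P)$ the expander $R(\epsi)$ is again asymptotic to $P$ and lies in the same topological class as $\epsi$; hence Theorem~\ref{thm:unique} forces $\epsi=R(\epsi)$. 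So $\epsi$ is symmetric under all of $\mathrm{Sym}(P)$; in particular its core, being invariant under the $\pi$--rotation about the origin (which swaps the two junctions) and under both reflections of $P$, must be a straight radial segment through the origin along one of the two bisector lines of $P$.

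It then remains to rule out the configuration in which the core bisects the two $120^\circ$ sectors of $P$. Here I would use that along each curve of a regular expander the curvature has constant sign: differentiating $\underline{k}=x^\perp$ gives $\partial_s\langle x,\tau\rangle=1+k^2>0$ and $\partial_s k=-k\,\langle x,\tau\rangle$, so $k$ cannot change sign unless it is identically zero, and each curve turns monotonically. Placing the core along the $y$--axis with the triple junction $O^1$ at $(0,d)$, $d>0$, the $120^\circ$ condition makes the two noncompact curves leave $O^1$ in the directions $30^\circ$ and $150^\circ$, i.e.\ exactly parallel to the two halflines bounding the $+y$--sector; since a curve cannot asymptote to the halfline it is already parallel to (zero total turning is impossible by the constant--sign property unless the curve is a radial segment, which $O^1$ does not lie on), the curve leaving at $30^\circ$ must asymptote to the halfline at $150^\circ$ and vice versa, turning monotonically by $\pm120^\circ$. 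Along this turning the heading stays in $(30^\circ,150^\circ)$, so the $y$--coordinate is strictly increasing; hence each of these two curves returns to the $y$--axis at a point strictly above $O^1$, and by the reflection symmetry across the $y$--axis the two of them return to the \emph{same} such point, producing a self--intersection of $\epsi$ — contradicting that a regular network is embedded. Therefore the core joins the two $60^\circ$--pairs, which is a single topological class, and Theorem~\ref{thm:unique} gives uniqueness. (In particular the Mazzeo--Saez expander, being embedded, automatically lies in this class.)

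The main obstacle is precisely this last step. Everything else is bookkeeping together with the rigidity theorem, but excluding the ``wrong'' resolution of the $4$--point requires combining the convexity (constant sign of curvature) of expander curves with the precise asymptotics of Lemma~\ref{lem:expander-asymptotic}; making the turning--angle count and the embeddedness contradiction airtight — in particular verifying that the curves really return to the core axis and do so above $O^1$ — is where the genuine work lies.
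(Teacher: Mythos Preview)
Your approach is correct and follows the same skeleton as the paper's proof: identify the two possible topological classes of a connected tree--like regular expander with four ends, exclude one of them using the constant--sign--of--curvature property of expander curves, and invoke Theorem~\ref{thm:unique} for uniqueness within the surviving class.

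The difference lies in the exclusion step. The paper dispatches the bad class in one line: since each unbounded expander curve cannot change its convexity, two such curves contained in a $120^\circ$ sector of the cross and concurring at a $3$--point would meet there at an angle \emph{larger} than $120^\circ$, violating the regularity condition directly. You instead take a longer detour: you first apply Theorem~\ref{thm:unique} to $\epsi$ and each $R(\epsi)$ to force full $\mathrm{Sym}(P)$--symmetry, deduce that the core is a straight segment on a bisector, and then in the bad case produce a self--intersection on the symmetry axis via a turning--angle and monotone--$y$ argument. Your exclusion is more explicit and perhaps easier to make airtight, but the symmetry step is unnecessary---the paper's angle argument at the $3$--point needs no symmetry at all. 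What your route buys is a concrete geometric picture (the two curves crossing on the axis); what the paper's buys is brevity.
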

\begin{proof}
In this case, it is easy to see that there are only two possible topological classes of connected regular expanders asymptotic to $P$ (analogous to the two situations depicted in 
Figure~\ref{nonuniqexp}), but since every unbounded curve cannot change its convexity (as for the shrinkers, by analyzing the expanders equation~\eqref{expeq}), if two such curves are contained in the angle of $120$ degrees of the standard cross, when they concur at a $3$--point they must form an angle larger than $120$ degrees, which is a contradiction, hence such topological class is forbidden.\\
Thus, only one topological class is allowed and it contains only one regular expander (with two symmetry axes), by Theorem~\ref{thm:unique}.
\end{proof}

We recall that the same conclusion of this corollary also holds when $P$ is composed of three halflines from the origin.

\subsection{A short--time existence theorem for non--regular networks}
The first result we present (\!\cite[Theorem 1.1]{Ilnevsch}) requires the notion of 
convergence {\em in the sense of varifolds} and can be stated as follows.

\begin{thm}\label{evolnonreg}
Let $\SS_0$ be a possibly non--regular, embedded, $C^1$ network with bounded curvature, which is $C^2$ away from its multi--points and 
such that the exterior unit tangent vectors of the concurring curves at every multi--point are mutually distinct. Then, there exist $T>0$ and a
smooth curvature flow of connected regular networks $\SS_t$, locally
tree--like, for $t\in(0,T)$, such that $\SS_{t}$ for $t\in[0,T)$ is a regular
Brakke flow. Moreover, away from the multi--points of $\SS_{0}$ the
convergence of $\SS_t$ to $\SS_0$, as $t\to 0$, is in $C^2\loc$ (or as
smooth as $\SS_0$).\\
Furthermore, there exists a constant $C>0$ such that
$\sup_{\SS_t}\left|k\right|\leqslant C/\sqrt{t}$
and the length of the shortest curve of $\SS_t$ is bounded
from below by $C\sqrt{t}$.
\end{thm}

\begin{rem}\label{var} To be more precise, we define the sets $G_t$ as 
$$
G_t=\{(x,\tau(x,t))~\vert~x\in\SS_t\}\cup\{(x,-\tau(x,t))~\vert~
x\in\SS_t\}\subseteq\R^2\times\SS^1\,,
$$
for every $t\in[0,T)$, where $\tau(x,t)$ is the unit tangent vector at
$x\in\SS_t$.
The convergence of $\SS_t\to\SS_0$ in the previous theorem is 
in the sense of {\em varifolds},
that is, as $t\to 0$, the Hausdorff measures $\H^1\res G_t$ converge to $\H^1\res G_0$,
as measures on $\R^2\times\SS^1$ (see~\cite{simon} for the general definition).
It is easy to see that this implies that $\H^1\res\SS_t\to\HH^1\res\SS_0$,
as $t\to0$, as measures on $\R^2$, hence there is no instantaneous loss 
of mass of the network at the starting time.\\
Around a non--regular multi--point the $C^1$--convergence is not
possible: for every $t>0$, the networks $\SS_t$ are regular, so they
satisfy the $120$ degrees condition and that would pass to the limit.
Varifold--convergence is anyway a sort of ``weak'' $C^1$--convergence,
slightly stronger than simply asking that
$\H^1\res\SS_t\to\HH^1\res\SS_0$, as $t\to0$.
\end{rem}

We aim to present now an outline of the proof of
Theorem~\ref{evolnonreg} which depends crucially on an expander monotonicity formula implying that self--similarly expanding flows are ``dynamically stable''. 
The monotone integral quantity we will consider
has been applied previously by A.~Neves in the setting of Lagrangian mean
curvature flow~\cite{neves1,neves2,nevestian}. Other main ingredients
are the local regularity Theorem~\ref{thm:locreg.2} and the pseudolocality Theorem~\ref{thm:graph_local} (see~\cite[Theorem~1.5]{Ilnevsch}). 
We underline that for curves moving in the plane, this latter can be replaced by S.~Angenent's {\em intersection
counting theorem}, see~\cite[Proposition~1.2]{angen2},~\cite[Section~2]{angen1}
and~\cite{angen5} for the proof.

By the assumptions at any multi--point of an initial network $\SS_0$, the
cone generated (at such point) by the interior unit normal vectors of the
concurring curves consist of a finite number of distinct halflines. The natural evolution of such a cone is a self--similarly expanding curvature flow, due to the scaling invariance of this particular initial network. The strategy is then as follows: we ``glue in'', around each possibly non--regular multi--point of the initial network $\SS_0$, a (piece of a) smooth, self--similarly expanding, tree--like, connected regular network at the scale $\sqrt{\sss}$ (in a ball of radius proportional to $\sqrt{\sss}$), corresponding to the cone generated by the interior unit tangent vectors of the concurring curves of $\SS_0$ at the multi--point, to obtain an approximating $C^2$ regular network $\SS^\sss_0$ (satisfying the compatibility conditions of every order, see Definition~\ref{ncompcond}).
The curvature of $\SS^\sss_0$ is thus of order $1/\sqrt{\sss}$ and the
shortest curve has length proportional to $\sqrt{\sss}$. Then, the
standard short--time existence result yields a smooth curvature flow
$\SS_t^\sss$ up to a positive time $T_\sss$.\\
To prove that these approximating flows exist for a time
$T>0$, independent of $\sss$, we make use of the expander monotonicity
formula to show that the flows $\SS^\sss_t$ stay close to the
corresponding self--similarly expanding flows, in an integral sense, around each
multi--point. This gives that the curvature is bounded by $C/\sqrt{t}$ up
to a fixed time $T>0$, together with a lower bound on the length of the
shortest curve. Thus, we can pass to the limit, as $\sss\to 0$, to obtain
the desired curvature flow.

\begin{rem}\label{noeq}
The Brakke flow provided by the above theorem is not necessarily {\em with equality} (see Definition~\ref{brk}). Indeed, assume for instance that $\SS_0$ is a standard cross (see Figure~\ref{crossfig}) and $\varphi$ a test function such that $0\leqslant \varphi \leqslant 1, \varphi = 1$ on $B_1(0)$ and $\varphi = 0$ outside of $B_2(0)$. Let $\SS_t=\sqrt{2t}\,\SS_0$ be the regular expander ``exiting'' from $\SS_0$ (which is the curvature flow given by Theorem~\ref{evolnonreg}).
Suppose by contradiction that $\SS_t$ is a regular Brakke flow with equality.
Since $\SS_0$ has no curvature, by using equation~\eqref{brakkeqqq} we have
\begin{equation*}
\frac{d\,\,}{dt}\int_{\SS_t}\varphi\,ds\,\Bigl\vert_{t=0}=
-\int_{\SS_0}\varphi k^2\,ds
+\int_{\SS_0}\langle\nabla\varphi\,\vert\,\underline{k}\rangle\,ds=0\,.
\end{equation*}
Anyway, by the mean value theorem for any $t >0$ there holds
$$
\frac{\int_{\SS_t} \varphi\,ds - \int_{\SS_0} \varphi\,ds}{t} =-\int_{\SS_\theta} \varphi k^2\, ds + \int_{\SS_\theta} \langle \nabla \varphi, \underline{k}\rangle \,ds\,,
$$
for some $0<\theta<t$. By the self--similarity property of $\SS_t=\sqrt{2t}\,\SS_0$, 
it is then easy to see that the first term on the right-hand side of this formula goes to $-\infty$ and the second one stays bounded, hence,
$$
\frac{\overline{d\,\,}}{dt}\int_{\SS_t}\varphi\,ds\,\Bigl\vert_{t=0}=\limsup_{t\to0} \frac{\int_{\SS_t} \varphi\,ds - \int_{\SS_0} \varphi\,ds}{t}=-\infty\,,
$$
which is a contradiction.
\end{rem}

\begin{rem}\label{ilrem1}
In writing this paper, we got informed that the hypothesis on the
non--coincidence of two (but no more than two) exterior unit tangent vectors can actually
be removed (Tom Ilmanen, {\em personal communication}).
\end{rem}

\begin{rem}\label{connrem}
The a priori choice of gluing in only {\em connected} regular self--similarly expanding networks, hence obtaining a connected network flows, has a physical meaning: 
it ensures that initially separated regions remain separated during the flow while using only {\em tree--like} self--similarly expanding networks excludes the formation of new bounded regions.\\
Indeed, from a $7$--point one could try (this is only conjectural, the line of Theorem~\ref{evolnonreg} does not work in this case) to get a flow with a new heptagonal region, by gluing in a symmetric self--similarly expanding network with a heptagonal region, following the construction of Theorem~\ref{evolnonreg} described above.\\
Anyway, it can be seen that all the connected, regular self--similarly expanding networks containing a bounded region must have at least seven unbounded halflines. This because, by means of the same arguments of Section~\ref{geopropsub} (Remark~\ref{schreg}), every bounded region of a regular self--similarly expanding network is bounded by at least seven curves. This clearly implies that from a multi--point of order less than six, the flow produced by Theorem~\ref{evolnonreg} is always locally tree--like, even if the line of proof (and at the moment it is not) could be adapted to ``glue in'' {\em any} self--similarly expanding network (that is, possibly also a non tree--like one, in general). It is then a natural question if a multi--point with more than five (or possibly more than six) concurring curve can appear in the limit network $\SS_T$, as $t\to T$, described in Theorem~\ref{ppp2} of the previous section. This is related to finding a regular (possibly degenerate) shrinker with more than five (or maybe six) unbounded halflines.
\end{rem}

\begin{oprob}\label{ooo4000}
Do there exist (possibly degenerate) regular shrinkers with more that five (or six) unbounded halflines?
\end{oprob}

\subsection{The expander monotonicity formula}\label{expmon}
Let $\SS_t$ be a curvature flow of tree--like regular networks.
The tangent vector of $\SS_t$
makes with the $x$--axis an angle $\overline{\theta}_t$ which, away from the triple
junctions, is a well defined function up to a multiple of $\pi$, since we
do not care about orientation. Because at the triple junctions, the angle jumps by
$2\pi/3$,
there is a well defined function $\theta_t$ which is continuous on
$\SS_t$ and coincides with $\overline{\theta}_t$ up to a multiple of
$\pi/3$.
We identify the plane $\mathbb{R}^2$ with $\mathbb{C}$, thus
$$
\underline{k} = J\tau\,\partial_s\theta_t = \nu \,\partial_s\theta_t\,\,,
$$
where $J$ is the complex structure.

Let $\mathscr{L} = xdy - ydx$ be the Liouville form on $\R^2$. Since we
assumed that $\SS_t$ has no loops, we can find a function $\beta_t$,
unique up to a time--dependent constant, such that
$$
d\beta_t = \mathscr{L}|_{\SS_t}\,.
$$
We can modify the time--dependent constant so that the following
evolution equations hold, see~\cite[Lemma~3.1]{Ilnevsch}.
\begin{lem} \label{thm:evoeq}
The following evolution equations hold away from the triple junctions:
\begin{equation*}\frac{d\theta_t}{dt} = \partial^2_{s}\theta_t +
\partial_s\theta_t\,\langle\tau\,\vert\,X\rangle\,,
\end{equation*}
\begin{equation*}\frac{d\beta_t}{dt} = \partial^2_{s}\beta_t +
\partial_s\beta_t\,\langle\tau\,\vert\,X\rangle - 2 \theta_t\,,
\end{equation*}
where $X = \underline{k}+\lambda \tau$ is the velocity of the evolution.
\end{lem}
Notice that this implies that the function $\alpha_t =
\beta_t+2t\theta_t$ satisfies the evolution equation
$$
\frac{d\alpha_t}{dt} = \partial_s^2\alpha_t +
\partial_s\alpha_t\,\langle\tau\,\vert\,X\rangle\,.
$$
Furthermore, $J\tau\,\partial_s\alpha_t = \nu\, \partial_s\alpha_t =
-x^\perp + 2t\underline{k}$, which
exactly vanishes on a self--similarly expanding network. With a
computation similar to the one leading to Huisken's monotonicity
formula~\eqref{eqmonfor}, we arrive at the following result,
see~\cite[Lemma~3.2]{Ilnevsch}.

\begin{lem}[Expander monotonicity formula]\label{thm:expmon}
 The following identity holds
\begin{equation*}
\frac{d}{dt}\int_{\SS_t} \alpha_t^2\,\rho_{x_0,t_0}(x, t)\,ds
=-\int_{\SS_t}2\,\bigl|\,x^{\perp}-2t\underline{k}\,
\bigr|^2\rho_{x_0,t_0}(x, t)\, ds
 -\int_{\SS_t}
\alpha_t^2\left|\,\underline{k}+\frac{(x-x_0)^{\perp}}{2(t_0-t)}\,\right|^2\rho_{x_0,t_0}(x,
t)\, ds\,,
\end{equation*}
for some constant $C$.
\end{lem}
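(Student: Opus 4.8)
The plan is to derive the expander monotonicity formula by differentiating the weighted integral $\int_{\SS_t} \alpha_t^2 \, \rho_{x_0,t_0}(x,t)\,ds$ directly, using the evolution equations from Lemma~\ref{thm:evoeq} together with the properties of the backward heat kernel that already appear in the derivation of Huisken's monotonicity formula~\eqref{eqmonfor}. First I would recall the three basic facts: the measure evolves by $\partial_t (ds) = (\lambda_s - k^2)\,ds$ (Proposition~\ref{equality1000}); the function $\alpha_t = \beta_t + 2t\theta_t$ satisfies the clean evolution equation $\partial_t \alpha_t = \partial^2_{ss}\alpha_t + \partial_s\alpha_t \langle \tau \mid X\rangle$ (noted right after Lemma~\ref{thm:evoeq}); and the key geometric identity $\nu\,\partial_s\alpha_t = -x^\perp + 2t\underline{k}$, so that $(\partial_s\alpha_t)^2 = |x^\perp - 2t\underline{k}|^2$. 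Finally, the backward heat kernel satisfies the standard identity used for the monotonicity formula, namely $\partial_t \rho + \partial^2_{ss}\rho = -\rho\,\langle \underline{k} \mid \underline{k} + \tfrac{(x-x_0)^\perp}{2(t_0-t)}\rangle + \big(\text{tangential derivative terms}\big)$ along the flow; more precisely one uses $\frac{d}{dt}\rho = \Delta_{\SS_t}\rho + (\text{curvature coupling})$ with the well-known consequence that the ``full'' time derivative of $\int \rho\,ds$ picks up the Gaussian defect $-|\underline{k} + (x-x_0)^\perp/2(t_0-t)|^2$.

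Next I would carry out the differentiation. Writing $u = \alpha_t$ and $\rho = \rho_{x_0,t_0}$, we have
\begin{equation*}
\frac{d}{dt}\int_{\SS_t} u^2 \rho\,ds = \int_{\SS_t}\Big( 2u\,u_t\,\rho + u^2 \rho_t + u^2\rho (\lambda_s - k^2)\Big)\,ds\,.
\end{equation*}
Substituting $u_t = u_{ss} + u_s\langle\tau\mid X\rangle$ and integrating by parts in $s$ along each curve (the boundary terms at the triple junctions must be controlled — see below), the term $2u u_{ss}\rho$ produces $-2u_s^2\rho - 2u u_s \rho_s$, and combining with the transport term $2u u_s \langle \tau \mid X\rangle \rho$ and the $u^2 \lambda_s \rho$ term one regroups everything so that all first-order-in-$u$ contributions organize into a total derivative plus the kernel identity; the point is that the combination of $\rho_t$, the Laplacian of $\rho$, and the $-k^2$ term reconstructs exactly $-u^2|\underline{k} + (x-x_0)^\perp/2(t_0-t)|^2\rho$, precisely as in Huisken's computation, while the leftover $-2u_s^2\rho = -2|x^\perp - 2t\underline{k}|^2\rho$ gives the first term on the right-hand side. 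This is the same bookkeeping as in Lemma~6.3 of~\cite{mannovtor} adapted to the weight $u^2\rho$ instead of $\rho$, so I would reference that computation for the routine parts and only display the new terms coming from the extra factor $u^2$.

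The main obstacle I expect is the cancellation of the boundary terms at the triple junctions produced by the integrations by parts in $s$. After integrating by parts one gets contributions of the form $\sum_{p}\sum_{i=1}^3 \big(2 u^{pi} u_s^{pi}\rho - u^2 (\text{something})\big)\big|_{O^p}$; since $\alpha_t = \beta_t + 2t\theta_t$ is by construction \emph{continuous} across each triple junction (recall $\theta_t$ was defined to be continuous and $\beta_t$ is continuous because $\mathscr{L}$ restricts smoothly), the values $u^{pi}(O^p)$ agree for the three concurring curves, and the sum of the exterior tangential contributions $\sum_i u_s^{pi}(O^p)$ — which involves $\sum_i \langle \tau^{pi}\mid X\rangle$ and, after using $\sum_i \tau^{pi}=0$ and the relations~\eqref{eq:cond2} for $k^{pi},\lambda^{pi}$ — vanishes. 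One also needs $\sum_i k^{pi} = 0$ and the continuity of the velocity $X$ at the junction. I would verify carefully that, exactly as in the derivation of~\eqref{eqmonfor} in~\cite{mannovtor}, every junction term cancels so that no boundary contribution survives (there are no end-points here since $\SS_t$ is tree-like and the flow under consideration in Theorem~\ref{evolnonreg} has no fixed end-points). Once that cancellation is established, assembling the two negative terms yields the stated identity; the phrase ``for some constant $C$'' in the statement is vacuous (the formula is an exact identity with no free constant), so no further work is needed there.
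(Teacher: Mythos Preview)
Your approach is correct and matches what the paper itself indicates: the proof is a computation parallel to Huisken's monotonicity formula~\eqref{eqmonfor}, carried out with the extra weight $\alpha_t^2$, using the evolution equation $\partial_t\alpha_t=\partial^2_{ss}\alpha_t+\partial_s\alpha_t\langle\tau\,\vert\,X\rangle$ and the identity $(\partial_s\alpha_t)^2=\vert x^\perp-2t\underline{k}\vert^2$. The paper does not give more detail than this, simply referring to~\cite[Lemma~3.2]{Ilnevsch}.

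One small correction to your junction analysis: the boundary term $\sum_i u_s^{pi}$ arising from integrating $2uu_{ss}\rho$ by parts does \emph{not} involve $\langle\tau\,\vert\,X\rangle$; rather $u_s=\partial_s\alpha_t=-\langle x\,\vert\,\nu\rangle+2tk$, so $\sum_{i=1}^3 u_s^{pi}=-\bigl\langle O^p\,\bigl\vert\,\sum_i\nu^{pi}\bigr\rangle+2t\sum_i k^{pi}=0$ directly from $\sum_i\nu^{pi}=0$ and relation~\eqref{eq:cond2}. The other boundary contribution, from handling the $\lambda_s$ term, is $u^2\lambda\rho$ and vanishes since $\sum_i\lambda^{pi}=0$. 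With this adjustment your argument goes through, and your remark that the phrase ``for some constant $C$'' is vestigial is correct.
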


In the later applications, the evolving networks will be only locally tree--like, that is, only locally without loops. In order
to apply the above monotonicity formula, it will need to be localized. We assume that $\SS_t\cap B_4(x_0)$ does not contain any closed loop for all
$0\leqslant t<T$. We define $\beta_t$ locally on $\SS_t\cap B_4(x_0)$ and we let $\varphi:\R^2\to\R$ be a smooth cut--off function such that $\varphi = 1 $ on
$B_2(x_0)$, $\varphi = 0$ on $\R^2\setminus B_3(x_0)$ and $0\leqslant \varphi\leqslant 1$.
Then, we have the following localized version of Lemma~\ref{thm:expmon},
see~\cite[Lemma~3.3]{Ilnevsch}.

\begin{lem}[Localized expander monotonicity formula]\label{thm:evol.local} The following estimate holds,
\begin{equation*}
\frac{d}{dt}\int_{\SS_t}\varphi\, \alpha_t^2\,\rho_{x_0,t_0}(x,t)\, ds
\leqslant - \int_{\SS_t} \varphi\, |x^\perp - 2t\underline{k}|^2 \rho_{x_0,t_0}(x,t)\,ds
+ C \int_{\SS_t\cap(B_3(x_0)\setminus B_2(x_0))} \alpha_t^2\,\rho_{x_0,t_0}(x,t)\, ds\,.
\end{equation*} 
\end{lem}

\subsection{Outline of the proof of Theorem~\ref{evolnonreg}}\label{stesecsub}
Now let $\SS_0$ be a non--regular initial network with bounded
curvature. For simplicity, let us assume that $\SS$ has only one non--regular multi--point at the origin.

If the multi--point consists of only two curves meeting at an
angle different from $\pi$ (remember that a zero angle is not allowed), then, by the work of Angenent~\cite{angen1,angen2,angen3}, there exists a curvature flow starting at $\SS_0$, satisfying the statement of Theorem~\ref{evolnonreg}: actually the angle is immediately smoothed and the two curves become a single smooth one.

So we can assume that at the origin at least three curves meet and let
$\tau_j$, for $j=1, 2, \dots, n$, be the exterior unit tangent vectors. We denote with 
$$
P_j =\bigl\{-\ell \tau_j\, | \ \ell\geqslant 0\,\bigr\}
$$
the corresponding halflines and $P=\bigcup_{j=1}^n P_j$. Since $\SS_0$ has bounded curvature,
we can assume, by scaling $\SS_0$ if necessary, that $\SS_0 \cap
B_5(0)$ consists of $n$ curves $\sigma_j$ corresponding to the tangents $\tau_j$ and if $\omega_j$ is the angle that $P_j$ makes with the $x$--axis, there
is a function $u_j$ such that $\sigma_j$ can be parametrized (with a small error at the boundary of the ball $B_5(0)$) as
$$
\sigma_j =\bigl\{ \ell e^{i\omega_j}+u_j(\ell) e^{i(\omega_j + \pi/2)}\, |\, 0\leqslant
\ell \leqslant 5\bigr\}\,.
$$
Notice that the assumption that $\SS_0$ has bounded curvature implies
\begin{equation}
 \label{eq:shorttime.1}
 |u_j(\ell)|\leqslant C \ell^2\qquad \text{and}\qquad |u_j'(\ell)| \leqslant C \ell\,,
\end{equation}
for some constant $C$.\\
As already mentioned, in~\cite{schn-schu} it was shown that for $n=3$ there exists a unique tree--like regular expander $\epsi$ asymptotic to $P=\bigcup_{j=1}^nP_j$. In 
the case $n>3$, the existence of tree--like, connected, regular expanders was shown by Mazzeo--Saez~\cite{mazsae}.\\
We remind that, thanks to Lemma~\ref{lem:expander-asymptotic},
there exists $r_0>0$ such that outside the ball $B_{r_0}(0)$
the $n$ noncompact curves $\gamma_j$ of the regular expander $\epsi$ 
can be parametrized as 
$$
\gamma_j=\bigl\{ \ell e^{i\omega_j}+v_j(\ell) e^{i(\omega_j + \pi/2)}\, |\, 
\ell \geqslant r_0\bigr\}\,,
$$
where the functions $v_j$ have the following decay:
\begin{equation}\label{eq:shorttime.v_j}
|v_{j}(\ell)|\leqslant C_0\, e^{-\ell^2/2}\,,\quad
|v'_{j}(\ell)|\leqslant C_1\ell^{-1}\, e^{-\ell^2/2}\,,\quad
|v''_{j}(\ell)|\leqslant C_2\, e^{-\ell^2/2}\,.
\end{equation}

Consider now the rescaled expander $\epsi_\sss=\sqrt{2\sss}\,\epsi$,
call $\sigma_{j,\sss}$ be the curve of $\epsi_\sss$ asymptotic to $P_j$, for every $j=1, 2, \dots, n$,
then 
$$
\sigma_{j,\sss}=\bigl\{ \ell e^{i\omega_j}+v_{j,\sss}(\ell) e^{i(\omega_j + \pi/2)}\, |\, 
\ell\geqslant r_0{\sqrt{2\sss}}\bigr\}\,,
$$
and we have the estimates
\begin{equation}\label{eq:shorttime.2}
|v_{j,\sss}(\ell)|\leqslant C\sqrt{2\sss}\, e^{-\ell^2/4\sss}\,,\quad
|v'_{j,\sss}(\ell)|\leqslant C\ell^{-1}\sqrt{2\sss}\, e^{-\ell^2/4\sss}\,,\quad
|v''_{j,\sss}(\ell)|\leqslant C\, e^{-\ell^2/4\sss}/\sqrt{2\sss}\,.
\end{equation}
In particular, choosing $\sss$ small enough, we have $r_0 \sqrt{2\sss}<4$
and this holds in the annulus $A(r_0 \sqrt{2\sss},4)=B_4(0)\setminus B_{r_0 \sqrt{2\sss}}(0)$.

We now aim to construct the network $\SS_0^\sss$ by gluing $\epsi_\sss=\sqrt{2\sss}\,\epsi$ into $\SS_0$ (more precisely $\epsi_\sss\cap B_{r_0\sqrt{2\sss}}(0)$, for $\sss$ small enough). We define the network $\SS_0^\sss$ that coincides with $\epsi_\sss$ in $B_{r_0\sqrt{2\sss}}(0)$ and with $\SS_0$ outside $B_4(0)$, while in the ``gluing'' annulus 
$A(r_0 \sqrt{2\sss},4)$, in a way we ``interpolate'' between the two networks. 
Precisely, letting $\varphi:\R^+ \to [0,1]$ be a cut--off function such that $\varphi =1$ 
on $(0,1]$ and $\varphi = 0$ on $[2,+\infty)$, we define $\SS_0^\sss$ in $A(r_0 \sqrt{2\sss},4)$ via the graph function $u_{j,\sss}$ as follows, for $\ell\in[r_0\sqrt{2\sss},4)$, 
$$
u_{j,\sss}(\ell)= \varphi(\sss^{-1/4}\ell)v_{j,\sss}(\ell) + \bigl(1-\varphi(\sss^{-1/4}\ell)\bigr)u_j(\ell)\,.
$$
That is, 
$$
\SS_0^\sss\cap A(r_0 \sqrt{2\sss},4)=\bigl\{ \ell e^{i\omega_j}+u_{j,\sss}(\ell) e^{i(\omega_j + \pi/2)}\, |\, r_0\sqrt{2\sss}\leqslant\ell \leqslant 4\bigr\}
$$
(with a small error at the borders of the annulus $A(r_0 \sqrt{2\sss},4)$).

By construction, every network $\SS_0^\sss$ has the same regularity of $\SS_0$, it is regular and satisfies all the compatibility conditions of every order (see Definition~\ref{ncompcond}), it is locally a tree and it can be checked easily that it satisfies the following properties, for every $\sss$ smaller than some $\sss_0>0$:

\begin{enumerate}
\item[${\mathcal{P}}1$.] There is a constant $D_1$, independent of $\sss$, such that
$$
\H^1(\SS^\sss_0\cap B_r(x))\leqslant D_1r\,,
$$
for all $x\in\R^2$ and $r>0$. 

\item[${\mathcal{P}}2$.] There is a constant $D_2$ independent of $\sss$, such that for every $x\in\SS^\sss_0$,
$$
\bigl|\theta^\sss_0(x)\bigr|+\bigl|\beta^\sss_0(x)\bigr|\leqslant D_2(|x|^2+1)\,,
$$
where $\theta^\sss_0$ and $\beta^\sss_0$ are the ``angle function'' and a primitive for the Liouville form of the network $\SS^\sss_0$, as defined in Section~\ref{expmon}.

\item[${\mathcal{P}}3$.] The curvature of $\SS^\sss_0$ is bounded by $C/\sqrt{\sss}$ and $\SS^\sss_0\to \SS_0$ in $C^1\loc (\R^2\setminus\{0\})$, as $\sss\to 0$.

\item[${\mathcal{P}}4$.] The connected components of $P\cap A(r_0\sqrt{2\sss},4)$ are in one--to--one correspondence with the connected components of 
$\SS^\sss_0\cap A(r_0\sqrt{2\sss},4)$ and there is a constant $D_3$, independent of $\sss$, such that the functions $u_{j,\sss}$ satisfy 
\begin{equation}\label{H4}
|u_{j,\sss}(\ell)|+ \ell|u'_{j,\sss}(\ell)|+\ell^2|u''_{j,\sss}(\ell)| \leqslant D_3 \Bigl(\ell^2+\sqrt{2\sss}\,e^{-{\ell^2}/{4\sss}}\Bigr)\,,
\end{equation}
for every $\ell\in[r_0\sqrt{2\sss},4]$.

\item[${\mathcal{P}}5$.] The sequence of rescaled networks $\widetilde\SS^\sss_0=\SS^\sss_0/\sqrt{2\sss}$ converges in $C^{1,\alpha}\loc(B_{r_0}(0))$ to $\epsi$, for $\alpha\in(0,1)$, as $\sss\to 0$.\\
Without loss of generality we can also assume that locally 
\begin{equation*}\label{zero}
\lim_{\sss\to 0} (\widetilde\theta^\sss_0+\widetilde\beta^\sss_0)=0\,,
\end{equation*}
where $\widetilde\theta^\sss_0$ and $\widetilde\beta^\sss_0$ are relative to $\widetilde \SS^\sss_0$.
\end{enumerate}

Let $\SS^\sss_t$, for $t\in[0,T_\sss)$, be a maximal smooth curvature flow starting at the initial network $\SS^\sss_0$ and let
$$
\Theta^\sss_{x_0,t_0}(t)=\int_{\SS^\sss_t}\rho_{x_0,t_0}(\cdot,t)\,ds
$$
be the Gaussian density function with respect to the flow $\SS^\sss_t$.

We fix $\varepsilon_0>0$ such that $3/2+\varepsilon_0 < \Theta_{S^1}$. The main estimate, which will imply short--time existence, is given by the following proposition.
\begin{prop}\label{main}
There are constants $\sss_1$, $\delta_1$ and $\eta_1$ depending on $D_1$, $D_2$, $D_3$, $\epsi$, $r_0$ and $\varepsilon_0$, such that if 
$$
t\leqslant\delta_1,\, r^2\leqslant \eta_1^2t,\,\text{ and }\,\sss\leqslant \sss_1\,,
$$
then,
$$
\Theta^\sss_{x,t+r^2}(t)\leqslant 3/2+\varepsilon_0\,,
$$
for every $x\in B_1(0)$.
\end{prop}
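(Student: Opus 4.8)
The plan is to argue by contradiction along a sequence of flows $\SS^{\sss_i}_t$. Suppose the statement fails; then one can find sequences $\sss_i \to 0$, $t_i \to 0$, $r_i^2 / t_i \to 0$, and points $x_i \in B_1(0)$ such that $\Theta^{\sss_i}_{x_i, t_i + r_i^2}(t_i) > 3/2 + \varepsilon_0$ for every $i$. The natural rescaling to perform is Huisken's dynamical rescaling adapted to the expander scale: set $\widetilde\SS^{\sss_i}_\sigma = \SS^{\sss_i}_{\sss_i^{-1}(\cdots)}/\sqrt{2\sss_i}$, or more precisely parabolically rescale the flow $\SS^{\sss_i}_t$ by the factor $1/\sqrt{2\sss_i}$ around the origin, so that the rescaled initial networks $\widetilde\SS^{\sss_i}_0$ converge in $C^{1,\alpha}\loc$ to the regular expander $\epsi$ by property $\mathcal{P}5$. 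Using the uniform bounded length ratios $\mathcal{P}1$ (scaling invariant) together with the curvature bound $\mathcal{P}3$ and a compactness argument in the spirit of Proposition \ref{thm:shrinkingnetworks.1}, one extracts a limit flow. Because $t_i, r_i^2 \to 0$ at the expander scale (the relevant rescaled times all tend to $0$ from below), the Gaussian density $\Theta^{\sss_i}_{x_i, t_i+r_i^2}(t_i)$ converges to the value of a Gaussian integral evaluated on the limit of the rescaled initial networks, i.e.\ on $\epsi$ itself (or on a translate, since $x_i$ rescaled tends to $0$ or to $\infty$ — the latter giving an empty contribution, hence density $\le 1$).

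The key analytic input is the expander monotonicity formula, Lemma \ref{thm:expmon}, in its localized form Lemma \ref{thm:evol.local}. Using property $\mathcal{P}2$ (the uniform quadratic bound $|\theta^\sss_0| + |\beta^\sss_0| \le D_2(|x|^2+1)$) and $\mathcal{P}5$ (the normalization $\widetilde\theta^\sss_0 + \widetilde\beta^\sss_0 \to 0$), the monotone quantity $\int_{\SS^\sss_t} \varphi\, \alpha_t^2\, \rho_{x_0,t_0}(x,t)\, ds$ is, for the rescaled flows, initially small and stays controlled, so that $|x^\perp - 2t\underline{k}|$ is small in an integral sense along the flow near the origin. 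This forces the rescaled flows $\widetilde\SS^{\sss_i}_\sigma$ to converge, for almost every rescaled time, in $C^{1,\alpha}\loc \cap W^{2,2}\loc$ to the static self-similarly expanding flow $\sqrt{2\sigma + \text{const}}\,\epsi$ generated by $\epsi$ — this is exactly the "dynamical stability" of expanders. In particular, the limit Gaussian density at a space-time point at scale $t_i \ll \sss_i$ is the Gaussian density of the \emph{generator} $P$, not of $\epsi$: since $P$ is a finite union of halflines through the origin, $\Theta_P = n/2$ where $n$ is the number of halflines, but more to the point, the relevant quantity $\widehat\Theta$ at $(x_0, t_0)$ with $t_0 \to 0^+$ is bounded by $\Theta_{\epsi}$ which, for a tree-like regular expander asymptotic to at most finitely many halflines, one computes to be less than $3/2 + \varepsilon_0$ by the choice of $\varepsilon_0$ — here I would invoke that tree-like expanders have density strictly below $\Theta_{\SS^1}$ via a loop-counting / Colding–Minicozzi-type argument as in Lemma \ref{thm:densitybound}, combined with the $o(1)$ corrections coming from the gluing region (controlled by $\mathcal{P}4$, inequality \eqref{H4}).

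Concretely, the steps in order: (1) set up the contradiction sequence and the parabolic rescaling at scale $\sqrt{2\sss_i}$ around the origin; (2) establish uniform bounded length ratios and tightness for the rescaled flows (Lemma \ref{rescestim2} adapted, using $\mathcal{P}1$); (3) pass to a limit flow using $\mathcal{P}3$, $\mathcal{P}5$ and the compactness argument of Proposition \ref{thm:shrinkingnetworks.1}; (4) use the localized expander monotonicity formula Lemma \ref{thm:evol.local} together with $\mathcal{P}2$ and $\mathcal{P}5$ to show the monotone quantity $\int \varphi \alpha_t^2 \rho\, ds \to 0$, forcing $|x^\perp - 2t\underline{k}| \to 0$ and hence identifying the limit flow as the self-similar expander flow of $\epsi$; (5) compute $\lim_i \Theta^{\sss_i}_{x_i, t_i+r_i^2}(t_i)$ as the Gaussian density of this limit flow at a space-time point collapsing to the origin at time $\to 0^+$, which equals $\Theta$ of the generator cone, bounded above by $\Theta_\epsi < 3/2 + \varepsilon_0$ (using that tree-like regular expanders have density below the standard triod plus $\varepsilon_0$, or more carefully below $\Theta_{\SS^1}$, by the loop argument); (6) for the case where the rescaled $x_i$ escape to infinity, the limit network near the rescaled base point is empty or a multiplicity-one line, giving density $\le 1$; in all cases this contradicts $\Theta^{\sss_i} > 3/2 + \varepsilon_0$.

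The main obstacle I expect is step (4)–(5): making rigorous that the smallness of the expander-monotone quantity at the initial (rescaled) time \emph{propagates} to control the Gaussian density at the \emph{later} time $t_i$ — i.e.\ that the limit of the rescaled flows is genuinely the static expander flow and not something with extra density concentrating at the origin. This requires carefully tracking the cutoff error term in Lemma \ref{thm:evol.local} (the integral over the annulus $B_3 \setminus B_2$), which one controls via the bounded length ratios and the quadratic growth $\mathcal{P}2$, and then combining with Huisken's monotonicity formula \eqref{eqmonfor} to get the upper density bound. A secondary technical point is ensuring the constants $\delta_1, \eta_1, \sss_1$ depend only on $D_1, D_2, D_3, \epsi, r_0, \varepsilon_0$ and not on the particular flow — this follows because all the ingredients (length ratios, monotone quantity bounds, the expander $\epsi$ itself) are uniform by construction of the $\SS^\sss_0$, but it needs to be stated explicitly that the contradiction argument only used those data.
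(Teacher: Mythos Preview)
Your plan has a critical error in step (5): you claim that the Gaussian density of a tree--like regular expander (or of its generator cone $P$) is bounded by $3/2+\varepsilon_0$, invoking a ``loop argument'' as in Lemma~\ref{thm:densitybound}. But that lemma goes in the opposite direction: it says that a shrinker with density below $\Theta_{\SS^1}$ must be a tree, not that trees have small density. In fact the generator $P$ consists of $n$ halflines through the origin, so $\Theta_P=n/2$; for the interesting cases $n\geq 4$ this is at least $2>\Theta_{\SS^1}>3/2+\varepsilon_0$. So your contradiction never closes.

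What you are missing is the role of the smallness of $\eta_1$. The condition $r^2\leq\eta_1^2 t$ means the Gaussian kernel $\rho_{x,t+r^2}(\cdot,t)$ probes the network $\SS^\sss_t$ at a scale $r$ that is \emph{small compared to} $\sqrt{t}$, hence small compared to the scale at which the expander structure lives. In the expander rescaling $\widetilde\SS^\sss_t=\SS^\sss_t/\sqrt{2(\sss+t)}$, the scale becomes $\leq\eta_1$, so you are measuring the \emph{local} density of $\widetilde\SS^\sss_t$ at a point. If $\widetilde\SS^\sss_t$ is $C^{1,\alpha}$--close to the smooth regular network $\epsi$, then at every point this local density is at most $3/2+\varepsilon_0$ (either a smooth point or a triple junction) --- this is the real source of the bound, and it has nothing to do with $\Theta_\epsi$ or $\Theta_P$.

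The paper's argument is structurally quite different from your compactness/contradiction scheme. It first disposes of the regimes $t\lesssim\sss$ and $|x|\gtrsim\sqrt{\sss+t}$ by direct monotonicity--formula estimates (Lemma~\ref{lem:far and short estimates}), then runs a \emph{continuation argument}: let $T_1$ be the supremum of times up to which the bound holds, and show $T_1\geq\delta_3$. The expander monotonicity formula (Lemma~\ref{integral}) produces a time $t_1\in[T_1/a,T_1]$ at which $\int|\underline{k}-x^\perp|^2$ is small on $\widetilde\SS^\sss_{t_1}$; then --- and this is the second ingredient you omit --- the uniqueness Theorem~\ref{thm:unique} (expanders in the same topological class coincide) is combined with a compactness argument to conclude $\widetilde\SS^\sss_{t_1}$ is $C^{1,\alpha}$--close to the specific expander $\epsi$. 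From that closeness the local density bound extends to times $t_1(1+2q_1)>T_1$, contradicting the definition of $T_1$. Your approach never invokes Theorem~\ref{thm:unique}, but without it the expander monotonicity only tells you the rescaled flow is near \emph{some} expander, which is not enough.
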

We will sketch the proof after showing how this implies Theorem~\ref{evolnonreg}.
\begin{proof}[Proof of Theorem~\ref{evolnonreg}]
Considering the smooth curvature flows $\SS^\sss_t$ in the time interval $[0,T_\sss)$, for some $T_\sss>0$, discussed above, we now aim to show 
that there exists $T>0$ such that $T_\sss\geqslant T$, for all $\sss\in(0,\sss_1)$ and that there are interior estimates on the curvature and all its higher derivatives 
for all positive times, independent of $\sss\in(0,\sss_1)$. 

By~\cite[Theorem~1.5]{Ilnevsch}, there exists $\varepsilon > 0$ such that if $\SS^\sss_0$ can be written with respect to suitably chosen coordinate system as a graph with a small gradient in a ball $B_R(x)$, then $\SS^\sss_t$ remains a graph in this coordinate system in $B_{\varepsilon R}(x)$ with small gradient, for $t\in [0,\varepsilon R^2]$. Combining this fact with the interior estimates of Ecker--Huisken in~\cite{eckhui2} for the curvature and its higher
derivatives, we can choose a parametrization of the evolving network and a smooth family of points $\overline{P}_j^\sss\in
\SS_t^\sss$ in the annulus $B_{1/2}(0)\setminus B_{1/3}(0)$ along each curve corresponding to $P_j$, for $j=1,\ldots,n$, such that 
$$
\partial_s^l\lambda(\overline{P}^\sss_j,t) = 0~\text{ and }~\bigl|\partial^l_sk(\overline{P}_j^\sss,t)\bigr| \leqslant C_l\,, 
$$
for all $l\geqslant 0$ with constants $C_l$ independent of $\sss$ for $0\leqslant t< \min\{T_\sss,\delta\}$, where $\delta>0$ does not depend on $\sss$. Then, Corollary~\ref{topolino7} gives
estimates on the curvature and its derivatives, independent of $\sss$ and $t$, on $\SS_t^\sss\setminus B_{1/2}(0)$, for $t\in(0,\min\{T_\sss,\delta\})$ (possibly taking a smaller $\delta>0$).\\
To get the desired estimates on $\SS_t^\sss\cap B_{1/2}(0)$ we now apply Proposition~\ref{main} and Theorem~\ref{thm:locreg.2}. Let $\sss_1,\delta_1,\eta_1$ be given by Proposition~\ref{main}. 
If we choose $0<t_0<\min\{T_\sss,\delta_1,\delta,1/2 \}$ and $x_0 \in B_{1/2}(0)$, 
Proposition~\ref{main} implies that if $\sss<\sss_1$, we have
$$
\Theta^\sss_{x,t+r^2}(t)\leqslant 3/2 + \varepsilon_0\,,
$$
for all $x\in B_1(0)$, $t\in(0,t_0)$ and $r^2\leqslant\eta_1^2t$. In particular, we see that if $\overline{t}\in(t_0/2,t_0)$, choosing $r^2\leqslant \frac{\eta^2_1t_0}{2(1+\eta_1^2)}$ and setting $t=\overline{t}-r^2$, we have $t<t_0\leqslant \delta_1$ and $r^2\leqslant \eta_1^2 t$. Hence, the above estimate holds and it can be equivalently written as
$$
\Theta^\sss_{x,\overline{t}}(\overline{t}-r^2)\leqslant 3/2 + \varepsilon_0\,,
$$
for such pairs $(\overline{t},r)$. Letting $\rho=\sqrt{t_0/2}$ (notice that $B_\rho(x_0)\subseteq B_1(0)$), such estimate holds for all $(x,\overline{t}) \in B_\rho(x_0)\times (t_0-\rho^2,t_0)$ and $r\leqslant\frac{\eta^2_1}{\sqrt{1+\eta_1}}\rho$. Hence, by Theorem~\ref{thm:locreg.2} with $\sigma=1/2$, there exists a constant $C$, depending only on $\varepsilon_0$ and $\eta_1$ (by property~${\mathcal{P}}1$ above, the length ratios are uniformly bounded) such that
$$
\bigl|k^\sss(x,\overline{t})\bigr| \leqslant {C}/{\sqrt{t_0}}\,,
$$
for every $\overline{t}\in(t_0/8,t_0)$ and $x\in\SS_{\overline{t}}^\sss\cap B_{\sqrt{t_0/8}}(0)$. Sending $\overline{t}\to t_0$, we get
$$
\bigl|k^\sss(x_0,t_0)\bigr| \leqslant {C}/{\sqrt{t_0}}\,.
$$
Hence, by the arbitrariness of $x_0$, this estimates holds for all $x_0\in\SS_{t_0}^\sss\cap B_{1/2}(0)$ and $t_0$ small enough, together with the corresponding estimates on all higher
derivatives. Moreover, by the second point of Remark~\ref{rem:locreg.1}, there is a constant $C_1>0$, depending only on $\varepsilon_0$ and $\eta_1$, such that the length of the shortest curve of $\SS^\sss_{t_0}$ is bounded from below by $C_1\,\sqrt{t_0}$. By the arbitrariness of the choice, these estimates hold for every $t_0>0$ small enough. 

Together with the estimates on $\SS^\sss_t\setminus B_{1/2}(0)$ for every $t\in(0,\min\{T_\sss,\delta\})$, this implies that $T_\sss\geqslant T$, for some $T>0$, for every $\sss\leqslant\sss_1$. By the estimates on the curvature, which are independent of $\sss$, we can then take a subsequential limit of the flows $\SS^\sss_t$ on $[0,T)$, as $\sss\to 0$, to obtain a smooth limit curvature flow $\SS_t$ in a positive time interval, starting from the non--regular network $\SS_0$.

Notice that, by~\cite[Theorem~1.5]{Ilnevsch} and the interior estimates of
Ecker--Huisken, away from any multi--point, the flow $\SS_t$ attains the initial network $\SS_0$ in $C^2$ (or in the class of regularity of $\SS_0$, if it is better than $C^2$ away from the multi--point).\\
Furthermore, by the above estimate on the curvature and Theorem~\ref{thm:locreg.2}, we have 
$$
\bigl|k(x,t)\bigr| \leqslant {C}/{\sqrt{t}}\,,
$$
for every $x\in\SS_t$.
The estimate on the length of the shortest curve passes to the limit as well.
\end{proof}

\begin{rem}\label{felix} The conclusions of Theorem~\ref{evolnonreg} also hold if the initial network $\SS_0$ is a $C^1$ non--regular network, smooth away from the multi--points where the exterior unit tangent vectors of the concurring curves are mutually distinct and the curvature is of order $o(1/r)$, where $r$ is the distance from the set of the multi--points of $\SS_0$. The modifications in the proof are not completely trivial, the details of such result will appear elsewhere.
\end{rem}

We will now give a sketch of the proof of Proposition~\ref{main}. Since the estimates are rather technical we only outline it and refer the interested reader to~\cite{Ilnevsch}. 
However we want to underline the main three steps of the proof.

\bigskip

\noindent {\bf Step 1.} {\em Estimates far from the origin and for a short time.}

\smallskip

The following estimates are a direct consequence of Huisken's monotonicity formula~\eqref{eqmonfor}: the first one says that the flow is well controlled at a point $x$ away from the origin up to a time proportional to $|x|^2$. This follows by observing that in the annulus $A(K_0 \sqrt{2\sss},1)$, where $K_0$ is sufficiently large, the initial network $\SS^\sss_0$ is close to the collection of halflines $P$ for all $0<\sss\leqslant \sss_1$. Even more, for $1\geqslant |x| \geqslant K_0 \sqrt{2(\sss+t)}$ we see that in $B_{(K_0/2) \sqrt{2(\sss+t)}}(x)$ the initial network is 
$C^1$--close to a unit density line. By the monotonicity formula, this gives a control up to time $t$.\\
The second one shows that if we ``glue in'' the regular expander at scale $\sss$, then we get control in $t$ up to a time proportional to $\sss$. This estimate follows from observing that scaling the initial network $\SS^\sss_0$ by $1/\sqrt{2\sss}$, each point on the network is uniformly $C^1$--close, in a ball of fixed size, either to a unit density line or to a standard triod. The estimate then follows from the monotonicity formula.\\
For details of the proof see~\cite[Lemma 5.2]{Ilnevsch}.
\begin{lem}\label{lem:far and short estimates}\ 
\begin{itemize}
\item {\rm{(Far from origin estimate)}}
There are $\delta_1,K_0>0$ such that if $r^2\leqslant t\leqslant \delta_1$, then 
$$
\Theta^\sss_{x,t+r^2}(t) \leqslant 3/2 + \varepsilon_0\,,
$$
for every $x$ with $1\geqslant |x| \geqslant K_0 \sqrt{2(\sss+t)}$.

\item {\rm{(Short time estimate)}} 
There are $\sss_1,q_1>0$ such that if $\sss\leqslant\sss_1,\ r^2,t \leqslant q_1\sss$, then 
$$
\Theta^\sss_{x,t+r^2}(t)\leqslant 3/2 + \varepsilon_0\,,
$$
for every $x \in B_1(0)$.
\end{itemize}
\end{lem}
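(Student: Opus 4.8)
The plan is to derive both bounds from the \emph{localized} Huisken monotonicity formula together with the quantitative description of $\SS^\sss_0$ encoded in properties ${\mathcal{P}}1$--${\mathcal{P}}5$. Fix a cut--off $\phi$ equal to $1$ on $B_{3/2}(0)$ and supported in $B_2(0)$ and apply the localized version of Proposition~\ref{promono} (as in Remark~\ref{rem:locreg.1}) to the smooth flows $\SS^\sss_t$: for $x\in B_1(0)$ this gives
\[
\int_{\SS^\sss_t}\phi\,\rho_{x,t+r^2}(\cdot,t)\,ds\le \int_{\SS^\sss_0}\phi\,\rho_{x,t+r^2}(\cdot,0)\,ds + C\int_0^t\int_{\SS^\sss_\tau\cap(B_2\setminus B_{3/2})}\bigl(\,|\underline k|+1\,\bigr)\rho_{x,t+r^2}(\cdot,\tau)\,ds\,d\tau\,,
\]
the curvature integral being non--positive. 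Since $\dist\bigl(x,B_2\setminus B_{3/2}\bigr)\ge\tfrac12$, the curvature of $\SS^\sss_0$ on the fixed annulus $B_2\setminus B_{3/2}$ is bounded uniformly in $\sss$ (by ${\mathcal{P}}3$ and~\eqref{H4}; in fact $\SS^\sss_0=\SS_0$ there for $\sss$ small) and stays so for short time (Corollary~\ref{topolino7}), while the length ratios are uniformly bounded by ${\mathcal{P}}1$; hence, using also $r\le\sqrt{\delta_1}$ (resp.\ $r\le\sqrt{q_1\sss_1}$), this error and the contribution $\int_{\SS^\sss_t}(1-\phi)\rho_{x,t+r^2}(\cdot,t)\,ds$ are both $O\bigl(e^{-c/\delta_1}\bigr)$ (resp.\ $O\bigl(e^{-c/(q_1\sss_1)}\bigr)$). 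It therefore suffices to prove
\[
\int_{\SS^\sss_0}\rho_{x,t+r^2}(\cdot,0)\,ds\le \tfrac32+\tfrac{\varepsilon_0}{2}
\]
and then to fix $K_0,\delta_1,\sss_1,q_1$ so that the above errors are $<\varepsilon_0/2$.

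For the far from origin estimate I would observe that the kernel $\rho_{x,t+r^2}(\cdot,0)$ has width $\asymp\sqrt{t+r^2}\le 2\sqrt{t+\sss}$, whereas $|x|\ge K_0\sqrt{2(\sss+t)}$; hence for $K_0$ large it is concentrated, up to a tail of size $f(\varepsilon_1 K_0)$ with $f(R)\to0$ (tightness, as in Lemma~\ref{rescestim2}(2)), inside a ball $B_{\varepsilon_1|x|}(x)$ not containing the origin. Here $\varepsilon_1>0$ is chosen small depending only on the minimal angle between the halflines $P_j$, on $D_3$ and $r_0$: a routine computation from~\eqref{H4} (distinguishing $|x|\le r_0\sqrt{2\sss}$, the gluing annulus $r_0\sqrt{2\sss}\le|x|\le 4$, and $|x|\ge 4$ where $\SS^\sss_0=\SS_0$ has bounded curvature) shows that, for $\sss$ small, $\SS^\sss_0\cap B_{\varepsilon_1|x|}(x)$ is either empty or $C^1$--close to a single straight segment, with no multi--point. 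Converting this $C^1$--closeness into a Gaussian--density bound yields $\int_{\SS^\sss_0}\rho_{x,t+r^2}(\cdot,0)\,ds\le 1+ o_{\delta_1,\sss_1}(1)+ f(\varepsilon_1 K_0)$, and the claim follows by choosing first $K_0$ large, then $\delta_1,\sss_1$ small.

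For the short time estimate I would split at $|x|\sim\sss^{1/4}$, the scale of the cut--off $\varphi(\sss^{-1/4}\ell)$ in the definition of $\SS^\sss_0$. If $|x|\ge\sss^{1/4}$ then near $x$ the network $\SS^\sss_0$ is either $\SS_0$ itself or, in the transition annulus, a graph over the halflines with gradient $O(\sss^{1/4})$ by~\eqref{H4}; in either case it is $C^{1,1}$ with locally bounded curvature, and since $r^2,t\le q_1\sss$ forces the kernel width $\le 2\sqrt{q_1\sss}\ll\sss^{1/4}$ one argues as in the far case to get $\int_{\SS^\sss_0}\rho\le 1+o(1)$. If instead $|x|<\sss^{1/4}$, I rescale by $\mu=1/\sqrt{2\sss}$ and use scaling covariance of Gaussian densities: $\int_{\SS^\sss_0}\rho_{x,t+r^2}(\cdot,0)\,ds=\int_{\widetilde\SS^\sss_0}\rho_{\widetilde x,\widetilde r^2}(\cdot,0)\,ds$ with $\widetilde x=x/\sqrt{2\sss}$ and $\widetilde r^{\,2}=(t+r^2)/(2\sss)\le 2q_1$. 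By ${\mathcal{P}}5$, the coincidence $\SS^\sss_0=\sqrt{2\sss}\,\epsi$ on $B_{r_0\sqrt{2\sss}}(0)$, and the graph decay estimates, for $\sss$ small $\widetilde\SS^\sss_0$ is, on every ball of bounded-order radius around $\widetilde x$, $C^1$--close to the smooth regular expander $\epsi$ (or, in the outermost transition annulus, to its asymptotic halflines); hence $\int_{\widetilde\SS^\sss_0}\rho_{\widetilde x,\widetilde r^2}(\cdot,0)\,ds\le\int_\epsi\rho_{\widetilde x,\widetilde r^2}(\cdot,0)\,d\overline s+\varepsilon$, and since $\epsi$ has only triple junctions, the last integral is $\le\tfrac32+g(\widetilde r)$ with $g(\widetilde r)\to0$ (using Lemma~\ref{lem:expander-asymptotic} for the tail and the resulting bounded length ratios of $\epsi$). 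Choosing $q_1$ small depending on $\varepsilon_0,\epsi,r_0$ and then $\sss_1$ small gives $\le\tfrac32+\varepsilon_0/2$.

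The step I expect to be the main obstacle is making the assertion ``$\SS^\sss_0$ is $C^1$--close to a line, resp.\ to $\epsi$, at such--and--such scale'' fully quantitative and uniform in $\sss$: this is a careful multi--scale bookkeeping involving simultaneously the expander scale $\sqrt{2\sss}$, the interpolation scale $\sss^{1/4}$, the kernel width $\sqrt{t+r^2}$ and the distance $|x|$ to the origin, and it requires checking that $K_0,\delta_1,\sss_1,q_1$ (and the auxiliary $\varepsilon_1$) can be fixed in a consistent hierarchy so that the network never exhibits more than essentially a triple junction inside the relevant ball — in particular, one must avoid the order--$n$ multi--point of $\SS_0$ being ``seen'' by the kernel. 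Everything else — the localized monotonicity inequality, the exponential smallness of the cut--off and far--field errors, the tightness of Gaussian tails, and the passage from $C^1$--closeness to a Gaussian--density bound — is standard and appears in the excerpt precisely in the needed form (Lemmas~\ref{rescestim2} and~\ref{stimadib}, and the proof of Proposition~\ref{thm:shrinkingnetworks.1}).
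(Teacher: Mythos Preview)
Your proposal is correct and follows essentially the same approach as the paper: reduce via Huisken monotonicity to a Gaussian--density estimate on the initial datum $\SS^\sss_0$, then use that at the relevant scale and location the network is $C^1$--close either to a single line (far case) or, after rescaling by $1/\sqrt{2\sss}$, to a line or a standard triod (short--time case), so that the initial Gaussian density is at most $3/2+\varepsilon_0$.

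A few minor differences are worth noting. First, the paper does not introduce the cut--off $\phi$ and the localized monotonicity: since the length ratios are uniformly bounded (${\mathcal{P}}1$), the standard monotonicity formula together with the exponential decay of the kernel already handles the tails, which is slightly cleaner than tracking the annulus error you introduce. Second, for the short--time estimate the paper does not split at $|x|\sim\sss^{1/4}$: it simply rescales the whole of $\SS^\sss_0$ by $1/\sqrt{2\sss}$ and observes that at every point the rescaled network is, in a ball of \emph{fixed} size, uniformly $C^1$--close to either a unit--density line (away from the triple junctions of $\epsi$) or to a standard triod (near them). Your detour via the Gaussian density of $\epsi$ itself is equivalent but introduces an extra step --- proving $\sup_{\widetilde x}\int_\epsi\rho_{\widetilde x,\widetilde r^2}\,d\overline s\le 3/2+g(\widetilde r)$ --- which is exactly the statement that $\epsi$ is locally a line or a triod. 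Finally, in the far case the paper uses the ball $B_{(K_0/2)\sqrt{2(\sss+t)}}(x)$ rather than $B_{\varepsilon_1|x|}(x)$; these scales are comparable and the argument is the same. The multi--scale ``bookkeeping'' you flag as the main obstacle is indeed the content of the proof, and you have correctly identified the hierarchy of constants.
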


It is convenient to introduce a rescaling of the flow which makes the expander ``stationary''. We set (see property~${\mathcal{P}}5$ above)
$$
\widetilde{\SS}^\sss_t = \frac{ \SS^\sss_t}{\sqrt{2(\sss+t)}}\,,
$$
and let 
$$
\widetilde{\Theta}^\sss_{x_0,t_0}(t) = \int_{\widetilde{\SS}^\sss_t} \rho_{x_0,t_0}(\cdot,t)\, ds\,.
$$ 
Notice that 
\begin{equation}\label{eq:thetarescaled}
\Theta^\sss_{x_0,t+r^2}(t) = \widetilde{\Theta}^\sss_{\frac{x_0}{\sqrt{2(\sss+t)}}\,,\,t+\frac{r^2}{2(\sss+t)}}(t)\,.
\end{equation}

\begin{rem}\label{rem:rescaled}\ 
\begin{enumerate}
\item It follows from the second estimate in Lemma~\ref{lem:far and short estimates} that we need only to prove Proposition~\ref{main} when $t\geqslant q_1\sss$.
\item By formula~\eqref{eq:thetarescaled} and the previous point, it suffices to find $\sss_1, \delta_1$ and $\eta_1$ such that for every 
$\sss\leqslant \sss_1,\ q_1\sss \leqslant t\leqslant \delta_1,\ r^2 \leqslant \eta_1^2$ and $y$ with $|y|\leqslant1/\sqrt{2(\sss+t)}$, we have
$$
\widetilde{\Theta}^\sss_{y,t+r^2}(t) \leqslant 3/2 +\varepsilon_0\,.
$$
\item We set $\eta_1^2 = q_1/(2(q_1+1))$. The second estimate in Lemma~\ref{lem:far and short estimates} implies that for $\sss\leqslant \sss_1,\ t\leqslant q_1\sss$ and $r^2\leqslant \eta_1^2$ we have
$$
\widetilde{\Theta}^\sss_{y,t+r^2}(t) \leqslant 3/2 +\varepsilon_0\,,
$$
for every $|y| \leqslant 1/\sqrt{2(\sss+t)}$.\\
The first estimate in Lemma~\ref{lem:far and short estimates} implies that for $r^2 \leqslant \eta_1^2, \sss\leqslant \sss_1$ and $q_1\sss\leqslant t \leqslant\delta_1$,
$$
\widetilde{\Theta}^\sss_{y,t+r^2}(t) \leqslant 3/2 +\varepsilon_0\,,
$$
for every $y$ with $K_0 \leqslant |y| \leqslant 1/\sqrt{2(\sss+t)}$.
\end{enumerate}
\end{rem}

\bigskip

\noindent {\bf Step 2.} {\em Controlling the asymptotic behavior of $\widetilde{\SS}^\sss_t$.}

\smallskip

\noindent By some rather delicate estimates, but which only use the asymptotics ${\mathcal{P}}4$ and again the monotonicity formula, one can show that the following holds (see Lemma~\cite[Lemma 5.4]{Ilnevsch}). It is important here that $r_1$ does not depend on $\nu$.

\begin{lem}[Proximity to $P$]\label{lem:proximity expander}
There are constants $C_1$ and $r_1$ such that, for every $\nu>0$, we can find $\sss_2,\delta_2 >0$ such that the following holds. If $\sss\leqslant \sss_2, t\leqslant \delta_2$ and $r\leqslant 2$, then
$$
\text{dist}(y,P)\leqslant \nu + C_1 e^{-|y|^2/C_1}\ \ \text{if}\ \ y\in \widetilde{\SS}_t^\sss \cap A\bigl(r_1,(\sss+t)^{-1/8}\bigr)\,, 
$$
and
$$
\widetilde{\Theta}^\sss_{y,t+r^2}(t)\leqslant 1 + \varepsilon_0/2 + \nu \ \ \text{if}\ \ y\in A\bigl(r_1,(\sss+t)^{-1/8}\bigr)\,,
$$
where $A\bigl(r_1,(\sss+t)^{-1/8}\bigr)$ is the annulus $B_{(\sss+t)^{-1/8}}(0)\setminus B_{r_1}(0)$.
\end{lem}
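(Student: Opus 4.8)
The plan is to pull everything back to the un-rescaled flows $\SS^\sss_t$, where Huisken's monotonicity formula (Proposition~\ref{promono}) is available, to exploit the explicit description of the glued initial networks from property~${\mathcal{P}}4$, and then to upgrade the resulting Gaussian density bound to the pointwise statement by means of White's local regularity theorem (Theorem~\ref{thm:locreg.2}) and the uniqueness of the expander $\epsi$. First I would translate the scaled density: for $y\in A\bigl(r_1,(\sss+t)^{-1/8}\bigr)$ put $x_0=\sqrt{2(\sss+t)}\,y$, so that $r_1\sqrt{2(\sss+t)}\le |x_0|\le \sqrt2\,(\sss+t)^{3/8}$, and a change of variables in the Gaussian integral gives
\[
\widetilde\Theta^\sss_{y,t+r^2}(t)=\Theta^\sss_{x_0,t^\ast}(t),\qquad t^\ast=t+2(\sss+t)r^2 .
\]
By Remark~\ref{rem:rescaled} it is enough to treat $t\ge q_1\sss$, so $\sss+t\le(1+q_1^{-1})t$, and then the Gaussian width relative to the center time $t^\ast$ stays bounded by $C\sqrt t$ all the way back to time $0$, which is at most $(C'/r_1)|x_0|$; in particular, for $r_1$ a large fixed constant this width is much smaller than $|x_0|$, and $|x_0|\ge r_1\sqrt{2\sss}\ge K\sqrt\sss$, so the scale $|x_0|$ lies well outside the glued-in expander.

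Because the halflines $P_1,\dots,P_n$ are mutually distinct, their pairwise distances at radius $\ell$ from the origin are at least $c\,\ell$, so in $B_{c|x_0|/2}(x_0)$ the network $\SS^\sss_0$ is a single curve; by property~${\mathcal{P}}4$ this curve is $C^1$--close to a segment of the corresponding halfline $P_j$, with $C^1$--distance bounded by $D_3\bigl(|x_0|+|x_0|^{-1}\sqrt{2\sss}\,e^{-|x_0|^2/8\sss}\bigr)$, hence as small as we wish once $\sss,t$ are small (depending on $\nu$) and $r_1$ is large. Therefore $\Theta^\sss_{x_0,t^\ast}(0)=\int_{\SS^\sss_0}\rho_{x_0,t^\ast}(\cdot,0)\,ds$ splits into a contribution from $B_{c|x_0|/2}(x_0)$, which is within $\nu/2$ of the Gaussian density of a straight line, i.e.\ of $1$, and a contribution from the complement, which is exponentially small by the length-ratio bound ${\mathcal{P}}1$ since $\sqrt{t^\ast}\ll c|x_0|/2$. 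Applying Huisken's monotonicity formula localized by a cut-off supported in $B_{1/2}(0)$ then yields $\Theta^\sss_{x_0,t^\ast}(t)\le \Theta^\sss_{x_0,t^\ast}(0)+E$, where $E$ collects the cut-off--gradient terms and the artificial-boundary terms at the points $\overline P^\sss_j\in B_{1/2}(0)\setminus B_{1/3}(0)$ (where $\lambda=0$ and $k$ is bounded); since $|x_0|<1/6$ these supports are at distance $\ge 1/6$ from $x_0$, so the backward heat kernel makes $E$ exponentially small for $\sss,t$ small. Altogether $\widetilde\Theta^\sss_{y,t+r^2}(t)\le 1+\nu\le 1+\varepsilon_0/2+\nu$, which is the second assertion.

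For the distance estimate I would fix $y\in\widetilde\SS^\sss_t\cap A\bigl(r_1,(\sss+t)^{-1/8}\bigr)$. By the density bound just obtained, $\widetilde\Theta^\sss_{y,t+r^2}(t)<\Theta_{\SS^1}$, so the localized form of Theorem~\ref{thm:locreg.2} (see Remark~\ref{rem:locreg.1}) applies near $y$ --- the density being below $3/2$ already rules out triple junctions there --- and, together with the Ecker--Huisken interior estimates and the pseudolocality theorem (Theorem~\ref{thm:graph_local}), it gives uniform curvature bounds and $C^1$ graph control for $\widetilde\SS^\sss_t$ in a fixed-size ball about $y$. Since the original flow was glued to coincide with the self-expander $\sqrt{2\sss}\,\epsi$ near the origin, the localized expander monotonicity formula (Lemmas~\ref{thm:expmon} and~\ref{thm:evol.local}) shows that the integral defect measuring the deviation of $\SS^\sss_t$ from the self-similarly expanding flow $\sqrt{2(\sss+t)}\,\epsi$ stays small, whence $\widetilde\SS^\sss_t$ is $C^1$--close to the regular expander $\epsi$ locally in the annulus; that this limit is exactly $\epsi$, and not some other self-expander, uses the convergence of the initial data (property~${\mathcal{P}}5$) together with the uniqueness of expanders in a given topological class (Theorem~\ref{thm:unique}). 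Finally, Lemma~\ref{lem:expander-asymptotic} gives that the relevant end of $\epsi$ lies within $C_0e^{-|y|^2/2}$ of the halfline $P_j$, so the triangle inequality yields $\text{dist}(y,P)\le\nu+C_1e^{-|y|^2/C_1}$ after renaming constants.

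\emph{Main obstacle.} The delicate point throughout is uniformity: every error term must be controlled for $\sss\in(0,\sss_2]$ on a time interval $[0,\delta_2]$ that does not shrink with $\sss$, and on a region whose scaled size $(\sss+t)^{-1/8}$ grows as $\sss+t\to0$. This forces one to keep strictly apart the part of the error that is small because $\sss+t$ is small from the part that is small because $|y|$ is large --- which is exactly why the Gaussian tail $\sqrt{2\sss}\,e^{-\ell^2/4\sss}$ in property~${\mathcal{P}}4$ and the Gaussian decay in Lemma~\ref{lem:expander-asymptotic} are indispensable, and why the constant $r_1$ must be fixed before $\nu$. A second difficulty, concentrated in the distance estimate, is that the curvature control needed to invoke the local regularity theorem is not available a priori and must be extracted through a continuity/bootstrap argument feeding pseudolocality and the Ecker--Huisken estimates into White's theorem via the monotonicity-based density bound; keeping the expander monotonicity estimate effective uniformly up to the scale $(\sss+t)^{-1/8}$, rather than only on fixed compact sets, is the real heart of the matter.
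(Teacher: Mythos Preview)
Your treatment of the Gaussian density bound (the second assertion) is right and matches the paper: pull back via $x_0=\sqrt{2(\sss+t)}\,y$, use ${\mathcal P}4$ to see that $\SS^\sss_0$ near $x_0$ is a single nearly--straight graph over one $P_j$, compute $\Theta^\sss_{x_0,t^\ast}(0)\approx 1$, and descend by Huisken's monotonicity.

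The distance estimate is where you diverge. The paper is explicit that this lemma is obtained using \emph{only} the asymptotics ${\mathcal P}4$ and the ordinary Huisken monotonicity formula; it further says, when introducing Lemma~\ref{integral}, that the expander monotonicity formula is used \emph{only at that single later point}, and the compactness argument based on Theorem~\ref{thm:unique} is the content of the unnamed lemma that comes immediately \emph{after} the present one. Your route --- invoke Theorem~\ref{thm:locreg.2} for curvature control, feed in the expander monotonicity to get integral closeness to $\epsi$, upgrade to $C^1$--closeness via compactness and uniqueness, then read off the distance from Lemma~\ref{lem:expander-asymptotic} --- is essentially the whole machinery of Step~3 imported into Step~2. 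Beyond not matching the paper's structure, it risks genuine circularity: the localized expander monotonicity (Lemma~\ref{thm:evol.local}) produces an annular error term $\int_{\SS_t\cap(B_3\setminus B_2)}\alpha_t^2\,\rho\,ds$, and controlling \emph{that} uniformly in $\sss$ appears to require exactly the proximity--to--$P$ information you are trying to prove. Your ``Main obstacle'' paragraph flags this bootstrap difficulty but does not close it.

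The intended argument for the distance bound runs parallel to the density computation, using monotonicity in the other direction. Since $y\in\widetilde\SS^\sss_t$, the point $x_0=\sqrt{2(\sss+t)}\,y$ lies on $\SS^\sss_t$, hence $\Theta^\sss_{x_0,t^\ast}(t)\ge 1$ for any $t^\ast>t$ (Gaussian density of a regular network at one of its own points). Monotonicity then forces $\Theta^\sss_{x_0,t^\ast}(0)\ge 1$. But $\SS^\sss_0$ is explicitly known from ${\mathcal P}4$, so one can compute this integral directly and see that the inequality forces $x_0$ to lie close to one of the graphs $u_{j,\sss}$, hence close to $P$; rescaling and splitting the bound~\eqref{H4} into its polynomial part (small on $|y|\le(\sss+t)^{-1/8}$ once $\sss+t$ is small, giving the $\nu$) and its Gaussian tail (surviving as $C_1e^{-|y|^2/C_1}$) gives the statement. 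The estimates are, as the paper says, ``rather delicate'', but they need no curvature control and none of the Step~3 ingredients.
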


The next step is to combine these estimates with the uniqueness of the regular expander in its topological class, given by Theorem~\ref{thm:unique} and a compactness argument
(see~\cite[Corollary 4.6]{Ilnevsch}) to show the following:
\begin{lem}
Let $C_1$ and $r_1$ be the constants given by Lemma~\ref{lem:proximity expander}
and let $\epsi$ be a regular expander. Set $r_2=\max \{r_0,r_1,1\}$, $R=\sqrt{1+2q_1}K_0 + r_2$.
Then there exist $R_1\geqslant R$, $\varrho, \nu>0$ such that if
$\SS$ is a regular network with controlled length ratios such that:
\begin{enumerate}
\item $\int_{\SS\cap B_{R_1}(0)} |\underline{k} - x^\perp|^2\, ds\leqslant \varrho$,
\item $\SS$ and $\epsi$ are in the same topological class 
(see Definition~\ref{sametopologicalclass}),
\end{enumerate}
then $\SS$ must be $\varepsilon$--close in $C^{1,\alpha}(B_{R_1}(0))$ to $\epsi$, for a fixed $\alpha\in(0,1/2)$ 
and a suitably small $\varepsilon>0$, depending on $\epsi$.
\end{lem}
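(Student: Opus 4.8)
The plan is a compactness argument by contradiction, with the uniqueness Theorem~\ref{thm:unique} as the final ingredient (this is essentially \cite[Corollary~4.6]{Ilnevsch}). Suppose the conclusion fails. Negating it, for every $R_1\geq R$ and every $\varrho,\nu>0$ there is a counterexample; by a diagonal choice along a sequence $R_1=R_1^j\nearrow+\infty$ and $\varrho=\nu=1/j$ we obtain regular networks $\SS_j$, all with length ratios bounded by the same fixed constant $\Lambda$ as in the hypothesis, satisfying $\int_{\SS_j\cap B_{R_1^j}(0)}|\underline{k}-x^\perp|^2\,ds\leq 1/j$, each in the same topological class as $\epsi$, but none of which is $\varepsilon$--close in $C^{1,\alpha}(B_{R_1^j}(0))$ to $\epsi$. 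On any fixed ball $B_\rho(0)$ the bounded length ratios give $\int_{\SS_j\cap B_\rho}|x^\perp|^2\,ds\leq\Lambda\rho^3$, hence $\int_{\SS_j\cap B_\rho}|\underline{k}|^2\,ds$ is uniformly bounded; reparametrizing the curves of $\SS_j$ proportionally to arclength produces curves with uniformly bounded first derivatives and $L^2\loc$--bounded second derivatives. Exactly as in the proof of Proposition~\ref{thm:shrinkingnetworks.1}, a subsequence converges in $C^{1,\alpha}\loc\cap W^{2,2}\loc$ to a (a priori degenerate) regular network $\SS_\infty$, and since $\int_{\SS_j\cap B_\rho}|\underline{k}-x^\perp|^2\to 0$ and this functional is lower semicontinuous along the convergence, every non--degenerate curve of $\SS_\infty$ satisfies $\underline{k}=x^\perp$ in $W^{2,2}\loc$, hence is smooth by the usual bootstrap; thus $\SS_\infty$ is a degenerate regular expander.

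Next I would identify $\SS_\infty$ with $\epsi$. Because each $\SS_j$ lies in the topological class of $\epsi$, its underlying graph is the tree $G$, its triple junctions stay at mutually bounded distance, and its ends are asymptotic to the fixed family $P$ of halflines. The uniform smallness of the expander defect on the growing balls $B_{R_1^j}(0)$, together with the exponential asymptotics of expanders in Lemma~\ref{lem:expander-asymptotic}, should then allow one to show that no core forms in the limit — a ``longest path'' argument at a hypothetical core, in the spirit of the proof of Lemma~\ref{lemmatree}, is incompatible with the preserved combinatorics of $G$ — and that $\SS_\infty$ has the same $n$ ends asymptotic to the same halflines $P$. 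Hence $\SS_\infty$ and $\epsi$ are two regular expanders in the same topological class, and Theorem~\ref{thm:unique} forces $\SS_\infty=\epsi$. Therefore $\SS_j\to\epsi$ in $C^{1,\alpha}\loc$.

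It remains to turn this local convergence into a contradiction with the failure of $\varepsilon$--closeness on the whole ball $B_{R_1^j}(0)$. I would first fix $R$ large, depending only on $\epsi$, so that by Lemma~\ref{lem:expander-asymptotic} the part of $\epsi$ outside $B_R(0)$ consists of graphs over $P$ with $C^{1,\alpha}$ norm less than $\varepsilon/4$; for $j$ large the $C^{1,\alpha}(B_R(0))$--convergence gives that $\SS_j$ is $\varepsilon/4$--close to $\epsi$ on $B_R(0)$, so one is reduced to showing that $\SS_j$ is $\varepsilon/2$--close to $\epsi$ on the annulus $B_{R_1^j}(0)\setminus B_R(0)$. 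This is where the proximity estimate Lemma~\ref{lem:proximity expander} enters, and where the auxiliary parameter $\nu$ (to be chosen small in terms of $\varepsilon$ and $\Lambda$) is used: smallness of $\int|\underline{k}-x^\perp|^2$ on a large ball, combined with bounded length ratios and the correct topology, forces $\SS_j$ to be graphical over $P$ with small norm far out, and the required $R_1$ is produced precisely as the outcome of that estimate. I expect this last step — controlling $\SS_j$ uniformly out to radius of order $R_1^j$, i.e.\ ruling out that the expander defect being small only on a ball still permits misbehaviour near $\partial B_{R_1^j}(0)$ or loss of mass at infinity — to be the main obstacle; the remainder is the by--now standard compactness machinery already used for shrinkers in Propositions~\ref{thm:shrinkingnetworks.1} and~\ref{resclimit-general}.
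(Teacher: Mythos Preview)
Your overall strategy --- compactness/contradiction, pass to a limit that satisfies $\underline{k}=x^\perp$, then invoke the uniqueness Theorem~\ref{thm:unique} --- is exactly what the paper indicates (it gives no proof, only the sentence ``combine these estimates with the uniqueness \dots and a compactness argument (see~\cite[Corollary~4.6]{Ilnevsch})''). So the approach matches.

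The one genuine issue is how you negate the statement. The lemma asserts the \emph{existence} of a fixed $R_1$ (together with $\varrho,\nu$); the closeness is then required only on $B_{R_1}(0)$. By sending $R_1^j\nearrow+\infty$ in your contradiction sequence you make the failure condition ``not $\varepsilon$--close on $B_{R_1^j}(0)$'' progressively weaker, and this is precisely why your final step --- upgrading $C^{1,\alpha}\loc$ convergence to closeness on all of $B_{R_1^j}(0)$ --- becomes, as you correctly sense, an obstacle. That obstacle is self--inflicted. The cleaner route is to \emph{first} fix $R_1$ large, depending only on $\epsi$ (large enough that the ends of $\epsi$ outside $B_{R_1}(0)$ are already well controlled by Lemma~\ref{lem:expander-asymptotic} and that $R_1\geq R$), and then take $\varrho_j\to 0$ with $R_1$ fixed. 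The limit $\SS_\infty$ satisfies the expander equation on $B_{R_1}(0)$; the ``same topological class'' hypothesis (which, per Definition~\ref{sametopologicalclass}, controls the ends and the mutual distance of triple junctions) lets you extend $\SS_\infty$ to a global regular expander asymptotic to the same halflines as $\epsi$, rule out a core, and apply Theorem~\ref{thm:unique}. Then $\SS_j\to\epsi$ in $C^{1,\alpha}(B_{R_1}(0))$ directly contradicts ``not $\varepsilon$--close on $B_{R_1}(0)$''. With this quantifier fix the argument goes through and your invocation of Lemma~\ref{lem:proximity expander} becomes unnecessary for the lemma itself (the constants $r_1,C_1,\nu$ are only carried along for the subsequent application in Proposition~\ref{main}).
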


Notice that $\varepsilon$ has to be chosen sufficiently small,
so that the monotonicity formula guarantees a control of the Gaussian 
densities for a network $C^{1,\alpha}$--close to $\epsi$.

\bigskip

\noindent{\bf Step 3.} {\em Application of the expander monotonicity formula.}

\smallskip

\noindent The next lemma is essential to prove Proposition~\ref{main}. Its content is that the proximity of $\widetilde \SS^\sss_t$ to the self--similarly expanding curvature flow generated by $\epsi$ can be controlled in an integral sense. This is the only point where the expander monotonicity formula is used.

We notice that by property~${\mathcal{P}}5$ above, we have that $\widetilde{\SS}^\sss_0=\sqrt{2\sss}\,\SS^\sss_0 \to \epsi$ in $C^{1,\alpha}_\text{loc}(B_{r_0}(0))$, as $\sss\to0$ and recall that the rescaled quantity
$$
\widetilde{\alpha}_t^\sss=\widetilde{\beta}_t^\sss+\widetilde{\theta}_t^\sss\,,
$$
of the expander monotonicity formula, converges locally to zero along this limit. Localizing the expander monotonicity formula (Lemma~\ref{thm:expmon}), choosing $(x_0,t_0)$ appropriately and estimating carefully, one arrives at the following (see~\cite[Lemma 5.6]{Ilnevsch}). Choose $a>1$ such $(1+2q_1)/a >1$ and set $q=q_1/a$.
\begin{lem}\label{integral}
There are constants $\delta_0$ and $\sss_0$ such that for every $\sss\leqslant\sss_0$ and $T_0\in[q\sss,\delta_0]$, we have
$$
\frac{1}{(a-1)T_0}\int^{aT_0}_{T_0}\int_{\widetilde \SS^\sss_t\cap B_{R_1}(0)}|\underline{k}-x^{\perp}|^2\,ds\,dt\leqslant \varrho\,.
$$
\end{lem}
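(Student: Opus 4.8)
The plan is to integrate in time a localized form of the expander monotonicity formula for the \emph{unrescaled} approximating flows $\SS^\sss_t$, and then to convert the resulting space--time $L^2$ bound into the statement by undoing the parabolic rescaling $\widetilde\SS^\sss_t=\SS^\sss_t/\sqrt{2(\sss+t)}$. Since $\SS^\sss_0$ is glued from the expander $\epsi$ at scale $\sqrt{2\sss}$, the natural potential to use is the \emph{shifted} one $\widehat\alpha_t=\beta_t+2(\sss+t)\theta_t$ in place of $\alpha_t=\beta_t+2t\theta_t$: a short computation from Lemma~\ref{thm:evoeq} shows that $\widehat\alpha_t$ still satisfies $\partial_t\widehat\alpha_t=\partial_{ss}^2\widehat\alpha_t+\partial_s\widehat\alpha_t\,\langle\tau\,\vert\,X\rangle$ and that $\nu\,\partial_s\widehat\alpha_t=-x^\perp+2(\sss+t)\underline{k}$, so the derivation of Lemma~\ref{thm:evol.local} goes through verbatim and yields
\begin{equation*}
\frac{d}{dt}\int_{\SS^\sss_t}\varphi\,\widehat\alpha_t^2\,\rho_{x_0,t_0}\,ds
\leq -\int_{\SS^\sss_t}\varphi\,\bigl|x^\perp-2(\sss+t)\underline{k}\bigr|^2\rho_{x_0,t_0}\,ds
+C\!\int_{\SS^\sss_t\cap(B_3(x_0)\setminus B_2(x_0))}\!\!\widehat\alpha_t^2\,\rho_{x_0,t_0}\,ds\,.
\end{equation*}
Under the rescaling one has $|x^\perp-2(\sss+t)\underline{k}|^2=2(\sss+t)\,|\widetilde{\underline{k}}-\widetilde x^\perp|^2$ and $\widehat\alpha_t(x)=2(\sss+t)\,\widetilde\alpha^\sss_t(\widetilde x)$ with $\widetilde x=x/\sqrt{2(\sss+t)}$, which is the link between the ``driving'' negative term above and the quantity to be estimated.

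Next I would fix $x_0=0$ and $t_0=cT_0$ with a constant $c>a$ chosen so that, for all $t\in[T_0,aT_0]$ and all $\sss$ with $q\sss\leq T_0$, both $\sss+t$ and $t_0-t$ are comparable to $T_0$ (this is exactly what the hypothesis $T_0\in[q\sss,\delta_0]$ together with $a>1$ buys). With this choice, once $\delta_0,\sss_0$ are small one has $\varphi\equiv1$ on $B_2(0)\supset B_{R_1\sqrt{2(\sss+t)}}(0)$ and $\rho_{0,t_0}(x,t)\geq c_1(\sss+t)^{-1/2}$ on that ball; undoing the rescaling then gives, for $t\in[T_0,aT_0]$,
\begin{equation*}
\int_{\widetilde\SS^\sss_t\cap B_{R_1}(0)}\bigl|\widetilde{\underline{k}}-\widetilde x^\perp\bigr|^2\,ds
\;\leq\;\frac{c_2}{T_0}\int_{\SS^\sss_t}\varphi\,\bigl|x^\perp-2(\sss+t)\underline{k}\bigr|^2\rho_{0,t_0}\,ds\,.
\end{equation*}
Integrating the monotonicity inequality over $[T_0,aT_0]$ and discarding the nonpositive contribution at $aT_0$, the right--hand side here is controlled by $\int_{\SS^\sss_{T_0}}\varphi\,\widehat\alpha_{T_0}^2\rho_{0,t_0}\,ds$ plus $C\int_{T_0}^{aT_0}\int_{\SS^\sss_t\cap(B_3\setminus B_2)}\widehat\alpha_t^2\rho_{0,t_0}\,ds\,dt$, so the whole problem reduces to making these two quantities $o(T_0^2)$ as $\sss_0,\delta_0\to0$.

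For the annular error term I would use that $B_3(0)\setminus B_2(0)$ lies at a fixed distance from the origin, where by property $\mathcal{P}3$ (plus short--time interior estimates away from the gluing point, as in the proof of Theorem~\ref{evolnonreg}) the networks $\SS^\sss_t$ stay uniformly $C^1$--close to the fixed $\SS_0$, hence $|\widehat\alpha_t|\leq C$ there; combined with $\rho_{0,t_0}(x,t)\leq Ce^{-1/(t_0-t)}/\sqrt{t_0-t}\leq Ce^{-c/T_0}/\sqrt{T_0}$, the length--ratio bound $\mathcal{P}1$, and the length $aT_0-T_0$ of the time interval, this term is $\leq C\,T_0^{1/2}e^{-c/T_0}$. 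For the boundary term I would rescale to $\int_{\SS^\sss_{T_0}}\varphi\,\widehat\alpha_{T_0}^2\rho_{0,t_0}\,ds\leq C\,T_0^2\int_{\widetilde\SS^\sss_{T_0}}(\widetilde\alpha^\sss_{T_0})^2e^{-|\widetilde x|^2/C}\,ds$ and split the last integral over $B_{R_1}(0)$, $B_{(\sss+T_0)^{-1/8}}(0)\setminus B_{R_1}(0)$ and its complement: on $B_{R_1}(0)$ the network $\widetilde\SS^\sss_{T_0}$ is $C^{1,\alpha}$--close to $\epsi$ (from $\mathcal{P}5$, the estimates of Lemma~\ref{lem:far and short estimates}, and the uniqueness/compactness input of Step~2 via Theorem~\ref{thm:unique}), so $\widetilde\alpha^\sss_{T_0}$ is uniformly small there and this contribution is small by $\mathcal{P}1$; on the intermediate annulus Lemma~\ref{lem:proximity expander} places $\widetilde\SS^\sss_{T_0}$ near the union of halflines $P$, along which the Liouville form vanishes so that $\widetilde\alpha^\sss_{T_0}$ is bounded, while $e^{-|\widetilde x|^2/C}$ beats the at--most--polynomial (by $\mathcal{P}1$) growth of the measure; on the far complement the polynomial bound $|\widetilde\alpha^\sss_{T_0}|\lesssim|\widetilde x|^2+1$ from $\mathcal{P}2$ is crushed by the super--exponentially small factor $e^{-(\sss+T_0)^{-1/4}/C}$. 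Summing, the boundary term is $\leq T_0^2\cdot o_{\sss_0,\delta_0}(1)$. Combining with the previous step (whose conversion contributed an extra factor $1/T_0$) and dividing by $(a-1)T_0$, the left--hand side of the lemma is bounded by $o_{\sss_0,\delta_0}(1)+C\,T_0^{-3/2}e^{-c/T_0}$, which is $\leq\varrho$ once $\sss_0$ and $\delta_0$ are chosen small enough.

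The step I expect to be the main obstacle is the scale bookkeeping: the rescaling factor $\sqrt{2(\sss+t)}$, the heat--kernel scale $\sqrt{t_0-t}$, and the fixed localization annulus $B_3\setminus B_2$ must all be reconciled simultaneously, and this is only possible on the window $[T_0,aT_0]$ with $q\sss\leq T_0\leq\delta_0$ — precisely why the statement is restricted this way and why $a>1$ (and the relation $(1+2q_1)/a>1$) enters. The genuinely delicate point within this is the boundary term at $t=T_0$: it requires $\widetilde\alpha^\sss_{T_0}$ to be small on $B_{R_1}(0)$, i.e.\ $\widetilde\SS^\sss_{T_0}$ already close to $\epsi$ there, which must be extracted uniformly in $\sss$ from $\mathcal{P}5$ even though the ``rescaled time'' $\approx\tfrac12\log\!\big((\sss+T_0)/\sss\big)$ is unbounded; here the uniqueness of the expander in its topological class (Theorem~\ref{thm:unique}), together with the a~priori estimates of Steps~1--2, is what makes the argument close, and carrying this out rigorously rather than by a bare compactness sketch is the heart of the proof.
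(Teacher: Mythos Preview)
Your overall architecture is right and matches the paper: use the (localized) expander monotonicity for the unrescaled flows $\SS^\sss_t$, convert via the parabolic rescaling $\widetilde\SS^\sss_t=\SS^\sss_t/\sqrt{2(\sss+t)}$, and exploit that the scales $\sss+t$, $t_0-t$ and $T_0$ are all comparable on $[T_0,aT_0]$ when $T_0\in[q\sss,\delta_0]$. Your observation that the \emph{shifted} potential $\widehat\alpha_t=\beta_t+2(\sss+t)\theta_t$ satisfies the same heat--type equation as $\alpha_t$ and has $\nu\,\partial_s\widehat\alpha_t=-x^\perp+2(\sss+t)\underline{k}$ is correct and is exactly the right object; under the rescaling it becomes $2(\sss+t)\,\widetilde\alpha^\sss_t$, which is the quantity the paper singles out just before the lemma.

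There is, however, a genuine gap in how you handle the boundary term. You integrate the monotonicity over $[T_0,aT_0]$ and then try to make $\int_{\widetilde\SS^\sss_{T_0}\cap B_{R_1}(0)}(\widetilde\alpha^\sss_{T_0})^2$ small by asserting that $\widetilde\SS^\sss_{T_0}$ is $C^{1,\alpha}$--close to $\epsi$ on $B_{R_1}(0)$, citing ``$\mathcal P5$, Lemma~\ref{lem:far and short estimates}, and the uniqueness/compactness input of Step~2''. But none of these give you that. Property~$\mathcal P5$ concerns only $t=0$; the short--time estimate in Lemma~\ref{lem:far and short estimates} controls densities only for $t\le q_1\sss$, whereas $T_0$ can range up to $\delta_0$ independently of $\sss$; Lemma~\ref{lem:proximity expander} gives proximity to the halflines $P$ only in the annulus $A(r_1,(\sss+t)^{-1/8})$, not on $B_{R_1}(0)$; and the uniqueness/compactness lemma of Step~2 has \emph{as hypothesis} the bound $\int|\underline{k}-x^\perp|^2\le\varrho$, which is precisely what the present lemma must supply. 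In the paper's logical flow, closeness of $\widetilde\SS^\sss_t$ to $\epsi$ on $B_{R_1}(0)$ is \emph{deduced from} Lemma~\ref{integral} (via the continuity argument at the end of Step~3), not fed into its proof. So your boundary control at $t=T_0$ is circular.

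The fix is simple and is what the paper's hint (``recall that \dots $\widetilde\alpha^\sss_0\to 0$'') points to: integrate the localized monotonicity from $t=0$ rather than from $t=T_0$. The time integral is nonnegative, so restricting to $[T_0,aT_0]$ only helps, and now the sole boundary term is $\int_{\SS^\sss_0}\varphi\,\widehat\alpha_0^2\,\rho_{0,t_0}\,ds$, which is controlled \emph{directly from the construction}: with the constant in $\beta$ chosen so that $\widehat\alpha_0\equiv0$ on the glued--in expander piece $\epsi_\sss\cap B_{r_0\sqrt{2\sss}}(0)$ (this is exactly the content of $\mathcal P5$), one integrates $\partial_\ell\beta=\ell u'-u$ along each graphical arm using the decay in $\mathcal P4$ to get $|\widehat\alpha_0(x)|\le C(|x|^3+\sss)$ on $B_3(0)$. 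Together with $\rho_{0,t_0}(\cdot,0)\le C\,T_0^{-1/2}e^{-|x|^2/(CT_0)}$ and the length ratio bound $\mathcal P1$, a change of variables $|x|=\sqrt{T_0}\,\mu$ gives $\int_{\SS^\sss_0}\varphi\,\widehat\alpha_0^2\,\rho_{0,t_0}\,ds\le C\,T_0^{5/2}$, which after your conversion factor $C/T_0^2$ yields a contribution $\le C\sqrt{T_0}\le C\sqrt{\delta_0}$. Your treatment of the annular error term (exponentially small in $1/T_0$) goes through unchanged on $[0,aT_0]$, since $t_0-t\in[(c-a)T_0,cT_0]$ there. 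With this one modification the argument closes and gives the lemma; the ``delicate point'' you flagged disappears, because the initial data is explicit.
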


Take $\delta_0, \sss_0$ for which this lemma holds, consider also
$\delta_1, \sss_1$ for which Lemma~\ref{lem:far and short estimates} holds
and $\sss_2=\sss_2(\nu)$, $\delta_2=\delta_2(\nu)$ given by Lemma~\ref{lem:proximity expander}.
Set $\sss_3=\min\{\sss_0, \sss_1,\sss_2\}$, $\delta_3=\min\{\delta_0, \delta_1,\delta_2\}$ and
then, decrease $\sss_3$ and $\delta_3$, if necessary, so that 
$(\sss_3+\delta_3)^{-1/8}\geqslant 2R_1$, $q_1\sss_3\leqslant \delta_3$.\\
Having all the constants properly defined, we can now finish the proof. Set
$$
T_1=\sup\bigl\{\,\widetilde{T}\,\,|\,\; \widetilde\Theta^\sss_{x,t+r^2}(t)\leqslant 3/2+\varepsilon_0\quad\text{for all } x\in B_{K_0}(0),\; r^2\leqslant
 \eta_1^2, \; t\leqslant \widetilde{T}\,\bigr\}\,.
$$
It suffices to show that $T_1 \geqslant \delta_3$, for every $\sss\leqslant\sss_3$. The first point of Remark~\ref{rem:rescaled} implies that $T_1\geqslant q_1\sss$. 
Suppose that $T_1<\delta_3$ and set $T_2=T_1/a$. Lemma~\ref{integral} implies the existence of $t_1\in[T_2,T_1]$ such that
$$
\int_{\widetilde \SS^\sss_{t_1}\cap B_{R_1}(0)}|\underline{k}-x^{\perp}|^2\,ds\leqslant \varrho\,.
$$
One can now check that all the conditions for the previous step are met with $\SS$ being $\widetilde \SS^\sss_{t_1}$.
Therefore, we obtain that $\widetilde \SS^\sss_{t_1}$ is $\varepsilon$--close in $C^{1,\alpha}(B_{R_1}(0))$ to $\epsi$.
Denote by $\widehat{\SS}^\sss_l$, for $l\geqslant 0$, the curvature flow with initial condition $\widetilde \SS^\sss_{t_1}$. A simple
computation shows that
$$
\widehat{\SS}^\sss_l=\sqrt{1+2l}\,\, \widetilde \SS^\sss_{t_1+l\mu^2},
$$
where $\mu^2=2(\sss+t_1).$ Since $\widetilde \SS^\sss_{t_1}$ is $\varepsilon$--close in $C^{1,\alpha}(B_{R_1}(0))$ to $\epsi$, 
we again use the monotonicity formula to conclude that for every $l\leqslant q_1$, we have
$$
\widetilde\Theta^\sss_{x,t_1+l\mu^2+r^2}(t_1+l\mu^2)=\widehat{\Theta}^\sss_{x\sqrt{1+2l}\,,\, l+r^2(1+2l)}(l)\leqslant 3/2+\varepsilon_0\,,
$$
provided 
$$
\sqrt{1+2l}\,|x|\leqslant R_1-1 \quad\text{and}\quad(1+2l)r^2\leqslant q_1\,.
$$
Hence, for all $t_1\leqslant t\leqslant t_1(1+2q_1)$, there holds 
$$
\widetilde\Theta^\sss_{x,t+r^2}(t)\leqslant 3/2+\varepsilon_0\,,
$$
for every $x$ in $B_{K_0}(0)$ and $r^2\leqslant \eta_1^2$, which implies that
$T_1\geqslant t_1(1+2q_1)$. This is a contradiction because
$$
t_1(1+2q_1)\geqslant T_2(1+2q_1)=T_1(1+2q_1)/a>T_0\,.
$$
This concludes the proof of Proposition~\ref{main}.

\begin{rem}\ 
\begin{itemize}
\item Combining Theorem~\ref{evolnonreg} and Theorem~\ref{c2shorttime} (or Theorem~\ref{geosmoothexist}, if $\SS_0$ is geometrically smooth) we have a curvature flow (in the sense of Brakke) smooth for every positive time for every initial $C^2$ network $\SS_0$ (satisfying the hypothesis that at every multi--point the exterior unit tangent vectors of the concurring curves are mutually distinct -- see anyway Remark~\ref{ilrem1}).

\item Notice that in the above proof, we do not perform the ``gluing in'' construction at the regular $3$--points of the initial network. Hence, since the approximating flows are obtained from Theorem~\ref{c2shorttime} (or Theorem~\ref{geosmoothexist} if $\SS_0$ is smooth), the convergence of $\SS_t$ to $\SS_0$, as $t\to0$, locally around a regular $3$--point of $\SS_0$ is the one given by such theorems.\\
Clearly, one could apply the ``gluing in'' procedure also at the regular $3$--points (in such case the regular expander $\epsi$ to be ``glued in'' is simply a standard triod). Then, a natural question is if the convergence of $\SS_t\to\SS_0$ locally around such regular $3$--point is at least $C^1$ or better (depending on the regularity of $\SS_0$ and the level of compatibility conditions it satisfies) and what is the relation between this curvature flow and the one instead obtained by Theorem~\ref{c2shorttime}.

\item In the special situation when we want to use Theorem~\ref{evolnonreg} to ``restart'' a limit non--regular network $\SS_T$, after a singularity at time $T$ (if possible), far from its multi--points $O^1, O^2, \dots, O^m$ such network is smooth, hence, $\SS_t\to\SS_T$ in $C^\infty\loc\bigl(\R^2\setminus\{O^1, O^2, \dots, O^m\}\bigr)$, as $t\to T$.
\end{itemize}
\end{rem}

\subsection{Another approach to short--time existence of the flow for non--regular networks}

One may wonder if it is possible to define the motion by curvature of a non--regular initial network without introducing the notions of varifolds and of Brakke flow. The answer is actually positive, as it was shown in~\cite[Theorem~1.1]{LiMazPlSa}.

\begin{thm}\label{evononreg2}
Let $\SS_0$ be an initial network where all curves are of class $C^2$.
Then, there exists a time $T>0$ and an evolving family of regular networks $\SS_t$ for $t\in(0,T)$, such that $\SS_t\to \SS_0$, as $t\to 0$, in a certain ``strong'' sense.\\
Moreover, the set of the possible flows is classified by the collection of all (appropriate) self--similarly expanding networks coming out from each junction. 
\end{thm}

\begin{rem}
The convergence toward the initial datum as $t\to 0$, which we are going to describe in detail below, in particular, implies that the {\em set} $\SS_t$ converges to $\SS_0$ in Hausdorff distance or that the collection of maps $(\gamma^1_t,\ldots,\gamma^N_t)$ composing the networks $\SS_t$ 
converges uniformly to the family of maps $(\gamma^1_0,\ldots,\gamma^N_0)$ that describes $\SS_0$ (we underline that some of the $\gamma^i_0$ could be constant maps).
\end{rem}

The method used to prove the previous result relies on a central tool in geometric microlocal analysis: the blow--up of the domain and range spaces. 
One interprets the ``non--regular'' junctions as ``singularities of the space'' and ``desingularise'' them by blowing--up the domain $[0,1]$ of each curve and the ambient $\mathbb{R}^2$. We are going to try to describe the ideas and give an outline of the proof, addressing the interested reader to the original paper~\cite{LiMazPlSa} for the full detail.\\

For simplicity, we consider the special case of an initial network $\SS_0=(\gamma_0^1, \ldots,\gamma_0^4)$ composed by only four curves, each one given by a smooth map $[0,1]\to\mathbb{R}^2$, meeting at a non--regular junction
$\gamma^1_0(0)=\gamma^2_0(0)=\gamma^3_0(0)=\gamma^4_0(0)$.
The eventual solution will be an evolving network with {\em five} curves.\\
We first define the {\em blow--up of the domain}. We may regard the entire network as a collection of mappings from a disjoint union of regions in the $(x,t)$ plane. For any $j = 1, \ldots, 4$, let $Q^{j}=\{ (x,t) \in \mathbb{R}^2\,\,\vert\,\, 0 \leqslant t,\ 0\leqslant x\leqslant 1\}$ be the domain parametrizing the evolution of the initial curve $\gamma^{j}_0$. We introduce parabolic polar coordinates defined near $(0,0)$ as 
$$\rho = \sqrt{t + x^2} \geqslant 0,\qquad\qquad \omega = \arcsin(t/\rho^2) = \arccos(x/\rho) \in [0, \pi/2],$$
hence,
$$
(t,x) = (\rho \cos \omega, \rho^2 \sin \omega).
$$
We define
$$
Q_h^{j}=[ Q^{j}, (0,0); dt]\subseteq\R\times \R^+ 
$$
as the set obtained by replacing the corner point $(0,0)$ with the corresponding ``faces'' $\{\rho = 0, 0 \leqslant \omega \leqslant \pi/2\}$. These are called the {\em front faces} of $Q_h^{j}$ and are denoted with $\ff$. Each $Q^{j}_h$ has then as a boundary: a front face $\ff$, two {\em side faces} $\lf$, $\rf$ and the {\em bottom face} $\bff$ (see the following figure).
\begin{figure}[H]
\begin{center}
\begin{tikzpicture}[scale=1.5]
\draw[black, scale=0.65]
(2.5,0)--(2.5,2.5)
(0,0)--(0,2.5)
(0,0)--(2.5,0);
\draw[black, scale=0.65, shift={(2.5,2.5)}, rotate=0]
(0,0)to[out= -45,in=135, looseness=1] (0.1,-0.1)
(0,0)to[out= -135,in=45, looseness=1] (-0.1,-0.1);
\draw[black, scale=0.65, shift={(0,2.5)}, rotate=0]
(0,0)to[out= -45,in=135, looseness=1] (0.1,-0.1)
(0,0)to[out= -135,in=45, looseness=1] (-0.1,-0.1);
\draw[white]
(2.5,-0.335)--(2.55,-0.335);
\path[font=\small]
(0,0)[left] node{$(0,0)$}
(1.63,0)[right] node{$(0,1)$}
(0.83,0)[above] node{$x$}
(1.83,1.3)[above] node{$t$}
(-0.2,1.3)[above] node{$t$};
\end{tikzpicture}\qquad\qquad
\begin{tikzpicture}
\draw[white]
(3.95,0)--(4,0);
\draw[black]
(2.5,0)--(2.5,2.5)
(0,1)--(0,2.5)
(1,0)--(2.5,0);
\draw[color=black,scale=1,domain=0: 1.57,
smooth,variable=\t,shift={(0,0)},rotate=0]plot({1.*sin(\t r)},
{1.*cos(\t r)}) ; 
\path[font=\small]
(1.8,-0.05)[below]node{$\bff$}
(1.05,0.65)[above] node{$\ff$}
(2.60,1.5)[right] node{$\rf$}
(-0.05,1.5)[left] node{$\lf$};
\draw[black, scale=0.65, shift={(3.84,3.84)}, rotate=0]
(0,0)to[out= -45,in=135, looseness=1] (0.1,-0.1)
(0,0)to[out= -135,in=45, looseness=1] (-0.1,-0.1);
\draw[black, scale=0.65, shift={(0,3.84)}, rotate=0]
(0,0)to[out= -45,in=135, looseness=1] (0.1,-0.1)
(0,0)to[out= -135,in=45, looseness=1] (-0.1,-0.1);
\end{tikzpicture}
\end{center}
\caption{The spaces $Q^j$ and $Q_h^j$.}
\label{Fig1}
\end{figure}
The front face $\ff$ is given by $\rho = 0$ in local parabolic polar coordinates; the left and right faces are the vertical sides ``above'' the corresponding front face where $\omega = \pi/2$ and the bottom face is the initial face $t=0$, at $\omega = 0$.\\
The solution $\gamma^{j}(t,x)$ will be defined on $Q^{j}_h$ rather than $Q^{j}$. It has initial condition $\gamma^{j}_0(x)$ on $\bff$ and satisfies the ``matching'' (Herring) conditions along the left and right faces. Its behavior on the front face is the key issue to address.\\
The evolving network $\SS_t$ will also include a new curve $\gamma^5$ which is defined on the set $P^5=\{ (t,x) \in \mathbb{R}^2\,\,\vert\,\, 0 \leqslant t,\ 0\leqslant x\leqslant\sqrt{t}\}$. The fact that this region shrinks to a point at $t=0$ corresponds to the fact that indeed the curve $\gamma^5(t,\cdot)$ disappears, as $t\to 0$.\\
We also blow--up the region $P^5$ parabolically at $(0,0)$, obtaining 
$$
P^5_h = [ P^5, (0,0); dt].
$$
In parabolic polar coordinates defined exactly as above, this space has coordinates $(\rho, \omega)$, with $\rho \geqslant 0$ and $0 \leqslant \omega \leqslant \arcsin (1/2) = \pi/6$.\\
We then define
$$
Q_h = \bigsqcup_{j=1}^4 Q^{j}_h, \qquad Q = \bigsqcup_{j=1}^4 Q^{j}. 
$$
The space $Q_h \bigsqcup P_h^5$ is the ``desingularized domain'' of the evolving network $\SS_t$.\\
Similarly, we now consider the {\em blow--up of the range}. The dilation properties of the self--similar expanding networks suggest that the
homogeneity in the range should also be emphasized. In other words, 
we have to introduce the change of variable $z \in \mathbb{R}^2$ to $w = z/\sqrt{2t}$ in the range. We formalize this as follows. We define $Z = \mathbb{R}^+_t \times \mathbb{R}^2_z$ and we consider the space $Z_h = [Z, (0,O) ; dt]$
obtained from $Z$ by taking the parabolic blow--up of $Z$ at the junction
$O$ at $t=0$. This parabolic blow--up is defined exactly as before, by replacing each $(0,O)$ with the inward--pointing spherical parabolic normal bundle. As above, this becomes more tangible in locally defined parabolic polar coordinates. Suppose that $O = (0,0)$ and define
$$
R = \sqrt{t + |z|^2}, \qquad \ \Theta = (t/ R^2, z/R) = (\Theta_0, \Theta'),
$$
then $Z_h$ has a front face $\ff = \{R=0\}$ and a bottom face $\bff = \{\Theta_0 = 0\}$. There is a codimension-two corner where these two faces intersect. 
\begin{figure}[H]
\begin{center}
\begin{tikzpicture}[scale=1.5]
\draw[black, scale=0.65]
(0,0)--(-1.25,-1.25)
(0,0)--(0,2.5)
(0,0)--(2.5,0);
\draw[black, scale=0.65,shift={(-1.25,-1.25)}, rotate=135]
(0,0)to[out= -45,in=135, looseness=1] (0.1,-0.1)
(0,0)to[out= -135,in=45, looseness=1] (-0.1,-0.1);
\draw[black, scale=0.65, shift={(2.5,0)}, rotate=-90]
(0,0)to[out= -45,in=135, looseness=1] (0.1,-0.1)
(0,0)to[out= -135,in=45, looseness=1] (-0.1,-0.1);
\draw[black, scale=0.65, shift={(0,2.5)}, rotate=0]
(0,0)to[out= -45,in=135, looseness=1] (0.1,-0.1)
(0,0)to[out= -135,in=45, looseness=1] (-0.1,-0.1);
\path[font=\small]
(-1,-1)[above] node{$x_1$}
(1.6,0.05)[above] node{$x_2$}
(-0.16,1.5)[above] node{$t$};
\end{tikzpicture}\qquad\qquad
\begin{tikzpicture}
\draw[black]
(-0.5,-0.5)to[out= 180,in=-135, looseness=1] (-1,0)
(-0.5,-0.5)to[out= 0,in=-135, looseness=1] (1,0);
\draw[dashed, black]
(-1,0)--(-2,0)
(0.5,0.5)to[out= 180,in=45, looseness=1] (-1,0)
(0.5,0.5)to[out= 0,in=45, looseness=1] (1,0);
\draw[black, shift={(-0.5,-0.5)}]
(0,0)--(-0.75,-0.75);
\draw[black]
(0,1)--(0,2.5)
(1,0)--(2.5,0);
\draw[black, shift={(0,1)}] plot[smooth,domain=-1:0] (\x, {\x^2});
\draw[black, shift={(0,1)}] plot[smooth,domain=0:1] (\x, {-\x^2});
\draw[black, shift={(-1.25,-1.25)}, scale=1, rotate=135]
(0,0)to[out= -45,in=135, looseness=1] (0.1,-0.1)
(0,0)to[out= -135,in=45, looseness=1] (-0.1,-0.1);
\draw[black, shift={(2.5,0)}, scale=1, rotate=-90]
(0,0)to[out= -45,in=135, looseness=1] (0.1,-0.1)
(0,0)to[out= -135,in=45, looseness=1] (-0.1,-0.1);
\draw[black, shift={(0,2.5)}, scale=1, rotate=0]
(0,0)to[out= -45,in=135, looseness=1] (0.1,-0.1)
(0,0)to[out= -135,in=45, looseness=1] (-0.1,-0.1);
\path[font=\small]
(-1.5,-1.5)[above] node{$x_1$}
(2.5,0.05)[above] node{$x_2$}
(-0.24,2.25)[above] node{$t$};
\end{tikzpicture}
\end{center}
\caption{From $Z=\mathbb{R}^+_t\times\mathbb{R}^2_z$ to $Z_h$}
\label{fig2}
\end{figure}
Now we regard each $\gamma^{j}$ as a map into $\mathbb{R}^+ \times \R^2$ via $(t,x) \mapsto (t, \gamma^{j}(t,x))$. We ``lift'' this map by blowing--up both the domain and the range. In other words, each ``lift'' should be regarded as a map
$$
\gamma^{j}(t,x): Q_h^{j} \longrightarrow Z_h.
$$
We then want to write the equation satisfied by the ``lifted'' map.
In the computations it is usually simpler to work in coordinate systems different than the parabolic polar coordinates above, in particular, we introduce two sets of projective coordinates near the front face at $(0,0)$. We define $\tau=\sqrt{2t}$ and $s={x}/{\sqrt{2t}}$. Then, $(\tau, s)$ is a nondegenerate coordinate system on $Q_h^{j}$ near $\ff$ away from $\bff$, moreover,
we may also use these coordinates in $P^5_h$ near its front face where $\rho = 0$. Notice that in each $Q^{j}_h$, we have $s \in [0,+\infty)$, while in $P^5_h$, $0 \leqslant s \leqslant 1$. The variable $\tau$ is a defining function for the front face in each of these cases, in the sense that it vanishes exactly on $\ff$ and is ``comparable'' with $\rho$ on any compact set in $Q_h^{j}$ which does not intersect $\rf$ and on the entirety of $P^5_h$. The variable $s$ is a defining function for $\lf$ or $\rf$ and identifies the interior of $\ff$ with $\mathbb{R}^+$.\\
The $(\tau,s)$ coordinates are not valid near the bottom face where $t=0$ and in particular, near the intersection of $\ff$ and $\bff$. Near this corner we introduce an alternate set of projective coordinates $y=x$ and $T={t}/{x^2}$. 
These are singular along the positive $t$--axis; the variable $y$ is now the defining function for $\ff$ and $T$ is the defining function for $\bff$.\\
There are useful projective coordinates for $Z_h$ too, namely
$$
\tau = \sqrt{2t}, \qquad w = z/\sqrt{2t},
$$
thus, $\tau = 0$ is a defining function for $\ff$, while $w$ is a projectively natural linear coordinate for $\ff$. These coordinates are valid away from $\bff$. Thus $(t, \gamma^{j}(t,x))$ ``lifts'' to $(\tau^2/2, \tau q\eta^{j}(\tau,s))$.\\
We now consider the evolution equation in terms of these blow--ups: we ``lift'' the maps $\gamma^{j}$ to maps between $Q_h^{j}$ and $Z_h$ by simply writing 
\begin{equation}\label{motion}
\partial_t\gamma=\frac{\partial^2_{x} \gamma}{\vert\partial_x\gamma\vert^2},
\end{equation}
using the coordinate systems $(\tau,s)$ on $Q_h^{j}$ and $(\tau, w)$ on $Z_h$.\\
We set $\gamma^{j} = \tau \eta^{j}$, which corresponds to the introduction of the projective coordinate on $Z_h$ and, for simplicity, we drop the superscript $j$ for the time being. As we noticed earlier, if $\gamma$ is an arc in an expanding soliton, then $\eta$ depends only on $s$.\\
Since $\partial_t = \tau^{-2}( \tau\partial_\tau - s\partial_s)$ and $\partial_x = \tau^{-1} \partial_s$, equation~\eqref{motion} becomes
$$
\frac{1}{\tau^2} (\tau \partial_\tau - s\partial_s) (\tau \eta) = \frac{ \tau^{-2} \partial_s^2 (\tau \eta)}{ | \tau^{-1} \partial_s (\tau \eta)|^2},
$$
or finally,
\begin{equation}
(\tau \partial_\tau + 1 - s\partial_s) \eta = \frac{ \partial_s^2 \eta}{|\partial_s\eta|^2}.
\label{liftedeq1}
\end{equation}
In particular, if $\gamma$ is an expander, so $\eta = \eta(s)$, this yields the dimensionally reduced expander equation
\begin{equation}
\frac{\partial_s^2 \eta}{|\partial_s\eta|^2} + (s\partial_s - 1) \eta = 0.
\label{stationary}
\end{equation}
Clearly, the equations are complemented with suitable boundary conditions which are naturally specified along all of the side faces.\\
Finally, it is not immediate how to specify an ``initial'' condition along any of the front faces. To determine this, we remark that we expect the ``lifted'' map $\eta^j(\tau,s)$ to be bounded as $\tau \to 0$, hence, we assume that $\eta^j$ is actually smooth up to the front face. This means that it has a boundary value $\eta^j_0(s)$. Noticing that $\tau\partial_\tau \eta^j |_{\tau=0} = 0$, we deduce that 
$$
\frac{\partial_s^2 \eta_0}{|\partial_s \eta_0|^2} + (s\partial_s - 1) \eta_0 = 0,
$$
which is precisely the expander equation. In other words, expander arise naturally as the initial conditions for the flow along the front faces.

\begin{rem}
We can now specify in which sense $\SS_t\to\SS_0$ in Theorem~\ref{evononreg2}. In the blown--up spaces, the number of curves of the initial datum and of the evolving network $\SS_t$ for $t>0$, coincides. Hence, we can consider a suitable convergence of the maps $\gamma^i_t$ to the initial ones $\gamma^i_0$, for each $i$, for instance, we can require that the convergence is in $C^2$, or even smooth, as $t\to 0$.
\end{rem}

Our last step is then solving the ``lifted'' PDE's system.
A rather delicate part of the proof is the construction
of approximate solutions, i.e., a family of networks $\widehat{\SS}_t$ which converges to the initial datum $\SS_0$ and which satisfies the flow equations up to an error that vanishes for all orders at $t=0$. 
To do so, we proceed at follow: by blowing--up the non--regular junctions we can determine the entire Taylor series of the solution whose first term satisfies the expanders equation. We are then able to prove that,
once the first term of the series is determined, all the other term (up to an error) can be obtained with a recursion argument. Thus, to determine the entire series one choose a specific expander at the non--regular junction which actually captures the geometry of the evolving network, in particular how the non--regular junction breaks apart. Then, we still need to get rid of the rapidly vanishing error term to get an exact solution. This is accomplished by an existence proof using a priori estimates. 

\begin{rem}\label{uniqueness-nonreg}
It is clear from the strategy of the proof, that with this alternative approach
(and this quite strong definition of solutions) we have as many different flows as choices, for every non--regular junction of the initial network, of a self--similar expanders compatible with the junction.
In particular, when at every junction there exists a unique expander coming out from the cone $P$ generated by the inner unit tangent vectors of the concurring curves, the produced solution is unique.
As a remarkable example, there is a unique tree--like, connected, regular expander, asymptotic to a standard cross, see Corollary~\ref{crossunique} (composed of four halflines from the origin with opposite directions pairs and forming angles of $120/60$ degrees between them), generated by the exterior unit tangents of the four concurring curves at the $4$--point which arises as the collapse with bounded curvature of a curve in the ``interior'' of $\SS_t$, as $t\to T$, described in Proposition~\ref{bdcurvcollapse}. The same conclusion holds also when $P$ is composed of three halflines from the origin~\cite{schn-schu}.\\
Hence, if all the junctions of the network are of these types, by means of this theorem, the flow can be started (or restarted, for instance in the situation of the collapse of single isolated curves with bounded curvature, as we said above) in a unique way. 
\end{rem}

\medskip

\begin{rem}\ 
\begin{itemize} 
\item One could apply the procedure of Theorem~\ref{evononreg2} also at any regular $3$--point and in such case the associated regular expander is simply a standard triod, hence the resulting flow is unique, moreover, it must coincide with the one obtained by means of Theorem~\ref{c2shorttime}, as it can be shown that it is among the flows of the class ${\mathcal N}$ defined in such theorem.
\item When we use Theorems~\ref{evolnonreg} or~\ref{evononreg2} to ``restart'' a limit non--regular network $\SS_T$ after a singularity at time $T$ (if possible), if such network is smooth far from its multi--points $O^1, O^2, \dots, O^m$, there holds $\SS_t\to\SS_T$ in $C^\infty\loc\bigl(\R^2\setminus\{O^1, O^2, \dots, O^m\}\bigr)$, as $t\to T$, by the local estimates for the motion by curvature (see~\cite{eckhui2}).
\end{itemize}
\end{rem}

Differently from Theorem~\ref{evononreg2}, it is not clear if Theorem~\ref{evolnonreg} produces a unique solution when the expander associated to every junction is unique. This is related to the use of the varifold convergence to the initial network in place of a stronger one.
 
\begin{oprob}\label{ooo9999} If there is a unique regular expander asymptotic to the family of halflines generated by the inner unit tangent vectors of the concurring curves to a multi--point of $\SS_0$, then does Theorem~\ref{evolnonreg} produce a {\em unique} curvature flow?
\end{oprob}

We can also state the open problem in the specific case of a triod and of a standard cross.

\begin{oprob}\label{ooo9999st} In the case of a single triple junction (possibly non regular), Theorem~\ref{evolnonreg} produces a {\em unique} curvature flow?
\end{oprob}

\begin{oprob}\label{ooo9999s} If the inner unit tangent vectors of the concurring curves to a $4$--point of $\SS_0$ generate a standard cross, Theorem~\ref{evolnonreg} produces a {\em unique} curvature flow?
\end{oprob}

We underline here that whatever procedure one decides to apply to have a curvature flow of a general network such that the networks of the flow are regular for every (small) positive time, uniqueness can be impossible, as is shown in the following figure:
\begin{figure}[H]
\begin{center}
\begin{minipage}[c]{.35\textwidth}
\begin{tikzpicture}
\draw[color=black,scale=1,domain=-3.141: 3.141,
smooth,variable=\t,shift={(-5,2)},rotate=0]plot({1.42*sin(\t r)},
{1.42*cos(\t r)});
\draw[color=black,thick,scale=1,domain=0: 1.5708,
smooth,variable=\t,shift={(-6,2)},rotate=0]plot({1*sin(\t r)},
{1*cos(\t r)});
\draw[color=black,thick,scale=1,domain=0: 1.5708,
smooth,variable=\t,shift={(-4,2)},rotate=180]plot({1*sin(\t r)},
{1*cos(\t r)});
\draw[color=black,thick,scale=1,domain=0: 1.5708,
smooth,variable=\t,shift={(-5,1)},rotate=90]plot({1*sin(\t r)},
{1*cos(\t r)});
\draw[color=black,thick,scale=1,domain=0: 1.5708,
smooth,variable=\t,shift={(-5,3)},rotate=-90]plot({1*sin(\t r)},
{1*cos(\t r)});
\path[font=\large]
(-6,3.1) node[left] {$P^4$}
(-4,3.1) node[right] {$P^3$}
(-6,.9) node[left] {$P^1$}
(-4,.9) node[right] {$P^2$}
(-5.3,2.47) node[below] {$O$};
\draw[color=black!40!white,rotate=69,shift={(.2,.3)}]
(-0.05,2.65)to[out= -90,in=150, looseness=1] (0.17,2.3)
(0.17,2.3)to[out= -30,in=100, looseness=1] (-0.12,2)
(-0.12,2)to[out= -80,in=40, looseness=1] (0.15,1.7)
(0.15,1.7)to[out= -140,in=90, looseness=1.3](0,1.1)
(0,1.1)--(-.2,1.35)
(0,1.1)--(+.2,1.35);
\draw[color=black!40!white,rotate=111,shift={(3.35,-1.05)}]
(-0.05,2.65)to[out= -90,in=150, looseness=1] (0.17,2.3)
(0.17,2.3)to[out= -30,in=100, looseness=1] (-0.12,2)
(-0.12,2)to[out= -80,in=40, looseness=1] (0.15,1.7)
(0.15,1.7)to[out= -140,in=90, looseness=1.3](0,1.1)
(0,1.1)--(-.2,1.35)
(0,1.1)--(+.2,1.35);
\end{tikzpicture}
\end{minipage}
\quad 
\begin{minipage}[c]{.25\textwidth}
\begin{tikzpicture}
\draw[color=black,scale=1,domain=-3.141: 3.141,
smooth,variable=\t,shift={(1,0)},rotate=0]plot({1.42*sin(\t r)},
{1.42*cos(\t r)});
\draw[color=black,thick,scale=1,domain=0: 1.5708,
smooth,variable=\t,shift={(0,0)},rotate=0]plot({1*sin(\t r)},
{1*cos(\t r)});
\draw[color=black,thick,scale=1,domain=0: 1.5708,
smooth,variable=\t,shift={(2,0)},rotate=180]plot({1*sin(\t r)},
{1*cos(\t r)});
\draw[color=black,thick,scale=1,domain=0: 1.5708,
smooth,variable=\t,shift={(1,-1)},rotate=90]plot({1*sin(\t r)},
{1*cos(\t r)});
\draw[color=black,thick,scale=1,domain=0: 1.5708,
smooth,variable=\t,shift={(1,1)},rotate=-90]plot({1*sin(\t r)},
{1*cos(\t r)});
\filldraw[color=white,scale=0.2,domain=-3.141: 3.141,
smooth,variable=\t,shift={(5,0)},rotate=0]plot({1.42*sin(\t r)},
{1.42*cos(\t r)});
\draw[color=black, thick, shift={(1,0)}]
(-0.05,-0.05)to[out=45,in=-135, looseness=1](0.05,0.05);
\draw[color=black, thick, shift={(1,0)}]
(-0.05,-0.05)to[out=165,in=10, looseness=1](-0.29,-0.045);
\draw[color=black, thick, shift={(1,0)}]
(-0.05,-0.05)to[out=-75,in=110, looseness=1](0.045,-0.29);
\draw[color=black, thick, shift={(1,0)},rotate=180]
(-0.05,-0.05)to[out=165,in=10, looseness=1](-0.29,-0.045);
\draw[color=black, thick, shift={(1,0)}, rotate=180]
(-0.05,-0.05)to[out=-75,in=110, looseness=1](0.045,-0.29);
\path[font=\large]
(0,1.1) node[left] {$P^4$}
(2,1.1) node[right] {$P^3$}
(0,-1.1) node[left] {$P^1$}
(2,-1.1) node[right] {$P^2$}
(1.4,0.67) node[below] {$O^1$}
(0.7,-0.13) node[below] {$O^2$};
\end{tikzpicture}

\quad

\begin{tikzpicture}[rotate=90]
\draw[color=black,scale=1,domain=-3.141: 3.141,
smooth,variable=\t,shift={(1,0)},rotate=0]plot({1.42*sin(\t r)},
{1.42*cos(\t r)});
\draw[color=black,thick,scale=1,domain=0: 1.5708,
smooth,variable=\t,shift={(0,0)},rotate=0]plot({1*sin(\t r)},
{1*cos(\t r)});
\draw[color=black,thick,scale=1,domain=0: 1.5708,
smooth,variable=\t,shift={(2,0)},rotate=180]plot({1*sin(\t r)},
{1*cos(\t r)});
\draw[color=black,thick,scale=1,domain=0: 1.5708,
smooth,variable=\t,shift={(1,-1)},rotate=90]plot({1*sin(\t r)},
{1*cos(\t r)});
\draw[color=black,thick,scale=1,domain=0: 1.5708,
smooth,variable=\t,shift={(1,1)},rotate=-90]plot({1*sin(\t r)},
{1*cos(\t r)});
\filldraw[color=white,scale=0.2,domain=-3.141: 3.141,
smooth,variable=\t,shift={(5,0)},rotate=0]plot({1.42*sin(\t r)},
{1.42*cos(\t r)});
\draw[color=black, thick, shift={(1,0)}]
(-0.05,-0.05)to[out=45,in=-135, looseness=1](0.05,0.05);
\draw[color=black, thick, shift={(1,0)}]
(-0.05,-0.05)to[out=165,in=10, looseness=1](-0.29,-0.045);
\draw[color=black, thick, shift={(1,0)}]
(-0.05,-0.05)to[out=-75,in=110, looseness=1](0.045,-0.29);
\draw[color=black, thick, shift={(1,0)},rotate=180]
(-0.05,-0.05)to[out=165,in=10, looseness=1](-0.29,-0.045);
\draw[color=black, thick, shift={(1,0)}, rotate=180]
(-0.05,-0.05)to[out=-75,in=110, looseness=1](0.045,-0.29);
\path[font=\large,rotate=-90, shift={(-1,1)}]
(0,1.1) node[left] {$P^4$}
(2,1.1) node[right] {$P^3$}
(0,-1.1) node[left] {$P^1$}
(2,-1.1) node[right] {$P^2$};
\path[font=\large]
(1.6,0.5) node[below] {$O^1$}
(1,-0.43) node[below] {$O^2$};
\end{tikzpicture}
\end{minipage}
\end{center}
\begin{caption}{An example of non--uniqueness of the flow.\label{nonuniqnet}}
\end{caption}
\end{figure}
\noindent indeed, by the symmetry of the initial network with respect to rotations of $90$ degrees, the rotation of any admissible evolution must still be a solution.

\begin{rem}\label{nouniq2}\ 
\begin{itemize}
\item In general, given the set $P$ composed of a finite union of $n$ halflines for the origin, with $n>3$, there are {\em many} 
regular expander asymptotic to $P$, even restricting ourselves to the class of the tree--like ones (see Figure~\ref{nonuniqexp}, for instance). 
One would like to have, at least for the ``generic'' family of halflines $P$, a sort of ``selection principle'' to choose the ``best'' regular expander $\epsi$ at a multi--point with more than $3$ concurring curves, in both procedures.
\item A simple uniqueness statement (which can hold, by what we said, only for a ``generic'' initial network) for the curvature flow obtained by Theorem~\ref{evolnonreg} or by Theorem~\ref{evononreg2} is missing at the moment.
\end{itemize}
\end{rem}

\begin{oprob}\label{ooo15}
For a ``generic'' family of networks $P$ given by $n$ halflines for the origin, 
does there exist a ``selection principle'' to choose the ``best'' regular expander $\epsi$ asymptotic to $P$, to use in performing the procedure of Theorem~\ref{evolnonreg}
or Theorem~\ref{evononreg2}?
\end{oprob}

\begin{oprob}\label{ooo16}
In what class of curvature flows, for a ``generic'' initial non--regular network $\SS_0$, is the flow given by Theorem~\ref{evolnonreg} or Theorem~\ref{evononreg2} unique?
\end{oprob}

\section{Restarting the flow after a singular time}\label{restart}

By means of the analysis of Section~\ref{resum} and the description of the limit network $\SS_T$ at a singular time in Theorems~\ref{ppp2} and~\ref{ppp1}, we can continue the flow by applying the ``restarting'' Theorem~\ref{evolnonreg} (or possibly its extension, see Remark~\ref{felix}). 
We then have an ``extended'' curvature flow for some positive time $T^\prime>T$ (if we are 
not in some of the situations, discussed in Section~\ref{resum}, when the flow ``naturally ends'' 
-- for instance, if the whole network collapses and vanishes, as $t\to T$) 
which is a Brakke flow (possibly without equality, see Remark~\ref{noeq}) in the time interval $(0,T')$ and a smooth curvature flow in $(0,T)\cup(T,T')$.

The passage through a singularity when (locally) a single curve vanishes and two triple junctions collapse forming a regular $4$--point in $\Omega$ is particularly interesting, 
as this type of singularities with bounded curvature, that we called of Type--0 (see Remark~\ref{type0}), is the only possible one for the motion of a tree--like network, assuming that \textbf{M1} holds.
We call this change in the structure of the network a ``standard transition''
(see Figures~\ref{figstandard},~\ref{figstatheta}).\\ 
We recall that while the curvature stays uniformly bounded for $t\leqslant T$, it is of order $1/\sqrt{T-t}$ as $t>T$ (and the ``new'' curve has length of order $\sqrt{T-t}$).
\begin{figure}[H]
\begin{center}
\begin{tikzpicture}[rotate=90,scale=0.6]
\draw[color=black!40!white, shift={(0,-4.8)}]
(-0.05,2.65)to[out=-90,in=150,looseness=1] (0.17,2.3)
(0.17,2.3)to[out=-30,in=100,looseness=1] (-0.12,2)
(-0.12,2)to[out=-80,in=40,looseness=1] (0.15,1.7)
(0.15,1.7)to[out=-140,in=90,looseness=1.3](0,1.1)
(0,1.1)--(-.2,1.35)
(0,1.1)--(+.2,1.35);
\draw[color=black!40!white, shift={(0,-13.8)}]
(-0.05,2.65)to[out=-90,in=150,looseness=1] (0.17,2.3)
(0.17,2.3)to[out=-30,in=100,looseness=1] (-0.12,2)
(-0.12,2)to[out=-80,in=40,looseness=1] (0.15,1.7)
(0.15,1.7)to[out=-140,in=90,looseness=1.3](0,1.1)
(0,1.1)--(-.2,1.35)
(0,1.1)--(+.2,1.35);
\path[font=\small]
(-1,-2.9) node[above]{$t\to T$}
(-1,-11.9) node[above]{$t>T$}
(2.8,-.2) node[below]{$\SS_t$}
(2.8,-18.4) node[below]{$\SS_t$}
(2.8,-9) node[below]{$\SS_T$};

\draw[color=black]
(-0.05,2.65)to[out=30,in=180,looseness=1] (2,3)
(-0.05,2.65)to[out=-90,in=150,looseness=1] (0.17,2.3)
(-0.05,2.65)to[out=150,in=-20,looseness=1] (-2,3.3)
(0.17,2.3)to[out=-30,in=100,looseness=1] (-0.12,2)
(-0.12,2)to[out=-80,in=40,looseness=1] (0.15,1.7)
(0.15,1.7)to[out=-140,in=90,looseness=1](0,1.25)
(0,1.25)to[out=-30,in=180,looseness=1] (1.9,0.7)
(0,1.25)to[out=-150,in=-15,looseness=1] (-1.9,1.2);
\draw[color=black,dashed]
(-2,3.3)to[out=160,in=-20,looseness=1](-2.7,3.5)
 (-1.9,1.2)to[out=165,in=-15,looseness=1](-2.7,1.3)
(1.9,0.7)to[out=0,in=-160,looseness=1] (2.6,0.9)
(2,3)to[out=0,in=160,looseness=1] (2.8,2.9);

\draw[color=black!40!white,shift={(0,-10)}]
(0,2.65)--(1.73,3.65)
(0,2.65)--(1.73,1.65)
(0,2.65)--(-1.73,3.65)
(0,2.65)--(-1.73,1.65);
\draw[color=black,shift={(0,-10)}]
(0,2.65)to[out=-30,in=180,looseness=1] (1.9,2)
(0,2.65)to[out=-150,in=-15,looseness=1] (-1.9,2.3)
(0,2.65)to[out=30,in=180,looseness=1] (2.2,3.3)
(0,2.65)to[out=150,in=-20,looseness=1] (-2.2,3.1);
\draw[color=black,dashed,shift={(0,-10)}]
(-2.2,3.1)to[out=160,in=-20,looseness=1](-3,3.3)
 (-1.9,2.3)to[out=165,in=-15,looseness=1](-2.7,2.4)
(1.9,2)to[out=0,in=-160,looseness=1] (2.6,2.2)
(2.2,3.3)to[out=0,in=160,looseness=1] (3,3.2);

\draw[color=black,scale=0.75,shift={(0,-24)},rotate=-30]
(0,2.65)to[out=-30,in=180,looseness=1] (1.9,2.1);
\draw[color=black,scale=0.75,dashed,shift={(0,-24)},rotate=-30]
(1.9,2.1)to[out=0,in=-160,looseness=1] (2.8,2.4);
\draw[color=black,scale=0.75,shift={(2.65,-24)},rotate=30]
(0,2.65)to[out=30,in=180,looseness=1] (2,3);
\draw[color=black,dashed,scale=0.75,shift={(2.65,-24)},rotate=30]
(2,3)to[out=0,in=160,looseness=1] (2.9,2.9);
\draw[color=black,scale=0.75,shift={(0,-24)},rotate=30]
(0,2.65)to[out=-150,in=-15,looseness=1] (-1.9,2.6);
\draw[color=black,dashed,scale=0.75,shift={(0,-24)},rotate=30]
(-1.9,2.6)to[out=165,in=-15,looseness=1](-2.8,3.0);
\draw[color=black,scale=0.75,shift={(-2.65,-24)},rotate=-30]
(0,2.65)to[out=150,in=-20,looseness=1] (-2,3.3);
\draw[color=black,dashed,scale=0.75,shift={(-2.65,-24)},rotate=-30]
(-2,3.3)to[out=160,in=-20,looseness=1](-3,3.5);
\draw[color=black,scale=0.75,shift={(0,-14)}]
(-1.32,-7.7)to[out=0,in=-150,looseness=1]
(-0.65,-7.4)to[out=30,in=150,looseness=1]
(0.65,-8)to[out=-30,in=180,looseness=1](1.32,-7.7);
\end{tikzpicture}
\end{center}
\begin{caption}{The local description of a ``standard'' transition.\label{figstandard}}
\end{caption}
\end{figure}

\begin{figure}[H]
\begin{center}
\begin{tikzpicture}[scale=0.6,rotate=90]
\draw[color=black!40!white, shift={(0,-4.8)}]
(-0.05,2.65)to[out=-90,in=150,looseness=1] (0.17,2.3)
(0.17,2.3)to[out=-30,in=100,looseness=1] (-0.12,2)
(-0.12,2)to[out=-80,in=40,looseness=1] (0.15,1.7)
(0.15,1.7)to[out=-140,in=90,looseness=1.3](0,1.1)
(0,1.1)--(-.2,1.35)
(0,1.1)--(+.2,1.35);
\draw[color=black!40!white, shift={(0,-13.8)}]
(-0.05,2.65)to[out=-90,in=150,looseness=1] (0.17,2.3)
(0.17,2.3)to[out=-30,in=100,looseness=1] (-0.12,2)
(-0.12,2)to[out=-80,in=40,looseness=1] (0.15,1.7)
(0.15,1.7)to[out=-140,in=90,looseness=1.3](0,1.1)
(0,1.1)--(-.2,1.35)
(0,1.1)--(+.2,1.35);
\path[font=\small]
(-1,-2.9) node[above]{$t\to T$}
(-1,-11.9) node[above]{$t>T$}
(2.8,-.2) node[below]{$\SS_t$}
(2.8,-18.4) node[below]{$\SS_t$}
(2.8,-9) node[below]{$\SS_T$};
\draw[color=black, scale=0.6, shift={(0,3.5)}]
(0,1.5)to[out=30,in=180,looseness=1] (3.3,2.5)
(3.3,2.5)to[out=0,in=90,looseness=1] (5.43,0)
(0,-1.5)to[out=-30,in=180,looseness=1] (3.3,-2.5)
(3.3,-2.5)to[out=0,in=-90,looseness=1] (5.43,0);
\draw[color=black, scale=0.6,rotate=180, shift={(0,-3.5)} ]
(0,1.5)to[out=30,in=180,looseness=1] (3.3,2.5)
(3.3,2.5)to[out=0,in=90,looseness=1] (5.43,0)
(0,-1.5)to[out=-30,in=180,looseness=1] (3.3,-2.5)
(3.3,-2.5)to[out=0,in=-90,looseness=1] (5.43,0);
\draw[color=black, scale=0.6, shift={(0,3.5)}]
(0,1.5)to[out=-90,in=50,looseness=1](0,0.8)
to[out=-130, in=110,looseness=1](0,-0.8)
to[out=-70, in=90,looseness=1](0,-1.5);
\draw[color=black,scale=0.6, shift={(0,-11)}]
(0,0)to[out=30,in=180,looseness=1] (2.7,1.5)
(2.7,1.5)to[out=0,in=90,looseness=1] (4.73,0)
(0,0)to[out=-30,in=180,looseness=1] (2.7,-1.5)
(2.7,-1.5)to[out=0,in=-90,looseness=1] (4.73,0);
\draw[color=black,scale=0.6, shift={(0,-11)}, rotate=180]
(0,0)to[out=30,in=180,looseness=1] (2.7,1.5)
(2.7,1.5)to[out=0,in=90,looseness=1] (4.73,0)
(0,0)to[out=-30,in=180,looseness=1] (2.7,-1.5)
(2.7,-1.5)to[out=0,in=-90,looseness=1] (4.73,0);
\draw[color=black, scale=0.6, shift={(0,-26)}]
(2,0)to[out=60,in=180,looseness=1] (3.7,1)
(3.7,1)to[out=0,in=90,looseness=1] (5,0)
(2,0)to[out=-60,in=180,looseness=1] (3.7,-1)
(3.7,-1)to[out=0,in=-90,looseness=1] (5,0);
\draw[color=black,scale=0.6, shift={(0,-26)}, rotate=180]
(2,0)to[out=60,in=180,looseness=1] (3.7,1)
(3.7,1)to[out=0,in=90,looseness=1] (5,0)
(2,0)to[out=-60,in=180,looseness=1] (3.7,-1)
(3.7,-1)to[out=0,in=-90,looseness=1] (5,0);
\draw[color=black,scale=0.6, shift={(0,-26)}]
(2,0)to[out=180,in=-80,looseness=1](1,0)
(1,0)to[out=100,in=0,looseness=1](0,0);
\draw[color=black,scale=0.6, shift={(0,-26)}, rotate=180]
(2,0)to[out=180,in=-80,looseness=1](1,0)
(1,0)to[out=100,in=0,looseness=1](0,0);
\end{tikzpicture}
\end{center}
\begin{caption}{A ``standard'' transition for a $\Theta$--shaped
 network (double cell).\label{figstatheta}}
\end{caption}
\end{figure}
\noindent We remark that such transition, passing by $\SS_T$, is not symmetric: when $\SS_t\to\SS_T$, as $t\to T^-$, the exterior unit tangent vectors, hence the four angles between the curves, are continuous, while when $\SS_t\to\SS_T$, as $t\to T^+$, there is a ``jump'' in such angles, precisely there is an instantaneous ``switch'' between the angles of $60$ degrees and the angles of $120$ degrees at time $T$.

\begin{rem}\label{unirem999} Since there is a single expander ``coming out'' from the cone of the inner unit tangent vectors generated by the four concurring curves, we expect that by restarting the flow by means of Theorem~\ref{evolnonreg}, we get a {\em unique} evolution (see Problem~\ref{ooo9999s}).
\end{rem}

Coming back to the general situation, we list a series of facts 
when passing through a singularity.

\begin{itemize}
\item The total length of the evolving network $\SS_t$ is non increasing and continuous for 
every $t\in(0,T')$. Hence as a Brakke flow in the time interval $[0,T')$ it does not suffer from the 
phenomenon of ``sudden mass loss'' (see~\cite{brakke} and the recent work~\cite{kimton}).
\item For every $x_0\in\R^2$ and $t_0\in(0,+\infty)$, the Gaussian density function 
$\Theta_{x_0,t_0}(t):[0,\min\{t_0,T'\})\to\R$ 
is still non-increasing. The same for the entropy of $\SS_t$, see formula~\eqref{entropydef}.
 \item The uniform bound on length ratios survives the ``restarting'' procedure 
 with the same constant.
 \end{itemize}
 
These points follow easily by the (weak) continuity of the Hausdorff measures 
$\HH^1\res\SS_t$, see Remarks~\ref{are} and~\ref{var} 
(it is clear in the case of a standard transition).
 
\begin{itemize}
\item By the construction in the ``restarting'' Theorem~\ref{evolnonreg}, no new regions are created passing a singularity, their total number is non-increasing. In particular, a tree remains a tree after restarting (even if its ``structure'' changes).

\item The number of curves of the network is not increasing. To be more precise, if
at least a region vanishes the total number of curves decreases by at least three. In a standard transition, it remains the same.

\item The number of triple junctions of the network is non-increasing. To be more precise, if
at least a region vanishes the total number of triple junctions decreases by at least two. In a standard transition, it remains the same.
\end{itemize}
The fact that no new regions arise follows by the fact that we ``desingularise'' a multi--point, in Theorem~\ref{evolnonreg}, by gluing in a tree--like, connected, regular expander (which is an a priori choice, see Remark~\ref{connrem}). In doing that, by means of Euler's formula for trees, we can see that if the multi--point has order $n$, being the number of the regions equal to $n$, the number of triple junctions we will have in the restarted network in place of the single multiple junction is equal to $n-2$ and the number of curves is $2n-3$.\\
It is then easy to check the above statements if only one bounded region is collapsing since it must be bounded by $n$ curves. If instead, a group of regions is collapsing, we can get the conclusion by applying the same argument to the bounded ``macro--region'' that we obtain considering their union, which will be bounded by a piecewise smooth loop (in a way, we are ``forgetting'' the interior curves to such ``macro--region'' which will anyway be ``lost'' in the collapse).

\smallskip

Clearly, all these facts say that, in a sense, the ``topological complexity'' of the network is ``non-increasing'' passing through a singular time.

\smallskip

We finally mention here that also the bound on the ``embeddedness measure'' $E(t)$, which we will introduce in Section~\ref{dsuL}, survives the ``restarting'' procedure.

\section{Long time behavior}\label{llong}

Since we can repeat the restarting procedure at every singular time, 
either the flow naturally ends at some time $\widehat{T}$ (for instance, if the whole network collapses and vanishes, as $t\to \widehat{T}$) 
or we found ourselves in some of the situations described in Section~\ref{resum} 
where we have to decide how to continue the flow (related to the behavior at the boundary of $\Omega$), 
or we have an increasing sequence of singular--restarting times $T_i$ 
for the evolution of the network $\SS_t$. In this latter case it
follows by the ``topological'' conclusions in the previous section that among these times $T_i$, 
the number of the ones such that we have a non--standard transition is actually finite and depends only on $\SS_0$
(indeed, if a transition is non--standard, then at least one region vanishes during the transition and $\SS_0$ can have only a finite number of regions). 
Instead, we cannot conclude the same for the number of standard transitions that a priori could be infinite. Even worse, notice that Theorem~\ref{evolnonreg} does not give any estimate on the (short) time of existence of the restarted flow, which means that we are not able to say in general if and when another singularity could appear after the restarting time. In particular, we are also not able to exclude that the singular times (associated to standard transitions) ``accumulate'', not even for a tree--like network when all the possible singularities are standard transitions.

The following figures show some examples of these (maybe) possible situations.
\begin{figure}[H]
\begin{center}
\begin{tikzpicture}[scale=0.6]
\draw[color=black!50!white,rotate=90,shift={(0,-1.9)}, scale=0.6]
(-0.05,2.65)to[out=-90,in=150,looseness=1] (0.17,2.3)
(0.17,2.3)to[out=-30,in=100,looseness=1] (-0.12,2)
(-0.12,2)to[out=-80,in=40,looseness=1] (0.15,1.7)
(0.15,1.7)to[out=-140,in=90,looseness=1.3](0,1.1)
(0,1.1)--(-.2,1.35)
(0,1.1)--(+.2,1.35);
\draw[color=black!50!white,rotate=90,shift={(0,-6.9)}, scale=0.6]
(-0.05,2.65)to[out=-90,in=150,looseness=1] (0.17,2.3)
(0.17,2.3)to[out=-30,in=100,looseness=1] (-0.12,2)
(-0.12,2)to[out=-80,in=40,looseness=1] (0.15,1.7)
(0.15,1.7)to[out=-140,in=90,looseness=1.3](0,1.1)
(0,1.1)--(-.2,1.35)
(0,1.1)--(+.2,1.35);
\draw[color=black!50!white,rotate=90,shift={(0,-11.9)}, scale=0.6]
(-0.05,2.65)to[out=-90,in=150,looseness=1] (0.17,2.3)
(0.17,2.3)to[out=-30,in=100,looseness=1] (-0.12,2)
(-0.12,2)to[out=-80,in=40,looseness=1] (0.15,1.7)
(0.15,1.7)to[out=-140,in=90,looseness=1.3](0,1.1)
(0,1.1)--(-.2,1.35)
(0,1.1)--(+.2,1.35);
\draw[color=black!50!white,rotate=90,shift={(0,-16.9)}, scale=0.6]
(-0.05,2.65)to[out=-90,in=150,looseness=1] (0.17,2.3)
(0.17,2.3)to[out=-30,in=100,looseness=1] (-0.12,2)
(-0.12,2)to[out=-80,in=40,looseness=1] (0.15,1.7)
(0.15,1.7)to[out=-140,in=90,looseness=1.3](0,1.1)
(0,1.1)--(-.2,1.35)
(0,1.1)--(+.2,1.35);
\draw[black]
 (-3.03,1.25) 
to[out=-50,in=180,looseness=1] (-2,0.6) 
to[out=60,in=180,looseness=1.5] (-0.43,1.25) 
(-2,0.6)
to[out=-60,in=130,looseness=0.9] (-1.5,-0.3)
to[out=10,in=100,looseness=0.9](-0.43,-1.25)
(-1.5,-0.3)
to[out=-110,in=50,looseness=0.9](-3.03,-1.25);
\draw[black, shift={(5,0)}]
 (-3.03,1.25) 
to[out=-50,in=180,looseness=1] (-1.65,0.25)
to[out=60,in=180,looseness=1.5] (-0.43,1.25) 
(-1.65,0.25)
to[out=0,in=100,looseness=0.9](-0.43,-1.25)
(-1.65,0.25)
to[out=-120,in=50,looseness=0.9](-3.03,-1.25);
\draw[black, shift={(10,0)}]
 (-3.03,1.25) 
to[out=-50,in=180,looseness=1]
(-2.4,-0.2)
to[out=-60,in=50,looseness=1.5] (-3.03,-1.25)
(-2.4,-0.2)
to[out=60,in=-130,looseness=0.9] (-1.2,0.3)
to[out=110,in=100,looseness=0.9](-0.43,1.25) 
(-1.2,0.3)
to[out=-10,in=50,looseness=0.9](-0.43,-1.25);

\draw[black, shift={(20,0)}]
 (-3.03,1.25) 
to[out=-40,in=150,looseness=1] (-1.8,0.5) 
to[out=30,in=180,looseness=1.5] (-0.43,1.25) 
(-1.8,0.5)
to[out=-90,in=90,looseness=0.9] (-1.8,-0.5)
to[out=-30,in=110,looseness=0.9](-0.43,-1.25)
(-1.8,-0.5)
to[out=-150,in=10,looseness=0.9](-3.03,-1.25);

\draw[black, shift={(15,0)}]
 (-3.03,1.25) 
to[out=-50,in=120,looseness=1] (-1.75,0.15)
to[out=60,in=180,looseness=1.5] (-0.43,1.25) 
(-1.75,0.15)
to[out=-60,in=100,looseness=0.9](-0.43,-1.25) 
(-1.75,0.15)
to[out=-120,in=50,looseness=0.9] (-3.03,-1.25) ;
\draw[color=black, domain=-3.141: 3.141,
smooth,variable=\t,shift={(-1.72,0)},rotate=0, scale=0.9]plot({2.*sin(\t r)},
{2.*cos(\t r)}) ;
\draw[color=black, domain=-3.141: 3.141,
smooth,variable=\t,shift={(3.28,0)},rotate=0, scale=0.9]plot({2.*sin(\t r)},
{2.*cos(\t r)}) ;
\draw[color=black, domain=-3.141: 3.141,
smooth,variable=\t,shift={(8.28,0)},rotate=0, scale=0.9]plot({2.*sin(\t r)},
{2.*cos(\t r)}) ;
\draw[color=black, domain=-3.141: 3.141,
smooth,variable=\t,shift={(13.28,0)},rotate=0, scale=0.9]plot({2.*sin(\t r)},
{2.*cos(\t r)}) ;
\draw[color=black, domain=-3.141: 3.141,
smooth,variable=\t,shift={(18.28,0)},rotate=0, scale=0.9]plot({2.*sin(\t r)},
{2.*cos(\t r)}) ;
\end{tikzpicture}
\end{center}
\begin{caption}{A tree--like network with four fixed end--points switching between its two possible topological classes.\label{switch2}}
\end{caption}
\end{figure}
\begin{figure}[H]
\begin{center}
\begin{tikzpicture}[scale=0.65]
\draw[color=black!50!white,rotate=90,shift={(0,-1.9)}, scale=0.6]
(-0.05,2.65)to[out=-90,in=150,looseness=1] (0.17,2.3)
(0.17,2.3)to[out=-30,in=100,looseness=1] (-0.12,2)
(-0.12,2)to[out=-80,in=40,looseness=1] (0.15,1.7)
(0.15,1.7)to[out=-140,in=90,looseness=1.3](0,1.1)
(0,1.1)--(-.2,1.35)
(0,1.1)--(+.2,1.35);
\draw[color=black!50!white,rotate=90,shift={(0,-6.9)}, scale=0.6]
(-0.05,2.65)to[out=-90,in=150,looseness=1] (0.17,2.3)
(0.17,2.3)to[out=-30,in=100,looseness=1] (-0.12,2)
(-0.12,2)to[out=-80,in=40,looseness=1] (0.15,1.7)
(0.15,1.7)to[out=-140,in=90,looseness=1.3](0,1.1)
(0,1.1)--(-.2,1.35)
(0,1.1)--(+.2,1.35);
\draw[color=black!50!white,rotate=90,shift={(0,-11.9)}, scale=0.6]
(-0.05,2.65)to[out=-90,in=150,looseness=1] (0.17,2.3)
(0.17,2.3)to[out=-30,in=100,looseness=1] (-0.12,2)
(-0.12,2)to[out=-80,in=40,looseness=1] (0.15,1.7)
(0.15,1.7)to[out=-140,in=90,looseness=1.3](0,1.1)
(0,1.1)--(-.2,1.35)
(0,1.1)--(+.2,1.35);
\draw[color=black!50!white,rotate=90,shift={(0,-16.9)}, scale=0.6]
(-0.05,2.65)to[out=-90,in=150,looseness=1] (0.17,2.3)
(0.17,2.3)to[out=-30,in=100,looseness=1] (-0.12,2)
(-0.12,2)to[out=-80,in=40,looseness=1] (0.15,1.7)
(0.15,1.7)to[out=-140,in=90,looseness=1.3](0,1.1)
(0,1.1)--(-.2,1.35)
(0,1.1)--(+.2,1.35);
\draw[color=black, domain=-3.141: 3.141,
smooth,variable=\t,shift={(-1.72,0)},rotate=0, scale=0.9]plot({2.*sin(\t r)},
{2.*cos(\t r)}) ;
\draw[color=black, domain=-3.141: 3.141,
smooth,variable=\t,shift={(3.28,0)},rotate=0, scale=0.9]plot({2.*sin(\t r)},
{2.*cos(\t r)}) ;
\draw[color=black, domain=-3.141: 3.141,
smooth,variable=\t,shift={(8.28,0)},rotate=0, scale=0.9]plot({2.*sin(\t r)},
{2.*cos(\t r)}) ;
\draw[color=black,domain=-3.141: 3.141,
smooth,variable=\t,shift={(13.28,0)},rotate=0, scale=0.9]plot({2.*sin(\t r)},
{2.*cos(\t r)}) ;
\draw[color=black, domain=-3.141: 3.141,
smooth,variable=\t,shift={(18.28,0)},rotate=0, scale=0.9]plot({2.*sin(\t r)},
{2.*cos(\t r)}) ;
\draw[color=black] 
(-1.6,0.5) 
to[out=60,in=-150,looseness=1](-0.43,1.25)
(-1.6,0.5) 
to[out=-60,in=150,looseness=1.5] (-1.5,0) 
(-1.6,0.5)
to[out=180,in=90, looseness=1] (-3.25,-0.625)
to[out=-90,in=180,looseness=0.9] (-1.25,-0.75)
(-1.5,0)
to[out=-30,in=90,looseness=0.9] (-1,0)
to[out=-90,in=60,looseness=0.9] (-1.25,-0.75)
to[out=-60,in=150,looseness=0.9](-0.43,-1.25);
\draw[color=black,shift={(5,0)}] 
(-1.5,0)
to[out=150,in=40,looseness=1] (-2.9,0.9) 
to[out=-140,in=90,looseness=1] (-3.2,0)
(-1.5,0) 
to[out=-150,in=0,looseness=1] (-3,-0.7) 
to[out=-180,in=-90,looseness=1] (-3.2,0)
(-1.5,0) 
to[out=30,in=150,looseness=1.5] (-0.43,1.25) 
(-1.5,0)
to[out=-30,in=-120,looseness=0.9](-0.43,-1.25);
\draw[color=black,shift={(10,0)}] 
(-2,0)
to[out=170,in=40,looseness=1] (-2.5,0.7) 
to[out=-140,in=90,looseness=1] (-2.8,0)
(-2,0) 
to[out=-70,in=0,looseness=1] (-2.65,-0.5) 
to[out=-180,in=-90,looseness=1] (-2.8,0)
(-2,0) 
to[out=50,in=180,looseness=1] (-1.3,0) 
to[out=60,in=150,looseness=1.5] (-0.75,1) 
(-1.3,0)
to[out=-60,in=-120,looseness=0.9] (-0.5,-0.75)
(-0.75,1)
to[out=-30,in=90,looseness=0.9](-0.43,1.25)
(-0.43,-1.25)
to[out=-90,in=60,looseness=0.9] (-0.5,-0.75);
\draw[color=black,shift={(15,0)}] 
(-1,0)
to[out=120,in=40,looseness=1] (-1.9,0.6) 
to[out=-140,in=90,looseness=1] (-2.2,0)
(-1,0) 
to[out=-120,in=0,looseness=1] (-2,-0.4) 
to[out=-180,in=-90,looseness=1] (-2.2,0)
(-1,0) 
to[out=60,in=-150,looseness=1.5] (-0.43,1.25) 
(-1,0)
to[out=-60,in=-120,looseness=0.9](-0.43,-1.25);
\draw[black, shift={(20,0)}]
(-1,0.65) 
to[out=45,in=180,looseness=1.5] (-0.43,1.25) 
(-1,0.65)
to[out=-75,in=75,looseness=0.9] (-1,-0.65)
(-1,0.65)
to[out=-195,in=90,looseness=0.9] (-1.5,0)
to[out=-90,in=195,looseness=0.9] (-1,-0.65)
to[out=-45,in=110,looseness=0.9](-0.43,-1.25);
\end{tikzpicture}
\end{center}
\begin{caption}{Standard transitions switching a lens--shaped network to an ``island--shaped'' (with a bridge) one and viceversa.\label{switch1}}
\end{caption}
\end{figure}
\begin{figure}[H]
\begin{center}
\begin{tikzpicture}[scale=0.65]
\draw[color=black!50!white,rotate=90,shift={(0,-1.9)}, scale=0.6]
(-0.05,2.65)to[out=-90,in=150,looseness=1] (0.17,2.3)
(0.17,2.3)to[out=-30,in=100,looseness=1] (-0.12,2)
(-0.12,2)to[out=-80,in=40,looseness=1] (0.15,1.7)
(0.15,1.7)to[out=-140,in=90,looseness=1.3](0,1.1)
(0,1.1)--(-.2,1.35)
(0,1.1)--(+.2,1.35);
\draw[color=black!50!white,rotate=90,shift={(0,-6.9)}, scale=0.6]
(-0.05,2.65)to[out=-90,in=150,looseness=1] (0.17,2.3)
(0.17,2.3)to[out=-30,in=100,looseness=1] (-0.12,2)
(-0.12,2)to[out=-80,in=40,looseness=1] (0.15,1.7)
(0.15,1.7)to[out=-140,in=90,looseness=1.3](0,1.1)
(0,1.1)--(-.2,1.35)
(0,1.1)--(+.2,1.35);
\draw[color=black!50!white,rotate=90,shift={(0,-11.9)}, scale=0.6]
(-0.05,2.65)to[out=-90,in=150,looseness=1] (0.17,2.3)
(0.17,2.3)to[out=-30,in=100,looseness=1] (-0.12,2)
(-0.12,2)to[out=-80,in=40,looseness=1] (0.15,1.7)
(0.15,1.7)to[out=-140,in=90,looseness=1.3](0,1.1)
(0,1.1)--(-.2,1.35)
(0,1.1)--(+.2,1.35);
\draw[color=black!50!white,rotate=90,shift={(0,-16.9)}, scale=0.6]
(-0.05,2.65)to[out=-90,in=150,looseness=1] (0.17,2.3)
(0.17,2.3)to[out=-30,in=100,looseness=1] (-0.12,2)
(-0.12,2)to[out=-80,in=40,looseness=1] (0.15,1.7)
(0.15,1.7)to[out=-140,in=90,looseness=1.3](0,1.1)
(0,1.1)--(-.2,1.35)
(0,1.1)--(+.2,1.35);
\draw[color=black, domain=-3.141: 3.141,
smooth,variable=\t,shift={(-1.72,0)},rotate=0, scale=0.9]plot({2.*sin(\t r)},
{2.*cos(\t r)}) ;
\draw[color=black, domain=-3.141: 3.141,
smooth,variable=\t,shift={(3.28,0)},rotate=0, scale=0.9]plot({2.*sin(\t r)},
{2.*cos(\t r)}) ;
\draw[color=black, domain=-3.141: 3.141,
smooth,variable=\t,shift={(8.28,0)},rotate=0, scale=0.9]plot({2.*sin(\t r)},
{2.*cos(\t r)}) ;
\draw[color=black, domain=-3.141: 3.141,
smooth,variable=\t,shift={(13.28,0)},rotate=0, scale=0.9]plot({2.*sin(\t r)},
{2.*cos(\t r)}) ;
\draw[color=black, domain=-3.141: 3.141,
smooth,variable=\t,shift={(18.28,0)},rotate=0, scale=0.9]plot({2.*sin(\t r)},
{2.*cos(\t r)}) ;
\draw[color=black,shift={(0,0.3)}] 
(-1.73,-1.8) 
to[out=180,in=180,looseness=1] (-2.8,0) 
to[out=60,in=150,looseness=1.5] (-1.5,1) 
(-2.8,0)
to[out=-60,in=180,looseness=0.9] (-1.25,-0.75)
(-1.5,1)
to[out=-30,in=90,looseness=0.9] (-1,0)
to[out=-90,in=60,looseness=0.9] (-1.25,-0.75)
to[out=-60,in=0,looseness=0.9](-1.73,-1.8);
\draw[scale=0.33,rotate=90,color=black,shift={(0,-9.7)}]
(0,0)to[out=30,in=180,looseness=1] (2.7,1.5)
(2.7,1.5)to[out=0,in=90,looseness=1] (4.73,0)
(0,0)to[out=-30,in=180,looseness=1] (2.7,-1.5)
(2.7,-1.5)to[out=0,in=-90,looseness=1] (4.73,0);
\draw[scale=0.33,rotate=-90,color=black,shift={(0,9.7)}]
(0,0)to[out=30,in=180,looseness=1] (2.7,1.5)
(2.7,1.5)to[out=0,in=90,looseness=1] (4.73,0)
(0,0)to[out=-30,in=180,looseness=1] (2.7,-1.5)
(2.7,-1.5)to[out=0,in=-90,looseness=1] (4.73,0);
\draw[black, shift={(8.5,2)}, rotate=90]
(-2.25,-0.12) 
to[out=20,in=180,looseness=1] (-1.55,0.17);
\draw[scale=0.43,color=black,shift={(19.34,-1)}, rotate=90]
(2,0)to[out=60,in=180,looseness=1] (3.7,1)
(3.7,1)to[out=0,in=90,looseness=1] (5,0)
(2,0)to[out=-60,in=180,looseness=1] (3.7,-1)
(3.7,-1)to[out=0,in=-90,looseness=1] (5,0);
\draw[scale=0.43,color=black,shift={(19.34,1.3)}, rotate=-70]
(2,0)to[out=60,in=180,looseness=1] (3.7,1)
(3.7,1)to[out=0,in=90,looseness=1] (5,0)
(2,0)to[out=-60,in=180,looseness=1] (3.7,-1)
(3.7,-1)to[out=0,in=-90,looseness=1] (5,0);
\draw[scale=0.35,rotate=90,color=black,shift={(0,-38)}]
(0,0)to[out=60,in=180,looseness=1] (2,1.5)
(2,1.5)to[out=0,in=90,looseness=1] (3.7,0)
(0,0)to[out=-60,in=180,looseness=1] (2,-1.5)
(2,-1.5)to[out=0,in=-90,looseness=1] (3.7,0);
\draw[scale=0.35,rotate=-90,color=black,shift={(0,38)}]
(0,0)to[out=60,in=180,looseness=1] (2,1.5)
(2,1.5)to[out=0,in=90,looseness=1] (3.7,0)
(0,0)to[out=-60,in=180,looseness=1] (2,-1.5)
(2,-1.5)to[out=0,in=-90,looseness=1] (3.7,0);
\draw[color=black,shift={(20,0)}] 
(-1.73,-1) 
to[out=180,in=-120,looseness=1](-2.2,0)
to[out=120,in=150,looseness=1.5] (-1.5,1) 
(-2.2,0)
to[out=0,in=180,looseness=0.9] (-1.4,0)
(-1.5,1)
to[out=-30,in=90,looseness=0.9] (-1.3,0.5)
to[out=-90,in=60,looseness=0.9] (-1.4,0)
to[out=-60,in=0,looseness=0.9](-1.73,-1);
\end{tikzpicture}
\end{center}
\begin{caption}{Switching by standard transitions of a $\Theta$--shaped network to an ``eyeglasses--shaped'' one and {\em viceversa}.\label{switch}}
\end{caption}
\end{figure}
\noindent In all these examples (where there is a sort of ``duality'' between the two involved networks: lens--island, theta--eyeglasses and between the only two possible trees connecting four points) we do not know if this kind of ``oscillatory phenomenon'' can happen infinitely many times.

\begin{oprob}\label{ooo120} Let us assume that the ``boundary'' 
curves do not collapse during the flow.
\begin{itemize}
\item The set of singular times is finite?
\item The set of singular times is discrete (i.e. it has no accumulation points)?
\item Can the flow be defined for every positive time?
\end{itemize}
\end{oprob}

\begin{rem}
The last question concerns the possibility that the other two have a negative answer. In such case, we could still hope to be able to find a ``well--behaved'' limit network $\SS_{\widehat{T}}$, as $t\to\widehat{T}$, even when the singular times $T_i$ accumulate at $\widehat{T}$, to possibly restart again the flow with Theorem~\ref{evolnonreg} or some extension. Indeed, by restarting the flow at every singularity we can define an {\em extended} curvature flow of networks on some maximal time interval $[0,\widehat{T})$. Then, either the whole network vanishes or there is an accumulation of singular times at $\widehat{T}$, if it is finite. This extended curvature flow is a Brakke flow, by Theorem~\ref{evolnonreg} and actually, it is easy to see that only singular times when a standard transition happens can accumulate at $\widehat{T}$ (the number of regions is non increasing, hence the number of singular times when at least one of them collapses is finite).
We also mention that it would be quite interesting to compare this extended curvature flow with the globally defined one introduced by L.~Kim and Y.~Tonegawa in~\cite{kimton} .
\end{rem}

\begin{rem}
In the recent paper~\cite{NoSc23}, it is shown that the previous questions have positive answers in the special case of axially symmetric networks with only two triple junctions. More precisely, it is proved that the number of singular times is necessarily finite.
We point out that, under these conditions, there are only four possible topological types of networks: the {\em tree}, the {\em lens}, the {\em theta} and the {\em eyeglasses (of ``type A'')} shapes, as in the following figure (see the discussion at the beginning of Section~\ref{globsec2}).
\begin{figure}[H]
\centering
\begin{tikzpicture}[x=1cm,y=1cm,scale=1]
\draw[line width=.5pt, dashed, shift={(-3 cm, 4 cm)}] (-2.5,0) -- (2.5,0);
\draw[line width=.5pt, dashed, shift={(-3 cm, 4 cm)}] (0,1.5) -- (0,-1.5);
\draw[line width=1pt, smooth, shift={(-3 cm, 4 cm)}] plot[samples=200,domain=.5:2] (\x, {1.73*(\x-.5)*((\x-.5)^3-1.75*(\x-.5)^2+1)});
\draw[line width=1pt, smooth, shift={(-3 cm, 4 cm)}] plot[samples=200,domain=.5:2] (\x, {-1.73*(\x-.5)*((\x-.5)^3-1.75*(\x-.5)^2+1)});
\draw[line width=1pt, smooth, shift={(-3 cm, 4 cm)}] (-.5,0) -- (.5,0);
\draw[line width=1pt, smooth, shift={(-3 cm, 4 cm)}] plot[samples=200,domain=-2:-.5] (\x,{1.73*(-\x-.5)*((-\x-.5)^3-1.75*(-\x-.5)^2+1)});
\draw[line width=1pt, smooth, shift={(-3 cm, 4 cm)}] plot[samples=200,domain=-2:-.5] (\x,{-1.73*(-\x-.5)*((-\x-.5)^3-1.75*(-\x-.5)^2+1)});
\draw[line width=.5pt, dashed, shift={(3 cm, 4 cm)}] (-2.5,0) -- (2.5,0);
\draw[line width=.5pt, dashed, shift={(3 cm, 4 cm)}] (0,1.5) -- (0,-1.5);
\draw[line width=1pt, smooth, shift={(3 cm, 4 cm)}] plot[samples=200,domain=-1:1] (\x,{.87*(\x*\x-1)});
\draw[line width=1pt, smooth, shift={(3 cm, 4 cm)}] plot[samples=200,domain=-1:1] (\x,{-.87*(\x*\x-1)});
\draw[line width=1pt, smooth, shift={(3 cm, 4 cm)}] (-2,0) -- (-1,0);
\draw[line width=1pt, smooth, shift={(3 cm, 4 cm)}] (1,0) -- (2,0);
\draw[line width=.5pt, dashed, shift={(-3 cm, 0 cm)}] (-2.5,0) -- (2.5,0);
\draw[line width=.5pt, dashed, shift={(-3 cm, 0 cm)}] (0,1.5) -- (0,-1.5);
\draw[line width=1pt, smooth, shift={(-3 cm, 0 cm)}] plot[samples=2000,domain=0:2] (\x,{.54*(\x+1)*sqrt(2-\x)});
\draw[line width=1pt, smooth, shift={(-3 cm, 0 cm)}] plot[samples=2000,domain=0:2] (\x,{-.54*(\x+1)*sqrt(2-\x)});
\draw[line width=1pt, smooth, shift={(-3 cm, 0 cm)}] (0,.77) -- (0,-.77);
\draw[line width=1pt, smooth, shift={(-3 cm, 0 cm)}] plot[samples=200,domain=-2:0] (\x,{.54*(-\x+1)*sqrt(2+\x)});
\draw[line width=1pt, smooth, shift={(-3 cm, 0 cm)}] plot[samples=200,domain=-2:0] (\x,{-.54*(-\x+1)*sqrt(2+\x)});
\draw[line width=.5pt, dashed, shift={(3 cm, 0 cm)}] (-2.5,0) -- (2.5,0);
\draw[line width=.5pt, dashed, shift={(3 cm, 0 cm)}] (0,1.5) -- (0,-1.5);
\draw[line width=1pt, smooth, shift={(3 cm, 0 cm)}] plot[samples=2000,domain=.5:2] (\x,{(\x-.5)*sqrt(4-2*\x)});
\draw[line width=1pt, smooth, shift={(3 cm, 0 cm)}] plot[samples=2000,domain=.5:2] (\x,{-(\x-.5)*sqrt(4-2*\x)});
\draw[line width=1pt, smooth, shift={(3 cm, 0 cm)}] (-.5,0) -- (.5,0);
\draw[line width=1pt, smooth, shift={(3 cm, 0 cm)}] plot[samples=200,domain=-2:-.5] (\x,{(-\x-.5)*sqrt(4+2*\x)});
\draw[line width=1pt, smooth, shift={(3 cm, 0 cm)}] plot[samples=200,domain=-2:-.5] (\x,{-(-\x-.5)*sqrt(4+2*\x)});
\end{tikzpicture}
\caption{The four possible types of axially symmetric networks with two triple junctions: the {\em tree,} the {\em lens,} the {\em theta} and the {\em eyeglasses}.\label{fig:net-types}}
\end{figure}
\end{rem}

We now discuss the long--time behavior of the curvature flow of a regular network, assuming that there is no accumulation of the singular times or, even better, that the flow definitely does not have singularities after some time. We see in the following proposition that this latter case can only happen for networks without regions with less than six edges.

\begin{prop}\label{loop}
Let $[0,T)$ be the maximal time interval of existence of a smooth curvature flow $\mathbb{S}_t$ of a network 
that has at least one loop $\ell$ of length $L(t)$, enclosing a region of area $A(t)$
composed of $m$ curves with $m<6$.
Then, $T\leqslant\frac{3A(0)}{(6-m)\pi}$ and the equality holds if and only if $\lim_{t\to T}A(t)=0$.
Moreover, if $\lim_{t\to T}L(t)=0$, then $\lim_{t\to T}\int_{\mathbb{S}_t}k^2\,ds=+\infty$.
\end{prop}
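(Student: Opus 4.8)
The plan is to read off the area bound directly from the von~Neumann rule and then obtain the curvature blow--up from a single Cauchy--Schwarz estimate along the loop.

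For the first part, note that since the flow is smooth on $[0,T)$ the ``topological structure'' of the network, and in particular the loop $\ell$ with its $m$ bounding curves and the enclosed region $\mathcal{A}(t)$, persists for all $t\in[0,T)$. By the von~Neumann rule~\eqref{areaevolreg} we have $A'(t)=-(2-m/3)\pi=-(6-m)\pi/3$ at every such time, a negative constant because $m<6$; integrating gives $A(t)=A(0)-\frac{(6-m)\pi}{3}t$ on $[0,T)$. Since $A(t)\ge 0$ for every $t<T$, this forces $t\le\frac{3A(0)}{(6-m)\pi}$ for all $t<T$, hence $T\le\frac{3A(0)}{(6-m)\pi}$ (this is essentially~\eqref{stimaT0}). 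The limit $\lim_{t\to T}A(t)=A(0)-\frac{(6-m)\pi}{3}T$ exists by linearity and vanishes precisely when $T=\frac{3A(0)}{(6-m)\pi}$, which gives the claimed characterization of the equality case.

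For the second part I would exploit the same computation in the opposite direction. Orienting $\ell$ counterclockwise, the identity $m\pi/3+\sum_{i=1}^m\Delta\theta_i=2\pi$ used in deriving~\eqref{areaevolreg} (equivalently, Gauss--Bonnet for the region $\mathcal{A}(t)$, whose exterior angle at each of its $m$ corners equals $\pi/3$ by the $120$ degrees condition) gives
$$\int_\ell k\,ds=2\pi-\frac{m\pi}{3}=\frac{(6-m)\pi}{3}\,,$$
a strictly positive constant since $m<6$. Then Cauchy--Schwarz along $\ell$ yields
$$\Bigl(\frac{(6-m)\pi}{3}\Bigr)^2=\Bigl(\int_\ell k\,ds\Bigr)^2\le L(t)\int_\ell k^2\,ds\le L(t)\int_{\SS_t}k^2\,ds\,,$$
so that $\int_{\SS_t}k^2\,ds\ge\frac{(6-m)^2\pi^2}{9\,L(t)}$. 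If $L(t)\to 0$ as $t\to T$ the right--hand side diverges, proving $\lim_{t\to T}\int_{\SS_t}k^2\,ds=+\infty$.

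There is no substantial obstacle in this argument: both ingredients, the von~Neumann rule and the turning--angle identity, are already contained in the computations of Section~\ref{geopropsub}, and the rest is elementary. The only points deserving a line of care are the sign and orientation bookkeeping that identifies $\int_\ell k\,ds$ with $(6-m)\pi/3$ at the triple junctions, and the fact that $A(t)$ stays nonnegative up to time $T$; one may additionally remark that, by embeddedness and regularity of $\SS_t$, the area of the region cannot actually reach zero strictly before $T$, so the loop genuinely survives on all of $[0,T)$.
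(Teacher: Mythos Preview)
Your proof is correct and is essentially the same as the paper's: both integrate the von~Neumann rule~\eqref{areaevolreg} to get the linear decay of $A(t)$ and the bound on $T$, and both obtain the curvature blow--up from the Cauchy--Schwarz estimate $\bigl(\int_\ell k\,ds\bigr)^2\le L(t)\int_\ell k^2\,ds$ together with the identity $\int_\ell k\,ds=(6-m)\pi/3$. The paper additionally remarks that $L(t)\to 0$ forces the equality $T=\frac{3A(0)}{(6-m)\pi}$ and $A(t)\to 0$, but this is not needed for the $L^2$ blow--up and your argument stands without it.
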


\begin{proof}
Integrating in time the equation~\eqref{areaevolreg}, we have
\begin{equation}\label{intareaevolreg}
A(t)-A(0)=\left(-2\pi+m\left(\frac{\pi}{3}\right)\right)t\,.
\end{equation}
Therefore, $T\leqslant\frac{3A(0)}{(6-m)\pi}$, with equality 
if and only if $\lim_{t\to T}A(t)=0$.

Suppose now that $\lim_{t\to T}L(t)=0$. Then we necessarily have $\lim_{t\to T}A(t)=0$, hence $T=\dfrac{3A(0)}{(6-m)\pi}$. Combining equation~\eqref{areaevolreg} and H\"{o}lder inequality, we get
$$
\Big\vert -2\pi+m\left(\frac{\pi}{3}\right)\Big\vert=\Big\vert \frac{dA(t)}{dt}\Big\vert =\Big\vert \int_{\ell_t} k\,ds\Big\vert
\leqslant \left(L(t)\right)^\frac12\left(\int_{\ell_t} k^2\,ds \right)^\frac12\,,
$$
which gives
$$
\int_{\mathbb{S}_t} k^2\,ds\geqslant\int_{\ell_t} k^2\,ds\geqslant \frac{\left(6-m\right)^2\pi^2}{9L(t)}\,.
$$
Since $\lim_{t\to T}L(t)=0$, it follows that
$\lim_{t\to T}\int_{\mathbb{S}_t}k^2\,ds=+\infty$.
\end{proof}

\begin{rem}\hspace{.5truecm}
\begin{enumerate}
\item If a loop is composed of six or more curves, then by equation~\eqref{areaevolreg}, either the enclosed area remains constant or increases during the evolution.
\item The previous proposition does not exclude the possibility that a singularity appears at a time $T<\frac{3A(0)}{(6-m)\pi}$.
\item We expect that, if $T=\frac{3A(0)}{(6-m)\pi}$, then the region is collapsing, hence, by Corollary~\ref{regioncor} the curvature cannot be bounded and we expect that $\lim_{t\to T}L(t)=0$ and $\lim_{t\to T}\int_{\mathbb{S}_t}k^2\,ds=+\infty$.
\end{enumerate}
\end{rem}

For a general network, even assuming that there is no accumulation of the singular times, if the boundary curves do not collapse, we cannot anyway exclude that there could be an infinite sequence of standard transitions with some loops present and regions (with more that five edges) never collapsing. We now deal with tree--like networks that after some time have no more singularities.
 
\begin{prop}\label{prolong}
Suppose that $\mathbb{S}_t$ is a smooth curvature flow in $[0,+\infty)$ of a tree--like network.
Then for every sequence of times $t_i\to\infty$, there exists a (non relabeled) subsequence such that the evolving networks $\SS_{t_i}$ converge
in $C^{1,\alpha}\cap W^{2,2}$, for every $\alpha\in(0,1/2)$, to a possibly degenerate (and non--embedded) regular network with zero curvature, that is, ``stationary'' for the length functional, as $i\to\infty$.
\end{prop}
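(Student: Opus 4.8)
The plan is to exploit the monotonicity of the global length together with the restarting estimate for tree--like networks of Lemma~\ref{kappa2}, in order to prove the stronger statement that $\int_{\SS_t}k^2\,ds\to 0$ as $t\to+\infty$, and then to extract a convergent subsequence by the standard $W^{2,2}$--compactness argument, identifying the limit via lower semicontinuity. First I would record that, since the end--points of $\SS_t$ are fixed (the setting of this section), Proposition~\ref{equality1001} gives $\frac{dL(t)}{dt}=-\int_{\SS_t}k^2\,ds\le 0$; being $L(t)\ge 0$, it decreases to a limit $L_\infty\ge 0$ and, integrating in time, $\int_0^{+\infty}\int_{\SS_t}k^2\,ds\,dt=L(0)-L_\infty<+\infty$. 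In particular $\liminf_{t\to+\infty}\int_{\SS_t}k^2\,ds=0$, and the only real content is to promote this to an honest limit, since a priori $f(t)=\int_{\SS_t}k^2\,ds$ could oscillate, with tall narrow spikes compatible with the finiteness of its time integral.

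This is exactly where the tree hypothesis enters. For a tree with fixed end--points the curvature vanishes at every $P^r$, so in Lemma~\ref{kinfty} the constant $C$ there is zero and the end--point estimate $\Vert k\Vert_{L^\infty}^2\le D_n\Vert k\Vert_{L^2}\Vert k_s\Vert_{L^2}$ holds with $D_n$ depending only on the combinatorial depth $n$ of the tree (it is obtained by Cauchy--Schwarz, with no length dependence). Running the computation of the proof of Lemma~\ref{kappa2} (equivalently of \eqref{evolint999}) with these vanishing boundary contributions, one gets $\frac{d}{dt}\int_{\SS_t}k^2\,ds\le C\bigl(\int_{\SS_t}k^2\,ds\bigr)^3$, with $C$ depending only on the topological type of the tree and \emph{no} additive constant. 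Thus $f'\le Cf^3$, and a short ODE argument then forces $f(t)\to 0$: if $f(s)=\varepsilon$ is small, comparison with $g'=Cg^3$, $g(s)=\varepsilon$, gives $f(t)\le g(t)=\varepsilon/\sqrt{1-2C\varepsilon^2(t-s)}$, so, choosing $s$ as the last time below $\varepsilon$ before a time $t_0$ where $f(t_0)=a>0$, one finds $t_0-s\ge c/(C\varepsilon^2)$ while $f\ge\varepsilon$ on $(s,t_0]$; if such a value $a$ recurred at arbitrarily large times one could pick infinitely many disjoint such intervals and $\int_0^{+\infty}f$ would diverge, contradicting the previous paragraph. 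Hence $\int_{\SS_t}k^2\,ds\to 0$ as $t\to+\infty$.

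With this in hand, fix an arbitrary sequence $t_i\to+\infty$. The networks $\SS_{t_i}$ have total length $\le L(0)$, hence uniformly bounded length ratios (Lemma~\ref{rescestim2}); reparametrizing every curve proportionally to arclength, one obtains curves with uniformly bounded first derivatives and with $L^2$--norm of the curvature tending to zero. By Arzel\`a--Ascoli and weak $W^{2,2}$--compactness, a (not relabeled) subsequence converges in $C^{1,\alpha}\cap W^{2,2}$, for every $\alpha\in(0,1/2)$, to a family of limit curves describing a network $\SS_\infty$ which may be degenerate in the sense of Definition~\ref{degnet} (a curve whose length goes to zero along $t_i$ collapses to a point with an ``assigned'' unit tangent, exactly as in Proposition~\ref{collapse}). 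The $120$ degrees condition at the non--collapsed triple junctions and the ``degenerate $120$ degrees'' condition at the collapsed ones pass to the $C^1$--limit, so $\SS_\infty$ is a degenerate regular network; and by lower semicontinuity of $\SS\mapsto\int_\SS k^2\,ds$ together with $\int_{\SS_{t_i}}k^2\,ds\to 0$, the curvature of $\SS_\infty$ vanishes. Therefore every non--degenerate curve of $\SS_\infty$ is a straight segment and, by Lemma~\ref{lemreg}, $\SS_\infty$ is stationary for the length functional, i.e.\ a (possibly degenerate) Steiner network, which is the claim.

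I expect the main obstacle to be the second step, namely upgrading $\liminf_{t\to+\infty}\int_{\SS_t}k^2\,ds=0$ to a genuine limit, that is, excluding a slow oscillation of the $L^2$--norm of the curvature with unbounded peaks; this relies essentially on the tree structure, through the uniform (purely topological) constant in the restarting estimate of Lemma~\ref{kappa2}, and has no counterpart for networks with loops (cf.\ the discussion around Proposition~\ref{loop}). A secondary but genuine point of care is that $\SS_\infty$ may be degenerate, so one works within Definition~\ref{degnet} throughout and only asserts the straightness and the $120$ degrees condition for the non--collapsed part.
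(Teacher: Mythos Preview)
Your proof is correct and follows essentially the same route as the paper: both first establish $\int_0^{+\infty}\int_{\SS_t}k^2\,ds\,dt<+\infty$ from the length monotonicity, then use the differential inequality coming from Lemma~\ref{kappa2} to upgrade the $\liminf$ to an honest limit $\int_{\SS_t}k^2\,ds\to 0$, and finally extract a $C^{1,\alpha}\cap W^{2,2}$--convergent subsequence. Your observation that for a tree with fixed end--points the additive constant in inequality~\eqref{kappatt} can be dropped (since $k$ vanishes at the end--points, so the $C$ in Lemma~\ref{kinfty} is zero and the estimate is length--free) is a correct refinement, but it is not needed: the paper simply uses the weaker $f'\le C(1+f)^3$ and runs the same contradiction argument backward in time from each putative bad time $t_j$, which is marginally cleaner than your forward ODE comparison but equivalent in content.
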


\begin{proof}
From equation~\eqref{evolength} we have the estimate
\begin{equation}\label{1}
\int_0^{+\infty}\int_{\mathbb{S}_t}k^2\,ds\,dt\leqslant L(0)<+\infty\,.
\end{equation}
Suppose by contradiction that for a sequence of times $t_j\nearrow+\infty$
we have $\int_{\mathbb{S}_{t_j}}k^2\,ds\geqslant\delta$ for some $\delta>0$. 
By the following estimate, which is inequality~\eqref{kappatt} in Lemma~\ref{kappa2}, 
$$
\frac{d}{dt}\int_{\mathbb{S}_t}k^2\,ds\leqslant C\Bigl( 1+\Bigl( \int_{\mathbb{S}_t}k^2\Bigr)\Bigr)^3\,,
$$
holding (in the case of fixed end--points) with a uniform constant $C$ independent of time, 
we would have $\int_{\mathbb{S}_{\widetilde{t}}}k^2\,ds\geqslant{\delta}/{2}$, 
for every $\widetilde{t}$ in a uniform neighborhood of every $t_j$. This is clearly in contradiction with the estimate~\eqref{1}.
Hence, $\lim_{t\to+\infty}\int_{\mathbb{S}_t}k^2\,ds=0$ and, 
consequently, for every sequence of times $t_i\to+\infty$, there exists a subsequence (not relabeled) 
such that the evolving networks $\SS_{t_i}$ converge in $C^{1,\alpha}\cap W^{2,2}$, for every $\alpha\in(0,1/2)$,
to a possibly degenerate regular network with zero curvature, as $i\to\infty$.
\end{proof}

\begin{rem} 
The previous proposition shows that, up to subsequences, the sequence of evolving networks $\SS_{t_i}$ converge, as $t_i\to +\infty$, to a ``stationary'' network for the length functional (which is not necessarily a global minimum).
We do not know if such a stationary network can be non--embedded, that is, some segments have multiplicity greater than one and we underline that actually it can be degenerate, that is, taking the limit of $\SS_{t_i}$ when $t_i\to+\infty$, one or more curves collapse, as shown in the following example. Suppose that $\SS_0$ is 
the regular network in Figure~\ref{family}. It is a smooth regular network composed of five curves, symmetric with respect to the horizontal and vertical axes, the middle curve $\gamma^0$ is a vertical segment and the remaining four curves are convex, i.e., their oriented curvature has a sign. 
The network has four end--points located at the vertices of a rectangle of sides of length $2$ and $2\sqrt{3}$. 
\begin{figure}[H]
\begin{center}
\begin{tikzpicture}[scale=2.2]
\draw[gray, shift={(1.3,0)}, scale=0.5, rotate=-90]
(0,0)to[out= -45,in=135, looseness=1] (0.1,-0.1)
(0,0)to[out= -135,in=45, looseness=1] (-0.1,-0.1);		
\draw[gray, shift={(0,0.6)}, scale=0.5, rotate=0]
(0,0)to[out= -45,in=135, looseness=1] (0.1,-0.1)
(0,0)to[out= -135,in=45, looseness=1] (-0.1,-0.1);	
\draw[gray,dashed]
(-1,-0.5)--(1,0.5)
(1,-0.5)--(-1,0.5);
		 \draw[gray]
 (-1.3,0)--(1.3,0);
 \draw[gray]
 (0,-0.6)--(0,0.6);
		\draw[thick]
		(-1,-0.5)to[out=5, in=210, looseness=0.8]
		(0,-0.15)to[out=90, in=270, looseness=0]
		(0,0.15);
		\draw[rotate=180,thick]
		(-1,-0.5)to[out=5, in=210, looseness=0.8]
		(0,-0.15)to[out=90, in=270, looseness=0]
		(0,0.15);
		\draw[xscale=-1,thick]
		(-1,-0.5)to[out=5, in=210, looseness=0.8]
		(0,-0.15);
		\draw[xscale=-1, rotate=180,thick]
		(-1,-0.5)to[out=5, in=210, looseness=0.8]
		(0,-0.15);
\fill[black](-1,-0.5) circle (0.7pt); 
\fill[black](1,-0.5)circle (0.7pt); 
\fill[black](-1,0.5) circle (0.7pt); 
\fill[black](1,0.5)circle (0.7pt); 
\draw [decorate,decoration={brace,amplitude=5pt,mirror,raise=1ex},gray]
 (-1,-0.5) -- (0,-0.5)
 node[midway,yshift=-1.5em]{$\sqrt{3}$};
 \draw [decorate,decoration={brace,amplitude=5pt,raise=1ex},gray]
 (1,0.5) -- (1,0)
 node[midway,xshift=1.5em]{$1$};
		\path[font=\normalsize]
		(-0.3,0.3)node[above]{$\gamma^1$};
		\path[font=\normalsize]
		(0,0.07)node[right]{$\gamma^0$};
		\end{tikzpicture}
\end{center}
\begin{caption}{The initial network $\SS_0$.\label{family}}
\end{caption}
\end{figure}

Thanks to the symmetries, we can reduce to study the flow of $\SS_0$ to the evolution of a single curve, for instance, $\gamma^1$.
The flow $\SS_t$ starting from $\SS_0$
exists for every time with no singularities, the length of each curve $\gamma^i$ is strictly positive for any time, the curvature of each curve $\gamma^i$ is uniformly bounded and as $t\to+\infty$, the flow smoothly converges to the degenerate network composed of the two segments joining the opposite pairs of end--points and a core at the origin, given by the collapse of the vertical curve $\gamma^0$, whose length goes to zero, as $t\to+\infty$ (see~\cite{PlPo22A}).
\end{rem}

\begin{oprob}\label{ooo3001}\ \begin{itemize}
\item Can the tree--like hypothesis be removed in Proposition~\ref{prolong}? 
\item Is the limit network embedded?
\item What are the possible degeneracies of the limit network? We conjecture that it belongs to the class of networks described in Proposition~\ref{bdcurvcollapse}, in particular, it is embedded and it can only have as degeneracies some regular $4$--points, hence each one with a core given by a single isolated collapsed curve (as in the previous example).
\end{itemize}
\end{oprob}

\begin{rem}\label{singlim} 
If we do not assume that the number of singularities is finite and/or that the network becomes a tree, but only that the flow exists for every $t\in[0,+\infty)$, being globally a Brakke flow (see the previous section), inequality~\eqref{1} still holds (by the defining formula~\eqref{brakkeqqqineq}) and we can always find a sequence of networks $\SS_{t_i}$ converging in $C^{1,\alpha}\cap W^{2,2}$, for every $\alpha\in(0,1/2)$, to a possibly degenerate regular network with zero curvature, as $i\to\infty$. As said before, such limit network could be non--embedded.
\end{rem}

It is natural to ask ourselves if actually, the {\em full} flow of networks $\SS_t$ converges to a limit network, as $t\to+\infty$ (moreover, as we said, we expect that such a limit network is embedded and that the tree--like hypothesis in Proposition~\ref{prolong} is actually superfluous). We are able to show the full convergence assuming that the limit network is not degenerate. A key result to get such convergence is the following {\L}ojasiewicz--Simon inequality for regular networks, proved in~\cite{PlPo22A}.

\begin{thm}\label{cor:LojaNetworkMinimali}
Let $\SS_\ast=(\gamma^1_\ast, \ldots,\gamma^n_\ast)$ be a regular network composed of 
straight segments. Then, there exist $C_{\mathrm LS},\varepsilon >0$ and $\theta \in (0,1/2]$ such that 
if $\SS=(\gamma^1, \ldots,\gamma^n)$ is a regular network of class $W^{2,2}$ with the same topological structure, the same end--points of $\SS_\ast$ and such that
\begin{equation*}
\sum_{i=1}^n \| \gamma^i - \gamma^i _\ast \|_{W^{2,2}} \leqslant \varepsilon\,,
\end{equation*}
then,
\begin{equation}\label{eq:LojaNetworkMinimali}
\left|{L}(\SS)-{L}(\SS_\ast ) \right|^{1-\theta}
\leqslant C_{\mathrm LS} \left( \int_\SS k^2 \, ds \right)^{1/2}.
\end{equation}
\end{thm}

We state now the convergence result.

\begin{thm}[Theorem~5.3 in~\cite{PlPo22A}]\label{prop:Convergence}
Suppose that $\SS_t$ is a smooth curvature flow in $[0,+\infty)$ and let $\SS_\infty$ be a regular (non--degenerate) network with zero curvature, composed of straight segments such that $\SS_{t_n} \to \SS_\infty$ in $W^{2,2}$, for some sequence $t_n\nearrow +\infty$, as $n\to\infty$. Then, up to reparametrization, $\SS_{t} \to \SS_\infty$ smoothly, as $t\to+\infty$.
\end{thm}

We refer the reader to the original paper~\cite{PlPo22A} for the proofs of these two results. We just give here an idea of the application of the {\L}ojasiewicz--Simon inequality in order to get the full convergence of the sequence of networks. 
Let $\SS_t=(\gamma^1_t,\ldots,\gamma^n_t)$ be a smooth network flow defined on $[0,+\infty)$ and 
let $\SS_\infty=(\gamma^1_\infty,\ldots,\gamma^n_\infty)$ be the regular $C^{1,\alpha}\cap W^{2,2}$--limit network along a sequence of times $t_n\to+\infty$, given by Proposition~\ref{prolong}, which we assume to be non--degenerate. Then, by the evolution equation of the length, we have
\begin{equation}
\frac{d}{dt}\big({L}(\SS_t)-{L}(\SS_\infty)\big)
= -\int_{\SS_t} k^2\,ds
\end{equation}
and for all times for which
$\sum_{i=1}^n \| \gamma^i_t - \gamma^i _\infty \|_{W^{2,2}}\leqslant \varepsilon$, 
we get
\begin{align*}
-\frac{d}{dt}\big({L}(\SS_t)-{L}(\SS_\infty)\big)^\theta 
&=\theta\big ({L}(\SS_t)-{L}(\SS_\infty)\big)^{\theta-1} 
\int_{\SS_t} k^2\,ds\\
&\geqslant \frac{\theta}{C_{\mathrm LS}}{\bigg(\int_{\SS_t}k^2\,ds\bigg)^{-{1}/{2}}}\int_{\SS_t}k^2\,ds\\
& =\frac{\theta}{C_{\mathrm LS}}\bigg(\int_{\SS_t}k^2\,ds\bigg)^{{1}/{2}}\,,
\end{align*}
where we used the {\L}ojasiewicz--Simon inequality~\eqref{eq:LojaNetworkMinimali}.
Then, we can take $\widetilde{t}\in [0,+\infty)$ and $t_2>t_1\geqslant \widetilde{t}$ such that for every
$t\in [\widetilde{t}, t_2]$, there holds $ \sum_{i=1}^n \| \gamma^i_t - \gamma^i _\infty \|_{W^{2,2}}\leqslant {\varepsilon}/{4}$ and $\left|{L}(\SS_{t})-{L}(\SS_\infty) \right|^{\theta} \leqslant{\varepsilon}/{4}$. 
We get 
\begin{align*}
\bigg(\int_{0}^1\bigg(\gamma^i(x,t_2)-\gamma^i(x,t_1)\bigg)^2\,dx\bigg)^{1/2}
&= \bigg(\int_{0}^1\bigg(\int_{t_1}^{t_2}\gamma^i_t(x,t)\,dt\bigg)^2\,dx\bigg)^{1/2}\\
&\leqslant \int_{t_1}^{t_2}\bigg(\int_0^1 (\gamma^i_t(x,t))^2\,dx\bigg)^{1/2}\,dt\\
&=\int_{t_1}^{t_2}\bigg(\int_{\gamma^i_t} k^2\,ds\bigg)^{1/2}\,dt\\
&\leqslant \int_{t_1}^{t_2}\bigg(\int_{\SS_t} k^2\,ds\bigg)^{1/2}\,dt\\
& \leqslant \frac{C_{\mathrm LS}}{\theta} \left|{L}(\SS_t)-{L}(\SS_\infty) \right|^{\theta}<\frac{\varepsilon C_{\mathrm LS}}{4\theta}\,.
\end{align*}
This implies that $\gamma^i(\cdot,t):[0,1]\to\mathbb{R}^2$
is a Cauchy sequence and from it we can deduce the desired convergence.

\bigskip

After all this discussion, the following questions are rather natural.

\begin{oprob}\label{ooo3000}\ \begin{itemize}
\item In the hypotheses of Theorem~\ref{prop:Convergence}, does the whole sequence of networks $\SS_t$ converge
in $C^{1,\alpha}\cap W^{2,2}$, for every $\alpha\in(0,1/2)$, also if the limit network is a degenerate (embedded) regular network with zero curvature, as $t\to+\infty$?
\item The conclusions can be extended to the general situation described in Remark~\ref{singlim}? For instance, if the flow of networks has an infinite sequence of singular times going to $+\infty$?
\end{itemize}
\end{oprob}

\subsection{Stability}
Exploiting the {\L}ojasiewicz--Simon inequality~\eqref{eq:LojaNetworkMinimali}, it is also possible to prove a stability result: if a flow starts sufficiently close to a regular network with zero curvature composed of straight segments, then it exists for every time and smoothly converges to a (possibly different) network with zero curvature.

\begin{thm}[Theorem~5.3 in~\cite{PlPo22A}]\label{stability}
Let $\SS_\ast=(\gamma^1_\ast, \ldots,\gamma^n_\ast)$ be a regular network with zero curvature, composed of straight segments.
Then, there exists $\delta>0$ such that if $\SS_0=(\gamma^1_0, \ldots,\gamma^n_0)$ is a smooth regular network with the same topological structure and the same end--points of $\SS_\ast$ such that
\begin{equation*}
\sum_{i=1}^n \| \gamma^i_0- \gamma^i_\ast \|_{W^{2,2}} \leqslant \delta\,,
\end{equation*}
the flow by curvature of the network $\SS_0$ exists smooth for all times and smoothly converges, as $t\to+\infty$, to a regular network $\SS_\infty=(\gamma^1_\infty, \ldots,\gamma^n_\infty)$ with zero curvature (that is, composed of straight segments) satisfying ${L}(\SS_\infty)={L}(\SS_\ast)$.
\end{thm}

\begin{rem}
The special case in which $\SS_\ast$ is a triod was first considered in~\cite{kinderliu} and one can actually adapt such proof to the case in which $\SS_\ast$ is an isolated critical point of the length functional.
\end{rem}

\begin{rem}
It is not necessarily true that $\SS_\ast=\SS_\infty$, but there are some cases in which we are able to determine $\SS_\infty$:
\begin{itemize}
\item If the network $\SS_\ast$ is an isolated critical point of the length functional, then $\SS_\infty$ must coincide with $\SS_\ast$ and this is always the case if $\SS_\ast$ is a tree. 
\item Suppose that $\SS_\ast$ is a network composed of a regular hexagon $H$ with area $A_\ast$ and six straight segments connecting the vertices of a bigger regular hexagon. Then, $\SS_\ast$ is not an isolated critical point of the length functional, indeed, all the networks composed of concentric hexagons and straight segments connecting the end--points give a one--parameter family of critical points with the same length, see Figure~\ref{Fig:Ragnatela}. We underline that there are no other critical points of the length functional with this topology and with the same end--points. 
\begin{figure}[H]
\begin{center}
\begin{tikzpicture}[scale=2]
\draw[thick]
(-0.5,-0.86)--(-0.375,-0.649)
(-1,0)-- (-0.75,0)
(-0.5,0.86)--(-0.375,0.649)
(0.5,-0.86)--(0.375,-0.649)
(1,0)-- (0.75,0)
(0.375,0.649)--(0.5,0.86) ;
\draw[thick]
(-0.375,-0.649)--(-0.75,0)--(-0.375,0.649)--
(0.375,0.649)--(0.75,0)--(0.375,-0.649)--(-0.375,-0.649);
\draw[thick,dashed]
(-0.25,-0.43)--(-0.375,-0.649)
(-0.5,0)-- (-0.75,0)
(-0.25,0.43)--(-0.375,0.649)
(0.25,-0.43)--(0.375,-0.649)
(0.5,0)-- (0.75,0)
(0.375,0.649)--(0.25,0.43);
\draw[thick,dashed]
(-0.25,-0.43)--(-0.5,0)--(-0.25,0.43)--
(0.25,0.43)--(0.5,0)--(0.25,-0.43)--(-0.25,-0.43);
\draw[thick,dotted]
(-0.16,-0.28)--(-0.33,0)--(-0.16,0.28)--
(0.16,0.28)--(0.33,0)--(0.16,-0.28)--(-0.16,-0.28);
\draw[thick,dotted]
(-0.16,-0.28)--(-0.25,-0.43)
(-0.33,0)-- (-0.5,0)
(-0.16,0.28)--(-0.25,0.43)
(0.16,-0.28)--(0.25,-0.43)
(0.33,0)-- (0.5,0)
(0.25,0.43)--(0.16,0.28) ;
\fill[black](1,0) circle (1pt); 
\fill[black](-1,0) circle (1pt); 
\fill[black](0.5,-0.86) circle (1pt); 
\fill[black](-0.5,-0.86) circle (1pt); 
\fill[black](0.5,0.86) circle (1pt); 
\fill[black](-0.5,0.86) circle (1pt); 
\end{tikzpicture}
\end{center}
\begin{caption}{Three different networks with zero curvature with the same end--points and topology. They all have the same length.\label{Fig:Ragnatela}}
\end{caption}
\end{figure}

Suppose now that $\SS_0$ is regular network with the same end--points and the same topology of $\SS_\ast$, sufficiently close to $\SS_\ast$ and such that the area enclosed by the loop is equal to $A_0$. Then, $\SS_\infty$ coincides with $\SS_\ast$ if and only if $A_0=A_\ast$, as the area enclosed by any loop of six curves is preserved during the evolution and $\SS_\ast$ is the unique network with zero curvature and area $A_\ast$ among the possible limit critical networks. We remark that if $A_0\not=A_\ast$, we then have an example where the limit network $\SS_\infty$ is different by $\SS_\ast$, indeed $\SS_\infty$ must be the unique network of such family with a central regular hexagon of area $A_0$.
\end{itemize}
\end{rem}

We conclude this section with a couple of open problems.

\begin{oprob}\label{open-stab1}
Is it possible to replace the $W^{2,2}$--closedness condition in the stability Theorem~\ref{stability} 
with some ``small distance'' condition between the networks that allows also topological changes, for instance, the Hausdorff distance?
\end{oprob}

\begin{oprob}\label{open-stab2}
It is possible to ``identify'' the limit network $\SS_\infty$ in the stability Theorem~\ref{stability}, in general?
This question is relevant in the non--trivial case when $\SS_\ast$ belongs to a continuous family
of critical points for the length functional.
\end{oprob}

\section{An isoperimetric estimate}\label{dsuL} 

Given the smooth flow $\SS_t=F(\SS,t)$, we take two points $p=F(x,t)$ and
$q=F(y,t)$ belonging to $\SS_t$. A couple $\left(p,q\right)$ is ``admissible'' 
if the segment joining $p$ and $q$ does not intersect the network $\SS_t$ in other points.
We call $\mathfrak{A}$ the
class of the admissible couple. Given an admissible pair $\left( p,q\right)$ we consider the set 
of the embedded curves $\Gamma_{p,q}$ contained in $\SS_t$ connecting $p$ and $q$,
forming with the segment $\overline{pq}$ a Jordan curve.
Thus, it is well defined the area of the open region $\mathcal{A}_{p,q}$ enclosed by any Jordan curve constructed in this way and, for any pair $(p,q)$, 
we call $A_{p,q}$ the smallest area of all such possible regions $\mathcal{A}_{p,q}$. 
If $p$ and $q$ are both points of a set of curves forming a loop,
we define $\psi(A_{p,q})$ as 
$$
\psi(A_{p,q})=\frac{A}{\pi}\sin\left( \frac{\pi}{A}A_{p,q}\right)\,,
$$
where $A=A(t)$ is the area of the connected component
of $\Omega\setminus\SS_t$ which contains the open segment
joining $p$ and $q$.

We consider the function 
$\Phi_t: \SS\times\SS\to \R\cup\{+\infty\}$ as 
\begin{equation*}
\Phi_t(x,y)= 
\begin{cases}
\frac{\vert p-q\vert^2}{\psi(A_{p,q})}\qquad &\,\text{if $x\not=y$ and $x, y$ are points of a loop;}\\
\frac{\vert p-q\vert^2}{A_{p,q}}\qquad &\,\text{if $x\not=y$ and $x, y$ are not both points of a loop;}\\
4\sqrt{3}\;\; &\,\text{if $x$ and $y$ coincide with one of the 3--points $O^i$ of $\SS$;}\\
+\infty\;\; &\,\text{if $x=y\not=O^i$;}\\
\end{cases}
\end{equation*} 
where $p=F(x,t)$ and $q=F(y,t)$.

\begin{rem}
Following the argument of Huisken in~\cite{huisk2},
in the definition of the function $\Phi_t$ we introduce the function $\psi(A_{p,q})$, when
the two points belong to a loop because we want to maintain the function smooth
also when $A_{p,q}$ is equal to $A/2$.
\end{rem}

In the following, with a little abuse of notation, we consider the function $\Phi_t$ defined on 
$\SS_t\times\SS_t$ and we speak of admissible pair for the couples of points
$(p,q)\in\SS_t\times\SS_t$ instead of $(x,y)\in\SS\times\SS$.

We define $E(t)$ as the infimum of $\Phi_t$ between all admissible couple of points
$p=F(x,t)$ and $q=F(y,t)$:
\begin{equation}\label{equant}
E(t) = \inf_{(p,q)\in\mathfrak{A}}\Phi_t
\end{equation}
for every $t\in[0,T)$.\\
We call $E(t)$ ``embeddedness measure''. We underline that similar geometric quantities have already been applied to analogous problems in~\cite{chzh,hamilton3,huisk2}.

The following lemma holds, for its proof in the case of a compact network see~\cite[Theorem~2.1]{chzh}.

\begin{lem}\label{lemet1}
The infimum of the function $\Phi_t$ between all admissible couples $(p,q)$ is actually a minimum.
Moreover, assuming that $0<E(t)<4\sqrt{3}$, for any minimizing pair 
$(p,q)$ we have $p\ne q$ and
neither $p$ nor $q$ coincides with one of the 3--points $O^i(t)$ of $\SS_t$.
\end{lem}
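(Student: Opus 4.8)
The plan is to argue by compactness plus a careful inspection of the degenerate behavior of $\Phi_t$ at the ``diagonal'' and at the $3$--points. First I would note that the space of admissible pairs is not compact for two reasons: a minimizing sequence $(p_j,q_j)$ could have $|p_j-q_j|\to 0$, or one (or both) of the limit points could be a $3$--point. So the strategy is to show that, under the assumption $0<E(t)<4\sqrt{3}$, neither of these degenerations can occur along a minimizing sequence, and then standard compactness of the (compact, since $\Omega$ is bounded) network $\SS_t$ gives a genuine minimizing pair $(p,q)$ with $p\neq q$, both distinct from all $O^i(t)$.

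The first step is to handle the case $|p_j-q_j|\to 0$ with $p_j,q_j$ converging to a common point $z\in\SS_t$. Here I would distinguish whether $z$ is a regular point of a curve or a $3$--point. If $z$ is a regular point: for two nearby points on a $C^2$ curve, the enclosed area $A_{p_j,q_j}$ (or $\psi(A_{p_j,q_j})$, which is comparable to $A_{p_j,q_j}$ when the latter is small) behaves like a higher-order quantity compared to $|p_j-q_j|^2$ — more precisely, if the two points are at arclength distance $\ell$ apart on the curve, then $|p_j-q_j|\sim\ell$ while the area of the thin region trapped between the chord and the arc is $O(\ell^3)$ (governed by the curvature), so $\Phi_t(p_j,q_j)\to+\infty$, contradicting $E(t)<4\sqrt3<+\infty$. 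The mild subtlety is that the segment $\overline{p_jq_j}$ must be checked to be admissible, i.e. not cross the network elsewhere; but for $p_j,q_j$ sufficiently close on a fixed $C^2$ curve this is automatic, and in any case the infimum definition only makes things larger if we restrict to such pairs. If $z=O^i(t)$ is a $3$--point: by the definition of $\Phi_t$, the ``diagonal value'' assigned at a $3$--point is exactly $4\sqrt{3}$, and I would show $\liminf_j \Phi_t(p_j,q_j)\geq 4\sqrt{3}$ by a local computation near the $120$ degrees junction (the two arcs emanating from $O^i$ with a $120^\circ$ angle between their tangents; a chord of length $|p_j-q_j|$ cuts off a region whose area is asymptotically that of a circular sector of angle $2\pi/3$, and the ratio $|p_j-q_j|^2/(\text{area})$ tends precisely to $4\sqrt3$). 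Again this contradicts $E(t)<4\sqrt3$.

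The second step is to handle the case where a minimizing sequence converges to a pair $(p,q)$ with $p\neq q$ but with, say, $p=O^i(t)$ a $3$--point (or both being $3$--points). Here $\Phi_t$ is actually continuous in the relevant sense: $|p_j-q_j|^2$ is continuous, and the enclosed area $A_{p_j,q_j}$ (resp. $\psi$) depends continuously on the endpoints, since a $3$--point is an ordinary point of the underlying set $\SS_t$ and the minimal enclosed area is a continuous function of the two endpoints along the curves. So the limiting value $\Phi_t(p,q)$ is finite and equals $\lim_j\Phi_t(p_j,q_j)=E(t)$, so $(p,q)$ is a minimizing pair with $p\neq q$; but then I claim it cannot involve a $3$--point. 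Indeed, if $p=O^i(t)$, one can perturb $p$ slightly into the interior of any one of the three curves meeting at $O^i$ and, by a first-variation / monotonicity argument (moving $p$ in the direction that decreases $|p-q|^2$ faster than it can decrease the enclosed area — or keeping the same enclosed-area branch and noting the chord length is not stationary), strictly decrease $\Phi_t$, contradicting minimality. This monotonicity-at-the-junction computation, using the $120^\circ$ condition, is the step I expect to be the main obstacle: one has to check that for at least one of the three outgoing directions the perturbation strictly lowers $\Phi_t$, which requires comparing the derivative of $|p-q|^2$ along the curve with the derivative of $A_{p,q}$ (the latter being, by the standard first variation of area, governed by the sine of the angle between the chord $\overline{pq}$ and the tangent at $p$), and verifying that a bad configuration for all three directions simultaneously is impossible given the angle constraint. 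Once this is established, we conclude that every minimizing pair $(p,q)$ satisfies $p\neq q$ and avoids all $3$--points, and by Step~1 the infimum is attained (a minimizing sequence stays in a compact subset of $\SS_t\times\SS_t$ bounded away from the diagonal and from the $3$--points, on which $\Phi_t$ is continuous), which is exactly the statement of the lemma.
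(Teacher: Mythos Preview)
Your plan is sound, and since the paper does not actually prove this lemma (it only cites \cite[Theorem~2.1]{chzh}), there is no ``paper's own proof'' to compare against directly. Let me confirm that the pieces you flag as uncertain do in fact go through.

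For Step~1 at a $3$--point, the region is a \emph{triangle}, not a circular sector: if $p,q$ lie on two distinct curves at arclength distances $\ell_1,\ell_2$ from $O^i$, then to leading order $|p-q|^2\approx \ell_1^2+\ell_2^2+\ell_1\ell_2$ (law of cosines with the $120^\circ$ angle) while $A_{p,q}\approx \tfrac12\ell_1\ell_2\sin 120^\circ=\tfrac{\sqrt3}{4}\ell_1\ell_2$, so $\Phi_t\approx \tfrac{4}{\sqrt3}\bigl(\ell_1/\ell_2+\ell_2/\ell_1+1\bigr)\geq 4\sqrt3$, with equality iff $\ell_1=\ell_2$. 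This is exactly why the diagonal value $4\sqrt3$ is assigned at $3$--points.

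For Step~2, the first--variation argument you anticipate as the main obstacle does work and is where the constant $4\sqrt3$ reappears. If $p=O^i$ and the chord $\overline{pq}$ lies in the sector between the tangents $\tau^2,\tau^3$ (so $\alpha^2+\alpha^3=120^\circ$ with $\alpha^j$ the angle between $\tau^j$ and $q-p$), then moving $p$ into curve~$2$ and into curve~$3$ are both admissible perturbations, and the area derivatives have \emph{opposite} signs (one curve bounds the minimal region, the other is on the far side of the chord). The two one--sided minimality conditions $\frac{d\Phi}{ds}\geq 0$ then read
\[
\cot\alpha^2\leq \tfrac{E(t)}{4}\qquad\text{and}\qquad \cot\alpha^3\leq -\tfrac{E(t)}{4}
\]
(or with the roles of $2$ and $3$ swapped). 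The second forces $\alpha^3>90^\circ$, hence $\alpha^2<30^\circ$ and $\cot\alpha^2>\sqrt3$, which combined with the first gives $E(t)>4\sqrt3$, contradicting the hypothesis. So the ``bad configuration for all three directions simultaneously'' is indeed ruled out precisely by the $120^\circ$ condition and the bound $E(t)<4\sqrt3$.

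One point you pass over lightly: along a minimizing sequence $(p_j,q_j)\to(p,q)$ with $p\neq q$, the limiting segment $\overline{pq}$ could in principle become tangent to the network at a third point, so that $(p,q)$ is only borderline admissible. This does not cause trouble (one can split the segment at the tangency and get a pair with smaller $\Phi_t$, or argue by continuity of the minimal enclosed area), but it deserves a sentence.
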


\begin{rem}\label{opendl}
In the case of an open network without end--points, since the network is asymptotically $C^1$--close to a family of halflines (and during its curvature motion such halflines are fixed), there holds that if the infimum of $\Phi_t$ is less than a ``structural'' constant depending only on such halflines, then it is a minimum. By means of such modification to this lemma, all the rest of the analysis of this chapter also holds for the evolution of open networks, we let the details and the easy modifications of the arguments to the reader.
\end{rem}

Notice that it follows that the network $\SS_t$ is embedded {\em if and only if} $E(t)>0$. Moreover, $E(t)\leqslant 4\sqrt{3}$ always holds, thus when
$E(t)>0$ the two points $(p,q)$ of a minimizing pair can coincide if
and only if $p=q=O^i(t)$.\\
Finally, since the evolution is smooth, it is easy to see that the
function $E:[0,T)\to\R$ is locally Lipschitz, in particular, $\frac{dE(t)}{dt}>0$ exists for almost every time $t\in[0,t)$.

If the curvature flow $\mathbb{S}_t$ has fixed end--points $\{P^1, P^2,\dots, P^l\}$ on the boundary of a strictly convex set
$\Omega$, we consider the flows ${\mathbb{H}}^i_t$ each obtained as the union of $\mathbb{S}_t$ with its reflection $\mathbb{S}^{R_i}_t$ with respect to the end--point $P^i$, as we described at the end of Section~\ref{geopropsub}.\\
We underline that this is still a smooth curvature flow (as the compatibility conditions of every order in Definition~\ref{ncompcond} are satisfied by $\SS_t$ at its end--points) without self--intersections, where $P^i$ is no more an end--point and the
number of triple junctions of ${\mathbb{H}}^i_t$ is exactly twice the number of the ones of $\SS_t$.
\begin{figure}[H]
\begin{center}
\begin{tikzpicture}[rotate=25,scale=1.7]
\draw[color=black,scale=1,domain=-3.15: 3.15,
smooth,variable=\t,rotate=0]plot({1*sin(\t r)},
{1*cos(\t r)}); 
\draw[scale=0.5]
(-2,0) to [out=45, in=-160,looseness=1] (-0.85,0.25)
(-0.85,0.25) to [out=-40, in=150,looseness=1] (0.75,-0.35)
(-0.85,0.25)to [out=80, in=-90,looseness=1] (0,2)
(0.75,-0.35)to [out=30, in=-120,looseness=1](2,0)
(0.75,-0.35)to [out=-90, in=90,looseness=1](0,-2);
\draw[color=black,scale=1,domain=-3.15: 3.15,
smooth,variable=\t,rotate=0,shift={(2,0)}]plot({1*sin(\t r)},
{1*cos(\t r)}); 
\draw[scale=0.5]
(6,0) to [out=-135, in=20,looseness=1] (4.85,-0.25)
(4.85,-0.25) to [out=140, in=-30,looseness=1] (3.25,0.35)
(4.85,-0.25)to [out=-100, in=90,looseness=1] (4,-2)
(3.25,0.35)to [out=-150, in=90,looseness=1](2,0)
(3.25,0.35)to [out=90, in=-90,looseness=1](4,2);
\draw[color=black,scale=1,domain=-3.15: 3.15,
smooth,variable=\t,rotate=0,shift={(-2,0)}]plot({1*sin(\t r)},
{1*cos(\t r)}); 
\draw[scale=0.5]
(-2,0) to [out=-135, in=40,looseness=1] (-3.15,-0.25)
(-3.15,-0.25) to [out=140, in=-30,looseness=1] (-4.75,0.35)
(-3.15,-0.25)to [out=-100, in=90,looseness=1] (-4,-2)
(-4.75,0.35)to [out=-150, in=60,looseness=1](-6,0)
(-4.75,0.35)to [out=90, in=-90,looseness=1](-4,2);
\draw[color=black,scale=1,domain=-3.15: 3.15,
smooth,variable=\t,rotate=0,shift={(0,2)}]plot({1*sin(\t r)},
{1*cos(\t r)});
\draw[scale=0.5]
(2,4) to [out=-135, in=20,looseness=1] (0.85,3.75)
(0.85,3.75) to [out=140, in=-30,looseness=1] (-0.75,4.35)
(0.85,3.75)to [out=-100, in=90,looseness=1] (0,2)
(-0.75,4.35)to [out=-150, in=60,looseness=1](-2,4)
(-0.75,4.35)to [out=90, in=-90,looseness=1](0,6);
\draw[color=black,scale=1,domain=-3.15: 3.15,
smooth,variable=\t,rotate=0,shift={(0,-2)}]plot({1*sin(\t r)},
{1*cos(\t r)}); 
\draw[scale=0.5]
(2,-4) to [out=-135, in=20,looseness=1] (0.85,-4.25)
(0.85,-4.25) to [out=140, in=-30,looseness=1] (-0.75,-3.65)
(0.85,-4.25)to [out=-100, in=90,looseness=1] (0,-6)
(-0.75,-3.65)to [out=-150, in=60,looseness=1](-2,-4)
(-0.75,-3.65)to [out=90, in=-90,looseness=1](0,-2);
\path[font=\footnotesize,rotate=-25]
(1.35,-0.4) node[left]{$\SS_t$}
(-2.95,-0.4) node[left]{${\mathbb{H}}^1_t$}
(0.05,-2.77) node[left]{${\mathbb{H}}^2_t$}
(3.4,0.4) node[left]{${\mathbb{H}}^3_t$}
(0.1,3) node[left]{${\mathbb{H}}^4_t$}
(-0.88,-0.45) node[left]{$P^1$}
(0.43,-0.81) node[left]{$P^2$}
(.9,0.4) node[left]{$P^3$}
(-.08,1.11) node[left]{$P^4$}
(-.05,.05) node[left]{$O^1$}
(0.48,-0.15) node[left]{$O^2$};
\end{tikzpicture}
\end{center}
\begin{caption}{A tree--like network $\mathbb{S}_t$ with the associated networks ${\mathbb{H}}^i_t$.}
\end{caption}
\end{figure}
We define for the networks ${\mathbb{H}}^i_t$ the functions $E^i:[0,T)\to\mathbb{R}$, analogous to the function
$E:[0,T)\to\mathbb{R}$ of $\mathbb{S}_t$ and, for every $t\in[0,T)$, we call $\Pi(t)$ the minimum of the values $E^i(t)$. The function $\Pi:[0,T)\to\R$ is still a locally Lipschitz function (hence, differentiable for almost every time), clearly satisfying $\Pi(t)\leqslant E^i(t)\leqslant E(t)$ for all $t\in[0,T)$.
Moreover, as there are no self--intersections, by construction, we have $\Pi(0)>0$. If we prove that $\Pi(t)\geqslant C>0$ for all $t\in[0,T)$, form some constant $C\in\R$,
then, we can conclude that also $E(t)\geqslant C>0$, for all $t\in[0,T)$.

\begin{thm}\label{dlteo} 
Let $\Omega$ be an open, bounded, strictly convex subset of $\mathbb{R}^2$.
Let $\mathbb{S}_0$ be an initial regular network
with at most two triple junctions and let the $\mathbb{S}_t$ be a 
smooth evolution by curvature of $\mathbb{S}_0$, defined in a maximal time interval $[0,T)$.\\
Then, there exists a constant $C>0$ depending only on $\SS_0$ 
such that $E(t)\geqslant C>0$, for every $t\in[0,T)$.
In particular, the networks $\SS_t$ remain ({\em uniformly}, in a sense) embedded during the flow.
\end{thm}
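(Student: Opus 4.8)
The plan is to prove a differential inequality for the locally Lipschitz functions $E^i(t)$ associated to the reflected networks ${\mathbb{H}}^i_t$, and then conclude by Gronwall-type reasoning for $\Pi(t)=\min_i E^i(t)$. Since $\Pi(t)\le E^i(t)\le E(t)$, a uniform lower bound on $\Pi$ gives the uniform lower bound on $E$. The crucial reduction, which is exactly where the hypothesis ``at most two triple junctions'' enters, is that each ${\mathbb{H}}^i_t$ is a \emph{closed} smooth curvature flow of a network with at most four triple junctions and no end-points, so the analysis of the isoperimetric-type monotonicity can be carried out on each ${\mathbb{H}}^i_t$ and then transported back to $\SS_t$ via the reflection construction described just before Section~\ref{novan}. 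This is why we first replace $E$ by $\Pi$: on $\SS_t$ itself the end-points would create boundary contributions that are hard to control, whereas the doubled network ${\mathbb{H}}^i_t$ removes this difficulty, at the price of doubling the number of triple junctions (still at most four).

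First I would fix a time $t_0$ at which $\Pi'(t_0)$ exists and, among the indices realizing $\Pi(t_0)=E^i(t_0)$, pick one and a minimizing admissible pair $(p,q)$ for $\Phi^i_{t_0}$ on ${\mathbb{H}}^i_{t_0}$; by Lemma~\ref{lemet1} (and Remark~\ref{opendl} is not needed here since the doubled network is closed), as long as $0<\Pi(t_0)<4\sqrt 3$ we have $p\ne q$, neither is a triple junction, and the chord $\overline{pq}$ meets the network only at its endpoints. Then I would use Hamilton's trick to differentiate $\Phi^i_t$ along the minimizing pair, getting $\frac{d\Pi}{dt}\ge \partial_t\Phi^i_t(p,q)$ evaluated with the curvature flow velocity. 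The computation splits into the ``loop'' case, where the relevant quantity involves $\psi(A_{p,q})$ and the enclosed area $A(t)$, and the ``non-loop'' case, where it is $|p-q|^2/A_{p,q}$; in both cases one uses the first-variation conditions at the minimizing pair (the chord is perpendicular to the network at $p$ and at $q$, which is where the geometric meaning of the minimality enters) together with $\gamma_t=k\nu+\lambda\tau$ and the von~Neumann area-evolution rule~\eqref{areaevolreg}. The aim is to show that $\partial_t\Phi^i_t(p,q)\ge 0$ whenever $\Phi^i_t(p,q)$ has decreased to a ``dangerous'' threshold, i.e. that the value $4\sqrt 3$ (attained at collapsed triple junctions) acts as a barrier and that below it the derivative has a favorable sign, possibly after subtracting off a controlled term coming from the area contributions of the loops — which is precisely the reason $\psi$ was introduced (to keep things smooth across $A_{p,q}=A/2$) and the reason the ``embeddedness measure'' is modelled on Huisken's and Hamilton's chord-arc quantities.

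The main obstacle, and the place where the restriction to two triple junctions is genuinely used rather than merely convenient, is the analysis of the triple-junction terms in the evolution of $\Phi^i_t$: when the minimizing pair degenerates toward a triple junction one must check that the ``boundary'' contributions at the $3$-points of ${\mathbb{H}}^i_t$ do not spoil the sign of the derivative, and this relies on the $120$ degrees condition together with the relations $\sum_i k^{pi}=\sum_i\lambda^{pi}=0$ and $k^{pi}=(\lambda^{p(i+1)}-\lambda^{p(i-1)})/\sqrt3$ from Section~\ref{basiccomp}; it is only for networks with few triple junctions that the combinatorics of chords joining different loops closes up so that the associated quantity $E$ (equivalently $\Pi$) retains the needed monotonicity, whereas — as the text itself remarks — with three or more $3$-points the analogous quantity ``apparently does not share the above monotonicity property''. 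Once the differential inequality of the form $\Pi'(t)\ge -C\big(1+\|k\|_{L^\infty}\big)\,\Pi(t)$ (or, better, $\Pi'(t)\ge 0$ whenever $\Pi(t)\le c$ for a structural constant $c$) is established on the set where $\Pi$ is differentiable and below the barrier $4\sqrt 3$, a standard ODE comparison argument together with $\Pi(0)>0$ (embeddedness of $\SS_0$ and strict convexity of $\Omega$) yields $\Pi(t)\ge C>0$ on $[0,T)$, hence $E(t)\ge C>0$, which is the assertion. A final remark is that, since the inequality is autonomous in the sense of depending only on $\SS_0$ through $\Pi(0)$ and the fixed structural data, the constant $C$ depends only on $\SS_0$, as claimed; and by the discussion at the end of Section~\ref{restart} this bound is stable under the restarting procedure, though that is not needed for the statement itself.
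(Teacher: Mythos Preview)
Your overall framework is right --- work with $\Pi(t)=\min_i E^i(t)$, apply Hamilton's trick at a minimizing pair, and show the derivative is positive once $\Pi$ falls below a structural threshold --- but there are two genuine gaps.

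First, the reflected networks ${\mathbb{H}}^i_t$ are \emph{not} closed. The reflection is through a \emph{single} end-point $P^i$, so $P^i$ becomes interior but the remaining end-points $P^j$ ($j\ne i$) and their reflections persist. Consequently the paper still needs a case analysis on whether the minimizing pair $(p,q)$ for $E^i$ involves end-points of ${\mathbb{H}}^i_t$; when it does, one either gets a direct lower bound (distinct fixed end-points are separated by a structural constant) or reduces to another ${\mathbb{H}}^j_t$ with both points interior.

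Second, and more importantly, you misidentify where the hypothesis ``at most two triple junctions'' enters. It is not in ``boundary contributions at the $3$-points'' or in the relations $\sum_i k^{pi}=0$. The mechanism is the number of $3$-points lying on the arc $\Gamma_{p,q}$: in computing $\partial_t A_{p,q}$ and $\partial_t A$ one uses the turning-angle/von~Neumann formula, and the resulting sign in $\partial_t\Phi$ is favourable only when $\Gamma_{p,q}$ contains \emph{at most two} $3$-points (this is exactly Proposition~\ref{lemet2} and the Remark following it). Now when $\SS_0$ has two $3$-points, ${\mathbb{H}}^i_t$ has \emph{four}, and a minimizing pair with $p\in\SS_t$, $q\in\SS_t^{R_i}$ may well have $\Gamma_{p,q}$ passing through three or four of them --- Proposition~\ref{lemet2} then does \emph{not} apply. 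The paper closes this gap with a separate blow-up argument (Lemma~\ref{trinoncoll}), showing that both $3$-points of $\SS_t$ cannot converge to the same end-point $P^i$; this yields a uniform positive lower bound on $|p-q|$ in the bad case. Your proposal contains no trace of this step, and without it the argument does not go through for networks with two triple junctions. (Incidentally, the first variation gives $\pi/2<\alpha<7\pi/12$, not perpendicularity; this matters since the sign argument uses $\alpha>\pi/2$.)
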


To prove this theorem we first show the next proposition and lemma.
\begin{prop}\label{lemet2} 
Let $t\in[0,T)$ such that
\begin{itemize}
\item $0<E(t)<1/4$,
\item for at least one minimizing pair $(p,q)$ of $\Phi_t$, the curve $\Gamma_{p,q}$ contains at most two triple junctions with neither $p$ nor $q$ coinciding with one of the end--points $P^i$.
\end{itemize}
Then, if the derivative $\frac{dE(t)}{dt}$ exists, it is positive.
\end{prop}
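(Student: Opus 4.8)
The strategy is a standard first-variation / maximum-principle computation at a minimizing configuration, carried out in the spirit of Huisken's distance-comparison argument in~\cite{huisk2} and its network adaptations in~\cite{mannovtor,chzh}. Fix a time $t$ with $0<E(t)<1/4$ at which $\frac{dE}{dt}$ exists, and pick a minimizing pair $(p,q)$ with $\Gamma_{p,q}$ containing at most two triple junctions and neither point an end-point. By Lemma~\ref{lemet1} we know $p\neq q$ and that neither $p$ nor $q$ is a triple junction, so near $p$ and $q$ the network is a single smooth curve and the quantities $|p-q|$, $A_{p,q}$ (or $\psi(A_{p,q})$) are smooth functions of the parameters as we move $p$ and $q$ along $\SS_t$. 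The first step is to write the stationarity conditions for $\Phi_t$ at $(p,q)$: differentiating in the two arclength directions at $p$ and at $q$ gives relations between the chord direction $w=(p-q)/|p-q|$, the unit tangents $\tau(p),\tau(q)$, and the derivative of $A_{p,q}$ (which is $\tfrac12$ times the signed ``sweep'' of the chord). In the non-loop case one gets $\langle \tau(p),w\rangle = \tfrac{|p-q|^2}{4A_{p,q}}\,\langle\nu(p),w\rangle$-type identities (and symmetrically at $q$); in the loop case the same with $\psi$ in place of $A$. These are the analogues of the conditions in~\cite[Section~6]{mannovtor}.

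Next I would compute the time derivative of $\Phi_t$ along the flow at this fixed pair (using that $(p,q)$ is a minimizer, so the contribution of the motion of the parameters is non-negative by Hamilton's trick, i.e. only the explicit time dependence matters, cf.~\cite{hamilton2,Manlib}). The velocity at $p$ is $k(p)\nu(p)+\lambda(p)\tau(p)$ and similarly at $q$; hence $\frac{d}{dt}|p-q|^2 = 2\langle p-q,\, \underline v(p)-\underline v(q)\rangle$, and $\frac{d}{dt}A_{p,q}$ picks up the curvature-driven area change of the region $\mathcal A_{p,q}$, namely $-\int_{\Gamma_{p,q}}k\,ds$ plus the chord term $\tfrac12\langle p-q, \underline v(p)^\perp-\underline v(q)^\perp\rangle$ (the boundary contribution from the moving endpoints of the Jordan curve). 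The integral $\int_{\Gamma_{p,q}}k\,ds$ is the total turning of the arc, which is controlled geometrically: it equals the exterior-angle sum which, by the $120^\circ$ condition at the (at most two) triple junctions on $\Gamma_{p,q}$ together with the angle between the chord and the tangents at $p,q$, is bounded. Substituting the stationarity relations from the first step into this expression and doing the (routine but slightly delicate) trigonometric bookkeeping, one shows that whenever $E(t)=\Phi_t(p,q)$ is small — here the hypothesis $E(t)<1/4$ is used quantitatively to absorb error terms — the explicit time derivative of $\Phi_t$ at $(p,q)$ is strictly positive; since $\frac{dE}{dt}$ equals this explicit derivative by the envelope/Hamilton argument, we conclude $\frac{dE(t)}{dt}>0$.

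The only genuinely case-distinguishing point is the loop versus non-loop situation and the role of $\psi$: when $p,q$ lie on a loop bounding a region of total area $A(t)$, one must use $\psi(A_{p,q})=\tfrac{A}{\pi}\sin(\tfrac{\pi}{A}A_{p,q})$ and also track $\frac{dA(t)}{dt}=-\int_{\ell}k\,ds=-(2-m/3)\pi$ from the von~Neumann rule~\eqref{areaevolreg}; the concavity of $\sin$ and the normalization built into $\psi$ are exactly what keep the inequality clean at $A_{p,q}=A/2$, as in~\cite{huisk2}. I expect the main obstacle to be this computation in the loop case: one has to combine the stationarity identities with the evolution of both $A_{p,q}$ and the ambient region area $A(t)$, and verify that the sign is right uniformly, which requires the smallness threshold and careful estimation of the turning integral $\int_{\Gamma_{p,q}}k\,ds$ in terms of $|p-q|$ and $A_{p,q}$ (via an isoperimetric-type inequality for the enclosed region). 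Once the positivity of the explicit $\partial_t\Phi_t$ at any such small-value minimizer is established, the statement of Proposition~\ref{lemet2} follows immediately. The restriction to $\Gamma_{p,q}$ with at most two triple junctions is what makes the angle bookkeeping tractable and is the reason Theorem~\ref{dlteo} is stated for networks with at most two $3$-points.
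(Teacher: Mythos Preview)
Your overall strategy is correct and matches the paper's, but there is a genuine gap: you never invoke the \emph{second variation} inequality at the minimizing pair, and without it the computation does not close. After Hamilton's trick you are left with
\[
\frac{dE}{dt}=\frac{1}{\psi(A_{p,q})^2}\Bigl(\psi(A_{p,q})\,\tfrac{d}{dt}|p-q|^2-|p-q|^2\,\tfrac{d}{dt}\psi(A_{p,q})\Bigr),
\]
and both pieces contain the pointwise curvatures $k(p),k(q)$: the first via $2\langle p-q,\,k(p)\nu(p)-k(q)\nu(q)\rangle$, the second via the chord contribution $-\tfrac12|p-q|(k(p)-k(q))\cos\alpha$ to $\dot A_{p,q}$. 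The first--order stationarity relations only tell you about $\alpha$ (indeed they give $\cot\alpha=-\tfrac{E(t)}{4}\cos(\tfrac{\pi}{A}A_{p,q})$, whence $\tfrac{\pi}{2}<\alpha<\tfrac{7\pi}{12}$ under $E(t)<1/4$); they say nothing about $k(p),k(q)$, which can be arbitrarily large. What the paper does---and what your plan is missing---is to take the variation where \emph{both} $p$ and $q$ slide simultaneously along the network with unit speed, and use the second--order minimality $\psi\,\tfrac{d^2}{ds^2}\eta^2\ge\eta^2\,\tfrac{d^2}{ds^2}\psi$. Expanding this produces exactly the same curvature combination (with the same coefficients) as appears in $\dot E$, bounded below by a purely geometric quantity, namely $|p-q|^4\sin^2\alpha\cdot\bigl(-\tfrac{\pi}{A}\sin(\tfrac{\pi}{A}A_{p,q})\bigr)$. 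Substituting this is what eliminates $k(p),k(q)$ from $\dot E$ altogether.

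Two smaller points. First, the turning integral $\int_{\Gamma_{p,q}}k\,ds$ is not estimated via an ``isoperimetric--type inequality''; it is computed \emph{exactly} by Gauss--Bonnet on the region $\mathcal A_{p,q}$, giving $2\alpha-\tfrac{m\pi}{3}$ with $m$ the number of $3$--points on $\Gamma_{p,q}$ (here $m\le 2$, so $\dot A_{p,q}$ contains the term $2\alpha-\tfrac{4\pi}{3}$). Second, once the curvature terms are gone, positivity is not automatic from $E(t)<1/4$ alone: the paper splits into the cases $0\le A_{p,q}/A\le\tfrac13$ and $\tfrac13\le A_{p,q}/A\le\tfrac12$ and checks the sign by hand in each, using $\tfrac{4\pi}{3}-2\alpha\in(\tfrac{\pi}{6},\tfrac{\pi}{3})$. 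Your outline should incorporate the second--variation step and this final case split; with those, it becomes the paper's proof.
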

\begin{proof}
By simplicity, we consider in detail only the case shown in Figure~\ref{disuelle}. The computations in the other situations are analogous. 
\begin{figure}[H]
\begin{center}
\begin{tikzpicture}[scale=0.85]
\draw[color=black,scale=1,domain=-3.15: 3.15,
smooth,variable=\t,shift={(-1,0)},rotate=0]plot({3.25*sin(\t r)},
{2.5*cos(\t r)}) ;
\filldraw[fill=black!10!white,shift={(-8,0)}]
(6,0.5) 
to[out=-165,in=60, looseness=1] (5.19,0)--(5.19,0)
to[out=-60,in=150, looseness=1] (7.62,-0.81)--(7.62,-0.81) 
to[out=30,in=-167.7,looseness=1.5] (8.483,-0.59) -- (6,0.5) ;
\draw[color=black!40!white]
(0,1) to[out=-90, in=90,looseness=1] (0,-0.67)
(0.15,0.98) to[out=-90, in=90,looseness=1] (0.15,-0.61)
(0.3,0.92) to[out=-90, in=90,looseness=1] (0.3,-0.585)
(0.45,0.87) to[out=-90, in=90,looseness=1] (0.45,-0.565)
(0.6,0.8) to[out=-90, in=90,looseness=1] (0.6,-0.55)
(0.75,0.7) to[out=-90, in=90,looseness=1] (0.75,-0.52)
(0.9,0.65) to[out=-90, in=90,looseness=1] (0.9,-0.5)
(1.05,0.57) to[out=-90, in=90,looseness=1] (1.05,-0.45)
(1.2,0.52) to[out=-90, in=90,looseness=1] (1.2,-0.4)
(1.35,0.4) to[out=-90, in=90,looseness=1] (1.35,-0.265)
(-0.15,1.025) to[out=-90, in=90,looseness=1] (-0.15,-0.68)
(-0.3,1.035) to[out=-90, in=90,looseness=1] (-0.3,0.15)
(-0.3,0) to[out=-90, in=90,looseness=1] (-0.3,-0.75)
(-0.45,1.02) to[out=-90, in=90,looseness=1] (-0.45,0.35)
(-0.45,0.1) to[out=-90, in=90,looseness=1] (-0.45,-0.74)
(-0.6,0.98) to[out=-90, in=90,looseness=1] (-0.6,0.1)
(-0.6,0) to[out=-90, in=90,looseness=1] (-0.6,-0.67)
(-0.75,0.93) to[out=-90, in=90,looseness=1] (-0.75,-0.63)
(-0.9,0.84) to[out=-90, in=90,looseness=1] (-0.9,-0.6)
(-1.05,0.75) to[out=-90, in=90,looseness=1] (-1.05,-0.573)
(-1.2,0.655) to[out=-90, in=90,looseness=1] (-1.2,-0.56)
(-1.35,0.595) to[out=-90, in=90,looseness=1] (-1.35,-0.545)
(-1.5,0.559) to[out=-90, in=90,looseness=1] (-1.5,-0.535)
(-1.65,0.526) to[out=-90, in=90,looseness=1] (-1.65,-0.527)
(-1.8,0.51) to[out=-90, in=90,looseness=1] (-1.8,-0.517)
(-1.95,0.47) to[out=-90, in=90,looseness=1] (-1.95,0)
(-1.95,-0.2) to[out=-90, in=90,looseness=1] (-1.95,-0.5)
(-2.1,0.445) to[out=-90, in=90,looseness=1] (-2.1,0.18)
(-2.1,-0.17) to[out=-90, in=90,looseness=1] (-2.1,-0.47)
(-2.25,0.405) to[out=-90, in=90,looseness=1] (-2.25,0)
(-2.25,-0.1) to[out=-90, in=90,looseness=1] (-2.25,-0.43)
(-2.4,0.34) to[out=-90, in=90,looseness=1] (-2.4,-0.36)
(-2.55,0.24) to[out=-90, in=90,looseness=1] (-2.55,-0.255)
(-2.7,0.12) to[out=-90, in=90,looseness=1] (-2.7,-0.12);
\draw[color=white,shift={(-8,0)}]
(7.62,-0.81) 
to[out=30,in=-167.7,looseness=1.5] (8.483,-0.59);
\draw[shift={(-8,0)}]
 (3.75,0) node[left] {$P^1$} to[out=30,in=110,looseness=1] (4.37,0) 
to[out=-50,in=180,looseness=1] (5.19,0)node[above] {$O^1$}
to[out=60, in=-145,looseness=1] (7,0.81)
to[out=35,in=150, looseness=1] (8.62,0.81) 
to[out=-30,in=90,looseness=1.5] (9.44,0)
(7.62,-0.81) 
to[out=30,in=-90, looseness=1](9.44,0)
(7.62,-0.81) 
to[out=150,in=-60, looseness=1] (5.19,0)
(7.62,-0.81) 
to[out=-90,in=60, looseness=1] (9.1,-1.9)node[below]{$P^2$};
\draw
 (0.438,-0.58) -- (-1.98,0.49); 
\path[font= \footnotesize] 
(-2,0.3) node[below] {$\mathcal{A}_{p,q}$};
\path[font= \Large]
(-3.75,-1.8) node[below] {$\Omega$};
\path[font=\large]
(-0.45,0.53) node[below] {$A$}
(-0.3,-1) node[left]{$O^2$}
(-2.03,1) node[below] {$p$}
(0.7,-0.52) node[below] {$q$};
\end{tikzpicture}
\end{center}
\begin{caption}{The situation considered in the computations of Proposition~\ref{lemet2}.\label{disuelle}}
\end{caption}
\end{figure}
Let $0<E(t)<1/4$ and let $(p,q)$ a minimizing pair for $\Phi_t$ such that the two points are both distinct
from the end--points $P^i$. We choose a value $\eps>0$ smaller than the ``geodesic'' distances of $p$ and $q$ from
the 3--points of $\SS_t$ and between them.\\
Possibly taking a smaller $\eps>0$, we fix an arclength
coordinate $s\in (-\eps,\eps)$ and a local parametrization $p(s)$ of
the curve containing $p$ such that $p(0)=p$, with the 
same orientation as the original one.
Let $\eta(s)=\vert p(s)-q\vert$, since 
\begin{equation*}
E(t)=\min_{s\in(-\varepsilon,\varepsilon)}\frac{\eta^2(s)}{\psi(A_{p(s),q})}=
\frac{\eta^2(0)}{\psi(A_{p,q})}\,,
\end{equation*}
if we differentiate in $s$ we obtain 
\begin{equation}\label{eqdsul}
\frac{d\eta^2(0)}{ds}\psi(A_{p(0),q})=
\frac{d\psi(A_{p(0),q})}{ds}\eta^2(0)\,.
\end{equation}

We underline that we are considering the function $\psi$ because we are doing all
the computation for the case shown in Figure~\ref{disuelle}, where there is a loop.
For a network without loops the computations are simpler:
instead of formula~\eqref{eqdsul}, one has
$$
\frac{d\eta^2(0)}{ds}A_{p(0),q}=
\frac{dA_{p(0),q}}{ds}\eta^2(0)\,,
$$
see~\cite[Page~281]{mannovtor}, for instance.

As the intersection of the segment $\overline{pq}$ with the network is transversal, 
we have an angle $\alpha(p)\in(0,\pi)$ 
determined by the unit tangent $\tau(p)$ and the vector $q-p$.\\
We compute
\begin{align*} 
\left.\frac{{ d} \eta^2(s)}{{ d} s}\right\vert_{s=0}
 &=\,
-2 \langle \tau(p)\,\vert\, q-p\rangle = -2 \vert
p-q\vert \cos\alpha(p)\\ 
\left.\frac{{ d} A(s)}{{ d} s}\right\vert_{s=0} 
&=\, 0\\
\left.\frac{{ d} A_{p(s),q}}{{ d} s}\right\vert_{s=0} 
&=\, 
\frac 12 \vert \tau(p) \wedge (q-p)\vert = 
\frac 12 \langle \nu(p)\,\vert\, q-p\rangle = 
\frac 12 \vert p-q\vert \sin\alpha(p)\\
\left.\frac{{ d} \psi(A_{p(s),q})}{{ d} s} \right\vert_{s=0} 
&=\, 
\frac{{ d} A_{p,q}}{{ d} s}\cos\left(\frac{\pi}{A}A_{p,q}\right)\\
\, &=\,
\frac 12 \vert p-q\vert \sin\alpha(p)\cos\left(\frac{\pi}{A}A_{p,q}\right)\,. \\
\end{align*}
Putting these derivatives in equation~\eqref{eqdsul} and
recalling that $\eta^2(0)/\psi(A_{p,q})=E(t)$, we get
\begin{equation}\label{topolino2}
\cot\alpha(p) = -\frac{\vert p-q\vert^2}{4\psi(A_{p,q})}\cos\left(\frac{\pi}{A}A_{p,q}\right)
= -\frac{E(t)}{4}\cos\left(\frac{\pi}{A}A_{p,q}\right)\,.
\end{equation}
Since $0<E(t)<\frac{1}{4}<4(2-\sqrt{3})$,
we have $\sqrt{3}-2<\cot\alpha(p)<0$, which implies
\begin{equation}\label{alpha}
\frac{\pi}{2} <\alpha(p) < \frac {7\pi}{12} \,.
\end{equation}
The same argument clearly holds for the point $q$, hence 
defining $\alpha(q)\in(0,\pi)$ to be the angle determined by the
unit tangent $\tau(q)$ and the vector $p-q$, by equation~\eqref{topolino2}
it follows that $\alpha(p)=\alpha(q)$ and we simply write $\alpha$
for both.\\
We consider now a different variation, moving at the same time the
points $p$ and $q$, in such a way that $\frac{dp(s)}{ds}=\tau(p(s))$ and 
$\frac{dq(s)}{ds}=\tau(q(s))$.\\
As above, letting $\eta(s)=\vert p(s)-q(s)\vert$, by minimality we have
\begin{align}\label{eqdersec}
\frac{{ d} \eta^2(0)}{{ d} s}\left.\psi(A_{p(s),q(s)})\right\vert_{s=0}
&=
\left( \left.\frac{{ d} \psi(A_{p(s),q(s)})}{{ d} s}\right\vert_{s=0}\right) \eta^2(0) \;\;\text{ { and} }\;\;\nonumber\\
\frac{{ d}^2 \eta^2(0)}{{ d} s^2}\left.\psi(A_{p(s),q(s)})\right\vert_{s=0}
&\geqslant 
\left( \left.\frac{{ d}^2\psi( A_{p(s),q(s)})}{{ d} s^2}\right\vert_{s=0}\right) \eta^2(0)\,.
\end{align}
Computing as before,
\begin{align*}
\left.\frac{{ d} \eta^2(s)}{{ d} s}\right\vert_{s=0}
=&\,2 \langle p-q \,\vert\,\tau(p)-\tau(q)\rangle = -4\vert p-q\vert\cos\alpha \\
\left.\frac{{ d} A_{p(s),q(s)}}{{ d} s}\right\vert_{s=0} 
=&\, -\frac12\langle p-q\,\vert\,\nu(p)+\nu(q)\rangle
=+\vert p-q\vert\sin\alpha\\
\left.\frac{{ d}^2 \eta^2(s)}{{ d} s^2}\right\vert_{s=0} 
=&\,2 \langle \tau(p)-\tau(q) \,\vert\,\tau(p)-\tau(q)\rangle +
2 \langle p-q \,\vert\, k(p)\nu(p) -k(q)\nu(q)\rangle\\
=&\, 2\vert \tau(p)-\tau(q)\vert^2+
2 \langle p-q \,\vert\, k(p)\nu(p) -k(q)\nu(q)\rangle\\
=&\, 8\cos^2\alpha+
2 \langle p-q \,\vert\, k(p)\nu(p) -k(q)\nu(q)\rangle\\
\left.\frac{{ d}^2 A_{p(s),q(s)}}{{ d} s^2}\right\vert_{s=0} 
=&\,-\frac12\langle \tau(p)-\tau(q)\,\vert\,\nu(p)+\nu(q)\rangle
+\frac12\langle p-q\,\vert\,k(p)\tau(p)+k(q)\tau(q)\rangle\\
=&\,-\frac12\langle\tau(p)\,\vert\,\nu(q)\rangle
+\frac12\langle \tau(q)\,\vert\,\nu(p)\rangle+\frac12\langle p-q\,\vert\,k(p)\tau(p)+k(q)\tau(q)\rangle\\
=&\,-2\sin\alpha\cos\alpha
-1/2\vert p-q\vert(k(p)-k(q))\cos\alpha\\
\left.\frac{{ d}^2 \psi(A_{p(s),q(s)})}{{ d} s^2}\right\vert_{s=0} 
=&\,\left.\frac{{ d}}{{ d} s}\left\{
\frac{{ d} A_{p(s),q(s)}}{{ d} s}
\cos\left(\frac{\pi}{A}A_{p(s),q(s)}\right) 
\right\}\right\vert_{s=0}\\
=&\, (-2\sin\alpha\cos\alpha-\frac12\vert p-q\vert(k(p)-k(q))\cos\alpha)
\cos\left( \frac{\pi}{A}A_{p,q}\right) \\
&\,-\frac{\pi}{A}\vert p-q\vert^2 \sin^2\alpha\sin\left(\frac{\pi}{A}A_{p,q}\right)\,. \\
\end{align*} 
Substituting the last two relations in inequality~\eqref{eqdersec}, we get 
\begin{align*}
(8\cos^2\alpha+&\, 
2\langle p-q \,\vert\, k(p)\nu(p)-k(q)\nu(q)\rangle)\psi(A_{p,q})\\
\geqslant&\, \vert p-q\vert^2
\left\{ 
(-2\sin\alpha\cos\alpha-\frac12\vert p-q\vert(k(p)-k(q))\cos\alpha)
\cos\left( \frac{\pi}{A}A_{p,q}\right)\right.\\
&\,\left.-\frac{\pi}{A}\vert p-q\vert^2 \sin^2\alpha\sin\left(\frac{\pi}{A}A_{p,q}\right) 
\right\} \,,
\end{align*}
hence, keeping in mind that 
$\tan\alpha=
\frac{-4}{E(t)\cos\left(\frac{\pi}{A}A_{p(s),q(s)} \right) }$, we obtain
\begin{align}
2\psi(A_{p,q})\langle p-q \,\vert\, &\,k(p)\nu(p) -k(q)\nu(q)\rangle+1/2\vert p-q\vert^3(k(p)-k(q))\cos\alpha 
\cos\left(\frac{\pi}{A}A_{p,q}\right)\nonumber\\
\geqslant&\,-2\sin\alpha\cos\alpha\vert p-q\vert^2\cos\left(\frac{\pi}{A}A_{p,q}\right)\nonumber\\
&\,-8\psi(A_{p,q})\cos^2\alpha
+\vert p-q\vert^4 \sin^2\alpha
\left[-\frac{\pi}{A}\sin\left(\frac{\pi}{A}A_{p,q}\right) \right]\nonumber\\
=&\,-2\psi(A_{p,q})\cos^2\alpha
\left(\tan\alpha\frac{\vert p-q\vert^2}{\psi(A_{p,q})}
\cos\left(\frac{\pi}{A}A_{p,q}\right)+ 4\right)\nonumber\\
&\,+\vert p-q\vert^4\sin^2\alpha
\left[ -\frac{\pi}{A}\sin\left(\frac{\pi}{A}A_{p,q}\right) \right] \nonumber\\
=&\,\vert p-q\vert^4\sin^2\alpha
\left[ -\frac{\pi}{A}\sin\left(\frac{\pi}{A}A_{p,q}\right) \right]\,.\label{eqfin}
\end{align}

We now compute the derivative $\frac{dE(t)}{dt}$ by means of the Hamilton's trick (see~\cite{hamilton2} or~\cite[Lemma~2.1.3]{Manlib}), that is,
$$
\frac{dE(t)}{dt} = \frac{\partial\,}{\partial
 t}\Phi_t(\overline{p},\overline{q})\,,
$$
for {\em any} minimizing pair $(\overline{p},\overline{q})$ for $\Phi_t$. In particular, 
$\frac{dE(t)}{dt} = \frac{\partial}{\partial t}\Phi_t(p,q)$ and, we recall, $\frac{\vert
 p-q\vert^2}{\psi(A_{p,q})}=E(t)$.\\
Notice that by minimality of the pair $(p,q)$, we are free to choose the 
``motion'' of the points $p(s)$, $q(s)$ ``inside'' the networks $\Gamma_s$
in computing such partial derivative, that is,
$$
\frac{dE(t)}{dt} = \frac{\partial\,}{\partial t}\Phi_t(p,q) = \left.\frac{d\,}{ds}\Phi_t(p(s),q(s))\right\vert_{s=t}\,.
$$
Since locally the networks are moving by curvature and we know that
neither $p$ nor $q$ coincides with the 3--point, 
we can find $\varepsilon>0$ and two smooth curves $p(s), q(s)\in\Gamma_s$ for
every $s\in(t-\varepsilon,t+\varepsilon)$ such that
\begin{align*}
p(t) &=\, p \qquad \text{ and }\qquad \frac{dp(s)}{ds} = k(p(s), s)
~\nu(p(s),s)\,, \\
q(t) &=\, q \qquad \text{ and }\qquad \frac{dq(s)}{ds}= k(q(s),s)
~\nu(q(s),s)\,. 
\end{align*}
Then,
\begin{equation}\label{eqderE}
\frac{dE(t)}{dt} =\frac{\partial\,}{\partial t}\Phi_{t}(p,q)=\frac{1}{[\psi(A_{p,q})]^2}\left.
\left(\psi(A_{p,q})
\frac{d\vert p(s)-q(s)\vert^2}{ds} - 
\vert p-q\vert^2 \frac{d\psi(A_{p(s),q(s)})}{ds}\right)\right\vert_{s=t}\,.
\end{equation}
With a straightforward computation, we get the following equalities, 
\begin{align*}
\left.\frac{{ d} \vert p(s)-q(s)\vert^2}{{ d} s} \right\vert_{s=t}
=&\, 2 \langle p-q\,\vert\, k(p)\nu(p) -k(q)\nu(q)\rangle\\
\left.\frac{{ d}A(s)}{{ d} s}\right\vert_{s=t} 
=&\,-\frac{4\pi}{3}\\
\left.\frac{{ d} A_{p(s),q(s)}}{{ d} s}\right\vert_{s=t} 
=&\,\int_{\Gamma_{p,q}} \langle\underline{k}(s)\,\vert\nu_{\xi_{p,q}}\rangle\,ds
+ \frac12\vert p-q\vert\langle {\nu_{[p,q]}}\,\vert\, k(p)\nu(p)+k(q)\nu(q)\rangle\\
=&\,2\alpha -\frac{4\pi}{3}-\frac 12 \vert p-q\vert(k(p)-k(q))\cos\alpha\\
\left.\frac{{ d} \psi(A_{p(s),q(s)})}{{ d} s} \right\vert_{s=t}
=&\,-\frac{4\pi}{3}\left[\frac{1}{\pi}\sin\left(\frac{\pi}{A}A_{p,q}\right)
-\frac{A_{p,q}}{A}\cos\left(\frac{\pi}{A}A_{p,q} \right)\right] \\
&\,+\left(2\alpha -\frac{4\pi}{3}-\frac 12 \vert p-q\vert(k(p)-k(q))\cos\alpha\right)
\cos\left(\frac{\pi}{A}A_{p,q} \right)\\
\end{align*}
where we wrote $\nu_{\xi_{p,q}}$ and $\nu_{[p,q]}$ for the exterior
unit normal vectors to the region $A_{p,q}$, respectively at the
points of the geodesic $\xi_{p,q}$ and of the segment $\overline{pq}$.\\
We remind that in general $\frac{{ d}A(t)}{{ d} t}=-(2-m/3)\pi$
where $m$ is the number of triple junctions of the loop
(see formula~\eqref{areaevolreg}), hence, we have $\frac{{ d}A(t)}{{ d} t}=-\frac{4\pi}{3}$, since we are referring to the situation in Figure~\ref{disuelle}, where there is a loop with exactly two triple junctions.\\
Substituting these derivatives in equation~\eqref{eqderE} we get
\begin{align*}
\frac{dE(t)}{dt} 
=&\,\frac{2\langle p-q\,\vert\, k(p)\nu(p) -k(q)\nu(q)\rangle}{\psi(A_{p,q})}\\
&\,-\frac{\vert p-q\vert^2}{[\psi(A_{p,q})]^2}
\left\{ 
-\frac{4\pi}{3}\left[\frac{1}{\pi}\sin\left(\frac{\pi}{A}A_{p,q}\right)
-\frac{A_{p,q}}{A}\cos\left(\frac{\pi}{A}A_{p,q}\right)\right]
\right. \\
&\,\left. +\left(2\alpha -\frac{4\pi}{3}-\frac 12 \vert p-q\vert(k(p)-k(q))\cos\alpha\right)
\cos\left(\frac{\pi}{A}A_{p,q} \right)
 \right\} \\
\end{align*}
and, by equation~\eqref{eqfin}, \\
\begin{align*}
\frac{dE(t)}{dt} 
\geqslant 
&\,-\frac{\vert p-q\vert^2}{[\psi(A_{p,q})]^2}
\left\{ 
-\frac{4}{3}\sin\left(\frac{\pi}{A}A_{p,q}\right)
+\frac{4\pi}{3}\frac{A_{p,q}}{A}\cos\left(\frac{\pi}{A}A_{p,q}\right) \right.\\
&\,\left. +\left(2\alpha -\frac{4\pi}{3}\right)
\cos\left(\frac{\pi}{A}A_{p,q} \right)+\frac{\pi}{A}\vert p-q \vert^2 \sin^2(\alpha) 
\sin\left(\frac{\pi}{A}A_{p,q}\right)
\right\}\,.\\
\end{align*}
It remains to prove that the quantity 
\begin{align*}
&\frac{4}{3}\sin\left(\frac{\pi}{A}A_{p,q}\right)
-\frac{4\pi}{3}\frac{A_{p,q}}{A}\cos\left(\frac{\pi}{A}A_{p,q} \right) 
 +\left(\frac{4\pi}{3}-2\alpha\right)
\cos\left(\frac{\pi}{A}A_{p,q} \right)\\
&-\frac{\pi}{A}\vert p-q \vert^2 \sin^2(\alpha) 
\sin\left(\frac{\pi}{A}A_{p,q}\right)\\
\end{align*}
is positive.\\
As $E(t)=\frac{\vert p-q\vert^2}{\psi(A_{p,q})}=\frac{\vert p-q\vert^2}{\frac{A}{\pi}\sin(\frac{\pi}{A}A_{p,q})}$,
we can write
\begin{align*}
&\,\frac{4}{3}\sin\left(\frac{\pi}{A}A_{p,q}\right)
-\frac{4\pi}{3}\frac{A_{p,q}}{A}\cos\left(\frac{\pi}{A}A_{p,q} \right) 
 +\left(\frac{4\pi}{3}-2\alpha\right)
\cos\left(\frac{\pi}{A}A_{p,q} \right)\\
&\,-\frac{\pi}{A}\vert p-q \vert^2 \sin^2(\alpha) 
\sin\left(\frac{\pi}{A}A_{p,q}\right)\\
=&\,\frac{4}{3}\sin\left(\frac{\pi}{A}A_{p,q}\right)
-\frac{4\pi}{3}\frac{A_{p,q}}{A}\cos\left(\frac{\pi}{A}A_{p,q} \right) 
 +\left(\frac{4\pi}{3}-2\alpha\right)
\cos\left(\frac{\pi}{A}A_{p,q} \right)\\
&\,-E(t)\sin^2(\alpha) 
\sin^2\left(\frac{\pi}{A}A_{p,q}\right)\,.
\end{align*}
Notice that using inequality~\eqref{alpha}, we can evaluate $\frac{4\pi}{3}-2\alpha\in(\pi/6,\pi/3)$, in particular, it is positive.\\
We finally conclude the estimate of $\frac{dE(t)}{dt}$ and the proof of this proposition by separating the analysis into two cases, depending on the value of $\frac{A_{p,q}}{A}$.\\
If $0\leqslant \frac{A_{p,q}}{A}\leqslant \frac 13$,
we have
\begin{align*}
\frac{dE(t)}{dt} 
\geqslant&\,\frac{4}{3}\sin\left(\frac{\pi}{A}A_{p,q}\right)
-\frac{4\pi}{3}\frac{A_{p,q}}{A}\cos\left(\frac{\pi}{A}A_{p,q} \right) \\
& +\left(\frac{4\pi}{3}-2\alpha\right)
\cos\left(\frac{\pi}{A}A_{p,q} \right)-E(t)\sin^2(\alpha) 
\sin^2\left(\frac{\pi}{A}A_{p,q}\right)\\
\geqslant&\,\left(\frac{4\pi}{3}-2\alpha\right)
\cos\left(\frac{\pi}{A}A_{p,q} \right)-E(t)\sin^2(\alpha) 
\sin^2\left(\frac{\pi}{A}A_{p,q}\right)\\
\geqslant&\,\left(\frac{\pi}{6}\right)
\cos\left(\frac{\pi}{3} \right)-E(t) 
\sin^2\left(\frac{\pi}{3}\right)>0\,.
\end{align*}
If $\frac 13\leqslant \frac{A_{p,q}}{A}\leqslant \frac 12$,
we get
\begin{align*}
\frac{dE(t)}{dt} 
\geqslant&\,\frac{4}{3}\sin\left(\frac{\pi}{A}A_{p,q}\right)
-\frac{4\pi}{3}\frac{A_{p,q}}{A}\cos\left(\frac{\pi}{A}A_{p,q}\right)\\ 
&+\left(\frac{4\pi}{3}-2\alpha\right)
\cos\left(\frac{\pi}{A}A_{p,q} \right)-E(t)\sin^2(\alpha) 
\sin^2\left(\frac{\pi}{A}A_{p,q}\right)\\
\geqslant&\,\frac{4}{3}\sin\left(\frac{\pi}{A}A_{p,q}\right)
-\frac{4\pi}{3}\frac{A_{p,q}}{A}\cos\left(\frac{\pi}{A}A_{p,q} \right) 
 -E(t)\sin^2(\alpha) 
\sin^2\left(\frac{\pi}{A}A_{p,q}\right)\\
\geqslant&\,\frac{4}{3} \left( 
\sin\left(\frac{\pi}{3}\right)
-\frac{\pi}{3}\cos\left(\frac{\pi}{3}\right) 
 \right) 
 -E(t)>0\,.
\end{align*}
\end{proof}

\begin{rem}
We want to stress here the reason why we are able to prove Proposition~\ref{lemet2} only when $\Gamma_{p,q}$ contains at most two triple junctions and so Theorem~\ref{dlteo} only for networks with at most two 3--points. If we try to repeat the computations of the final part of this proof
considering a situation such that $\Gamma_{p,q}$ contains more than two triple junctions, as the value of $\frac{dA(t)}{dt}$ changes 
according to $\frac{{ d}A(t)}{{ d} t}=-(2-m/3)\pi$, when $m\geqslant 3$, 
we only have $\frac{{ d}A(t)}{{ d} t}\geqslant -\pi$ (instead of being equal to $-4\pi/3$), which is not sufficient to get to the inequality $\frac{dE(t)}{dt}>0$.
\end{rem}

\begin{lem}\label{trinoncoll}
Let $\Omega$ be an open, bounded, strictly convex subset of $\mathbb{R}^2$.
Let $\mathbb{S}_0$ be an initial regular network
with two triple junctions and let the $\mathbb{S}_t$ be the 
evolution by curvature of $\mathbb{S}_0$
defined in a maximal time interval $[0,T)$.
Then, there cannot be a sequence of times $t_j\to T$ such that, along such sequence, the two triple junctions converge to the same end--point of the network.
\end{lem}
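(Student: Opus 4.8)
The plan is to argue by contradiction. Suppose there is a sequence $t_j\to T$ with $O^1(t_j)\to P^r$ and $O^2(t_j)\to P^r$ for some end--point $P^r\in\partial\Omega$ of the network. Since $P^r\in\SS_t$ for every $t$, it is a reachable point of the flow, so Proposition~\ref{resclimit-general} (Huisken's dynamical rescaling at $P^r$) yields a sequence of rescaled times tending to $+\infty$ along which the rescaled networks $\widetilde\SS_{P^r,\cdot}$ converge in $C^1\loc\cap W^{2,2}\loc$ to a \emph{non--empty} degenerate regular shrinker $\widetilde\SS_\infty$, which by construction is a $C^1\loc$--limit of embedded regular networks (Remark~\ref{remreg}) and whose underlying graph has at most two trivalent vertices (counting those hidden in cores), since $\SS_t$ has only two triple junctions, and at most one end--point, located at the origin (the blow--up of the curve of $\SS_t$ arriving at $P^r$). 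The first key input is Proposition~\ref{omegaok2} and the argument in its proof: by strict convexity of $\Omega$ and the strong maximum principle, for $t\geq t_r>0$ the network $\SS_t$ is contained, near $P^r$, in the convex set $\Om_t$ whose amplitude at $P^r$ is non--increasing, hence bounded by some $\beta_0<\pi$. Passing to the blow--up, $\widetilde\SS_\infty$ is contained in a closed convex cone $K$ of angle $\beta_0<\pi$ with vertex at the origin.

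Next I would show that this confinement forces $\widetilde\SS_\infty$ to be, as a set, a (single) halfline from the origin. A cone of angle $<\pi$ contains no pair of antipodal directions, so $\widetilde\SS_\infty$ contains no line through the origin; by Remark~\ref{abla} every unbounded curve of $\widetilde\SS_\infty$ is then a halfline emanating from the origin and every compact non--straight curve is an Abresch--Langer curve. A core at the origin, or a triple junction at the origin, would force at least three halflines to emanate from the origin at mutual angles that are integer multiples of $60^\circ$ (the degenerate $120^\circ$ condition together with the winding analysis in the proof of Lemma~\ref{lemmatree}); among such the smallest configuration of three spans... at any rate the only such configurations actually occurring — the standard triod and the standard cross — span at least $240^\circ$, which is impossible inside $K$. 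Likewise a non--straight Abresch--Langer curve cannot have an end--point at the origin, since a curve of a shrinker meeting the origin must be straight (Remark~\ref{abla}). Finally one goes through the short list of connected degenerate regular shrinkers with at most two triple junctions arising as $C^1\loc$--limits of embedded regular networks (lines, halflines, the unit circle, the standard triod, the standard cross, the Brakke spoon, and the lens and fish shrinkers, the $\Theta$--shaped one being ruled out by~\cite{balhausman}): apart from the degenerate ones that reduce as a set to halflines from the origin, none of them fits in a cone of angle $<\pi$ — the unit circle is centred at the origin; the triod and the cross span at least $240^\circ$; and in the spoon, the lens and the fish the origin lies in the (unique) bounded region, which therefore surrounds it, so these span $2\pi$ as seen from the origin. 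Hence $\widetilde\SS_\infty$ is a halfline from the origin and $\widehat\Theta(P^r)=\Theta_{\widetilde\SS_\infty}\leq 1<\tfrac32<\Theta_{\SS^1}$ (the inequality being valid, with the same conclusion, even if the limit happens to consist of two disjoint non--antipodal halflines).

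From $\widehat\Theta(P^r)<\tfrac32$ and the upper semicontinuity of $\widehat\Theta$ one obtains a neighbourhood $U$ of $P^r$ in which the hypothesis of the local regularity Theorem~\ref{thm:locreg.2} is met for $t$ close to $T$, exactly as in the proof of Corollary~\ref{regcol}; hence the curvature of $\SS_t$ is uniformly bounded in $U$ as $t\to T$, and by the localised version of Proposition~\ref{collapse} together with Lemma~\ref{boh} the network $\SS_t$ converges there, up to reparametrization, in $C^1$ to a limit which near $P^r$ is a single embedded arc, in particular free of triple junctions in a neighbourhood of $P^r$ for $t$ close to $T$. This contradicts $O^1(t_j),O^2(t_j)\to P^r$ and proves the lemma. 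The main obstacle is clearly the middle step: excluding, \emph{without} invoking the multiplicity--one conjecture {\bf M1} (not available here, since this lemma feeds into Theorem~\ref{dlteo}), that the blow--up shrinker be anything other than a halfline. This is exactly where the restriction to at most two triple junctions enters — it keeps the list of possible degenerate shrinkers short enough that each can be excluded by hand from the cone constraint of angle $<\pi$, from Remark~\ref{abla}, and from the known classification of regular shrinkers with at most two triple junctions; for networks with more junctions the analogous classification is open, which is why Lemma~\ref{trinoncoll}, like Theorem~\ref{dlteo}, is stated only in the two--junction case.
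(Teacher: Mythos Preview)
Your overall strategy—blow up at $P^r$, use the strict convexity of $\Omega$ to confine $\widetilde\SS_\infty$ in a cone of opening $<\pi$, run through the short list of possible shrinkers with at most two triple junctions, and conclude regularity near $P^r$—is the right one, and it is close to what the paper does. But there is a genuine gap in the last step, and the paper's proof differs precisely there.

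You invoke the local regularity Theorem~\ref{thm:locreg.2} (via the argument of Corollary~\ref{regcol}) at the point $P^r$. That theorem, however, is not available at an end--point of the flow: Remark~\ref{rem:locreg.1} explicitly requires that there be no boundary points in $B_{2\rho}(x_0)$. The paper circumvents this by first passing to the \emph{reflected} flow $\mathbb{H}^r_t=\SS_t\cup\SS_t^{R_r}$, for which $P^r$ is an interior point, and blowing up $\mathbb{H}^r_t$ rather than $\SS_t$. After reflection your density bound doubles, so even granting $\widehat\Theta_{\SS}(P^r)\leq 1$ you only get $\widehat\Theta_{\mathbb{H}^r}(P^r)\leq 2$, which is above $\Theta_{\SS^1}$ and hence useless for Theorem~\ref{thm:locreg.2}. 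This is why the paper does \emph{not} go through a density bound at all: once the blow--up $\widetilde{\mathbb{H}}_\infty$ is shown (via the same cone and loop arguments you sketch) to be either a line or a standard cross, it applies White's theorem in the first case and Proposition~\ref{cross} in the second—both independent of {\bf M1}—to obtain the curvature bound and hence the absence of a second $3$--point near $P^r$.

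A secondary point: your case analysis for the blow--up is not quite complete. When $\SS_t$ carries a loop, you need the area argument (the rescaled enclosed area is constant in $\tt$) to rule out that the loop collapses to a core in $\widetilde\SS_\infty$; your ``short list'' of shrinkers consists of non--degenerate ones (plus the cross), and a loop that has collapsed to a core would not appear on it. The paper makes this step explicit. Similarly, in the tree case, the paper uses Lemma~\ref{lemmatree} on the reflected (hence end--point--free) network $\widetilde{\mathbb{H}}_\infty$ to reduce to halflines from the origin with a symmetric core, and then the cone constraint disposes of the remaining ``large core'' configuration; working directly with $\widetilde\SS_\infty$ and its end--point at the origin makes the bookkeeping of the degenerate cases noticeably harder, which is another reason the reflection is helpful.
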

\begin{proof}
Let $O^1(t)$ and $O^2(t)$ be the two triple junctions of $\SS_t$
and $P^i$ the end--points on $\partial\Omega$. Suppose, by contradiction, that $\lim_{i\to\infty} O^j(t_i)=P^1$, for $j\in\{1, 2\}$. Notice that if $\SS_t$ is not a tree, then it has the structure either of a ``lens/fish--shaped'' network (see Figure~\ref{fishshape}) or of an ``island--shaped'' network.
\begin{figure}[H]
\begin{center}
\begin{tikzpicture}[scale=1]
\draw[shift={(10,0)}] 
(-2,0)
to[out=170,in=40,looseness=1] (-2.9,1.2) 
to[out=-140,in=90,looseness=1] (-3.2,0)
(-2,0) 
to[out=-70,in=0,looseness=1] (-3,-0.9) 
to[out=-180,in=-90,looseness=1] (-3.2,0)
(-2,0) 
to[out=50,in=180,looseness=1] (-1.3,0) 
to[out=60,in=150,looseness=1.5] (-0.75,1) 
(-1.3,0)
to[out=-60,in=-120,looseness=0.9] (-0.5,-0.75)
(-0.75,1)
to[out=-30,in=-150,looseness=0.9] (1.75,1)
(0,-2.32)
to[out=90,in=60,looseness=0.9] (-0.5,-0.75);
\draw[color=black,scale=1,domain=-3.141: 3.141,
smooth,variable=\t,shift={(8.78,0)},rotate=0]plot({3.25*sin(\t r)},
{2.5*cos(\t r)});
\path[font=\small,shift={(10,0)}]
(-1.95,-0.2) node[left]{$O^1$}
(-1.25,0)node[right]{$O^2$}
(-2.9,0.8) node[below] {$\gamma^1$}
(-1.3,0.35)[left] node{$\gamma^2$}
(-.15,-1.15)[left] node{$\gamma^3$}
(.8,1.3) node[below] {$\gamma^4$}
(0.5,-2.6)[left] node{$P^2$}
(2.43,1.1)[left] node{$P^1$};
\end{tikzpicture}
\end{center}
\begin{caption}{An island--shaped network.\label{island}}
\end{caption}
\end{figure}
If we consider the sequence of rescaled networks $\widetilde{{\mathbb{H}}}^1_{P^1,\tt_{j}}$ 
obtained via Huisken's dynamical procedure applied to ${\mathbb{H}}^1_t$, as in Proposition~\ref{resclimit-general}, centered in $P^1$, it converges in $C^{1,\alpha}\loc\cap W^{2,2}\loc$, for any $\alpha \in (0,1/2)$
to a (not empty) limit degenerate regular shrinker $\widetilde{{\mathbb{H}}}_\infty$.
We analyze the possible $\widetilde{{\mathbb{H}}}_\infty$ without using the multiplicity--one conjecture {\bf{M1}}, to avoid a ``circular argument''. Moreover, we consider among all the possible blow--up limits $\widetilde{{\mathbb{H}}}_\infty$, one with the maximum number of $3$--points (which can only be $0$, $2$ or $4$).

We first consider the case when $\widetilde{{\mathbb{H}}}_\infty$ (hence, also the underlying graph) is a tree, then it is a symmetric family of halflines from the origin, by Lemma~\ref{lemmatree}.\\
If $\widetilde{{\mathbb{H}}}_\infty$ has no $3$--points, then it is a line through the origin, which means that in the rescaling procedure all the $3$--points go to infinity, hence it must be that the curves $\gamma^i$ of $\SS_t$ not going to infinity, in the sequence of rescalings, satisfy
\begin{equation*}
\lim_{j\to\infty}\frac{L^i(t_j)}{\sqrt{T-t_j}}=+\infty\,,
\end{equation*}
then, repeating the argument of Proposition~\ref{resclimit} (leading to Proposition~\ref{resclimit2}), such a line must have multiplicity one, being composed of the ``reflection'' of two halflines with unit multiplicity.\\
If $\widetilde{{\mathbb{H}}}_\infty$ contains only two $3$--points (hidden in its core at the origin), recalling the argument in the proof of Lemma~\ref{lemmatree}, it is given by four halflines forming angles of $120/60$ degrees.\\
In both these two cases the curvature of the non--rescaled networks ${{\mathbb{H}}}_t$ (hence, of $\SS_t$) 
is locally uniformly bounded around $P^1$ (by White's regularity theorem in~\cite{white1} and Proposition~\ref{cross}, which are both independent of {\bf{M1}}), then (in the second case, by arguing as in Lemma~\ref{boh}) the presence of another $3$--point of $\SS_t$ in a space--time neighborhood of $(P^1,T)$ is ``forbidden'', clearly contradicting the hypotheses.\\
The remaining case of four $3$--points in $\widetilde{{\mathbb{H}}}_\infty$, is when the (symmetric) core of $\widetilde{{\mathbb{H}}}_\infty$ is given by three degenerate curves (and four $3$--points) at the origin. In this case it is straightforward to see that $\widetilde{\SS}_\infty$ contains a straight line through the origin, which is not possible since $\widetilde{\SS}_\infty$ must be contained in an angle with opening less than $\pi$, by the strict convexity of $\Omega$, as it is shown in Proposition~\ref{omegaok2}.

If instead $\widetilde{{\mathbb{H}}}_\infty$ contains a loop (actually, two symmetric ones coming from a collapsing loop in $\SS_t$, as $t\to T$ and its ``reflection''), pushing a little the analysis in Section~\ref{sshnet} (see also the Appendix), it could only have the structure of a Brakke spoon (see Figure~\ref{brakspfig}) or of a shrinking lens/fish (see Figure~\ref{fishfig}). Then, it would contains the origin of $\R^2$ in its inside, which is clearly not possible in our situation of blow--up around an end--point of the network $\SS_t$.
\end{proof}

\begin{rem} As before, we remark that the strictly convexity hypothesis on $\Omega$
can actually be weakened by asking that $\Omega$ is convex and that there does not exist three
aligned end--points of the initial network $\mathbb{S}_0$ on $\partial\Omega$.
\end{rem}

\begin{proof}[Proof of Theorem~\ref{dlteo}]
If $\SS_t$ is the evolution of a network with only one triple junction, any of the evolving networks ${\mathbb{H}}^i_t$ has exactly two 3--points. Let $t\in[0,T)$ a time such that $0<\Pi(t)<1/4$ and $\Pi$ and all embeddedness measures $E^i$, associated to the networks ${\mathbb{H}}^i_t$, are differentiable at $t$ (this clearly holds for almost every time).\\
Let $E^i(t)=\Pi(t)<1/4$ and $E^i(t)$ is realized by a pair of points $p$ and $q$ in ${\mathbb{H}}^i_t$, we separate the analysis in the following cases:
\begin{itemize}
\item If the points $p$ and $q$ of the minimizing pair are both end--points of ${\mathbb{H}}^i_t$, by construction
$\vert p-q\vert\geqslant \varepsilon>0$.
Moreover, the area enclosed in the Jordan curve formed by the segment $\overline{pq}$ and 
by the geodesic curve $\Gamma_{p,q}$ can be uniformly bounded by above by a constant $\overline{C}>0$, for instance, 
the area of a ball containing all the networks ${\mathbb{H}}^i_t$.
Since $\varepsilon>0$ and $\overline{C}$ depend only on $\Omega$ and on the structure of the initial network $\SS_0$
(more precisely on the position of the end--points on the boundary of $\Omega$, that stay fixed during the evolution
and that do not coincide), the ratio $\frac{\vert p-q\vert^2}{\psi(A_{p,q})}$ (or $\frac{\vert p-q\vert^2}{A_{p,q}}$, if $p, q$ do not belong to a loop) is greater or equal than some constant $C_\eps=\frac{\varepsilon^2}{\overline{C}}>0$ uniformly, hence the same holds for $\Pi(t)$.
\item If one point is internal and the other is an end--point of ${\mathbb{H}}^i_t$, we consider the following two situations.
If one of the two point $p$ and $q$ is in $\mathbb{S}_t\subseteq{\mathbb{H}}^i_t$ and the other is
in the reflected network $\mathbb{S}^{R_i}_t$, then, we obtain, by construction, a uniform bound from below on $\Pi(t)$ as in the case in which 
$p$ and $q$ are both boundary points of ${\mathbb{H}}^i_t$.\\
Otherwise, if $p$ and $q$ are both in $\SS_t$ and one of them coincides with $P^j$ with $j\neq i$, either the other point coincides with $P^i$ and we have again a uniform bound from below on $\Pi(t)$, as before, or both $p$ and $q$ are points of ${\mathbb{H}}^j_t$ both not coinciding with its end--points and $E^j(t)=E^i(t)=\Pi(t)<1/4$, so we can apply the argument at the next point.

\item If $p$ and $q$ are both ``inside'' ${\mathbb{H}}^i_t$, by Hamilton's trick (see~\cite{hamilton2} or~\cite[Lemma~2.1.3]{Manlib}), we have $\frac{d\Pi(t)}{dt}=\frac{dE^i(t)}{dt}$ and, by Proposition~\ref{lemet2}, $\frac{dE^i(t)}{dt}>0$, hence $\frac{d\Pi(t)}{dt}>0$.
\end{itemize}
All this discussion implies that at almost every point $t\in[0,T)$ such that $\Pi(t)$ is smaller than some uniform constant depending only on $\Omega$ and on the structure of the initial network $\SS_0$, then $\frac{d\Pi(t)}{dt}>0$, which clearly proves the theorem in the case a network with a single triple junction (see also~\cite[Section~4]{mannovtor}).

Let now $\SS_t$ be a flow of regular networks with {\em two} triple junctions. If there are no end--points, the conclusion follows immediately from Proposition~\ref{lemet2}. Hence, we assume that $\SS_t$ has two or four end--points (in the first case there is a loop, and in the second $\SS_t$ is a tree), which are the only possibilities.\\
The analysis is the same as above, with only a delicate point to be addressed, that is, in the last case, when the two points $p$ and $q$ of the minimizing pair are ``inside'' ${\mathbb{H}}^i_t$ and we apply Proposition~\ref{lemet2}. Indeed, since ${\mathbb{H}}^i_t$ has {\em four} 3--points it can happen that the geodesic curve $\Gamma_{p,q}$ contains more than two 3--points, hence this case requires special treatment. Notice that if the points $p$ and $q$ are both ``inside'' $\mathbb{S}_t\subseteq{\mathbb{H}}^i_t$, then Proposition~\ref{lemet2} applies and we are done. We then assume that $p\in\mathbb{S}_t$, $q\in\mathbb{S}^{R_i}_t$,
and $\Gamma_{p,q}$ contains more than two triple junctions.\\
We want to show that there exists a uniform positive constant $\varepsilon$
such that $\vert p-q\vert\geqslant\varepsilon>0$, which implies a uniform positive estimate from below on $E^i(t)$, as above. This will conclude the proof.\\
Assume by contradiction that such a bound is not possible, then, for a sequence of times $t_j\to T$, the Euclidean distance between the two points $p_j$ and $q_j$ of the associated minimizing pair of $\Phi_{t_j}$ goes to zero, as $j\to\infty$ and this can happen only if $p_i,q_i\to P^i$. It follows, by the maximum principle that the two 3--points $O^1(t)$ and $O^2(t)$ converge to $P^i$ on some sequence of times $t_k\to T$ (possibly different from $t_j$), which is forbidden by Lemma~\ref{trinoncoll} and we are done.
\end{proof}

\begin{rem}\label{dl1O}
Notice, by inspecting the previous proof, that in the case that $\SS_t$ has a single 3--point, the strict convexity of $\Omega$ is not necessary, convexity is sufficient.
\end{rem}

\subsection{Consequences for the multiplicity--one conjecture}\label{consequences}
The quantity $E(t)$ considered in the previous section is clearly, by
definition, dilation and translation invariant, moreover it is
continuous under $C^1\loc$--convergence of networks. Hence, if
$E(t)\geqslant C>0$ for every $t\in[0,T)$, the same holds for every
$C^1\loc$--limit of rescalings of networks of the flow $\SS_t$. This
clearly implies the strong multiplicity--one conjecture {\bf{SM1}}.

\begin{cor}\label{Elemma1}
If $\Omega$ is strictly convex and the initial network $\SS_0$ has at most two triple junctions,
then the strong multiplicity--one conjecture {\bf{SM1}} is true for the flow $\SS_t$.
\end{cor}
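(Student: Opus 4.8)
The plan is to derive the corollary directly from the isoperimetric estimate of Theorem~\ref{dlteo} by a compactness argument. First I would recall that, since $\Omega$ is strictly convex and $\SS_0$ has at most two triple junctions, Theorem~\ref{dlteo} provides a constant $C=C(\SS_0)>0$ with $E(t)\geq C$ for every $t\in[0,T)$. The two structural properties of $E$ that make the argument work, which I would make explicit, are: (i) $E$ is invariant under translations and homotheties of the plane, because $\Phi_t$ is built only from the ratio of the squared Euclidean distance $|p-q|^2$ to an enclosed area (or to the function $\psi$ of such an area, which itself scales like an area); and (ii) $E$ is lower semicontinuous under $C^1\loc$--convergence of networks, i.e. if a sequence of regular networks $\SS_i$ converges in $C^1\loc$ to a degenerate regular network $\SS_\infty$, then $E(\SS_\infty)\geq\liminf_i E(\SS_i)$, the admissible pairs and the enclosed regions $\mathcal A_{p,q}$ passing to the limit; for an unbounded limit one invokes the modification of Lemma~\ref{lemet1} pointed out in Remark~\ref{opendl}.

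Given this, the argument runs as follows. Let $\SS_\infty$ be any $C^1\loc$--limit of a sequence of rescalings $\SS_i=\mu_i(\SS_{t_i}-x_i)$ of networks of the flow; the case of bounded $\mu_i$ is trivial since the $\SS_t$ are embedded, so assume $\mu_i\to+\infty$. By scaling invariance $E(\SS_i)=E(t_i)\geq C$, hence $E(\SS_\infty)\geq C>0$. It then remains to check that a degenerate regular network with $E>0$ is embedded with multiplicity one. By the second point of Remark~\ref{remreg}, such a $C^1\loc$--limit of embedded networks can have at most tangential self--intersections. If $\SS_\infty$ contained a tangential self--intersection or a curve of multiplicity $\geq 2$, then near the corresponding point the prelimit networks $\SS_i$ would contain two distinct embedded arcs which are $C^1$--close, with mutual distance $w_i\to 0$; choosing $p_i,q_i$ on these two arcs realizing the minimal distance, the pair $(p_i,q_i)$ is admissible for $\Phi_{t_i}$ for $i$ large, and the minimal enclosed area $A_{p_i,q_i}$ is comparable to $w_i$ times a fixed length, so that $\Phi_{t_i}(p_i,q_i)\simeq|p_i-q_i|^2/A_{p_i,q_i}\simeq w_i\to 0$, contradicting $E(\SS_i)\geq C$. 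Hence $\SS_\infty$ is embedded with multiplicity one, which is exactly \textbf{SM1}.

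Finally, I would note that the same conclusion persists for the extended flow obtained by the restarting procedure of Sections~\ref{smtm3}--\ref{restart}: as remarked at the end of Section~\ref{restart}, the lower bound on $E$ survives passing through a singular time with the same constant, so $E(t)\geq C>0$ holds on the whole extended time interval, and any limit of rescalings, at any singular--restarting time, is again embedded with multiplicity one.

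The main obstacle I expect is not the overall scheme — which is short once Theorem~\ref{dlteo} is available — but the technical bookkeeping in points (ii) and in the last paragraph: verifying that $E$ is genuinely $C^1\loc$--lower semicontinuous when the limit network is \emph{degenerate} (has a core), possibly \emph{open} or unbounded, so that $\Phi$ and the enclosed regions remain meaningfully defined on $\SS_\infty$ and admissible pairs pass to the limit; and making precise the construction of the ``bad'' admissible pairs $(p_i,q_i)$ witnessing that $E>0$ excludes higher multiplicity. Both are routine but require exactly the kind of care already exercised in Lemma~\ref{lemet1} and in Remarks~\ref{opendl} and~\ref{remreg}.
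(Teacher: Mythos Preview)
Your approach is essentially the same as the paper's: the paper observes that $E(t)$ is dilation and translation invariant and continuous under $C^1\loc$--convergence, so the uniform lower bound $E(t)\geq C>0$ from Theorem~\ref{dlteo} passes to every $C^1\loc$--limit of rescalings, which ``clearly implies'' \textbf{SM1}. You have unpacked this last implication (the contrapositive argument showing that a tangential self--intersection or a multiplicity--two curve in the limit would force $E$ of the approximating networks to zero), which is fine and indeed makes the argument self--contained; note that this direct argument actually bypasses the need to define $E$ on the possibly degenerate limit, so the technical worries you flag in your last paragraph are not really obstacles.
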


A by--product of the proofs of Proposition~\ref{lemet2} and Theorem~\ref{dlteo} is actually that also the function $\Pi(t)$ is positively uniformly bounded from below during the flow.
 
\begin{cor}\label{Hlemma1}
If $\Omega$ is strictly convex and the initial network $\SS_0$ has at most two triple junctions,
then the strong multiplicity--one conjecture {\bf{SM1}} is true for all the ``symmetrized'' flows ${\mathbb{H}}^i_t$.
\end{cor}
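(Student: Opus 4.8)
\textbf{Proof plan for Corollary~\ref{Hlemma1}.}
The plan is to extract from the proof of Theorem~\ref{dlteo} the slightly stronger fact that the function $\Pi(t)=\min_i E^i(t)$ — the minimum, over the symmetrized flows $\mathbb{H}^i_t$, of their embeddedness measures — is uniformly bounded away from zero on $[0,T)$, and then to run the same scaling argument used for Corollary~\ref{Elemma1} on each $\mathbb{H}^i_t$ separately. First I would observe that each $\mathbb{H}^i_t$ is itself a smooth curvature flow of a regular network in the strictly convex set $2P^i-\Omega\cup\Omega$ (or simply of a regular network in $\R^2$ with fixed end--points), obtained by reflecting $\SS_t$ through the end--point $P^i$; it has $2l-1$ fixed end--points and twice as many triple junctions as $\SS_t$, hence up to four. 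One cannot apply Theorem~\ref{dlteo} directly to $\mathbb{H}^i_t$ because that theorem is stated only for networks with at most two triple junctions; instead we use that the proof of Theorem~\ref{dlteo} already works throughout with the $\mathbb{H}^i_t$'s and with $\Pi$.

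Concretely, inspecting that proof: at almost every time $t$ where $0<\Pi(t)<1/4$ and $\Pi$, $E^1,\dots,E^{l}$ are differentiable, one distinguishes according to which symmetrized network realizes $\Pi(t)$ and where the minimizing pair $(p,q)$ sits. If at least one of $p,q$ is an end--point of the relevant $\mathbb{H}^i_t$, a direct geometric bound (Euclidean distance between distinct fixed boundary points bounded below, enclosed area bounded above) gives $\Pi(t)\geq C_\varepsilon>0$ with $C_\varepsilon$ depending only on $\Omega$ and $\SS_0$. If both $p$ and $q$ are interior points of $\mathbb{H}^i_t$, then either both lie in $\SS_t$ (or in $\SS_t^{R_i}$) and the curve $\Gamma_{p,q}$ contains at most two triple junctions, so Proposition~\ref{lemet2} applies and $\tfrac{d\Pi}{dt}=\tfrac{dE^i}{dt}>0$ by Hamilton's trick; or $p\in\SS_t$, $q\in\SS_t^{R_i}$ and $\Gamma_{p,q}$ may contain more triple junctions, in which case Lemma~\ref{trinoncoll} (no two triple junctions of a two--junction network can collide at an end--point) forbids $p,q\to P^i$ and again forces a uniform lower bound $|p-q|\geq\varepsilon>0$. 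Hence $\Pi(t)\geq C>0$ on $[0,T)$, and therefore $E^i(t)\geq\Pi(t)\geq C>0$ for every $i$ and every $t\in[0,T)$.

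Finally, exactly as in Subsection~\ref{consequences} and the proof of Corollary~\ref{Elemma1}, each $E^i$ is invariant under dilations and translations and is continuous under $C^1\loc$--convergence of networks (by the exponential decay of the backward heat kernel in the case of open limit networks, cf.\ Remark~\ref{opendl}, and the very definition of $\Phi_t$ otherwise). Consequently, any $C^1\loc$--limit of rescalings of the networks $\mathbb{H}^i_t$ of the flow has embeddedness measure $\geq C>0$, hence is embedded with multiplicity one; this is precisely the strong multiplicity--one conjecture {\bf{SM1}} for the flows $\mathbb{H}^i_t$. The only delicate point — and the step I would be most careful about — is the last case above, i.e.\ verifying that Lemma~\ref{trinoncoll} indeed covers all the configurations in which the geodesic $\Gamma_{p,q}$ inside $\mathbb{H}^i_t$ straddles the reflection point $P^i$ and carries three or four triple junctions; everything else is a routine re--reading of the proof of Theorem~\ref{dlteo} keeping track of $\Pi$ rather than $E$.
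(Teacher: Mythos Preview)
Your proposal is correct and is exactly the approach the paper takes: the corollary is stated as a direct by--product of the proof of Theorem~\ref{dlteo}, namely the observation that the argument there actually yields a uniform positive lower bound for $\Pi(t)$ (not just $E(t)$), after which the dilation/translation invariance and $C^1\loc$--continuity of the embeddedness measure give {\bf SM1} for each $\mathbb{H}^i_t$ just as in Corollary~\ref{Elemma1}. Your careful re--reading of the case analysis (including the role of Lemma~\ref{trinoncoll} for the ``straddling'' case) is precisely what the paper is implicitly invoking.
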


\begin{rem} Actually, in general, if we are able to show the (strong) multiplicity--one conjecture for a curvature flow $\SS_t$ in a strictly convex open set $\Omega$, then, by construction and Proposition~\ref{omegaok2}, it also holds for all the ``symmetrized'' flows ${\mathbb{H}}^i_t$. This remark is in order since in the analysis of the flow $\SS_t$ in the previous sections, we used the ``reflection'' argument at the end--points of the network $\SS_t$, then we argued applying {\bf{M1}} to the resulting networks ${\mathbb{H}}^i_t$ (to be precise, in Section~\ref{novan} and in the proofs of Proposition~\ref{bdcurvcollapse} and of Proposition~\ref{prop999}).
\end{rem}

Another situation that can be analyzed by means of the ideas of this section is the following.

\begin{prop}
If during the curvature flow of a network $\SS_t$ the triple junctions stay uniformly far from each other and from the end--points, then {\bf{SM1}} is true for the flows $\SS_t$ and all ${\mathbb{H}}^i_t$. 
As a consequence, the evolution of $\SS_t$ does not develop singularities.
\end{prop}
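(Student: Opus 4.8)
The plan is to establish a uniform lower bound $E(t)\geq C>0$ (and $\Pi(t)\geq C>0$) along the flow by the same mechanism as in the proof of Theorem~\ref{dlteo}, with the rôle previously played by the hypothesis ``at most two $3$--points'' now played by the separation hypothesis, and then to deduce that no singularity can form.

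First I would record the easy reduction. Since $\SS_t$ is a tree whose $3$--points remain at mutual distance $\geq\delta>0$ and at distance $\geq\delta$ from the end--points for all $t$ in the (a priori) maximal interval $[0,T)$, every curve of $\SS_t$ joins either two $3$--points or a $3$--point and an end--point, so its length is bounded below by $\delta$; in particular no curve collapses. Hence, by Theorem~\ref{curvexplod-general}/Corollary~\ref{curvexplod-general2} together with Proposition~\ref{regnocollapse} (contrapositive: if {\bf M1} holds and no curve's length tends to $0$, then $T=+\infty$; alternatively one may invoke Corollary~\ref{tree-bdcurb}), once we know that {\bf M1} holds for $\SS_t$ the flow cannot become singular, i.e. $T=+\infty$. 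Since {\bf SM1}$\Rightarrow${\bf M1}, and since Proposition~\ref{regnocollapse} is applied through the reflection construction and therefore also requires {\bf M1} for the doubled flows ${\mathbb{H}}^i_t$, it suffices to prove {\bf SM1} for $\SS_t$ and for every ${\mathbb{H}}^i_t$. For the latter I would note that each ${\mathbb{H}}^i_t$ is again a tree whose $2m$ triple junctions are those of $\SS_t$ together with their reflections through $P^i$: as $P^i$ lies at distance $\geq\delta$ from all triple junctions of $\SS_t$, these $2m$ points again stay uniformly separated, so it is enough to treat $\SS_t$ itself.

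The core of the argument is then a monotonicity statement for $E(t)$: I would show that there is a structural constant $\eta_0=\eta_0(\delta,\SS_0)>0$ such that at any differentiability time $t$ with $0<E(t)<\eta_0$ one has $\frac{dE(t)}{dt}>0$. Since $\SS_t$ has no loops, by Lemma~\ref{lemet1} $E(t)$ is realised by a pair $(p,q)$ with $\Phi_t=|p-q|^2/A_{p,q}$, and $E(t)$ small forces $|p-q|$ small (because $A_{p,q}$ is bounded above by the area of $\Omega$). The key point, where the separation hypothesis enters, is the claim that for $E(t)<\eta_0$ there is a minimizing pair $(p,q)$ whose connecting geodesic $\Gamma_{p,q}$ in $\SS_t$ meets at most two $3$--points; granting this, the first-- and second--variation identities and the Gauss--Bonnet computation of $\frac{dA_{p,q}}{dt}$ are exactly those in the proof of Proposition~\ref{lemet2} (now with $\int_{\Gamma_{p,q}}\langle\underline{k}\,\vert\,\nu\rangle\,ds$ equal to $2\alpha$ or $2\alpha\pm\pi/3$ according to the number and the ``side'' of the triple junctions on $\Gamma_{p,q}$), and they give $\frac{dE(t)}{dt}>0$ after possibly shrinking $\eta_0$; the sub--cases in which $p$ or $q$ is an end--point of some ${\mathbb{H}}^i_t$ are dealt with by the purely geometric bound $|p-q|\geq\varepsilon>0$, as in the proof of Theorem~\ref{dlteo}. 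This yields $E(t)\geq\min\{E(0),\eta_0\}>0$ uniformly on $[0,T)$, and since $E$ is dilation-- and translation--invariant and continuous under $C^1\loc$--convergence of networks, the same lower bound passes to every $C^1\loc$--limit of rescalings of the $\SS_t$, i.e. {\bf SM1} holds for $\SS_t$; verbatim for the ${\mathbb{H}}^i_t$, and then the no--singularity conclusion follows as in the first paragraph.

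The step I expect to be the main obstacle is precisely the reduction ``$E(t)<\eta_0\Rightarrow$ a minimizing pair with $\Gamma_{p,q}$ meeting at most two triple junctions''. Unlike in the setting of $\leq 2$ triple junctions, here one must genuinely use that along a tree a long geodesic passing through several triple junctions cannot be $\Phi_t$--optimal when the chord $\overline{pq}$ is short: the natural argument is to replace such a geodesic by a cheaper admissible pair obtained by following the network only up to the first triple junction encountered, but one has to control how $A_{p,q}$ changes under such a shortcut and, in particular, rule out that every candidate shortcut segment re--crosses the network (which would force $|p-q|$ to be bounded below by a constant $c(\delta)>0$ anyway, again contradicting $E(t)<\eta_0$). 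I would expect this to follow from the embeddedness of $\SS_t$ combined with the uniform separation of the triple junctions, but the bookkeeping — and checking that the chosen localisation is compatible with the differentiability of $E$ used in Hamilton's trick — is the technical heart of the proof.
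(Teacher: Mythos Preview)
Your overall architecture is right and matches the paper's: bound $E(t)$ from below by showing $\frac{dE}{dt}>0$ whenever $E(t)$ is small, deduce {\bf SM1} for $\SS_t$ and the ${\mathbb{H}}^i_t$, and then conclude absence of singularities from the uniform lower bound on curve lengths. Where you diverge is precisely at the step you flag as the main obstacle, and the paper handles it by a much cleaner idea than your shortcut argument.

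The paper does not try to analyse a long geodesic $\Gamma_{p,q}$ and replace it by cheaper admissible pairs. Instead it partitions the pairs of curves of $\SS_t$ into those that share a common $3$--point and those that do not. For a pair $(\gamma^i,\gamma^j)$ \emph{not} sharing a $3$--point, the four endpoints of $\gamma^i$ and $\gamma^j$ are $3$--points or end--points of the network, and by hypothesis these stay uniformly $\delta$--separated for all $t$; the maximum principle for two disjoint curves moving by curvature with uniformly separated boundaries then gives a uniform lower bound $C>0$ on the distance between any point of $\gamma^i$ and any point of $\gamma^j$. Consequently, if $E(t)<C'$ for the associated constant $C'$, any minimizing pair $(p,q)$ must lie either on the same curve or on two curves with a common $3$--point. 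Since $\SS_t$ is a tree, in either case the unique geodesic $\Gamma_{p,q}$ contains \emph{at most one} $3$--point, so Proposition~\ref{lemet2} applies directly and yields $\frac{dE}{dt}>0$ with no need for your $2\alpha\pm\pi/3$ subcases. The extension to the ${\mathbb{H}}^i_t$ and the boundary--pair cases then goes exactly as in the proof of Theorem~\ref{dlteo}, using that the $3$--points stay far from the end--points as well.

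So your shortcut/re--crossing bookkeeping is unnecessary: the separation hypothesis plus the maximum principle already force the minimizing pair onto adjacent curves, which is the missing observation.
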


\begin{proof} We divide all the pairs of curves of the evolving network
 $\SS_t$ in two families, depending on the curve of a pair have a
 common 3--point or not. In the second case, by means of maximum
 principle and the assumption on the 3--points, there is a uniform
 constant $C>0$ such that any couple of points, one on each curve of
 such pair, have distance bounded below by $C$. Then, if the pair of
 points of $\SS_t$ realizing the quantity $E(t)$ stay on such curves
 it follows $E(t)\geqslant C'>0$ for some uniform constant $C'$. In case
 $E(t)< C'$, it follows that such a pair of points either stay on the
 same curve or on two curves with a common 3--point. Hence, {\em the}
 ``geodesic'' curve $\Gamma_{p,q}$ contains at most one 3--point,
 since otherwise the distance between $p$ and $q$ would be at least $C$,
 contradicting the fact that $E(t)< C'$.
 This implies that $\frac{dE(t)}{dt}>0$ by
 Proposition~\ref{lemet1}. Then, the strong multiplicity--one conjecture follows for $\SS_t$ and all the ``symmetrized'' flows ${\mathbb{H}}^i_t$, by the same argument in the proof of Theorem~\ref{dlteo}, taking into account the hypothesis that the triple junctions stay uniformly far also from the end--points.\\
It follows that the only possible singularities of the flow are given by the collapse of a curve of the network, but this is excluded by the assumption.
\end{proof}

\section{The flow of networks with at most two triple junctions}\label{globsec}

In what follows we present, up to the best of our knowledge, the description of the evolution of the networks with at most two triple junctions.
For simplicity, we let them evolve in a strictly convex, open and smooth subset $\Omega\subseteq\R^2$. These are not only simple examples of a complete analysis of the flow, but they are interesting since most of the 
relevant phenomena of the motion by curvature of networks 
are already present. 

All the results are based on the content of the previous sections.
We underline that in 
the current situations the strong multiplicity--one conjecture {\bf{SM1}} holds (see Section~\ref{consequences}), hence it is not necessary to assume it. 
We require instead the {\em uniqueness of blow--up assumption} {\bf{U}}, stated in Problem~\ref{ooo12}, to hold, which is still conjectural, even if some positive partial results were recently obtained in~\cite{PlPo22A}. 

We recall that if the maximal time of smooth existence is finite, either a curve is vanishing with bounded curvature, or there exists at least a point $x_0\in\Omega$ where the curvature is not bounded, that is, at least a region of the network collapses at such point and we have there a blow--up limit network which cannot have zero curvature. 

Since the multiplicity--one conjecture holds for these networks, when a region collapses, also the loop that encloses the region must collapse, with its length going to zero.

\subsection{Networks with only one triple junction}\label{globsec1}
If we consider the possible (topological) structures of regular networks with only one triple junction, we see that there are only two cases: the {\em triod} and the {\em spoon}--shaped network. The motion of a triod can be regarded as the simplest example of 
the evolution by curvature of a tree--like 
configuration of an ``essentially'' singular one--dimensional set,
the motion of a spoon is the simplest one with a loop.
\begin{figure}[H]
\begin{center}
\begin{tikzpicture}[scale=0.85]
\draw[color=black]
 (-3.73,0) 
to[out=50,in=180,looseness=1] (-2,0) 
to[out=60,in=180,looseness=1.5] (-0.45,1.55) 
(-2,0)
to[out=-60,in=180,looseness=0.9] (-0.75,-1.75);
\draw[color=black,scale=1,domain=-3.141: 3.141,
smooth,variable=\t,shift={(-1.72,0)},rotate=0]plot({2.*sin(\t r)},
{2.*cos(\t r)}) ;
\path
 (-3.3,-1.75)node[left]{$\Omega$}
 (-3.7,0) node[left]{$P^1$}
 (-2.9,0.85) node[below] {$\gamma^1$}
 (-1.5,1) node[right] {$\gamma^3$}
 (-0.8,-1)[left] node{$\gamma^2$}
 (-2.2,0) node[below] {$O$}
 (-0.07,1.32)node[above]{$P^3$}
 (-0.75,-1.75) node[below] {$P^2$}; 
\draw[shift={(7,0)},color=black] 
(-3.73,0) 
to[out=50,in=180,looseness=1] (-2.3,0) 
to[out=60,in=150,looseness=1.5] (-1,1) 
(-2.3,0)
to[out=-60,in=-120,looseness=0.9] (-0.75,-0.75)
(-1,1)
to[out=-30,in=90,looseness=0.9] (-0.5,0)
to[out=-90,in=60,looseness=0.9] (-0.75,-0.75);
\draw[color=black,scale=1,domain=-3.141: 3.141,
smooth,variable=\t,shift={(5.28,0)},rotate=0]plot({2.*sin(\t r)},
{2.*cos(\t r)}) ;
\path[shift={(7,0)}]
 (-3.3,-1.75)node[left]{$\Omega$}
 (-1.3,-0.3)node[above]{$A$}
 (-3.7,0) node[left]{$P$}
 (-2.9,0.8) node[below] {$\gamma^2$}
 (-0.8,-1.3)[left] node{$\gamma^1$}
 (-2.5,0) node[below] {$O$}; 
\end{tikzpicture}
\end{center}
\begin{caption}{Networks with only one triple junction: triod and spoon network.\label{untripuntobis}} 
\end{caption}
\end{figure}
In what follows we present a complete description of the evolution of networks with these two shapes (from~\cite{mannovtor,MMN13,pluda}). We will see that in the case of the triod, we can exclude the presence of singularities till the lengths of the three curves stay positively bounded from below, while in the case of the spoon instead, a singularity develops.

As defined in Section~\ref{smtm}, fixed a smooth, open, strictly convex set $\Omega\subseteq\mathbb{R}^2$, 
a triod is a network (a tree) $\TT$ composed only of three regular, embedded $C^1$
curves $\gamma^i:[0,1]\to\overline{\Omega}$. 
These curves intersects each other only at a single 3--point $O$, that is, $\gamma^1(0)=\gamma^1(0)=\gamma^1(0)=O$ 
and have the other three end--points $P^1, P^2, P^3$ on the boundary of $\Omega$ with $\gamma^i(1)=P^i$, for $i\in\{1, 2, 3\}$. The triod is regular if the three concurring curves form angles of $120$ degrees.

A spoon $\Gamma=\gamma^1([0,1])\cup \gamma^2([0,1])$ is the union of two regular, 
embedded $C^1$ curves $\gamma^1, \gamma^2:[0,1]\to\overline{\Omega}$ which intersect 
each other only at a triple junction $O$, with angles of $120$ degrees,
that is, $\gamma^1(0)=\gamma^1(1)=\gamma^2(0)=O\in\Omega$ 
and $\gamma^2(1)=P\in \partial\Omega$. 
We call $\gamma^1$ the ``closed'' curve and $\gamma^2$ the ``open'' curve of the spoon 
and we denote with $A$ the area of the region enclosed by the loop. A spoon is regular if $\tau^1(0)+\tau^2(0)-\tau^1(1)=0$, which means that the three angles at $O$ are of $120$ degrees.

For simplicity, we will assume in the following that all the initial networks are smooth, hence Theorem~\ref{smoothexist} applies and gives a smooth curvature flow in a maximal time interval $[0,T)$. As we discussed in the previous sections, to start the flow 
if the curves of the initial network are only $C^2$ but the Herring condition is still satisfied, we need Theorem~\ref{c2shorttime}. 
If the initial network is not regular, we need to apply Theorem~\ref{evolnonreg} to have a curvature flow. Anyway, in all these cases, the flow is smooth for every positive time.
If the network is regular, thanks to Theorem~\ref{c2shorttime}, we have uniqueness (geometric uniqueness to be more precise, see Definition~\ref{uniqdef}). If these networks with only triple junctions are not regular but their curves are smooth, we still get geometric uniqueness (see Remark~\ref{uniqueness-nonreg}).

Collecting and specializing the results for a smooth initial network to the cases of a triod or of a spoon (Theorem~\ref{smoothexist}), we have the following proposition.

\begin{prop}
Let $\Omega\subseteq\R^2$ be a smooth, open, strictly convex set, then, for any smooth regular initial triod $\TT_0$ or any smooth regular initial spoon $\Gamma_0$ in $\Omega$, there exists a geometrically unique smooth (and special) curvature flow in a maximal time interval $[0,T)$.
\end{prop}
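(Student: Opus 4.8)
The plan is to assemble the statement as a direct corollary of the earlier short time existence and uniqueness theorems, specialised to the two topological types (triod and spoon) with a single triple junction, noting that both are covered by the hypotheses of those theorems. First I would invoke Theorem~\ref{smoothexist}: a smooth regular initial triod $\TTT_0$ or spoon $\Gamma_0$ in a smooth, convex, open $\Omega\subset\R^2$ (strict convexity only strengthens the hypotheses) admits a smooth special curvature flow on a maximal positive time interval $[0,T)$. Since a smooth regular network is in particular geometrically smooth, Theorem~\ref{geosmoothexist} (or directly Theorem~\ref{smoothexist-prob}) also yields a $C^\infty$ solution of system~\eqref{problema-nogauge-general}, and Corollary~\ref{sunique} gives that the solution of Problem~\eqref{problema} is \emph{geometrically unique} in the class $C^{2+2\alpha,1+\alpha}([0,1]\times[0,T))$ and moreover $C^\infty$. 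The only point to check is that the short time existence argument of Bronsard--Reitich, as revisited in Sections~\ref{smtm} and~\ref{smtm2}, applies verbatim when $\Omega$ is strictly convex rather than merely convex: this is immediate, since the complementary and compatibility conditions at the $3$--point and at the end--points are unchanged and the parabolicity of the system~\eqref{problema-nogauge-general} is the same regardless of the shape of $\Omega$.

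Next I would make explicit why the statement covers exactly the triod and the spoon. A regular network with a single multi--point necessarily has that multi--point of order three (by the definition of regular network), so, viewing it as a planar graph with one vertex of order three, there are exactly two combinatorial possibilities: either the three edges terminate at three distinct end--points (a triod, Definition~\ref{triod}), or two of the three edges coincide forming a loop at the vertex and the third terminates at a single end--point (a spoon, as in Figure~\ref{untripuntobis}). In the triod case the end--points $P^1,P^2,P^3$ lie on $\partial\Omega$ and are kept fixed (Dirichlet conditions); in the spoon case the unique end--point $P$ on $\partial\Omega$ is kept fixed, while the loop is entirely interior. Both configurations are smooth regular networks in the sense of Definition~\ref{Cinfty} and Definition~\ref{regularnetwork}, hence Theorem~\ref{smoothexist} and Corollary~\ref{sunique} apply directly, with no modification.

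The bulk of the work has already been done in the cited theorems, so the ``proof'' here is essentially a bookkeeping step: one states that the smooth curvature flow produced by Theorem~\ref{smoothexist} is the desired flow, that it exists on a maximal interval $[0,T)$ by the very construction (one extends as long as possible, restarting via Theorem~\ref{smoothexist} whenever the smooth network converges in $C^\infty$), and that geometric uniqueness in the natural reparametrisation class follows from Corollary~\ref{sunique} together with Remark~\ref{fffggg}. I would also remark, for completeness, that in the spoon case the strict convexity of $\Omega$ is not strictly needed for existence and uniqueness --- convexity suffices, cf.~Remark~\ref{dl1O} --- but one states the proposition under strict convexity to match the hypotheses under which the subsequent global analysis (via the embeddedness estimate of Section~\ref{dsuL}, in particular Corollary~\ref{Elemma1}) is carried out.

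The main (very mild) obstacle, if any, is purely expository rather than mathematical: one must be careful that ``geometrically unique'' is interpreted in the sense of Definition~\ref{uniqdef}, i.e.\ uniqueness up to time--dependent $C^{2+2\alpha,1+\alpha}$ (here $C^\infty$) reparametrisations of the curves, and \emph{not} uniqueness in the wider class of $C^{2,1}$ curvature flows, which remains the open problem recorded in Open Problem~\ref{ooo1}. Since Corollary~\ref{sunique} already supplies exactly this statement for any geometrically smooth regular network, and both the triod and the spoon are such networks, there is nothing further to prove; the proposition is simply the restriction of Corollary~\ref{sunique} to the two one--triple--junction topologies.
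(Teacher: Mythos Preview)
Your proposal is correct and takes essentially the same approach as the paper: the proposition is stated there without proof, merely as a recollection of the earlier results, with explicit references to Theorems~\ref{smoothexist-prob},~\ref{smoothexist},~\ref{geosmoothexist} and Corollary~\ref{sunique}. Your write-up is more expansive than the paper's one-line citation, but the substance is identical --- invoke Theorem~\ref{smoothexist} for existence and Corollary~\ref{sunique} for geometric uniqueness, noting that smooth regular triods and spoons satisfy the hypotheses.
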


Before proceeding, we also recall that during the flow the evolving networks stay embedded and intersect the boundary of $\Omega$ only at the fixed end--points (transversally), see Section~\ref{geopropsub}.

\subsubsection{The triod}

Suppose that $T<+\infty$, then, by Proposition~\ref{regnocollapse}, the lengths of the three curves 
cannot be uniformly positively bounded from below. 
Hence, as $\Omega$ is strictly convex, Corollary~\ref{tree-bdcurb} and Theorem~\ref{prop999b}
imply that the curvature of $\TT_t$ is uniformly bounded and there must be a collapse of a curve to a fixed end--point on $\partial\Omega$, when $t\to T$, as depicted in the right side of Figure~\ref{Pcollapse} or Figure~\ref{boundarysing}.

Suppose instead that $T=+\infty$. Then, by Proposition~\ref{prolong}, 
for every sequence of times $t_i\to+\infty$, there exists a (not relabeled) 
subsequence such that the evolving triod $\mathbb{T}_{t_i}$
converge in $C^1$ to a possibly degenerate regular triod, embedded (by Theorem~\ref{dlteo}) and with
zero curvature, as $i\to\infty$, that is, a Steiner configuration connecting the three fixed points $P^i$ on $\partial\Omega$ (which possibly have a zero--length degenerate curve, for instance if the three end--points are the vertices of a triangle with an angle of $120$ degrees).
Moreover, as the Steiner configuration (which is length minimizing) connecting three points is unique (if it exists), for every subsequence of times, we have the same limit triod, hence, the {\em full} sequence of triods $\TTT_t$ converge to such limit, as $t\to+\infty$.

We notice that there is an obvious example where the length of 
one curve goes to zero in finite time: the case of an initial triod $\TT_0$
with the boundary points $P^i$ on $\partial\Omega$ such that 
one angle of the triangle with vertices $P^1, P^2, P^3$ is greater than $120$ degrees. 
In this case the Steiner triod does not exist, hence the maximal time of a smooth evolution must be finite.\\
Instead, if the angles of the triangle with vertices $P^1, P^2, P^3$ are all smaller than $120$ degrees and the initial triod $\TT_0$ is contained in the convex envelope of $P^1, P^2, P^3$, then no length can go to zero during the evolution, by Remark~\ref{angolostretto}, the maximal time of existence is $+\infty$ and the triods $\TT_t$ tend, as $t\to+\infty$, to the unique Steiner triod.

When the maximal time $T$ is finite and a curve collapses to an end--point
(see Figures~\ref{Pcollapse},~\ref{boundarysing} and the above discussion), 
it is not clear how to continue/restart the flow. Indeed, although the curvature is bounded, Theorem~\ref{evolnonreg} does not apply and we need some ``boundary'' extension (see the discussion in Section~\ref{resum}, after Figure~\ref{boundarysing}).

\subsubsection{The spoon}

In Section~\ref{geopropsub} we discussed the behavior of the area $A$ of a bounded region enclosed by a loop of an evolving regular network.
In the case of the spoon, the loop is composed of one curve and there is only one triple junction. Then, equation~\eqref{areaevolreg} gives $A'(t)=-{5\pi}/{3}$, which implies that the maximal time $T$ of existence of a smooth flow of a spoon is finite and $T\leqslant\frac{3A(0)}{5\pi}$, where $A(0)$ is the initial area enclosed in the loop (see Proposition~\ref{loop}).\\
As $t\to T$, the only possible limit regular shrinkers $\widetilde{\Gamma}_\infty$ arising from Huisken's rescaling procedure at a reachable point $x_0\in\overline{\Omega}$ are given by
\begin{itemize}
\item a halfline from the origin,
\item a straight line through the origin,
\item a standard triod,
\item a Brakke spoon (see Figure~\ref{brakspfig}). 
\end{itemize}
This follows by the simple topological structure of $\Gamma_t$ and the uniqueness (up to rotation) of the Brakke spoon among the shrinkers in its topological class (see Section~\ref{shrinkers}). We remind that all the possible blow--up limits are non--degenerate networks with multiplicity one, thanks to Corollary~\ref{Elemma1}.

We first notice that, if the curve $\gamma^1$ collapses, then the curvature clearly 
cannot be bounded. Moreover, by Proposition~\ref{prop999b}, it is not possible that both lengths of $\gamma^1$ and $\gamma^2$ go to zero, as $t\to T$.

Suppose that the length of the ``open'' curve $\gamma^2$ is uniformly positively bounded from below for all $t\in[0,T]$, then the curve $\gamma^1$ must collapse and the maximum of the curvature goes to $+\infty$ as $t\to T$ (indeed, $\lim_{t\to T}\int_{\mathbb{S}_t}k^2\,ds=+\infty$, by Proposition~\ref{loop}). Then, if $x_0=\lim_{t\to T}O(t)$, taking a blow--up limit $\widetilde{\Gamma}_\infty$ at $x_0\in\Omega$, we can only get a Brakke spoon, since in the other cases (a halfline is obviously excluded) the curvature would be locally bounded and the flow regular. Hence, as $t\to T$, the length of the closed curve $\gamma^1$ goes to zero and the area $A(t)$ enclosed in the loop goes to zero 
since (as {\bf{U}} holds) we have a limit network $\Gamma_T$, as $t\to T$, composed only by a $C^1$ curve $\gamma^2_T$ connecting $P$ with $x_0$ (and curvature going as $o(1/d_{x_0})$), as in Figure~\ref{f22}.
Moreover, from the evolution law $A(t)=A(0)-5\pi t/3$, we obtain that 
$T=\frac{3A(0)}{5\pi}$.

If instead the length of the curve $\gamma^2$ is not positively bounded from below then, as $t\to T$, by Proposition~\ref{prop999b} such curve collapses to the end--point $P$, the curvature stays bounded and the network $\Gamma_t$ is locally a tree around every point, uniformly in $t\in[0,T)$. Hence, the region enclosed by the curve $\gamma^1$ does not vanishes and the triple junction $O$ has collapsed onto the boundary point $P$, maintaining the $120$ degrees condition and with bounded curvature (see Proposition~\ref{cross}). The networks $\Gamma_t$ converge in $C^1$, as $t\to T$, to a limit network $\Gamma_T$, as in Figure~\ref{spoonPcollap}.

We actually do not have a natural way to restart the flow in the first situation. In the second one, a natural ``choice'' is to assume that the flow ends and the whole network vanishes for $t>T$.

We conclude this example with a couple of open questions.

\begin{oprob}[Special case of Problem~\ref{ooo12}]
Is the limit Brakke spoon obtained in the previous theorem (in the second situation) independent of the chosen sequence of times $\tt_k\to+\infty$?
That is, is the direction of its unbounded halfline unique?
\end{oprob}

\begin{oprob}
Having in mind the ``convexification'' result for simple closed curves
by Grayson~\cite{gray1} (see Remark~\ref{gremh}),
a natural question is: if we consider an initial spoon moving by curvature with the length of the non--closed curve uniformly positively 
bounded below during the evolution, does
the closed curve become eventually convex and then remain convex?
\end{oprob}

These two open problems are related: the uniqueness of the blow--up limit (which is a Brakke spoon, hence with a convex region) would imply that the region at some time becomes convex and then remains so, by the smooth convergence of the rescaled networks to the Brakke spoon (this follows from the argument of Lemma~8.6 in~\cite{Ilnevsch}, see the discussion just after the proof of Lemma~\ref{boh}).

\subsection{Networks with two triple junctions}\label{globsec2}
We consider now regular networks with exactly two triple junctions and we focus on their topological classification. We parametrize the curves composing the network by $\gamma^i:[0,1]\to\mathbb{R}^2$.
At each $3$--point either three different not closed curves concur (for instance $O^1=\gamma^1(0)=\gamma^2(0)=\gamma^3(0)$)
or two curves, one of which closed (that is $O^1=\gamma^1(0)=\gamma^1(1)=\gamma^2(0)$).
As we do not consider here open networks (with branches
that go to infinity asymptotic to halflines, see Definition~\ref{Cinftyopen}),
if a curve is not closed (hence $\gamma^1(0)\neq \gamma^1(1)$), there are only two possibilities for its end--point not concurring in $O^1$: either it is an end--point on the boundary of $\Omega$, or it belongs to the other triple junction $O^2$.
If we repeat the above reasoning for every end--point, we obtain all the cases shown in Figure~\ref{clas}.

When we say that a network has a loop $\ell$, we mean that there is a Jordan curve in $\mathbb{S}$ that encloses
an area $A$. For networks with two triple junctions, there are two cases (see Figure~\ref{clas}):
\begin{itemize}
\item the loop $\ell$ is composed of a single curve $\gamma:[0,1]\to\mathbb{R}^2$,
$\gamma(0)=\gamma(1)$ forming an angle of $120$ degrees.
The length $L$ of $\ell$ coincides with the length of $\gamma$.
\item the loop $\ell$ is composed of two curves $\gamma^1,\gamma^2:[0,1]\to\mathbb{R}^2$,
that meet each other at their end--points and at both junctions
there is an angle of $120$ degrees.
The length $L$ of $\ell$ is the sum of the lengths of the two curves.
\end{itemize}

\begin{figure}[H]
\begin{center}
\begin{tikzpicture}[scale=1.4]
\draw[color=black,scale=0.45,shift={(8,-4.5)}] 
(-1.73,-1.8) 
to[out= 180,in=180, looseness=1] (-2.8,0) 
to[out= 60,in=150, looseness=1.5] (-1.5,1) 
(-2.8,0)
to[out=-60,in=180, looseness=0.9] (-1.25,-0.75)
(-1.5,1)
to[out= -30,in=90, looseness=0.9] (-1,0)
to[out= -90,in=60, looseness=0.9] (-1.25,-0.75)
to[out= -60,in=0, looseness=0.9](-1.73,-1.8);
\draw[color=black!50!white,scale=0.45,domain=-3.15: 3.15,
smooth,variable=\t,shift={(6.28,-4.5)},rotate=0]plot({2.*sin(\t r)},
{2.*cos(\t r)}); 
\draw[color=black,scale=0.33,shift={(26.8,-5.5)}] 
(-2,0) 
to[out= 170,in=40, looseness=1] (-2.9,1.2) 
to[out= -140,in=90, looseness=1] (-3.2,0)
(-2,0)
to[out= -70,in=0, looseness=1] (-3,-0.9) 
to[out= -180,in=-90, looseness=1] (-3.2,0)
(-2,0) 
to[out= 50,in=180, looseness=1] (-1.3,0) 
to[out= 60,in=150, looseness=1.5] (-0.75,1) 
(-1.3,0)
to[out= -60,in=-120, looseness=0.9] (-0.5,-0.75)
(-0.75,1)
to[out= -30,in=90, looseness=0.9] (-0.25,0)
to[out= -90,in=60, looseness=0.9] (-0.5,-0.75);
\draw[color=black!50!white,scale=0.33,domain=-3.15: 3.15,
smooth,variable=\t,shift={(25,-5.5)},rotate=0]plot({2.*sin(\t r)},
{2.*cos(\t r)}) ; 
\draw[color=black,scale=0.33,shift={(31.1,-7)}] 
(-3,0) 
to[out= 170,in=140, looseness=1] (-2.1,1.4) 
to[out= -40,in=90, looseness=1] (0,0)
(-3,0) 
to[out= -70,in=-180, looseness=1] (-1,-1.3) 
to[out= 0,in=-90, looseness=1] (0,0)
(-3,0) 
to[out= 50,in=180, looseness=1] (-2.3,0) 
to[out= 60,in=150, looseness=1.5] (-1.75,1) 
(-2.3,0)
to[out= -60,in=-120, looseness=0.9] (-1.5,-0.75)
(-1.75,1)
to[out= -30,in=90, looseness=0.9] (-1.25,0)
to[out= -90,in=60, looseness=0.9] (-1.5,-0.75);
\draw[color=black!50!white,scale=0.33,domain=-3.15: 3.15,
smooth,variable=\t,shift={(29.58,-7)},rotate=0]plot({2.*sin(\t r)},
{2.*cos(\t r)}) ; 
\draw[color=black,scale=0.45, shift={(8,-9.72)}] 
(-3.73,0)
to[out= 50,in=180, looseness=1] (-2.8,0) 
to[out= 60,in=150, looseness=1.5] (-1.5,1) 
(-2.8,0)
to[out=-60,in=180, looseness=0.9] (-1.25,-0.75)
(-1.5,1)
to[out= -30,in=90, looseness=0.9] (-1,0)
to[out= -90,in=60, looseness=0.9] (-1.25,-0.75)
to[out= -60,in=150, looseness=0.9](-0.3,-1.3);
\draw[color=black!50!white,scale=0.45,domain=-3.15: 3.15,
smooth,variable=\t,shift={(6.28,-9.72)},rotate=0]plot({2.*sin(\t r)},
{2.*cos(\t r)}) ; 
\draw[color=black,scale=0.45, shift={(8,-14.94)}] 
 (-3.73,0) 
to[out= 50,in=180, looseness=1] (-2.3,0.7) 
to[out= 60,in=180, looseness=1.5] (-0.45,1.55) 
(-2.3,0.7)
to[out= -60,in=130, looseness=0.9] (-1,-0.3)
to[out= 10,in=100, looseness=0.9](0.1,-0.8)
(-1,-0.3)
to[out=-110,in=50, looseness=0.9](-2.7,-1.7);
\draw[color=black!50!white,scale=0.45,domain=-3.15: 3.15,
smooth,variable=\t,shift={(6.28,-14.94)},rotate=0]plot({2.*sin(\t r)},
{2.*cos(\t r)}) ; 
\draw[color=black,scale=0.45, shift={(14.65,-9.72)}] 
(-2,0)
to[out= 170,in=40, looseness=1] (-2.9,1.2) 
to[out= -140,in=90, looseness=1] (-3.2,0)
(-2,0) 
to[out= -70,in=0, looseness=1] (-3,-0.9) 
to[out= -180,in=-90, looseness=1] (-3.2,0)
(-2,0) 
to[out= 50,in=180, looseness=1] (-1.3,0) 
to[out= 60,in=150, looseness=1.5] (-0.75,1) 
(-1.3,0)
to[out= -60,in=-120, looseness=0.9] (-0.5,-0.75)
(-0.75,1)
to[out= -30,in=90, looseness=0.9] (0,1)
(0,-1)
to[out= -90,in=60, looseness=0.9] (-0.5,-0.75);
\draw[color=black!50!white,scale=0.45,domain=-3.15: 3.15,
smooth,variable=\t,shift={(12.93,-9.72)},rotate=0]plot({2.*sin(\t r)},
{2.*cos(\t r)}) ; 
\draw
(0,0)--(0,-8.05)
(10.5,0)--(10.5,-8.05)
(7.5,0)--(7.5,-8.05)
(4.5,0)--(4.5,-8.05)
(1.5,0)--(1.5,-8.05)
(0,0)--(10.5,0)
(0,-1)--(10.5,-1)
(0,-3.35)--(10.5,-3.35)
(0,-5.7)--(10.5,-5.7)
(0,-8.05)--(10.5,-8.05);
\path[color=black, font=\small]
(4.35,-3.11) node[left]{Theta}
(10.35,-3.16) node[left]{Eyeglasses}
(4.35,-5.46) node[left]{Lens}
(7.35,-5.46) node[left]{Island}
(4.35,-7.81) node[left]{Tree};
\path[font=\small]
(0.75,-2.2) node[above,rotate=90]{$0$ end--points}
(1.1,-2.2) node[above,rotate=90]{on $\partial\Omega$}
(0.75,-4.55) node[above,rotate=90]{$2$ end--points}
(1.1,-4.55) node[above,rotate=90]{on $\partial\Omega$}
(0.75,-6.9) node[above,rotate=90]{$4$ end--points}
(1.1,-6.9) node[above,rotate=90]{on $\partial\Omega$}
(2.1,-0.5) node[right]{$0$ closed curves}
(5.1,-0.5)node[right]{$1$ closed curve}
(8.1,-0.5) node[right] {$2$ closed curves};
\end{tikzpicture}
\end{center}
\begin{caption}
{Networks with two triple junctions.\label{clas}}
\end{caption}
\end{figure}

\begin{prop}
Let $\Omega\subseteq\R^2$ be a smooth, open, strictly convex set,
then, for any smooth regular initial network in the above family, there exists a geometrically unique smooth (and special) curvature flow in a maximal time interval $[0,T)$. During the flow, the evolving networks stay 
embedded and intersect the boundary of $\Omega$ 
only at the fixed end--points (transversally).
\end{prop}

We first analyze the possible blow--up limits at a singular time of the evolution of networks with two triple junctions
of general topological type, then we discuss in detail all the possible topologies, case by case.

It is crucial that all the possible blow--up limits $\widetilde\SS_\infty$, arising from Huisken's rescaling procedure, 
are embedded networks with multiplicity--one, by Corollary~\ref{Elemma1} in Section~\ref{dsuL}.

\begin{prop}\label{possiblelimit}
If the rescaling point $x_0$ belongs to $\Omega$, then the blow--up limit network $\widetilde\SS_\infty$ (if not empty) is one of the following (see Section~\ref{sshnet}):
\begin{itemize}
\item a straight line through the origin;
\item a standard triod centered at the origin;
\item a standard cross;
\item a Brakke spoon;
\item a shrinking lens;
\item a shrinking fish.
\end{itemize}
If the rescaling point $x_0$ is a fixed end--point of the evolving network
(on the boundary of $\Omega$), then the blow--up limit network 
$\widetilde\SS_\infty$ (if not empty) is one of the following:
\begin{itemize}
\item a halfline from the origin;
\item two halflines from the origin forming an angle of $120$ degrees (``half'' of a standard cross).
\end{itemize}
\end{prop}

\begin{proof}
The limit (possibly degenerate) network $\widetilde\SS_\infty$ has to satisfy the shrinkers equation $k_\infty+x^\perp=0$
for all $x\in\widetilde\SS_\infty$ (see the proof of Proposition~\ref{resclimit-general}).

If we assume that $\widetilde\SS_\infty$ is a degenerate regular shrinkers, it must be a standard cross, if $x_0\in\Omega$, or two halflines from the origin forming an angle of $120$ degrees, when $x_0\in\partial\Omega$ (``half'' of a standard cross). Then, its core is composed of a single curve 
(connecting the two triple junctions or a triple junction with an end--point, by Lemma~\ref{trinoncoll}) ``collapsed'' in the limit.

If $\widetilde\SS_\infty$ is not degenerate and the curvature $\widetilde{k}_\infty$ is constantly zero, the network is composed only of halflines or straight lines. Then, the possible regular shrinkers are either a straight line through the origin or a standard triod, if $x_0\in\Omega$, 
or a halfline, if $x_0\in\partial\Omega$.\\
If instead the curvature is not constantly zero and the network $\widetilde\SS_\infty$ is not degenerate, by the classification of regular shrinkers with two triple junctions, we can only have either the Brakke spoon, the shrinking lens, or the shrinking fish.
In all these three cases, the center of the homothety is inside the enclosed region, hence $x_0$ cannot be an end--point on the boundary of $\Omega$.
\end{proof}

\begin{prop}\label{loop2trip}
Let $\mathbb{S}_0$ be a network with two triple junctions and with a loop $\ell$ of length $L$, enclosing a region of area $A$ and let $\mathbb{S}_t$ be a smooth evolution by curvature of such network in the maximal time interval $[0,T)$. Then, $T$ is finite and if $\lim_{t\to T}L(t)=0$, there holds $\lim_{t\to T}\int_{\mathbb{S}_t}k^2\,ds=+\infty$.
\end{prop}
\begin{proof}
If a loop is present, by the above classification of the possible topological structures of the networks with two triple junctions, it must be composed of $m$ curves, with $m<6$, hence, Proposition~\ref{loop} applies.
\end{proof}

\begin{thm}\label{main2trip}
Let $\Omega\subseteq\mathbb{R}^2$ be a smooth, strictly convex, open set. 
Let $\mathbb{S}_0$ be a compact initial network with two triple junctions and with possibly
fixed end--points on $\partial\Omega$ and
let $\mathbb{S}_t$ be the smooth evolution by curvature
of $\mathbb{S}_0$ in a maximal time interval $[0,T)$.\\
If the network $\mathbb{S}_0$ has at least one loop, then the maximal time of existence $T$ is finite and one of the following situations occurs:
\begin{enumerate}
\item the limit of the length of a curve that connects the two $3$--points goes to zero, as $t\to T$, 
and the curvature remains bounded;
\item the limit of the length of a curve that connects the $3$--point with an end--point goes to zero, as $t\to T$ and the curvature remains bounded;
\item a region enclosed by a loop collapses with the length of the loop going to zero (since {\bf{SM1}} holds), as $t\to T$ and $\displaystyle{\lim_{t\to T}}\int_{\mathbb{S}_t}k^2\,ds=+\infty$.
\end{enumerate}
If the network is a tree and $T$ is finite, the curvature is uniformly bounded and only one of the first two situations listed above can happen. If instead $T=+\infty$, for every sequence of times $t_i\to+\infty$, there exists a subsequence (not relabeled) such that the evolving networks $\SS_{t_i}$ converge in $C^{1,\alpha}\cap W^{2,2}$, for every $\alpha\in(0,1/2)$,
to a possibly degenerate, regular network with zero curvature (hence, ``stationary'' for the length functional), as $i\to\infty$.
\end{thm}
\begin{proof}
If a loop is present, by Proposition~\ref{loop}, the maximal time of smooth existence $T>0$ is finite. If such time is smaller than the ``natural'' time at which the loop shrinks (depending on the number of curves composing the loop, as in Proposition~\ref{loop}), the network is locally a tree, uniformly for $t\in[0,T)$. Hence, every blow--up limit at any point $x_0\in\overline{\Omega}$ cannot contain loops, then Proposition~\ref{possiblelimit} shows that it must have zero curvature, thus, by Proposition~\ref{prop999} the curvature of $\SS_t$ is uniformly bounded along the flow and converges, as $t\to T$, to a degenerate regular network $\SS_T$ with vertices that are either a regular triple junction, an end--point, or 
\begin{itemize}
\item a $4$--point where the four concurring curves have opposite unit tangents in pairs and form 
angles of $120/60$ degrees between them
(collapse of the curve joining the two triple junctions of $\SS_t$);
\item a $2$--point at an end--point of the network $\SS_t$ where the
 two concurring curves form an angle of $120$
degrees among them (collapse of the curve joining a triple junction to such end--point of $\SS_t$).
\end{itemize}
The same conclusion clearly holds if $\SS_0$ is a tree and $T$ is finite.

If instead the time $T$ coincides with the vanishing time of a loop of the network, by Proposition~\ref{loop}, the curvature is unbounded and there must exist a reachable point for the flow $x_0\in\Omega$ and a sequence of times $t_j\to T$ such that, the associate sequence of rescaled networks $\widetilde{\SS}_{x_0,\tt_{j}}$, as in Proposition~\ref{resclimit-general}, 
converges in $C^{1,\alpha}\loc\cap W^{2,2}\loc$, for any $\alpha \in (0,1/2)$,
to a limit degenerate regular shrinker $\widetilde\SS_\infty$ that is either a Brakke spoon, a shrinking lens, or a shrinking fish.

If $T=+\infty$, hence $\SS_0$ must be a tree, then $\mathbb{S}_t$ converges, as $t\to+\infty$, to a (possibly degenerate) regular network with zero curvature (a stationary point for the length functional), thanks to Proposition~\ref{prolong}
\end{proof}

To now proceed with a more detailed analysis of the behavior of the flow of these networks, we consider each topological type separately.

\subsubsection{The theta}

We call $A_1$ the area enclosed by the curves $\gamma^1$ and $\gamma^2$ and $A_2$ the area enclosed by $\gamma^2$ and
$\gamma^3$, as in the following figure.
\begin{figure}[H]
\begin{center}
\begin{tikzpicture}[scale=0.9]
\draw[color=black,shift={(5,0)}] 
(-1.73,-1.8) 
to[out= 180,in=180, looseness=1] (-2.8,0) 
to[out= 60,in=150, looseness=1.5] (-1.5,1) 
(-2.8,0)
to[out=-60,in=180, looseness=0.9] (-1.25,-0.75)
(-1.5,1)
to[out= -30,in=90, looseness=0.9] (-1,0)
to[out= -90,in=60, looseness=0.9] (-1.25,-0.75)
to[out= -60,in=0, looseness=0.9](-1.73,-1.8);
\draw[color=black!50!white,scale=1,domain=-3.15: 3.15,
smooth,variable=\t,shift={(3.3,-0.3)},rotate=0]plot({3.25*sin(\t r)},
{2.5*cos(\t r)}) ; 
\path[shift={(5,0)}] 
(-5.5,-0.75)node[right]{$\Omega$}
(-1.25,-0.75)node[right]{$O^2$}
 (-1.5,-0.3)[left] node{$\gamma^2$}
 (-1.5,0.5)[left] node{$A_1$}
 (-1.5,-1.1)[left] node{$A_2$}
 (-0.8,1.1)[left] node{$\gamma^1$}
 (-0.6,-1.45)[left] node{$\gamma^3$}
 (-3,0.65) node[below] {$O^1$}; 
 \end{tikzpicture}
\end{center}
\begin{caption}{Theta.\label{theta}} 
\end{caption}
\end{figure}
Let $x_0\in\Omega$ be a reachable point of the flow, from Proposition~\ref{resclimit-general}, we know that the sequence of rescaled networks $\widetilde{\SS}_{x_0,\tt_{j}}$
converges in $C^{1,\alpha}\loc\cap W^{2,2}\loc$, for any $\alpha \in (0,1/2)$, to a blow--up limit shrinker $\widetilde\SS_\infty$. By Proposition~\ref{possiblelimit}, the possible $\widetilde\SS_\infty$ are: 
\begin{itemize}
\item a straight line through the origin;
\item a standard triod;
\item a standard cross;
\item a shrinking lens;
\item a shrinking fish,
\end{itemize}
where we excluded the Brakke spoon for topological reasons.\\
We know from Proposition~\ref{loop} that the maximal time $T$ of existence of a smooth flow is finite and bounded by $\frac{3}{4\pi}\min\{A_1(0),A_2(0)\}$. Indeed, from equation~\eqref{areaevolreg}, we know that the areas enclosed in the two loops are linearly decreasing in time, precisely, $A_1'(t)=A_1'(t)=-{4\pi}/{3}$.

\medskip

If $T<\frac{3}{4\pi}\min\{A_1(0) , A_2(0)\}$, then the evolving network is locally a tree for all times, hence the analysis of Sections~\ref{van0} and~\ref{van} (in particular, Theorem~\ref{tree-bdcurb2}) applies and the curvature stays bounded while the length of only one curve is going to zero, as $t\to T$, forming a regular $4$--point, where the two triple junctions converge.

\medskip

Suppose that $T=\frac{3}{4\pi}\min\{A_1(0) , A_2(0)\}$.
Suppose by contradiction that
$A_1(0)=A_2(0)$.
Clearly the two regions should collapse both at $T$. 
Taking a blow--up limit $\widetilde{\mathbb{S}}_\infty$ at a hypothetical vanishing point $x_0\in\Omega$, such limit must contain two contiguous regions with a common edge and with equal finite area. Indeed, 
every rescaled network of the sequence $\widetilde{\SS}_{x_0,\tt}$
contain two contiguous regions and the two loops cannot vanish in the limit (neither collapsing to a core because of the enclosed constant area), since at least one is present in the possible blow--up limit shrinker. Then, since there are no possible limit shrinkers with two bounded regions, by Proposition~\ref{possiblelimit}, this situation is not possible.\\
So, in the case $T=\frac{3}{4\pi}\min\{A_1(0) , A_2(0)\}$, the two areas $A_1(0)$ and $A_2(0)$ must be different. The curvature cannot stay bounded, hence there must exist a singular point $x_0\in\Omega$ where $\widetilde\SS_\infty$ is a non--straight shrinker, thus, a shrinking lens or a shrinking fish. The resulting possible limit network $\SS_T$, as $t\to T$, will then be given by a $C^1$ curve, ``closing'' at $x_0$, possibly forming an angle. As we supposed that the {\em uniqueness of blow--up assumption} {\bf{U}} in Problem~\ref{ooo12} holds, such angle is either the one between the two ``halfilnes'' of the shrinking fish, if this is the blow--up limit shrinker, or the curve is $C^1$ (no angle), if the blow--up limit shrinker is a shrinking lens (see Figure~\ref{angfig}).

\bigskip

In the first case, we ``pass through'' the (topological) Type-0 singularity by a standard transition, as described in Section~\ref{restart} (see Figure~\ref{figstandard}) and we actually conjecture that this can be done in a unique way, see Remark~\ref{unirem999}. After the transition, the network becomes eyeglasses--shaped (of ``type A'' or of ``type B'', depending on whether the collapsed curve was the central one, or one of the other two, respectively), as in Figure~\ref{figstatheta} or in the left side of Figure~\ref{switch}.\\
In the case $\SS_T$ is a $C^1$ closed curve with possibly an angle, by the results of Angenent in~\cite{angen3} (see also~\cite{eckhui2}), we can (uniquely) restart the evolution by means of the ``classical'' curve shortening flow, obtaining an evolving closed embedded curve, which becomes immediately smooth. After some time it becomes convex and then shrinks in finite time to a ``round'' point of $\Omega$, by the well--known works of Gage, Grayson and Hamilton~\cite{gage,gage0,gaha1,gray1}

\subsubsection{The eyeglasses}

We analyze the two different ``types'' of these networks, as in the following Figure~\ref{eyeglasses}. 

From equation~\eqref{areaevolreg} we know that the area enclosed by any loop is linearly decreasing in time. Hence, being present some regions, by Proposition~\ref{loop} it follows that the maximal time $T>0$ of existence of a smooth flow is finite and bounded by 
$\frac{3}{5\pi}\min\{A_1(0),A_2(0)\}$, where $A_1$ and $A_2$ are the areas of the regions respectively enclosed by the curves $\gamma^1$ and $\gamma^2$, in the ``type A'' case, by
$\frac{3A_1(0)}{5\pi}$, where $A_1$ is the area enclosed by the ``internal'' loop, in the ``type B'' case (in the case of collapse of a region also its boundary loop must vanish, hence the ``internal'' region is forced to collapse).
\begin{figure}[H]
\begin{center}
\begin{tikzpicture}[scale=0.9]
\draw[black]
(-2,0) 
to[out= 170,in=40, looseness=1] (-2.9,1.2) 
to[out= -140,in=90, looseness=1] (-3.2,0)
(-2,0)
to[out= -70,in=0, looseness=1] (-3,-0.9) 
to[out= -180,in=-90, looseness=1] (-3.2,0)
(-2,0) 
to[out= 50,in=180, looseness=1] (-1.3,0) 
to[out= 60,in=150, looseness=1.5] (-0.75,1) 
(-1.3,0)
to[out= -60,in=-120, looseness=0.9] (-0.5,-0.75)
(-0.75,1)
to[out= -30,in=90, looseness=0.9] (-0.25,0)
to[out= -90,in=60, looseness=0.9] (-0.5,-0.75);
\draw[color=black!50!white,scale=1,domain=-4.141: 4.141,
smooth,variable=\t,shift={(-1.72,0)},rotate=0]plot({2.3*sin(\t r)},
{2.3*cos(\t r)}) ;

\path
(-3,-2.1) node[left]{$\Omega$}
(-1.9,-0.2) node[left]{$O^1$}
(-1.3,0)node[right]{$O^2$}
 (-2.8,0.8) node[below] {$\gamma^1$}
 (-1.3,0.4)[left] node{$\gamma^3$}
 (0,-0.95)[left] node{$\gamma^2$};

\draw[color=black,shift={(7,0)}] 
(-3,0) 
to[out= 170,in=140, looseness=1] (-2.1,1.4) 
to[out= -40,in=90, looseness=1] (0,0)
(-3,0) 
to[out= -70,in=-180, looseness=1] (-1,-1.3) 
to[out= 0,in=-90, looseness=1] (0,0)
(-3,0) 
to[out= 50,in=180, looseness=1] (-2.3,0) 
to[out= 60,in=150, looseness=1.5] (-1.75,1) 
(-2.3,0)
to[out= -60,in=-120, looseness=0.9] (-1.5,-0.75)
(-1.75,1)
to[out= -30,in=90, looseness=0.9] (-1.25,0)
to[out= -90,in=60, looseness=0.9] (-1.5,-0.75);
\draw[color=black!50!white,scale=1,domain=-4.141: 4.141,
smooth,variable=\t,shift={(5.28,0)},rotate=0]plot({2.3*sin(\t r)},
{2.3*cos(\t r)}) ;
\path[shift={(7,0)}] 
(-3,-2.1) node[left]{$\Omega$}
(-2.9,-0.2) node[left]{$O^2$}
(-2.3,0)node[right]{$O^1$}
 (-2.5,0.72) node[below] {$\gamma^3$}
 (-0.7,0.5)[left] node{$\gamma^1$}
 (0,-0.5)[left] node{$\gamma^2$};
\end{tikzpicture}
\end{center}
\begin{caption}{Eyeglasses -- ``type A'' and ``type B''.\label{eyeglasses}}
\end{caption}
\end{figure}
Considering a reachable point for the flow $x_0\in\Omega$, the possible blow--up limit shrinkers $\widetilde\SS_\infty$, as $t\to T$, by Proposition~\ref{possiblelimit} are:
\begin{itemize}
\item a straight line through the origin;
\item a standard triod;
\item a standard cross;
\item a Brakke spoon,
\end{itemize}
where we excluded the shrinking lens and fish, since are not topological compatible with the possible limits of eyeglasses--shaped network (limit regions cannot ``increase'' the number of edges).

\smallskip

We first analyze the behavior of a ``type A'' eyeglasses--shaped network.\\
If $T<\frac{3}{5\pi}\min\{A_1(0) , A_2(0)\}$, no region has collapsed, then the evolving network is locally a tree for all times, hence (as in the analogous case for a $\Theta$--shaped network), the curvature stays bounded while only the length of the single ``open'' curve is going to zero, as $t\to T$, forming a regular $4$--point, where the two triple junctions converge.

\medskip

If $T=\frac{3}{5\pi}\min\{A_1(0) , A_2(0)\}$, then at last one of the two region collapses (with unbounded curvature) at some point $x_0\in\Omega$ and the blow--up limit shrinker $\widetilde\SS_\infty$ must be a Brakke spoon. We underline that, differently from the case of the $\Theta$--shaped network, if $A_1(0)=A_2(0)$, we do not have an argument to exclude that both regions collapse to a single common point, as $t\to T$ (even if it seems quite implausible).\\
Hence, we have the following possibilities, in the case of a collapse of a region, as $t\to T$:
\begin{itemize}
\item if $A_1(0)\not=A_2(0)$, then the limit network $\SS_T$ is a spoon with an ``open'' $C^1$ curve ending at the collapse point, see Figure~\ref{f21};
\item if $A_1(0)=A_2(0)$ and the two regions collapse at two different points of $\Omega$, the limit network $\SS_T$ is a $C^1$ curve connecting such two points;
\item if $A_1(0)=A_2(0)$ and the two regions collapse at a common point of $\Omega$, then the limit network $\SS_T$ is a closed $C^1$ curve, starting and ending at the collapse point and there possibly forming an angle, if the length of the ``open'' curve does not go to zero, otherwise, all the network collapses at such point, if also the length of the ``open'' curve goes to zero.
\end{itemize}
Anyway, we conjecture that this last situation and in particular, a complete ``vanishing'' of the network, as $t\to T$, is not possible.

\medskip

We now deal with a ``type B'' eyeglasses--shaped network.\\
From what we said above, if $T<\frac{3A_1(0)}{5\pi}$, no region has collapsed, then the evolving network is locally a tree for all times, hence (as above), the curvature stays bounded while only the length of the single ``open'' curve is going to zero, as $t\to T$, forming a regular $4$--point, where the two triple junctions converge.

\medskip

If $T=\frac{3A_1(0)}{5\pi}$, then the ``internal'' region collapses (with unbounded curvature) at some point $x_0\in\Omega$ and the blow--up limit shrinker $\widetilde\SS_\infty$ must be a Brakke spoon. Since, arguing as in Proposition~\ref{loop}, we have that the area $A_2$ of the region between the inner and the outer closed curve of the network satisfies
$A_2'=-2\pi/3$, while $A_1'=-5\pi/3$, if $A_1(0)/A_2(0)>5/2$, we have a contradiction since $A_2$ would go to zero before $A_1$ and this cannot happen (it would contradict what we said at the beginning of this section). Hence, in this case, the initial areas must satisfy $A_1(0)/A_2(0)\leqslant5/2$.\\
If we have the equality $A_1(0)/A_2(0)=5/2$, at time $T$, both regions are collapsing, as $t\to T$ and they cannot ``disappear'' in the blow--up limit shrinker $\widetilde\SS_\infty$, since in the rescaled sequence they have constant area and, being one contained in the other, no one of them can ``go to infinity''. Hence, the blow--up limit shrinker would have two regions, which is impossible, as it must a Brakke spoon, by Proposition~\ref{possiblelimit}.\\
Thus, it must be $A_1(0)/A_2(0)<5/2$ and the network cannot completely ``vanish'' at a single point of $\Omega$. We have instead that only the ``interior'' region collapses at a point and the limit network $\SS_T$ is a closed $C^1$ curve, starting and ending at the collapse point and there forming an angle of $120$ degrees, if the length of the ``open'' curve goes to zero, otherwise, there is also a $C^1$ curve connecting the collapse point with the angle of the limit of the curve $\gamma^2$ (see Figure~\ref{eyeglasses}), if the length of the ``open'' curve does not go to zero.

\bigskip

In case of collapse of the ``open'' curve, for both types, we ``pass through'' the singularity as before, with a standard transition, getting a $\Theta$--shaped network after the time $T$ (see the right side of Figure~\ref{switch}).\\
In the other cases, imposing that after the time $T$, all the ``open'' curves with a ``free'' end--point vanish in the subsequent evolution, we have only to deal with the remaining part of the network (if present) and we can restart the flow with the same arguments discussed above for the limits at a singular time of a $\Theta$--shaped network.

\subsubsection{The lens}

The main difference between this case (and also the next ones) with the theta and the eyeglasses cases, is that boundary points are present. 
\begin{figure}[H]
\begin{center}
\begin{tikzpicture}[scale=0.9]
\draw[color=black,shift={(5,0)}] 
(-4.7,1)
to[out= -50,in=180, looseness=1] (-2.8,0) 
to[out= 60,in=150, looseness=1.5] (-1.5,1) 
(-2.8,0)
to[out=-60,in=180, looseness=0.9] (-1.25,-0.75)
(-1.5,1)
to[out= -30,in=90, looseness=0.9] (-1,0)
to[out= -90,in=60, looseness=0.9] (-1.25,-0.75)
to[out= -60,in=150, looseness=0.9](1,-1.3);
\draw[color=black!50!white,scale=1,domain=-3.15: 3.15,
smooth,variable=\t,shift={(3.28,0)},rotate=0]plot({3.25*sin(\t r)},
{2.5*cos(\t r)}) ; 
\path[shift={(5,0)}]
(-5.5,-0.75)node[right]{$\Omega$}
(-4.7,1) node[left]{$P^1$}
(1,-1.3)node[right]{$P^2$}
(-2.85,0.05) node[below] {$O_1$}
(-2,0.5) node[below] {$A$}
(-1.25,-0.75)node[right]{$O_2$}
(-1.5,-1)[left] node{$\gamma^2$}
(-3.6,0.89) node[below] {$\gamma^1$}
(-0.6,0.9)[left] node{$\gamma^4$}
(0.7,-0.75)[left] node{$\gamma^3$};
\end{tikzpicture}
\end{center}
\begin{caption}{Lens.\label{lens}}
\end{caption}
\end{figure}
This increases the list of the possible blow--up limit networks $\widetilde\SS_\infty$. Indeed, by Proposition~\ref{possiblelimit}, if the blow--up point $x_0\in\Omega$, they can be:
\begin{itemize}
\item a straight line through the origin;
\item a standard triod;
\item a standard cross;
\item a shrinking lens;
\item a shrinking fish,
\end{itemize}
where we excluded the Brakke spoon for topological reasons and, if $x_0\in\partial\Omega$,
\begin{itemize}
\item a halfline from the origin;
\item two halflines from the origin that form an angle of $120$ degrees.
\end{itemize}
As we have a region with two edges in this network, the maximal time of existence $T$ is finite and bounded by $\frac{3A(0)}{4\pi}$, by Proposition~\ref{loop}.

\medskip

If $T<\frac{3A(0)}{4\pi}$, no region collapses, the evolving network is locally a tree for all times and the curvature stays bounded. Then, we can have two cases: either the length of one of the two ``central'' curves goes to zero, or this happens for one or both the ``boundary'' curves. In the first case, the limit network $\SS_T$ has a regular $4$--point connected with the two end--points and with a closed $C^1$ curve, starting and ending at such point, forming an angle of $60$ degrees. In the second case, $\SS_T$ can have either two curves between the two end--points bounding a region, or a curve from an end--point with a triple junction at its other end, which is connected with the other end--point by two curves bounding a region. At the end points, the curves form an angle of $120$ degrees.

\medskip

If $T=\frac{3A(0)}{4\pi}$ the central region is collapsing (with unbounded curvature) and the sequence $\widetilde{\SS}_{x_0,\tt_{j}}$ converges to a shrinking lens or to a shrinking fish, hence giving as a limit network $\SS_T$, either a $C^1$ curve connecting the two end--points (if the blow--up limit shrinker is a shrinking lens), or two curves from the two end--points to the collapse point in $\Omega$, where they form and angle like the one between the two ``halfilnes'' of the shrinking fish (if this is the blow--up limit shrinker). We remind that 
the collapse at the same time of both triple junctions (and the central region) to an end--point on $\partial\Omega$ is excluded by Lemma~\ref{trinoncoll}.

\bigskip

In the first case, we ``pass through'' the singularity as before, with a standard transition, getting an island--shaped network, after the time $T$ (see the left side of Figure~\ref{switch1}).\\
If one or both the boundary curves collapses to an end--point, we actually do not have a natural way to restart the flow (as in the case of the spoon, when the ``open'' curve collapses).\\
If the central region collapses, hence $\SS_T$ is a piecewise $C^1$ curve with possibly a single angle and (fixed) end--points on $\partial\Omega$, by the results in~\cite{eckhui2} and~\cite{angen3}, we can (uniquely) restart the evolution by means of the curve shortening flow with fixed end--points, obtaining an evolving embedded curve, which becomes immediately smooth and converges as $t\to+\infty$, to the segment connecting such end--points.

\subsubsection{The island}

As for the previous networks with a closed curve, for an island--shaped network, the maximal time of existence $T$ of a smooth flow in bounded by $\frac{3A(0)}{5\pi}$.
\begin{figure}[H]
\begin{center}
\begin{tikzpicture}[scale=0.9]
\draw[color=black, shift={(10,0)}] 
(-2,0)
to[out= 170,in=40, looseness=1] (-2.9,1.2) 
to[out= -140,in=90, looseness=1] (-3.2,0)
(-2,0) 
to[out= -70,in=0, looseness=1] (-3,-0.9) 
to[out= -180,in=-90, looseness=1] (-3.2,0)
(-2,0) 
to[out= 50,in=180, looseness=1] (-1.3,0) 
to[out= 60,in=150, looseness=1.5] (-0.75,1) 
(-1.3,0)
to[out= -60,in=-120, looseness=0.9] (-0.5,-0.75)
(-0.75,1)
to[out= -30,in=-150, looseness=0.9] (1.75,1)
(0,-2.32)
to[out= 90,in=60, looseness=0.9] (-0.5,-0.75);
\draw[color=black!50!white,scale=1,domain=-3.141: 3.141,
smooth,variable=\t,shift={(8.78,0)},rotate=0]plot({3.25*sin(\t r)},
{2.5*cos(\t r)}) ;
\path[shift={(10,0)}]
(-2,-0.2) node[left]{$O^1$}
(-1.3,0)node[right]{$O^2$}
(-2.85,0.8) node[below] {$\gamma^1$}
(-1.2,0.4)[left] node{$\gamma^2$}
(-0.5,-1.2)[left] node{$\gamma^3$}
(1,1.5) node[below] {$\gamma^4$}
(0.5,-2.6)[left] node{$P^2$}
(2.5,1.35)[left] node{$P^1$};
\end{tikzpicture}
\end{center}
\begin{caption}{Island.\label{island2}}
\end{caption}
\end{figure}
By Theorem~\ref{curvexplod-general}, if the blow--up point $x_0\in\Omega$, the blow--up limit networks $\widetilde\SS_\infty$ can be:
\begin{itemize}
\item a straight line through the origin;
\item a standard triod;
\item a standard cross;
\item a Brakke spoon,
\end{itemize}
where we excluded the standard lens and fish (as the limit cannot have a region with more than one edge) and, if $x_0\in\partial\Omega$,
\begin{itemize}
\item a halfline from the origin;
\item two halflines from the origin that form an angle of $120$ degrees.
\end{itemize}

If $T<\frac{3A(0)}{4\pi}$, no region collapses, the evolving network is locally a tree for all times and the curvature stays bounded. Then, we can have two cases: either the curve $\gamma^2$ in the figure collapses with $O^1$ and $O^2$ forming a $4$--point, or the length of one of the two ``boundary'' curves goes to zero. In the first case, the limit network $\SS_T$ has a regular $4$--point connected with the two end--points and with a closed $C^1$ curve, starting and ending at such point, forming an angle of $120$ degrees. In the second case, $\SS_T$ is formed by the union of a spoon and a $C^1$ curve connecting the two end--point. The ``open'' curve of the spoon form an angle of $120$ degrees with such connecting curve at the end--point where they concur.

\medskip

If $T=\frac{3A(0)}{4\pi}$, the region is collapsing (with unbounded curvature) and the sequence $\widetilde{\SS}_{x_0,\tt_{j}}$ converges to a Brakke spoon. Hence, since the collapse at the same time of both triple junctions (and the central region) to an end--point on $\partial\Omega$ is excluded by Lemma~\ref{trinoncoll}, the limit network $\SS_T$ must be either a triod composed by two curves connecting the two end--points to the triple junctions and an ``open'' curve, or (if the curve $\gamma^2$ also collapses -- see Figure~\ref{island2}) two curves from the two end--points to the collapse point in $\Omega$. In both cases the two ``boundary'' curves form an angle of $120$ degrees.

\bigskip

In the first case, no region collapses, we ``pass through'' the singularity with a standard transition, getting an lens--shaped network, after the time $T$ (see the right side of Figure~\ref{switch1}), if the ``open'' curve collapses in $\Omega$. If instead, it is a ``boundary'' curve which collapses, we do not have a natural way to continue the flow.\\
In the second case, as before, we ``forget'' the possibly present ``open'' curve, imposing that after the time $T$ it vanish and we can (uniquely) restart the evolution of the piecewise $C^1$ curve with a single angle by means of the curve shortening flow with fixed end--points (as in the case of a lens--shaped network when the central region collapses), obtaining an evolving embedded curve, which becomes immediately smooth and converges, as $t\to+\infty$, to the segment connecting such end--points.

\subsubsection{The tree}

This is the only network with two triple junctions which does not present loops. Consequently, it is the only case where we could have the global existence of the flow.
\begin{figure}[H]
\begin{center}
\begin{tikzpicture}[scale=0.9]
\draw[black]
 (-3.73,0) 
to[out= 50,in=180, looseness=1] (-2.3,0.7) 
to[out= 60,in=180, looseness=1.5] (-0.45,1.55) 
(-2.3,0.7)
to[out= -60,in=130, looseness=0.9] (-1,-0.3)
to[out= 10,in=100, looseness=0.9](0.1,-0.8)
(-1,-0.3)
to[out=-110,in=50, looseness=0.9](-2.7,-1.7);
\draw[color=black!50!white,scale=1,domain=-4.141: 4.141,
smooth,variable=\t,shift={(-1.72,0)},rotate=0]plot({2.*sin(\t r)},
{2.*cos(\t r)}) ;
\path
 (-3.73,0) node[left]{$P^1$}
 (-2.75,-1.75)node[below]{$P^2$}
 (0.1,-0.8)node[right]{$P^3$} 
 (-0.45,1.55) node[right]{$P^4$}
 (-3,0.6) node[below] {$\gamma^1$}
 (-1.5,1.3) node[right] {$\gamma^4$}
 (-1.1,-1.2)[left] node{$\gamma^2$}
 (0,-0.8)[left] node{$\gamma^3$}
 (-1.3,0.5)[left] node{$\gamma^5$}
 (-2.5,1.38) node[below] {$O_1$}
 (-1.4,0) node[below] {$O_2$}; 
\end{tikzpicture}
\end{center}
\begin{caption}{Tree.\label{tree2}}
\end{caption}
\end{figure}
Being a tree, by the analysis of Sections~\ref{van0} and~\ref{van} the curvature stays bounded till a possible singular time and we can only have a formation of a $4$--point or one or two non--concurrent ``boundary'' curve collapse to their respective end--point, forming an angle of $120$ degrees. In this latter case, as we said, we do not have a natural way to continue the flow, while in the first case, we have a standard transition, getting another tree, with the only other possible structure with the same end--points (see Figure~\ref{switch2}).\\
If $T=+\infty$ or the number of standard transition during the ``extended'' flow is finite, $\mathbb{S}_t$ tends, as $t\to\infty$, to the Steiner configuration of minimal length, connecting the four fixed end--points.

\subsubsection{The symmetric tree}

Following~\cite{PlPo22A,NoSc23}, if we add a symmetry assumption, we get a complete description of the evolution of a tree with two triple junctions.
Suppose $\SS_0$ is the smooth regular network in Figure~\ref{family2}.
The network has four end--points located at the vertices of a rectangle,
it is composed of five curves, symmetric with respect to the horizontal and vertical axes, the middle curve $\gamma^0$ is a segment.
\begin{figure}[H]
\begin{center}
\begin{tikzpicture}[scale=2.2]
\draw[gray, shift={(1.3,0)}, scale=0.5, rotate=-90]
(0,0)to[out= -45,in=135, looseness=1] (0.1,-0.1)
(0,0)to[out= -135,in=45, looseness=1] (-0.1,-0.1);		
\draw[gray, shift={(0,0.6)}, scale=0.5, rotate=0]
(0,0)to[out= -45,in=135, looseness=1] (0.1,-0.1)
(0,0)to[out= -135,in=45, looseness=1] (-0.1,-0.1);	
\draw[gray,dashed]
(-1,-0.5)--(1,0.5)
(1,-0.5)--(-1,0.5);
\draw[gray]
(-1.3,0)--(1.3,0);
\draw[gray]
(0,-0.6)--(0,0.6);
\draw[thick]
(-1,-0.5)to[out=5, in=210, looseness=0.8]
(0,-0.15)to[out=90, in=270, looseness=0]
(0,0.15);
\draw[rotate=180,thick]
(-1,-0.5)to[out=5, in=210, looseness=0.8]
(0,-0.15)to[out=90, in=270, looseness=0]
(0,0.15);
\draw[xscale=-1,thick]
(-1,-0.5)to[out=5, in=210, looseness=0.8]
(0,-0.15);
\draw[xscale=-1, rotate=180,thick]
(-1,-0.5)to[out=5, in=210, looseness=0.8]
(0,-0.15);
\fill[black](-1,-0.5) circle (0.7pt); 
\fill[black](1,-0.5)circle (0.7pt); 
\fill[black](-1,0.5) circle (0.7pt); 
\fill[black](1,0.5)circle (0.7pt); 
\path[font=\normalsize]
(-0.3,0.3)node[above]{$\gamma^1$};
\path[font=\normalsize]
(0,0.07)node[right]{$\gamma^0$};
\end{tikzpicture}
\end{center}
\begin{caption}{A symmetric tree network.\label{family2}}
\end{caption}
\end{figure}
Thanks to the symmetries, we can reduce to study the evolution of a single curve, for instance, $\gamma^1$. In this case one can prove (see~\cite{NoSc23}) that the network flow encounters only a finite number of standard transitions, so that it is eventually regular and globally defined. The limit, as $t\to +\infty$, is therefore a Steiner tree or a standard cross (only when the ratio between the longer and shorter side of the rectangle is equal to $\sqrt{3})$. In the latter case, the length of $\gamma^0$ goes to zero and the curvature of the network remains bounded.

\begin{rem} Taking into account the discussion at the beginning of Section~\ref{llong}, one should actually consider the flow of theta--eyeglasses and lens--island coupled, as a standard transition ``switches'' the shape/topology of two networks
from one to the other (like for the only two possible trees connecting four points, as we said above), as in the Figures~\ref{switch2},~\ref{switch1} and~\ref{switch}). 

Let us assume that 
\begin{itemize}
\item[i)] singular times are finite;
\item[ii)] there is no collapse of ``boundary'' curves;
\item[iii)] any ``open'' curve generated by a singularity, immediately disappears when we restart the flow;
\end{itemize}
then, at some time at least one region must collapse and
\begin{itemize}
\item in the case theta--eyeglasses, either we get a closed curve with possibly an angle that evolves smoothly by curve shortening flow and shrinks in finite time to a ``round'' point of $\Omega$, or the network completely vanishes (we actually think this last scenario is not possible),
\item in the case lens--island, we get a piecewise $C^1$ curve with possibly a single angle connecting the two end--points, which then evolves smoothly by curve shortening flow with fixed end--points and converges, as $t\to+\infty$, to the segment connecting such end--points.
\end{itemize}
We observe that in both cases, these are the last singular times of the flows (before the ``vanishing'' in the first case).
\end{rem}

\section{Open problems}\label{open}

In this section we recall some problems that we find the most important among the several open questions scattered in the text.

\begin{enumerate}

\item {\em Definition of the flow.}\\ 
Our ``parametric'' approach gives a good definition for the curvature flow of a network, compared with the existing notions of generalized evolutions for singular objects, more general but allowing weaker conclusions. The only unsatisfactory point is that we {\em impose} the presence of only triple junctions and the $120$ degrees angle condition. Thanks to them, we have the well--posedness of the system of PDE's~\eqref{problema-nogauge-general}, hence the short--time existence Theorems~\ref{wellposednessSobolev} and~\ref{2compexist0}, in Sobolev and H\"{o}lder spaces, respectively.\\
Nevertheless, one may wonder if these two conditions are automatically satisfied instantaneously, for every positive time, by choosing a different suitable definition of the curvature flow of a network.

\item {\em Multiplicity--one conjecture.}\\
Maybe the main open problem in the subject is the multiplicity--one
conjecture, that is, whether every blow--up limit shrinker is an
embedded network with multiplicity one (see Problem~\ref{ooo9}).\\
Several of the arguments and results in this work depend on such conjecture, we mention its fundamental role in the description of the limit network at a singular time and, consequently, in the possibility to implement the restarting procedure in order to continue the evolution, moreover, it is also a key ingredient in showing that the curvature of a tree--like network is uniformly bounded during the flow for all times and that one has only to deal with ``standard transitions'' at the singular times (see Section~\ref{behavsing}).\\
At the moment, we are able to prove the (strong) multiplicity--one conjecture only for networks with at most two triple junctions (see Section~\ref{dsuL}).

\item {\em Uniqueness of blow--up limits.}\\
According to Proposition~\ref{resclimit-general},
the sequence of rescaled networks $\widetilde{\SS}_{x_0,\tt_{j}}$ associated to a sequence of rescaled times
$\tt_j\to+\infty$, converges to a degenerate regular shrinker $\widetilde{\SS}_\infty$, only up to a subsequence. Analogously, in Proposition~\ref{thm:shrinkingnetworks.1}, the sequence of rescaled curvature flows $\SS^{\mu_i}_\tt$ converges to a degenerate regular self--similarly shrinking flow $\SS^\infty_\tt$, up to a subsequence.\\
One would like to prove that the limit degenerate regular shrinker $\widetilde\SS_\infty$ (and/or the degenerate regular self--similarly shrinking flow $\SS^\infty_\tt$) is actually independent of the chosen converging subsequences, that is, the full family $\widetilde{\SS}_{x_0,\tt}$ $C^1\loc$--converges to $\widetilde\SS_\infty$, as $\tt\to+\infty$. This is what we called {\em uniqueness assumption} in Problem~\ref{ooo12} and it is fundamental for the conclusions of Proposition~\ref{loopsing} and Theorem~\ref{ppp2}, necessary to restart the flow when a region collapses at a singular time.\\
Some positive partial results were recently obtained in~\cite{PlPo22A-2}, in particular, uniqueness holds if the blow--up limit shrinker is compact (some examples are given in the Appendix).

\item {\em Behavior when a region collapses.}\\
The singularities when a whole region collapses and then vanishes are the most difficult to deal with, in particular because the curvature is unbounded. We are not able, at the moment, to give a complete picture of the behavior of the evolving network, getting close to the singular time. A couple of conjectures are stated in Problems~\ref{ooo112} and~\ref{ooo113}, in particular, we expect that the non--collapsing curves ``exiting'' from the collapsing regions (and converging to the concurring curves at the new multi--point of the limit network) have locally uniformly bounded curvature during the flow and that, anyway, such singularities are actually all Type~I singularities, see Remark~\ref{T1} (in other words, the curvature flow of embedded networks does not develop Type~II singularities).\\
Anyway, hypothetically admitting the possibility of Type~II singularities, one is led to consider and try to analyze/classify also Type~II blow--up limits (see~\cite[Section~7]{mannovtor}), which are actually ``eternal'' curvature flows of regular networks (for instance, the ``translating'' ones, see~\cite[Section~5.2]{mannovtor}, that possibly coincide with them).

\item {\em Classification of shrinkers.}\\
Several questions (also of independent interest) arise in trying to classify the (embedded) regular shrinkers. Such a classification is complete for shrinkers with at most two triple junctions~\cite{balhausman,balhausman2,balhausman3}, or for the shrinkers with a single bounded region~\cite{chen_guo,chenguo,schnurerlens,balhausman}, see the following figure.

\begin{figure}[H]
\begin{center}
\begin{tikzpicture}[scale=0.5]
\draw[color=black, scale=0.4, shift={(9.875,0)}]
(-3.035,0)to[out=60,in=180,looseness=1] (2.2,3)
(2.2,3)to[out=0,in=90,looseness=1] (5.43,0)
(-3.035,0)to[out=-60,in=180,looseness=1] (2.2,-3)
(2.2,-3)to[out=0,in=-90,looseness=1] (5.43,0);
\draw[color=black, scale=0.4, shift={(9.875,0)}]
(-7,0)to[out=0,in=180,looseness=1](-3,0);
\draw[color=black, scale=0.4, shift={(9.875,0)}, dashed]
(-9,0)to[out=0,in=180,looseness=1](-7,0);
\fill(4,0) circle (2pt);
\fill[white](4,-2) circle (2pt);
\end{tikzpicture}\qquad
\begin{tikzpicture}[scale=0.5]
\draw[color=black,scale=0.4, shift={(-23,-15)}]
(-3.035,0)to[out=60,in=180,looseness=1] (2.2,2.8)
(2.2,2.8)to[out=0,in=120,looseness=1] (7.435,0)
(2.2,-2.7)to[out=0,in=-120,looseness=1] (7.435,0)
(-3.035,0)to[out=-60,in=180,looseness=1] (2.2,-2.7);
\draw[color=black,scale=0.4, shift={(-23,-15)}]
(-7,0)to[out=0,in=180,looseness=1](-3.035,0)
(7.435,0)to[out=0,in=180,looseness=1](11.4,0);
\draw[color=black,dashed,scale=0.4, shift={(-23,-15)}]
(-9,0)to[out=0,in=180,looseness=1](-7,0)
(11.4,0)to[out=0,in=180,looseness=1](13.4,0);
\fill(-8.33,-6) circle (2pt);
\fill[white](-4,-8) circle (2pt);
\end{tikzpicture}\qquad
\begin{tikzpicture}[scale=0.5]
\draw[color=black,shift={(2,-6)},scale=1.5]
(-0.47,0)to[out=20,in=180,looseness=1](1.5,0.65)
(1.5,0.65)to[out=0,in=90,looseness=1] (2.37,0)
(-0.47,0)to[out=-20,in=180,looseness=1](1.5,-0.65)
(1.5,-0.65)to[out=0,in=-90,looseness=1] (2.37,0);
\draw[white, very thick,shift={(2,-6)},scale=1.5]
(-0.47,0)--(-.150,-0.13)
(-0.47,0)--(-.150,0.13);
\draw[color=black,shift={(2,-6)},scale=1.5]
(-.150,0.13)to[out=-101,in=90,looseness=1](-.18,0)
(-.18,0)to[out=-90,in=101,looseness=1](-.150,-0.13);
\draw[black,shift={(2,-6)},scale=1.5]
(-.150,0.13)--(-1.13,0.98)
(-.150,-0.13)--(-1.13,-0.98);
\draw[black, dashed,shift={(2,-6)},scale=1.5]
(-1.13,0.98)--(-1.50,1.31)
(-1.13,-0.98)--(-1.50,-1.31);
\fill(2,-6) circle (2pt);
\end{tikzpicture}\qquad
\begin{tikzpicture}[scale=0.25, rotate=90]
\fill[color=black](0,0) circle (4pt); 
\draw[color=black, scale=1.32]
(-1.37,2.38)to[out=120,in=-60,looseness=1](-2.5,4.33)
(-1.37,-2.38)to[out=-120,in=60,looseness=1](-2.5,-4.33);
\draw[color=black]
(2.75,0)to[out=0,in=180,looseness=1](5,0)
(-1.37,2.38)to[out=120,in=-60,looseness=1](-2.5,4.33)
(-1.37,-2.38)to[out=-120,in=60,looseness=1](-2.5,-4.33)
(-1.37,-2.38)to[out=120,in=-120,looseness=1](-1.37,2.38)
(2.75,0)to[out=120,in=0,looseness=1](-1.37,2.38)
(2.75,0)to[out=-120,in=0,looseness=1](-1.37,-2.38);
\draw[color=black,dashed]
(5,0)to[out=0,in=180,looseness=1](6,0);
\draw[color=black,dashed, scale=1.32]
(-2.5,4.33)to[out=120,in=-60,looseness=1](-3,5.19)
(-2.5,-4.33)to[out=-120,in=60,looseness=1](-3,-5.19);
\end{tikzpicture}
\ \\

\bigskip

\ \\
\begin{tikzpicture}[scale=0.5]
 \fill[color=black](0,0) circle (2pt); 
 \draw[color=black,shift={(0,0)}]
(2.8,0)to[out=0,in=180,looseness=1](4,0);
 \draw[color=black,dashed,shift={(0,0)}]
(4,0)to[out=0,in=180,looseness=1](5,0);
\draw[color=black,shift={(-0.05,0)},scale=4]
(-.150,0.13)to[out=-101,in=90,looseness=1](-.18,0)
(-.18,0)to[out=-90,in=101,looseness=1](-.150,-0.13);
\draw[black,shift={(-0.05,0)},scale=4]
(-.150,0.13)--(-0.63,0.45);
\draw[black,shift={(-0.05,0)},scale=4]
(-.150,-0.13)--(-0.63,-0.45);
\draw[color=black,shift={(-0.05,0)},scale=4]
(-.150,0.13)to[out=20,in=180,looseness=1](0.325,0.2)
(0.325,0.2)to[out=0,in=120,looseness=1] (0.725,0)
(-.150,-0.13)to[out=-20,in=180,looseness=1](0.325,-0.2)
(0.325,-0.2)to[out=0,in=-120,looseness=1] (0.725,0);
\draw[black,dashed,shift={(-0.05,0)},scale=4]
(-0.63,0.45)--(-0.7,0.5);
\draw[black,dashed,shift={(-0.05,0)},scale=4]
(-0.63,-0.45)--(-0.7,-0.5);
 \fill[color=white](0,-3.225) circle (2pt); 
\end{tikzpicture}\qquad\quad
\begin{tikzpicture}[scale=0.55, rotate=90]
\draw[color=black]
(-2.5,0)to[out=0,in=180,looseness=1](-1.15,0)
(1.15,0)to[out=0,in=180,looseness=1](2.5,0)
(0,2.5)to[out=-90,in=90,looseness=1](0,1.15)
(0,-2.5)to[out=90,in=-90,looseness=1](0,-1.15)
(-1.15,0)to[out=60,in=-150,looseness=1](0,1.15)
(-1.15,0)to[out=-60,in=150,looseness=1](0,-1.15)
(1.15,0)to[out=1200,in=-30,looseness=1](0,1.15)
(1.15,0)to[out=-120,in=30,looseness=1](0,-1.15);
\draw[color=black,dashed]
(-3,0)to[out=0,in=180,looseness=1](-2.5,0)
(2.5,0)to[out=0,in=180,looseness=1](3,0)
(0,3)to[out=-90,in=90,looseness=1](0,2.5)
(0,-3)to[out=90,in=-90,looseness=1](0,-2.5);
\fill(0,0) circle (1.5pt);
\end{tikzpicture}\quad
\begin{tikzpicture}[scale=0.6, rotate=90]
\draw[color=black]
(0.76,0)to[out=0,in=180,looseness=1](2.5,0)
(0.23,-0.72)to[out=-72,in=108,looseness=1](0.77,-2.37)
(0.23,0.72)to[out=72,in=-108,looseness=1](0.77,2.37)
(-0.61,-0.44)to[out=-144,in=36,looseness=1](-2,-1.46)
(-0.61,0.44)to[out=144,in=-36,looseness=1](-2,1.46)
(0.76,0)to[out=120,in=-48,looseness=1](0.23,0.72)
to[out=-162,in=24,looseness=1](-0.61,0.44)
to[out=-96,in=96,looseness=1](-0.61,-0.44)
(0.76,0)to[out=-120,in=48,looseness=1](0.23,-0.72)
to[out=162,in=-24,looseness=1](-0.61,-0.44);
\draw[color=black,dashed]
(2.5,0)to[out=0,in=180,looseness=1](3,0)
(0.77,-2.37)to[out=-72,in=108,looseness=1](0.92,-2.85)
(0.77,2.37)to[out=72,in=-108,looseness=1](0.92,2.85)
(-2,-1.46)to[out=-144,in=36,looseness=1](-2.42,-1.76)
(-2,1.46)to[out=144,in=-36,looseness=1](-2.42,1.76);
\fill(0,0) circle (1.5pt);
 \fill[color=white](-2.6875,0) circle (2pt); 
\end{tikzpicture}
\end{center}
\begin{caption}{The regular shrinkers with a single bounded region.\label{chsh}}
\end{caption}
\end{figure}
A lot of numerical computations, partial results and conjectures can be found in~\cite{haettenschweiler}. We mention the very natural question whether there exist regular shrinkers with more than five halflines. Moreover, interesting stability/instability results were recently obtained in~\cite{Chang}.

\item {\em The set of singular times.}\\
An important point to be understood, in order to define a curvature flow in a maximal time interval, passing through singular times by means of the restarting procedure described in Section~\ref{restart}, is whether the set of singular times is discrete or even finite (as it happens for symmetric networks with two triple junctions, see~\cite{NoSc23}), or if they can accumulate in some particular situation (see Problem~\ref{ooo120}). In this latter case, at the moment we actually do not know how to continue the flow.

\item {\em Asymptotic convergence.}\\
In the case of global existence in time of an ``extended'' curvature flow (see Section~\ref{llong}), we would like to show the convergence of the evolving network, as $t\to+\infty$, to a stationary network for the length functional (Problem~\ref{ooo3000}). At the moment, we are able to face this problem only under the assumption that the structure of the network stops changing after some time, that is, there are no singularities of the flow for large times, see~\cite{PlPo22A}.
\end{enumerate}

\newpage

\appendix
\setcounter{secnumdepth}{0}

\section{Appendix -- A regular shrinkers gallery ({\em courtesy of Tom Ilmanen)}}\label{appshrink}

The following figures of regular shrinkers with their Gaussian density are
based on numerical computations due to J. H\"attenschweiler
(see~\cite{haettenschweiler} where one can also find other positive and
negative examples and several conjectures) and T. Ilmanen. We remark that
this is not an exhaustive list, only the shrinkers with at most one
bounded region are completely classified, by the work of Chen and
Guo~\cite{chenguo} (and actually they are the only ones in this gallery
whose existence is rigorously proved). Moreover, all the shrinkers shown
below have at least one symmetry axis, we do not know of examples without
any symmetries at all.

\bigskip

\medskip

\noindent {\Large{No regions:}}

\medskip

\noindent
\begin{tikzpicture}[scale=0.9]
\draw
(-2,0)to[out=0,in=180,looseness=1](2,0);
\draw[color=black, scale=1]
(-2,1.6)to[out=-90,in=90,looseness=1](-2,-1.6)
(2,1.6)to[out=-90,in=90,looseness=1](2,-1.6)
(-2,1.6)to[out=0,in=180,looseness=1](2,1.6)
(-2,-1.6)to[out=0,in=180,looseness=1](2,-1.6);
\path
(-2,-2) node[right]{\text{Line}}
(-2,-2.5) node[right]{$\Theta=1$};
\end{tikzpicture}\qquad
\begin{tikzpicture}[scale=0.9]
\draw
(-2,0)to[out=0,in=180,looseness=1](0,0)
(0,0)to[out=60,in=-120,looseness=1](0.92,1.6)
(0,0)to[out=-60,in=120,looseness=1](0.92,-1.6);
\draw[color=black, scale=1]
(-2,1.6)to[out=-90,in=90,looseness=1](-2,-1.6)
(2,1.6)to[out=-90,in=90,looseness=1](2,-1.6)
(-2,1.6)to[out=0,in=180,looseness=1](2,1.6)
(-2,-1.6)to[out=0,in=180,looseness=1](2,-1.6);
\path
(-2,-2) node[right]{\text{Triod}}
(-2,-2.5) node[right]{$\Theta=1.5$};
\end{tikzpicture}
\medskip

\noindent{\Large{1 region:}}

\medskip

\noindent
\begin{tikzpicture}[scale=0.9]
\draw[color=black,scale=1,domain=-3.141: 3.141,
smooth,variable=\t,rotate=0]plot({1.3*sin(\t r)},
{1.3*cos(\t r)});
\draw[color=black, scale=1]
(-2,1.6)to[out=-90,in=90,looseness=1](-2,-1.6)
(2,1.6)to[out=-90,in=90,looseness=1](2,-1.6)
(-2,1.6)to[out=0,in=180,looseness=1](2,1.6)
(-2,-1.6)to[out=0,in=180,looseness=1](2,-1.6);
\path
(-2,-2) node[right]{\text{Circle}}
(-2,-2.5) node[right]{$\Theta=\sqrt{2\pi/e}\approx1.520$};
\end{tikzpicture}\qquad
\begin{tikzpicture}[scale=0.9]
\draw[color=black, scale=0.355]
(-3.035,0)to[out=60,in=180,looseness=1] (2.2,3)
(2.2,3)to[out=0,in=90,looseness=1] (5.43,0)
(-3.035,0)to[out=-60,in=180,looseness=1] (2.2,-3)
(2.2,-3)to[out=0,in=-90,looseness=1] (5.43,0);
\draw[color=black, scale=0.355]
(-5.6,0)to[out=0,in=180,looseness=1](-3,0);
\draw[color=black, scale=1]
(-2,1.6)to[out=-90,in=90,looseness=1](-2,-1.6)
(2,1.6)to[out=-90,in=90,looseness=1](2,-1.6)
(-2,1.6)to[out=0,in=180,looseness=1](2,1.6)
(-2,-1.6)to[out=0,in=180,looseness=1](2,-1.6);
\path
(-2,-2) node[right]{\text{Spoon}}
(-2,-2.5) node[right]{$\Theta\approx1.699$};
\end{tikzpicture}\qquad
\begin{tikzpicture}[scale=0.9]
\draw[color=black,shift={(-0.65,0.07)},scale=0.3]
(-3.035,0)to[out=60,in=180,looseness=1] (2.2,2.8)
(2.2,2.8)to[out=0,in=120,looseness=1] (7.435,0)
(2.2,-2.7)to[out=0,in=-120,looseness=1] (7.435,0)
(-3.035,0)to[out=-60,in=180,looseness=1] (2.2,-2.7);
\draw[color=black,shift={(-0.65,0.07)},scale=0.3]
(-4.5,0)to[out=0,in=180,looseness=1](-3.035,0)
(7.435,0)to[out=0,in=180,looseness=1](8.9,0);
\draw[color=black, scale=1]
(-2,1.6)to[out=-90,in=90,looseness=1](-2,-1.6)
(2,1.6)to[out=-90,in=90,looseness=1](2,-1.6)
(-2,1.6)to[out=0,in=180,looseness=1](2,1.6)
(-2,-1.6)to[out=0,in=180,looseness=1](2,-1.6);
\path
(-2,-2) node[right]{\text{Lens}}
(-2,-2.5) node[right]{$\Theta\approx1.789$};
\end{tikzpicture}\qquad
\begin{tikzpicture}[scale=0.9]
\draw[color=black,shift={(-1.05,-0.05)},scale=1.1]
(-0.47,0)to[out=20,in=180,looseness=1](1.5,0.65)
(1.5,0.65)to[out=0,in=90,looseness=1] (2.37,0)
(-0.47,0)to[out=-20,in=180,looseness=1](1.5,-0.65)
(1.5,-0.65)to[out=0,in=-90,looseness=1] (2.37,0);
\draw[white, very thick,shift={(-1.05,-0.05)},scale=1.1]
(-0.47,0)--(-.150,-0.13)
(-0.47,0)--(-.150,0.13);
\draw[color=black,shift={(-1.05,-0.05)},scale=1.1]
(-.150,0.13)to[out=-101,in=90,looseness=1](-.15,0)
(-.15,0)to[out=-90,in=101,looseness=1](-.150,-0.13);
\draw[black,shift={(-1.05,-0.05)},scale=1.1]
(-.150,0.13)--(-0.87,0.585)
(-.150,-0.13)--(-0.87,-0.585);
\draw[color=black, scale=1]
(-2,1.6)to[out=-90,in=90,looseness=1](-2,-1.6)
(2,1.6)to[out=-90,in=90,looseness=1](2,-1.6)
(-2,1.6)to[out=0,in=180,looseness=1](2,1.6)
(-2,-1.6)to[out=0,in=180,looseness=1](2,-1.6);
\path
(-2,-2) node[right]{\text{Fish}}
(-2,-2.5) node[right]{$\Theta\approx2.026$};
\end{tikzpicture}
\medskip

\noindent
\begin{tikzpicture}[scale=0.9]
\draw[color=black, shift={(-0.22,0)}, scale=0.485]
(2.75,0)to[out=0,in=180,looseness=1](4.6,0)
(-1.5,2.45)to[out=120,in=-60,looseness=1](-1.95,3.3)
(-1.5,-2.45)to[out=-120,in=60,looseness=1](-1.95,-3.3)
(2.75,0)to[out=120,in=0,looseness=1](-1.5,2.45)
(-1.5,-2.45)to[out=120,in=-120,looseness=1](-1.5,2.45)
(2.75,0)to[out=-120,in=0,looseness=1](-1.5,-2.45);
\draw[color=black, scale=1]
(-2,1.6)to[out=-90,in=90,looseness=1](-2,-1.6)
(2,1.6)to[out=-90,in=90,looseness=1](2,-1.6)
(-2,1.6)to[out=0,in=180,looseness=1](2,1.6)
(-2,-1.6)to[out=0,in=180,looseness=1](2,-1.6);
\path
(-2,-2) node[right]{\text{$3$--ray star}}
(-2,-2.5) node[right]{$\Theta\approx2.031$};
\end{tikzpicture}\qquad
\begin{tikzpicture}[scale=0.9]
\draw[color=black,shift={(-0.8,-0.1)}]
(2.17,0)to[out=0,in=180,looseness=1](2.8,0);
\draw[color=black,shift={(-0.8,-0.1)},scale=3]
(-.150,0.13)to[out=-101,in=90,looseness=1](-.165,0)
(-.165,0)to[out=-90,in=101,looseness=1](-.150,-0.13);
\draw[black,shift={(-0.8,-0.1)},scale=3]
(-.150,-0.13)--(-0.4,-0.3);
\draw[black,shift={(-0.8,-0.1)},scale=3]
(-.150,0.13)--(-0.4,0.3);
\draw[color=black,shift={(-0.8,-0.1)},scale=3]
(-.150,0.13)to[out=20,in=180,looseness=1](0.325,0.2)
(0.325,0.2)to[out=0,in=120,looseness=1] (0.725,0)
(-.150,-0.13)to[out=-20,in=180,looseness=1](0.325,-0.2)
(0.325,-0.2)to[out=0,in=-120,looseness=1] (0.725,0);
\draw[color=black, scale=1]
(-2,1.6)to[out=-90,in=90,looseness=1](-2,-1.6)
(2,1.6)to[out=-90,in=90,looseness=1](2,-1.6)
(-2,1.6)to[out=0,in=180,looseness=1](2,1.6)
(-2,-1.6)to[out=0,in=180,looseness=1](2,-1.6);
\path
(-2,-2) node[right]{\text{Rocket}}
(-2,-2.5) node[right]{$\Theta=\,?$};
\end{tikzpicture}\qquad
\begin{tikzpicture}[scale=0.9]
\draw[color=black,rotate=45,scale=1.15]
(-1.95,0)to[out=0,in=180,looseness=1](-1.15,0)
(1.15,0)to[out=0,in=180,looseness=1](1.95,0)
(0,1.95)to[out=-90,in=90,looseness=1](0,1.15)
(0,-1.95)to[out=90,in=-90,looseness=1](0,-1.15);
\draw[color=black,rotate=45,scale=1.15]
(-1.15,0)to[out=60,in=-150,looseness=1](0,1.15)
(-1.15,0)to[out=-60,in=150,looseness=1](0,-1.15)
(1.15,0)to[out=120,in=-30,looseness=1](0,1.15)
(1.15,0)to[out=-120,in=30,looseness=1](0,-1.15);
\draw[color=black, scale=1]
(-2,1.6)to[out=-90,in=90,looseness=1](-2,-1.6)
(2,1.6)to[out=-90,in=90,looseness=1](2,-1.6)
(-2,1.6)to[out=0,in=180,looseness=1](2,1.6)
(-2,-1.6)to[out=0,in=180,looseness=1](2,-1.6);
\path
(-2,-2) node[right]{\text{$4$--ray star}}
(-2,-2.5) node[right]{$\Theta\approx2.295$};
\end{tikzpicture}\qquad
\begin{tikzpicture}[scale=0.9]
\draw[color=black,rotate=36, scale=1.2]
(0.76,0)to[out=0,in=180,looseness=1](2.05,0)
(0.23,-0.72)to[out=-72,in=108,looseness=1](0.638,-1.97)
(0.23,0.72)to[out=72,in=-108,looseness=1](0.44,1.34)
(-0.61,-0.44)to[out=-144,in=36,looseness=1](-1.15,-0.82)
(-0.61,0.44)to[out=144,in=-36,looseness=1](-1.355,0.995);
\draw[color=black,rotate=36, scale=1.2]
(0.76,0)to[out=120,in=-48,looseness=1](0.23,0.72)
to[out=-162,in=24,looseness=1](-0.61,0.44)
to[out=-96,in=96,looseness=1](-0.61,-0.44)
(0.76,0)to[out=-120,in=48,looseness=1](0.23,-0.72)
to[out=162,in=-24,looseness=1](-0.61,-0.44);
\draw[color=black, scale=1]
(-2,1.6)to[out=-90,in=90,looseness=1](-2,-1.6)
(2,1.6)to[out=-90,in=90,looseness=1](2,-1.6)
(-2,1.6)to[out=0,in=180,looseness=1](2,1.6)
(-2,-1.6)to[out=0,in=180,looseness=1](2,-1.6);
\path
(-2,-2) node[right]{\text{$5$--ray star}}
(-2,-2.5) node[right]{$\Theta\approx2.606$};
\end{tikzpicture}
\medskip

\noindent{\Large{2 regions:}}

\medskip

\noindent
\begin{tikzpicture}[scale=0.9]
\draw[black,scale=0.95]
(-1.35,0.295)--(-2.07,0.445)
(-1.35,-0.295)--(-2.07,-0.445)
(1.35,0.295)--(2.07,0.445)
(1.35,-0.295)--(2.07,-0.445);
\draw[color=black, scale=0.95]
(-1.2,0)to[out=0,in=180,looseness=1](1.2,0)
(0,0.85)to[out=180,in=48,looseness=1](-1.35,0.295)
(0,-0.85)to[out=180,in=-48,looseness=1](-1.35,-0.295)
(-1.2,0)to[out=120,in=-72,looseness=1](-1.35,0.295)
(-1.2,0)to[out=-120,in=72,looseness=1](-1.35,-0.295)
(0,0.85)to[out=0,in=132,looseness=1](1.35,0.295)
(0,-0.85)to[out=0,in=-132,looseness=1](1.35,-0.295)
(1.2,0)to[out=60,in=-108,looseness=1](1.35,0.295)
(1.2,0)to[out=-60,in=108,looseness=1](1.35,-0.295);
\draw[color=black]
(-2,1.6)to[out=-90,in=90,looseness=1](-2,-1.6)
(2,1.6)to[out=-90,in=90,looseness=1](2,-1.6)
(-2,1.6)to[out=0,in=180,looseness=1](2,1.6)
(-2,-1.6)to[out=0,in=180,looseness=1](2,-1.6);
\path
(-2,-2) node[right]{\text{Cisgeminate eye}}
(-2,-2.5) node[right]{$\Theta=\,?$};
\end{tikzpicture}\qquad
\begin{tikzpicture}[scale=0.9]
\draw[color=black, scale=1]
(-2,1.6)to[out=-90,in=90,looseness=1](-2,-1.6)
(2,1.6)to[out=-90,in=90,looseness=1](2,-1.6)
(-2,1.6)to[out=0,in=180,looseness=1](2,1.6)
(-2,-1.6)to[out=0,in=180,looseness=1](2,-1.6);
\draw[black,scale=1.15]
(0,1.4)to[out=-90,in=90,looseness=1](0,1.15)
(0,-1.4)to[out=90,in=-90,looseness=1](0,-1.15)
(-1.08,0)to[out=0,in=180,looseness=1](1.08,0)
(1.08,0)to[out=60,in=-117,looseness=1](1.12,0.05)
(1.08,0)to[out=-60,in=117,looseness=1](1.12,-0.05)
(-1.08,0)to[out=120,in=-63,looseness=1](-1.12,0.05)
(-1.08,0)to[out=-120,in=63,looseness=1](-1.12,-0.05)
(1.12,0.05)to[out=3,in=-177,looseness=1](1.75,0.078)
(1.12,-0.05)to[out=-3,in=177,looseness=1](1.75,-0.078)
(-1.12,0.05)to[out=177,in=-3,looseness=1](-1.75,0.078)
(-1.12,-0.05)to[out=-177,in=3,looseness=1](-1.75,-0.078)
(-1.12,0.05)to[out=57,in=-150,looseness=1](0,1.15)
(-1.12,-0.05)to[out=-57,in=150,looseness=1](0,-1.15)
(1.12,0.05)to[out=123,in=-30,looseness=1](0,1.15)
(1.12,-0.05)to[out=-123,in=30,looseness=1](0,-1.15);
\path
(-2,-2) node[right]{\text{Cisgeminate $4$--ray star}}
(-2,-2.5) node[right]{$\Theta=\,?$};
\end{tikzpicture}

\eject

\noindent{\Large{3 regions:}}

\medskip

\noindent
\begin{tikzpicture}[scale=0.9]
\draw
(0,0)
to[out=180,in=0,looseness=1] (-1,0)
(0,0)
to[out=60,in=-120,looseness=1] (0.5,0.86)
(0,0)
to[out=-60,in=120,looseness=1] (0.5,-0.86)
(0.5,0.86)
to[out=0,in=90,looseness=1](1.5,0)
to[out=-90,in=0,looseness=1] (0.5,-0.86)
(0.5,0.86)
to[out=120,in=30,looseness=1](-0.75,1.29)
to[out=-150,in=120,looseness=1] (-1,0)
(0.5,-0.86)
to[out=-120,in=-30,looseness=1](-0.75,-1.29)
to[out=150,in=-120,looseness=1] (-1,0);
\draw[color=black, scale=1]
(-2,1.6)to[out=-90,in=90,looseness=1](-2,-1.6)
(2,1.6)to[out=-90,in=90,looseness=1](2,-1.6)
(-2,1.6)to[out=0,in=180,looseness=1](2,1.6)
(-2,-1.6)to[out=0,in=180,looseness=1](2,-1.6);
\path
(-2,-2) node[right]{\text{Mercedes--Benz}}
(-2,-2.5) node[right]{$\Theta\approx2.532$};
\end{tikzpicture}\qquad
\begin{tikzpicture}[scale=0.9]
\draw[color=black, scale=1]
(-0.12,0)to[out=120,in=-63,looseness=1](-0.37,0.51)
(-0.12,0)to[out=-120,in=63,looseness=1](-0.37,-0.51)
(-1.62,0)to[out=-60,in=-177,looseness=1](-0.37,-0.51)
(-1.62,0)to[out=60,in=177,looseness=1](-0.37,0.51)
(-0.37,-0.51)to[out=-57,in=180,looseness=1](0.72,-1.14)
to[out=0,in=-90,looseness=1](1.4,-0.51)
to[out=90,in=-60,looseness=1](1.28,0)
(-0.37,0.51)to[out=57,in=180,looseness=1](0.72,1.14)
to[out=0,in=90,looseness=1](1.4,0.51)
to[out=-90,in=60,looseness=1](1.28,0)
(-2,0)--(-1.62,0)
(-0.12,0)--(1.28,0);
\draw[color=black, scale=1]
(-2,1.6)to[out=-90,in=90,looseness=1](-2,-1.6)
(2,1.6)to[out=-90,in=90,looseness=1](2,-1.6)
(-2,1.6)to[out=0,in=180,looseness=1](2,1.6)
(-2,-1.6)to[out=0,in=180,looseness=1](2,-1.6);
\path
(-2,-2) node[right]{\text{$1$--ray Mercedes--Benz}}
(-2,-2.5) node[right]{$\Theta\approx2.598$};
\end{tikzpicture}\qquad
\begin{tikzpicture}[scale=0.9]
\draw[color=black, scale=1]
(-1.35,0)to[out=-60,in=180,looseness=1](-0.07,-0.6)
(-0.07,-0.6)to[out=-60,in=180,looseness=1](1.13,-1.47)
(1.13,-1.47)to[out=60,in=-60,looseness=1](0.98,0)
(-1.35,0)to[out=60,in=180,looseness=1](-0.07,0.6)
(-0.07,0.6)to[out=60,in=180,looseness=1](1.13,1.47)
(1.13,1.47)to[out=-60,in=60,looseness=1](0.98,0)
(1.13,1.47)--(1.2,1.6)
(1.13,-1.47)--(1.2,-1.6)
(0.28,0)--(-0.07,0.6)
(0.28,0)--(-0.07,-0.6)
(-2,0)--(-1.35,0)
(0.28,0)--(0.98,0);
\draw[color=black, scale=1]
(-2,1.6)to[out=-90,in=90,looseness=1](-2,-1.6)
(2,1.6)to[out=-90,in=90,looseness=1](2,-1.6)
(-2,1.6)to[out=0,in=180,looseness=1](2,1.6)
(-2,-1.6)to[out=0,in=180,looseness=1](2,-1.6);
\path
(-2,-2) node[right]{\text{$3$--ray Mercedes--Benz}}
(-2,-2.5) node[right]{$\Theta\approx2.762$};
\end{tikzpicture}\qquad
\begin{tikzpicture}[scale=0.9]
\draw[color=black, scale=1]
(-2,1.6)to[out=-90,in=90,looseness=1](-2,-1.6)
(2,1.6)to[out=-90,in=90,looseness=1](2,-1.6)
(-2,1.6)to[out=0,in=180,looseness=1](2,1.6)
(-2,-1.6)to[out=0,in=180,looseness=1](2,-1.6);
\draw[color=black, scale=0.52]
(0,0)to[out=0,in=180,looseness=1](2.75,0)
(2.75,0)to[out=120,in=-61,looseness=1](2.79,0.12)
(2.75,0)to[out=-120,in=61,looseness=1](2.79,-0.12)
(2.79,0.12)to[out=2,in=-178,looseness=1](3.85,0.16)
(2.79,-0.12)to[out=2,in=178,looseness=1](3.85,-0.16)
(2.79,0.12)to[out=120,in=-30,looseness=1](1.1,1.9)
(2.79,-0.12)to[out=-120,in=30,looseness=1](1.1,-1.9);
\draw[color=black, scale=0.52, rotate=120]
(0,0)to[out=0,in=180,looseness=1](2.75,0)
(2.75,0)to[out=120,in=-61,looseness=1](2.79,0.12)
(2.75,0)to[out=-120,in=61,looseness=1](2.79,-0.12)
(2.79,0.12)to[out=2,in=-178,looseness=1](3.65,0.15)
(2.79,-0.12)to[out=2,in=178,looseness=1](3.5,-0.14)
(2.79,0.12)to[out=120,in=-30,looseness=1](1.1,1.9)
(2.79,-0.12)to[out=-120,in=30,looseness=1](1.1,-1.9);
\draw[color=black, scale=0.52, rotate=-120]
(0,0)to[out=0,in=180,looseness=1](2.75,0)
(2.75,0)to[out=120,in=-61,looseness=1](2.79,0.12)
(2.75,0)to[out=-120,in=61,looseness=1](2.79,-0.12)
(2.79,0.12)to[out=2,in=-178,looseness=1](3.5,0.14)
(2.79,-0.12)to[out=2,in=178,looseness=1](3.65,-0.15)
(2.79,0.12)to[out=120,in=-30,looseness=1](1.1,1.9)
(2.79,-0.12)to[out=-120,in=30,looseness=1](1.1,-1.9);
\path
(-2,-2) node[right]{\text{Cisgeminate $3$--ray star}}
(-2,-2.5) node[right]{$\Theta=\,?$};
\end{tikzpicture}
\medskip

\noindent{\Large{4 regions:}}

\medskip

\noindent
\begin{tikzpicture}[scale=0.9]
\draw[color=black, shift={(0.19,0.08)}, rotate=120]
(-1.5,0.02)to[out=90,in=180,looseness=1](-0.51,0.85)
(-1.5,0.02)to[out=-90,in=180,looseness=1](-0.51,-0.85);
\draw[color=black, shift={(0.19,0.02)}, rotate=-120]
(-1.5,0.02)to[out=90,in=180,looseness=1](-0.51,0.85)
(-1.5,0.02)to[out=-90,in=180,looseness=1](-0.51,-0.85);
\draw[color=black, shift={(0.22,0.05)}]
(-1.5,0.02)to[out=90,in=180,looseness=1](-0.51,0.85)
(-1.5,0.02)to[out=-90,in=180,looseness=1](-0.51,-0.85)
(0.88,0)--(0.96,0)
(-0.48,0.77)--(-0.51,0.85)
(-0.48,-0.77)--(-0.51,-0.85);
\draw[color=black, shift={(0.22,0.05)}, scale=0.32]
(2.75,0)to[out=120,in=0,looseness=1](-1.5,2.45)
(-1.5,-2.45)to[out=120,in=-120,looseness=1](-1.5,2.45)
(2.75,0)to[out=-120,in=0,looseness=1](-1.5,-2.45);
\draw[color=black, scale=1]
(-2,1.6)to[out=-90,in=90,looseness=1](-2,-1.6)
(2,1.6)to[out=-90,in=90,looseness=1](2,-1.6)
(-2,1.6)to[out=0,in=180,looseness=1](2,1.6)
(-2,-1.6)to[out=0,in=180,looseness=1](2,-1.6);
\path
(-2,-2) node[right]{\text{$3$--leaf clover}}
(-2,-2.5) node[right]{$\Theta\approx3.064$};
\end{tikzpicture}\qquad
\begin{tikzpicture}[scale=0.9]
\draw[color=black, scale=1]
(0,0.19)to[out=150,in=-30,looseness=1](-0.39,0.4)
(0,0.19)to[out=30,in=-150,looseness=1](0.39,0.4)
(0,-0.19)to[out=-150,in=30,looseness=1](-0.39,-0.4)
(0,-0.19)to[out=-30,in=150,looseness=1](0.39,-0.4)
(0.39,0.4)to[out=-30,in=120,looseness=1](0.7,0)
(-0.39,0.4)to[out=-150,in=60,looseness=1](-0.7,0)
(0.39,-0.4)to[out=30,in=-120,looseness=1](0.7,0)
(-0.39,-0.4)to[out=150,in=-60,looseness=1](-0.7,0)
(0.39,0.4)to[out=90,in=-30,looseness=1](0,1.5)
(-0.39,0.4)to[out=90,in=-150,looseness=1](0,1.5)
(0.39,-0.4)to[out=-90,in=30,looseness=1](0,-1.5)
(-0.39,-0.4)to[out=-90,in=150,looseness=1](0,-1.5)
(0,-1.5)to[out=-90,in=90,looseness=1](0,-1.6)
(0,1.5)to[out=90,in=-90,looseness=1](0,1.6)
(0,0.19)to[out=-90,in=90,looseness=1](0,0)
(2,0)to[out=180,in=0,looseness=1](0.7,0)
(0,-0.19)to[out=90,in=-90,looseness=1](0,0)
(-2,0)to[out=0,in=180,looseness=1](-0.7,0);
\draw[color=black, scale=1]
(-2,1.6)to[out=-90,in=90,looseness=1](-2,-1.6)
(2,1.6)to[out=-90,in=90,looseness=1](2,-1.6)
(-2,1.6)to[out=0,in=180,looseness=1](2,1.6)
(-2,-1.6)to[out=0,in=180,looseness=1](2,-1.6);
\path
(-2,-2) node[right]{\text{$2$--ray $2$--floc}}
(-2,-2.5) node[right]{$\Theta\approx3.249$};
\end{tikzpicture}
\medskip

\noindent{\Large{5 regions:}}

\medskip

\noindent
\begin{tikzpicture}[scale=0.9]
\draw[color=black]
(-1.42,0)to[out=90,in=-165,looseness=1](-0.78,0.78)
(-1.42,0)to[out=-90,in=165,looseness=1](-0.78,-0.78)
(1.42,0)to[out=90,in=-15,looseness=1](0.78,0.78)
(1.42,0)to[out=-90,in=15,looseness=1](0.78,-0.78);
\draw[color=black,rotate=90]
(-1.42,0)to[out=90,in=-165,looseness=1](-0.78,0.78)
(-1.42,0)to[out=-90,in=165,looseness=1](-0.78,-0.78)
(1.42,0)to[out=90,in=-15,looseness=1](0.78,0.78)
(1.42,0)to[out=-90,in=15,looseness=1](0.78,-0.78);
\draw[color=black,rotate=45,scale=0.57]
(-1.95,0)to[out=0,in=180,looseness=1](-1.15,0)
(1.15,0)to[out=0,in=180,looseness=1](1.95,0)
(0,1.95)to[out=-90,in=90,looseness=1](0,1.15)
(0,-1.95)to[out=90,in=-90,looseness=1](0,-1.15);
\draw[,color=black,rotate=45,scale=0.57]
(-1.15,0)to[out=60,in=-150,looseness=1](0,1.15)
(-1.15,0)to[out=-60,in=150,looseness=1](0,-1.15)
(1.15,0)to[out=1200,in=-30,looseness=1](0,1.15)
(1.15,0)to[out=-120,in=30,looseness=1](0,-1.15);
\draw[color=black, scale=1]
(-2,1.6)to[out=-90,in=90,looseness=1](-2,-1.6)
(2,1.6)to[out=-90,in=90,looseness=1](2,-1.6)
(-2,1.6)to[out=0,in=180,looseness=1](2,1.6)
(-2,-1.6)to[out=0,in=180,looseness=1](2,-1.6);
\path
(-2,-2) node[right]{\text{$4$--leaf clover}}
(-2,-2.5) node[right]{$\Theta\approx3.234$};
\end{tikzpicture}\qquad
\begin{tikzpicture}[scale=0.9]
\draw[color=black, scale=1]
(-2,0)to[out=0,in=180,looseness=1](-1.72,0)
(1.72,0)to[out=0,in=180,looseness=1](2,0)
(-0.75,0.35)to[out=15,in=165,looseness=1](0.75,0.35)
(-0.75,-0.35)to[out=-15,in=-165,looseness=1](0.75,-0.35)
(0.75,0.35)to[out=-75,in=75,looseness=1](0.75,-0.35)
(-0.75,0.35)to[out=-105,in=105,looseness=1](-0.75,-0.35)
(0.94,0.52)to[out=105,in=0,looseness=1](0,1.3)
(-0.94,0.52)to[out=75,in=180,looseness=1](0,1.3)
(-0.94,-0.52)to[out=-75,in=180,looseness=1](0,-1.3)
(0.94,-0.52)to[out=-105,in=0,looseness=1](0,-1.3)
(0.94,0.52)--(0.75,0.35)
(0.94,-0.52)--(0.75,-0.35)
(-0.94,0.52)--(-0.75,0.35)
(-0.94,-0.52)--(-0.75,-0.35)
(1.72,0)to[out=120,in=-15,looseness=1](0.94,0.52)
(1.72,0)to[out=-120,in=15,looseness=1](0.94,-0.52)
(-1.72,0)to[out=60,in=-175,looseness=1](-0.94,0.52)
(-1.72,0)to[out=-60,in=175,looseness=1](-0.94,-0.52);
\draw[color=black, scale=1]
(-2,1.6)to[out=-90,in=90,looseness=1](-2,-1.6)
(2,1.6)to[out=-90,in=90,looseness=1](2,-1.6)
(-2,1.6)to[out=0,in=180,looseness=1](2,1.6)
(-2,-1.6)to[out=0,in=180,looseness=1](2,-1.6);
\path
(-2,-2) node[right]{\text{$2$--ray $4$--leaf clover}}
(-2,-2.5) node[right]{$\Theta\approx3.365$};
\end{tikzpicture}\qquad
\begin{tikzpicture}[scale=0.9]
\draw[color=black, scale=1]
(0,1.42)--(0,1.6)
(0,-1.42)--(0,-1.6)
(2,0)--(1.42,0)
(-2,0)--(-1.42,0);
\draw[color=black]
(-0.615,0.615)to[out=75,in=-150,looseness=1](0,1.42)
(0.615,0.615)to[out=105,in=-30,looseness=1](0,1.42);
\draw[color=black,rotate=90]
(-0.615,0.615)to[out=75,in=-150,looseness=1](0,1.42)
(0.615,0.615)to[out=105,in=-30,looseness=1](0,1.42);
\draw[color=black,rotate=180]
(-0.615,0.615)to[out=75,in=-150,looseness=1](0,1.42)
(0.615,0.615)to[out=105,in=-30,looseness=1](0,1.42);
\draw[color=black,rotate=270]
(-0.615,0.615)to[out=75,in=-150,looseness=1](0,1.42)
(0.615,0.615)to[out=105,in=-30,looseness=1](0,1.42);
\draw[color=black,rotate=45,scale=0.65]
(-1.35,0)to[out=0,in=180,looseness=1](-1.15,0)
(1.15,0)to[out=0,in=180,looseness=1](1.35,0)
(0,1.35)to[out=-90,in=90,looseness=1](0,1.15)
(0,-1.35)to[out=90,in=-90,looseness=1](0,-1.15);
\draw[color=black,rotate=45,scale=0.65]
(-1.15,0)to[out=60,in=-150,looseness=1](0,1.15)
(-1.15,0)to[out=-60,in=150,looseness=1](0,-1.15)
(1.15,0)to[out=1200,in=-30,looseness=1](0,1.15)
(1.15,0)to[out=-120,in=30,looseness=1](0,-1.15);
\draw[color=black, scale=1]
(-2,1.6)to[out=-90,in=90,looseness=1](-2,-1.6)
(2,1.6)to[out=-90,in=90,looseness=1](2,-1.6)
(-2,1.6)to[out=0,in=180,looseness=1](2,1.6)
(-2,-1.6)to[out=0,in=180,looseness=1](2,-1.6);
\path
(-2,-2) node[right]{\text{$4$--petal flower}}
(-2,-2.5) node[right]{$\Theta\approx3.474$};
\end{tikzpicture}
\medskip

\noindent{\Large{6 regions:}}

\medskip

\noindent
\begin{tikzpicture}[scale=0.9]
\draw[rotate=36, scale=0.61]
(0.76,0)to[out=120,in=-48,looseness=1](0.23,0.72)
to[out=-162,in=24,looseness=1](-0.61,0.44)
to[out=-96,in=96,looseness=1](-0.61,-0.44)
(0.76,0)to[out=-120,in=48,looseness=1](0.23,-0.72)
to[out=162,in=-24,looseness=1](-0.61,-0.44);
\draw
(-1.2,-0.87)to[out=-54,in=-168,looseness=1](-0.37,-1.15)
(-1.22,0)to[out=-120,in=126,looseness=1](-1.2,-0.87);
\draw[rotate=72]
(-1.2,-0.87)to[out=-54,in=-168,looseness=1](-0.37,-1.15)
(-1.22,0)to[out=-120,in=126,looseness=1](-1.2,-0.87);
\draw[rotate=-72]
(-1.2,-0.87)to[out=-54,in=-168,looseness=1](-0.37,-1.15)
(-1.22,0)to[out=-120,in=126,looseness=1](-1.2,-0.87);
\draw[rotate=144]
(-1.2,-0.87)to[out=-54,in=-168,looseness=1](-0.37,-1.15)
(-1.22,0)to[out=-120,in=126,looseness=1](-1.2,-0.87);
\draw[rotate=-144]
(-1.2,-0.87)to[out=-54,in=-168,looseness=1](-0.37,-1.15)
(-1.22,0)to[out=-120,in=126,looseness=1](-1.2,-0.87);
\draw[ rotate=72]
(-1.22,0)to[out=0,in=180,looseness=1](-0.45,0);
\draw[rotate=-72]
(-1.22,0)to[out=0,in=180,looseness=1](-0.45,0);
\draw[rotate=144]
(-1.22,0)to[out=0,in=180,looseness=1](-0.45,0);
\draw[rotate=-144]
(-1.22,0)to[out=0,in=180,looseness=1](-0.45,0);
\draw
(-1.22,0)to[out=0,in=180,looseness=1](-0.45,0);
\draw[color=black, scale=1]
(-2,1.6)to[out=-90,in=90,looseness=1](-2,-1.6)
(2,1.6)to[out=-90,in=90,looseness=1](2,-1.6)
(-2,1.6)to[out=0,in=180,looseness=1](2,1.6)
(-2,-1.6)to[out=0,in=180,looseness=1](2,-1.6);
\path
(-2,-2) node[right]{\text{$5$--leaf clover}}
(-2,-2.5) node[right]{$\Theta\approx3.455$};
\end{tikzpicture}\qquad
\begin{tikzpicture}[scale=0.9]
\draw
(-0.54,0.43)to[out=45,in=-165,looseness=1](-0.1,0.685)
(-0.36,0)to[out=0,in=180,looseness=1](0,0)
(-0.36,0)to[out=-120,in=75,looseness=1](-0.54,-0.43)
(-1.67,0) to[out=-90,in=-165,looseness=1](-0.54,-0.43)
(-0.36,0)to[out=120,in=-75,looseness=1](-0.54,0.43)
(-1.67,0) to[out=90,in=165,looseness=1](-0.54,0.43);
\draw[rotate=-120]
(-0.54,0.43)to[out=45,in=-165,looseness=1](-0.1,0.685)
(-0.36,0)to[out=0,in=180,looseness=1](0,0)
(-0.36,0)to[out=-120,in=75,looseness=1](-0.54,-0.43)
(-1.67,0) to[out=-90,in=-165,looseness=1](-0.54,-0.43)
(-0.36,0)to[out=120,in=-75,looseness=1](-0.54,0.43)
(-1.67,0) to[out=90,in=165,looseness=1](-0.54,0.43);
\draw[rotate=120]
(-0.54,0.43)to[out=45,in=-165,looseness=1](-0.1,0.685)
(-0.36,0)to[out=0,in=180,looseness=1](0,0)
(-0.36,0)to[out=-120,in=75,looseness=1](-0.54,-0.43)
(-1.67,0) to[out=-90,in=-165,looseness=1](-0.54,-0.43)
(-0.36,0)to[out=120,in=-75,looseness=1](-0.54,0.43)
(-1.67,0) to[out=90,in=165,looseness=1](-0.54,0.43);
\draw[color=black, scale=1]
(-2,1.6)to[out=-90,in=90,looseness=1](-2,-1.6)
(2,1.6)to[out=-90,in=90,looseness=1](2,-1.6)
(-2,1.6)to[out=0,in=180,looseness=1](2,1.6)
(-2,-1.6)to[out=0,in=180,looseness=1](2,-1.6);
\path
(-2,-2) node[right]{\text{$3$--floc}}
(-2,-2.5) node[right]{$\Theta\approx3.477$};
\end{tikzpicture}\qquad
\begin{tikzpicture}[scale=0.9]
\draw[color=black, scale=1]
(-2,1.6)to[out=-90,in=90,looseness=1](-2,-1.6)
(2,1.6)to[out=-90,in=90,looseness=1](2,-1.6)
(-2,1.6)to[out=0,in=180,looseness=1](2,1.6)
(-2,-1.6)to[out=0,in=180,looseness=1](2,-1.6);
\draw[shift={(0.36,0)}]
(-0.38,0)to[out=0,in=180,looseness=1](0,0)
(-2.32,0)to[out=0,in=180,looseness=1](-1.62,0)
(-1.62,0)to[out=60,in=180,looseness=1](-1,0.26)
(-1.62,0)to[out=-60,in=180,looseness=1](-1,-0.26)
(-1,0.26)to[out=0,in=170,looseness=1](-0.47,0.21)
(-1,-0.26)to[out=0,in=-170,looseness=1](-0.47,-0.21)
(-0.38,0)to[out=-120,in=70,looseness=1](-0.47,-0.21)
(-0.38,0)to[out=120,in=-70,looseness=1](-0.47,0.21)
(-0.47,0.21)to[out=50,in=-170,looseness=1](0.065,0.515);
\draw[shift={(0.36,0)}, rotate=120]
(-0.38,0)to[out=0,in=180,looseness=1](0,0)
(-1.85,0)to[out=0,in=180,looseness=1](-1.62,0)
(-1.62,0)to[out=60,in=180,looseness=1](-1,0.26)
(-1.62,0)to[out=-60,in=180,looseness=1](-1,-0.26)
(-1,0.26)to[out=0,in=170,looseness=1](-0.47,0.21)
(-1,-0.26)to[out=0,in=-170,looseness=1](-0.47,-0.21)
(-0.38,0)to[out=-120,in=70,looseness=1](-0.47,-0.21)
(-0.38,0)to[out=120,in=-70,looseness=1](-0.47,0.21)
(-0.47,0.21)to[out=50,in=-170,looseness=1](0.065,0.515);
\draw[shift={(0.36,0)},rotate=-120]
(-0.38,0)to[out=0,in=180,looseness=1](0,0)
(-1.85,0)to[out=0,in=180,looseness=1](-1.62,0)
(-1.62,0)to[out=60,in=180,looseness=1](-1,0.26)
(-1.62,0)to[out=-60,in=180,looseness=1](-1,-0.26)
(-1,0.26)to[out=0,in=170,looseness=1](-0.47,0.21)
(-1,-0.26)to[out=0,in=-170,looseness=1](-0.47,-0.21)
(-0.38,0)to[out=-120,in=70,looseness=1](-0.47,-0.21)
(-0.38,0)to[out=120,in=-70,looseness=1](-0.47,0.21)
(-0.47,0.21)to[out=50,in=-170,looseness=1](0.065,0.515);
\path
(-2,-2) node[right]{\text{$3$--ray three--floc}}
(-2,-2.5) node[right]{$\Theta\approx3.517$};
\end{tikzpicture}\qquad
\begin{tikzpicture}[scale=0.9]
\draw[rotate=36, scale=0.77]
(0.76,0)to[out=120,in=-48,looseness=1](0.23,0.72)
to[out=-162,in=24,looseness=1](-0.61,0.44)
to[out=-96,in=96,looseness=1](-0.61,-0.44)
(0.76,0)to[out=-120,in=48,looseness=1](0.23,-0.72)
to[out=162,in=-24,looseness=1](-0.61,-0.44);
\draw
(-1.01,0)to[out=0,in=180,looseness=1](-0.58,0);
\draw[rotate=72]
(-1.01,0)to[out=0,in=180,looseness=1](-0.58,0);
\draw[rotate=-72]
(-1.01,0)to[out=0,in=180,looseness=1](-0.58,0);
\draw[rotate=144]
(-1.01,0)to[out=0,in=180,looseness=1](-0.58,0);
\draw[rotate=-144]
(-1.01,0)to[out=0,in=180,looseness=1](-0.58,0);
\draw
(1.52,0)to[out=0,in=180,looseness=1](2.1,0);
\draw[rotate=72]
(1.52,0)to[out=0,in=180,looseness=1](1.67,0);
\draw[rotate=-72]
(1.52,0)to[out=0,in=180,looseness=1](1.67,0);
\draw[rotate=144]
(1.52,0)to[out=0,in=180,looseness=1](2.35,0);
\draw[rotate=-144]
(1.52,0)to[out=0,in=180,looseness=1](2.35,0);
\draw
(1.52,0)to[out=-120,in=24,looseness=1](0.81,-0.59)
(1.52,0)to[out=120,in=-24,looseness=1](0.81,0.59);
\draw[ rotate=72]
(1.52,0)to[out=-120,in=24,looseness=1](0.81,-0.59)
(1.52,0)to[out=120,in=-24,looseness=1](0.81,0.59);
\draw[rotate=-72]
(1.52,0)to[out=-120,in=24,looseness=1](0.81,-0.59)
(1.52,0)to[out=120,in=-24,looseness=1](0.81,0.59);
\draw[rotate=144]
(1.52,0)to[out=-120,in=24,looseness=1](0.81,-0.59)
(1.52,0)to[out=120,in=-24,looseness=1](0.81,0.59);
\draw[rotate=-144]
(1.52,0)to[out=-120,in=24,looseness=1](0.81,-0.59)
(1.52,0)to[out=120,in=-24,looseness=1](0.81,0.59);
\draw[color=black, scale=1, shift={(0.1,0)}]
(-2,1.6)to[out=-90,in=90,looseness=1](-2,-1.6)
(2,1.6)to[out=-90,in=90,looseness=1](2,-1.6)
(-2,1.6)to[out=0,in=180,looseness=1](2,1.6)
(-2,-1.6)to[out=0,in=180,looseness=1](2,-1.6);
\path
(-2,-2) node[right]{\text{$5$--petal flower}}
(-2,-2.5) node[right]{$\Theta\approx3.907$};
\end{tikzpicture}

\eject

\noindent{\Large{9 regions:}}

\medskip

\noindent
\begin{tikzpicture}[scale=0.9]
\draw[color=black, scale=1]
(-2,1.6)to[out=-90,in=90,looseness=1](-2,-1.6)
(2,1.6)to[out=-90,in=90,looseness=1](2,-1.6)
(-2,1.6)to[out=0,in=180,looseness=1](2,1.6)
(-2,-1.6)to[out=0,in=180,looseness=1](2,-1.6);
\draw
(-0.43,0)to[out=0,in=180,looseness=1](0,0)
(-0.6,0.365)to[out=50,in=-170,looseness=1](-0.02,0.705)
(-0.43,0)to[out=-120,in=70,looseness=1](-0.6,-0.365)
(-0.43,0)to[out=120,in=-70,looseness=1](-0.6,0.365)
(-0.975,-0.425) to[out=10,in=-170,looseness=1](-0.6,-0.365)
(-0.975,0.425) to[out=-10,in=170,looseness=1](-0.6,0.365)
(-1.1,0) to[out=-90,in=130,looseness=1](-0.975,-0.425)
(-1.1,0) to[out=90,in=-130,looseness=1](-0.975,0.425)
(-0.975,0.425) to[out=110,in=-150,looseness=1](-0.78,1.36)
(-0.975,-0.425) to[out=-110,in=150,looseness=1](-0.78,-1.36);
\draw[rotate=-120]
(-0.43,0)to[out=0,in=180,looseness=1](0,0)
(-0.6,0.365)to[out=50,in=-170,looseness=1](-0.02,0.705)
(-0.43,0)to[out=-120,in=70,looseness=1](-0.6,-0.365)
(-0.43,0)to[out=120,in=-70,looseness=1](-0.6,0.365)
(-0.975,-0.425) to[out=10,in=-170,looseness=1](-0.6,-0.365)
(-0.975,0.425) to[out=-10,in=170,looseness=1](-0.6,0.365)
(-1.1,0) to[out=-90,in=130,looseness=1](-0.975,-0.425)
(-1.1,0) to[out=90,in=-130,looseness=1](-0.975,0.425)
(-0.975,0.425) to[out=110,in=-150,looseness=1](-0.78,1.36)
(-0.975,-0.425) to[out=-110,in=150,looseness=1](-0.78,-1.36);
\draw[rotate=120]
(-0.43,0)to[out=0,in=180,looseness=1](0,0)
(-0.6,0.365)to[out=50,in=-170,looseness=1](-0.02,0.705)
(-0.43,0)to[out=-120,in=70,looseness=1](-0.6,-0.365)
(-0.43,0)to[out=120,in=-70,looseness=1](-0.6,0.365)
(-0.975,-0.425) to[out=10,in=-170,looseness=1](-0.6,-0.365)
(-0.975,0.425) to[out=-10,in=170,looseness=1](-0.6,0.365)
(-1.1,0) to[out=-90,in=130,looseness=1](-0.975,-0.425)
(-1.1,0) to[out=90,in=-130,looseness=1](-0.975,0.425)
(-0.975,0.425) to[out=110,in=-150,looseness=1](-0.78,1.36)
(-0.975,-0.425) to[out=-110,in=150,looseness=1](-0.78,-1.36);
\path
(-2,-2) node[right]{\text{$9$--floc}}
(-2,-2.5) node[right]{$\Theta\approx4.194$};
\end{tikzpicture}\qquad
\begin{tikzpicture}[scale=0.9]

\draw[color=black, scale=1, shift={(0.1,0)}]
(-2,1.6)to[out=-90,in=90,looseness=1](-2,-1.6)
(2,1.6)to[out=-90,in=90,looseness=1](2,-1.6)
(-2,1.6)to[out=0,in=180,looseness=1](2,1.6)
(-2,-1.6)to[out=0,in=180,looseness=1](2,-1.6);
\draw
(-0.78,1.36)--(-0.92,1.59)
(-0.4,0)to[out=0,in=180,looseness=1](0,0)
(-0.6,0.53)to[out=50,in=-150,looseness=1](-0.4,0.7)
(-0.6,-0.53)to[out=-50,in=150,looseness=1](-0.4,-0.7)
(-0.4,0)to[out=-120,in=70,looseness=1](-0.6,-0.53)
(-0.4,0)to[out=120,in=-70,looseness=1](-0.6,0.53)
(-0.8,-0.59) to[out=10,in=-170,looseness=1](-0.6,-0.53)
(-0.8,0.59) to[out=-10,in=170,looseness=1](-0.6,0.53)
(-0.8,0.59) to[out=110,in=-120,looseness=1](-0.78,1.36)
(-0.8,-0.59) to[out=-110,in=120,looseness=1](-0.78,-1.36)
(-1,0) to[out=-90,in=130,looseness=1](-0.8,-0.59)
(-1,0) to[out=90,in=-130,looseness=1](-0.8,0.59);
\draw[rotate=-120]
(-0.78,1.36)--(-1.07,1.85)
(-0.4,0)to[out=0,in=180,looseness=1](0,0)
(-0.6,0.53)to[out=50,in=-150,looseness=1](-0.4,0.7)
(-0.6,-0.53)to[out=-50,in=150,looseness=1](-0.4,-0.7)
(-0.4,0)to[out=-120,in=70,looseness=1](-0.6,-0.53)
(-0.4,0)to[out=120,in=-70,looseness=1](-0.6,0.53)
(-0.8,-0.59) to[out=10,in=-170,looseness=1](-0.6,-0.53)
(-0.8,0.59) to[out=-10,in=170,looseness=1](-0.6,0.53)
(-0.8,0.59) to[out=110,in=-120,looseness=1](-0.78,1.36)
(-0.8,-0.59) to[out=-110,in=120,looseness=1](-0.78,-1.36)
(-1,0) to[out=-90,in=130,looseness=1](-0.8,-0.59)
(-1,0) to[out=90,in=-130,looseness=1](-0.8,0.59);
\draw[rotate=120]
(-0.78,1.36)--(-0.92,1.59)
(-0.4,0)to[out=0,in=180,looseness=1](0,0)
(-0.6,0.53)to[out=50,in=-150,looseness=1](-0.4,0.7)
(-0.6,-0.53)to[out=-50,in=150,looseness=1](-0.4,-0.7)
(-0.4,0)to[out=-120,in=70,looseness=1](-0.6,-0.53)
(-0.4,0)to[out=120,in=-70,looseness=1](-0.6,0.53)
(-0.8,-0.59) to[out=10,in=-170,looseness=1](-0.6,-0.53)
(-0.8,0.59) to[out=-10,in=170,looseness=1](-0.6,0.53)
(-0.8,0.59) to[out=110,in=-120,looseness=1](-0.78,1.36)
(-0.8,-0.59) to[out=-110,in=120,looseness=1](-0.78,-1.36)
(-1,0) to[out=-90,in=130,looseness=1](-0.8,-0.59)
(-1,0) to[out=90,in=-130,looseness=1](-0.8,0.59);
\path
(-2,-2) node[right]{\text{$3$--ray $9$--floc}}
(-2,-2.5) node[right]{$\Theta\approx4.321$};
\end{tikzpicture}

\bigskip

\bigskip

\noindent{\Large{Non--embedded regular shrinkers:}}

\bigskip

\noindent
\begin{tikzpicture}[scale=0.25, rotate=-54]
\draw
(2.2,3)to[out=0,in=90,looseness=1] (5.43,0)
(-3.035,0)to[out=-60,in=180,looseness=1] (2.2,-3)
(2.2,-3)to[out=0,in=-90,looseness=1] (5.43,0);
\draw[rotate=108]
(-3.035,0)to[out=60,in=180,looseness=1] (2.2,3)
(2.2,3)to[out=0,in=90,looseness=1] (5.43,0)
(2.2,-3)to[out=0,in=-90,looseness=1] (5.43,0);
\draw[color=black, rotate=234, scale=0.43]
(-8.7,0)to[out=0,in=180,looseness=1](16.635,0);
\draw[color=black, scale=3.6, shift={(0,0)}, rotate=54]
(-2,1.6)to[out=-90,in=90,looseness=1](-2,-1.6)
(2,1.6)to[out=-90,in=90,looseness=1](2,-1.6)
(-2,1.6)to[out=0,in=180,looseness=1](2,1.6)
(-2,-1.6)to[out=0,in=180,looseness=1](2,-1.6);
\path[rotate=54, scale=3.6]
(-2,-2) node[right]{\text{Antispoon}}
(-2,-2.5) node[right]{$\Theta\approx2.365$};
\end{tikzpicture}\qquad
\begin{tikzpicture}[scale=0.9]
\draw[color=black, scale=1, shift={(0,0)}]
(-2,1.6)to[out=-90,in=90,looseness=1](-2,-1.6)
(2,1.6)to[out=-90,in=90,looseness=1](2,-1.6)
(-2,1.6)to[out=0,in=180,looseness=1](2,1.6)
(-2,-1.6)to[out=0,in=180,looseness=1](2,-1.6);
\draw[color=black, scale=1, shift={(0,0)}]
(0,0.05)to[out=0,in=160,looseness=1](1.6,-0.13)
(1.6,-0.13)to[out=-20,in=90,looseness=1](1.88,-0.43)
(0,0.05)to[out=180,in=20,looseness=1](-1.6,-0.13)
(-1.6,-0.13)to[out=-160,in=90,looseness=1](-1.88,-0.43)
(0,0)to[out=90,in=-90,looseness=1](0,1.6)
(0,0)to[out=-30,in=180,looseness=1](1.6,-0.7)
(1.6,-0.7)to[out=0,in=-90,looseness=1](1.88,-0.43)
(0,0)to[out=-150,in=0,looseness=1](-1.6,-0.7)
(-1.6,-0.7)to[out=180,in=-90,looseness=1](-1.88,-0.43);
\path
(-2,-2) node[right]{\text{Bowtie}}
(-2,-2.5) node[right]{$\Theta\approx2.503$};
\end{tikzpicture}

\bigskip

\bigskip

\noindent{\Large{Impossible regular shrinkers:}}

\bigskip

\noindent
\begin{tikzpicture}[scale=0.9]
\draw[color=black, scale=1, shift={(0,0)}]
(-2,1.6)to[out=-90,in=90,looseness=1](-2,-1.6)
(2,1.6)to[out=-90,in=90,looseness=1](2,-1.6)
(-2,1.6)to[out=0,in=180,looseness=1](2,1.6)
(-2,-1.6)to[out=0,in=180,looseness=1](2,-1.6);
\draw[color=black, scale=0.3]
(0,1.3)to[out=30,in=180,looseness=1] (3,2)
(3,2)to[out=0,in=90,looseness=1] (6,0)
(0,-1.3)to[out=-30,in=180,looseness=1] (3,-2)
(3,-2)to[out=0,in=-90,looseness=1] (6,0);
\draw[color=black, rotate=180, scale=0.3]
(0,1.3)to[out=30,in=180,looseness=1] (3,2)
(3,2)to[out=0,in=90,looseness=1] (6,0)
(0,-1.3)to[out=-30,in=180,looseness=1] (3,-2)
(3,-2)to[out=0,in=-90,looseness=1] (6,0);
\draw[color=black, scale=0.3]
(0,1.3)to[out=-90,in=90,looseness=1](0,-1.3);
\end{tikzpicture}\qquad
\begin{tikzpicture}[scale=0.9]
\draw[color=black, scale=1, shift={(0,0)}]
(-2,1.6)to[out=-90,in=90,looseness=1](-2,-1.6)
(2,1.6)to[out=-90,in=90,looseness=1](2,-1.6)
(-2,1.6)to[out=0,in=180,looseness=1](2,1.6)
(-2,-1.6)to[out=0,in=180,looseness=1](2,-1.6);
\draw[color=black, scale=0.3]
(0,0.6)to[out=20,in=180,looseness=1] (3,1.2)
(3,1.2)to[out=0,in=120,looseness=1] (5.5,0)
(0,-0.6)to[out=-20,in=180,looseness=1] (3,-1.2)
(3,-1.2)to[out=0,in=-120,looseness=1] (5.5,0)
(5.5,0)to[out=0,in=180,looseness=1] (6.7,0);
\draw[color=black, scale=0.3, rotate=180]
(0,0.6)to[out=20,in=180,looseness=1] (3,1.2)
(3,1.2)to[out=0,in=120,looseness=1] (5.5,0)
(0,-0.6)to[out=-20,in=180,looseness=1] (3,-1.2)
(3,-1.2)to[out=0,in=-120,looseness=1] (5.5,0)
(5.5,0)to[out=0,in=180,looseness=1] (6.7,0);
\draw[color=black, scale=0.3]
(0,0.6)to[out=-90,in=90,looseness=1](0,-0.6);
\end{tikzpicture}\qquad
\begin{tikzpicture}[scale=0.9]
\draw[color=black, scale=1, shift={(0,0)}]
(-2,1.6)to[out=-90,in=90,looseness=1](-2,-1.6)
(2,1.6)to[out=-90,in=90,looseness=1](2,-1.6)
(-2,1.6)to[out=0,in=180,looseness=1](2,1.6)
(-2,-1.6)to[out=0,in=180,looseness=1](2,-1.6);
\draw[color=black, scale=1]
(0,0.4)to[out=150,in=-10,looseness=1](-0.13,0.46)
(0,0.4)to[out=30,in=-170,looseness=1](0.13,0.46)
(0,-0.4)to[out=-150,in=10,looseness=1](-0.13,-0.46)
(0,-0.4)to[out=-30,in=170,looseness=1](0.13,-0.46)
(0.13,0.46)to[out=-50,in=90,looseness=1](0.3,0)
(-0.13,0.46)to[out=-130,in=90,looseness=1](-0.3,0)
(0.13,-0.46)to[out=50,in=-90,looseness=1](0.3,0)
(-0.13,-0.46)to[out=130,in=-90,looseness=1](-0.3,0)
(0.13,0.46)to[out=70,in=0,looseness=1](0,1.45)
(-0.13,0.46)to[out=110,in=180,looseness=1](0,1.45)
(0.13,-0.46)to[out=-70,in=0,looseness=1](0,-1.45)
(-0.13,-0.46)to[out=-110,in=180,looseness=1](0,-1.45)
(0,0.4)to[out=-90,in=90,looseness=1](0,0)
(0,-0.4)to[out=90,in=-90,looseness=1](0,0);
\end{tikzpicture}\qquad
\begin{tikzpicture}[scale=0.9]
\draw[color=black, scale=1, shift={(0,0)}]
(-2,1.6)to[out=-90,in=90,looseness=1](-2,-1.6)
(2,1.6)to[out=-90,in=90,looseness=1](2,-1.6)
(-2,1.6)to[out=0,in=180,looseness=1](2,1.6)
(-2,-1.6)to[out=0,in=180,looseness=1](2,-1.6);
\draw[color=black, scale=1]
(0,0.11)to[out=150,in=-10,looseness=1](-0.29,0.17)
(0,0.11)to[out=30,in=-170,looseness=1](0.29,0.17)
(0,-0.11)to[out=-150,in=10,looseness=1](-0.29,-0.17)
(0,-0.11)to[out=-30,in=170,looseness=1](0.29,-0.17)
(0.29,0.17)to[out=-50,in=120,looseness=1](0.37,0)
(-0.29,0.17)to[out=-130,in=60,looseness=1](-0.37,0)
(0.29,-0.17)to[out=50,in=-120,looseness=1](0.37,0)
(-0.29,-0.17)to[out=130,in=-60,looseness=1](-0.37,0)
(0.37,0)to[out=0,in=180,looseness=1](2,0)
(-0.37,0)to[out=180,in=0,looseness=1](-2,0)
(0.29,0.17)to[out=70,in=0,looseness=1](0,1.45)
(-0.29,0.17)to[out=110,in=180,looseness=1](0,1.45)
(0.29,-0.17)to[out=-70,in=0,looseness=1](0,-1.45)
(-0.29,-0.17)to[out=-110,in=180,looseness=1](0,-1.45)
(0,0.11)to[out=-90,in=90,looseness=1](0,0)
(0,-0.11)to[out=90,in=-90,looseness=1](0,0);
\end{tikzpicture}

\bigskip

\noindent
\begin{tikzpicture}[scale=0.9]
\draw[color=black, scale=1, shift={(0,0)}]
(-2,1.6)to[out=-90,in=90,looseness=1](-2,-1.6)
(2,1.6)to[out=-90,in=90,looseness=1](2,-1.6)
(-2,1.6)to[out=0,in=180,looseness=1](2,1.6)
(-2,-1.6)to[out=0,in=180,looseness=1](2,-1.6);
\draw[color=black, scale=1, shift={(0,0)}]
(0,0.27)to[out=30,in=-150,looseness=1](0.4,0.43)
(0,-0.27)to[out=-30,in=150,looseness=1](0.4,-0.43)
(0,0.27)to[out=150,in=-30,looseness=1](-0.4,0.43)
(0,-0.27)to[out=-150,in=30,looseness=1](-0.4,-0.43)
(0.4,0.43)to[out=-30,in=120,looseness=1](0.8,0)
(0.4,-0.43)to[out=30,in=-120,looseness=1](0.8,0)
(-0.4,0.43)to[out=-150,in=60,looseness=1](-0.8,0)
(-0.4,-0.43)to[out=150,in=-60,looseness=1](-0.8,0)
(0.4,0.43)to[out=90,in=-95,looseness=1](0.415,0.7)
(0.4,-0.43)to[out=-90,in=95,looseness=1](0.415,-0.7)
(-0.4,0.43)to[out=90,in=-85,looseness=1](-0.415,0.7)
(-0.4,-0.43)to[out=-90,in=85,looseness=1](-0.415,-0.7)
(0.415,0.7)to[out=25,in=180,looseness=1](1.2,0.88)
(1.2,0.88)to[out=0,in=60,looseness=1](1.7,0)
(0.415,-0.7)to[out=- 25,in=180,looseness=1](1.2,-0.88)
(1.2,-0.88)to[out=0,in=-60,looseness=1](1.7,0)
(-0.415,0.7)to[out=155,in=0,looseness=1](-1.2,0.88)
(-1.2,0.88)to[out=180,in=120,looseness=1](-1.7,0)
(-0.415,-0.7)to[out=-155,in=0,looseness=1](-1.2,-0.88)
(-1.2,-0.88)to[out=180,in=-120,looseness=1](-1.7,0)
(-0.415,0.7) to[out=25,in=145,looseness=1] (0.415,0.7)
(-0.415,-0.7) to[out=-25,in=-145,looseness=1] (0.415,-0.7)
(0,0.27)to[out=-90,in=90,looseness=1](0,-0.27)
(-1.7,0)to[out=0,in=180,looseness=1](-0.8,0)
(0.8,0)to[out=0,in=180,looseness=1](1.7,0);
\end{tikzpicture}\qquad
\begin{tikzpicture}[scale=0.9]
\draw[color=black, scale=1, shift={(0,0)}]
(-2,1.6)to[out=-90,in=90,looseness=1](-2,-1.6)
(2,1.6)to[out=-90,in=90,looseness=1](2,-1.6)
(-2,1.6)to[out=0,in=180,looseness=1](2,1.6)
(-2,-1.6)to[out=0,in=180,looseness=1](2,-1.6);
\draw[color=black, scale=1, shift={(0,0)}]
(0.95,0.45)to[out=0,in=90,looseness=1](1.81,0)
(-0.95,0.45)to[out=180,in=90,looseness=1](-1.81,0)
(0.95,0.45)to[out=120,in=20,looseness=1](0,0.67)
(-0.95,0.45)to[out=60,in=160,looseness=1](0,0.67)
(0.83,0.27)to[out=50,in=-120,looseness=1](0.95,0.45)
(-0.83,0.27)to[out=130,in=-60,looseness=1](-0.95,0.45)
(0.83,0.27)to[out=-70,in=90,looseness=1](0.91,0)
(-0.83,0.27)to[out=-110,in=90,looseness=1](-0.91,0)
(-0.29,0)to[out=120,in=-70,looseness=1](-0.42,0.27)
(0.29,0)to[out=60,in=-110,looseness=1](0.42,0.27)
(0.42,0.27) to[out=10,in=170,looseness=1](0.83,0.27)
(-0.42,0.27) to[out=170,in=10,looseness=1](-0.83,0.27)
(0.42,0.27) to[out=130,in=-40,looseness=1](0,0.67)
(-0.42,0.27) to[out=50,in=-140,looseness=1](0,0.67)
(-0.29,0)to[out=0,in=180,looseness=1](0.29,0);
\draw[color=black, scale=1, shift={(0,0)}, rotate=180]
(0.95,0.45)to[out=0,in=90,looseness=1](1.81,0)
(-0.95,0.45)to[out=180,in=90,looseness=1](-1.81,0)
(0.95,0.45)to[out=120,in=20,looseness=1](0,0.67)
(-0.95,0.45)to[out=60,in=160,looseness=1](0,0.67)
(0.83,0.27)to[out=50,in=-120,looseness=1](0.95,0.45)
(-0.83,0.27)to[out=130,in=-60,looseness=1](-0.95,0.45)
(0.83,0.27)to[out=-70,in=90,looseness=1](0.91,0)
(-0.83,0.27)to[out=-110,in=90,looseness=1](-0.91,0)
(-0.29,0)to[out=120,in=-70,looseness=1](-0.42,0.27)
(0.29,0)to[out=60,in=-110,looseness=1](0.42,0.27)
(0.42,0.27) to[out=10,in=170,looseness=1](0.83,0.27)
(-0.42,0.27) to[out=170,in=10,looseness=1](-0.83,0.27)
(0.42,0.27) to[out=130,in=-40,looseness=1](0,0.67)
(-0.42,0.27) to[out=50,in=-140,looseness=1](0,0.67);
\end{tikzpicture}

\bigskip

\bigskip

\noindent Conjecturally, by numerical evidence
in~\cite{haettenschweiler}, there are no regular shrinkers with these
topological shapes. The only one whose non--existence is rigorously proved
is the first one, the $\Theta$--shaped (double cell) shrinker,
in~\cite{balhausman}.

\eject

\bibliographystyle{amsplain}
\bibliography{biblio}

\end{document}